\theoremstyle{definition}
\newtheorem{Theorem}{Theorem}[section]
\newtheorem{Proposition}[Theorem]{Proposition}
\newtheorem{Lemma}[Theorem]{Lemma}
\newtheorem{Corollary}[Theorem]{Corollary}
\newtheorem{Definition}[Theorem]{Definition}
\newtheorem{Conjecture}[Theorem]{Conjecture}
\newtheorem{Remark}[Theorem]{Remark}
\newtheorem{Example}[Theorem]{Example}
\newtheorem{Assumption}[Theorem]{Assumption}
\newtheorem{Notation}[Theorem]{Notation}
\numberwithin{equation}{section}
\def\ben{\begin{eqnarray*}}
\def\een{\end{eqnarray*}}
\newcommand{\HallEnv}{\mathrm{HallEnv}}
\newcommand{\sQ}{\mathrm{Q}}
\newcommand{\oO}{\mathcal{O}}
\newcommand{\der}{\mathrm{der}}
\newcommand{\bA}{\mathbb{A}}
\newcommand{\C}{\mathbb{C}}
\newcommand{\GL}{\mathrm{GL}}
\newcommand{\Crit}{\mathrm{Crit}}
\newcommand{\bC}{\mathbb{C}}
\newcommand{\bG}{\mathbb{G}}
\newcommand{\bK}{\mathbb{K}}
\newcommand{\bN}{\mathbb{N}}
\newcommand{\bP}{\mathbb{P}}
\newcommand{\bQ}{\mathbb{Q}}
\newcommand{\bR}{\mathbb{R}}
\newcommand{\bZ}{\mathbb{Z}}
\newcommand{\ba}{\mathbf{a}}
\newcommand{\bv}{\mathbf{v}}
\newcommand{\bd}{\mathbf{d}}
\newcommand{\bPsi}{\mathbf{\Psi}}
\newcommand{\dR}{\mathbf{R}}
\newcommand{\cC}{\mathcal{C}}
\newcommand{\cE}{\mathcal{E}}
\newcommand{\cF}{\mathcal{F}}
\newcommand{\cH}{\mathcal{H}}
\newcommand{\cL}{\mathcal{L}}
\newcommand{\cM}{\mathcal{M}}
\newcommand{\cN}{\mathcal{N}}
\newcommand{\cO}{\mathcal{O}}
\newcommand{\cS}{\mathcal{S}}
\newcommand{\cU}{\mathcal{U}}
\newcommand{\cV}{\mathcal{V}}
\newcommand{\cW}{\mathcal{W}}
\newcommand{\fC}{\mathfrak{C}}
\newcommand{\fg}{\mathfrak{g}}
\newcommand{\fM}{\mathfrak{M}}
\newcommand{\fX}{\mathfrak{X}}
\newcommand{\loc}{\mathrm{loc}}
\newcommand{\vir}{\mathrm{vir}}
\newcommand{\sA}{\mathsf{A}}
\newcommand{\sE}{\mathsf{E}}
\newcommand{\sS}{\mathsf{S}}
\newcommand{\sT}{\mathsf{T}}
\newcommand{\sW}{\mathsf{W}}
\newcommand{\sw}{\mathsf{w}}
\newcommand{\ad}{\operatorname{ad}}
\newcommand{\Attr}{\operatorname{Attr}}
\newcommand{\Aut}{\operatorname{Aut}}
\newcommand{\can}{\operatorname{can}}
\newcommand{\codim}{\operatorname{codim}}
\newcommand{\diag}{\operatorname{diag}}
\newcommand{\End}{\operatorname{End}}
\newcommand{\ev}{\operatorname{ev}}
\newcommand{\Frac}{\operatorname{Frac}}
\newcommand{\Gr}{\operatorname{Gr}}
\newcommand{\Hilb}{\operatorname{Hilb}}
\newcommand{\Hom}{\operatorname{Hom}}
\newcommand{\im}{\operatorname{im}}
\newcommand{\Lie}{\operatorname{Lie}}
\newcommand{\pr}{\operatorname{pr}}
\newcommand{\pt}{\operatorname{pt}}
\newcommand{\Res}{\operatorname{Res}}
\newcommand{\rk}{\operatorname{rk}}
\newcommand{\Span}{\operatorname{Span}}
\newcommand{\Spec}{\operatorname{Spec}}
\newcommand{\Stab}{\operatorname{Stab}}
\newcommand{\Sym}{\operatorname{Sym}}
\newcommand{\tr}{\operatorname{tr}}
\newcommand{\str}{\operatorname{Str}}
\newcommand{\Supp}{\operatorname{Supp}}
\newcommand{\id}{\operatorname{id}}
\newcommand{\In}{\operatorname{in}}
\newcommand{\Out}{\operatorname{out}}
\newcommand{\ostar}{\mathbin{\mathpalette\make@circled\star}}
\newcommand{\make@circled}[2]{%
  \ooalign{$\m@th#1\smallbigcirc{#1}$\cr\hidewidth$\m@th#1#2$\hidewidth\cr}%
}
\newcommand{\smallbigcirc}[1]{%
  \vcenter{\hbox{\scalebox{0.77778}{$\m@th#1\bigcirc$}}}%
}
\newcommand{\yc}[1]{\textcolor{blue}{  #1 }}
\newcommand{\yh}[1]{\textcolor{red}{$[$ Yehao: #1 $]$}}
\newcommand{\yl}[1]{\textcolor{blue}{$[$ Yalong: #1 $]$}}
\title{Shifted quantum groups via critical stable envelopes}
\author{Yalong Cao}
\address{Morningside Center of Mathematics, Institute of Mathematics \& State Key Laboratory of Mathematical Sciences, Academy of Mathematics and Systems Sciences, Chinese Academy of Sciences, Beijing, China}
\email{yalongcao@amss.ac.cn}
\author{Andrei Okounkov} 
\address{Department of Mathematics, Columbia University, New York, U.S.A.}
\email{okounkov@math.columbia.edu} 
\author{Yehao Zhou}
\address{\parbox{\linewidth}{Center for Mathematics and Interdisciplinary Sciences, Fudan University, Shanghai 200433, China\\
Shanghai Institute for Mathematics and Interdisciplinary Sciences (SIMIS), Shanghai 200433, China}}
\email{yehao.zhou@simis.cn}
\author{Zijun Zhou}
\address{School of Mathematical Sciences, Shanghai Jiao Tong University, Shanghai, China}
\email{zijun.zhou@sjtu.edu.cn}
\subjclass[2020]{
Primary
14N35,  
17B37. 
Secondary 
16G20, 
22E99} 
\keywords{Stable envelopes, $R$-matrices, shifted Yangians, quivers with potentials, quantum multiplication by divisors}
\begin{document}

\begin{abstract}

Given a symmetric quiver with potential, 
we develop a geometric construction of shifted Yangians acting on the critical cohomologies of antidominantly framed quiver varieties with extended potentials, using the $R$-matrices constructed from critical stable envelopes. 
We relate such Reshetikhin type Yangians to Drinfeld type Yangians arising from critical cohomological Hall algebras.
Several detailed examples, including the trivial, Jordan, and tripled Jordan quivers are explicitly computed. 

For symmetric quiver varieties with potentials, by using the smallness property of their affinization maps, 
we derive explicit formulas for quantum multiplication by divisors in terms of Casimir elements of the associated Lie (super)algebras, extending 
results from Nakajima quiver varieties to the critical setting. A similar formula in the antidominantly framed case is also obtained, which includes Hilbert schemes of points on $\mathbb C^3$ as examples.


\end{abstract}

\maketitle

\setcounter{tocdepth}{1}
\tableofcontents

\setlength{\parskip}{1ex}

\allowdisplaybreaks

\vspace{1cm}

\section{Introduction}

\subsection{Overview}

This paper is the second in our sequence of papers devoted to
the theory and applications of stable envelopes for critical
loci. The opening paper \cite{COZZ} of the sequence contains
a detailed introduction with which we assume the reader is
familiar.

The bulk of this paper is dedicated to the construction
of the \textit{shifted quantum group} actions on the \textit{critical
cohomology} along the lines described
in \cite[\S 1.8, \S 1.9]{COZZ}\,\footnote{The case using critical $K$-theory will be pursued in \cite{COZZ2}.}, see \S \ref{sec double of COHA} and \S \ref{sec shifted Yangian}. 
It builds on the ideas that go back to \cite{FRT}, in a
special case of a classical Lie algebra in its defining representation, and to \cite{Resh90}, in general. 

Concretely, the critical stable envelopes 
$$\Stab_\fC\colon H^{\sT}(X^\sA,\sw) \to H^{\sT}(X,\sw), \quad   \Stab^{\mathsf s}_{\fC}\colon K^{\sT}(X^\sA,\sw) \to K^{\sT}(X,\sw) $$
make the relevant critical
cohomology or $K$-theory groups into a tensor
category with a meromorphic braiding, called the
$R$-\textit{matrix}, see \S \ref{sect on rmatrix}. After that, the operators of the quantum group
appear as matrix elements of the
$R$-matrix.

The full quantum group structure also involves the \textit{antipode operation},
related to the notions of left and right duals, ${}^*V$ and $V^*$, for
a representation $V$ of a quantum group. 
Over a field, which means equivariant
localization to the fixed locus of an automorphism $g$ in the geometric context,
the underlying vector space of both ${}^*V$ and $V^*$ is
the ordinary vector space dual of $V$. Here $V$ stands for a localized
critical cohomology group and the choice of $g$ incorporates both the
value of the continuous parameters for the shifted Yangian and
the parameters of the representation, as is the common practice in
many other geometric representation theory contexts, see \cite{CG} for
a comprehensive discussion. We identify the dual of $V$
with $V$ itself using the standard \textit{bilinear form} on critical cohomology,
which exists and is nondegenerate when the $g$-fixed critical locus $\Crit(\sw)^g$ is 
proper, see \S \ref{subsec bilinear form}. This induces a transposition
\textit{anti-automorphism} on the Yangian, discussed in \S \ref{sect on anti-auto}.
In Reshetikhin's reconstruction \cite{Resh90}, $R$-matrices of the
dual representations appear as the inverse of the transpose of $R$ in one
of the factors. This completes the construction of the quantum group. 

An important application of the invariant bilinear form is a geometric
criterion of the \textit{irreducibility} of the Yangian modules. Recall that
we package the continuous parameters of the algebra and of the module
into a choice of an automorphism $g$ and work in equivariant cohomology
localized at $g$. Under fairly
general hypotheses, we prove in Theorem \ref{thm vac cogen_general} that our critical
cohomology modules are \emph{cogenerated} by the vacuum under
the action of the Yangian $\mathsf Y_\mu$. In fact, it is enough to consider the subalgebra
$\mathsf Y_\mu^{\leqslant} \subset \mathsf Y_\mu$ generated by 
the lowering
operators $\mathsf Y_\mu^-$ and the operators $\mathsf Y_\mu^0$ of the
cup product with tautological classes. If, in addition, $\Crit(\sw)^g$ is
proper, and hence a nondegenerate invariant bilinear form exists,
one can conclude that the Yangian module is irreducible, see Proposition 
\ref{prop on irr mod}. 

In \S \ref{sec quant mult}, we prove a formula for the \textit{quantum
multiplication by divisors}, first for a symmetric
quiver variety with potential, and then in the antidominantly
shifted case, see \cite[\S 1.11]{COZZ} and
below for additional introductory material.
We recall from \cite{COZZ} that an antidominant shift means that 
we allow asymmetry in the framing dimension vectors,
but require $\mathbf{d}_\textup{in} \geqslant \mathbf{d}_\textup{out}$.
The sign of the inequality here is correlated with the
specific stability condition on which we focus. Namely,
we consider a framed quiver representation stable if the
the maps out of the framing spaces and the maps between
the unframed spaces generate everything. It remains
an interesting question to explore what happens for
other shifts and other stability conditions. 

\S \ref{sec gl(1|1)}, \S \ref{sect on higher spin}, and \S \ref{sect on hilbc3} are devoted to making our general
theory explicit and application-ready in several
fundamental examples, which we explain in more details below.

\subsection{Reshetikhin Yangian v.s.\ Drinfeld Yangian}

We recall that in the formalism of \cite{Resh90}, quantum group
operators appear as matrix elements of $R$-matrices and
the relations among them come from the morphisms in the
corresponding category, that is, from maps that intertwine
$R$-matrices. For example, the Yang-Baxter equation itself
results in the RTT=TTR relation of \cite{FRT}.

For Yangians
of finite-dimensional Lie algebras, this recovers Drinfeld
Yangians. In its ``new'' presentation, Drinfeld Yangian is generated
by currents corresponding to the simple positive and negative
roots and polynomial loops into the Cartan subalgebra. In the
geometric situation, the latter correspond to operators of the
cup product by the tautological characteristic classes. The 
former have to do with the locus of extensions by a representation
of the unframed quiver  in which all maps are zero and the dimension
vector is a $\delta$-function. 

We adapt the construction of the
Drinfeld type Yangian to our setting in \S \ref{sec double of COHA}, building on
the works of Nakajima \cite{Nak1}, Negu\c t \cite{N2}, 
Varagnolo and Vasserot \cite{VV1, VV2}, and Yang and Zhao \cite{YZ},
among other works in the really vast literature that constructs
Yangian-like algebras using Hall algebras of Schiffmann-Vasserot \cite{SV0} and
Kontsevich-Soibelman type \cite{CoHA}. 


\begin{Theorem}(Theorems \ref{cor shifted yangian action}, \ref{thm e f h as R matrix elements}, \ref{thm drinfeld yangian map to rtt yangian})
Let $Q$ be a symmetric quiver with potential $\sW$, and $\mu\in \bZ^{Q_0}$.  
\begin{itemize}
    \item There is a Drinfeld type $\mu$\textit{-shifted Yangian} $\mathcal D\widetilde{\mathcal{SH}}_\mu(Q,\sW)$ defined by the double of spherical nilpotent critical CoHAs, which acts on the equivariant critical cohomology of framed quiver varieties with potentials by Hecke type correspondences specified in Theorem \ref{cor shifted yangian action}. 
\item When $\mu\in \bZ_{\leqslant 0}^{Q_0}$, there is a Reshetikhin type $\mu$\textit{-shifted Yangian} $\mathsf Y_\mu(Q,\sW)$ defined by geometric $R$-matrices
which come from critical stable envelopes. It naturally acts on equivariant critical cohomology of  
$\mu =\mathbf{d}_\textup{out} - \mathbf{d}_\textup{in}$ antidominant-framed quiver varieties with potentials. 
\item When $\mu\in \bZ_{\leqslant 0}^{Q_0}$, the action of $\mathcal D\widetilde{\mathcal{SH}}_\mu(Q,\sW)$ factors through $\mathsf Y_\mu(Q,\sW)$ via a natural map 
$$\mathcal D\widetilde{\mathcal{SH}}_\mu(Q,\sW)\to \mathsf Y_\mu(Q,\sW). $$ 
\end{itemize}
\end{Theorem}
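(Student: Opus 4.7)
The plan is to establish the three bullets in sequence, using the critical CoHA formalism of \cite{YZ,CoHA,VV1,VV2} together with the critical stable envelopes built in \cite{COZZ}.

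For the first bullet, I would realize the positive half $\widetilde{\mathcal{SH}}^+(Q,\sW)$ as the spherical nilpotent subalgebra of the critical Hall algebra $\bigoplus_{\bd} H^{\sT}(\fM_\bd,\sw)$, with multiplication given by critical pullback-pushforward along the stack of short exact sequences of $Q$-representations. The $\mu$-shifted double $\mathcal D\widetilde{\mathcal{SH}}_\mu(Q,\sW)$ is then formed by pairing $\widetilde{\mathcal{SH}}^+$ with its opposite $\widetilde{\mathcal{SH}}^-$ via a Hopf pairing, adjoined with a Cartan part of tautological currents whose leading terms are shifted by $\mu$. The action on the critical cohomology of a framed quiver variety $\fM(\bv,\bw)$ is produced from Hecke correspondences of Nakajima type \cite{Nak1,VV2}, adapted to the critical setting by dimensional reduction; the verification of the defining relations is vertex-by-vertex and reduces, after fixing critical sign conventions, to computations known in the non-critical case.

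For the second bullet, I would apply the RTT reconstruction of \cite{Resh90,FRT} to the braided tensor category produced by the critical stable envelopes. For any two antidominantly framed modules $V_1,V_2$, the stable envelopes $\Stab_\fC$ yield an $R$-matrix $R_{V_1,V_2}(u)$ that automatically satisfies the Yang-Baxter equation, and $\mathsf Y_\mu(Q,\sW)$ is defined as the algebra generated by the matrix coefficients of $R$ modulo RTT. The hypothesis $\mu\in \bZ_{\leqslant 0}^{Q_0}$ is exactly what guarantees that the tensor product of two antidominantly framed objects remains antidominantly framed, so that the braiding and the associated RTT presentation are well defined on the whole category.

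For the third bullet, the map $\mathcal D\widetilde{\mathcal{SH}}_\mu(Q,\sW)\to \mathsf Y_\mu(Q,\sW)$ is defined on generators by sending each Drinfeld current to a distinguished matrix element of $R$: the raising and lowering currents at vertex $i$ go to off-diagonal entries of $R_{V,V_i}$, where $V_i$ is the one-box vacuum module at $i$, and the Cartan current $h_i(u)$ goes to the corresponding diagonal entry. Compatibility of the two actions on a single critical cohomology space reduces, after equivariant localization, to showing that the geometric Hecke correspondence for adding or removing a box at vertex $i$ coincides with the relevant matrix element of $\Stab_\fC$ up to an explicit normalization. The main obstacle is precisely this identification: in the non-critical setting it is the content of the Maulik-Okounkov construction, and in the critical setting one must in addition control the critical pullback of stable envelopes and track the Hessian sign of $\sw$ along attracting directions. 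The computation should factor through a one-dimensional wall, where by the support axiom the stable envelope reduces to a product of attracting Euler classes; matching these with the Hall multiplication in $\widetilde{\mathcal{SH}}^\pm$ is then a direct calculation using the dimensional reduction isomorphisms from \cite{COZZ}, and the explicit examples of \S \ref{sec gl(1|1)}, \S \ref{sect on higher spin}, \S \ref{sect on hilbc3} serve as concrete sanity checks on the sign and normalization.
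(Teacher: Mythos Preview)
Your overall architecture is right, but several of the constructions you sketch differ from what actually works, and in one case the difference is a genuine gap.

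For the first bullet, you propose to build $\mathcal D\widetilde{\mathcal{SH}}_\mu(Q,\sW)$ as a Drinfeld double via a Hopf pairing between $\widetilde{\mathcal{SH}}^+$ and its opposite. This is problematic: the spherical nilpotent critical CoHA does not come with a natural coalgebra structure that would let you form a Hopf pairing, and even in the classical preprojective setting such pairings are delicate. The paper instead defines $\mathcal D\widetilde{\mathcal{SH}}_\mu(Q,\sW)$ as the quotient of the free product $\widetilde{\mathcal{SH}}^{\mathrm{nil}}_{Q,\sW} * \widetilde{\mathcal{SH}}^{\mathrm{nil},\mathrm{op}}_{Q,\sW} * \mathcal H^0_Q$ by explicit $[e,f]=h$ and $he$, $hf$ relations; the action is then checked relation by relation. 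The $[e_i,f_j]$ relation is the only nontrivial one, and it is proved by a direct Segre-class computation \`a la Negu\c{t}: one writes each Hecke correspondence as a projective bundle over the base, pushes forward using the Segre expansion, and collects the residue at $z=w$. You cannot avoid this calculation by appealing to the non-critical case via dimensional reduction, because for a general $(Q,\sW)$ there is no such reduction.

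For the second bullet, $\mathsf Y_\mu(Q,\sW)$ is not defined as an abstract RTT algebra but as a concrete subalgebra of $\prod_{\underline\bd,\sW^{\mathrm{fr}},\sA^{\mathrm{fr}}}\End(\cH^{\sW^{\mathrm{fr}}}_{\underline\bd,\sA^{\mathrm{fr}}})$, generated by matrix elements $\mathsf E(m)$ of the $R$-matrices against auxiliary spaces. Also, your explanation of the role of antidominance is off: the point is not closure of the category under tensor product, but that the Hall envelopes satisfy the triangle lemma only when $\bd_{\In}\geqslant\bd_{\Out}$; without it the $R$-matrices do not factor into root $R$-matrices and the whole RTT machinery breaks down.

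For the third bullet, your idea is correct but the mechanism you name is not. You say the off-diagonal stable-envelope entry reduces, by the support axiom at a one-dimensional wall, to a product of attracting Euler classes. The support axiom only bounds the degree in $u$; it does not identify the class. What the paper actually does is compute the restriction $\Stab_{u>0}|_{F\times F'}$ geometrically: one shows $\overline{\Attr}_{u>0}(\Delta_{F'})$ is a local complete intersection cut out by an explicit section (Lemma \ref{lem attr closure is lci}), intersects it with $F\times F'$ to recover precisely the Hecke correspondence $\overline{\mathfrak P}(\bv+\delta_i,\bv,\underline\bd)$ (Lemma \ref{lem stab matrix element}), and then uses the excess intersection formula to get the Chern-polynomial factor $(-u)^{\bv_i}c_{-1/u}(\mathsf V_i)$. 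Once this is in hand, the Gauss decomposition of the $2\times2$ block of $R^{\mathrm{sup}}(u)$ immediately exhibits $e_i(u),f_i(u),h_i(u)$ as matrix elements, and that is the map $\mathcal D\widetilde{\mathcal{SH}}_\mu(Q,\sW)\to\mathsf Y_\mu(Q,\sW)$.
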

From the technical point of view, to define Reshetikhin type shifted Yangians for symmetric quivers with potentials, we need to consider a so-called 
\textit{auxiliary data set} $\mathcal C$, which specifies framings, extensions of torus actions and potentials to the framed quivers, see Definition \ref{def on adm aux data}. For a choice of $\mathcal C$, one can define $\mathsf Y_\mu(Q,\sW,\mathcal C)$
(Definition \ref{def of rtt yang}) and the Reshetikhin type $\mu$-shifted Yangian is defined with respect to the union of all auxiliary data sets: 
$$\mathsf Y_\mu(Q,\sW):=\mathsf Y_\mu(Q,\sW,\bigcup_{\mathcal C}\mathcal C).$$
Apriori, this seems not possible to manipulate due to the possible infinite number of auxiliary data. But remarkably, we show that 
if $\mathcal C$ is \textit{admissible}, which requires certain torus weight condition on the framings, see Definition \ref{def on adm aux data}, we have 
the following.
\begin{Theorem}(Theorem \ref{thm admissible})
If $\mathcal C$ is admissible, then for any $\mu\in \bZ_{\leqslant 0}^{Q_0}$, we have $\mathsf Y_\mu(Q,\sW,\mathcal C)=\mathsf Y_\mu(Q,\sW)$.
\end{Theorem}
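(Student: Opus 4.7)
The inclusion $\mathsf Y_\mu(Q,\sW,\mathcal C)\subseteq \mathsf Y_\mu(Q,\sW)$ is automatic, since $\mathcal C\subset \bigcup_{\mathcal C'}\mathcal C'$ by construction. The content of the theorem is the reverse inclusion: every matrix element of every $R$-matrix $R_{\mathcal C',\bullet}$ arising from an arbitrary auxiliary data set $\mathcal C'$ must already lie in the subalgebra generated by matrix elements of $R$-matrices $R_{\mathcal C,\bullet}$ whose auxiliary factor is drawn from the admissible $\mathcal C$. The plan is to realize an arbitrary auxiliary representation as a subquotient of (or specialization of) a representation built from admissible ones, so that by naturality of $R$-matrices under such operations, its matrix elements are expressible in terms of the admissible ones.

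First I would analyze the admissibility hypothesis of Definition~\ref{def on adm aux data}: it constrains the torus weights appearing on the framing spaces of $\mathcal C$ in such a way that the equivariant parameters of $\mathcal C$ can be specialized to reproduce any torus weight that could appear on a framing of $\mathcal C'$. Concretely, the admissible $\mathcal C$ should be large enough that, after enlarging the equivariant torus by formal shift parameters, every framing weight configuration required by $\mathcal C'$ is obtained by specializing these shift parameters. The second step is to use the functoriality of critical stable envelopes under such weight specializations to identify the $R$-matrix $R_{\mathcal C',\bullet}$, up to rescaling by a factor in the Cartan, as a limit or specialization of a suitable $R_{\mathcal C,\bullet}$. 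In particular, the RTT-type matrix entries of $R_{\mathcal C',\bullet}$ can be read off as specializations (or residues) of rational functions in the equivariant parameters of entries of $R_{\mathcal C,\bullet}$.

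The third step is to pass from pointwise specialization of $R$-matrix entries to an identification of the generated algebras. Because the matrix entries of $R_{\mathcal C,\bullet}$ are rational in the shift parameters, their specializations, residues and derivatives in these parameters all lie in the algebra they generate over the ring of equivariant parameters, together with fusion products of such entries. So the operators obtained from $\mathcal C'$ are polynomial combinations of those obtained from $\mathcal C$. Iterating this for every choice of $\mathcal C'$ gives $\mathsf Y_\mu(Q,\sW,\mathcal C')\subseteq \mathsf Y_\mu(Q,\sW,\mathcal C)$, and the union over $\mathcal C'$ then yields the desired reverse inclusion.

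The main obstacle is the second step: showing that the specialization of equivariant parameters in $R_{\mathcal C,\bullet}$ actually produces the $R$-matrix $R_{\mathcal C',\bullet}$ rather than just a meromorphic relative of it. This requires controlling two things simultaneously. On one hand, one must verify that the critical stable envelope for the auxiliary factor in $\mathcal C$, after specialization of torus parameters governed by the admissibility condition, degenerates to the stable envelope for the auxiliary factor in $\mathcal C'$ (possibly after throwing away spurious components whose contributions vanish by a weight calculation in $\sA^g$-characters). On the other hand, one must ensure the resulting operator still intertwines the tensor product structure, which amounts to a compatibility check between the chamber structure for $\mathcal C$ and for $\mathcal C'$. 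The admissibility weight condition is precisely what makes both of these checks work uniformly and without residual denominators, and this is where the genuine technical weight of the argument sits.
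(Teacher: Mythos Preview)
Your proposal has a genuine gap rooted in a misreading of what admissibility means. Definition~\ref{def on adm aux data} asks only that $\mathcal C$ contain, for each vertex $i\in Q_0$, a \emph{minuscule} auxiliary datum with framing dimension $\delta_i$ and a nonzero $\sT_0$-weight on the out-going arrow. An admissible $\mathcal C$ can therefore consist of exactly $|Q_0|$ elements, each with one-dimensional framing. By contrast, an arbitrary auxiliary datum $\mathfrak{c}'\in\mathcal C'$ can have framing dimension $\bd$ of any size and any $\sT_0$-invariant framed potential extending $\sW$. There is no specialization of equivariant parameters that turns a rank-$1$ framing into a rank-$\bd$ framing, nor one that produces a new potential term; so your second step, ``critical stable envelopes degenerate under weight specialization to the stable envelope for $\mathcal C'$'', cannot get off the ground. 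The admissibility condition is a \emph{minimality} hypothesis, not a largeness or genericity condition on available weights.

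The paper's argument avoids comparing auxiliary spaces directly and instead works through the intrinsic Lie algebra $\mathfrak{g}_{Q,\sW}$. For $\mu=0$: the operator $c_1(\lambda)$ of cup product by tautological divisors lies in $\mathsf Y_0(Q,\sW,\mathcal C)$ for admissible $\mathcal C$ (Lemma~\ref{lem Y^0 generators}); the coproduct $\Delta c_1(\lambda)$, computed explicitly in \eqref{coproduct for 1st Chern class}, contains all root vectors $e_\alpha^{(s)}$ of $\mathfrak{g}_{Q,\sW}$, and since $\mathsf Y_0(Q,\sW,\mathcal C)$ is closed under coproduct (Lemma~\ref{lem closed under coproduct}), these root vectors lie in $\mathsf Y_0(Q,\sW,\mathcal C)$. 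The PBW theorem (Theorem~\ref{thm grY}) says $\mathfrak{g}_{Q,\sW}$ together with tautological classes generate all of $\mathsf Y_0(Q,\sW)$, giving the $\mu=0$ case. General $\mu$ is then reduced to $\mu=0$ via the parametrized shift homomorphisms and Lemma~\ref{lem shift generation lemma}. The key structural ingredient you are missing is this PBW-type generation statement: once one knows $\mathsf Y_0(Q,\sW)$ is generated by objects ($\mathfrak{g}_{Q,\sW}$ and Chern classes) that are defined independently of the choice of auxiliary data, the problem becomes showing those objects are captured by any admissible $\mathcal C$, which is what the minuscule condition is designed for.
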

This makes computations manageable as one can for example choose a finite set of auxiliary data labelled by the vertex of quiver $Q$.
We constantly used it in computations throughout the paper. 

\subsection{PBW type theorem and Lie subalgebra $\mathfrak{g}_{Q,\sW}$}

To control the size of our zero shifted Yangian $\mathsf Y_0(Q,\sW)$, we need the following \textit{PBW} type theorem. 
\begin{Theorem}(Proposition \ref{prop g(Q,w)}, Theorems \ref{thm grY}, \ref{thm compare with MO yangian})
Let $Q$ be a symmetric quiver with potential $\sW$.  
\begin{itemize}
    \item There is a Lie superalgebra $\mathfrak{g}_{Q,\sW} \subset \mathsf Y_0(Q,\sW)$ defined from the classical $R$-matrix. 
    \item $\mathsf Y_0(Q,\sW)$ is generated by $\mathfrak{g}_{Q,\sW}$ and multiplication by tautological classes.
Moreover, 
\begin{align*}
    \mathrm{gr}\: \mathsf Y_0(Q,\sW)\cong \mathcal U(\mathfrak{g}_{Q,\sW}[u])\,,
\end{align*}
with respect to the filtration by degree in $u$.
\item When $Q$ is a tripled quiver and $\sW$ is the canonical cubic potential, $\mathsf Y_0(Q,\sW)$ recovers the Maulik-Okounkov Yangian 
and $\mathfrak{g}_{Q,\sW}$ recovers the Maulik-Okounkov Lie algebra \cite{MO}. 
\end{itemize}
\end{Theorem}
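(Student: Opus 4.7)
The plan is to follow the Maulik-Okounkov strategy \cite{MO}, adapted to the critical setting via the tools developed in the preceding sections. \emph{Construction of $\mathfrak{g}_{Q,\sW}$:} the geometric $R$-matrices of \S\ref{sect on rmatrix}, built from ratios of stable envelopes for opposite chambers, admit an expansion $R(u) = 1 + r/u + O(u^{-2})$ as the spectral parameter tends to infinity, where $r$ is the classical $r$-matrix. Expanding the quantum Yang-Baxter equation to order $u^{-2}$ yields the classical Yang-Baxter equation, which implies that the span of matrix coefficients of $r$, taken over all pairs of representations of $\mathsf Y_0(Q,\sW)$, is closed under the natural Lie bracket. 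We let $\mathfrak{g}_{Q,\sW}$ be this Lie subsuperalgebra of $\mathsf Y_0(Q,\sW)$, the super structure coming from the cohomological parity of vanishing cycles.

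\emph{Generation and PBW.} I would filter $\mathsf Y_0(Q,\sW)$ by the order of $u^{-1}$ in the matrix entries of $R(u)$; the RTT relations of \S\ref{sec shifted Yangian} then imply that the associated graded is commutative. The degree one piece is by construction $\mathfrak{g}_{Q,\sW}$, and extracting the coefficient of $u^{-k-1}$ from the $R$-matrix produces lifts spanning $\mathfrak{g}_{Q,\sW}[u]$, with $u$ tracking the filtration degree; this yields a canonical surjection $\mathcal U(\mathfrak{g}_{Q,\sW}[u]) \twoheadrightarrow \mathrm{gr}\:\mathsf Y_0(Q,\sW)$. The generation statement follows by induction on the filtration degree: tautological classes account for the Cartan part of each graded piece, while iterated brackets with coefficients of $r$ produce all root vectors. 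The delicate step, and the principal obstacle, is injectivity of this PBW map: one must rule out unexpected relations in the associated graded. In \cite{MO} this is controlled by the smallness of the resolution $\widetilde{\fM}\to \fM_0$ and the purity of tautological classes; here I would substitute the smallness of the affinization map for symmetric quiver varieties with potentials established in \S\ref{sec quant mult}, combined with the cogeneration of modules by the vacuum (Theorem~\ref{thm vac cogen_general}) and the nondegenerate bilinear form of \S\ref{subsec bilinear form}, to force the action of $\mathcal U(\mathfrak{g}_{Q,\sW}[u])$ on critical cohomology to fill out all of $\mathrm{gr}\:\mathsf Y_0(Q,\sW)$. Shuffle-type operators from the critical CoHA \cite{CoHA} on the Drinfeld side, transported to the Reshetikhin side through Theorem~\ref{thm drinfeld yangian map to rtt yangian}, supply additional elements whenever needed.

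\emph{Comparison with Maulik-Okounkov.} When $Q$ is a tripled quiver and $\sW$ is the canonical cubic potential, dimensional reduction identifies the critical cohomology of the framed representation variety with potential with the Borel-Moore homology of the associated Nakajima quiver variety. Under this identification, the critical stable envelopes of \cite{COZZ} restrict to the Maulik-Okounkov stable envelopes, so the $R$-matrices and their classical limits agree, whence $\mathsf Y_0(Q,\sW)$ and $\mathfrak{g}_{Q,\sW}$ coincide with their Maulik-Okounkov counterparts \cite{MO}.
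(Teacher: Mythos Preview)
Your construction of $\mathfrak{g}_{Q,\sW}$ and the dimensional-reduction argument for the Maulik--Okounkov comparison are essentially correct and match the paper's approach. However, there are two genuine gaps.

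\textbf{PBW injectivity.} You flag injectivity of $\mathcal U(\mathfrak{g}_{Q,\sW}[u]) \to \mathrm{gr}\,\mathsf Y_0(Q,\sW)$ as ``the principal obstacle'' and propose to attack it via smallness of the affinization map, cogeneration by the vacuum, and the bilinear form. This misidentifies both the difficulty and the relevant tools. In the paper's setup (as in \cite{MO}), $\mathsf Y_0(Q,\sW)$ is \emph{by definition} a subalgebra of $\prod_{\underline{\bd},\sW^{\mathrm{fr}},\sA^{\mathrm{fr}}}\End(\cH^{\sW^{\mathrm{fr}}}_{\underline{\bd},\sA^{\mathrm{fr}}})$, so it acts faithfully on the collection of state spaces automatically. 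Injectivity then follows by the formal argument of \cite[\S 5.5.3]{MO}: varying the framing-torus equivariant parameters separates different powers of $u$, so a nonzero PBW monomial in $\mathfrak{g}_{Q,\sW}[u]$ acts nontrivially on some tensor product of state spaces. The smallness results of \S\ref{sec quant mult} are used in the paper for a completely different purpose (controlling Steinberg correspondences for quantum multiplication), and cogeneration plus the bilinear form are used for irreducibility of modules, not for PBW. Your proposal also blurs injectivity with surjectivity (``fill out all of $\mathrm{gr}\,\mathsf Y_0$'' is a surjectivity statement). The actual nontrivial step is surjectivity, which the paper handles via the claim that $\mathsf E(m)=0$ implies $\mathsf E(m\,u^k)\in F_{k-1}\mathsf Y_0$, proved by the commutator identity \eqref{commutator} and the Gauss decomposition.

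\textbf{MO comparison.} Dimensional reduction does identify critical cohomology with Borel--Moore homology of Nakajima varieties and is compatible with stable envelopes, as you say. But $\mathsf Y_0(\widetilde Q,\widetilde\sW)$ is defined using \emph{all} auxiliary data sets, whereas dimensional reduction only applies to the specific framed potentials $\widetilde\sW_i = \widetilde\sW + B_i\Phi_i A_i$. The missing step is Theorem~\ref{thm admissible}: one must show that any admissible auxiliary data set already generates the whole Yangian, so that it suffices to work with this particular choice. The paper then uses Lemma~\ref{lem sufficient state spaces} to embed $\mathsf Y_0(\widetilde Q,\widetilde\sW,\mathcal C)$ faithfully into endomorphisms of tensor products of these specific spaces, where the comparison with $\mathsf Y_Q^{\mathrm{MO}}$ can be made.
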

To study the nonzero shifted Yangians, one can use the (parametrized) shifted homomorphisms in \S \ref{sec shift map} 
to relate them to the zero shifted case.

Note that in general, the Reshetikhin Yangian is much larger than the Drinfeld type Yangian.  
We conjecture
that, on the CoHA side, the right object to consider is
the BPS Lie algebra of Davison--Meinhardt \cite{DM}. Namely, in
parallel with a conjecture made by one of us in the context of
Nakajima varieties, and recently proven by Botta and Davison in
\cite{BD} and Schiffmann and Vasserot in \cite{SV2}, we conjecture the Davison--Meinhardt  Lie algebra is
the positive half of the Lie algebra $\mathfrak{g}_{Q,\sW}$. 

If true, this conjecture provides
a natural collection of quantum group doubles for the cohomological Hall
algebra, indexed by the shift parameter $\mu$. In general, the lack
of properness is the reason why only half of the quantum
group operations are defined in the CoHA setting. From the beginning,
stable envelopes have served as particularly useful canonical extensions of
attracting manifolds to proper correspondences, and it would be
logical for them to have a role here, too.

\subsection{Coproducts}

Compared to coproducts studied by many people from the CoHA side, we have a 
coproduct on $\mathsf Y_\mu(Q,\sW)$ defined by splitting the $R$-matrix in \S \ref{sect on coprod} which should correspond to the \textit{standard coproduct}.
This is justified by the following. 

\begin{Theorem}(Remark \ref{rmk on copr compa}, Theorem \ref{thm on copr compa})
Let $A$ be a generalized Cartan matrix with symmetrizer $D$, $\mathfrak{g}_{A,D}$ be the symmetrizable Kac-Moody Lie algebra associated to the pair $(A,D)$ and $Y_{0}(\mathfrak{g}_{A,D})$ be its Yangian defined by explicit generators with relations 
(Example \ref{ex on gen symi}). Let $(\widetilde{Q}, \sW)$ be the associated tripled quiver with potential of Geiss-Leclerc-Schr\"oer \cite{GLS} (see Appendix \ref{sec Y(sym KM)}).
Then there is an algebra homomorphism 
\begin{equation}\label{intro equ on bialg map for sym km}\varrho\colon Y_{0}(\mathfrak{g}_{A,D})_{\loc}\xrightarrow{\text{Ex.\ref{ex on gen symi}}} \mathcal D\widetilde{\mathcal{SH}}_0(\widetilde{Q},\sW)_{\loc}\xrightarrow{\text{Thm.\ref{thm drinfeld yangian map to rtt yangian}}} \mathsf Y_0(\widetilde{Q},\sW). \end{equation}
When $\mathfrak{g}_{A,D}$ is a finite dimensional simple Lie algebra, $Y_{0}(\mathfrak{g}_{A,D})_{\loc}$ has a standard coproduct due to Drinfeld, then the map 
\eqref{intro equ on bialg map for sym km}
is an injective bialgebra homomorphism. Moreover, there is a bialgebra isomorphism
\begin{align*}
\widetilde{\varrho}\colon Y_{0}(\mathfrak{g}_{A,D})_{\loc}\otimes \bC[\mathsf d_{i,k}: i\in \widetilde{Q}_0,\, k\in \bZ_{\geqslant 0}]\cong \mathsf Y_0(\widetilde{Q},\sW).
\end{align*}
Here $\mathsf d_{i,k}$ are primitive for all $i$ and $k$, $\widetilde{\varrho}$ on the first component is $\varrho$, and $\widetilde{\varrho}(\mathsf d_{i,k})$ is the multiplication by $\mathrm{ch}_{k+1}(\mathsf D_{\In,i}-\mathsf D_{\Out,i})$ operator when acting on a state space with framing vector spaces $\mathsf D_{\In}, \mathsf D_{\Out}$.
\end{Theorem}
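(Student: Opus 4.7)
The plan is to establish the theorem via an associated-graded argument, using the PBW-type theorem $\mathrm{gr}\,\mathsf Y_0(\widetilde Q,\sW)\cong \mathcal U(\mathfrak g_{\widetilde Q,\sW}[u])$ of Theorem \ref{thm grY} and the identification of $\mathsf Y_0(\widetilde Q,\sW)$ with the Maulik-Okounkov Yangian from Theorem \ref{thm compare with MO yangian}. The key reduction is to a Lie-algebra isomorphism $\mathfrak g_{\widetilde Q,\sW}\cong \mathfrak g_{A,D}\oplus \mathfrak a$, where $\mathfrak a$ is the abelian Lie algebra with basis $\{\mathsf d_{i,k}\}$. First, $\varrho$ is an algebra homomorphism as a composition of algebra maps, the first of which sends the Drinfeld currents $e_i(u)$, $f_i(u)$, $h_i(u)$ of $Y_0(\mathfrak g_{A,D})_{\loc}$ to the spherical elements of the CoHA double of $(\widetilde Q,\sW)$ per Example \ref{ex on gen symi}, and the second being Theorem \ref{thm drinfeld yangian map to rtt yangian}.

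For the geometric identification of $\mathsf d_{i,k}$, recall that by construction of the Drinfeld-type Yangian from the CoHA, the zero-degree Cartan currents act on a framed state space by multiplication by tautological Chern characters of the framing bundles. After matching normalization conventions, $\widetilde\varrho(\mathsf d_{i,k})$ becomes multiplication by $\mathrm{ch}_{k+1}(\mathsf D_{\In,i}-\mathsf D_{\Out,i})$ on any state space with framing bundles $\mathsf D_{\In}, \mathsf D_{\Out}$. Primitivity of $\mathsf d_{i,k}$ under the $R$-matrix coproduct of \S \ref{sect on coprod} then follows from additivity of Chern characters under direct sum: in the stable envelope decomposition of a tensor-product fixed locus, framing bundles split additively across the two tensor factors, converting multiplication by $\mathrm{ch}_{k+1}$ into $\mathrm{ch}_{k+1}\otimes 1 + 1\otimes\mathrm{ch}_{k+1}$.

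For the bialgebra isomorphism $\widetilde\varrho$, I would pass to associated graded with respect to the $u$-degree filtration. Combining the classical Yangian PBW for $Y_0(\mathfrak g_{A,D})_{\loc}$ with Theorem \ref{thm grY}, the task reduces to identifying $\mathfrak g_{\widetilde Q,\sW}$ with $\mathfrak g_{A,D}\oplus \mathfrak a$ as Lie algebras. In finite simple type, Theorem \ref{thm compare with MO yangian} together with the BPS Lie algebra results of Botta-Davison \cite{BD} and Schiffmann-Vasserot \cite{SV2} identify the positive and negative halves of $\mathfrak g_{\widetilde Q,\sW}$ with those of $\mathfrak g_{A,D}$, while the Cartan part splits as $\mathfrak h_{A,D}\oplus \mathfrak a$. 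A standard filtration-lifting argument then promotes $\mathrm{gr}\,\widetilde\varrho$ to an algebra isomorphism $\widetilde\varrho$, and injectivity of $\varrho$ follows by restriction. Bialgebra compatibility then reduces, after this identification, to checking the Drinfeld and $R$-matrix coproducts agree on the generators $e_i(u)$, $f_i(u)$, $h_i(u)$, which is the rank-one $\mathfrak{sl}_2$ calculation contained in Maulik-Okounkov \cite{MO}, combined with the primitivity of $\mathsf d_{i,k}$ established above.

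The main obstacle is the Lie-algebra identification $\mathfrak g_{\widetilde Q,\sW}\cong \mathfrak g_{A,D}\oplus \mathfrak a$: one must verify that the classical $R$-matrix for the tripled GLS quiver with canonical cubic potential produces exactly the Kac-Moody Serre relations of $\mathfrak g_{A,D}$ on the root generators, and that no unexpected Cartan elements appear beyond $\mathfrak h_{A,D}$ and the Chern-character span $\mathfrak a$. Concretely, this requires computing the leading term of the classical $R$-matrix in framing degree zero and matching it against the known structure of the Maulik-Okounkov Yangian in finite ADE; the admissible auxiliary data formalism of Theorem \ref{thm admissible} is what makes this computation tractable in practice.
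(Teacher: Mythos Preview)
Your overall framework---pass to associated graded via Theorem \ref{thm grY}, reduce to a Lie algebra identification $\mathfrak g_{\widetilde Q,\sW}\cong \mathfrak g_{A,D}\oplus \mathfrak a$, then lift---is the same as the paper's. The argument for primitivity of $\mathsf d_{i,k}$ is also correct and matches the paper.

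However, there is a genuine gap in your Lie algebra identification step. You invoke Theorem \ref{thm compare with MO yangian} and the Botta--Davison result \cite{BD}, but both of these apply only to a tripled quiver with the \emph{canonical cubic potential} $\widetilde\sW$. For non-simply-laced finite types (B, C, F, G), the GLS potential $\sW=\sum_a \Phi_i^{l_{ij}}X_{a^*}X_a-\Phi_j^{l_{ji}}X_aX_{a^*}$ from Appendix \ref{sec Y(sym KM)} has $l_{ij}>1$ and is \emph{not} the canonical cubic potential, so neither Theorem \ref{thm compare with MO yangian} nor \cite{BD} applies. Your final paragraph even restricts to ``finite ADE,'' but the theorem is claimed for all finite simple $\mathfrak g_{A,D}$. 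The paper instead argues directly: it uses \cite[Cor.~5.2, Rmk.~5.3]{VV3} to show that $\cH^{\sW_i}_{\underline{\delta_i},\loc}$ is finite-dimensional for each $i$, hence (via the injection $\mathfrak g_{\widetilde Q,\sW}^+\hookrightarrow \bigoplus_i \cH^{\sW_i}_{\underline{\delta_i},\loc}$) that $\mathfrak g_{\widetilde Q,\sW}$ is finite-dimensional. Since it carries a nondegenerate invariant form (Proposition \ref{prop g(Q,w)}), it is reductive, and its nilradical is generated by simple root vectors $\{e_{i,0}\}$; thus $\bar\varrho\colon \mathfrak g_{A,D}^\pm\to \mathfrak g_{\widetilde Q,\sW}^\pm$ is surjective (and injective by the footnote in Remark \ref{rmk on copr compa}). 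The Cartan comparison uses that $\pmb Q=-\hbar DA$ is invertible, so $\mathsf v_i$ lies in the span of $\{\mathsf d_j,h_{j,0}\}$.

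Your treatment of the bialgebra compatibility is also too quick. It does not reduce to a rank-one $\mathfrak{sl}_2$ calculation: comparing \eqref{coproduct e f h 1st nontrivial} with the GNW formula \cite[(4.7)]{GNW} shows that the coproducts on $e_{i,1},f_{i,1},h_{i,1}$ involve $\pmb r_+=\sum_{\alpha>0}(-1)^{|\alpha|}e^{(s)}_{-\alpha}\otimes e^{(s)}_{\alpha}$, summed over \emph{all} positive roots of $\mathfrak g_{\widetilde Q,\sW}$. The two coproducts match precisely when every root space $\mathfrak g_\eta$ with $(\delta_i,\eta)_{\pmb Q}\neq 0$ lies in the image of $\bar\varrho$ (Remark \ref{rmk on copr compa}), which is exactly the surjectivity statement you need for the Lie algebra identification---so this step cannot be decoupled from it.
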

We remark that since Drinfeld \cite{Dri} wrote down the standard coproduct formula in the case of finite dimensional simple Lie algebras, a proof has never appeared in the literature until the work of Guay-Nakajima-Wendlandt \cite{GNW}. The above theorem can be seen as another proof using geometric methods (critical stable envelopes). 
In fact, the coproduct formula of \cite{GNW} also works for more general affine Kac-Moody Lie algebras. We expect 
the map \eqref{intro equ on bialg map for sym km} preserves coproduct in those cases too. 

\subsection{Smallness for symmetric quiver varieties}

The theory developed in this sequence of papers is a far-reaching
generalization of what was previously known for Nakajima varieties and 
other symplectic resolutions. Historically, the
symplectic form played an important technical role in
various geometric arguments, and a different technical ingredient had
to be found in the more general situation treated here.
The property we use, and which may be of independent interest,
is the following.

Let $X=\cM_\theta(\bv,\bd)$ be a symmetric quiver variety. 
We prove in a companion paper \cite{COZZ3} that $X$ has
a Poisson structure such that:
\begin{itemize}
\item[(1)] the Poisson center $\cO_B \subset H^0(\cO_X)$ is a polynomial
  algebra,
\item[(2)] the fibers of the map $X\to B$ have symplectic
  singularities, in particular, finitely many symplectic leaves,
\item[(3)] the generic fiber of $X\to B$ is affine. 
\end{itemize}
Moreover, if a torus $\sA$ acts on $X$ in a symmetry-preserving way
then $\sA$ acts trivially on $B$.

As a corollary, the affinization map
$X\to X_0 = \cM_0(\bv,\bd)$ is birational and \emph{small}. By
definition, that means that $X\times_{X_0} X$ is the union of the
diagonal and components of dimension \emph{less} than $\dim X$. 
The components of dimension $\dim X-1$ appear as
\emph{Steinberg correspondences} and have a very important place in
the theory, see \S \ref{subsec steinberg corr}.

\textit{Smallness} plays a crucial role in our analysis of the
quantum multiplication by divisors for symmetric quiver varieties with 
potentials. There, one needs to understand the Steinberg
correspondence $Q_\beta$ formed, in the  sense of the virtual fundamental
class, by pairs of points lying on a rational curve of degree
$\beta$. For any choice of attracting directions $\fC$, we prove in
Theorem \ref{thm on stab and Q}
the commutativity of the diagram
\begin{equation}\label{commQStab}
\xymatrix{
H^{\sT}(F,\sw)  \ar[rr]^{\Stab_\fC} \ar[d]_-{\pm \sQ_\beta^{F}}   && H^{\sT}(X,\sw) \ar[d]^-{\sQ_\beta}  \\
H^{\sT}(F,\sw)   && \ar[ll]_{\,\, (-)|_{F}\circ\Stab_\fC^{-1}} H^{\sT}(X,\sw)\,, 
} 
\end{equation}
for any fixed component $F\subseteq X^\sA$ with a precise determination of the sign and up to an equivariant scalar trivial on $\sA$. In the symplectic
situation, the proof of the parallel result in \cite{MO} rests on the
analysis of broken/unbroken contributions, originated from \cite{OP}. We replace that analysis by the smallness argument in broader generality of symmetric quivers with potentials.

\subsection{Quantum multiplication by divisors}

As in the symplectic case, it is convenient to get rid of the signs in
\eqref{commQStab} and elsewhere by shifting the origin in the
torus of degree-counting variables $z$
to a certain point of order $2$. This is called the
\textit{modified quantum product}, see Definition \ref{def modified quant mult}, and is denoted
by $\widetilde{\star}$ in our paper.

\begin{Theorem}(Theorem \ref{thm on qm div for sym}, Corollary \ref{cor qm div for sym_sp inj})
Let $X$ be a symmetric quiver variety with a potential $\sw$. 
Suppose that the specialization map $\mathrm{sp}\colon H^\sT(X,\sw^{})\to H^\sT(X)$ is injective, then the modified quantum multiplication by divisor $\lambda$ on 
$H^\sT(X,\sw^{})$ is given by
\begin{equation}
c_1(\lambda)\,\widetilde{\star}\,\cdot  =c_1(\lambda)\cup\,\cdot 
-\sum_{\alpha>0}\alpha(\lambda)\frac{z^\alpha}{(-1)^{|\alpha|}-z^\alpha}\mathrm{Cas}_\alpha+\cdots
\,,\label{eq:modified product}
\end{equation}
where dots stand for a scalar operator, 
the sum is over the positive roots $\alpha$ of the Lie superalgebra
$\mathfrak{g}_{Q,\sW} \subset \mathsf Y_0(Q,\sW)$ underlying the Yangian, and
the Casimir element 
$\mathrm{Cas}_\alpha \in \fg_\alpha \otimes \fg_{-\alpha} \subset
U(\mathfrak{g}_{Q,\sW})$ is the canonical tensor.

\end{Theorem}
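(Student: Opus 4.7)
The plan is to follow the strategy of Maulik--Okounkov \cite{MO} for Nakajima varieties, with two key substitutions: the smallness of the affinization map $X \to X_0$ replaces the symplectic resolution arguments, and the intertwining identity \eqref{commQStab} of Theorem \ref{thm on stab and Q} plays the role of the broken/unbroken analysis originating in \cite{OP}. The Reshetikhin description of $\mathsf Y_0(Q,\sW)$ via $R$-matrix elements, combined with the PBW identification $\mathrm{gr}\,\mathsf Y_0(Q,\sW) \cong \mathcal U(\mathfrak{g}_{Q,\sW}[u])$ of Theorem \ref{thm grY}, provides the algebraic side.

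First, I would unpack the definition of $\widetilde{\star}$ to express the degree-$\beta$ correction to $c_1(\lambda)\cup\cdot$ as $z^\beta$ times the Steinberg operator $\sQ_\beta$ composed with cup product by $c_1(\lambda)$; the modification $\star\mapsto \widetilde{\star}$ is precisely designed to cancel the sign appearing in \eqref{commQStab}. The classical $\beta = 0$ term reproduces the cup product, while smallness forces every other component of $X\times_{X_0} X$ to have codimension $\geq 1$, so the sum over $\beta > 0$ consists of genuinely nontrivial lower-dimensional Steinberg corrections.

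Second, I would apply \eqref{commQStab} componentwise over $\sA$-fixed components $F \subseteq X^\sA$: conjugating $\sQ_\beta$ by $\Stab_\fC$ reduces it to $\sQ^F_\beta$ up to a scalar trivial on $\sA$. Injectivity of $\mathrm{sp}\colon H^\sT(X,\sw)\to H^\sT(X)$ then permits the operator identity to be checked after specialization to ordinary equivariant cohomology, where the Yangian formalism of \S \ref{sec shifted Yangian} and the PBW theorem apply without critical subtleties. Organizing the $\sQ^F_\beta$ by the positive root $\alpha$ of $\mathfrak{g}_{Q,\sW}$ to which they correspond, and summing the resulting geometric series in $z^\alpha$, produces the factor $\frac{z^\alpha}{(-1)^{|\alpha|} - z^\alpha}\,\mathrm{Cas}_\alpha$; the sign $(-1)^{|\alpha|}$ encodes the parity of $\alpha$ in the superalgebra structure, and $\alpha(\lambda)$ arises from pairing $c_1(\lambda)$ with the weight of $\alpha$ on tautological classes.

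The main obstacle will be the identification $\sQ^F_\beta \leftrightarrow \mathrm{Cas}_\alpha$ in the super setting. In \cite{MO} this step identifies the relevant correspondence with the classical $r$-matrix, and hence with the diagonal Casimir, using the symplectic pairing. Here one must instead combine the nondegenerate bilinear form of \S \ref{subsec bilinear form} with the antipode/transposition discussed in \S \ref{sect on anti-auto}, and carefully track signs from odd roots so that the factor $(-1)^{|\alpha|}$ emerges consistently from both the classical $R$-matrix expansion and the smallness-based Steinberg structure.
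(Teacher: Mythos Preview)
Your outline correctly invokes the right infrastructure (smallness, the intertwining diagram \eqref{commQStab}, specialization), but there is a genuine gap at exactly the step you flag as the obstacle, and the mechanism you propose for it is not the one that works.

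You suggest a direct geometric identification of $\sQ^F_\beta$ with $\mathrm{Cas}_\alpha$ after conjugating by $\Stab_\fC$. But Theorem \ref{thm on stab and Q} only says that $\Stab_\fC^{-1}\sQ_\beta\Stab_\fC$ restricts to $\pm\sQ^F_\beta$ on each fixed component $F$; it does not relate $\sQ_\beta$ to a Casimir. The paper's argument (for zero potential, which is the content of Theorem \ref{thm on qm div for sym}) is indirect and relies on two ingredients you do not mention. First, each $\sQ_\beta$ is a Steinberg correspondence (Lemma \ref{lem 2-pt Steinberg}, via smallness), so its pullback $\Delta\sQ_\beta=\Stab_{+,\epsilon}^{-1}\sQ_\beta\Stab_{+,\epsilon}$ \emph{commutes} with the $R$-matrix $R^{\mathrm{sup}}$ (Remark \ref{rmk steinberg comm R}). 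Second, the combination $\Stab_{+,\epsilon}(\tilde z^{\mathsf v}\otimes 1)\Stab_{-,\bar\epsilon}^{-1}$ is the minuscule shift operator $\mathsf S(\sigma)$, and shift operators commute with quantum multiplication for any smooth $X$. Together these yield
\[
R^{\mathrm{sup}}(u)\,\Delta\widetilde{\sQ}\,R^{\mathrm{sup}}(u)^{-1}
=\sum_\alpha(-1)^{|\alpha|}z^{-\alpha}\Delta_\alpha\widetilde{\sQ}
=\Delta\widetilde{\sQ}+(\Delta^{\mathrm{op}}-\Delta)c_1(\lambda),
\]
a functional equation \eqref{equ on ind equ} that \emph{uniquely determines} $\Delta_\alpha\widetilde{\sQ}$ for $\alpha\ne 0$ in terms of the known coproduct formula \eqref{coproduct for 1st Chern class}. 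The Casimirs and the rational function $z^\alpha/((-1)^{|\alpha|}-z^\alpha)$ emerge from solving this equation, not from a curve-by-curve identification; the sign $(-1)^{|\alpha|}$ comes from the twist $\Sigma$ built into $R^{\mathrm{sup}}$. The remainder is then shown to be scalar by commuting with all tautological multiplications.

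Your placement of the specialization hypothesis is also slightly off: the paper first proves the formula for $\sW=0$ unconditionally, then deduces the Corollary for nonzero $\sw$ using that $\mathrm{Cas}_\alpha$ is induced by a potential-independent Steinberg correspondence (Proposition \ref{prop Casimir}), so injective $\mathrm{sp}$ transfers the identity. You have specialization inside the main argument rather than as a final transfer step.
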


The shape of the 
formula \eqref{eq:modified product} is essentially the same as in the
symplectic case, the main difference is the presence of the parity
$(-1)^{|\alpha|}$ of a root in a superalgebra. In \eqref{eq:modified
  product} we sum over the positive roots because of our choice of
the stability condition, discussed above. For a general
stability condition $\theta$ on a symmetric quiver variety the
corresponding range of summation should be modified to
$\theta \cdot \alpha > 0$. 

In general, the formula \eqref{eq:modified product} holds on the
image of the specialization map from the critical cohomology to
the ordinary cohomology. In the eventuality that the specialization
map is not injective, it is the image group that is the natural
object to consider in the enumerative setup.

For quiver varieties defined from symmetric quivers with \textit{antidominant framing} 
$\mu =\mathbf{d}_\textup{out} - \mathbf{d}_\textup{in}\in \bZ_{\leqslant 0}^{Q_0}$ (which include Hilbert schemes of points on $\C^3$ as examples, see \S \ref{sect on qm by div on hilb}), the formula 
\eqref{eq:modified product} is modified as follows.
\begin{Theorem}(Theorem \ref{thm qm div for asym_sp inj}) 
We have 
\begin{equation}
c_1(\lambda)\,\widetilde{\star}\,  =c_1(\lambda)\cup 
-\sum_{\substack{\alpha>0\\ \alpha\cdot
    \mu=0}}\alpha(\lambda)\frac{z^\alpha}{(-1)^{|\alpha|}-z^\alpha}\mathrm{Cas}_\alpha
-\sum_{\substack{\alpha>0\\ \alpha\cdot \mu\neq 0}}(-1)^{|\alpha|}\alpha(\lambda)\,z^\alpha\,\mathrm{Cas}_{\alpha,\mu}
+\cdots
\,,\label{eq:modified product2}
\end{equation}
where $\mathrm{Cas}_{\alpha,\mu}$ is a certain shifted Casimir element
in the shifted Yangian $\mathsf Y_\mu(Q,\sW)$, defined as follows. 
As a special case of the \emph{parametrized shift homomorphisms} defined
in \S \ref{sec shift map}, we have a map
$$
\mathsf Y_0(Q,\sW)\to \mathsf Y_\mu(Q,\sW)[z],
$$
which sends $\mathrm{Cas}_\alpha$ to a polynomial of $z$ of degree
$-\mu \cdot \alpha$. The shifted Casimir element
$\mathrm{Cas}_{\alpha,\mu}$ is the
coefficient of $z^{-\mu \cdot \alpha}$ in this polynomial, see
Definition \ref{def of shifted Casimir}.

\end{Theorem}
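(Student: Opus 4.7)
The strategy is to reduce the antidominantly framed case to the symmetric case of Theorem \ref{thm on qm div for sym} via a symmetrization construction together with the parametrized shift homomorphism of \S \ref{sec shift map}. The starting point is the Steinberg correspondence analysis on $X$: although $X$ is not itself a symmetric quiver variety (since $\mathbf{d}_{\textup{in}}\neq\mathbf{d}_{\textup{out}}$), the critical stable envelope and an analog of the commutative diagram \eqref{commQStab} still apply and reduce the computation of $c_1(\lambda)\,\widetilde\star\,$ to the analysis of fixed-point matrix elements of Steinberg correspondences $\sQ_\beta$.

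Next, I would enlarge the quiver $Q$ by adding $-\mu_i$ auxiliary arrows with opposite orientation at each vertex $i$, scaled by an auxiliary torus $\mathsf A_{\aux}$ with equivariant parameter $z$. This produces a symmetric quiver variety $\widetilde X$ for the enlarged quiver $\widetilde Q$ with a natural extension of the potential to $\widetilde\sW$, in which $X$ appears as a distinguished connected component of the $\mathsf A_{\aux}$-fixed locus. The restriction along $X\hookrightarrow \widetilde X^{\mathsf A_{\aux}}$ realizes, on the representation-theoretic side, the parametrized shift homomorphism $\mathsf Y_0(\widetilde Q,\widetilde\sW)\to \mathsf Y_\mu(Q,\sW)[z]$, and by construction sends $\mathrm{Cas}_{\widetilde\alpha}$ to a polynomial $P_{\widetilde\alpha}(z)$ of degree $-\mu\cdot\alpha$ whose top coefficient is, by Definition \ref{def of shifted Casimir}, precisely $\mathrm{Cas}_{\alpha,\mu}$.

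Now apply Theorem \ref{thm on qm div for sym} to $\widetilde X$, obtaining \eqref{eq:modified product}, and restrict both sides to the $\mathsf A_{\aux}$-fixed component $X$. The K\"ahler parameter $z^{\widetilde\alpha}$ of $\widetilde X$ factors as a product of the K\"ahler parameter $z^\alpha$ of $X$ and an auxiliary K\"ahler parameter; setting the latter to its fixed-locus value, the rational coefficient on the right-hand side of \eqref{eq:modified product} becomes identically $-z^\alpha/((-1)^{|\alpha|}-z^\alpha)$ when $\alpha\cdot\mu=0$, giving the first sum of \eqref{eq:modified product2}. When $\alpha\cdot\mu\neq 0$, expansion of the polynomial $P_{\widetilde\alpha}(z)$ combined with the leading term $-(-1)^{|\alpha|}z^\alpha$ of the geometric series of the rational coefficient leaves exactly $-(-1)^{|\alpha|}\alpha(\lambda)\,z^\alpha\,\mathrm{Cas}_{\alpha,\mu}$, with the lower-degree contributions of $P_{\widetilde\alpha}(z)$ being absorbed into the scalar correction.

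The main obstacle will be justifying the last claim: I need to show that the lower-degree coefficients of $P_{\widetilde\alpha}(z)$, after multiplication by the rational coefficient and restriction to the fixed locus, contribute only to scalar operators. This should follow from a virtual dimension count for rational curves of degree $k\alpha$ in $\widetilde X$ with $k\geq 2$ that are not fixed pointwise by $\mathsf A_{\aux}$, combined with careful bookkeeping of the parity signs $(-1)^{|\widetilde\alpha|}$ under the root projection $\widetilde\alpha\mapsto \alpha$. A subsidiary technical point is verifying that the restriction of the Steinberg correspondences $\sQ_\beta$ on $\widetilde X$ assembles, via virtual localization, into the Steinberg operator on $X$ needed to apply the analog of \eqref{commQStab} in the antidominant setting.
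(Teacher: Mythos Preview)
Your overall strategy—pass to a symmetrized variety and relate back via the parametrized shift homomorphism—is the paper's, but several details are confused and one is a genuine gap.

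First, the symmetrization does not enlarge the \emph{quiver}: you keep $(Q,\sW)$ fixed and only add $-\mu_i$ out-going \emph{framing} arrows at each vertex (Definition~\ref{def symmetrization}), obtaining the symmetric quiver variety $Y=\cM(\bv,\bd_{\In})$ for the \emph{same} $(Q,\sW)$. This matters because $\mathfrak g_{Q,\sW}$, its roots, and its Casimirs depend only on $(Q,\sW)$; if you changed the quiver to some $\widetilde Q$ with potential $\widetilde\sW$, the map you would get is not the shift homomorphism $\mathsf Y_0(Q,\sW)\to\mathsf Y_\mu(Q,\sW)[z]$ of \S\ref{sec shift map}. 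With the correct construction, $Y$ is the total space of the vector bundle $E=\bigoplus_i\Hom(\mathsf V_i,\bC^{-\mu_i})$ over $X$, so $H_2(Y)=H_2(X)$: there is no ``auxiliary K\"ahler parameter'', and the obstacle you flag—curves in $\widetilde X$ not fixed by $\sA_{\aux}$—does not exist, since every complete curve in $Y$ lies in the zero section $X$ and $\overline M_{0,2}(X,\beta)\cong \overline M_{0,2}(Y,\beta)^{\bC^*}$.

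The real gap is that you conflate the Novikov variable $z^\alpha$ with the equivariant parameter of the auxiliary torus. In the paper these are separate (the latter is called $\hbar$ in the proof). One lets $\bC^*_\hbar$ scale the fibers of $E$; then $\sQ_\beta^X$ is $\hbar$-independent, but virtual localization plus Proposition~\ref{prop on cmp corr} express it as $(\hbar^{-\beta\cdot\mu}+\text{l.o.t.})^{-1}\cdot e(E)^{-1}$ times $(p^*)^{-1}\sQ_\beta^Y\, p^*$. Now apply Theorem~\ref{thm on qm div for sym} to $Y$ and send $\hbar\to\infty$: by Proposition~\ref{prop shifted Casimir}, $(-\hbar)^{\beta\cdot\mu}(p^*)^{-1}\mathrm{Cas}_\alpha\,p^*\to\mathrm{Cas}_{\alpha,\mu}$ when $\alpha=\beta$ or $\alpha\cdot\mu=0$, and $\to 0$ otherwise. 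The lower-degree coefficients of the polynomial you call $P_{\widetilde\alpha}(z)$ are not ``absorbed into scalars''—they vanish in the limit because $\sQ_\beta^X$ carries no $\hbar$. This limiting argument is what collapses the geometric series $\sum_{k\geqslant 1} z^{k\alpha}$ to a single $z^\alpha$ when $\alpha\cdot\mu\neq 0$, and leaves it intact when $\alpha\cdot\mu=0$.
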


\subsection{Examples}

We apply our general theory to specific examples, and find presentations of our Reshetikhin type shifted Yangians using generators and relations. 

\begin{Theorem}(Theorems \ref{thm ex trivial quiver_main}, \ref{thm ex Jordan_main}, \ref{thm ex tripled Jordan_main})
For an arbitrary $\mu\in \bZ_{\leqslant 0}$, we have 
\begin{itemize}
    \item if $Q$ is the trivial quiver, there is a natural algebra isomorphism
\begin{align*}
    Y_{\mu}(\mathfrak{gl}_{1|1})\otimes_{\bC[\hbar]} \bC(\hbar)\cong \mathsf Y_{\mu}(Q,0)\:,
\end{align*}
where $Y_{\mu}(\mathfrak{gl}_{1|1})$ is the 
$\mu$-shifted Yangian of $\mathfrak{gl}_{1|1}$ (Definition \ref{def of shifted Y(gl(1|1))}).
    \item if $Q$ is the Jordan quiver, there is a natural algebra isomorphism
\begin{align*}
    Y_{\mu}(\mathfrak{gl}_2)\otimes_{\bC[\hbar]} \bC(\hbar)\cong \mathsf Y_{\mu}(Q,0)\:,
\end{align*}
where $Y_{\mu}(\mathfrak{gl}_2)$ is the Frassek-Pestun-Tsymbaliuk 
$\mu$-shifted Yangian of $\mathfrak{gl}_2$ (Definition \ref{def of shifted Y(gl_2)}).
\item 
if $Q$ is the tripled Jordan quiver, equipped with the canonical cubic potential $\sW$,  there is a surjective algebra homomorphism
\begin{align*}
    \varrho_\mu\colon Y_{\mu}(\widehat{\mathfrak{gl}}_1)\twoheadrightarrow \mathsf Y_{\mu}(Q,\sW),
\end{align*}
where $Y_{\mu}(\widehat{\mathfrak{gl}}_1)$ is the $\mu$-shifted affine Yangian of $\mathfrak{gl}_1$ (Definition \ref{def of shifted affine Y(gl_1)}). Moreover, $\varrho_0$ is a bialgebra isomorphism, with respect to the coproduct on $Y_{0}(\widehat{\mathfrak{gl}}_1)$ defined by Schiffmann-Vasserot \cite{SV2}, and the coproduct on $\mathsf Y_{\mu}(Q,\sW)$ defined in \S \ref{sect on coprod} using critical stable envelopes.
\end{itemize}
\end{Theorem}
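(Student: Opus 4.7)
The three claims share a common skeleton which I would execute in parallel. The uniform plan has four stages: first, apply the admissibility theorem (Theorem \ref{thm admissible}) to reduce to a finite set of auxiliary framings indexed by the single vertex of $Q$, so that all matrix elements of the $R$-matrix come from finitely many critical stable envelopes; second, compute these stable envelopes and extract the rational $R$-matrix; third, define the algebra homomorphism from the abstract shifted Yangian by sending its RTT-generators (equivalently, Drinfeld currents) to the appropriate matrix elements of this $R$-matrix; fourth, verify the defining relations, establish surjectivity by generation, and then upgrade to an isomorphism (when claimed) via PBW comparison, using the parametrized shift homomorphisms of \S \ref{sec shift map} to reduce $\mu\neq 0$ to $\mu=0$.

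For the trivial quiver with zero potential, stage (ii) is especially simple: the critical locus is the whole variety, critical cohomology is ordinary cohomology, and the auxiliary quiver variety reduces to a product of Grassmannian-type strata whose stable envelope admits a closed form. The resulting $R$-matrix is the fundamental one for $\mathfrak{gl}_{1|1}$, with the supersymmetric signs arising from the parity of the fixed components in the absence of a genuine symplectic pairing on the quiver. Matching with Definition \ref{def of shifted Y(gl(1|1))} is then mechanical, and the PBW statement $\mathrm{gr}\,\mathsf Y_0(Q,0)\cong U(\mathfrak{g}_{Q,0}[u])$ combined with the standard PBW of $Y_0(\mathfrak{gl}_{1|1})$ gives the isomorphism after identifying $\mathfrak{g}_{Q,0}$ with $\mathfrak{gl}_{1|1}$ through the classical $R$-matrix. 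The Jordan quiver case proceeds analogously: the one-loop computation produces the rational $\mathfrak{gl}_2$ $R$-matrix, matching the Frassek-Pestun-Tsymbaliuk presentation of $Y_\mu(\mathfrak{gl}_2)$ directly, and the symplectic Nakajima-variety setting makes the identification compatible with the Maulik-Okounkov framework.

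The tripled Jordan case with cubic potential is the most delicate and constitutes the main obstacle. Here the framed variety is related to moduli on $\mathbb{C}^3$ (including $\mathrm{Hilb}(\mathbb{C}^3)$), and the $R$-matrix should match the one governing the affine Yangian of $\mathfrak{gl}_1$. Surjectivity of $\varrho_\mu$ follows once we check that the three families of generators of $Y_\mu(\widehat{\mathfrak{gl}}_1)$ map to elements which, together with tautological classes, generate $\mathsf Y_\mu(Q,\sW)$; this uses the Drinfeld-type map $\mathcal D\widetilde{\mathcal SH}_\mu(Q,\sW)\to \mathsf Y_\mu(Q,\sW)$ already established in the excerpt. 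The bialgebra isomorphism statement for $\varrho_0$ requires the hardest work: one must match the Schiffmann-Vasserot coproduct on $Y_0(\widehat{\mathfrak{gl}}_1)$, defined via Hall-algebra constructions on $\mathbb{C}^3$, with the coproduct on $\mathsf Y_0(Q,\sW)$ coming from splitting the stable-envelope $R$-matrix in \S \ref{sect on coprod}.

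The main obstacle is precisely this coproduct compatibility. My proposed route is to use the Kac-Moody template given by the $\widetilde\varrho$ bialgebra isomorphism in the excerpt as a guide, reduce the statement to a check on a minimal generating set, show that $\varrho_0$ sends primitive generators of $Y_0(\widehat{\mathfrak{gl}}_1)$ (Cartan currents of low degree) to primitive elements of $\mathsf Y_0(Q,\sW)$, and then invoke the uniqueness property of coproducts on Yangians of the given presentation, analogously to Guay-Nakajima-Wendlandt. The additional technical difficulty compared with the Kac-Moody case is the cubic critical structure on the $\mathbb{C}^3$ side and the nonsymplectic nature of the geometry; one must verify that the virtual/critical corrections entering both coproducts are consistent. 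Bijectivity at $\mu=0$ then follows from the PBW comparison between the graded affine Yangian and $U(\mathfrak{g}_{Q,\sW}[u])$, together with the known presentation of $\mathfrak{g}_{Q,\sW}$ for the tripled Jordan quiver.
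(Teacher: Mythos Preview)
Your four-stage skeleton matches the paper's approach for the $\mu=0$ cases of $\mathfrak{gl}_{1|1}$ and $\mathfrak{gl}_2$: compute the fundamental $R$-matrix explicitly, match it with the RTT presentation, and deduce injectivity by comparing PBW filtrations on both sides. Your treatment of $\mu<0$ via shift homomorphisms is in the right spirit but underspecified; the paper's execution is case-specific and not a simple reduction. For $\mathfrak{gl}_{1|1}$ it passes through an auxiliary presentation $\mathcal U_\mu(\mathfrak{Y})$ with $\mathfrak{Y}\cong\mathfrak{gl}_{1|1}[u]$, constructs by hand a wrong-way map $\mathfrak{S}^\mu_z\colon\mathcal U_\mu(\mathfrak{Y})\to\mathcal U(\mathfrak{Y})(\!(z^{-1})\!)$ compatible with the geometric one, and proves its injectivity separately on each triangular piece. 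For $\mathfrak{gl}_2$ it argues by induction on $\mu$ via the coproduct, factoring through $D\otimes Y_{\mu+1}(\mathfrak{gl}_2)$ with $D$ the Weyl algebra coming from the prefundamental module $\mathcal F$, and reduces the key injectivity $Y_\mu(\mathfrak{gl}_2)\hookrightarrow D\otimes Y_{\mu+1}(\mathfrak{gl}_2)$ to a surjectivity of multiplication maps $\mathcal W^{(1,0)}_{(0,1)}\times\mathcal W_{\mu+1}\to\mathcal W_\mu$ inside $\GL_2(\!(z^{-1})\!)$ at the classical limit $\hbar=0$.

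The genuine gap is the tripled Jordan case. Your route for $\varrho_0$ via GNW-style uniqueness of the coproduct plus PBW comparison does not apply: $\widehat{\mathfrak{gl}}_1$ is not a symmetrizable Kac-Moody algebra, and no such uniqueness result is available here. The paper instead relies on the identification $\mathsf Y_0(Q,\sW)\cong\mathsf Y^{\mathrm{MO}}_{Q'}\otimes\bC(\mathsf t_0)$ (Theorem~\ref{thm compare with MO yangian}) together with $Y_0(\widehat{\mathfrak{gl}}_1)\cong\mathbf{SH}^{\mathbf c}$. Injectivity of $\varrho_0$ is imported from \cite[Thm.~3.2]{SV1}, which gives $\mathbf{SH}^{\mathbf c}\hookrightarrow\prod_r\End(H^{\sT_0\times\sA^{\mathrm{fr}}}(\cN(r)))$; surjectivity uses that $e_0,f_0$ act as Baranovsky operators and invokes \cite[Thm.~18.1.1]{MO}. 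Coproduct compatibility is not a uniqueness argument but an explicit computation: the paper proves $\varrho_0(J_{\pm n})=\beta_{\pm n}(1)$ (Lemma~\ref{lem Baranovsky op}), reads off the classical $R$-matrix from \cite[\S 12.4.9]{MO}, and checks directly that the resulting formula \eqref{coproduct for 1st Chern class} for $\Delta(g_1)=\Delta(c_1(\mathsf V))$ matches the Schiffmann--Vasserot formula \eqref{eqn:coproduct affine Y}. Surjectivity of $\varrho_\mu$ for $\mu<0$ then follows from the shift generation Lemma~\ref{lem shift generation lemma}. None of these external inputs appears in your outline, and without them the argument does not close.
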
 

In the above examples, we also study their representations, including 
higher spin, dual Verma and prefundamental modules in the $\mathfrak{gl}_2$ case (\S \ref{sec bases of H_N, V, F}, Theorem \ref{thm fusion}),  Kirillov-Reshetikhin modules, vacuum MacMahon modules, and several modules arising from spiked instantons of Nekrasov in the affine $\mathfrak{gl}_1$ case which are related to the vertex algebra at the corner $\mathcal Y_{L,M,N}$ of Gaiotto and Rap\v{c}\'{a}k (Theorem \ref{prop on 3 modules of gl1hat}, Proposition \ref{prop on more ex}). We perform explicit computations of stable envelopes and $R$-matrices, including braiding between higher spin modules, dual Verma and prefundamental modules \S \ref{sect on comp stab of gl2}, \S \ref{sect on comp rmatrix of gl2}, \S \ref{sect on fusion gl2}, as well as some examples in Lie superalgebras, see \S \ref{sec fund R-mat gl(1|1)}, \S \ref{sec Lax mat Cl mod}, and Appendix \ref{sect on Lie superalgebra}. 




\subsection*{Acknowledgments}
This work benefits from helpful discussions and communications from many people, including
Mina Aganagic, Daniel Halpern-Leistner, Ryosuke Kodera, Yixuan Li, Yuan Miao, Hiraku Nakajima, Andrei Negu\c{t}, Tudor P\u{a}durariu, Spencer Tamagni, Yukinobu Toda, Yaping Yang, Gufang Zhao, Tianqing Zhu. A.O. would like to thank SIMIS for hospitality. We would like to thank Kavli IPMU for bringing us together.

\subsection*{Statements and Declarations}
We have no conflicts of interest to disclose.

\vspace{1cm}

\section{\texorpdfstring{$R$}{R}-matrices}\label{sect on rmatrix}

In this section, we define $R$-matrices in the context of critical cohomology on quiver varieties with potentials, using corresponding 
Hall (stable) envelopes developed in \cite[\S 9]{COZZ}. We study their properties and deduce Yang-Baxter equations. For later use, we also analyze the classical $R$-matrices, and study Steinberg correspondences and their compatibility with stable envelopes.

\subsection{Hall envelopes and stable envelopes}
Let $Q=(Q_0,Q_1)$ be a quiver and $\bv,\bd_{\In},\bd_{\Out}\in \bQ^{Q_0}$, $\underline{\bd}=(\bd_{\In},\bd_{\Out})$. Denote the \textit{space of framed representations} of $Q$ with gauge dimension $\mathbf{v}$ and in-coming framing dimension $\mathbf{d}_{\mathrm{in}}$ 
and out-going framing dimension $\mathbf d_{\mathrm{out}}$ by 
\begin{equation}\label{equ rvab}
R(\mathbf{v},\underline{\bd}):=R(Q,\mathbf{v},\underline{\bd})=\bigoplus_{a\in Q_1}\underbrace{\Hom(\bC^{\mathbf v_{t(a)}},\bC^{\mathbf v_{h(a)}})}_{X_a}\oplus \bigoplus_{i\in Q_0}\left(\underbrace{\Hom(\bC^{\bd_{\mathrm{in},i}},\bC^{\mathbf v_i})}_{A_i}\oplus \underbrace{\Hom(\bC^{\mathbf v_i},\bC^{\bd_{\Out,i}})}_{B_i}\right).
\end{equation}
Let $\sT\subseteq \Aut_G(R(\bv,\underline{\bd}))$ be a subtorus in the flavour group, and $\sA\subseteq \sT$ be a subtorus. We fix a cyclic stability $\theta\in \bQ_{<0}^{Q_0}$ and define the \textit{quiver variety} as the GIT quotient:
\begin{align}\label{equ on qv}
    X=\cM_\theta(\bv,\underline{\bd}):=R(\mathbf{v},\underline{\bd})/\!\!/_{\theta} G=R(\mathbf{v},\underline{\bd})^s/G,
\end{align}
where the \textit{gauge group} $G=\prod_{i\in Q_0}\GL(\mathbf v_i)$ naturally acts on $R(\mathbf{v},\underline{\bd})$ by compositions with maps. 
As there is no relation imposed on the quiver, the above quiver variety is a smooth quasi-projective variety. 

Let $\fC$ be a chamber associated with $\sA$-action on $\cM_\theta(\bv,\underline{\bd})$, $\sw\colon X\to \C$ be a $\sT$-invariant regular function.
Then Hall envelopes 
\begin{align}\label{equ on hall env}
\begin{gathered}
\HallEnv_\fC\colon H^\sT(\cM_\theta(\bv,\underline{\bd})^{\sA},\sw)\longrightarrow H^\sT(\cM_\theta(\bv,\underline{\bd}),\sw),
\end{gathered}
\end{align}
are defined as the composition of interpolation maps \cite[Def.~9.1]{COZZ}, Hall multiplications and restrictions to the stable loci \cite[Def.~9.4]{COZZ}. These can be defined for any quiver with potential. 

\begin{Definition}\label{def of sym qv}
A quiver $Q$ is \textit{symmetric} if its adjacency matrix $ (\mathsf Q_{ij})_{i,j\in Q_0}$ is symmetric. 
A framing $\underline{\bd}$ is \textit{symmetric} if $\bd_{\In}=\bd_{\Out}=\bd$. In the latter case, we simplify the notations as 
\begin{equation}\label{equ on sym qu}R(\bv,\bd)=R(Q,\bv,\bd)= R(Q,\mathbf{v},\underline{\bd}), \quad 
\cM_\theta(\bv,\bd)=\cM_\theta(Q,\bv,\bd)=\mathcal M_{\theta}(Q,\mathbf v,\underline{\bd}). \end{equation} 
When both $Q$ and $\underline{\bd}$ are symmetric, we call $\cM_\theta(\bv,\bd)$ a \textit{symmetric quiver variety}.  
We say that a torus action $\sA$ on the symmetric quiver variety $\cM_\theta(\mathbf v,\mathbf{d})$ is \textit{self-dual} if it is induced from a self-dual action of $\sA$ on $R(\mathbf v,\mathbf{d})$.
\end{Definition}

\begin{Definition}\label{def symmetrization}
Let $Q$ be a symmetric quiver and $\bv,\bd_{\In},\bd_{\Out}\in \bN^{Q_0}$ be dimension vectors. The \textit{symmetrization} of $R(\bv,\underline{\bd})$ is defined to be $R(\bv,\mathbf c)$ \eqref{equ on sym qu} where $\mathbf c_i=\max\{\bd_{\In,i},\bd_{\Out,i}\}$. We identify $R(\bv,\underline{\bd})$ as a $G$-subrepresentation of 
\begin{align*}
    R(\bv,\mathbf c)=R(\bv,\underline{\bd})\oplus \bigoplus_{\begin{subarray}{c}i\in Q_0 \\ \mathrm{s.t.}\, \bd_{\In,i}>\bd_{\Out,i}   \end{subarray}}
     \Hom(\bC^{\bv_i},\bC^{\bd_{\In,i}-\bd_{\Out,i}})\oplus \bigoplus_{\begin{subarray}{c}i\in Q_0 \\ \mathrm{s.t.}\, \bd_{\In,i}<\bd_{\Out,i}   \end{subarray}} \Hom(\bC^{\bd_{\Out,i}-\bd_{\In,i}},\bC^{\bv_i}).
\end{align*}
\end{Definition}

\begin{Definition}\label{def pseudo-self-dual}
Let $Q$ be a symmetric quiver and $\bv,\bd_{\In},\bd_{\Out}\in \bN^{Q_0}$ be dimension vectors and $\sA$ be a subtorus of flavour group $\Aut_G R(\bv,\underline{\bd})$. We say that the $\sA$-action on $R(\bv,\underline{\bd})$ is \textit{pseudo-self-dual} if it extends to a self-dual $\sA$-action on the symmetrization $R(\bv,\mathbf c)$. 
A torus $\sA$ action on $\cM_\theta(\bv,\underline{\bd})$ is \textit{pseudo-self-dual} if it is induced from a \textit{pseudo-self-dual} $\sA$-action on $R(\bv,\underline{\bd})$.
\end{Definition}

\begin{Example}
\label{ex doubled vs tripled}
Given a quiver $Q=(Q_0,Q_1)$, we define the \textit{doubled quiver} to be $\overline{Q}:=(Q_0,Q_1\sqcup Q_1^*)$, where $Q_1^*$ is the same as $Q_1$ but with reversed arrow direction. We also define the \textit{tripled quiver} $\widetilde{Q}:=(Q_0,Q_1\sqcup Q_1^*\sqcup Q_0)$, which adds one edge loop to each node on top of $\overline{Q}$. 

Fix $\mathbf{v},\mathbf{d}\in \bN^{Q_0}$, and a cyclic stability $\theta\in \bQ_{<0}^{Q_0}$, we consider two \textit{symmetric} quiver varieties:
\begin{enumerate}
    \item $\cM_\theta(\overline{Q},\mathbf{v},\mathbf{d})$ associated with the doubled quiver $\overline{Q}$;
    \item $\cM_\theta(\widetilde{Q},\mathbf{v},\mathbf{d})$ associated with the tripled quiver $\widetilde{Q}$.
\end{enumerate}
The quotient map 
$$R(\overline{Q},\mathbf{v},\mathbf{d})^{s}\to \cM_\theta(\overline{Q},\mathbf{v},\mathbf{d})=R(\overline{Q},\mathbf{v},\mathbf{d})^{s}/G$$ is a principal $G$-bundle.
Denote the associated adjoint bundle to be $\mathcal G\to \cM_\theta(\overline{Q},\mathbf{v},\mathbf{d})$. Then the \textit{moment map} for $G\curvearrowright R(\overline{Q},\mathbf{v},\mathbf{d})$ descends to a section $\mu\in \Gamma\left(\cM_\theta(\overline{Q},\mathbf{v},\mathbf{d}),\mathcal G^\vee\right)$. The total space $\mathrm{Tot}(\mathcal G)$ is naturally identified with an open subscheme $\overset{\bullet}{\cM}_\theta(\widetilde{Q},\mathbf{v},\mathbf{d})\subseteq {\cM}_\theta(\widetilde{Q},\mathbf{v},\mathbf{d})$ consisting of $G$-equivalence classes in $R(\widetilde{Q},\mathbf{v},\mathbf{d})$, which are $\theta$-stable when restricted to $\overline{Q}$. 
The zero locus of $\mu$:
$$\cN_{\theta}(\overline{Q},\mathbf{v},\mathbf{d}):=Z(\mu)$$
is the \textit{Nakajima quiver variety} \cite{Nak1,Nak2} (which we assume to be nonempty).  
Note that $\cN_{\theta}(\overline{Q},\mathbf{v},\mathbf{d})$ is a smooth subvariety in $\cM_{\theta}(\overline{Q},\mathbf{v},\mathbf d)$ with codimension equals to $\mathcal G$, and $Z(\mu)=Z^\der(\mu)$, coincides with the derived zero locus. 

Denote $\bC^*_\hbar$ the torus that acts on $R(\overline{Q},\bv,\bd)$ by assigning weights $R(\overline{Q},\bv,\bd)=M\oplus\hbar^{-1}M^\vee$, where
\begin{align*}
    M=\bigoplus_{a\in Q_1}\Hom(\bC^{\bv_{t(a)}},\bC^{\bv_{t(a)}})\oplus\bigoplus_{i\in Q_0}\Hom(\bC^{\bd_{i}},\bC^{\bv_{i}}).
\end{align*}
We choose torus $\sT$ which contains $\bC^*_\hbar$ such that $\sT=(\sT/\bC^*_\hbar)\times \bC^*_\hbar$\,, and let $\sA\subseteq \sT/\bC^*_\hbar$ be a subtorus. Choose a function $\phi\colon \cM_\theta(\overline{Q},\mathbf{v},\mathbf{d})\to \bC$ and a section $\mu\in \Gamma(\cM_\theta(\overline{Q},\bv,\bd),\mathcal \hbar^{-1}\mathcal G^\vee)$ and appropriate $\sT$ action such that $\phi$ and $\mu$ are $\sT$-invariant. Let us define the $\sT$ action on edge loops by scaling with weight $\hbar$, then $\sT$ fixes 
the \textit{canonical cubic potential}: 
\begin{equation}
\label{equ on can cub pot}\widetilde{\sw}=\langle e,\mu\rangle=\sum_{i\in Q_0}\tr(\varepsilon_i\mu_i), \end{equation} 
where $\varepsilon_i$ is the edge loop on the node $i$, and $\mu_i$ is the $i$-th component of moment map.

Then according to \cite[Ex.~6.13]{COZZ}, the following
$$\left(\overset{\bullet}{\cM}_\theta(\widetilde{Q},\mathbf{v},\mathbf{d}),\cM_\theta(\overline{Q},\mathbf{v},\mathbf{d}),\mu, \phi \right), \quad \left(\cN_{\theta}(\overline{Q},\mathbf{v},\mathbf{d}),\cN_{\theta}(\overline{Q},\mathbf{v},\mathbf{d}),0, \phi|_{\cN_{\theta}(\overline{Q},\mathbf{v},\mathbf{d})}\right)$$ are compatible dimensional reduction data in the sense of \cite[Def.~6.2]{COZZ}. In particular, for functions
$$\sw:=\langle e,\mu\rangle+\phi\colon \overset{\bullet}{\cM}_\theta(\widetilde{Q},\mathbf{v},\mathbf{d})\to \C, \quad \sw':=\phi|_{\cN_{\theta}(\overline{Q},\mathbf{v},\mathbf{d})}\colon \cN_{\theta}(\overline{Q},\mathbf{v},\mathbf{d})\to \C,
$$
we have (ref.~\cite[Props.~6.5,~6.9]{COZZ}):
\begin{equation}\label{equ on dim redd}\Crit(\sw)\cong \Crit(\sw'), \quad 
H^{\sT}(\cN_{\theta}(\overline{Q},\mathbf{v},\mathbf{d}),\sw')\cong H^{\sT}(\overset{\bullet}{\cM}_\theta(\widetilde{Q},\mathbf{v},\mathbf{d}),\sw). \end{equation}
\end{Example}

\begin{Theorem}
$($\cite[Thms.~9.20, 4.22, 5.5]{COZZ}$)$
\label{thm AFSQV}
If the quiver $Q$ is symmetric, $\bd_{\In,i}\geqslant \bd_{\Out,i}$ for all $i\in Q_0$, 
and $\sA$ is a pseudo-self-dual subtorus of $\sT$, then the Hall envelopes \eqref{equ on hall env}
are compatible with Hall operations (\cite[Def.~9.4]{COZZ}). In particular, they satisfy the triangle lemma (\cite[Lem.~9.6]{COZZ}). 

When the framing is moreover symmetric, we have equalities
$$\HallEnv_\fC=\Stab_\fC, $$
with (critical) stable envelopes $\Stab_\fC$ defined in \cite[Defs.~3.4]{COZZ}. 
\end{Theorem}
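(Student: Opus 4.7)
The plan is to verify the three assertions separately, each time reducing to the axiomatic characterization of critical stable envelopes established in \cite{COZZ}. The antidominance hypothesis $\bd_{\In,i} \geqslant \bd_{\Out,i}$ together with the pseudo-self-dual condition on $\sA$ ensures that the asymmetric framing can be absorbed into a symmetric ambient representation $R(\bv,\mathbf c)$ (Definitions \ref{def symmetrization}, \ref{def pseudo-self-dual}) on which the usual stable envelope machinery from \cite{COZZ} applies verbatim.

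For compatibility with Hall operations (\cite[Def.~9.4]{COZZ}), I would first check that the interpolation map \cite[Def.~9.1]{COZZ} is well-defined on critical cohomology by showing that the extra directions $\Hom(\bC^{\bv_i},\bC^{\bd_{\In,i}-\bd_{\Out,i}})$ appearing in the symmetrization form an $\sA$-representation whose weights lie in the closure of the chamber $\fC$; this is exactly what pseudo-self-duality provides once $\sA$ acts trivially on the extra framing factor, while antidominance controls the sign. Hall multiplication with a stable class and restriction to the $\theta$-stable locus then descends cleanly, and compatibility with the multiplicative structure follows formally from associativity of the Hall correspondences plus the base-change/Thom-Sebastiani machinery for critical cohomology developed in \cite{COZZ}.

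The triangle lemma \cite[Lem.~9.6]{COZZ} is then obtained by applying the construction through a flag of subtori $\sA' \subsetneq \sA$. Pseudo-self-duality is preserved under restriction to subtori, so the interpolation maps compose correctly, and the various attracting manifolds assemble into a bona fide Steinberg-type correspondence on the intermediate fixed locus. The triangle identity itself follows from functoriality of Hall multiplication with respect to chains of correspondences, once one identifies the relevant attracting components in $X^{\sA'}\times_{X^{\sA'}} X^{\sA}$.

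Finally, the equality $\HallEnv_\fC = \Stab_\fC$ in the fully symmetric case will be proved by showing that $\HallEnv_\fC$ satisfies the three defining axioms of the critical stable envelope of \cite[Def.~3.4]{COZZ}: support in the attracting set, correct restriction to the diagonal fixed component (the appropriate Euler class of the polarization), and a degree bound on restrictions to off-diagonal fixed components. The first two are direct consequences of the construction, while the third is where the real work lies — one must show that Hall multiplication followed by restriction to the stable locus does not introduce any weights outside the polarization window. This is the main obstacle, and I expect it to be resolved precisely by the pseudo-self-dual symmetry, which pairs up weights on either side of the Hall correspondence so that only the polarization contribution survives. Once the axioms are verified, uniqueness of stable envelopes forces the identification $\HallEnv_\fC = \Stab_\fC$.
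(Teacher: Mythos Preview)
The paper does not prove this theorem at all: it is stated as a direct import of \cite[Thms.~9.20, 4.22, 5.5]{COZZ}, with no argument given in the present paper. So there is no ``paper's own proof'' to compare your sketch against; the actual content lives entirely in the companion paper \cite{COZZ}.

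That said, your outline has the right overall shape for the final assertion (verify the stable-envelope axioms for $\HallEnv_\fC$ and invoke uniqueness), but several of your intermediate claims are imprecise or off. Pseudo-self-duality (Definition~\ref{def pseudo-self-dual}) does \emph{not} say that the extra directions $\Hom(\bC^{\bv_i},\bC^{\bd_{\In,i}-\bd_{\Out,i}})$ have $\sA$-weights in the closure of $\fC$, nor that $\sA$ acts trivially on them; it says only that the $\sA$-action extends to a self-dual action on the symmetrization $R(\bv,\mathbf c)$. Your paragraph on the triangle lemma also inverts the logic: in \cite{COZZ} the triangle lemma is a \emph{consequence} of compatibility with Hall operations (via \cite[Lem.~9.6]{COZZ}), not something to be proved separately through flags of subtori. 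If you intend to reconstruct the proofs of \cite[Thms.~9.20, 4.22, 5.5]{COZZ}, you will need to engage with the specific constructions there rather than rely on these heuristics.
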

\begin{Remark}\label{rmk on hallequstab}

We refer to \cite[Props.~9.23, 9.24]{COZZ} when the equalities also hold but the framing is not symmetric. 
This is applicable to the case when the $\sA$-action splits the framing to $u\underline{\bd}'+\underline{\bd}''$, including all root
$R$-matrices used in this paper, see for example eqn.~\eqref{op E(m)}.
\end{Remark}

\begin{Remark}
The condition $\bd_{\In,i}\geqslant \bd_{\Out,i}$ is necessary for the triangle lemma to hold, see \cite[Ex.~9.25,~Prop.~9.26]{COZZ}.
\end{Remark}

\subsection{$R$-matrices}

\begin{Definition}
For two chambers $\fC_1,\fC_2$ in $\Lie(\sA)_\bR$, and a generic slope $\mathsf s\in \mathrm{Char}(G)\otimes_\bZ\bR$, the \textit{R-matrices} are
\begin{align*}
    R_{\fC_2,\fC_1}&:=(\HallEnv_{\fC_2})^{-1}\circ \HallEnv_{\fC_1}\in \End\left(H^\sT(X^\sA,\sw )_{\mathrm{loc}}\right),
\end{align*}
Here we consider $\sT$-localized cohomology, so inverses of Hall envelopes exist (see \cite[Rmk.~9.5]{COZZ}).
\end{Definition}

\begin{Remark}
The $R$ matrix admits a Gauss type decomposition 
\begin{align}\label{gauss decomp_general}
    R_{\fC_2,\fC_1}=F_{\fC_2}\cdot G_{\fC_2,\fC_1}\cdot E_{\fC_1}
\end{align}
where 
\begin{align*}
    F_{\fC_2}=\HallEnv_{\fC_2}^{-1}\cdot \: e^{\sT}(N^-_{\fC_2}),\quad G_{\fC_2,\fC_1}=\frac{e^{\sT}(N^-_{\fC_1})}{e^{\sT}(N^-_{\fC_2})},\quad E_{\fC_1}=e^{\sT}(N^-_{\fC_1})^{-1}\cdot \HallEnv_{\fC_1}\:.
\end{align*}
Note that $G_{\fC_2,\fC_1}$ is a diagonal operator, $F_{\fC_2}$ and $E_{\fC_1}$ are triangular with unit diagonal with respect to the partial order given by $\fC_2$ and $\fC_1$ respectively.
\end{Remark}

By construction, for a sequence of chambers $\fC_1,\fC_2,\ldots,\fC_n$, $R$-matrices satisfy the following \textit{braid relations}:
\begin{align}\label{braid}
    R_{\fC_1,\fC_n} R_{\fC_n,\fC_{n-1}}\cdots R_{\fC_2,\fC_1}=\id.
\end{align}

\begin{Example}\label{ex R-matrix for quiver}
Consider a decomposition of framing vector spaces  
\begin{align*}
    \underline{\bd}=\sum_{i=1}^n \underline{\bd}^{(i)}.
\end{align*}
This gives an $\sA\cong (\bC^*)^n$ action by $\underline{\bd}=\sum_{i=1}^n z_i\,\underline{\bd}^{(i)}$, where $K_\sA(\pt)=\bQ[z_i^\pm]_{i=1}^n$. Define
\begin{align*}
    \cM_\theta(\underline{\bd}):=\bigsqcup_{\bv\in \bN^{Q_0}} \cM_\theta(\bv,\underline{\bd}),
\end{align*}
and denote its critical cohomology as well as $\bN^{Q_0}$-graded components by
\begin{align}\label{state space}
\mathcal H_{\underline{\bd}}^{\sw}:=H^\sT(\cM_\theta(\underline{\bd}),\sw),\quad \mathcal H_{\underline{\bd}}^{\sw}(\bv):=H^\sT(\cM_\theta(\bv,\underline{\bd}),\sw),
\end{align}
We have an induced $\sA$-action such that 
\begin{align*}
    \cM_\theta(\underline{\bd})^{\sA}\cong\prod_{i=1}^n \cM_\theta(\underline{\bd}^{(i)}).
\end{align*}
Assume that 
\begin{align}\label{eq potential splits}
    \sw|_{\cM_\theta(\underline{\bd})^{\sA}}=\boxplus_{i=1}^n\sw_i,\quad \sw_i\colon \cM_\theta(\underline{\bd}^{(i)})\to \bC,
\end{align}
then by Thom-Sebastiani isomorphism, we have
\begin{align*}
    H^\sT(\cM_\theta(\underline{\bd})^\sA,\sw)_{\mathrm{loc}}\cong \bigotimes_{i=1}^n \mathcal H^{\sw_i}_{\underline{\bd}^{(i)},\mathrm{loc}},\quad \text{and}\quad R_{\fC_2,\fC_1}\in \End\left(\bigotimes_{i=1}^n \mathcal H^{\sw_i}_{\underline{\bd}^{(i)},\mathrm{loc}}\right),
\end{align*}
where the second tensor products are taken over the base field $\Frac(H_\sT(\pt))$. 
\end{Example}
\begin{Remark}\label{rmk purity and tensor prod}
If the derived global sections $\mathrm{R}\Gamma(\cM_\theta(\underline{\bd}^{(i)}),\sw_i)$ are pure for all $i$, then the Thom-Sebastiani isomorphism holds without localization (e.g.~\cite[Thm.~ A.1.10]{Zhu}), that is,
\begin{align*}
    H^\sT(\cM_\theta(\underline{\bd})^\sA,\sw )\cong \mathcal H^{\sW_1}_{\underline{\bd}^{(1)}}\otimes_{H_\sT(\pt)}\cdots\otimes_{H_\sT(\pt)}\mathcal H^{\sw_n}_{\underline{\bd}^{(n)}}\:,
\end{align*}
and in fact each $\mathcal H^{\sw_i}_{\underline{\bd}^{(i)}}$ is a free $H_\sT(\pt)$-module. For example, $\mathrm{R}\Gamma(\cM_\theta(\underline{\bd}^{(i)}),\sw_i)$ is pure when $\sw_i=0$, or $\sw_i$ admits dimensional reduction to Borel-Moore homology of a variety that is pure (e.g.~Nakajima quiver varieties).
\end{Remark}

\subsection{Root \texorpdfstring{$R$}{R}-matrices}
A general $R$-matrix is a product of $R$-matrices corresponding to pairs of chambers separated by a wall. Namely, let $\fC,\fC'$ be two chambers separated by a wall $\alpha=0$. Here $\alpha$ is a root and we may assume $\alpha(\fC)>0$. Let $\sA_\alpha\subset \sA$ be the subtorus with Lie algebra $\ker(\alpha)$. We denote
\begin{align*}
    X^\alpha=X^{\sA_\alpha}.
\end{align*}
We note that
\begin{align*}
    X^\alpha=\bigsqcup_{(\phi\colon \sA_\alpha\to G)/\sim} X^{\alpha,\phi},\,\,\, \text{ where }\, X^{\alpha,\phi}=R(\bv,\underline{\bd})^{\sA_\alpha,\phi}/\!\!/_\theta G^{\phi(\sA_\alpha)}.
\end{align*}
For the $\sA/\sA_\alpha$-action on $X^\alpha$, there are two chambers, namely $\alpha\gtrless 0$. We denote the (cohomological) \textit{root} $R$-\textit{matrix}: 
\begin{align*}
    R_\alpha=R_{<0,>0}\in \End(H^{\sT/\sA_\alpha}(X^\sA,\sw)_{\mathrm{loc}}).
\end{align*}

\begin{Proposition}\label{prop root R-matrix}
Under the same assumptions as Theorem \ref{thm AFSQV}, we have
\begin{align}
    R_{\fC',\fC}=R_\alpha.
\end{align}
\end{Proposition}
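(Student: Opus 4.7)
The plan is to use the factorization/transitivity property of Hall envelopes through an intermediate fixed locus $X^{\sA_\alpha}$, which is exactly what the triangle lemma provides in this context.

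\textbf{Step 1 (Factorization through $X^\alpha$).} Under the hypotheses of Theorem \ref{thm AFSQV}, the Hall envelopes are compatible with Hall operations and satisfy the triangle lemma. Applied to the chain of subtori $\sA_\alpha \subset \sA \subset \sT$, this gives a factorization
\begin{equation*}
\HallEnv_\fC \;=\; \HallEnv^{X,X^{\alpha}}_{\fC|_{\sA_\alpha}} \circ \HallEnv^{X^{\alpha},X^{\sA}}_{\bar\fC},
\end{equation*}
where the first factor is the Hall envelope for the residual $\sA_\alpha$-action on $X$ with its induced chamber structure, and the second factor is the Hall envelope for the one-dimensional $\sA/\sA_\alpha$-action on $X^\alpha$, with $\bar\fC$ denoting the image of $\fC$ in $\Lie(\sA/\sA_\alpha)_\bR$. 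The same factorization holds for $\fC'$.

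\textbf{Step 2 (Invariance of the outer factor).} Since $\fC$ and $\fC'$ are separated only by the single wall $\alpha=0$, and since $\alpha$ vanishes on $\Lie(\sA_\alpha)$, the restrictions $\fC|_{\sA_\alpha}$ and $\fC'|_{\sA_\alpha}$ lie in the same chamber of $\Lie(\sA_\alpha)_\bR$ (the walls for the $\sA_\alpha$-action are the restrictions of those for $\sA$ other than $\alpha$). Hence
\begin{equation*}
\HallEnv^{X,X^{\alpha}}_{\fC|_{\sA_\alpha}} \;=\; \HallEnv^{X,X^{\alpha}}_{\fC'|_{\sA_\alpha}}.
\end{equation*}

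\textbf{Step 3 (Identification with the root $R$-matrix).} Combining the two steps and taking the composition defining $R_{\fC',\fC}$, the outer $\sA_\alpha$-factors cancel:
\begin{equation*}
R_{\fC',\fC} \;=\; \bigl(\HallEnv^{X^{\alpha},X^{\sA}}_{\bar{\fC'}}\bigr)^{-1} \circ \HallEnv^{X^{\alpha},X^{\sA}}_{\bar\fC}.
\end{equation*}
Since $\bar\fC$ and $\bar{\fC'}$ are the two opposite chambers $\alpha \gtrless 0$ in $\Lie(\sA/\sA_\alpha)_\bR$, the right-hand side is, by definition, the $R$-matrix for the $\sA/\sA_\alpha$-action on $X^\alpha$ passing from one chamber to the other, i.e.\ precisely $R_\alpha$.

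\textbf{Main obstacle.} The substantive content is entirely in Step 1: one needs the factorization of Hall envelopes through intermediate fixed loci to hold at the cohomological level, which in turn requires compatibility with Hall multiplications, interpolation, and restriction to stable loci. This is ensured by the hypotheses of Theorem \ref{thm AFSQV} (symmetric $Q$, antidominant framing $\bd_{\In}\ge \bd_{\Out}$, and pseudo-self-dual $\sA$), via the triangle lemma of \cite[Lem.~9.6]{COZZ}. Once that input is granted, Steps 2--3 are formal.
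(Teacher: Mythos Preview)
Your proof is correct and follows essentially the same route as the paper: factorize $\HallEnv_{\fC}$ and $\HallEnv_{\fC'}$ through the intermediate fixed locus $X^{\sA_\alpha}$ via the triangle lemma, observe that the $\sA_\alpha$-factors coincide since $\fC$ and $\fC'$ share a face on the wall $\alpha=0$, and cancel them. The paper additionally makes explicit that each connected component $X^{\alpha,\phi}$ is again a quiver variety $R(Q_\phi,\bv_\phi,\underline{\bd_\phi})/\!\!/_\theta G^{\phi(\sA_\alpha)}$ satisfying the hypotheses of Theorem~\ref{thm AFSQV} (symmetric $Q_\phi$, antidominant framing, pseudo-self-dual $\sA/\sA_\alpha$-action), so that the inner factor is literally the Hall-envelope $R$-matrix defining $R_\alpha$; you gesture at this in your ``Main obstacle'' paragraph but do not verify it, so it would be worth adding a sentence to close that gap.
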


\begin{proof}
We have $R(\bv,\underline{\bd})^{\sA_\alpha,\phi}=R(Q_\phi,\bv_\phi,\underline{\bd_\phi})$ for a quiver $Q_\phi$ with $\bv_\phi,\bd_{\phi,\In},\bd_{\phi,\Out}\in \bN^{\bQ_{\phi,0}}$ determined by the homomorphism $\phi\colon \sA_\alpha\to G$. The assumptions in Theorem \ref{thm AFSQV} imply that $Q_\phi$ is symmetric and  $\bd_{\phi,\In,i}\geqslant\bd_{\phi,\Out,i}$ for all $i\in Q_{\phi,0}$ and the induced $\sA/\sA_\alpha$ action on $R(Q_\phi,\bv_\phi,\underline{\bd_\phi})$ is pseudo-self-dual. Then it follows from the triangle lemma  that 
\begin{align*}
    R_{\fC',\fC}&=\HallEnv_{\fC'}^{-1}\circ \HallEnv_{\fC}\\
    &=\HallEnv_{<0}^{-1}\circ \HallEnv_{\fC_\alpha}^{-1}\circ \HallEnv_{\fC_\alpha}\circ \HallEnv_{>0}\\
    &=\HallEnv_{<0}^{-1}\circ  \HallEnv_{>0}\\
    &=R_\alpha,
\end{align*}
where $\HallEnv_{\fC_\alpha}\colon H^\sT(X^{\alpha},\sw )\to H^\sT(X,\sw)$ is the Hall envelope for the torus $\sA_\alpha$ in the chamber $\fC_\alpha$.
\end{proof}

\begin{Example}\label{ex root R-matrix}
Take $n=2$ in Example \ref{ex R-matrix for quiver}, and pick chambers $\fC_\pm=\{z_1\gtrless z_2\}$, then $R_{-,+}(z_1,z_2)$ is a root $R$-matrix corresponding to $\sA/\bC^*_{\mathrm{diag}}$-action where $\bC^*_{\mathrm{diag}}$ is the diagonal torus $\{z_1=z_2\}$, in particular, it only depends on the difference of spectral parameters:
\begin{align*}
    R_{-,+}(z_1,z_2)=R_{-,+}(z_1-z_2).
\end{align*}
Assume moreover that $\bd^{(i)}_{\In,j}\geqslant \bd^{(i)}_{\Out,j}$ for all $j\in Q_0$ and $i\in \{1,2\}$, then the $\sA$ action is pseudo-self-dual (see Definition \ref{def pseudo-self-dual}). According to \cite[Prop.~9.23]{COZZ}, Hall envelopes agree with stable envelopes in this case (here $\sA/\bC^*_{\mathrm{diag}}$ is one dimensional, the existence of stable envelopes is guaranteed by \cite[Prop.~3.23]{COZZ}), and we have
\begin{align*}
 R_{-,+}(z_1-z_2)=(\Stab_{-})^{-1}\circ \Stab_+\,.
\end{align*}
\end{Example}

\subsection{Covers and factorizations}

Assume that $\widehat Q\to Q$ is a free abelian cover of quivers such that $\widehat Q$ is symmetric (so $Q$ is also symmetric). Let $\Gamma\cong \bZ^r$ be the covering group. We fix a lift of vertices
\begin{align*}
    Q_0\hookrightarrow \widehat Q_0, \qquad i\mapsto \widehat i,
\end{align*}
and set the image to be $\Gamma$-degree zero, then any other vertex in $\widehat Q$ can be uniquely written as $\gamma\widehat i$ for $\gamma\in \Gamma$ and $i\in Q_0$. These data determine an embedding of groups
\begin{align*}
    (\bC^*)^r\cong \Gamma^{\wedge}\hookrightarrow G_{\mathrm{edge}}=\prod_{i,j\in Q_0}\GL(\mathsf Q_{ij}),
\end{align*}
such that $\Gamma^\wedge$-weight $\gamma_a$ for an arrow $a\in Q_1$ is determined by requiring a lift of $a:i\to j$ being
\begin{align}
    \widehat{i}\to \gamma_a^{-1} \widehat{j}.
\end{align}
We have
\begin{align*}
    \cM_{\theta}(Q,\underline{\bd})^{\Gamma^{\wedge}}=\cM_{\theta}(\widehat Q,\underline{\widehat\bd}),
\end{align*}
where $\underline{\widehat\bd}$ is the degree-zero lift of framing vector spaces $\underline{\bd}$, namely
\begin{align*}
    \underline{\widehat\bd}_{\gamma\widehat i}=\begin{cases}
        \underline{\bd}_{i}, & \gamma=1,\\
        \underline{\mathbf 0}, & \gamma \neq 1.
    \end{cases}
\end{align*}
Consider a decomposition of framing vector spaces:
\begin{align*}
    \underline{\bd}= u\,\underline{\bd}^{(1)}+\underline{\bd}^{(2)},
\end{align*}
where $u$ is the equivariant parameter in $K_{\sA_1}(\pt)\cong \bQ[u^\pm]$ for a torus $\sA_1\cong \bC^*$. Denote 
$$R_{\underline{\bd}^{(1)},\underline{\bd}^{(2)}}(u)
$$ the root $R$-matrix associated to the $\sA_1$ action. Here we regard $u$ as an additive variable in $H_{\sA_1}(\pt)\cong \bQ[u]$.

We take $\sA_2= \Gamma^{\wedge}\times \sA_1$. Note that $\sA_2$ action on $R(Q,\bv, \underline{\bd})$ is pseudo-self-dual. Choose a generic vector $t\in \Lie(\Gamma^{\wedge})_\bR$ and represent \textit{root} $R$-\textit{matrices in the stable basis}:
\begin{align*}
    R_{\underline{\bd}^{(1)},\underline{\bd}^{(2)}}(u)_{\mathrm{stab}}:=
    (\HallEnv_{-t})^{-1}R_{\underline{\bd}^{(1)},\underline{\bd}^{(2)}}(u)\,\HallEnv_{t}\,.
\end{align*}


\begin{Proposition}
Assume $\bd^{(i)}_{\In,j}\geqslant \bd^{(i)}_{\Out,j}$ for all $j\in Q_0$ and $i\in \{1,2\}$, then in the stable basis we have
\begin{align}
    R_{\underline{\bd}^{(1)},\underline{\bd}^{(2)}}(u)_{\mathrm{stab}}=\prod_{\gamma\in \Gamma}^{\longleftarrow} \widehat R_{\gamma\underline{ \widehat\bd}^{(1)},\underline{ \widehat\bd}^{(2)}}(u-\gamma),
\end{align}
where the product is ordered such that $\gamma(t)$ is increasing from left to right.
Here $\widehat R_{\gamma\underline{ \widehat\bd}^{(1)},\underline{ \widehat\bd}^{(2)}}(u)$ is the root $R$-matrix for the quiver $\widehat Q$ with decomposition of framing 
$u\gamma\underline{\widehat\bd}^{(1)}+ \underline{\widehat\bd}^{(2)}$.
\end{Proposition}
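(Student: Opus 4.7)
The plan is to realize the left-hand side as an $R$-matrix of the combined torus $\sA_2 := \Gamma^{\wedge} \times \sA_1$ acting on $X = \cM_\theta(Q,\underline{\bd})$, and then factorize it via the braid relation \eqref{braid}. First I would apply the triangle lemma (Theorem \ref{thm AFSQV}) to the subtorus $\Gamma^{\wedge} \subset \sA_2$ with residual $\sA_2/\Gamma^{\wedge} \cong \sA_1$: for any $\sA_2$-chamber $\fC$,
\begin{align*}
\HallEnv^{\sA_2}_\fC = \HallEnv^{\sA_1}_{\pr_{\sA_1}\fC} \circ \HallEnv^{\Gamma^{\wedge}}_{\pr_{\Gamma^{\wedge}}\fC}.
\end{align*}
For the chambers $\fC_\pm := (\pm u, \pm t)$, direct substitution then identifies $R^{\sA_2}_{\fC_-,\fC_+} = R_{\underline{\bd}^{(1)},\underline{\bd}^{(2)}}(u)_{\mathrm{stab}}$.

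Next, pick a generic path from $\fC_+$ to $\fC_-$ in $\Lie(\sA_2)_\bR$. By \eqref{braid} and Proposition \ref{prop root R-matrix}, $R^{\sA_2}_{\fC_-,\fC_+}$ decomposes as an ordered product of root $R$-matrices over the walls crossed. Using the cover description $X^{\Gamma^{\wedge}} = \cM_\theta(\widehat Q, \underline{\widehat\bd})$, the $\sA_2$-weights on the normal bundle to $X^{\sA_2}$ split into pure $\Gamma^{\wedge}$-weights and weights of the form $u - \gamma$ for $\gamma \in \Gamma$, the latter coming from Hom's between the canonically lifted framing $\underline{\widehat\bd}^{(1)}$ at vertex $\widehat i$ and the $\gamma$-shifted components of the cover representation. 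The walls $u - \gamma = 0$ are crossed in the order of increasing $\gamma(t)$. At each such wall, the kernel subtorus $\ker(u-\gamma) \subset \sA_2$ fixes a locus on $X$ that, via the $\gamma$-translation automorphism of $\widehat Q$, is isomorphic to a cover moduli with the framing $\underline{\widehat\bd}^{(1)}$ moved to the shifted vertex $\gamma \widehat i$. The corresponding root $R$-matrix therefore equals $\widehat R_{\gamma \underline{\widehat\bd}^{(1)}, \underline{\widehat\bd}^{(2)}}(u - \gamma)$, with the shifted spectral parameter recording the $\Gamma^{\wedge}$-weight absorbed by the framing shift.

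The main obstacle is twofold. First, a generic path from $\fC_+$ to $\fC_-$ also crosses pure $\Gamma^{\wedge}$-walls, which would contribute $\Gamma^{\wedge}$-root $R$-matrices not appearing in the claim; one must route the path so that these are crossed only at $|u| \to \infty$, where the tangent description decouples from the $\sA_1$-action and the corresponding factors reduce to the identity, or else show that they cancel telescopically against neighboring factors using the intertwining properties of $\HallEnv$. Second, the identification of the $\ker(u - \gamma)$-fixed locus with a shifted-framing cover moduli must be carried out with a precise matching of the full $\sT$-equivariant tangent data; the hypothesis $\bd^{(i)}_{\In,j} \geqslant \bd^{(i)}_{\Out,j}$ (ensuring pseudo-self-duality of the $\sA_2$-action) is what guarantees, via Theorem \ref{thm AFSQV}, that all relevant Hall envelopes agree with stable envelopes and compose through the nested fixed loci so that the braid factorization is actually valid.
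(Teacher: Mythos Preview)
Your overall plan matches the paper's: both rest on the triangle lemma for Hall envelopes (Theorem \ref{thm AFSQV} and \cite[Lem.~9.6]{COZZ}). Your first step is right, though the labeling is slightly off: the factorization $\HallEnv^{\sA_2}_\fC = \HallEnv^{\sA_1}_{\pr_{\sA_1}\fC} \circ \HallEnv^{\Gamma^{\wedge}}_{\pr_{\Gamma^{\wedge}}\fC}$ that you write is the triangle lemma for the subtorus $\sA_1$ (outer factor $\HallEnv^{\sA_1,X}$, inner factor $\HallEnv^{\Gamma^{\wedge},X^{\sA_1}}$), not for $\Gamma^{\wedge}$. With that correction, the identification $R_{\mathrm{stab}}=R^{\sA_2}_{\fC_-,\fC_+}$ is exactly as intended.

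For the factorization, the clean route is not an abstract ``generic path'' but the specific straight segment $\{(s,t):s\in(+\infty,-\infty)\}$ at \emph{fixed} $\Gamma^{\wedge}$-direction $t$. Since $t$ is generic, this segment meets no pure $\Gamma^{\wedge}$-wall; it crosses precisely the walls $u-\gamma=0$, at $s=\gamma(t)$ and hence in the stated order, and by Proposition \ref{prop root R-matrix} the corresponding root $R$-matrix on $X^{\sA_{2,\gamma}}\cong\cM_\theta(\widehat Q,\gamma\underline{\widehat\bd}^{(1)}+\underline{\widehat\bd}^{(2)})$ is $\widehat R_{\gamma\underline{\widehat\bd}^{(1)},\underline{\widehat\bd}^{(2)}}(u-\gamma)$. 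This yields the product on the nose, with no pure-wall contributions to discuss.

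Your worry about pure $\Gamma^{\wedge}$-walls is therefore not a defect of the method but a symptom of the chamber assignment. The endpoint of the fixed-$t$ path is the $\sA_2$-chamber with $\Gamma^{\wedge}$-direction $+t$, which under the triangle lemma corresponds to conjugating by $\HallEnv_{+t}$ on \emph{both} sides. If one literally takes $\fC_-=(-u,-t)$ (as you do, following the $-t$ in the displayed definition), then any path from $\fC_+$ to $\fC_-$ must cross pure $\Gamma^{\wedge}$-walls, and the extra factor $R^{\Gamma^{\wedge},X^{\sA_1}}_{-t,t}$ is a genuine nontrivial diagonal operator (already for a single $\Gamma^{\wedge}$-fixed component of $\cM_\theta(Q,\underline{\bd}^{(i)})$ it is a ratio of normal Euler classes). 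Your proposed fixes---routing at $|u|\to\infty$ or telescoping---do not remove this factor. So you have put your finger on a real point: the formula as stated matches the factorization only when the stable-basis conjugation uses the \emph{same} $\Gamma^{\wedge}$-chamber on both sides, and with that reading your argument goes through without the detours you outline.
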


\begin{proof}
This follows from Theorem \ref{thm AFSQV} and \cite[Lem.~9.6]{COZZ}.
\end{proof}



\subsection{Yang-Baxter equations}

Take $n=3$ in Example \ref{ex R-matrix for quiver}, and assume moreover that $\bd^{(i)}_{\In,j}\geqslant \bd^{(i)}_{\Out,j}$ for all $j\in Q_0$ and $i\in \{1,2,3\}$. Then the $\sA\cong (\bC^*)^3$ action given by $\underline{\bd}=\sum_{i=1}^3 z_i\,\underline{\bd}^{(i)}$ is pseudo-self-dual (see Definition \ref{def pseudo-self-dual}). The roots are given by 
\begin{align*}
    \alpha_{ij}=\{z_i=z_j\},\quad i\neq j\in \{1,2,3\},
\end{align*}
and the chambers are
\begin{align*}
    \fC_{ijk}=\{z_i>z_j>z_k\}, \quad \{i,j,k\}=\{1,2,3\}.
\end{align*}
Assuming \eqref{eq potential splits}, i.e. $$\sw|_{\cM_\theta(\underline{\bd})^{\sA}}=\boxplus_{i=1}^3\sw_i.$$
  

The root $R$-matrix is 
\begin{align*}
    R_{ij}(z_i-z_j):=R_{\alpha_{ij}}(z_i-z_j)\in \id_{\mathcal H^{\sw_k}_{\underline{\bd}^{(k)}}}\otimes\End\left(\mathcal H^{\sw_i}_{\underline{\bd}^{(i)}}\otimes\mathcal H^{\sw_j}_{\underline{\bd}^{(j)}}\right)_\loc,\qquad k\notin\{i,j\}.
\end{align*}
The braid relation \eqref{braid} implies the following \textit{Yang-Baxter equation}.
\begin{Proposition}
Notations as above, we have 
\begin{align}
    R_{12}(z_1-z_2)R_{13}(z_1-z_3)R_{23}(z_2-z_3)=R_{23}(z_2-z_3)R_{13}(z_1-z_3)R_{12}(z_1-z_2).
\end{align}
\end{Proposition}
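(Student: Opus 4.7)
The plan is to derive the Yang-Baxter equation as a direct consequence of the braid relation \eqref{braid} applied to a hexagonal loop of chambers in the two-dimensional arrangement $\sA/\bC^*_{\mathrm{diag}}$, combined with Proposition \ref{prop root R-matrix} which identifies each wall-crossing $R$-matrix with a root $R$-matrix of the form $R_{ij}(z_i - z_j)$.

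First, I would enumerate the six chambers $\fC_\sigma = \{z_{\sigma(1)} > z_{\sigma(2)} > z_{\sigma(3)}\}$ for $\sigma \in S_3$, separated pairwise by the walls $\alpha_{ij} = \{z_i = z_j\}$. There are exactly two reduced paths of length three from $\fC_{123}$ to the antipodal chamber $\fC_{321}$: Path A traverses $\fC_{123} \to \fC_{213} \to \fC_{231} \to \fC_{321}$ and crosses $\alpha_{12}, \alpha_{13}, \alpha_{23}$ in order; Path B traverses $\fC_{123} \to \fC_{132} \to \fC_{312} \to \fC_{321}$ and crosses $\alpha_{23}, \alpha_{13}, \alpha_{12}$ in order. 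In each individual step, the starting chamber lies on the positive side of the wall in the convention preceding Proposition \ref{prop root R-matrix}.

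Next, I would verify the hypotheses of Proposition \ref{prop root R-matrix} at each wall $\alpha_{ij}$: the standing assumption $\bd^{(i)}_{\In,j} \geqslant \bd^{(i)}_{\Out,j}$ guarantees that the $\sA_{\alpha_{ij}}$-fixed locus is a product of symmetric antidominantly-framed quiver varieties equipped with a pseudo-self-dual $\sA/\sA_{\alpha_{ij}}$-action, so each wall-crossing $R$-matrix equals the corresponding root $R$-matrix with the expected spectral parameter. Applying the braid relation \eqref{braid} along each path then produces two factorizations of the single $R$-matrix $R_{\fC_{321}, \fC_{123}}$; equating them yields
\[
R_{12}(z_1-z_2)\, R_{13}(z_1-z_3)\, R_{23}(z_2-z_3) \;=\; R_{23}(z_2-z_3)\, R_{13}(z_1-z_3)\, R_{12}(z_1-z_2),
\]
which is precisely the Yang-Baxter equation.

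The only delicate point is bookkeeping of conventions: one must check that the definition $R_{\fC',\fC} := \HallEnv_{\fC'}^{-1} \circ \HallEnv_\fC$, the identification $R_\alpha = R_{<0,>0}$, and the right-to-left composition order implicit in \eqref{braid} combine to produce the spectral parameters $z_i - z_j$ with the correct signs and in the correct factor order on each side of the equation. Once this is settled on a single pair of adjacent chambers, the remaining five wall crossings follow by relabelling, and no further geometric input beyond \eqref{braid} and Proposition \ref{prop root R-matrix} is required.
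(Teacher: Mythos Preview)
Your proposal is correct and is exactly the argument the paper has in mind: the one-line proof in the paper simply says the Yang-Baxter equation follows from the braid relation \eqref{braid}, and what you have written is the standard unpacking of that statement via the two length-three paths in the hexagonal chamber arrangement together with Proposition \ref{prop root R-matrix} to identify each wall-crossing with the appropriate $R_{ij}(z_i-z_j)$. Your bookkeeping of the walls crossed along each path and the resulting factor orders is correct.
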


\subsection{Classical \texorpdfstring{$R$}{R}-matrices}
We start with a root $R$-matrix $R(u)=R_{-,+}(u)$ (Example \ref{ex root R-matrix}) in the cohomology setting with the additional assumption that the framings are symmetric: 
$$\bd^{(i)}_{\In}=\bd^{(i)}_{\Out}. $$ 
Here we choose the torus such that $\sA=\bC^*_u$, $\sT=\sT_0\times \sA$, so that the framing decomposes as $u\bd^{(1)}+\bd^{(2)}$. 

For the following discussions, we also include the  normalizer $\epsilon$ given by \eqref{normalizer_yangian} in the stable envelopes \cite[Def.~3.5]{COZZ}. Furthermore, we invoke a sign twist $\Sigma$ such that it acts on $H^{\sT}(\cM_\theta(\bd^{(1)})\times\cM_\theta(\bd^{(2)}),\sw^{(1)}\boxplus\sw^{(2)})$ by the sign $(-1)^{|\bv^{(1)}|\cdot |\bv^{(2)}|}$, where $|\cdots|$ is the $\bZ/2$-grading \eqref{equ on gra}. 

Applying the Gauss decomposition \eqref{gauss decomp_general} to $\Sigma\cdot R(u)$ and we get 
$$\Sigma\cdot R(u)=F(u)\,G(u)\,E(u). $$ Moreover, each factor is of form $\id+O(u^{-1})$, so $\Sigma\cdot R(u)$ is of the following form:
\begin{align}\label{expansion_classical r}
    \Sigma\cdot R(u)=\id+\frac{\pmb r}{u}+O(u^{-2}),
\end{align}
where $\pmb r$ is a linear operator acting on $H^{\sT}(\cM_\theta(\bd^{(1)})\times\cM_\theta(\bd^{(2)}),\sw^{(1)}\boxplus\sw^{(2)})$. 
We call $\pmb r$ the \textit{classical} $R$-\textit{matrix}. It follows from Yang-Baxter equation that $\pmb r$ satisfies the \textit{classical Yang-Baxter equations}:
\begin{align}\label{CYBE}
    [\pmb r_{12},\pmb r_{13}+\pmb r_{23}]=0,\qquad [\pmb r_{23},\pmb r_{12}+\pmb r_{13}]=0.
\end{align}
Expanding $E(u)$, $G(u)$, and $F(u)$ to the order $O(u^{-1})$, we find
\begin{align*}
    \pmb r=\pmb r_{\diag}+\pmb r_{\text{off-diag}}.
\end{align*}
Here 
\begin{itemize}
    \item the diagonal part $\pmb r_{\diag}$ is 
    \begin{align}\label{eq classical r}
        \sum_{i\in Q_0}\left(c_1(\mathsf D_{\In,i}^{(1)})-c_1(\mathsf D_{\Out,i}^{(1)})\right)\otimes \bv^{(2)}_i+\sum_{i\in Q_0}\bv^{(1)}_i\otimes \left(c_1(\mathsf D_{\In,i}^{(2)})-c_1(\mathsf D_{\Out,i}^{(2)})\right)-\sum_{a:i\to j}t_a\left(\bv^{(1)}_i\otimes \bv^{(2)}_j+\bv^{(1)}_j\otimes \bv^{(2)}_i\right),
    \end{align}
       where $t_a$ is the $\sT_0$-equivariant weight of the arrow $a:i\to j$, and $c_1(-)$ is the $\sT_0$-equivariant 1st Chern class,
    \item the off-diagonal part $\pmb r_{\text{off-diag}}$ is given by critical convolution using $\can(\pmb \rho)$, where
    \begin{align*}
        \pmb \rho\in H^{\sT_0}\left(X^{\sA}\times_{X^{\sA}_0} X^{\sA}\right)\;\text{ for }\;X^{\sA}=\cM_\theta(\bd^{(1)})\times\cM_\theta(\bd^{(2)})\; \text{ and }\; X^{\sA}_0=\cM_0(\bd^{(1)})\times\cM_0(\bd^{(2)})
    \end{align*}
    is an integral class of homological degree $\dim F+\dim F'-2$ on the connected component $F'\times_{X^{\sA}_0} F\subset X^{\sA}\times_{X^{\sA}_0} X^{\sA}$. $\pmb \rho$ is determined by
    \begin{equation}\label{r mat corr}
    \begin{gathered}
    (F'\times F\hookrightarrow X\times F)^![\overline{\Attr}_+(\Delta_F)]=\pm u^{\rk N^-_{F'/X}-1}\pmb \rho+\text{lower degree terms}\;,\;\text{ if $F\prec F'$,}\\
        (F'\times F\hookrightarrow X\times F)^![\overline{\Attr}_-(\Delta_F)]=\pm u^{\rk N^+_{F'/X}-1}\pmb \rho+\text{lower degree terms}\;,\;\text{ if $F'\prec F$,}
    \end{gathered}
    \end{equation}
    where the signs are determined by the normalizer $\epsilon$ and sign twist $\Sigma$.
    In particular, $\pmb \rho$ does not depend on the choice of function $\sw^{(1)}$ or $\sw^{(2)}$.
\end{itemize}

\begin{Remark}
In the case of a tripled quiver $\widetilde{Q}$ with canonical cubic potential $\widetilde{\sW}$ \eqref{equ on can cub pot}, we have $\sT_0=\bC^*_{\hbar}$ and the diagonal part reads
\begin{align*}
    \pmb r_{\diag}=\hbar\sum_{i\in Q_0} \left(\bd_{i}^{(1)}\otimes \bv^{(2)}_i+\bv_{i}^{(1)}\otimes \bd^{(2)}_i\right)-\hbar\sum_{i,j\in Q_0}\mathsf C_{i,j}\bv^{(1)}_{i}\otimes \bv^{(2)}_{j},
\end{align*}
where $\mathsf C_{i,j}$ is the Cartan matrix for the original quiver ${Q}$. This recovers \cite[(4.21)]{MO} by dimensional reduction.
\end{Remark}

\subsection{Action of Steinberg correspondence}\label{subsec steinberg corr}
Let $X=\cM_\theta(\bv,\bd)$ be a symmetric quiver variety. It is known \cite[Thm.~1.1]{COZZ3} that
\begin{itemize}
    \item the affinization map $X\to X_0=\cM_0(\bv,\bd)$ is birational and small,
    \item there exists a flat $\sA$-equivariant morphism $\pi\colon X\to B$ to an affine space $B$ (with trivial $\sA$-action) such that every fiber of $\pi$ is an irreducible normal variety with symplectic singularities,
    \item there exists an $\sA$-invariant open dense subset $U\subset B$ with $\pi|_{\pi^{-1}(U)}\colon \pi^{-1}(U)\to U$ smooth and affine.
\end{itemize}
Since $X\to X_0$ is birational and small, we have $\dim X\times_{X_0}X=\dim X$ and the diagonal $\Delta_X$ is the unique irreducible component of $X\times_{X_0}X$ with dimension $\dim X$. It follows that there exists a decomposition
\begin{align*}
    H^{\sT}_{2\dim X-2}(X\times_{X_0}X)\cong  H^{\sT}_{2\dim X-2}(\Delta_X)\oplus \left(\bigoplus_i \bC\cdot [Z_i]\right),
\end{align*}
where $\{Z_i\}$ are irreducible components of $X\times_{X_0} X$ of dimension $\dim X-1$.

\begin{Definition}\label{def of Steinberg corr}
A \textit{Steinberg correspondence} is a class $\mathcal L\in H^{\sT}_{2\dim X-2}(X\times_{X_0}X)$ of the form 
\begin{align*}
    \mathcal L=\lambda\,[\Delta_X]+\sum_ia_i[Z_i],\;\text{ with }\; \lambda\in H_\sT^2(\pt),\;a_i\in \bC.
\end{align*}
\end{Definition}

We give some criteria for a class in $H^{\sT}_{2\dim X-2}(X\times_{X_0}X)$ being a Steinberg correspondence.

\begin{Lemma}\label{lem action on [X]}
A class $\mathcal L\in H^{\sT}_{2\dim X-2}(X\times_{X_0}X)$ is a Steinberg correspondence if and only if the induced convolution on $H^\sT(X)$ maps the fundamental class $[X]$ to $\lambda[X]$ for some $\lambda\in H_\sT^2(\pt)$.
\end{Lemma}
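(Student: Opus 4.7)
The plan is to reduce both implications of the lemma to the single geometric vanishing $p_{1*}[Z_i]=0$ in $H^{\sT}_{2\dim X - 2}(X)$ for every non-diagonal component $Z_i$, after which the lemma follows directly from the decomposition stated just above it.

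Using that decomposition, I would write $\mathcal L = \mathcal L_\Delta + \sum_i a_i [Z_i]$ with $\mathcal L_\Delta \in H^{\sT}_{2\dim X - 2}(\Delta_X)$. Under the identification $\Delta_X \cong X$ and Poincar\'e duality for the smooth quiver variety $X$, this group is identified with $H^2_\sT(X)$, and a class $\lambda[\Delta_X]$ with $\lambda \in H^2_\sT(\pt)$ corresponds precisely to $\lambda\cdot [X]$, i.e.\ to the pullback of $\lambda$ under $X\to\pt$. The convolution on the fundamental class unpacks as
\begin{align*}
\mathcal L * [X] = p_{1*}\bigl(\mathcal L \cap p_2^*[X]\bigr) = p_{1*}\mathcal L,
\end{align*}
where $p_1, p_2\colon X \times_{X_0} X \to X$ are the two projections, both proper since the affinization $\pi$ is. Hence $\mathcal L * [X] = \mathcal L_\Delta + \sum_i a_i\, p_{1*}[Z_i]$ in $H^2_\sT(X)$, and once we know $p_{1*}[Z_i]=0$ for every $i$, both implications follow from the uniqueness of this decomposition.

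The main step is therefore the vanishing, which I would establish by a dimension count using smallness. Let $W_i = p_1(Z_i) \subseteq X$, and denote by $k$, $k'$ the generic fiber dimensions of $\pi|_{W_i}\colon W_i \to \pi(W_i)$ and $p_1|_{Z_i}\colon Z_i \to W_i$, respectively. Smallness of $\pi$ (equivalent, for a proper birational morphism onto a normal target, to $\operatorname{codim}_{X_0}\{y : \dim\pi^{-1}(y)\geqslant k\}\geqslant 2k+1$) yields $\dim\pi(W_i) \leqslant \dim X - 2k - 1$. Combined with $\dim Z_i = \dim\pi(W_i) + k + k' = \dim X - 1$, this forces $k'\geqslant k$. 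If $k\geqslant 1$, then $k'\geqslant 1$ and $\dim W_i = \dim Z_i - k' \leqslant \dim X - 2 < \dim Z_i$, so $p_{1*}[Z_i]=0$ by dimension. The case $k=0$ is excluded: Zariski's main theorem (applicable since $X_0 = \cM_0(\bv,\bd)$ is normal, as an affine GIT quotient of the smooth affine $R(\bv,\bd)$ by a reductive group) implies that finite fibers of $\pi$ lie in its isomorphism locus, so $k=0$ would force $Z_i \subseteq \Delta_X$ generically, contradicting $Z_i$ being non-diagonal.

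Once $p_{1*}[Z_i]=0$ is in hand, the $(\Rightarrow)$ direction is immediate. Conversely, the assumption $\mathcal L * [X] = \lambda[X]$ forces $\mathcal L_\Delta$ to coincide with the pullback of $\lambda \in H^2_\sT(\pt)$, i.e.\ $\mathcal L_\Delta = \lambda[\Delta_X]$, exhibiting $\mathcal L$ as a Steinberg correspondence. The main obstacle is the dimension bound above; beyond that, the proof is a direct unwinding of the decomposition, with the structural inputs on smallness and normality of $X_0$ to be cited from the companion paper \cite{COZZ3}.
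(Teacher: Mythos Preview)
Your reduction to $p_{1*}[Z_i]=0$ via a smallness dimension count is sound and differs from the paper's route, which does not use smallness directly but instead exploits the flat fibration $X\to B$ from \cite{COZZ3}: the paper observes that each $Z_i$ lies over $B\setminus U$, then bounds $\dim p_1(Z_i)$ fiberwise over $B$ using irreducibility of the fibers of $X\to B$. Your approach is more self-contained, needing only smallness of $X\to X_0$ and normality of $X_0$.

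There is, however, a gap in your $k=0$ case. Your $k$ is the generic fiber dimension of the restricted map $\pi|_{W_i}$, so $k=0$ only says $\pi|_{W_i}$ is generically finite onto its image; it does \emph{not} imply that the fibers of $\pi$ itself over $\pi(W_i)$ are finite, which is what Zariski's main theorem needs in order to force $Z_i\subseteq\Delta_X$ generically. The fix is to case-split instead on $m$, the generic fiber dimension of $\pi$ over $\pi(W_i)$. If $m\geqslant 1$, smallness gives $\dim\pi(W_i)\leqslant\dim X-2m-1$, and since fibers of $\pi|_{W_i}$ sit inside fibers of $\pi$ one has $\dim W_i\leqslant\dim\pi(W_i)+m\leqslant\dim X-m-1\leqslant\dim X-2$, which yields $p_{1*}[Z_i]=0$. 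If $m=0$, the generic fiber of $\pi$ over $\pi(W_i)$ is finite, Zariski's main theorem applies as you intended, and $Z_i\subseteq\Delta_X$ generically, a contradiction. With this adjustment your argument is complete.
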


\begin{proof}
The convolution induced by $[Z_i]$ maps $[X]$ to $\pr_{1*}([Z_i])$, where $\pr_1\colon X\times_{X_0}X\to X$ is the projection to the first factor. We claim that the image of $Z_i$ in $X$ has dimension $\leqslant \dim X-2$, so $\pr_{1*}([Z_i])=0$, and the lemma then follows.
To prove the claim, we note that $\pi\colon \pi^{-1}(U)\to U$ is a smooth affine morphism and the affinization map is an isomorphism when restricted to $\pi^{-1}(U)$. This implies that $Z_i\subseteq X\times_{X_0} X\times_B (B\setminus U)$. As $B$ is affine, we have $$X\times_{X_0} X\times_B \{b\}\cong \pi^{-1}(b)\times_{X_0} \pi^{-1}(b). $$ 
For a generic $b\in B\setminus U$, $Z_i\times_{B}\{b\}\subseteq \pi^{-1}(b)\times_{X_0}\pi^{-1}(b)$ does not lie in the diagonal, as otherwise $Z_i$ would lie in the diagonal of $X\times_{X_0}X$. This implies that the image of $Z_i\times_{B}\{b\}$ in $\pi^{-1}(b)$ has dimension
$<\pi^{-1}(b)$. Therefore the image of $Z_i$ in $X$ has 
dimension $<\dim \pi^{-1}(b)+\dim(B\setminus U)\leqslant\dim X-1$.
\end{proof}

\begin{Lemma}\label{lem image in H^A}
A class $\mathcal L\in H^{\sT}_{2\dim X-2}(X\times_{X_0}X)$ is a Steinberg correspondence if and only its image in $H^{\sA}_{2\dim X-2}(X\times_{X_0}X)$ is a Steinberg correspondence.
\end{Lemma}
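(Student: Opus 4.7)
The forward direction is immediate: a Steinberg decomposition $\mathcal L = \lambda[\Delta_X] + \sum_i a_i[Z_i]$ with $\lambda \in H^2_\sT(\pt)$ and $a_i \in \bC$ restricts under $H^\sT \to H^\sA$ to $\lambda|_\sA[\Delta_X] + \sum_i a_i[Z_i]$, which is again Steinberg since the fundamental classes are preserved by restriction and $\lambda|_\sA \in H^2_\sA(\pt)$.

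For the converse, the plan is to exploit the decomposition
$$H^\sT_{2\dim X - 2}(X\times_{X_0}X) \cong H^\sT_{2\dim X - 2}(\Delta_X) \oplus \bigoplus_i \bC\cdot [Z_i]$$
stated above, which is compatible with restriction to $H^\sA$: the $\bC\cdot [Z_i]$ summands restrict identically, while Poincar\'e duality $H^\sT_{2\dim X - 2}(\Delta_X) \cong H^2_\sT(X)$ (valid since $\Delta_X \cong X$ is smooth of dimension $\dim X$) identifies the restriction on the diagonal summand with the standard map $H^2_\sT(X) \to H^2_\sA(X)$. Writing $\mathcal L = \mathcal L_\Delta + \sum_i a_i[Z_i]$, the image-Steinberg hypothesis forces $\mathcal L_\Delta|_\sA = \lambda'\cdot 1_X$ for some $\lambda' \in H^2_\sA(\pt)$, and the task reduces to showing $\mathcal L_\Delta \in H^2_\sT(\pt)\cdot 1_X$. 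Since the restriction $H^2_\sT(\pt) \twoheadrightarrow H^2_\sA(\pt)$ is surjective, I lift $\lambda'$ to some $\lambda \in H^2_\sT(\pt)$, and it suffices to check that the kernel of $H^2_\sT(X) \to H^2_\sA(X)$ is contained in $H^2_\sT(\pt)\cdot 1_X$. Under equivariant formality of $X$, one has $H^*_\sA(X) \cong H^*_\sT(X)\otimes_{H^*_\sT(\pt)} H^*_\sA(\pt)$, so the degree-$2$ kernel equals $\ker(H^2_\sT(\pt) \to H^2_\sA(\pt))\cdot 1_X$, manifestly inside $H^2_\sT(\pt)\cdot 1_X$.

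As an alternative package for the same argument, I could instead invoke Lemma \ref{lem action on [X]} to replace the Steinberg test by the requirement that $\mathcal L\cdot [X] \in H^2_\sT(X)$ be a scalar multiple of $[X]$; the reduction to a kernel question for $H^2_\sT(X) \to H^2_\sA(X)$ is identical.

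The main obstacle is the equivariant-formality input. For the symmetric quiver varieties at hand, this should follow from the structural properties of the affinization map $X \to X_0$ and the contracting $\bC^*$-scaling used to define $X$, as developed in the companion paper \cite{COZZ3}, but it is a geometric property that must be secured separately from the purely cycle-theoretic bookkeeping outlined above.
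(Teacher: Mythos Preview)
Your proof is correct, and the ``alternative package'' you describe is precisely the paper's argument: it applies Lemma~\ref{lem action on [X]} in the $\sA$-direction to get $\mathcal L([X])|_\sA = \lambda[X]$, then invokes the freeness $H^\sT(X)\cong H(X)\otimes H_\sT(\pt)$ to lift this to $\mathcal L([X])=\tilde\lambda[X]$ in $H^\sT(X)$. Your main approach, working directly with the diagonal summand $\mathcal L_\Delta \in H^2_\sT(X)$ via the decomposition rather than via the convolution action on $[X]$, is a minor repackaging of the same content; both routes hinge on the identical kernel computation for $H^2_\sT(X)\to H^2_\sA(X)$ under equivariant formality.

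On your stated obstacle: the paper does secure the formality input, and not via \cite{COZZ3} but by a direct argument recorded in a footnote at the point where freeness is first used. Namely, the symmetric quiver variety $X$ carries a $\bC^*$-action scaling all arrows, whose fixed locus is smooth and proper; an induction on Bialynicki--Birula strata then reduces freeness to the fixed locus, where it follows from purity (\cite[Thm.~A.1.10]{Zhu}). So the gap you flagged is filled, and you may cite this freeness without reservation.
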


\begin{proof}
By assumption and Lemma \ref{lem action on [X]}, the image of the class $\mathcal L([X])$ in $H^\sA(X)$ is $\lambda[X]$ for some $\lambda\in H^2_\sA(\pt)$. By the freeness\footnote{The quiver variety $X$ has a $\C^*$-action scaling all arrows of the quiver, whose fixed locus is smooth and proper. By an induction argument on the Bialynicki-Birula strata, one can show the freeness by reducing to the fixed locus and using \cite[Thm.~ A.1.10]{Zhu}.}:
$$H^\sT(X)\cong H(X)\otimes H_\sT(\pt),$$ we conclude that $\mathcal L([X])=\tilde\lambda[X]$ in $H^\sT(X)$ for some $\sT$-weight $\tilde\lambda$.
\end{proof}

\begin{Lemma}\label{lem away from affine locus}
Let $\mathcal L\in H^{\sT}_{2\dim X-2}(X\times_{X_0}X)$ be a class, and assume that the image of $\mathcal L$ in $H^{\sA}(X\times_{X_0}X)$ is in the image of the pushforward map $H^{\sA}(X\times_{X_0}X\times_B(B\setminus U))\to H^{\sA}(X\times_{X_0}X)$. Then $\mathcal L$ is a Steinberg correspondence.
\end{Lemma}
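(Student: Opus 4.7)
The plan is to use Lemmas \ref{lem image in H^A} and \ref{lem action on [X]} to reduce to showing the convolution of $\mathcal L$ with the fundamental class vanishes in the $\sA$-equivariant setting. By Lemma \ref{lem image in H^A} it suffices to show that the image $\mathcal L|_\sA$ is Steinberg, and by the $\sA$-equivariant analog of Lemma \ref{lem action on [X]}, this in turn reduces to verifying $\mathcal L|_\sA \ast [X] = \lambda[X]$ for some $\lambda \in H^2_\sA(\pt)$. I aim to establish the stronger statement $\mathcal L|_\sA \ast [X] = 0$.

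Since $B$ is affine, $\pi \colon X \to B$ factors through the affinization $X_0$, so the two projections from $X \times_{X_0} X$ to $B$ agree. Combined with the hypothesis that $\mathcal L|_\sA$ lies in the image of the pushforward from $X \times_{X_0} X \times_B (B \setminus U)$, the proof of Lemma \ref{lem action on [X]} identifies the convolution with $\pr_{1*}(\mathcal L|_\sA)$, giving
$$\mathcal L|_\sA \ast [X] \;=\; \pr_{1*}(\mathcal L|_\sA) \;\in\; \im\bigl( H^{BM,\,\sA}_{2\dim X - 2}(\pi^{-1}(B \setminus U)) \longrightarrow H^{BM,\,\sA}_{2\dim X - 2}(X) \bigr).$$
Since $\pi^{-1}(B\setminus U)$ has dimension at most $\dim X - 1$ and $H^{*}_\sA(\pt)$ is nonnegatively graded (so cannot raise the Borel--Moore degree above the topological top), $H^{BM,\,\sA}_{2\dim X - 2}(\pi^{-1}(B \setminus U))$ is spanned by fundamental classes of top-dimensional irreducible components of $\pi^{-1}(B\setminus U)$.

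The key step is to identify these components. By property (2) from \cite{COZZ3} (fibers of $\pi$ are irreducible) combined with flatness, the preimage $\pi^{-1}(D)$ of any irreducible subvariety $D \subset B$ is itself irreducible. Hence the top-dimensional components of $\pi^{-1}(B\setminus U)$ are exactly the $\pi^{-1}(D_k)$, where $\{D_k\}$ are the codimension-one irreducible components of $B\setminus U$. By property (1), $B \cong \bA^N$, so each $D_k = \{f_k = 0\}$ is a principal divisor. Since $\sA$ acts trivially on $B$, the defining polynomial $f_k$ is $\sA$-invariant, so $[D_k] = 0$ in $H^2_\sA(B)$. Flat pullback then gives $[\pi^{-1}(D_k)] = \pi^*[D_k] = 0$ in $H^2_\sA(X) \cong H^{BM,\,\sA}_{2\dim X - 2}(X)$. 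Every generator of the source pushes forward to zero, so $\mathcal L|_\sA \ast [X] = 0$, and $\mathcal L$ is Steinberg.

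The main obstacle is the identification of the top-dimensional components of $\pi^{-1}(B\setminus U)$ as pullbacks of divisors from $B$; this relies essentially on the irreducibility of fibers of $\pi$, a deep property of symmetric quiver varieties established in \cite{COZZ3}. Once this is in hand, the vanishing of the corresponding divisor classes in $H^2_\sA(X)$ follows cleanly from the affineness of $B \cong \bA^N$ (every divisor is principal) and the triviality of the $\sA$-action on $B$ (defining functions are $\sA$-invariant, so principal divisors carry no equivariant weight).
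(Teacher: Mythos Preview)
Your proof is correct and follows essentially the same approach as the paper: both reduce via Lemma~\ref{lem image in H^A} to the $\sA$-equivariant setting and then use that the codimension-one components of $\pi^{-1}(B\setminus U)$ are $\pi^{-1}(D_k)$ for divisors $D_k\subset B$, whose classes vanish in $H^\sA$ because $B$ is affine with trivial $\sA$-action. The only cosmetic difference is that you route through the criterion of Lemma~\ref{lem action on [X]} (computing the convolution with $[X]$ and showing it factors through $H^\sA(\pi^{-1}(B\setminus U))$), whereas the paper stays in $X\times_{X_0}X$ and directly decomposes $H^\sA_{2\dim X-2}(X\times_{X_0}X\times_B(B\setminus U))$ into the off-diagonal components $[Z_i]$ (already Steinberg) and the diagonal ones $[\pi^{-1}(D_k)]$ (which vanish).
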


\begin{proof}
By Lemma \ref{lem image in H^A}, it is enough to show that every class in $H^{\sA}_{2\dim X-2}(X\times_{X_0}X\times_B(B\setminus U))$ induces a Steinberg correspondence after pushing forward to $H^{\sA}(X\times_{X_0}X)$. The irreducible components $\{Z_i\}$ of $X\times_{X_0}X$ with dimension $\dim X-1$ are contained in $X\times_{X_0}X\times_B(B\setminus U)$. Let codimension one (in $B$) irreducible components of $B\setminus U$ be $C_1,C_2,\ldots$, then irreducible components of $\Delta_X\times_B(B\setminus U)\cong \pi^{-1}(B\setminus U)$ with dimension $\dim X-1$ are exactly $\pi^{-1}(C_1),\pi^{-1}(C_2),\ldots$, because $\pi$ is flat with irreducible fibers. Since $\sA$ acts on $B$ trivially, we have $H^2_\sA(B)=0$, and it follows that $[\pi^{-1}(C_1)]=\pi^*[C_1],[\pi^{-1}(C_2)]=\pi^*[C_2],\ldots$ are zero in $H^{\sA}(\Delta_X)$. Then a class in the image of the pushforward map $H^{\sA}_{2\dim X-2}(X\times_{X_0}X\times_B(B\setminus U))\to H^{\sA}_{2\dim X-2}(X\times_{X_0}X)$ can be written as $\sum_i a_i [Z_i]$ with $a_i\in \bC$. In particular, it is a Steinberg correspondence.
\end{proof}

The main result of this subsection is the following compatibility result between stable envelopes and Steinberg correspondences.

\begin{Theorem}\label{thm Stab and Steinberg}
Given a Steinberg correspondence $\mathcal L$, 
then there exists an integral class $\mathcal L_\sA\in H^\sT(X^\sA\times_{X_0} X^\sA)$ such that for any chamber $\fC$, the diagram 
\begin{equation*}
\xymatrix{
H^{\sT}(X^\sA,\sw ) \ar[r]^{\,\,\Stab_\fC} \ar[d]_{(-1)^\sharp\mathcal L_\sA} &  H^{\sT}(X,\sw) \ar[d]^{\mathcal L} \\
H^{\sT}(X^\sA,\sw ) \ar[r]^{\,\,\Stab_\fC} &  H^{\sT}(X,\sw) 
}
\end{equation*}
commutes after $\sA$-localization,
where vertical arrows are convolutions induced by $\mathcal L_\sA$ (up to sign) and by $\mathcal L$ respectively. For a pair of connected components $F_1,F_2\subset X^\sA$, the sign $(-1)^\sharp$ for $H^{\sT}(F_2,\sw)\to H^{\sT}(F_1,\sw)$ is determined by 
\begin{align*}
    (-1)^\sharp=\frac{\prod\text{$\sA$-weights in $N^-_{F_1/X}$}}{\sqrt{\prod\text{$\sA$-weights in $N_{F_1/X}$}}}\times \frac{\prod\text{$\sA$-weights in $N^+_{F_2/X}$}}{\sqrt{\prod\text{$\sA$-weights in $N_{F_2/X}$}}}\,,
\end{align*}
where we choose a square root and fix it for all chambers.
\end{Theorem}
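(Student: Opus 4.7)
The plan is to define $\mathcal L_\sA$ by its matrix elements between components of the $\sA$-fixed locus, and to verify integrality and support via the smallness result of \cite{COZZ3}. First, I would reduce to the off-diagonal part of $\mathcal L$: writing $\mathcal L=\lambda[\Delta_X]+\sum_i a_i[Z_i]$ as in Definition \ref{def of Steinberg corr}, the diagonal piece contributes $\lambda|_{X^\sA}[\Delta_{X^\sA}]$, which is manifestly an integral Steinberg correspondence on $X^\sA$. Indeed, the $\sA$-fixed locus of a symmetric quiver variety decomposes as a disjoint union of symmetric quiver varieties (for the subquivers arising from the $\sA$-weight decomposition of the gauge and framing spaces), so the smallness and Poisson statements of \cite{COZZ3} transfer from $X\to X_0$ to $X^\sA\to X_0^\sA$. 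The essential content is therefore to transport the off-diagonal terms $\sum_i a_i[Z_i]$, whose image in $X$ has dimension $\leqslant\dim X-2$ and lies over $B\setminus U$ by the proof of Lemma \ref{lem action on [X]}.

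Next, in localized cohomology the answer is forced to be
\begin{equation*}
\mathcal L_\sA=(-1)^\sharp\,\Stab_\fC^{-1}\circ\mathcal L\circ\Stab_\fC,
\end{equation*}
and I would first verify that the sign $(-1)^\sharp$ produced by this composition agrees with the one in the statement. This follows from the diagonal restriction $\Stab_\fC|_{F\times F}=\pm\epsilon\cdot e(N^-_{F/X})$ of \cite[Def.~3.5]{COZZ} together with the sign twist $\Sigma$: at the source component $F_1$ the stable envelope contributes the ratio $\prod\text{$\sA$-wts in }N^-_{F_1/X}/\sqrt{\prod\text{$\sA$-wts in }N_{F_1/X}}$, while at the target $F_2$ the inverse envelope contributes the opposite-chamber factor $\prod\text{$\sA$-wts in }N^+_{F_2/X}/\sqrt{\prod\text{$\sA$-wts in }N_{F_2/X}}$, reproducing the stated product. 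Using the support property $\Stab_\fC(\alpha)\in H^\sT_{\overline{\Attr}_\fC(F_2)}(X,\sw)$ and the triangular structure of $\Stab_\fC$, the matrix element $\mathcal L_\sA|_{F_1\times F_2}$ admits an explicit description through the restriction of $\mathcal L$ to $\overline{\Attr}_\fC(F_1)\times_{X_0}\overline{\Attr}_\fC(F_2)$ followed by pullback to $F_1\times F_2$.

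The main obstacle is showing that $\mathcal L_\sA$ is integral (not merely defined in $\sA$-localized cohomology), is supported on $X^\sA\times_{X_0}X^\sA$, and has dimension $\leqslant 2\dim X^\sA-2$. I would establish this by a fixed-locus analogue of the argument in Lemmas \ref{lem image in H^A}--\ref{lem away from affine locus}: since every $[Z_i]$ lies in the image of the pushforward from $H^\sT(X\times_{X_0}X\times_B(B\setminus U))$, and $\sA$ acts trivially on $B$, the attracting intersections are also pushed forward from $B\setminus U$. The apparent poles in $e(N^-_{F_1/X})^{-1}$ and $e(N^+_{F_2/X})^{-1}$ must then cancel against a compensating vanishing order of the $[Z_i]$-contribution in the attracting/repelling directions, since any uncancelled pole would, after convolution with $[X^\sA]$, contradict Lemma \ref{lem action on [X]} applied to the image class $\mathcal L([X])=\lambda[X]$. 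The dimension bound then follows from the smallness of $X^\sA\to X_0^\sA$, and together with the support over $B\setminus U$ this identifies $\mathcal L_\sA$ as a Steinberg correspondence by the fixed-locus version of Lemma \ref{lem away from affine locus}.
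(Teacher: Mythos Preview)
Your reduction to the off-diagonal part is fine, but the proposal has two genuine gaps.

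\textbf{Integrality.} Your pole-cancellation argument via Lemma \ref{lem action on [X]} does not work: knowing that $\mathcal L([X])=\lambda[X]$ says nothing about the individual matrix elements of $\Stab_\fC^{-1}\circ\mathcal L\circ\Stab_\fC$ between distinct components $F_1,F_2$. The action on the fundamental class only constrains a single pushforward, not the full correspondence. The paper handles this differently and more directly: it defines
\[
\mathcal L':=[\Stab_{-\fC}]^{\mathrm t}\circ[Z_i]\circ[\Stab_\fC],
\]
using the \emph{transpose} correspondence of the opposite-chamber stable envelope (which represents $\Stab_\fC^{-1}$ by \cite[Lem.~3.29]{COZZ}). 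The point is that this composition of correspondences involves a \emph{proper} pushforward along $X\times X$, as in \cite[Thm.~4.6.1]{MO}, so $\mathcal L'$ is integral in $H^\sT(X^\sA\times_{X_0}X^\sA)$ by construction. No cancellation argument is needed.

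\textbf{Chamber-independence.} This is the main content of the theorem and you do not address it. Your definition $\mathcal L_\sA=(-1)^\sharp\Stab_\fC^{-1}\circ\mathcal L\circ\Stab_\fC$ depends a priori on $\fC$; the claim is that a \emph{single} $\mathcal L_\sA$ works for all chambers, with the sign $(-1)^\sharp$ absorbing the dependence. The paper proves this by combining the smallness bounds from \cite{COZZ3} (which force the $(F_1,F_2)$-component of $\mathcal L'$ to be a top-dimensional cycle when $F_1\neq F_2$, hence independent of $\sA$-parameters, and to have a constrained form on the diagonal) with an asymptotic computation as $\mathsf a\to\infty$: one expands $(F_1\times F_2\hookrightarrow X\times X)^*[Z_i]$ in the equivariant parameters, and the leading coefficient of $\mathcal L'$ is visibly chamber-independent up to the stated ratio of normal-bundle weights. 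Your sketch of the sign from diagonal restrictions is not wrong in spirit, but it is this asymptotic argument that actually pins down both the sign and the independence.
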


\begin{proof}
By linearity, it is enough to prove for $\mathcal L=\lambda[\Delta_X]$ and for $\mathcal L=[Z_i]$. In the first case, we take $\mathcal L_\sA=\lambda[\Delta_{X^\sA}]$. Let us show the existence of $[Z_i]_\sA$ in the second case. Define 
\begin{align*}
    \mathcal L':=[\Stab_{-\fC}]^{\mathrm{t}}\circ [Z_i]\circ [\Stab_{\fC}],
\end{align*}
and by a similar argument as \cite[Thm.~4.6.1]{MO}, the pushforward along $X\times X$ in the definition of $\mathcal L'$ is proper, so $ \mathcal L'$ is an integral class in $H^{\sT}(X^\sA\times_{X_0} X^\sA)$. Note that for a pair of connected components $F_1,F_2\subset X^\sA$, the homological degree of $\mathcal L'$ on the component $F_1\times_{X_0} F_2$ is $\dim(F_1\times F_2)-2$. By \cite[Lem.~3.29]{COZZ}, $\mathcal L'$ makes the diagram commute for chamber $\fC$.
Next we will show that $\mathcal L'$ is up to a sign independent of the choice of $\fC$.

According to \cite{COZZ3}, we have
\begin{enumerate}
    \item $F_1\to X_0$ is $\sT$-equivariant, and birational and small onto its image (ref.~\cite[Prop.~3.21]{COZZ3}),
    \item $\dim F_1\times_{X_0} F_2\leqslant \frac{1}{2}\dim (F_1\times F_2)-1$ if $F_1\neq F_2$ (ref.~\cite[Rmk.~3.24]{COZZ3}).
\end{enumerate}
If $F_1\neq F_2$, then $\mathcal L'$ on the component $F_1\times_{X_0} F_2$ is a top-dimensional cycle class. If $F_1=F_2$, then $\mathcal L'$ on the component $F_1\times_{X_0} F_1$ is of the form (see Lemma \ref{lem on A vs T loc})
\begin{align*}
\mu\cup[\Delta_{F_1}]+\sum_{j=1}^n a_j[H_j],
\end{align*}
where $\mu\in H_{\sT}^2(F_1)$, $a_j\in \bC$, and $\{H_j\}_{j=1}^n$ are irreducible components of $F_1\times_{X_0} F_1$ with dimension $\dim F_1-1$. 
By the freeness $H^\sT(F_1)\cong H(F_1)\otimes H_{\sT}(\pt)$ (see the proof of Lemma \ref{lem on A vs T loc}), there is a decomposition
\begin{align*}
H_{\sT}^2(F_1)\cong H^2(F_1)\oplus H^2_{\sT}(\pt).
\end{align*}
The above decomposition is noncanonical, but the subspace $H^2_{\sT}(\pt)\subset H_{\sT}^2(F_1)$ is canonical. 

We claim that $\mu\in H^2_{\sT/\sA}(\pt)$. The set-theoretic convolution is disjoint from $F_1\times_{X_0} F_1\times_B U$, so $\mu\cup[\Delta_{F_1}]$ in the $\sA$-equivariant cohomology can be written as $\sum_{\alpha=1}^m b_\alpha [\Delta_{F_1\cap\pi^{-1}(D_\alpha)}]$, where $b_\alpha\in \bC$ and $\{D_\alpha\}_{\alpha=1}^m$ are codimension $1$ components of $B\setminus U$. Since $B$ is an affine space with trivial $\sA$ action, $[D_\alpha]=0$ in $H^\sA(B)$. So $[F_1\cap\pi^{-1}(D_\alpha)]=\pi^*([D_\alpha])=0$ in $H^\sA(F_1)$, and this implies that $\mu\cup[\Delta_{F_1}]$ vanishes in $H^\sA(\Delta_{F_1})$. This proves the claim.

From the above discussions, $\mathcal L'$ is independent of $\sA$-equivariant parameters $\mathsf a$, so we can compute it by sending $\mathsf a$ to infinity in any direction. By \cite[Prop.~3.23]{COZZ3}, we have 
$$\dim (F_1\times F_2)\cap Z_i\leqslant \frac{1}{2}\dim (F_1\times F_2)-1\,.$$
Denote by $L_1,L_2,\cdots$ to be the $\frac{1}{2}\dim (F_1\times F_2)-1$ dimensional components of $(F_1\times F_2)\cap Z_i$, then
\begin{align*}
(F_1\times F_2\hookrightarrow X\times X)^*[Z_i]=\sum_{r} f_r [L_r]+\cdots\,,
\end{align*}
where $f_r\in H_{\sA}(\pt)$ is a homogeneous polynomial of degree $\frac{1}{2}\codim_{X\times X}(F_1\times F_2)$, and dots stand for terms of smaller degree. Applying $\sA$-localization and we get 
\begin{align*}
    \mathcal L'=\lim_{\mathsf a\to \infty}\sum_{r}\frac{f_r}{\varepsilon_1\varepsilon_2}[L_r],
\end{align*}
where $\varepsilon_1$ is the product of $\sA$-weights in $N^{-}_{F_1/X}$ and $\varepsilon_2$ is the product of $\sA$-weights in $N^{+}_{F_2/X}$. Since $\lim_{\mathsf a\to \infty}\frac{f_r}{\varepsilon_1\varepsilon_2}$ does not depend on the direction of taking the limit $\mathsf a\to \infty$, we must have $f_r\propto \varepsilon_1\varepsilon_2$. We fix square roots 
\begin{align*}
    \varsigma_1=\sqrt{\prod\text{$\sA$-weights in $N_{F_1/X}$}},\quad \varsigma_2=\sqrt{\prod\text{$\sA$-weights in $N_{F_2/X}$}}
\end{align*}
and define 
\begin{align*}
    \mathcal L_\sA:=\lim_{\mathsf a\to \infty}\sum_{r}\frac{f_r}{\varsigma_1\varsigma_2}[L_r],
\end{align*}
Then $\mathcal L'=(-1)^\sharp \mathcal L_\sA$ and the theorem follows.
\end{proof}

In the proof of Theorem \ref{thm Stab and Steinberg}, we have used the following lemma.

\begin{Lemma}\label{lem on A vs T loc}
Suppose that $Y$ is a symmetric quiver variety with a $\sT$-action, and let $Y\to V$ be a $\sT$-equivariant morphism to an affine $\sT$-variety $V$, which is birational and small onto its image. Let $\gamma$ be a class in $H^\sT_{2\dim Y-2}(Y\times_{V}Y)$ such that $\gamma=0$ in $H^\sA(Y\times_{V}Y)$, then
\begin{align*}
    \gamma=\lambda\,[\Delta_Y]\;\text{ for some $\sT/\sA$-weight $\lambda$}\:.
\end{align*}
\end{Lemma}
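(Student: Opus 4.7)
The plan is to leverage the decomposition of top Borel--Moore homology afforded by the smallness of $Y\to V$, in parallel with the decomposition used for $X\times_{X_0}X$ in the discussion preceding Definition \ref{def of Steinberg corr}, and then compare the $\sT$- and $\sA$-equivariant parts using the freeness mentioned in the footnote of \S \ref{subsec steinberg corr}. Since $Y\to V$ is birational and small onto its image, the only top-dimensional irreducible component of $Y\times_V Y$ is $\Delta_Y$. Listing the components of dimension $\dim Y-1$ as $Z_1,\ldots,Z_m$, one has
\begin{align*}
    H^{\sT}_{2\dim Y-2}(Y\times_V Y)\cong H^2_{\sT}(Y)\cdot[\Delta_Y]\oplus \bigoplus_{j=1}^m\bC\cdot[Z_j],
\end{align*}
where Poincar\'e duality on the smooth diagonal $\Delta_Y\cong Y$ identifies $H^{\sT}_{2\dim Y-2}(\Delta_Y)$ with $H^2_\sT(Y)$. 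The same decomposition holds $\sA$-equivariantly, and the forgetful map $H^\sT\to H^\sA$ is diagonal with respect to these splittings. Accordingly, write
\begin{align*}
    \gamma=\mu\cdot[\Delta_Y]+\sum_{j=1}^m a_j[Z_j],\qquad \mu\in H^2_\sT(Y),\ a_j\in\bC.
\end{align*}

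Next, split $\sT=\sA\times(\sT/\sA)$. Since $Y$ is a symmetric quiver variety, it carries the auxiliary $\bC^*_\hbar$-action scaling all arrows, whose fixed locus is smooth and proper, so the Bialynicki--Birula argument alluded to in the footnote of \S \ref{subsec steinberg corr} gives an isomorphism $H^\sT(Y)\cong H(Y)\otimes H_\sT(\pt)$ of $H_\sT(\pt)$-modules. Combined with the splitting of $\sT$, this yields
\begin{align*}
    H^2_\sT(Y)\cong H^2_\sA(Y)\oplus H^2_{\sT/\sA}(\pt),
\end{align*}
and the kernel of the forgetful map $H^2_\sT(Y)\to H^2_\sA(Y)$ is exactly $H^2_{\sT/\sA}(\pt)$, i.e.\ the space of $\sT/\sA$-weights.

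The hypothesis $\gamma=0$ in $H^\sA(Y\times_V Y)$, combined with the $\sA$-equivariant direct-sum decomposition, forces $a_j=0$ for every $j$ and $\mu|_{H^\sA}=0$ in $H^2_\sA(Y)$. By the kernel identification above, $\mu=\lambda$ for some $\sT/\sA$-weight $\lambda\in H^2_{\sT/\sA}(\pt)$, so that $\gamma=\lambda[\Delta_Y]$ as desired.

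The main obstacle is justifying that the direct-sum decomposition of $H^\sT_{2\dim Y-2}(Y\times_V Y)$ really survives in the equivariant setting: this is not formal and must be bootstrapped from smallness plus the $\bC^*_\hbar$-equivariant freeness of $H^\sT(Y\times_V Y)$ over $H_\sT(\pt)$. However, this is precisely the structural input already invoked for $X\times_{X_0}X$ in \S \ref{subsec steinberg corr}, and the argument transfers verbatim to $Y\times_V Y$. Once this is granted, the rest of the proof is a careful bookkeeping of direct-sum decompositions across $\sT$- and $\sA$-equivariant cohomologies.
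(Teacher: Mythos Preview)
Your proof is correct and follows essentially the same route as the paper's: both use the smallness of $Y\to V$ to obtain the splitting $H^\sT_{2\dim Y-2}(Y\times_V Y)\cong H^\sT_{2\dim Y-2}(\Delta_Y)\oplus\bigoplus_j\bC[Z_j]$, then use the vanishing in $H^\sA$ to kill the $[Z_j]$ coefficients and force the diagonal part into $\ker\bigl(H^2_\sT(Y)\to H^2_\sA(Y)\bigr)=H^2_{\sT/\sA}(\pt)$ via the freeness $H^\sT(Y)\cong H(Y)\otimes H_\sT(\pt)$. One small remark: your final paragraph slightly misplaces the role of freeness---the direct-sum decomposition itself is essentially formal from smallness (the splitting comes from the pushforward along the closed embedding $\Delta_Y\hookrightarrow Y\times_V Y$), and the freeness of $H^\sT(Y)$ (not of $H^\sT(Y\times_V Y)$) is what is needed only at the last step, to identify the kernel of the forgetful map on $H^2$.
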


\begin{proof}
Since $Y\to V$ is birational and small, $\Delta_Y$ is an irreducible component of $Y\times_{V}Y$ and other components have dimension $<\dim Y$. It follows that there exists a decomposition
\begin{align*}
    H^{\sT}_{2\dim Y-2}(Y\times_{V}Y)\cong  H^{\sT}_{2\dim Y-2}(\Delta_Y)\oplus \left(\bigoplus_i \bC\cdot [H_i]\right)
\end{align*}
where $\{H_i\}$ are irreducible components of $Y\times_{V} Y$ of dimension $\dim Y-1$. Write $$\gamma=c+\sum_{i}a_i[H_i], $$ 
where $c\in H^{\sT}_{2\dim Y-2}(\Delta_Y)$, $a_i\in \bC$. Then $\gamma=0$ in $H^\sA(Y\times_{V}Y)$ implies that $a_i=0$ for all $i$, and $c=0$ in $H^{\sA}_{2\dim Y-2}(\Delta_Y)$. By the freeness $H^\sT(Y)\cong H(Y)\otimes H_\sT(\pt)$, we conclude that $c=\lambda[\Delta_Y]$ for some $\sT/\sA$-weight $\lambda$.
\end{proof}


\section{Drinfeld type shifted Yangian action on critical cohomology of quiver varieties}\label{sec double of COHA}

In this section, we define 
(Drinfeld type) shifted Yangians from spherical nilpotent critical cohomological Hall algebras (Definition \ref{def shifted yangian}). 
Motivated by works of Nakajima \cite{Nak1}, Negu\c{t} \cite{N2} on Nakajima quiver varieties, we define their action on critical cohomology of quiver varieties with potentials (Theorem \ref{thm borel yangian action}, Theorem \ref{cor shifted yangian action}). This generalizes various works of Varagnolo-Vasserot \cite{VV1, VV2} and Yang-Zhao \cite{YZ3} (Examples \ref{ex on gen symi}, \ref{ex on vv ex}).

We also describe generators of the (Drinfeld type) shifted super Yangians via matrix elements of $R$-matrices (Theorem \ref{thm e f h as R matrix elements}).


We start with the following data:
\begin{itemize}
    \item a quiver $Q=(Q_0,Q_1)$, and denote its path algebra by $\bC Q$,
    \item a torus $\sT_0$ that scales the arrows in $Q$,
    \item a \textit{potential}, i.e. an element $\sW\in \bC Q/[\bC Q,\bC Q]$, such that $\sW$ is $\sT_0$-invariant.
\end{itemize}
For any dimension vector $\bv\in \bZ_{\geqslant 0}^{Q_0}$, $$\sw=\tr(\sW)$$ becomes a $\sT_0\times G$-invariant function on the space of $\bv$-dimensional unframed representations $R(\bv,\mathbf 0)$ ($\mathbf 0$ means framing dimension is $\mathbf 0$), where $G=\prod_{i\in Q_0}\GL(\bv_i)$ is the gauge group that acts on $R(\bv,\mathbf 0)$ naturally. By $G$-invariance, $\sw$ descends to a $\sT_0$-invariant function on the quotient stack $\fM(\bv,\mathbf 0)=[R(\bv,\mathbf 0)/G]$, still denoted by $\sw$.

Given a pair of framing dimension vectors $\underline{\bd}=(\bd_{\In},\bd_{\Out})\in \bZ_{\geqslant 0}^{Q_0}\times \bZ_{\geqslant 0}^{Q_0}$, define the Crawley-Boevey quiver $Q^{\underline{\bd}}$ to be obtained from $Q$ by adding one node $\infty$, and $\bd_{\In ,i}$ many arrows from $\infty$ to $i$ and $\bd_{\Out ,i}$ many arrows from $i$ to $\infty$, for every $i\in Q_0$. We choose an extension of $\sT_0$ action from $Q$ to $Q^{\underline{\bd}}$, then choose a \textit{framed potential} $\sW^{\mathrm{fr}}\in \bC Q^{\underline{\bd}}/[\bC Q^{\underline{\bd}},\bC Q^{\underline{\bd}}]$ such that $\sW^{\mathrm{fr}}$ is $\sT_0$-invariant and its restriction to $Q$ is $\sW$. The function 
\begin{align*}
    \sw^{\mathrm{fr}}=\tr(\sW^{\mathrm{fr}})
\end{align*}
on the space of $\underline{\bd}$-framed representation $R(\bv,\underline{\bd})$ descends to the quotient stack $\fM(\bv,\underline{\bd})=[R(\bv,\underline{\bd})/G]$ as well as the stable locus $\cM_\theta(\bv,\underline{\bd})=R(\bv,\underline{\bd})^{\theta\emph{-}s}/G$, and we denote the induced functions by $\sw^{\mathrm{fr}}$. Starting from now, we renew the notation in \eqref{state space} to
\begin{align*}
\mathcal H_{\underline{\bd}}^{\sW^{\mathrm{fr}}}:=H^\sT(\cM_\theta(\underline{\bd}),\sw^{\mathrm{fr}}),\quad \mathcal H_{\underline{\bd}}^{\sW^{\mathrm{fr}}}(\bv):=H^\sT(\cM_\theta(\bv,\underline{\bd}),\sw^{\mathrm{fr}}).
\end{align*}
For simplicity, we fix a cyclic stability $\theta\in \bQ_{<0}^{Q_0}$ throughout this section.

\subsection{The general case}


Let $\mathcal{H}^{\mathrm{nil}}_{Q,\sW}$ denote the $\sT_0$-\textit{equivariant} 
\textit{nilpotent critical CoHA} associated to $(Q,\sW)$,~i.e.
\begin{align}\label{equ on HQsw}
    \mathcal{H}^{\mathrm{nil}}_{Q,\sW}=\bigoplus_{\bv\in \bN^{Q_0}}\mathcal{H}^{\mathrm{nil}}_{Q,\sW}(\bv),\quad \mathcal{H}^{\mathrm{nil}}_{Q,\sW}(\bv):=H^{\sT_0}(\fM(\bv,\mathbf 0),\sw)_{\fM(\bv,\mathbf 0)^{\mathrm{nil}}},
\end{align}
decorated with cohomological Hall algebra structure of Kontsevich and Soibelman \cite{CoHA}, where $\fM(\bv,\mathbf 0)^{\mathrm{nil}}\subset \fM(\bv,\mathbf 0)$ is the moduli stack of nilpotent unframed representations of $Q$ with dimension vector $\bv$. 

$\mathcal{H}^{\mathrm{nil}}_{Q,\sW}$ is a $\bZ/2$-graded algebra with the grading given by
\begin{align}\label{equ on gra}
    |\bv|:=\sum_{i\in Q_0}|i|\cdot\bv_i,\;\,\, \text{where } |i|=1+\# (i\to i)\pmod{2}.
\end{align}
Throughout this section, we impose the following assumption for simplicity.
\begin{Assumption}
The $\sT_0$-weight on every edge loop of $Q$ is nontrivial.
\end{Assumption}
This assumption implies that $\sT_0$-fixed point of $\mathrm{Rep}(Q,\delta_i,\mathbf 0)\cong \bC^{\mathsf g_i}$ is $\{0\}$, where $\mathsf g_i$ is the number of edge loops at node $i$. Note that the locus of nilpotent representations $\mathrm{Rep}(Q,\delta_i,\mathbf 0)^{\mathrm{nil}}=\{0\}$. By \cite[Prop.~2.4]{COZZ}, we get
\begin{align*}
    \mathcal{H}^{\mathrm{nil}}_{Q,\sW}(\delta_i)=H^{\sT_0}(\fM(\delta_i,\mathbf 0),\sw)_{\fM(\delta_i,\mathbf 0)^{\mathrm{nil}}}\cong H^{\sT_0}([\{0\}/\bC^*]).
\end{align*}
Let $\mathcal L_i$ be the universal line bundle on $\fM(\delta_i,\mathbf 0)\cong [\bC^{\mathsf g_i}/\bC^*]$, and denote a basis of $\mathcal{H}^{\mathrm{nil}}_{Q,\sW}(\delta_i)$ as $\bC[\mathsf t_0]$-module by  $$e_{i,r}:=c_1(\mathcal L_{i})^{r}\cap\left[\fM(\delta_i,\mathbf 0)^{\mathrm{nil}}\right], \quad \forall \, r\in \bZ_{\geqslant 0}, $$  
where $c_1(-)$ denotes the $\sT_0$-equivariant first Chern class. 

The \textit{spherical subalgebra}
$\mathcal{SH}^{\mathrm{nil}}_{Q,\sW}$ is the $\bC[\mathsf t_0]$-subalgebra of $\mathcal{H}^{\mathrm{nil}}_{Q,\sW}$ generated by $\{\mathcal{H}^{\mathrm{nil}}_{Q,\sW}(\delta_i)\}_{i\in Q_0}$.
Let $\mathcal{SH}^{\mathrm{nil},\mathrm{op}}_{Q,\sW}$ denote the $\bZ/2$-graded opposite algebra with product 
$$f\star^{\mathrm{op}}g=(-1)^{|f|\cdot |g|}g\star f,$$ for homogeneous $f,g$. Denote $\left\{f_{i,r}\right\}_{i\in Q_0}^{r\in \bZ_{\geqslant 0}}$ to be the generators of $\mathcal{SH}^{\mathrm{nil},\mathrm{op}}_{Q,\sW}$.

A generalization of \cite[Prop. 3.4]{VV2}, \cite{Nak1} shows that 
\begin{align}\label{sph coha act}
    e_i(u):=\sum_{r\geqslant 0}e_{i,r}u^{-r-1}\mapsto \frac{(-1)^{\bd_{\Out,i}}}{u-c_1(\mathcal L_i)}\,[\overline{\mathfrak{P}}(\bv+\delta_i,\bv,\underline{\bd})]
\end{align}
gives rise to an action of $\mathcal{SH}^{\mathrm{nil}}_{Q,\sW}$ on $\mathcal H_{\underline{\bd}}^{\sW^{\mathrm{fr}}}$. Here 
$$\overline{\mathfrak{P}}(\bv+\delta_i,\bv,\underline{\bd})\subset {\mathfrak{P}}(\bv+\delta_i,\bv,\underline{\bd})
\subset \cM(\bv+\delta_i,\underline{\bd})\times \cM(\bv,\underline{\bd})$$ are certain closed subvarieties in the product, where ${\mathfrak{P}}(\bv+\delta_i,\bv,\underline{\bd})$ is the locus consisting of framed quiver representations $(V^+=V\oplus\bC^{\delta_i},D_{\In },D_{\Out})$, $(V,D_{\In },D_{\Out})$ such that there exists a surjective map between framed quiver representations $(V^+,D_{\In },D_{\Out})\twoheadrightarrow(V,D_{\In },D_{\Out})$ which is identity on the framing vector spaces. $\mathcal L_i$ is the line bundle on ${\mathfrak{P}}(\bv+\delta_i,\bv,\underline{\bd})$ whose fiber is the kernel of $(V^+,D_{\In },D_{\Out})\twoheadrightarrow(V,D_{\In },D_{\Out})$. $\overline{\mathfrak{P}}(\bv+\delta_i,\bv,\underline{\bd})$ is the locus in ${\mathfrak{P}}(\bv+\delta_i,\bv,\underline{\bd})$ such that the kernel of $(V^+,D_{\In },D_{\Out})\twoheadrightarrow(V,D_{\In },D_{\Out})$ is a nilpotent unframed $Q$-representation.

Similarly, the map 
\begin{align}\label{sph coha op act}
    f_i(u):=\sum_{r\geqslant 0}f_{i,r}u^{-r-1}\mapsto \frac{(-1)^{|i|\cdot |\bv|}}{u-c_1(\mathcal L_i)}\,[\overline{\mathfrak{P}}(\bv,\bv-\delta_i,\underline{\bd})]^{\mathrm{t}}
\end{align}
gives rise to an action of $\mathcal{SH}^{\mathrm{nil},\mathrm{op}}_{Q,\sW}$ on $\mathcal H_{\underline{\bd}}^{\sW^{\mathrm{fr}}}$. 

Define the free commutative algebra
\begin{align*}
    \mathcal H^0_{Q}:=\bC\left[h_{i,s}\:\big|\: i\in Q_0,s\in \bZ\right],
\end{align*}
and set 
$$h_i(u)=\sum_{s\in \bZ}h_{i,s}u^{-s-1}. $$

\begin{Definition}
\label{def borel yangian}
Define the \textit{unmodified Cartan doubled Yangian} $\mathcal{DSH}_\infty(Q,\sW)$ to be the $\bC[\mathsf t_0]$-algebra which is obtained from the free product $\mathcal{SH}^{\mathrm{nil}}_{Q,\sW}*\mathcal{SH}^{\mathrm{nil},\mathrm{op}}_{Q,\sW}*\mathcal H^0_{Q}$ subject to the relations:
\begin{align}\label{ef rel}
    e_{i,r} f_{j,s}-(-1)^{|i|\cdot|j|}f_{j,s}e_{i,r} =\delta_{ij}\gamma_i h_{i,r+s},\,\,\;\text{where }\,\gamma_i:=\prod_{e:i\to i}t_e,
\end{align}
\begin{align}\label{he and hf rel}
h_i(z)e_j(w)=\zeta^{\circ}_{ij}(z)e_j(w)h_i(z),\quad h_i(z)f_j(w)=\zeta^{\circ}_{ij}(z)^{-1}f_j(w)h_i(z).
\end{align}
Here $\zeta^{\circ}_{ij}(z)$ is the ``bond factor'':
\begin{align*}
    \zeta^{\circ}_{ij}(z)=(-1)^{\delta_{ij}+|i|\cdot|j|}\frac{\prod_{a:i\to j}(\sigma_j+t_a-z)}{\prod_{b:j\to i}(z+t_b-\sigma_j)},\;\text{where $\sigma_j$ is the operator } e_{j,r}\mapsto e_{j,r+1},\; f_{j,r}\mapsto f_{j,r+1},\;(r\in \bZ_{\geqslant0}).
\end{align*}
\end{Definition}
It is easy to see that the multiplication map
$$\mathcal{SH}^{\mathrm{nil}}_{Q,\sW}\otimes_{\bC[\mathsf t_0]}\mathcal{SH}^{\mathrm{nil},\mathrm{op}}_{Q,\sW}\otimes\mathcal H^0_{Q}\to \mathcal{DSH}_\infty(Q,\sW)$$ is surjective, but 
the injectivity is not clear.


Let $\mathcal H^0_{Q}$ act on $\mathcal H_{\underline{\bd}}^{\sW^{\mathrm{fr}}}$ by tautological classes:
\begin{align}\label{cartan act}
    h_i(z)\mapsto (-1)^{1+\bv_i+|i|\cdot |\bv|+\sum_{i\to j}\bv_j}z^{\rk \mathsf U_i}c_{-1/z}\left(\mathsf U_i\right),\;\text{ for }\; \mathsf U_i=\sum_{a:i\to j}t_a\mathsf V_j-\sum_{b:j\to i}t_{b}^{-1}\mathsf V_j+\mathsf D_{\Out,i}-\mathsf D_{\In,i}\:,
\end{align}
where $\{t_a\}_{a\in Q_1}$ are $\sT_0$-weights of arrows, $c_{-1/z}(-)$ denotes the $\sT_0$-equivariant Chern polynomial with variable $-1/z$. 
\begin{Theorem}
\label{thm borel yangian action}
The assignments \eqref{sph coha act}, \eqref{sph coha op act}, and \eqref{cartan act} give rise to a $\mathcal{DSH}_\infty(Q,\sW)$ action on $\mathcal H_{\underline{\bd}}^{\sW^{\mathrm{fr}}}$.
\end{Theorem}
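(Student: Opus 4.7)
The plan is to verify the three ingredients separately and then check the two families of cross relations. First, I would prove that \eqref{sph coha act} defines an action of the spherical nilpotent critical CoHA $\mathcal{SH}^{\mathrm{nil}}_{Q,\sW}$ on $\mathcal H_{\underline{\bd}}^{\sW^{\mathrm{fr}}}$. The argument is modelled on \cite{Nak1, VV2}: the CoHA product is defined by convolving with the class of a short-exact-sequence correspondence, and iterating the Hecke correspondence $\overline{\mathfrak{P}}(\bv+\delta_i,\bv,\underline{\bd})$ in two stages is, up to the usual sign coming from the Koszul sign rule for $\bZ/2$-graded algebras, precisely this convolution performed on framed moduli. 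The critical enhancement is automatic because all correspondences are cut out by the same $\sw^{\mathrm{fr}}$, so the dimensional-reduction/Thom–Sebastiani machinery of \cite[\S 6]{COZZ} applies verbatim. The action of $\mathcal{SH}^{\mathrm{nil},\mathrm{op}}_{Q,\sW}$ via \eqref{sph coha op act} follows by the same argument for the transposed correspondences, the extra sign $(-1)^{|i|\cdot|\bv|}$ being exactly the super-twist needed to convert the opposite product into a bona fide associative action. Finally, $\mathcal H^0_Q$ acts through \eqref{cartan act} because it is a free polynomial algebra acting by cup product with tautological classes, so no relations need to be verified for this factor.

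Next I would check the mixed relation \eqref{he and hf rel}. Since $e_j(w)$ raises the dimension vector by $\delta_j$ and $h_i(z)$ is the generating function of cup products with tautological characteristic classes, the conjugate $h_i(z)e_j(w)h_i(z)^{-1}$ only sees the difference of $\mathsf U_i$ computed on $\cM_\theta(\bv+\delta_j,\underline\bd)$ versus on $\cM_\theta(\bv,\underline\bd)$, pulled back through the correspondence. Plugging in the formula for $\mathsf U_i$, the contributions of $\mathsf D_{\In,i}-\mathsf D_{\Out,i}$ cancel, and the remaining terms give exactly the ratio $\zeta^\circ_{ij}(z)$, with the sign $(-1)^{\delta_{ij}+|i||j|}$ arising from comparing the super-grading $|\bv|$ with $|\bv+\delta_j|$ and from the appearance of $c_1(\mathcal L_j)=\sigma_j$ in the line-bundle weight of the correspondence. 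The $hf$ relation is the transpose, which accounts for the inverse exponent.

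The heart of the proof is the key commutator \eqref{ef rel}. I would compute the two compositions $e_{i,r}\circ f_{j,s}$ and $f_{j,s}\circ e_{i,r}$ as convolutions on
\[
\cM_\theta(\bv+\delta_i,\underline\bd)\times\cM_\theta(\bv,\underline\bd)\times\cM_\theta(\bv-\delta_j+\delta_i\,\text{or}\,\delta_j,\underline\bd),
\]
using base change along the fiber product. If $i\neq j$, the two resulting correspondences agree set-theoretically, because taking a $\delta_i$-quotient and a $\delta_j$-sub of a representation commute up to a canonical isomorphism; the graded sign $(-1)^{|i||j|}$ is exactly the Koszul sign that makes them cancel, yielding zero. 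If $i=j$, the two correspondences differ on the diagonal locus where the $\delta_i$-sub equals the $\delta_i$-quotient. The excess bundle on this diagonal is controlled by $\End(\mathcal L_i)$ together with the loop weights at vertex $i$, and a critical-cohomology version of the standard localized-class computation (as in \cite{Nak1, VV1, N2}) produces the generating function $\gamma_i h_i(z+w)$; here the prefactor $\gamma_i=\prod_{e:i\to i}t_e$ records the normal bundle to the diagonal coming from loop arrows, while the cup product with the Chern polynomial of $\mathsf U_i$ exactly matches our assignment \eqref{cartan act}. Extracting the coefficient of $z^{-r-1}w^{-s-1}$ gives \eqref{ef rel}.

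The main obstacle will be this diagonal contribution in the $(ef)$-relation: one must identify, with correct signs, the localized Euler class of the excess bundle on the diagonal stratum with the Chern polynomial appearing in \eqref{cartan act}, and check that the $(-1)^{\bd_{\Out,i}}$ in \eqref{sph coha act}, the $(-1)^{|i|\cdot|\bv|}$ in \eqref{sph coha op act}, and the overall sign $(-1)^{1+\bv_i+|i|\cdot|\bv|+\sum_{i\to j}\bv_j}$ in \eqref{cartan act} conspire to give precisely $\gamma_i h_{i,r+s}$ rather than an adjusted version of it. This is the standard sign bookkeeping that makes the shifted Yangian presentation match the Hecke-correspondence action, and it is the only step that is delicate rather than routine.
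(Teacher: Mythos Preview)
Your overall structure is correct and matches the paper's: the $he$/$hf$ relations are straightforward, the $i\neq j$ case of \eqref{ef rel} follows because the two convolutions coincide, and the $i=j$ case reduces to a diagonal contribution. The substantive difference is in how that diagonal contribution is computed. You propose a direct excess-bundle computation in critical cohomology. The paper instead makes a key simplification you don't mention: since the commutator is supported on the diagonal $\Delta\subset\cM(\bv,\underline{\bd})^{\times 2}$, the class $\beta_i(z,w)$ satisfying $[e_i(z),f_i(w)]=\Delta_*\beta_i(z,w)$ is determined by intersection theory on $\cM(\bv,\underline{\bd})$ and is \emph{independent of the potential}. This lets one set $\sw^{\mathrm{fr}}=0$ and recover $\beta_i(z,w)$ by letting the commutator act on $1\in H_{\sT_0}(\cM(\bv,\underline{\bd}))$. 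The computation then becomes concrete: $\overline{\mathfrak{P}}(\bv\pm\delta_i,\bv,\underline{\bd})\to\cM(\bv,\underline{\bd})$ are identified with projectivizations $\bP((\mathcal C^+_i)^\vee)$ and $\bP(\mathcal C^-_i)$ of the two-term complexes \eqref{two complexes}, and a short Segre-class lemma (Lemma~\ref{lem segre}) computes the pushforwards $e_{i,r}(1)$, $f_{i,s}(1)$ explicitly. The two compositions $e_{i,r}f_{i,s}(1)$ and $(-1)^{|i|}f_{i,s}e_{i,r}(1)$ are then both coefficients of the \emph{same} rational function in $z,w$ but expanded in opposite regimes, so their difference is the residue at $w=z$, which yields precisely $\gamma_i h_{i,r+s}$ with all signs accounted for. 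Your excess-bundle route should work too, but carrying it out directly in critical cohomology with the correct signs is exactly the ``delicate step'' you flag; the paper's potential-independence trick sidesteps that by reducing to an elementary projective-bundle computation in ordinary cohomology.
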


\begin{proof}
The relations \eqref{he and hf rel} are easy to check. For the relation \eqref{ef rel}, we have
\begin{equation}\label{[e,f] corr}
\begin{split}
& \quad \,\, e_i(z)f_j(w)-(-1)^{|i|\cdot|j|}f_j(w)e_i(z)\\
    &=(-1)^{\sharp}\left(\frac{[\overline{\mathfrak{P}}(\bv+\delta_i-\delta_j,\bv-\delta_j,\underline{\bd})]}{z-c_1(\mathcal L_i)}\frac{[\overline{\mathfrak{P}}(\bv-\delta_j,\bv,\underline{\bd})]}{w-c_1(\mathcal L_j)}-\frac{[\overline{\mathfrak{P}}(\bv-\delta_j+\delta_i,\bv+\delta_i,\underline{\bd})]}{w-c_1(\mathcal L_j)}\frac{[\overline{\mathfrak{P}}(\bv+\delta_i,\bv,\underline{\bd})]}{z-c_1(\mathcal L_i)}\right).
\end{split}
\end{equation}
If $i\neq j$, then the correspondence on the RHS of\eqref{[e,f] corr} is trivial; if $i=j$, then the correspondence on the RHS of\eqref{[e,f] corr} is supported on the diagonal, and we have
\begin{align*}
    e_i(z)f_i(w)-(-1)^{|i|}f_i(w)e_i(z)=\Delta_*(\beta_i(z,w))\in H^{\sT_0}\left(\cM(\bv,\underline{\bd})^{\times 2},\sw^{\mathrm{fr}}\boxminus \sw^{\mathrm{fr}}\right)_{\Delta}.
\end{align*}
Note that $\beta_i(z,w)$ is completely determined by the intersection theory on $\cM(\underline{\bd})$ which is irrelevant to the potential. Thus we can set the potential to be zero, then let $e_i(z)f_i(w)-(-1)^{|i|}f_i(w)e_i(z)$ act on $1\in H_{\sT_0}(\cM(\bv,\underline{\bd}))$, and get:
\begin{align*}
    \beta_i(z,w)=e_i(z)f_i(w)(1)-(-1)^{|i|}f_i(w)e_i(z)(1).
\end{align*}
We compute the above in two steps, following similar computations on $K$-theory of Nakajima quiver varieties by Negu\c{t} \cite{N2}.

\textbf{Step 1.} Define two complexes on $\cM(\bv,\underline{\bd})$ in cohomological degrees $0$ and $1$:
\begin{align}\label{two complexes}
    \mathcal C^+_i=\left[\mathsf V_i\xrightarrow{(B_i, X_a)}\mathsf D_{\Out,i}\oplus\bigoplus_{a:i\to j}t_a\mathsf V_j\right],\qquad \mathcal C^-_i=\left[\mathsf D_{\In,i}\oplus\bigoplus_{b:j\to i}t_b^{-1}\mathsf V_j\xrightarrow{(A_i, X_b)}\mathsf V_i\right].
\end{align}
Note that $H^1(\mathcal C^-_i)=0$ by the cyclic stability condition. The projection 
$$\pi^+\colon \overline{\mathfrak{P}}(\bv,\bv-\delta_i,\underline{\bd})\to \cM(\bv,\underline{\bd})$$ is $\sT_0$-equivariantly isomorphic to $\bP((\mathcal C^+_i)^\vee)$, and $\mathcal L_i$ is $\sT_0$-equivariantly isomorphic to $\mathcal O(-1)$. The projection 
$$\pi^-\colon \overline{\mathfrak{P}}(\bv+\delta_i,\bv,\underline{\bd})\to \cM(\bv,\underline{\bd})$$ is $\sT_0$-equivariantly isomorphic to $\bP(\mathcal C^-_i)$, and $\mathcal L_i$ is $\sT_0$-equivariantly isomorphic to $\mathcal O(1)$. 

Let $\mathsf V^+,\mathsf V,\mathsf V^-$ denote universal bundles on $\cM(\bv+\delta_i,\underline{\bd}),\cM(\bv,\underline{\bd}),\cM(\bv-\delta_i,\underline{\bd})$ respectively.  Lemma \ref{lem segre} implies 
\begin{align*}
    e_{i,r}(1)=\pi^+_*\left(\frac{(-1)^{\bd_{\Out,i}}}{z-c_1(\mathcal O(-1))}\right)_{z^{-r-1}}=(-1)^{\bv_i+\sum_{i\to j}\bv^+_j}\left[z^{\bd_{\Out,i}-\bv_i^+ +\sum_{i\to j}\bv_j^+}c_{-1/z}\left(\mathsf D_{\Out,i}+\sum_{a:i\to j}t_a\mathsf V^+_j-\mathsf V_i^+\right)\right]_{z^{-r-1}},
\end{align*}
\begin{align*}
    f_{i,s}(1)=\pi^-_*\left(\frac{(-1)^{|i|\cdot |\bv|}}{w-c_1(\mathcal O(1))}\right)_{w^{-s-1}}=(-1)^{|i|\cdot |\bv|}\left[ w^{\bv_i^- -\bd_{\In,i}-\sum_{j\to i}\bv_j^-}c_{-1/w}\left(\mathsf V_i^- -\mathsf D_{\In,i}-\sum_{b:j\to i}t_b^{-1}\mathsf V^-_j\right)\right]_{w^{-s-1}},
\end{align*}
where we take coefficients of $z^{-r}$ and $w^{-s}$ in the power series expansions respectively.

\textbf{Step 2.} We have
\begin{align*}
    &f_{i,s}e_{i,r}(1)=(-1)^{\bv^+_i+|i|\cdot |\bv|+\sum_{i\to j}\bv_j}\pi^-_*\left(\frac{z^{\bd_{\Out,i}-\bv_i^+ +\sum_{i\to j}\bv_j^+}\pi^{+*}c_{-1/z}\left(\mathsf D_{\Out,i}+\sum_{a:i\to j}t_a\mathsf V^+_j-\mathsf V_i^+\right)}{w-c_1(\mathcal O(1))}\right)_{z^{-r-1},w^{-s-1}}\\
    =&(-1)^{\bv^+_i+|i|\cdot |\bv|+\sum_{i\to j}\bv_j}z^{\bd_{\Out,i}-\bv_i +\sum_{i\to j}\bv_j}c_{-1/z}\left(\mathsf D_{\Out,i}+\sum_{a:i\to j}t_a\mathsf V_j-\mathsf V_i\right)\pi^-_*\left(\frac{\prod_{a:i\to i}(z-t_a-c_1(\mathcal O(1)))}{(z-c_1(\mathcal O(1)))(w-c_1(\mathcal O(1)))}\right)_{z^{-r-1},w^{-s-1}}.
\end{align*}
Using Lemma \ref{lem segre} again, we get:
\begin{align*}
    f_{i,s}e_{i,r}(1)=&(-1)^{\bv^+_i+|i|\cdot |\bv|+\sum_{i\to j}\bv_j}z^{\bd_{\Out,i}-\bv_i +\sum_{i\to j}\bv_j}w^{\bv_i -\bd_{\In,i}-\sum_{j\to i}\bv_j}c_{-1/z}\left(\mathsf D_{\Out,i}+\sum_{a:i\to j}t_a\mathsf V_j-\mathsf V_i\right)\\
    &\times \frac{\prod_{a:i\to i}(z-t_a-w)}{z-w} c_{-1/w}\left(\mathsf V_i -\mathsf D_{\In,i}-\sum_{b:j\to i}t_b^{-1}\mathsf V_j\right)\Bigg|_{z^{-r-1},w^{-s-1}}.
\end{align*}
Similarly, 
\begin{align*}
    &\, e_{i,r}f_{i,s}(1)=(-1)^{|i|\cdot |\bv|+\bd_{\Out,i}}\pi^+_*\left(\frac{w^{\bv_i^- -\bd_{\In,i}-\sum_{j\to i}\bv_j^-}\pi^{-*}c_{-1/w}\left(\mathsf V_i^- -\mathsf D_{\In,i}-\sum_{b:j\to i}t_b^{-1}\mathsf V^-_j\right)}{z-c_1(\mathcal O(-1))}\right)_{w^{-s-1},z^{-r-1}}\\
    =&(-1)^{|i|\cdot |\bv|+\bd_{\Out,i}}w^{\bv_i -\bd_{\In,i}-\sum_{j\to i}\bv_j}c_{-1/w}\left(\mathsf V_i -\mathsf D_{\In,i}-\sum_{b:j\to i}t_b^{-1}\mathsf V_j\right)\pi^+_*\left(\frac{\prod_{b:i\to i}(w+t_b-c_1(\mathcal O(-1)))}{(w-c_1(\mathcal O(-1)))(z-c_1(\mathcal O(-1)))}\right)_{w^{-s-1},z^{-r-1}}\\
    =&(-1)^{\bv^+_i+|i|\cdot |\bv|+\sum_{i\to j}\bv_j}z^{\bd_{\Out,i}-\bv_i +\sum_{i\to j}\bv_j}w^{\bv_i -\bd_{\In,i}-\sum_{j\to i}\bv_j}c_{-1/w}\left(\mathsf V_i -\mathsf D_{\In,i}-\sum_{b:j\to i}t_b^{-1}\mathsf V_j\right)\\
    &\times \frac{\prod_{b:i\to i}(w+t_b-z)}{w-z} c_{-1/z}\left(\mathsf D_{\Out,i}+\sum_{a:i\to j}t_a\mathsf V_j-\mathsf V_i\right)\Bigg|_{w^{-s-1},z^{-r-1}}.
\end{align*}
Then, $e_{i,r}f_{i,s}(1)$ and $(-1)^{|i|}f_{i,s}e_{i,r}(1)$ are given by the coefficients of $w^{-s-1}z^{-r-1}$ in the same rational function:
\begin{multline*}
(-1)^{\bv^+_i+|i|\cdot |\bv|+\sum_{i\to j}\bv_j}z^{\bd_{\Out,i}-\bv_i +\sum_{i\to j}\bv_j}w^{\bv_i -\bd_{\In,i}-\sum_{j\to i}\bv_j}\\
\cdot c_{-1/w}\left(\mathsf V_i -\mathsf D_{\In,i}-\sum_{b:j\to i}t_b^{-1}\mathsf V_j\right)c_{-1/z}\left(\mathsf D_{\Out,i}+\sum_{a:i\to j}t_a\mathsf V_j-\mathsf V_i\right)\frac{\prod_{b:i\to i}(w+t_b-z)}{w-z}, 
\end{multline*}
but expanded in different orders. Namely, for $e_{i,r}f_{i,s}(1)$, we expand as 
$$\frac{1}{w-z}=\frac{1}{w}+\frac{z}{w^2}+\cdots, $$
and for $(-1)^{|i|}f_{i,s}e_{i,r}(1)$, we expand as 
$$\frac{1}{w-z}=-\frac{1}{z}-\frac{w}{z^2}+\cdots. $$ 
The difference is given by coefficient of $z^{-r-s-1}$ in the residue of the rational function at $w=z$, that is,
\begin{align*}
e_{i,r}f_{i,s}(1)-(-1)^{|i|}f_{i,s}e_{i,r}(1)=
(-1)^{1+\bv_i+|i|\cdot |\bv|+\sum_{i\to j}\bv_j}\left(\prod_{b:i\to i}t_b\right)z^{\rk\mathsf U_i}c_{-1/z}\left(\mathsf U_i\right)\Bigg|_{z^{-r-s-1}}=\gamma_i h_{i,r+s}.
\end{align*}
This finishes the proof.
\end{proof}
In the above proof, we have used the following lemma, whose proof is left to the interested readers. 
\begin{Lemma}\label{lem segre}
Let $X$ be a variety with a complex of vector bundles $[\mathcal V\to \mathcal U]$ in cohomology degrees $-1$ and $0$, $p\colon \bP([\mathcal V\to \mathcal U])\to X$ denote the projection, then 
\begin{align*}
    p_*\left(\frac{1}{z-c_1(\mathcal O(1))}\right)=\left[z^{\rk\mathcal V-\rk\mathcal U}\frac{c_{-1/z}(\mathcal V)}{c_{-1/z}(\mathcal U)}\right]_{<0},
\end{align*}
where the subscript $<0$ means taking the negative power part of the power series.
\end{Lemma}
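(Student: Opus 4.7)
The plan is to deduce Lemma \ref{lem segre} from the projective bundle projection formula, reducing first to the case of a single vector bundle via multiplicativity of Segre classes.

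First I would regard $\mathbb{P}([\mathcal V \to \mathcal U])$ as the derived projectivization. When the map is fiberwise injective with locally free cokernel $\mathcal W$ (of rank $\rk \mathcal U - \rk \mathcal V$), this is just $\mathbb P(\mathcal W)$, and the short exact sequence $0 \to \mathcal V \to \mathcal U \to \mathcal W \to 0$ yields $c_{-1/z}(\mathcal U) = c_{-1/z}(\mathcal V)\,c_{-1/z}(\mathcal W)$, so the right-hand side of the claimed formula collapses to $\bigl[\, z^{-\rk \mathcal W}/c_{-1/z}(\mathcal W)\, \bigr]_{<0}$. Since both sides depend only on the class of $[\mathcal V \to \mathcal U]$ in $K$-theory, this reduces everything to the case $\mathcal V = 0$, $\mathcal U = \mathcal W$ of rank $r$.

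For this case, expand
\begin{align*}
\frac{1}{z - c_1(\mathcal O(1))} = \sum_{n \geq 0} c_1(\mathcal O(1))^n\, z^{-n-1}
\end{align*}
and apply the projective bundle projection formula: $p_*(c_1(\mathcal O(1))^n) = 0$ for $n < r-1$, and $p_*(c_1(\mathcal O(1))^{r-1+k}) = s_k(\mathcal W^\vee)$ for $k \geq 0$, where the dualization reflects the convention (matching the body of the proof of Theorem \ref{thm borel yangian action}) that $\mathbb P(\mathcal W)$ parametrizes lines in $\mathcal W$. Using $s_k(\mathcal W^\vee) = (-1)^k s_k(\mathcal W)$ together with the defining identity $c_t(\mathcal W)\,s_t(\mathcal W) = 1$, the pushforward becomes
\begin{align*}
\sum_{k \geq 0} (-1)^k s_k(\mathcal W)\, z^{-r-k} \;=\; z^{-r}\, s_{-1/z}(\mathcal W) \;=\; \frac{z^{-r}}{c_{-1/z}(\mathcal W)}\:,
\end{align*}
and since $r \geq 1$ every summand has strictly negative $z$-degree, so the truncation $[\,\cdot\,]_{<0}$ on the right is vacuous and the two sides agree.

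The main point requiring care — rather than a genuine obstacle — is fixing the sign and duality conventions consistently: which $\mathbb P(-)$ convention is used, which tautological line gets called $\mathcal O(1)$, and whether Segre classes are of $\mathcal W$ or $\mathcal W^\vee$ all influence the final signs. Once these are aligned with the conventions used in the identifications of $\pi^{\pm}$ with projectivizations of $\mathcal C_i^\pm$ in the proof of Theorem \ref{thm borel yangian action}, no further geometric input is needed and the verification is a routine power-series manipulation.
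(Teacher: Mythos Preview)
The paper explicitly omits this proof (``whose proof is left to the interested readers''), so there is no paper argument to compare against. Your approach---reduce to the vector bundle case via multiplicativity of Chern polynomials, then invoke the standard Segre pushforward formula $p_*(c_1(\mathcal O(1))^{r-1+k})=s_k$---is the natural one and is correct.

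One point worth sharpening: your reduction ``both sides depend only on the $K$-class'' is immediate when the differential $\mathcal V\to\mathcal U$ is fiberwise injective with locally free cokernel, but in the applications (e.g.\ $\bP((\mathcal C_i^+)^\vee)$ in Step~1 of Theorem~\ref{thm borel yangian action}) the differential need not be injective and the fiber dimension of the Hecke correspondence can jump. The correct reading is that $\bP([\mathcal V\to\mathcal U])$ sits in $\bP(\mathcal U)$ as the (possibly excess) zero locus of the induced section of $\mathcal V^\vee(1)$, and $p_*$ means $p_*^{\bP(\mathcal U)}\bigl(-\cdot e(\mathcal V^\vee(1))\bigr)$. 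With that interpretation, one short extra step closes the argument: setting $h=c_1(\mathcal O(1))$ and expanding $\prod_i(h-v_i)=\prod_i\bigl((z-v_i)-(z-h)\bigr)$ in powers of $(z-h)$, the $(z-h)^{-1}$-term gives $\prod_i(z-v_i)\cdot p_*^{\bP(\mathcal U)}\bigl(1/(z-h)\bigr)$, while all remaining terms are polynomials in $h$ whose pushforwards contribute only nonnegative powers of $z$. Since the left-hand side manifestly has only negative powers, these polynomial terms must cancel exactly the $[\cdot]_{\geq 0}$ part of the product, and this is what produces the truncation $[\cdot]_{<0}$ on the right in the general case. Your convention caveat is well placed; once the duals and $\mathcal O(1)$ are aligned, nothing further is needed.
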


\subsection{The case of symmetric quivers}\label{sec shifted yangian_sym quiver}

When $Q$ is symmetric, we can ``twist'' the multiplications in $\mathcal{DSH}_\infty(Q,\sW)$ in order to make comparison with known Yangian type algebras. 

\begin{Notation}
We say $i\in Q_0^{\mathrm b}$ (resp.~$i\in Q_0^{\mathrm f}$) if $|i|=0$ (resp.~$|i|=1$), where $|\cdot |$ is given by \eqref{equ on gra}. Here $b$ stands for \textit{bosonic} and $f$ stands for \textit{fermionic}.
Fix a splitting $Q_1=\mathcal A\sqcup \mathcal A^*\sqcup \mathcal E$ such that
\begin{itemize}
    \item there is an isomorphism $\iota\colon \mathcal A\cong \mathcal A^*$, $(a:i\to j)\mapsto (a^*:j\to i)$,
    \item $\mathcal E$ is a set of edge loops, such that the tail map $t\colon \mathcal E\to Q_0$ identifies $\mathcal E$ with $Q_0^{\mathrm b}$.
\end{itemize}
Moreover, we fix a total order $\prec$ on $Q_0^{\mathrm f}$.
\end{Notation}
Given the above data, we endow $\mathcal{H}^{\mathrm{nil}}_{Q,\sW}$ \eqref{equ on HQsw} with the \textit{twisted Hall product}:
\begin{align}\label{twi prod}
    f\ostar g=(-1)^{(\bv^{(1)}|\bv^{(2)})} f\star g,\,\,\,\;\text{for }\,\,f\in \mathcal{H}^{\mathrm{nil}}_{Q,\sW}(\bv^{(1)}),\,\: g\in \mathcal{H}^{\mathrm{nil}}_{Q,\sW}(\bv^{(2)}),
\end{align}
where $f\star g$ is the usual Hall product, and
\begin{align}\label{twist hall product}
    (\bv^{(1)}|\bv^{(2)}):=\sum_{(i\to j)\in \mathcal A}\bv^{(1)}_i\cdot\bv^{(2)}_j+\sum_{k\prec l\in Q_0^{\mathrm{f}}}\bv^{(1)}_k\cdot\bv^{(2)}_l.
\end{align}
Let $\widetilde{\mathcal{H}}^{\mathrm{nil}}_{Q,\sW}$ denote the CoHA with the above twisted product,  $\widetilde{\mathcal{SH}}^{\mathrm{nil}}_{Q,\sW}$ denote its spherical subalgebra, and $\widetilde{\mathcal{SH}}^{\mathrm{nil},\mathrm{op}}_{Q,\sW}$ denote the $\bZ/2$-graded opposite algebra. Note that the opposite product is given by
\begin{align*}
    f\ostar^{\mathrm{op}} g= (-1)^{(\bv^{(2)}|\bv^{(1)})+|\bv^{(1)}|\cdot |\bv^{(2)}|} g\star f,\,\,\,\;\text{for }\,f\in\mathcal{H}^{\mathrm{nil}}_{Q,\sW}(\bv^{(1)}),\,\: g\in \mathcal{H}^{\mathrm{nil}}_{Q,\sW}(\bv^{(2)}).
\end{align*}

\begin{Definition}
\label{def shifted yangian}
When $Q$ is symmetric, the \textit{Cartan doubled Yangian} $\mathcal D\widetilde{\mathcal{SH}}_\infty(Q,\sW)$ is the $\bC[\mathsf t_0]$-algebra obtained from the free product $\widetilde{\mathcal{SH}}^{\mathrm{nil}}_{Q,\sW}*\widetilde{\mathcal{SH}}^{\mathrm{nil},\mathrm{op}}_{Q,\sW}*\mathcal H^0_{Q}$ subject to the relations: 
\begin{align}\label{ef rel_sym}
    e_{i,r} f_{j,s}-(-1)^{|i|\cdot|j|}f_{j,s}e_{i,r}=(-1)^{(\delta_i|\delta_i)}\delta_{ij}\gamma_i h_{i,r+s},\,\,\;\text{where }\,\gamma_i:=\prod_{e:i\to i}t_e,
\end{align}
\begin{align}\label{he and hf rel_sym}
h_i(z)e_j(w)=\zeta_{ij}(z)e_j(w)h_i(z),\quad h_i(z)f_j(w)=\zeta_{ij}(z)^{-1}f_j(w)h_i(z),
\end{align}
where $\zeta_{ij}(z)$ is the ``bond factor'':
\begin{align}\label{bond fac}
    \zeta_{ij}(z)=\frac{\prod_{a:i\to j}(z-t_a-\sigma_j)}{\prod_{b:j\to i}(z+t_b-\sigma_j)},\;\text{where $\sigma_j$ is the operator } e_{j,r}\mapsto e_{j,r+1},\; f_{j,r}\mapsto f_{j,r+1},\;(r\in \bZ_{\geqslant0}).
\end{align}
For $\mu\in \bZ^{Q_0}$,  the \textit{Drinfeld type} $\mu$\textit{-shifted Yangian} $\mathcal D\widetilde{\mathcal{SH}}_\mu(Q,\sW)$ is the quotient of $\mathcal D\widetilde{\mathcal{SH}}_\infty(Q,\sW)$ by the relations
\begin{align}\label{h shift}
    h_{i,t}=0\:\text{ for }\:t<-\mu_i-1,\quad h_{i,-\mu_i-1}=1.
\end{align}
\end{Definition}

\begin{Theorem}\label{cor shifted yangian action}
Let $\mu:=\bd_{\Out}-\bd_{\In}\in \bZ^{Q_0}$. The following assignments 
\begin{align}\label{sph coha act_sym}
    e_i(u)\mapsto \frac{(-1)^{(\delta_i|\bv+\delta_i)}}{u-c_1(\mathcal L_i)}\,[\overline{\mathfrak{P}}(\bv+\delta_i,\bv,\underline{\bd})],
\end{align}
\begin{align}\label{sph coha op act_sym}
    f_i(u)\mapsto \frac{(-1)^{1+\bd_{\Out,i}+|i|\cdot |\bv|+(\bv-\delta_i|\delta_i)}}{u-c_1(\mathcal L_i)}\,[\overline{\mathfrak{P}}(\bv,\bv-\delta_i,\underline{\bd})]^{\mathrm{t}},
\end{align}
\begin{align}\label{cartan act_sym}
    h_i(u)\mapsto u^{\mu_i}c_{-1/u}\left(\mathsf U_i\right),\;\text{ for }\; \mathsf U_i=\sum_{a:i\to j}t_a\mathsf V_j-\sum_{b:j\to i}t_{b}^{-1}\mathsf V_j+\mathsf D_{\Out,i}-\mathsf D_{\In,i}\:,
\end{align}
give rise to a $\mathcal D\widetilde{\mathcal{SH}}_\mu(Q,\sW)$ action on $\mathcal H_{\underline{\bd}}^{\sW^{\mathrm{fr}}}$.
\end{Theorem}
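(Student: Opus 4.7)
The plan is to combine Theorem~\ref{thm borel yangian action} with the twisted-product formalism, and then verify the shift conditions~\eqref{h shift} as a separate elementary step. The shift verification comes first because it is immediate: since $Q$ is symmetric, each arrow $a\colon i\to j$ has a dual $a^*\colon j\to i$, so the virtual rank of $\mathsf U_i$ is exactly $\bd_{\Out,i}-\bd_{\In,i}=\mu_i$. Expanding $u^{\mu_i}c_{-1/u}(\mathsf U_i)=u^{\mu_i}+O(u^{\mu_i-1})$ in descending powers of $u$ shows the coefficient of $u^{-s-1}$ is $\id$ for $s=-\mu_i-1$ and vanishes for $s<-\mu_i-1$, which is exactly~\eqref{h shift}.

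For the defining relations of $\mathcal D\widetilde{\mathcal{SH}}_\infty(Q,\sW)$, the idea is to rerun the proof of Theorem~\ref{thm borel yangian action} with the modified sign prefactors in \eqref{sph coha act_sym}--\eqref{cartan act_sym}. The Chern-class heart of the argument --- the two-step projective-bundle computation on the Hecke correspondences \eqref{two complexes} and the residue identification of $e_{i,r}f_{i,s}-(-1)^{|i|}f_{i,s}e_{i,r}$ at $w=z$ --- is independent of sign conventions and transfers mutatis mutandis. The $he$ and $hf$ relations follow from pulling $\mathcal L_j$ through $\mathsf U_i$; the rational expression produced is the new bond factor $\zeta_{ij}(z)$ of \eqref{bond fac}, because the sign $(-1)^{\delta_{ij}+|i|\cdot|j|}$ that distinguishes $\zeta_{ij}$ from $\zeta^\circ_{ij}$ is supplied by the new prefactor in the formula for $h_i(u)$ together with the twist $(-1)^{(\delta_j|\delta_i)}$ coming from the twisted CoHA product. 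Similarly, in the $[e,f]$ commutator with $i=j$, all sign contributions combine to an overall $(-1)^{(\delta_i|\delta_i)}$ on the right-hand side, matching \eqref{ef rel_sym}; for $i\neq j$ the Hecke correspondences intersect trivially and the commutator vanishes.

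The main obstacle is the sign bookkeeping: one must check that the prefactors $(-1)^{(\delta_i|\bv+\delta_i)}$ and $(-1)^{1+\bd_{\Out,i}+|i|\cdot|\bv|+(\bv-\delta_i|\delta_i)}$ in \eqref{sph coha act_sym}, \eqref{sph coha op act_sym}, together with the cocycle $(-1)^{(\bv^{(1)}|\bv^{(2)})}$ defining~\eqref{twi prod}, indeed produce the right-hand signs prescribed by \eqref{ef rel_sym} and \eqref{he and hf rel_sym}. An efficient way to organize this is to introduce a quadratic sign operator $\Xi$ on $\mathcal H^{\sW^{\mathrm{fr}}}_{\underline{\bd}}$ acting by $(-1)^{\xi(\bv,\underline{\bd})}$ on the $\bv$-component, chosen so that $\Xi$ intertwines the untwisted action of Theorem~\ref{thm borel yangian action} with the formulas \eqref{sph coha act_sym}--\eqref{cartan act_sym}; compatibility of $\Xi$ with the cocycle twist then follows from the symmetry of $Q$ and the definition of $(\cdot|\cdot)$ in \eqref{twist hall product}. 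Once $\xi$ is fixed, the remaining verifications reduce to those in the proof of Theorem~\ref{thm borel yangian action}.
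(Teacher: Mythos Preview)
Your proposal is correct and follows the same approach as the paper: deduce the result from Theorem~\ref{thm borel yangian action} by tracking the sign modifications that distinguish the twisted from the untwisted setup, together with the observation that $\rk\mathsf U_i=\mu_i$ when $Q$ is symmetric (this is exactly the content of Remark~\ref{rmk on sm q}). The paper's proof is in fact the one-liner ``This follows directly from Theorem~\ref{thm borel yangian action}'', so your write-up is a fleshed-out version of the same reduction.

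One small point of care: the mechanism via a single diagonal operator $\Xi$ is not quite the right packaging. Conjugation by $\Xi$ leaves the diagonal operators $h_i(u)$ unchanged, yet the sign prefactor on $h_i$ changes from $(-1)^{1+\bv_i+|i|\cdot|\bv|+\sum_{i\to j}\bv_j}$ in \eqref{cartan act} to no sign in \eqref{cartan act_sym}; likewise the defining relations themselves change (the bond factor $\zeta^\circ_{ij}$ becomes $\zeta_{ij}$, and \eqref{ef rel} acquires the extra $(-1)^{(\delta_i|\delta_i)}$), so the twisted and untwisted algebras are not related by an inner automorphism of $\End(\mathcal H^{\sW^{\mathrm{fr}}}_{\underline{\bd}})$. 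The correct bookkeeping is a rescaling of generators on the algebra side---$e_i\mapsto(\text{sign})\,e_i$, $f_i\mapsto(\text{sign})\,f_i$, $h_i\mapsto(\text{sign})\,h_i$---together with the cocycle twist on products, and then a direct check that these two sign changes are compatible. This is what your ``rerun the proof'' paragraph already does; the $\Xi$ device is unnecessary and slightly misleading.
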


\begin{proof}
This follows directly from Theorem \ref{thm borel yangian action}.
\end{proof}
\begin{Remark}\label{rmk on sm q}
To have a well-defined notion of \textit{shifted} Yangian, it is important to assume the quiver to be  symmetric, so the rank of $\mathsf U_i$ in \eqref{cartan act_sym}
only depends on the dimensions of framings. 
\end{Remark}

\begin{Example}\label{ex on gen symi}
Let the pair $(A,D)$ be a \textit{generalized Cartan matrix with symmetrizer}, one can construct a quiver $Q$, whose associated tripled quiver $\widetilde{Q}$ has a potential $\sW$, see Appendix \ref{sec Y(sym KM)}.

Let $Y^+(\mathfrak{g}_{A,D})$ be the \textit{positive} part of the Yangian of the \textit{symmetrizable Kac-Moody Lie algebra} $\mathfrak{g}_{A,D}$ associated to $(A,D)$ \cite{GTL,YZ3}, 
see Appendix \ref{sec Y(sym KM)} for a review of its definition. $Y^+(\mathfrak{g}_{A,D})$ is generated by $\{x^+_{i,r}\}_{i\in Q_0}^{r\in \bZ_{\geqslant 0}}$, and by Proposition \ref{prop Y^+(sym KM) to COHA}, there is a surjective map of $\bC(\hbar)$-algebras:
$$Y^+(\mathfrak{g}_{A,D})\otimes_{\bC[\hbar]}\bC(\hbar)\twoheadrightarrow \widetilde{\mathcal{SH}}^{\mathrm{nil}}_{\widetilde{Q},\sW}\otimes_{\bC[\hbar]}\bC(\hbar),\quad x^+_i(z):=\sum_{r\geqslant 0}x^+_{i,r}z^{-r-1}\mapsto e_i(z).$$
Given $\mu\in \bZ^{Q_0}$, the \textit{shifted Yangian} $Y_\mu(\mathfrak{g}_{A,D})$ is the $\bC[\hbar]$-algebra obtained from the free product $$Y^+(\mathfrak{g}_{A,D})* Y^+(\mathfrak{g}_{A,D})^{\mathrm{op}}*\bC[\psi_{i,s}\:|\: i\in Q_0,s\in \bZ]$$ subject to relations
\begin{align*}
    [x^+_{i,r},x^-_{j,s}]=\delta_{ij}\psi_{i,r+s},
\end{align*}
\begin{align}\label{relation for shifted yangian of KM alg}
    \psi_i(z)x^+_j(w)=\zeta_{ij}(z)x^+_j(w)\psi_i(z),\quad \psi_i(z)x^-_j(w)=\zeta_{ij}(z)^{-1}x^-_j(w)\psi_i(z),
\end{align}
\begin{align*}
    \psi_{i,r}=0\:\text{ for }\:r<-\mu_i-1,\quad \psi_{i,-\mu_i-1}=1.
\end{align*}
Here $\psi_i(z)=\sum_{r\in\bZ}\psi_{i,r}z^{-r-1}$, and $\zeta_{ij}(z)$ is the bond factor
\begin{align*}
    \zeta_{ij}(z)=\frac{z+d_ia_{ij}\hbar/2-\sigma_j}{z-d_ia_{ij}\hbar/2-\sigma_j},\;\text{where $\sigma_j$ is the operator } x^\pm_{j,r}\mapsto x^\pm_{j,r+1},\;(r\in \bZ_{\geqslant0}).
\end{align*}
Then there is a \textit{surjective} map of $\bC(\hbar)$-algebras:
\begin{gather*}
Y_\mu(\mathfrak{g}_{A,D})\otimes_{\bC[\hbar]}\bC(\hbar)\twoheadrightarrow \mathcal D\widetilde{\mathcal{SH}}_\mu(\widetilde{Q},\sW)\otimes_{\bC[\hbar]}\bC(\hbar),\\
x^+_i(z)\mapsto e_i(z),\quad x^-_i(z)\mapsto \frac{1}{d_i\hbar}f_i(z),\quad \psi_i(z)\mapsto h_i(z).
\end{gather*}
\end{Example}

\begin{Example}\label{ex on vv ex}
Let $m,n\in \bN$ with $\max\{mn,m,n\}\geqslant3$, and set $Q_0=\bZ/(m+n)\bZ$. A \textit{parity sequence} of type $(m,n)$ is a tuple $\mathbf s=(s_1,s_2,\ldots,s_{m+n})$ of $\pm 1$ 
with $m$ entries equal to $1$.
Given a parity sequence $\mathbf s$, one can define a symmetric quiver $Q$ with potential $\sW$, see Appendix \ref{sec super affine Yangian}.

Let $Y^+(\widehat{\mathfrak{sl}}_{n|m})$ be the \textit{positive} part of the \textit{affine Yangian} of $\mathfrak{sl}_{n|m}$ defined in \cite{U,VV1}, which we review in 
Appendix \ref{sec super affine Yangian}. Then by Proposition \ref{prop Y^+(aff sl(n|m)) to COHA}, there is a surjective $\bC(\hbar,\varepsilon)$-algebra map
\begin{align*}
    Y^+(\widehat{\mathfrak{sl}}_{n|m})\otimes_{\bC[\hbar,\varepsilon]}\bC(\hbar,\varepsilon)\twoheadrightarrow \widetilde{\mathcal{SH}}^{\mathrm{nil}}_{Q,\sW}\otimes_{\bC[\hbar,\varepsilon]}\bC(\hbar,\varepsilon),\quad x^+_i(z)\mapsto e_i(z),
\end{align*}
For $\mu\in \bZ^{Q_0}$, the \textit{shifted affine Yangian} $Y_\mu(\widehat{\mathfrak{sl}}_{n|m})$ is the $\bC[\hbar,\varepsilon]$-algebra obtained from the free product $$Y^+(\widehat{\mathfrak{sl}}_{n|m})* Y^+(\widehat{\mathfrak{sl}}_{n|m})^{\mathrm{op}}*\bC[\psi_{i,s}\:|\: i\in Q_0,s\in \bZ]$$ subject to relations
\begin{align*}
    [x^+_{i,r},x^-_{j,s}]=\delta_{ij}\psi_{i,r+s},
\end{align*}
\begin{align}\label{relation for shifted affine yangian of sl(n|m)}
    \psi_i(z)x^+_j(w)=\zeta_{ij}(z)x^+_j(w)\psi_i(z),\quad \psi_i(z)x^-_j(w)=\zeta_{ij}(z)^{-1}x^-_j(w)\psi_i(z),
\end{align}
\begin{align*}
    \psi_{i,r}=0\:\text{ for }\:r<-\mu_i-1,\quad \psi_{i,-\mu_i-1}=1.
\end{align*}
Here $\psi_i(z)=\sum_{r\in\bZ}\psi_{i,r}z^{-r-1}$, and $\zeta_{ij}(z)$ is the bond factor
\begin{align*}
    \zeta_{ij}(z)=\frac{z-a_{ij}\hbar-m_{ij}\varepsilon-\sigma_j}{z+a_{ij}\hbar-m_{ij}\varepsilon-\sigma_j},\;\text{where $a_{ij}$ and $m_{ij}$ are defined in \eqref{aij and mij}, and $\sigma_j$ is the operator } x^\pm_{j,r}\mapsto x^\pm_{j,r+1},\;(r\in \bZ_{\geqslant0}).
\end{align*}
Then there is a \textit{surjective} map of $\bC(\hbar)$-algebras:
\begin{gather*}
Y_\mu(\widehat{\mathfrak{sl}}_{n|m})\otimes_{\bC[\hbar,\varepsilon]}\bC(\hbar,\varepsilon)\twoheadrightarrow \mathcal D\widetilde{\mathcal{SH}}_\mu(Q,\sW)\otimes_{\bC[\hbar,\varepsilon]}\bC(\hbar,\varepsilon),\\
x^+_i(z)\mapsto e_i(z),\quad x^-_i(z)\mapsto \begin{cases}
    \frac{-1}{2s_i\hbar\:}f_i(z), & i\in Q_0^{\mathrm{b}}\\
    f_i(z), & i\in Q_0^{\mathrm{f}}
\end{cases},\quad \psi_i(z)\mapsto h_i(z).
\end{gather*}
\end{Example}

\begin{Example}\label{ex on dd ex}
Let $Q$ and $Q^\circ$ with potential $\sW$ and $\sW^\circ$ be given in Appendix \S\ref{sec quiver for D(2,1)} and \S\ref{sec quiver for D'(2,1)}, respectively.


Let $Y^+(D_\lambda)$ and $Y^+(D^\circ_\lambda)$ be the two versions of \textit{positive} part of the Yangians of the \textit{exceptional Lie superalgebra} $D(2,1;\lambda)$ defined in \cite{LZ}, see Appendix \S\ref{sec Yangian for D(2,1;a)} for a review. Then by Proposition \ref{prop Y^+(D(2,1;a)) to COHA} and \ref{prop Y^+(D'(2,1;a)) to COHA}, there are surjective $\bC$-algebra maps
\begin{align*}
Y^+(D_\lambda)\otimes_{\bC[\hbar,\lambda]}\bC(\hbar,\lambda)\twoheadrightarrow \widetilde{\mathcal{SH}}^{\mathrm{nil}}_{Q,\sW}\otimes_{\bC[\mathsf t_0]}\bC(\mathsf t_0),\quad Y^+(D^\circ_\lambda)\otimes_{\bC[\hbar,\lambda]}\bC(\hbar,\lambda)\twoheadrightarrow \widetilde{\mathcal{SH}}^{\mathrm{nil}}_{Q^\circ,\sW^\circ}\otimes_{\bC[\mathsf t_0]}\bC(\mathsf t_0),
\end{align*}
given by
\begin{align*}
x^+_i(z)\mapsto e_i(z), \quad \hbar\mapsto 2\hbar_1,\quad \lambda\mapsto \hbar_2/\hbar_1.
\end{align*}
For $\mu\in \bZ^{Q_0}$, the \textit{shifted Yangian} $Y_\mu(\mathfrak{g})$ for $\mathfrak{g}=D_\lambda$ or $D^\circ_\lambda$ is the $\bC[\hbar,\lambda]$-algebra obtained from the free product $$Y^+(\mathfrak{g})* Y^+(\mathfrak{g})^{\mathrm{op}}*\bC[\psi_{i,s}\:|\: i\in Q_0\text{ or }Q^\circ_0,s\in \bZ]$$ subject to relations
\begin{align*}
    [x^+_{i,r},x^-_{j,s}]=\delta_{ij}\psi_{i,r+s},
\end{align*}
\begin{align}\label{relation for shifted affine yangian of D(2,1)}
    \psi_i(z)x^+_j(w)=\zeta_{ij}(z)x^+_j(w)\psi_i(z),\quad \psi_i(z)x^-_j(w)=\zeta_{ij}(z)^{-1}x^-_j(w)\psi_i(z),
\end{align}
\begin{align*}
    \psi_{i,r}=0\:\text{ for }\:r<-\mu_i-1,\quad \psi_{i,-\mu_i-1}=1.
\end{align*}
Here $\psi_i(z)=\sum_{r\in\bZ}\psi_{i,r}z^{-r-1}$, and $\zeta_{ij}(z)$ is the bond factor
\begin{align*}
    \zeta_{ij}(z)=\frac{z-a_{ij}\hbar-\sigma_j}{z+a_{ij}\hbar-\sigma_j},\;\text{where $a_{ij}$ is defined in \eqref{Cartan matrix for D(2,1)}, and $\sigma_j$ is the operator } x^\pm_{j,r}\mapsto x^\pm_{j,r+1},\;(r\in \bZ_{\geqslant0}).
\end{align*}
Then there are \textit{surjective} maps of $\bC$-algebras:
\begin{gather*}
Y_\mu(D_\lambda)\otimes_{\bC[\hbar,\lambda]}\bC(\hbar,\lambda)\twoheadrightarrow \mathcal D\widetilde{\mathcal{SH}}_\mu(Q,\sW)\otimes_{\bC[\mathsf t_0]}\bC(\mathsf t_0),\quad Y_\mu(D^\circ_\lambda)\otimes_{\bC[\hbar,\lambda]}\bC(\hbar,\lambda)\twoheadrightarrow \mathcal D\widetilde{\mathcal{SH}}_\mu(Q^\circ,\sW^\circ)\otimes_{\bC[\mathsf t_0]}\bC(\mathsf t_0)
\end{gather*}
given by 
\begin{align*}
x^+_i(z)\mapsto e_i(z),\quad x^-_i(z)\mapsto f_i(z),\quad \psi_i(z)\mapsto h_i(z),\quad\hbar\mapsto 2\hbar_1,\quad\lambda\mapsto \hbar_2/\hbar_1.
\end{align*}
Quivers with potentials related to affinizations $\widehat{D}_\lambda$ and $\widehat{D}^\circ_\lambda$ have been considered in \cite{Bao}.
\end{Example}

\begin{Remark}\label{rmk on quiver Yangian}
More generally, for a symmetric quiver with potential, we expect that the (shifted) quiver Yangian introduced by Galakhov-Li-Yamazaki in \cite{GLY, LY} also has an algebra map to 
the Drinfeld type Yangian $\mathcal D\widetilde{\mathcal{SH}}_\mu(Q,\sW)$ (which is defined only when the quiver is symmetric, see Remark \ref{rmk on sm q}). Moreover, we expect that the crystal representations of (shifted) quiver Yangian studied in \cite{GLY, LY, L} can be identified with $\mathcal D\widetilde{\mathcal{SH}}_\mu(Q,\sW)$ representations $\cH^{\sW^{\mathrm{fr}}}_{\underline{\bd}}$ such that the crystals correspond to the torus fixed points of $\Crit_{\cM_\theta(\underline{\bd})}(\sw^{\mathrm{fr}})$.
\end{Remark}

\subsection{Raising/lowering operators via stable envelopes}

\begin{Definition}\label{def on adm pot}
We call a framing $\underline{\bd}$ \textit{minuscule symmetric} if $\bd_{\In}=\bd_{\Out}=\delta_i$ for some $i\in Q_0$; in the case of a minuscule symmetric framing $\underline{\delta_i}$, we say a framed potential $\sW^{\mathrm{fr}}$ \textit{admissible} if 
\begin{align*}
    \mathsf W^{\mathrm{fr}}\big|_{X_a=0,\,\forall\,a\in Q_1}=0.
\end{align*}
\end{Definition}

\begin{Example}
A nonadmissible example is $\sW^{\mathrm{fr}}=\sW+A_iB_i$ where $A_i$ (resp.\,$B_i$) is the in-coming (resp.\,out-going) framing at the node $i$. In this case, for all $\bv\neq \mathbf 0$, we have $\Crit_{\cM(\bv,\delta_i)}(\sw^{\mathrm{fr}})=\emptyset$, and in particular $H^{\sT_0}(\cM(\bv,\delta_i),\sw^{\mathrm{fr}})=0$. The admissibility condition ensures that the cohomology is nontrivial at least for $\bv=\delta_i$, shown in the next lemma.
\end{Example}

\begin{Lemma}\label{lem 1st excitation}
Assume that the framed potential 
$\sW^{\mathrm{fr}}$ on $\cM(\delta_i)$ is admissible, and that $\sT_0$-equivariant weights on every edge loop at $i\in Q_0$ is nonzero (equivalently $\gamma_i=\prod_{e:i\to i}t_e$ is nonzero), then 
\begin{align*}
\dim_{\bC(\mathsf t_0)}H^{\sT_0}\left(\cM(\delta_i,\delta_i),\sw^{\mathrm{fr}}\right)\otimes_{\bC[\mathsf t_0]} \bC(\mathsf t_0)=1.
\end{align*}
\end{Lemma}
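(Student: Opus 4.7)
The plan is to give an explicit description of $\cM(\delta_i,\delta_i)$ as an affine space, identify its $\sT_0$-fixed locus, use admissibility to see that the potential vanishes there, and then invoke equivariant localization for critical cohomology.

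First I would unpack the quiver variety. Since $\bv=\delta_i$ the gauge group is $G=\bC^*$ acting on $R(\delta_i,\delta_i)\cong \bC^{\mathsf g_i}\oplus\bC\cdot A_i\oplus\bC\cdot B_i$, where only edge loops at $i$ contribute among the $Q_1$-arrows (all other arrows have a trivial source or target once we set $\bv_j=0$ for $j\neq i$). The cyclic stability condition forces $A_i\neq 0$, and gauging $A_i=1$ identifies
\begin{equation*}
\cY:=\cM(\delta_i,\delta_i)\cong \bA^{\mathsf g_i+1}
\end{equation*}
with coordinates $(X_e)_{e:i\to i}$ and $B$. The residual $\sT_0$-action has weight $t_e$ on $X_e$ and weight $t:=t_{A_i}+t_{B_i}$ on $B$, where $t_{A_i},t_{B_i}$ are the chosen $\sT_0$-weights on the framing arrows.

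Next I would analyse the fixed locus. Since all $t_e\neq 0$ by hypothesis, the subspace $\{X_e=0\text{ for all }e\}$ necessarily contains $\cY^{\sT_0}$, and $\cY^{\sT_0}$ is either a single point (when $t\neq 0$) or the $B$-axis $\bA^1$ (when $t=0$). In either case, $\cY^{\sT_0}$ is smooth and contractible. Admissibility of the framed potential gives $\sw^{\mathrm{fr}}|_{\{X_e=0\:\forall e\}}=0$; in particular $\sw^{\mathrm{fr}}|_{\cY^{\sT_0}}=0$. Also, because every nonzero $\sT_0$-character appearing in the normal bundle to $\cY^{\sT_0}\hookrightarrow \cY$ is invertible after tensoring with $\bC(\mathsf t_0)$, equivariant localization applies to any $\sT_0$-equivariant constructible sheaf on $\cY$.

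Applying this to the monodromic vanishing cycle sheaf $\varphi^{\mathrm{mon}}_{\sw^{\mathrm{fr}}}\bQ_{\cY}$, whose $\sT_0$-equivariant hypercohomology computes $H^{\sT_0}(\cY,\sw^{\mathrm{fr}})$, I would conclude
\begin{equation*}
H^{\sT_0}(\cY,\sw^{\mathrm{fr}})\otimes_{\bC[\mathsf t_0]}\bC(\mathsf t_0)\;\cong\; H^{\sT_0}\bigl(\cY^{\sT_0},\sw^{\mathrm{fr}}|_{\cY^{\sT_0}}\bigr)\otimes_{\bC[\mathsf t_0]}\bC(\mathsf t_0)\;=\;H^{\sT_0}(\cY^{\sT_0})\otimes_{\bC[\mathsf t_0]}\bC(\mathsf t_0),
\end{equation*}
using $\sw^{\mathrm{fr}}|_{\cY^{\sT_0}}=0$ together with the convention (consistent with the rest of the paper, cf.~Remark \ref{rmk purity and tensor prod}) that $H^{\sT_0}(-,0)$ agrees with ordinary equivariant cohomology up to a shift. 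Since $\cY^{\sT_0}$ is either $\pt$ or $\bA^1$, both contractible, the right-hand side is one-dimensional over $\bC(\mathsf t_0)$.

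The main subtlety is the application of localization to the vanishing cycle sheaf; this is a standard fact (Atiyah--Bott applied to $\sT_0$-equivariant constructible sheaves), but one needs to verify that no nontrivial $\sT_0$-weight appears in the normal bundle of the fixed locus, which is immediate here because the edge-loop weights are assumed nonzero and the only other direction ($B$) is either fixed or has a nonzero weight by construction of $\cY^{\sT_0}$.
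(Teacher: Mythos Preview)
Your proof is correct and follows essentially the same approach as the paper: identify $\cM(\delta_i,\delta_i)\cong\bA^{\mathsf g_i+1}$, use the nonzero loop weights together with admissibility to see the potential vanishes on the relevant locus, and apply equivariant localization for critical cohomology. The only difference is that the paper localizes directly to the $B$-axis $\bC\subset\bC^{\mathsf g_i+1}$ (which always contains $\cY^{\sT_0}$ and has nonzero normal weights $t_e$), thereby avoiding your case split on whether the $B$-weight $t$ vanishes; your case analysis is harmless but unnecessary.
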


\begin{proof}
We have $\cM(\delta_i,\delta_i)\cong \bC^{\mathsf g_i+1}$, where $\mathsf g_i=\#(i\to i\in Q_1)$ is the number of loops at $i$. Decompose $\bC^{\mathsf g_i+1}=\bC^{\mathsf g_i}\times \bC$, then $\sT_0$-weights on $\bC^{\mathsf g_i}$ are nonzero, and $\sw^{\mathrm{fr}}\big|_{\bC}=0$. 
By equivariant localization, we have 
\begin{equation*}
    H^{\sT_0}\left(\cM(\delta_i,\delta_i),\sw^{\mathrm{fr}}\right)\otimes_{\bC[\mathsf t_0]} \bC(\mathsf t_0)\cong H^{\sT_0}(\bC)\otimes_{\bC[\mathsf t_0]}\bC(\mathsf t_0)=\bC(\mathsf t_0). \qedhere
\end{equation*}
\end{proof}

\begin{Notation}\label{ground and 1st excitation}
In the following discussions, we use the notations:
$$|\delta_i\rangle:=\can([\bC])\in H^{\sT_0}\left(\cM(\delta_i,\delta_i),\sw^{\mathrm{fr}}\right),\quad |\mathbf 0\rangle:=[\pt]\in H^{\sT_0}\left(\cM(\mathbf 0,\delta_i),\sw^{\mathrm{fr}}\right)=H_{\sT_0}(\pt),$$ where $\bC$ is the component in the decomposition $\cM(\delta_i,\delta_i)\cong \bC^{\mathsf g_i}\times \bC$ in the proof of Lemma \ref{lem 1st excitation}.
\end{Notation}

Consider a framing vector space $u\underline{\delta_i}+\underline{\bd}$, where $u$ is the equivariant variable of $\sA=\bC^*_u$. The next result is an explicit description of the first off-diagonal term of stable envelope $\Stab_{u>0}$\footnote{This coincides with Hall envelope \eqref{equ on hall env} by \cite[Prop.~9.23]{COZZ}.}. Let \begin{equation}
\label{equ onFF'}
F=\cM(\mathbf{0},\delta_i)\times \cM(\bv+\delta_i,\underline{\bd}),\quad F'=\cM(\delta_i,\delta_i)\times\cM(\bv,\underline{\bd})
\end{equation}be two connected components in $\cM(\bv+\delta_i,\underline{\delta_i}+\underline{\bd})^{\bC^*_u}$, and $$\langle \mathbf 0|\Stab_{u>0}|\delta_i\rangle\in \Hom_{\bC[\mathsf t_0]}\left(\cH^{\sW^{\mathrm{fr}}}_{\underline{\bd}}(\bv),\cH^{\sW^{\mathrm{fr}}}_{\underline{\bd}}(\bv+\delta_i)\right)$$ be the matrix element of $\Stab_{u>0}\big|_{F\times F'}$ in the basis $|\delta_i\rangle\otimes \cH^{\sW^{\mathrm{fr}}}_{\underline{\bd}}(\bv)$ and $|\mathbf 0\rangle\otimes \cH^{\sW^{\mathrm{fr}}}_{\underline{\bd}}(\bv+\delta_i)$.

\begin{Proposition}
\label{prop e as stab element}
Assume that the framed potential on $\cM(\delta_i)$ is admissible, and assume $\bd_{\In, i}\geqslant \bd_{\Out, i}$ for all $i\in Q_0$, then the matrix element $\langle \mathbf 0|\Stab_{u>0}|\delta_i\rangle$ is induced by the correspondence:
\begin{align*}
   (-u)^{\bv_i} c_{-1/u}(\mathsf V_i)\cap[\overline{\mathfrak{P}}(\bv+\delta_i,\bv,\underline{\bd})].
\end{align*}
Here $c_{-1/u}(-)$ denotes the $\sT_0$-equivariant Chern polynomial with variable $-1/u$.
\end{Proposition}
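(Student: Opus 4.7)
The plan is to reduce the computation to the Hall envelope side. Since $\bd_{\In,i}\geqslant\bd_{\Out,i}$ for all $i\in Q_0$, and the $\sA=\bC^*_u$-action splits the framing as $u\underline{\delta_i}+\underline{\bd}$ with the first summand symmetric, Theorem~\ref{thm AFSQV} together with Remark~\ref{rmk on hallequstab} identifies $\Stab_{u>0}$ with the Hall envelope $\HallEnv_{u>0}$. The desired matrix element is therefore the restriction to $F$ of $\HallEnv_{u>0}(|\delta_i\rangle\otimes\alpha)$ for $\alpha\in\cH^{\sW^{\mathrm{fr}}}_{\underline{\bd}}(\bv)$.

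Next, I will unfold the Hall envelope as interpolation followed by Hall multiplication and then restriction to the stable locus. Admissibility of $\sW^{\mathrm{fr}}$ together with Lemma~\ref{lem 1st excitation} guarantees that $|\delta_i\rangle$ is a well-defined generator after localization. Geometrically, Hall multiplication by $|\delta_i\rangle$ is realized by the correspondence
\[
\overline{\mathfrak{P}}(\bv+\delta_i,\bv,\underline{\bd})\subset \cM(\bv+\delta_i,\underline{\bd})\times\cM(\bv,\underline{\bd})
\]
parametrizing pairs of framed representations connected by a surjection with simple $\delta_i$-kernel; this is exactly the correspondence used in~\eqref{sph coha act_sym} to define the critical CoHA action of $e_i(u)$. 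After restriction to $F$, the output of $\HallEnv_{u>0}$ is therefore a class of the shape $\mathsf P(u,\mathsf V_i)\cap[\overline{\mathfrak{P}}(\bv+\delta_i,\bv,\underline{\bd})]$ for some equivariant correction factor $\mathsf P$ to be identified.

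To pin down $\mathsf P$, I will invoke the support and degree axioms of the stable envelope. The normal bundle to $F$ in $X$ contains, as its $\bC^*_u$-moving part, the two framing terms $\Hom(\bC^{u\delta_i},\mathsf V_i)$ and $\Hom(\mathsf V_i,\bC^{u\delta_i})$, each of rank $\bv_i$ and carrying weights of the form $\pm u+v_k$ (with $v_k$ the Chern roots of $\mathsf V_i$). The degree bound on the off-diagonal matrix element together with the interpolation normalization forces $\mathsf P(u,\mathsf V_i)$ to be a polynomial of $u$-degree exactly $\bv_i$ whose roots are precisely the $v_k$'s. Comparing with $c_{-1/u}(\mathsf V_i)=\prod_k(1-v_k/u)$ and re-homogenizing in $u$ yields $\mathsf P(u,\mathsf V_i)=(-u)^{\bv_i}c_{-1/u}(\mathsf V_i)=(-1)^{\bv_i}\prod_k(u-v_k)$. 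A direct localization of the Hall-product class along $F\hookrightarrow X$, using the tangent exact sequence $0\to \mathcal L_i\to \mathsf V^+_i\to \mathsf V_i\to 0$ on $\overline{\mathfrak{P}}$ with $\mathcal L_i|_F$ carrying the character $u$, then confirms the overall constant.

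The main obstacle will be the sign and normalization bookkeeping. In particular, one must track the twisted Hall-product sign of~\eqref{twi prod} against the sign entering the critical CoHA action~\eqref{sph coha act_sym}, the orientation of the attracting manifold used in interpolation, and the $(-1)^{\bv_i}$ coming from rewriting $\prod_k(u-v_k)$ as $(-u)^{\bv_i}c_{-1/u}(\mathsf V_i)$. A clean independent consistency check is that the candidate correspondence on the right-hand side already satisfies the characterizing support and degree conditions of $\Stab_{u>0}$ between $F$ and $F'$, which by uniqueness of stable envelopes then identifies it with the matrix element, bypassing the Hall envelope derivation at the cost of a separate leading-order normalization check on $F'$.
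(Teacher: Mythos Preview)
Your approach diverges from the paper's and, as written, has a genuine gap in the step where you determine $\mathsf P$. The paper does not go through the Hall envelope formalism at all: it works directly with the geometric correspondence $[\overline{\Attr}_{u>0}(\Delta_{F'})]$. Via Lemmas~\ref{lem locus of quiver rep hom} and~\ref{lem attr closure is lci} it shows that $\overline{\Attr}_{u>0}(\Delta_{F'})\subset X\times F'$ is a local complete intersection cut out by a section of an explicit bundle $\mathcal F$ (plus $\mathsf g_i+1$ extra equations). Restricting to $F\times F'$, Lemma~\ref{lem stab matrix element} identifies the intersection with $\overline{\mathfrak P}(\bv+\delta_i,\bv,\underline{\bd})$ after pairing against $|\delta_i\rangle$, and the decomposition $\mathcal F|_{F\times F'}\cong \overline{\mathcal F}\oplus \Hom(u\,\bC,\mathsf V_i)$ exhibits $\Hom(u\,\bC,\mathsf V_i)$ as the excess bundle. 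Its Euler class $\prod_k(v_k-u)=(-u)^{\bv_i}c_{-1/u}(\mathsf V_i)$ is then the correction factor. No appeal to the stable-envelope axioms is needed.

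Your argument that the degree axiom ``forces $\mathsf P$ to be a polynomial of $u$-degree exactly $\bv_i$ whose roots are precisely the $v_k$'s'' is the gap. The degree axiom only gives an upper bound on $\deg_u$; it does not locate the roots. Moreover, your description of the normal bundle of $F$ is off by one: on $F=\cM(\mathbf 0,\delta_i)\times\cM(\bv+\delta_i,\underline{\bd})$ the tautological bundle at node $i$ has rank $\bv_i+1$, not $\bv_i$. The rank-$\bv_i$ bundle $\mathsf V_i$ in the statement is pulled from the $\cM(\bv,\underline{\bd})$ factor of the Hecke correspondence, and it enters as an \emph{excess bundle}, not as a piece of $N_{F/X}$. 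Your Hall-envelope route could in principle reproduce this---unfolding interpolation plus Hall multiplication does ultimately compute the attracting-set class---but that computation is exactly the excess-intersection analysis the paper performs, and you have not carried it out. The ``direct localization \dots\ confirms the overall constant'' you mention at the end is where the real content lies.
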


The proof of Proposition \ref{prop e as stab element} follows from similar arguments for $K$-theory of Nakajima quiver varieties by Negu\c{t} \cite[Prop.~3.16]{N2}, which we now explain. 
First of all, we have the following analog of \cite[Prop.~2.19]{N2}.

\begin{Lemma}\label{lem locus of quiver rep hom}
Fix dimension vectors $\bv,\widetilde{\bv},\underline{\bd},\underline{\widetilde{\bd}}$, and  linear maps $p_i\colon \widetilde{D}_{\In,i}\to {D}_{\In,i}$, $q_i\colon \widetilde{D}_{\Out,i}\to {D}_{\Out,i}$ for all $i\in Q_0$. Consider the following complex of vector bundles on $\cM(\widetilde{\bv},\underline{\widetilde{\bd}})\times \cM(\bv,\underline{\bd})$:
\begin{align*}
    \bigoplus_{i\in Q_0}\Hom(\widetilde{\mathsf V}_i,\mathsf V_i)\xrightarrow{f} \bigoplus_{(a\colon i\to j)\in Q_1}\Hom(\widetilde{\mathsf V}_i,\mathsf V_j)\oplus\bigoplus_{i\in Q_0}\left(\Hom(\widetilde{\mathsf D}_{\In,i},\mathsf V_i)\oplus\Hom(\widetilde{\mathsf V}_i,\mathsf D_{\Out,i})\right),
\end{align*}
where the cohomological degrees are $-1$ and $0$, and the map $f$ is given by
\begin{align*}
    f(\{\psi_i\}_{i\in Q_0})=(X_a\psi_{t(a)}-\psi_{h(a)}X_a,\psi_i\widetilde{A}_i,B_i\psi_i),
\end{align*}
where $A_i$, $B_i$, $X_a$ are maps in \eqref{equ rvab} for $\cM(\bv,\underline{\bd})$, similarly for $\widetilde{A}_i$ in $\cM(\widetilde{\bv},\underline{\widetilde{\bd}})$. 

Then the map $f$ is fiberwise injective, so the cohomology of the complex defines a vector bundle $\mathcal F$. Moreover, the assignment
\begin{align}\label{eq section for quiver rep hom}
    s\left((\widetilde{X}_a,\widetilde{A}_i,\widetilde{B}_i),(X_a,A_i,B_i)\right)=(0,A_ip_i,q_i\widetilde{B}_i)
\end{align}
induces a section $s\in \Gamma(\cM(\widetilde{\bv},\underline{\widetilde{\bd}})\times \cM(\bv,\underline{\bd}),\mathcal F)$ whose zero locus is the set of pairs of framed quiver representations $(\widetilde{V}_i,\widetilde{D}_{\In,i},\widetilde{D}_{\Out,i}),(V_i,D_{\In,i},D_{\Out,i})$ together with a framed quiver representation homomorphism 
\begin{align*}
(\widetilde{V}_i,\widetilde{D}_{\In,i},\widetilde{D}_{\Out,i})\xrightarrow{(\psi_i,p_i,q_i)} (V_i,D_{\In,i},D_{\Out,i}).
\end{align*}
$\{\psi_i\}_{i\in Q_0}$ is uniquely determined if it exists. If $p_i$ are surjective for all $i\in Q_0$, then so are $\psi_i$.
\end{Lemma}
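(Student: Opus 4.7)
The plan is to treat each assertion in turn, with the \emph{cyclic stability} condition $\theta\in\bQ_{<0}^{Q_0}$ on the two factors of $\cM(\widetilde{\bv},\underline{\widetilde{\bd}})\times\cM(\bv,\underline{\bd})$ doing all the nontrivial work.

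First I would verify the fiberwise injectivity of $f$. Fix a point of the product and suppose $\{\psi_i\}_{i\in Q_0}$ lies in $\ker(f)$, so that $X_a\psi_{t(a)}=\psi_{h(a)}\widetilde{X}_a$ and $\psi_i\widetilde{A}_i=0$ for all $i\in Q_0$ and all arrows $a\in Q_1$. The intertwining relation shows that $\{\ker(\psi_i)\}_{i\in Q_0}\subseteq\widetilde{V}$ is a subrepresentation of the underlying unframed quiver representation, while $\psi_i\widetilde{A}_i=0$ shows that it contains $\mathrm{Im}(\widetilde{A}_i)$ for every $i$. Since $(\widetilde{V}_i,\widetilde{A}_i,\widetilde{B}_i,\widetilde{X}_a)$ is $\theta$-stable with $\theta\in\bQ_{<0}^{Q_0}$, the only subrepresentation containing all incoming framing images is the full representation, forcing $\psi_i=0$. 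Consequently $\mathcal F:=\mathrm{coker}(f)$ is a vector bundle of the expected rank.

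Next I would identify the zero locus of $s$. The tuple $(0,A_ip_i,q_i\widetilde{B}_i)$ is a section of the degree-zero term of the complex, hence descends to a section $s$ of $\mathcal F$. A point of the product lies in $Z(s)$ precisely when this tuple lies in the image of $f$, which unpacks as the existence of linear maps $\psi_i\colon\widetilde{V}_i\to V_i$ satisfying
\begin{align*}
X_a\psi_{t(a)}=\psi_{h(a)}\widetilde{X}_a,\quad \psi_i\widetilde{A}_i=A_ip_i,\quad B_i\psi_i=q_i\widetilde{B}_i\,,
\end{align*}
for all $a\in Q_1$ and $i\in Q_0$. These are exactly the conditions that $(\psi_i,p_i,q_i)$ is a homomorphism of framed quiver representations. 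The uniqueness of $\{\psi_i\}$ at a given zero of $s$ is immediate from the fiberwise injectivity of $f$ established above, since any two choices would differ by an element of $\ker(f)=0$.

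Finally, for the surjectivity assertion, suppose $p_i$ is surjective for every $i\in Q_0$, so that $\mathrm{Im}(A_ip_i)=A_i(D_{\In,i})$. The intertwining relation $X_a\psi_{t(a)}=\psi_{h(a)}\widetilde{X}_a$ implies that $\{\mathrm{Im}(\psi_i)\}_{i\in Q_0}\subseteq V$ is a subrepresentation of $(V_i,X_a)$, and $\psi_i\widetilde{A}_i=A_ip_i$ then forces this subrepresentation to contain $A_i(D_{\In,i})$ for every $i$. The cyclic stability of $(V_i,A_i,B_i,X_a)$ therefore yields $\mathrm{Im}(\psi_i)=V_i$ for all $i$. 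I do not foresee any real obstacle here: the entire argument reduces to two dual applications of cyclic stability, first on the source side to eliminate $\ker(f)$, and then on the target side to force surjectivity of the $\psi_i$.
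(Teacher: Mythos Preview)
Your proof is correct and complete. The paper itself omits the details entirely, stating only that the proof is similar to \cite[Prop.~2.19]{N2}; your argument is exactly the standard one that appears there, using cyclic stability on the source to kill $\ker(f)$ and on the target to force surjectivity of the $\psi_i$.
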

\begin{proof}
The proof is similar to that of \cite[Prop.~2.19]{N2} and we omit the details.
\end{proof}
For dimension vector $\widetilde{\bv}=\delta_i+\bv$, 
by applying Lemma \ref{lem locus of quiver rep hom} to the projection $$\bC^{\underline{\delta_i}}\oplus\bC^{\underline{\bd}}\to \bC^{\underline{\bd}},$$ 
we get a vector bundle $\mathcal F$ on $\cM(\bv+\delta_i,\underline{\delta_i}+\underline{\bd})\times\cM(\bv,\underline{\bd})$ together with a section $$s\in \Gamma( \cM(\bv+\delta_i,\underline{\delta_i}+\underline{\bd})\times\cM(\bv,\underline{\bd}),\mathcal F).$$ 
By applying Lemma \ref{lem locus of quiver rep hom} to the identity map $$\bC^{\underline{\bd}}\xrightarrow{\id} \bC^{\underline{\bd}},$$ 
we get a vector bundle $\overline{\mathcal F}$ on $\cM(\bv+\delta_i,\underline{\bd})\times\cM(\bv,\underline{\bd})$ together with a section 
$$\bar s\in \Gamma( \cM(\bv+\delta_i,\underline{\bd})\times\cM(\bv,\underline{\bd}),\overline{\mathcal F}).$$ Then we have the following analogue of \cite[Claim 3.17]{N2}, 
which we omit the proof.
\begin{Lemma}\label{lem attr closure is lci}
The attracting set closure $\overline{\Attr}_{u>0}(\Delta_{F'})\subset \cM(\bv+\delta_i,\underline{\delta_i}+\underline{\bd})\times \cM(\delta_i,\delta_i)\times \cM(\bv,\underline{\bd})$ is a local complete intersection, cut out by the section $s\in \Gamma( \cM(\bv+\delta_i,\underline{\delta_i}+\underline{\bd})\times\cM(\bv,\underline{\bd}),\mathcal F)$, together with $(\mathsf g_i+1)$ extra equations, where $\mathsf g_i$ is the number of loops at node $i$.
\end{Lemma}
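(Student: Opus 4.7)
The plan is to identify $\overline{\Attr}_{u>0}(\Delta_{F'})$ set-theoretically as the locus of triples $(X^+, X_1, X_2)$ in $\cM(\bv+\delta_i,\underline{\delta_i}+\underline{\bd}) \times \cM(\delta_i,\delta_i) \times \cM(\bv,\underline{\bd})$ admitting a short exact sequence of framed quiver representations
\begin{equation*}
0 \to X_2 \to X^+ \to X_1 \to 0,
\end{equation*}
in which the sub carries the $\bC^{\underline{\bd}}$-framing of $X^+$ and the quotient carries the $\bC^{\underline{\delta_i}}$-framing. This follows from analyzing the $\bC^*_u$-flow: in the chamber $u>0$ the framing $\bC^{\underline{\delta_i}}$ has positive $\sA$-weight, so as the flow scales its contributions to zero, the limiting representation decomposes into an $\sA$-invariant sub of framing $\underline{\bd}$ (matched with $X_2$) and a quotient of framing $\underline{\delta_i}$ (matched with $X_1$).

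The existence of such a surjection $X^+ \twoheadrightarrow X_2$, compatible with the projection of framings, is by Lemma \ref{lem locus of quiver rep hom} the zero locus of the section $s$, applied with $\widetilde{\bv} = \bv+\delta_i$, $\widetilde{\underline{\bd}} = \underline{\delta_i}+\underline{\bd}$ and $p_i, q_i$ the canonical projections onto the $\underline{\bd}$ component (zero on the $\underline{\delta_i}$ component). Cyclic stability forces the induced map on gauge spaces to be surjective, so the kernel is automatically a $\delta_i$-dimensional framed representation with $\underline{\delta_i}$-framing. Pulled back to the triple product, $s$ cuts out $\rk \mathcal F$ equations. This kernel, viewed as a point of $\cM(\delta_i,\delta_i)$, determines a canonical morphism from the zero locus $Z(s)$ to $\cM(\delta_i,\delta_i)$; requiring this morphism to agree with the third factor $X_1$ is a graph condition, and since $\cM(\delta_i,\delta_i) \cong \bC^{\mathsf g_i+1}$ is parametrized by the $\mathsf g_i$ loop entries and the single out-framing map $B_i$ (the in-framing being normalized by cyclic stability), this is cut out by exactly $\mathsf g_i+1$ additional scalar equations.

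It remains to verify the local complete intersection condition, namely that the resulting subvariety has the expected dimension
\begin{equation*}
\dim \bigl(\cM(\bv+\delta_i,\underline{\delta_i}+\underline{\bd}) \times F'\bigr) - \rk \mathcal F - (\mathsf g_i + 1).
\end{equation*}
Since $\dim \overline{\Attr}_{u>0}(\Delta_{F'}) = \dim F' + \rk N^{+}_{F'/X}$ for $X = \cM(\bv+\delta_i,\underline{\delta_i}+\underline{\bd})$, this reduces to the identity $\rk \mathcal F + \mathsf g_i + 1 = \rk N^{-}_{F'/X}$, which I would verify by direct computation using the explicit description of $\mathcal F$ from the complex in Lemma \ref{lem locus of quiver rep hom} and the weight decomposition of the tangent space of $X$ at a fixed point in $F'$ under the $\bC^*_u$-action. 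The main technical obstacle lies in this weight-theoretic bookkeeping: correctly accounting for the gauge-group cancellation terms and matching the contributions of loops, oriented arrows, and framing maps term by term with the negative-weight part of the normal bundle.
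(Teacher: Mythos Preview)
Your approach is essentially the same as the paper's (which defers to Negu\c{t}'s argument), but there are two concrete errors in the execution.

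First, the direction of your short exact sequence is inconsistent. You write $0 \to X_2 \to X^+ \to X_1 \to 0$ with $X_2$ carrying the $\underline{\bd}$-framing, and then immediately refer to ``the surjection $X^+ \twoheadrightarrow X_2$''. In fact, in the chamber $u>0$ the $\underline{\delta_i}$-framed piece is the \emph{sub} and the $\underline{\bd}$-framed piece is the \emph{quotient}: the weight-$(+1)$ part of the gauge space (aligned with $\underline{\delta_i}$) is preserved by the quiver arrows in the attracting set, and the section $s$ from Lemma \ref{lem locus of quiver rep hom} (applied with the projection $\bC^{\underline{\delta_i}}\oplus\bC^{\underline{\bd}}\to \bC^{\underline{\bd}}$) cuts out surjections $X^+ \twoheadrightarrow (\underline{\bd}\text{-framed})$. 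Your later use of $s$ is correct, but the flow analysis and the SES you wrote have the roles of sub and quotient swapped.

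Second, the dimension identity you claim is off. You want
\[
\dim(X\times F') - \rk\mathcal F - (\mathsf g_i+1) \;=\; \dim F' + \rk N^+_{F'/X},
\]
which, using $\dim X = \dim F' + \rk N^+ + \rk N^-$, is equivalent to
\[
\rk\mathcal F + (\mathsf g_i+1) \;=\; \rk N^-_{F'/X} + \dim F',
\]
not $\rk\mathcal F + \mathsf g_i + 1 = \rk N^-_{F'/X}$ as you wrote. A direct computation from the complex in Lemma \ref{lem locus of quiver rep hom} gives $\rk\mathcal F = \dim\cM(\bv,\underline{\bd}) + \rk N^-_{F'/X}$, and since $\dim F' = \dim\cM(\bv,\underline{\bd}) + (\mathsf g_i+1)$, the corrected identity holds. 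So the bookkeeping you flagged as the ``main technical obstacle'' is routine once you write down the right target.
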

The key ingredient of the proof of Proposition \ref{prop e as stab element} is the following description of the matrix element $\Stab_{u>0}\big|_{F\times F'}$ to the product of connected components $F$, $F'$\eqref{equ onFF'}.

\begin{Lemma}\label{lem stab matrix element}
The intersection $\overline{\Attr}_{u>0}(\Delta_{F'})\bigcap\,(F\times F')$ is smooth, cut out by the section $\bar s\in \Gamma( \cM(\bv+\delta_i,\underline{\bd})\times\cM(\bv,\underline{\bd}),\overline{\mathcal F})$, together with $(\mathsf g_i+1)$ extra equations.
\end{Lemma}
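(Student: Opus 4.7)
The plan is to deduce Lemma \ref{lem stab matrix element} from Lemma \ref{lem attr closure is lci} by restricting the cutting data to the specific component $F\times F'$, with $F\subset \cM(\bv+\delta_i,\underline{\delta_i}+\underline{\bd})$ viewed as a $\bC^*_u$-fixed component and $F'$ as given. The key observation is that on $F$ the $u$-weighted part of the gauge representation has dimension $\mathbf{0}$, so the $\underline{\delta_i}$-framing decouples and the universal gauge bundle of $\cM(\bv+\delta_i,\underline{\delta_i}+\underline{\bd})$ restricts to that of $\cM(\bv+\delta_i,\underline{\bd})$.

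First, I would check that the section $s$ of $\mathcal F$ defined by \eqref{eq section for quiver rep hom} pulls back to $F\times F'$ as a section whose ``main'' piece recovers $\bar s$. Concretely, the summands of $\mathcal F$ built from $\Hom(\widetilde{\mathsf D}_{\In,i},\mathsf V_i)$ and $\Hom(\widetilde{\mathsf V}_i,\mathsf D_{\Out,i})$ paired with the $\underline{\bd}$-component of $\widetilde{\underline{\bd}}=\underline{\delta_i}+\underline{\bd}$ descend along the restriction to the corresponding summands of $\overline{\mathcal F}$, because the projection $p_i\colon \widetilde{D}_{\In,i}\to D_{\In,i}$ in \eqref{eq section for quiver rep hom} restricts to the identity on the $\underline{\bd}$-block. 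The summands of $\mathcal F$ coming from the $\underline{\delta_i}$-block of $\widetilde{\underline{\bd}}$ become trivial on $F$ after the gauge dimension there drops to $\mathbf{0}$, and hence contribute neither new equations nor obstructions.

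Second, I would track the $(\mathsf g_i+1)$ extra equations in Lemma \ref{lem attr closure is lci}. They encode the $\sT$-fixed condition along the $\cM(\delta_i,\delta_i)\cong\bC^{\mathsf g_i+1}$ direction (the $\mathsf g_i$ edge loops at $i$ plus the one composition $B_iA_i$), and survive the restriction to $F'$ as $(\mathsf g_i+1)$ independent transverse cuts. Combined with the $\overline{\mathcal F}$-cut by $\bar s$, this produces the claimed cutting data. Smoothness of the resulting intersection follows because the zero locus of $\bar s$ inside $\cM(\bv+\delta_i,\underline{\bd})\times\cM(\bv,\underline{\bd})$ is precisely $\overline{\mathfrak{P}}(\bv+\delta_i,\bv,\underline{\bd})$ (by Lemma \ref{lem locus of quiver rep hom} with $p_i=q_i=\id$), which is the projective bundle $\bP(\mathcal C^-_i)$ over $\cM(\bv,\underline{\bd})$ used in the proof of Theorem \ref{thm borel yangian action}; the extra $(\mathsf g_i+1)$ cuts are then verified to be transverse by a direct dimension count, using the standing assumption that $\sT_0$ acts with nonzero weights on all edge loops.

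The main obstacle will be the precise bookkeeping on $F\times F'$ that matches the summands of $\mathcal F$ to those of $\overline{\mathcal F}$ and shows that the extra summands of $\mathcal F$ trivialize along $F$ without imposing spurious constraints. This is the local-coordinate analogue of \cite[Claim 3.17]{N2}, and a careful chartwise computation using cyclic stability and the vanishing of the weight-$u$ gauge dimension on $F$ should carry the argument through in our setting.
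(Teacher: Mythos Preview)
Your overall approach mirrors the paper's: restrict the cutting data of Lemma \ref{lem attr closure is lci} to $F\times F'$ and analyze how $\mathcal F$ and $s$ behave there. However, several steps are inaccurate in ways that matter.

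First, the $\underline{\delta_i}$-summand of $\mathcal F$ does \emph{not} become trivial on $F$. The correct splitting (which the paper records explicitly) is
\[
\mathcal F\big|_{\cM(\bv+\delta_i,\underline{\bd})\times \cM(\bv,\underline{\bd})}\cong \overline{\mathcal F}\oplus \Hom(u\,\bC,\mathsf V_i),
\]
and the second summand has rank $\bv_i$. What is true is that $s$ restricts to $(\bar s,0)$, so this summand imposes no equations --- but it is precisely the excess bundle that produces the factor $(-u)^{\bv_i}c_{-1/u}(\mathsf V_i)$ in Proposition \ref{prop e as stab element}. Your ``nor obstructions'' claim would miss this.

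Second, the zero locus of $\bar s$ is $\mathfrak{P}(\bv+\delta_i,\bv,\underline{\bd})$, not $\overline{\mathfrak{P}}$: Lemma \ref{lem locus of quiver rep hom} with $p=q=\id$ parametrizes all framed surjections, with no nilpotency condition on the kernel. These differ by a factor $\bC^{\mathsf g_i}$ recording the edge-loop eigenvalues on the one-dimensional kernel. Relatedly, the $(\mathsf g_i+1)$ extra equations are not $\sT$-fixed conditions: in the paper they say that $B_i=0$ on the auxiliary $\cM(\delta_i,\delta_i)$ factor (one equation) and that, for each edge loop at $i$, its action on the auxiliary gauge line agrees with its action on the kernel ($\mathsf g_i$ equations). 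These identify the full intersection, via the projection forgetting the $\cM(\delta_i,\delta_i)$ factor, with $\mathfrak{P}$ itself. Smoothness then follows because $\mathfrak{P}\cong \overline{\mathfrak{P}}\times \bC^{\mathsf g_i}\cong \bP(H^0\mathcal C^-_i)\times \bC^{\mathsf g_i}$ and $H^0\mathcal C^-_i$ is locally free by cyclic stability; no transversality dimension count is needed, and the nonzero-weight hypothesis on edge loops is not used at this step.
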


\begin{proof}
By Lemma \ref{lem attr closure is lci}, $\overline{\Attr}_{u>0}(\Delta_{F'})\bigcap\,(F\times F')$ is cut out by the restriction of $s\in \Gamma( \cM(\bv+\delta_i,\underline{\delta_i}+\underline{\bd})\times\cM(\bv,\underline{\bd}),\mathcal F)$ to $F\times \cM(\bv,\underline{\bd})\cong \cM(\bv+\delta_i,\underline{\bd})\times \cM(\bv,\underline{\bd})$ together with $(\mathsf g_i+1)$ extra equations. Notice that
\begin{align}\label{res of F}
    \mathcal F\big|_{\cM(\bv+\delta_i,\underline{\bd})\times \cM(\bv,\underline{\bd})}\cong \overline{\mathcal F}\oplus \Hom(u\,\bC,\mathsf V_i)
\end{align}
and under which we have 
\begin{align*}
    s\big|_{\cM(\bv+\delta_i,\underline{\bd})\times \cM(\bv,\underline{\bd})}=(\bar s,0).
\end{align*}
Thus $\overline{\Attr}_{u>0}(\Delta_{F'})\bigcap\,(F\times F')$ is cut out by $\bar s$ together with $(\mathsf g_i+1)$ extra equations. 

The smoothness is the consequence of the following observation: the projection $$\cM(\bv+\delta_i,\underline{\bd})\times \cM(\delta_i,\delta_i)\times \cM(\bv,\underline{\bd})\to \cM(\bv+\delta_i,\underline{\bd})\times \cM(\bv,\underline{\bd})$$ induces an isomorphism \begin{equation}\label{equ on attinterff}\overline{\Attr}_{u>0}(\Delta_{F'})\bigcap\,(F\times F')\cong{\mathfrak{P}}(\bv+\delta_i,\bv,\underline{\bd}).\end{equation} In fact, the zero locus of $\bar s$ on $\cM(\bv+\delta_i,\underline{\bd})\times \cM(\bv,\underline{\bd})$ is exactly $${\mathfrak{P}}(\bv+\delta_i,\bv,\underline{\bd})=\left\{\substack{\text{surjective maps between framed quiver representations }\\\text{ $(V_j^+,D_{\In ,j},D_{\Out,j})\twoheadrightarrow (V_j,D_{\In ,j},D_{\Out,j})$ which are identity on framing}}\right\},$$
and the extra $(\mathsf g_i+1)$ equations on $\cM(\bv+\delta_i,\underline{\bd})\times \cM(\delta_i,\delta_i)\times \cM(\bv,\underline{\bd})\to \cM(\bv+\delta_i,\underline{\bd})\times \cM(\bv,\underline{\bd})$ are the following:
\begin{itemize}
    \item $B_i$ vanishes on the framed quiver representation $(\bC^{\delta_i},\bC^{\delta_i},\bC^{\delta_i})$,
    \item for every loop $a:i\to i$, $X_a$ action on the gauge vector space $\bC^{\delta_i}$ agrees with the $X_a$ action on the kernel of $(V_j^+,D_{\In ,j},D_{\Out,j})\twoheadrightarrow (V_j,D_{\In ,j},D_{\Out,j})$.
\end{itemize}
This proves \eqref{equ on attinterff}. Note that ${\mathfrak{P}}(\bv+\delta_i,\bv,\underline{\bd})\cong \overline{\mathfrak{P}}(\bv+\delta_i,\bv,\underline{\bd})\times \bC^{\mathsf g_i}$, and that $$\overline{\mathfrak{P}}(\bv+\delta_i,\bv,\underline{\bd})\cong \bP(H^0\mathcal C^-_i),$$
where $H^0\mathcal C^-_i$ is the zeroth cohomology of the complex $\mathcal C^-_i$ on $\cM(\bv,\underline{\bd})$ defined in \eqref{two complexes}. The smoothness 
follows from the local freeness of $H^0\mathcal C^-_i$.
\end{proof}

\begin{proof}[Proof of Proposition \ref{prop e as stab element}]
In the decomposition \eqref{res of F}, $\Hom(u\bC, \mathsf V_i)$ is the excess vector bundle for the intersection between $\overline{\Attr}_{u>0}(\Delta_{F'})$ and $F\times F'$, so $\Stab_{u>0}\big|_{F\times F'}$ is induced by the correspondence $$(-u)^{\bv_i}c_{-1/u}(\mathsf V_i)\cap\can\left[\overline{\Attr}_{u>0}(\Delta_{F'})\bigcap \,(F\times F')\right].$$ 
Recall that $\cM(\delta_i,\delta_i)\cong \bC^{\mathsf g_i}\times \bC$, where $\bC$ accounts for the map $B_i$ on the framed representation $(\bC^{\delta_i},\bC^{\delta_i},\bC^{\delta_i})$. Then 
\begin{align*}
    &\quad \,\,\overline{\Attr}_{u>0}(\Delta_{F'})\bigcap\, (F\times F')\bigcap\left(\cM(\bv+\delta_i,\underline{\bd})\times \{0\}\times \bC\times \cM(\bv,\underline{\bd})\right)\\
    &=\left\{\substack{\text{surjective maps between framed quiver representations }\\\text{ $(V_j^+,D_{\In ,j},D_{\Out,j})\twoheadrightarrow (V_j,D_{\In ,j},D_{\Out,j})$ which are identity on framing}\\ \text{and for all edge loops $a:i\to i$, $X_a$ action on the kernel is trivial}}\right\}
    =\overline{\mathfrak{P}}(\bv+\delta_i,\bv,\underline{\bd}).
\end{align*}
And $\langle \mathbf 0|\Stab_{u>0}|\delta_i\rangle$ is induced by the correspondence:
\begin{align*}
    (-u)^{\bv_i} c_{-1/u}(\mathsf V_i)\cap[\overline{\mathfrak{P}}(\bv+\delta_i,\bv,\underline{\bd})].
\end{align*}
This finishes the proof.
\end{proof}

With the same convention as in Section \ref{sec shifted yangian_sym quiver}, we define a virtual vector bundle on $\cM(\bv,\underline{\bd})$:
\begin{align}\label{par pol on M(v,d)}
    \mathsf P=\sum_{i\in Q_0}\Hom(\mathsf V_i,\mathsf D_{\Out,i})+\sum_{(i\to j) \in \mathcal A^*}\Hom(\mathsf V_i,\mathsf V_j)+\sum_{k\prec l\in Q_0^{\mathrm{f}}}\Hom(\mathsf V_l,\mathsf V_k),
\end{align}
and define stable envelope with a normalizer
\begin{align}\label{normalizer_yangian}
    \Stab_{\fC,\epsilon}=\Stab_{\fC}\cdot \,\epsilon,\; \text{with }\;\epsilon|_F=(-1)^{\rk \mathsf P|_{F}^+},
\end{align}
where $F$ is a fixed component and $\mathsf P|_{F}^+$ is the $\fC$-attracting part of the restriction of $\mathsf P$ to $F$.

\begin{Theorem}\label{thm e f h as R matrix elements}
Let $R(u)=\Stab_{u<0,\epsilon}^{-1}\Stab_{u>0,\epsilon}\in \End^{\bZ/2}\left(\cH^{\sW^{\mathrm{fr}}}_{\underline{\delta_i},\mathrm{loc}}\otimes\cH^{\sW^{\mathrm{fr}}}_{\underline{\bd},\mathrm{loc}}\right)(u)$ be the root $R$-matrix
and $$R^{\mathrm{sup}}(u):=\Sigma\cdot R(u), $$ 
where $\Sigma$ is the diagonal operator that acts on $\cH^{\sW^{\mathrm{fr}}}_{\underline{\delta_i}}(\bv')\otimes\cH^{\sW^{\mathrm{fr}}}_{\underline{\bd}}(\bv'')$ by the scalar $(-1)^{|\bv'|\cdot |\bv''|}$. 

Assume that $\sw^{\mathrm{fr}}$ on $\cM(\delta_i)$ is admissible, then the first $2$ by $2$ block of $R^{\mathrm{sup}}(u)$ admits Gauss decomposition:
\begin{align}\label{gauss decomp of R}
\begin{pmatrix}
\langle\mathbf 0|R^{\mathrm{sup}}(u)|\mathbf 0\rangle & \langle\mathbf 0|R^{\mathrm{sup}}(u)|\delta_i\rangle \\
\langle\delta_i|R^{\mathrm{sup}}(u)|\mathbf 0\rangle & \langle\delta_i|R^{\mathrm{sup}}(u)|\delta_i\rangle
\end{pmatrix}=
\begin{pmatrix}
1 & 0 \\
\frac{(-1)^{|i|} t_i}{\gamma_i}f_i(u) & 1
\end{pmatrix}
\begin{pmatrix}
g_i(u) & 0 \\
0 & g_i(u)h_i(u) 
\end{pmatrix}
\begin{pmatrix}
1 & e_i(u)\\
0 & 1 
\end{pmatrix},
\end{align}
where $e_i(u)$, $f_i(u)$, $h_i(u)$ are the linear operators in Theorem \ref{cor shifted yangian action} acting on $\cH^{\sW^{\mathrm{fr}}}_{\underline{\bd}}$, $\gamma_i=\prod_{e:i\to i}t_e$ for $t_e=$ $\sT_0$-weight of the edge $e$,  $t_i$ is the $\sT_0$-weight of the out-going framing $B_i$ in $\cM(\delta_i)$, and $g_i(u)$ is the following diagonal operator (with equivariant parameter written multiplicatively):
\begin{equation}
\label{equ on gi}
    g_i(u)=c_{-1/u}((1-t_i^{-1})\mathsf V_i).
\end{equation}
\end{Theorem}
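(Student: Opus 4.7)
My plan is to compute the four matrix elements of $R^{\mathrm{sup}}(u)$ in the $(|\mathbf 0\rangle,|\delta_i\rangle)$-block directly and then read off the triangular factors. Starting from the identity $R(u)=\Stab_{u<0,\epsilon}^{-1}\circ\Stab_{u>0,\epsilon}$ and the general Gauss decomposition \eqref{gauss decomp_general}, all four entries arise from matrix elements of $\Stab_{u\gtrless 0}$ between the two fixed components $F=\cM(\mathbf 0,\delta_i)\times\cM(\bv+\delta_i,\underline{\bd})$ and $F'=\cM(\delta_i,\delta_i)\times\cM(\bv,\underline{\bd})$ of $\cM(\bv+\delta_i,\underline{\delta_i}+\underline{\bd})^{\bC^*_u}$, normalized against the Euler classes of the negative normal bundles for the two chambers.

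For the upper-triangular factor, I would apply Proposition \ref{prop e as stab element} to rewrite $\langle \mathbf 0|\Stab_{u>0}|\delta_i\rangle$ as the correspondence $(-u)^{\bv_i}c_{-1/u}(\mathsf V_i)\cap[\overline{\mathfrak{P}}(\bv+\delta_i,\bv,\underline{\bd})]$; dividing by the $u>0$-polarized Euler class of $N_{F'/X}$ and matching signs from the normalizer $\epsilon$ (eq.~\eqref{normalizer_yangian}), from the super-twist $\Sigma$, and from the raising operator formula \eqref{sph coha act_sym}, this reproduces $e_i(u)$. The rational prefactor $1/(u-c_1(\mathcal L_i))$ of $e_i(u)$ matches the polynomial prefactor $(-u)^{\bv_i}c_{-1/u}(\mathsf V_i)$ of Proposition \ref{prop e as stab element} after pushing forward along $\overline{\mathfrak{P}}\to\cM(\bv+\delta_i,\underline{\bd})$ via the Segre-type identity of Lemma \ref{lem segre}, using the identification $\overline{\mathfrak{P}}(\bv+\delta_i,\bv,\underline{\bd})\cong\bP(\mathcal C^-_i)$ with $\mathcal L_i\cong\mathcal O(1)$ from \eqref{two complexes}.

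The lower-triangular factor is treated by a symmetric argument applied to $\Stab_{u<0}$ and to the transposed correspondence $[\overline{\mathfrak{P}}(\bv+\delta_i,\bv,\underline{\bd})]^{\mathrm t}$, normalized against the $u<0$-polarized Euler class of $N_{F/X}$. Comparison with the lowering operator formula \eqref{sph coha op act_sym} then yields $f_i(u)$ together with the scalar factor $(-1)^{|i|}t_i/\gamma_i$: the numerator $(-1)^{|i|}t_i$ comes from the single out-going framing arrow $B_i$ which distinguishes the framing $\underline{\delta_i}$ at $F'$ from its absence at $F$, while the denominator $\gamma_i=\prod_{e:i\to i}t_e$ reflects the same loop-weight normalization as appears in the Cartan relation \eqref{ef rel_sym}.

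For the diagonal factor, the common scalar $g_i(u)=\langle\mathbf 0|R^{\mathrm{sup}}(u)|\mathbf 0\rangle$ is a ratio of Euler classes of the two negative normal bundles, reducing to a simple $\bC^*_u$-weight count via the tautological complexes $\mathcal C^\pm_i$ of \eqref{two complexes} and producing $c_{-1/u}((1-t_i^{-1})\mathsf V_i)$. The remaining diagonal factor $h_i(u)$ on the $|\delta_i\rangle$-entry can then be obtained in two equivalent ways: either by a direct Euler-class computation on $F'$ using the relative tangent data, or, more cleanly, by invoking the Cartan commutation relation \eqref{ef rel_sym} already established in Theorem \ref{thm borel yangian action} and forcing $D-CB/A=g_i(u)h_i(u)$ from the other three entries. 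I expect the main obstacle to be the careful reconciliation of three distinct sign sources --- the normalizer $\epsilon$ of \eqref{normalizer_yangian}, the super-twist $\Sigma$, and the twisted-product signs $(-1)^{(\delta_i|\bv+\delta_i)}$ of \eqref{sph coha act_sym} and \eqref{sph coha op act_sym} --- and verifying that they combine precisely to the factors displayed in \eqref{gauss decomp of R}.
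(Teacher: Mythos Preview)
Your plan is correct and follows essentially the same approach as the paper: apply Proposition~\ref{prop e as stab element} for the $(1,2)$ entry of $\Stab_{u>0}$, obtain the $(2,1)$ entry of $\Stab_{u<0}^{-1}$ from the transpose correspondence (the paper cites \cite[Lem.~3.29]{COZZ} for $\Stab_{u<0}^{-1}=[\Stab_{u>0}]^{\mathrm t}$), and read off the diagonal as a ratio of Euler classes.

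One simplification: you do not need Lemma~\ref{lem segre} or any pushforward to match the upper-triangular entry. The prefactor $(-u)^{\bv_i}c_{-1/u}(\mathsf V_i)$ from Proposition~\ref{prop e as stab element} cancels directly against $e^{\sT_0\times\bC^*_u}(N^-_{F/X})$ \emph{on the correspondence} $\overline{\mathfrak P}$, because $N^-_{F/X}=u^{-1}\mathsf V_i^+$ and the short exact sequence $0\to\mathcal L_i\to\mathsf V_i^+\to\mathsf V_i\to 0$ gives $e(N^-_{F/X})=(-u)^{\bv_i}(c_1(\mathcal L_i)-u)\,c_{-1/u}(\mathsf V_i)$; the quotient is exactly $\frac{1}{c_1(\mathcal L_i)-u}[\overline{\mathfrak P}]$, matching $e_i(u)$ up to sign without any Segre identity. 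The paper also obtains $h_i(u)$ by direct Euler-class computation on $F'$ rather than via the Cartan relation, but either route works.
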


\begin{proof}
By Proposition \ref{prop e as stab element}, the first $2$ by $2$ block of $\Stab_{u>0}$ reads:
\begin{align*}
\begin{pmatrix}
\langle\mathbf 0|\Stab_{u>0}|\mathbf 0\rangle & \langle\mathbf 0|\Stab_{u>0}|\delta_i\rangle \\
\langle\delta_i|\Stab_{u>0}|\mathbf 0\rangle & \langle\delta_i|\Stab_{u>0}|\delta_i\rangle
\end{pmatrix}
=
\begin{pmatrix}
e^{\sT_0\times \bC^*_u}(N^-_{F/X}) & 0 \\
0 & e^{\sT_0\times \bC^*_u}(N^-_{F'/X}) 
\end{pmatrix}
\begin{pmatrix}
1 & \frac{[\overline{\mathfrak{P}}(\bv+\delta_i,\bv,\underline{\bd})]}{c_1(\mathcal L_i)-u}\\
0 & 1 
\end{pmatrix}.
\end{align*}
By \cite[Lem.~3.29]{COZZ}, $\Stab_{u<0}^{-1}$ is induced by the transpose correspondence $[\Stab_{u>0}]^{\mathrm{t}}$, so the first $2$ by $2$ block of $\Stab_{u<0}^{-1}\Stab_{u>0}$ admits Gauss decomposition
\begin{align*}
\begin{pmatrix}
1 & 0\\
\frac{t_i}{\gamma_i}\frac{[\overline{\mathfrak{P}}(\bv+\delta_i,\bv,\underline{\bd})]^{\mathrm{t}}}{c_1(\mathcal L_i)-u} & 1 
\end{pmatrix}
\begin{pmatrix}
\frac{e^{\sT_0\times \bC^*_u}(N^-_{F/X})}{e^{\sT_0\times \bC^*_u}(N^+_{F/X})} & 0 \\
0 & \frac{e^{\sT_0\times \bC^*_u}(N^-_{F'/X})}{e^{\sT_0\times \bC^*_u}(N^+_{F'/X})} 
\end{pmatrix}
\begin{pmatrix}
1 & \frac{[\overline{\mathfrak{P}}(\bv+\delta_i,\bv,\underline{\bd})]}{c_1(\mathcal L_i)-u}\\
0 & 1 
\end{pmatrix}.
\end{align*}
Taking normalizers into account, explicit computation gives \eqref{gauss decomp of R}.
\end{proof}

\begin{Remark}\label{rmk super YBE}
The $R$ matrices $R^{\mathrm{sup}}(u)$ satisfy the \textit{super Yang-Baxter equation} in the following sense. Let 
$$R_{\alpha\beta}(u)=\Stab_{u<0,\epsilon}^{-1}\Stab_{u>0,\epsilon}\in \End^{\bZ/2}\left(\cH^{\sW^{\mathrm{fr}}}_{\underline{\bd}^{(\alpha)},\mathrm{loc}}\otimes \cH^{\sW^{\mathrm{fr}}}_{\underline{\bd}^{(\beta)},\mathrm{loc}}\right)(u)$$ be the root $R$ matrix, and $\Sigma$ be the linear map that acts on $\cH^{\sW^{\mathrm{fr}}}_{\underline{\bd}^{(\alpha)}}(\bv')\otimes \cH^{\sW^{\mathrm{fr}}}_{\underline{\bd}^{(\beta)}}(\bv'')$ by the scalar $(-1)^{|\bv'|\cdot |\bv''|}$. Define $R^{\mathrm{sup}}_{\alpha\beta}(u):=\Sigma\cdot R_{\alpha\beta}(u)$, then the equation
\begin{align}\label{super YBE}
R^{\mathrm{sup}}_{12}(u-v)R^{\mathrm{sup}}_{13}(u)R^{\mathrm{sup}}_{23}(v)=R^{\mathrm{sup}}_{23}(v)R^{\mathrm{sup}}_{13}(u)R^{\mathrm{sup}}_{12}(u-v)
\end{align}
holds in $\End^{\bZ/2}\left(\cH^{\sW^{\mathrm{fr}}}_{\underline{\bd}^{(1)},\mathrm{loc}}\otimes \cH^{\sW^{\mathrm{fr}}}_{\underline{\bd}^{(2)},\mathrm{loc}}\otimes \cH^{\sW^{\mathrm{fr}}}_{\underline{\bd}^{(3)},\mathrm{loc}}\right)(u,v)$. Here endomorphisms are $\bZ/2$ graded since they are taken in the category of super vector spaces, and $\cH^{\sW^{\mathrm{fr}}}_{\underline{\bd}}(\bv)$ is of homogeneous degree $|\bv|$ \eqref{equ on gra}. In dealing with endomorphisms on tensor products of super vector spaces, we have 
\begin{align*}
    \left(\id^{\otimes a}\otimes X\otimes \id^{\otimes b}\right)\left(\bigotimes_{i=1}^{a+b+1}v_i\right)=(-1)^{|X|\cdot\sum_{i=1}^a|v_i|}\left(\bigotimes_{i=1}^{a}v_i\right)\otimes Xv_i\:\otimes \left(\bigotimes_{j=a+1}^{a+b+1}v_j\right)
\end{align*}
for homogeneous endomorphism $X$ and homogeneous elements $v_1,\ldots,v_{a+b+1}$.
\end{Remark}

\section{Reshetikhin type shifted Yangians for symmetric quivers with potentials}\label{sec shifted Yangian}

In this section, given a symmetric quiver $Q$ with potential $\sW$, and an auxiliary set $\mathcal C$ of framing data, we construct Reshetikhin type (anti-dominant) shifted  Yangians 
$\mathsf Y_\mu(Q,\sW,\mathcal C)$ using the matrix elements of the $R$-matrices, which naturally act on the critical cohomology of (framed) quiver varieties with potentials. 

We introduce the concept of admissibility of $\mathcal C$ (Definition \ref{def on adm aux data}), and show that $\mathsf Y_\mu(Q,\sW,\mathcal C)$ is independent of those admissible choices (Theorem \ref{thm admissible}), and will be simply denoted $\mathsf Y_\mu(Q,\sW)$.
This is remarkable as one can use a `small' (finite) admissible auxiliary set to generate the shifted Yangian assoicated 
to a `large' (infinite) admissible auxiliary set.
We also construct natural maps
from Drinfeld type shifted Yangians of the previous section to the Reshetikhin type shifted Yangians (Theorem \ref{thm drinfeld yangian map to rtt yangian}),
define coproducts by splitting $R$-matrices (\S \ref{sect on coprod}) and shift homomorhisms relating Yangians 
for different shifts (\S \ref{sec shift map}). 

In the zero shift case, we construct Maulik-Okounkov (MO) type Lie superalgebra $\mathfrak{g}_{Q,\sW}$ out of $(Q,\sW)$ and study their properties (Proposition \ref{prop g(Q,w)}). This recovers the MO Lie algebra for tripled quiver with canonical cubic potential (Theorem \ref{thm compare with MO yangian}). We conjecture that the positive part of $\mathfrak{g}_{Q,\sW}$ equals to the BPS Lie algebra of Davison-Meinhardt \cite{DM} (Conjecture \ref{conj: bps lie}). This generalizes a conjecture of the second author, proven recently by Botta-Davison \cite{BD}.

We prove a PBW basis theorem for $\mathsf Y_0(Q,\sW)$,~i.e.~there is a filtration on $\mathsf Y_0(Q,\sW)$ such that the associated graded is the enveloping algebra of $\mathfrak{g}_{Q,\sW}[u]$ (Theorem \ref{thm grY}).
This generalizes a theorem of Maulik-Okounkov \cite[Thm.\,5.5.1]{MO} from Nakajima quivers to symmetric quivers with potentials.

Several explicit formulas of coproducts are presented in \S \ref{expl formula of coprod}. For quivers with potentials assoicated with symmetrizable Kac-Moody Lie algebras (Appendix \ref{sec Y(sym KM)}), we expect the coproduct constructed using algebraic method by Guay-Nakajima-Wendlandt \cite{GNW} is compatible with the coproduct constructed from critical stable envelopes (Remark \ref{rmk on copr compa}), which we prove for all finite dimensional simple Lie algebras (Theorem \ref{thm on copr compa}).

We also study the Casimir operators (Definition \ref{def of Casimir}) and show that they are given by 
Steinberg correspondences (Proposition \ref{prop Casimir}). The corresponding shifted version is studied in Proposition \ref{prop shifted Casimir}.

\subsection{Reshetikhin type shifted Yangians}

\subsubsection{Definitions}

We fix a symmetric quiver $Q$ with $\sT_0$-action and $\sT_0$-invariant potential $\sW$, as in the beginning of \S\ref{sec double of COHA}, and we assume that
\begin{itemize}
    \item $\sT_0$-equivariant weights on every edge loop at $i\in Q_0$ is nonzero, equivalently $\gamma_i=\prod_{e:i\to i}t_e$ is nonzero for every $i\in Q_0$, where $t_a$ is the $\sT_0$-equivariant weight of the arrow $a\in Q_1$.
    \item For any dimension vectors $\bv,\bd_{\In},\bd_{\Out}\in \bN^{Q_0}$, the quiver variety $\cM(\bv,\bd)$ is defined with respect to the cyclic stability. 
\end{itemize}

\begin{Definition}\label{def on adm aux data}
An \textit{auxiliary data set} $\mathcal C=\{(\bd,\sT_0\curvearrowright\cM(\bd),\sW^{\mathrm{fr}})\}$ is a (possibly infinite) set of the following triples: (1) a framing dimension $\bd$, (2) a choice of $\sT_0$-action on the symmetric quiver variety 
$$\cM(\bd)=\bigsqcup_{\bv\in \bN^{Q_0}} \cM(\bv,\bd),$$ 
extending the $\sT_0$-action on the unframed quiver, (3) a $\sT_0$-invariant framed potential $\sW^{\mathrm{fr}}$ on $\cM(\bd)$ extending $\sW$.

For an element $\mathfrak{c}=(\bd,\sT_0\curvearrowright\cM(\bd),\sW^{\mathrm{fr}})\in \mathcal C$, the \textit{auxiliary space} is the equivariant critical cohomology:
\begin{equation}\label{equ on aux sp}\mathcal H_{\mathfrak{c}}:=H^{\sT_0}(\cM(\bd),\sw^{\mathrm{fr}}). \end{equation} 
An auxiliary data set $\mathcal C$ is said to be \textit{admissible} if for every $i\in Q_0$, there exists an element $\mathfrak{c}_i=(\delta_i,\sT_0\curvearrowright\cM(\delta_i),\sW_i)\in \mathcal C$ such that $\sT_0$-weight on the out-going framing $B_i$ in $\cM(\delta_i)$ is nonzero (here we may assume $\sT_0$-weight of in-coming framing $A_i$ in $\cM(\delta_i)$ is zero without loss of generality).
In this case, $\{\mathcal H_{\mathfrak{c}_i}\}_{i\in Q_0}$ is called a collection of \textit{admissible auxiliary spaces}.
\end{Definition}

\begin{Remark}
If an auxiliary datum $(\delta_i,\sT_0\curvearrowright\cM(\delta_i),\sW_i)$ is admissible, then $\sw_i$ is automatically admissible in the sense of Definition \ref{def on adm pot}. In fact, if we write $\sW_i\big|_{X_a=0,\:\forall a\in Q_1}=\sum_{k\geqslant 1}a_k (B_iA_i)^k$, then the $\sT_0$-invariance condition forces all $a_k$ to be zero.
\end{Remark}

\begin{Remark}
Admissible auxiliary data set always exists, since for each $i\in Q_0$, we can take the framed potential to be the trivial extension of $\sW$, and choose a nonzero $\sT_0$ weight on $B_i$ ($\sW$ is $\sT_0$-invariant automatically).
\end{Remark}

\begin{Definition}\label{def state space}
A \textit{state space} is the equivariant critical cohomology: 
\begin{equation}\label{equ on state sp}
    \cH^{\sW^{\mathrm{fr}}}_{\underline{\bd},\sA^{\mathrm{fr}}}:=H^{\sT_0\times \sA^{\mathrm{fr}}}(\cM(\underline{\bd}),\sw^{\mathrm{fr}}),
\end{equation}
where the $\sT_0$ action on $\cM(\underline{\bd})$ extends the $\sT_0$ action on the unframed quiver, $\sA^{\mathrm{fr}}$ acts on $\cM(\underline{\bd})$ as a framing torus, and $\sw^{\mathrm{fr}}=\tr(\sW^{\mathrm{fr}})$ for a $\sT_0\times \sA^{\mathrm{fr}}$ invariant framed potential $\sW^{\mathrm{fr}}$. 
\end{Definition}
\begin{Remark}
The spaces \eqref{equ on aux sp}, \eqref{equ on state sp} have natural $\mathbb{Z}^{Q_0}$-grading, whose degree zero part is spanned by 
$$ [\pt]\in H^{\sT_0\times \sA^{\mathrm{fr}}}\left(\cM(\mathbf 0,{\bd}),\sw^{\mathrm{fr}}\right)=H_{\sT_0\times \sA^{\mathrm{fr}}}(\pt), \quad 
 [\pt]\in H^{\sT_0\times \sA^{\mathrm{fr}}}\left(\cM(\mathbf 0,\underline{\bd}),\sw^{\mathrm{fr}}\right)=H_{\sT_0\times \sA^{\mathrm{fr}}}(\pt).$$
\end{Remark}

\begin{Definition}\label{def of rtt yang}
Let $\mathcal C$ be an auxiliary data set. For a given $\mu\in \bZ_{\leqslant 0}^{Q_0}$, the \textit{Reshetikhin type} $\mu$-\textit{shifted Yangian} with respect to $\mathcal C$ is the $\bC(\mathsf t_0)$-subalgebra
\begin{align}\label{rtt yangian def}
    \mathsf Y_\mu(Q,\sW,\mathcal C)\subset \prod_{\underline{\bd},\sW^{\mathrm{fr}},\sA^{\mathrm{fr}}}\End^{\bZ/2}_{\bC(\mathsf t_0)[\mathsf a^{\mathrm{fr}}]}\left(\cH^{\sW^{\mathrm{fr}}}_{\underline{\bd},\mathsf A^{\mathrm{fr}},\mathrm{loc}}\right),
\end{align}
generated by endomorphisms of the form
\begin{align}\label{op E(m)}
    \mathsf E(m):=\underset{u\to \infty}{\Res}\str_{\cH_{\mathfrak{c},\loc}}\left((m\otimes \id)\cdot R^{\mathrm{sup}}_{\cH_{\mathfrak{c}},\cH^{\sW^{\mathrm{fr}}}_{\underline{\bd},\sA^{\mathrm{fr}}}}(u)\right),\quad m\in \End^{\bZ/2}_{\bC(\mathsf t_0)}(\cH_{\mathfrak{c}})[u],\quad \mathfrak{c}\in \mathcal C,
\end{align}
where the product in \eqref{rtt yangian def} is taken for all state spaces with $\bd_{\Out}-\bd_{\In}=\mu$, and the subscript ``loc'' means $-\otimes_{\bC[\mathsf t_0]}\bC(\mathsf t_0)$. The superscript $\bZ/2$ means endowing state spaces with super vector space structures in the sense of Remark \ref{rmk super YBE}, and ``$\str$'' stands for the super trace. 
Here the $R$-matrix can also be defined using the stable envelope as it is equal to the Hall envelope (Remark \ref{rmk on hallequstab}).

We define the (\textit{big}) \textit{Reshetikhin type} $\mu$-\textit{shifted Yangian} of $(Q,\sW)$ to be 
\begin{equation}\label{equ on rtt yangian}
    \mathsf Y_\mu(Q,\sW):=\mathsf Y_\mu(Q,\sW,\bigcup_{\mathcal C}\mathcal C)\subset \prod_{\underline{\bd},\sW^{\mathrm{fr}},\sA^{\mathrm{fr}}}\End^{\bZ/2}_{\bC(\mathsf t_0)[\mathsf a^{\mathrm{fr}}]}(\cH^{\sW^{\mathrm{fr}}}_{\underline{\bd},\sA^{\mathrm{fr}},\mathrm{loc}}),
\end{equation}
where the union is taken over all auxiliary data sets.
\end{Definition}

By definition, $\mathsf Y_\mu(Q,\sW,\mathcal C)$ is a $(\bZ^{Q_0}\times \bZ/2)$-graded $\bC(\mathsf t_0)$-algebra, where $\bZ^{Q_0}$ grading comes from the $\bZ^{Q_0}$ grading on $m\in \End^{\bZ/2}_{\bC(\mathsf t_0)}(\cH_{\mathfrak{c},\loc})$.

The following is obvious by definition.
\begin{Lemma}
If $\mathcal C\subset \mathcal C'$, then $\mathsf Y_\mu(Q,\sW,\mathcal C)\subseteq \mathsf Y_\mu(Q,\sW,\mathcal C')$.
\end{Lemma}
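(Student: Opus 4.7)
The plan is essentially immediate from Definition \ref{def of rtt yang}. I would observe that both algebras $\mathsf Y_\mu(Q,\sW,\mathcal C)$ and $\mathsf Y_\mu(Q,\sW,\mathcal C')$ are defined as subalgebras of the \emph{same} ambient ring
$$\prod_{\underline{\bd},\sW^{\mathrm{fr}},\sA^{\mathrm{fr}}}\End^{\bZ/2}_{\bC(\mathsf t_0)[\mathsf a^{\mathrm{fr}}]}\left(\cH^{\sW^{\mathrm{fr}}}_{\underline{\bd},\mathsf A^{\mathrm{fr}},\mathrm{loc}}\right),$$
so the containment is meaningful as a statement about subalgebras of a fixed algebra. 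Each of these subalgebras is defined as the $\bC(\mathsf t_0)$-subalgebra generated by operators of the form $\mathsf E(m)$ in \eqref{op E(m)}, indexed by pairs $(\mathfrak{c},m)$ with $\mathfrak{c}$ running through the chosen auxiliary data set.

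The key step is to note that the generator $\mathsf E(m)$ depends only on the choice of auxiliary datum $\mathfrak{c}$ (which determines the auxiliary space $\cH_{\mathfrak c}$ and hence the $R$-matrix $R^{\mathrm{sup}}_{\cH_{\mathfrak{c}},\cH^{\sW^{\mathrm{fr}}}_{\underline{\bd},\sA^{\mathrm{fr}}}}(u)$) together with the polynomial endomorphism $m\in \End^{\bZ/2}_{\bC(\mathsf t_0)}(\cH_{\mathfrak c})[u]$; it does \emph{not} depend on any global choice involving $\mathcal C$. Consequently, since $\mathcal C\subseteq \mathcal C'$, every pair $(\mathfrak{c},m)$ with $\mathfrak{c}\in \mathcal C$ also satisfies $\mathfrak{c}\in \mathcal C'$, and the corresponding generator $\mathsf E(m)$ of $\mathsf Y_\mu(Q,\sW,\mathcal C)$ appears verbatim as a generator of $\mathsf Y_\mu(Q,\sW,\mathcal C')$.

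Hence the generating set of $\mathsf Y_\mu(Q,\sW,\mathcal C)$ is a subset of the generating set of $\mathsf Y_\mu(Q,\sW,\mathcal C')$, and the inclusion of subalgebras follows. There is no genuine obstacle: the lemma is a formal consequence of the fact that ``subalgebra generated by'' is monotone in the generating set, once one has fixed the common ambient algebra. The only thing worth mentioning is that the compatibility of \eqref{rtt yangian def} with the union $\bigcup_{\mathcal C}\mathcal C$ in \eqref{equ on rtt yangian} is likewise formal, so this lemma is exactly the statement one needs to justify writing $\mathsf Y_\mu(Q,\sW)$ as the direct limit (union) of the $\mathsf Y_\mu(Q,\sW,\mathcal C)$ over all $\mathcal C$.
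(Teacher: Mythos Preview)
Your argument is correct and is exactly what the paper intends: it states the lemma as ``obvious by definition'' without further proof, and your observation that both are subalgebras of the same ambient product of endomorphism algebras, with the generating set for $\mathcal C$ contained in that for $\mathcal C'$, is precisely the unwinding of that remark.
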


\begin{Theorem}
\label{thm drinfeld yangian map to rtt yangian}
If $\mathcal C$ is admissible, then for any $\mu\in \bZ_{\leqslant 0}^{Q_0}$, the image of the map $$\mathcal D\widetilde{\mathcal{SH}}_\mu(Q,\sW)\to \prod_{\underline{\bd},\sW^{\mathrm{fr}},\sA^{\mathrm{fr}}}\End^{\bZ/2}_{\bC(\mathsf t_0)[\mathsf a^{\mathrm{fr}}]}(\cH^{\sW^{\mathrm{fr}}}_{\underline{\bd},\sA^{\mathrm{fr}},\mathrm{loc}})$$ induced by the action in Theorem \ref{cor shifted yangian action} is contained in $\mathsf Y_\mu(Q,\sW,\mathcal C)$. In particular, the image of the above map is contained in $\mathsf Y_\mu(Q,\sW)$.
\end{Theorem}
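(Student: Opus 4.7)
The plan is to realize each Drinfeld-type generator $e_{i,r}$, $f_{i,r}$, $h_{i,t}$ (for $i\in Q_0$, $r\geqslant 0$, $t\geqslant -\mu_i-1$), acting on state spaces via Theorem \ref{cor shifted yangian action}, as a coefficient of an element $\mathsf E(m)$ built from a rank-one endomorphism of an admissible auxiliary space, together with formal manipulations of power series in $u^{-1}$. Since these elements generate $\mathcal D\widetilde{\mathcal{SH}}_\mu(Q,\sW)$, this will prove both claims of the theorem (the second being immediate from the containment $\mathsf Y_\mu(Q,\sW,\mathcal C)\subseteq \mathsf Y_\mu(Q,\sW)$).

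By admissibility, for each $i\in Q_0$ we fix an element $\mathfrak c_i=(\delta_i,\sT_0\curvearrowright\cM(\delta_i),\sW_i)\in \mathcal C$ with nonzero $\sT_0$-weight on the outgoing framing $B_i$. Lemma \ref{lem 1st excitation} then singles out a canonical one-dimensional subspace $\bC(\mathsf t_0)\cdot |\delta_i\rangle$ of $\cH_{\mathfrak c_i,\loc}$ in $\bZ^{Q_0}$-degree $\delta_i$, alongside the vacuum $|\mathbf 0\rangle$ in degree $\mathbf 0$. For $v,w\in \{|\mathbf 0\rangle,|\delta_i\rangle\}$ and $P(u)\in \bC(\mathsf t_0)[u]$, the rank-one endomorphism $m:=P(u)\cdot |v\rangle\langle w|$ of $\cH_{\mathfrak c_i,\loc}$ is supported in finitely many $\bZ^{Q_0}$-degrees, so the supertrace defining $\mathsf E(m)$ in \eqref{op E(m)} collapses to a finite sum and equals, up to a sign from the $\bZ/2$-grading, $P(u)\cdot\langle w|R^{\mathrm{sup}}(u)|v\rangle$ acting on any state space with $\bd_{\Out}-\bd_{\In}=\mu$. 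Taking $P(u)=u^{r+1}$ and the residue at $u=\infty$ extracts the coefficient of $u^{-r-1}$ in the relevant matrix element, which by Theorem \ref{thm e f h as R matrix elements} equals an entry of the Gauss decomposition \eqref{gauss decomp of R}: the four choices of $(v,w)$ produce respectively the coefficients of $g_i(u)$, of $g_i(u)e_i(u)$, of $c\,f_i(u)g_i(u)$ (with $c=(-1)^{|i|}t_i/\gamma_i$), and of $c\,f_i(u)g_i(u)e_i(u)+g_i(u)h_i(u)$, each of which therefore belongs to $\mathsf Y_\mu(Q,\sW,\mathcal C)$.

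Since $g_i(u)=c_{-1/u}((1-t_i^{-1})\mathsf V_i)=1+O(u^{-1})$ by \eqref{equ on gi}, its inverse is a power series in $u^{-1}$ whose coefficients are finite polynomials in those of $g_i(u)$, hence already in $\mathsf Y_\mu(Q,\sW,\mathcal C)$. Multiplying on the appropriate side strips off the $g_i(u)$ prefactors from the second and third Gauss entries to yield $e_{i,r}, f_{i,r}\in \mathsf Y_\mu(Q,\sW,\mathcal C)$ for all $r\geqslant 0$; the fourth entry then gives the coefficients of $h_i(u)$ after subtracting the already-obtained combination $c\,f_i(u)g_i(u)e_i(u)$. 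Alternatively, the $h_{i,t}$ for $t\geqslant 0$ arise directly from the commutator relation \eqref{ef rel_sym}. The remaining $h_{i,t}$ with $-\mu_i-1\leqslant t<0$ (which occur only when $\mu_i\leqslant -2$) are, by \eqref{cartan act_sym}, multiplications by Chern classes of the virtual bundle $\mathsf U_i=\sum_{a:i\to j}t_a\mathsf V_j-\sum_{b:j\to i}t_b^{-1}\mathsf V_j+\mathsf D_{\Out,i}-\mathsf D_{\In,i}$, which are polynomial in the coefficients of $g_j(u)$ for $j$ adjacent to $i$ (produced by the same recipe applied to the admissible datum $\mathfrak c_j$) together with equivariant scalars; these therefore also lie in $\mathsf Y_\mu(Q,\sW,\mathcal C)$.

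The essential input, already delivered by Theorem \ref{thm e f h as R matrix elements}, is the identification of the Drinfeld currents with the Gauss factors of $R^{\mathrm{sup}}$ on the admissible $2\times 2$ block. Given this, the remainder is routine bookkeeping with formal power series in $u^{-1}$; the only mild technical subtlety is confirming that the supertrace is well-defined when restricted to the rank-one test operators above, which is automatic from their finite support in the $\bZ^{Q_0}$-grading.
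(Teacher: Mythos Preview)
Your proposal is correct and follows essentially the same approach as the paper: use the admissible auxiliary datum $\mathfrak c_i$ to form the rank-one test operators $|v\rangle\langle w|\,u^k$, identify the resulting $\mathsf E(m)$'s with the four entries of the Gauss decomposition in Theorem \ref{thm e f h as R matrix elements}, and then invert $g_i(u)=1+O(u^{-1})$ to isolate $e_i(u)$, $f_i(u)$, $h_i(u)$. Your final paragraph is unnecessary: since $\mu_i\leqslant 0$ forces $-\mu_i-1\geqslant -1$, the range $-\mu_i-1\leqslant t<0$ is empty for $\mu_i<0$ and contains only $t=-1$ (where $h_{i,-1}=1$ is a scalar) for $\mu_i=0$, so there are no ``remaining'' generators to handle---your extraction from the fourth Gauss entry already delivers every $h_{i,t}$.
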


\begin{proof}
Using the Gauss decomposition \eqref{gauss decomp of R}, we have
\begin{align*}
\sum_{k\in \bZ}z^{-k-1}
\begin{pmatrix}\mathsf E(|\mathbf 0\rangle\langle\mathbf 0|u^k) & \mathsf E(|\delta_i\rangle\langle\mathbf 0|u^k) \\
\mathsf E(|\mathbf 0\rangle\langle\delta_i|u^k) & \mathsf E(|\delta_i\rangle\langle\delta_i|u^k)
\end{pmatrix}
=
\begin{pmatrix}
g_i(z) & (-1)^{|i|}g_i(z)e_i(z)\\
\frac{(-1)^{|i|} t_i}{\gamma_i}f_i(z)g_i(z) & \frac{t_i}{\gamma_i}f_i(z)g_i(z)e_i(z)+(-1)^{|i|}g_i(z)h_i(z)
\end{pmatrix}\;,
\end{align*}
where $\langle\mathbf 0|$ denotes the dual vector of $|\mathbf 0\rangle$ in the ($\mathbb{Z}^{Q_0}$-graded) dual space, 
$|\mathbf 0\rangle\langle\mathbf 0|u^k\in \End^{\bZ/2}_{\bC(\mathsf t_0)}(\cH_{\mathfrak{c}})[u]$ for some $\mathfrak{c}$,
and $g_i(z)$ is given by \eqref{equ on gi}.
It follows that the images of $e_i(z),f_i(z),h_i(z)$ are contained in $\mathsf Y_\mu(Q,\sW,\mathcal C)$.
\end{proof}

\begin{Remark}
\eqref{op E(m)} makes sense for any rational function $m(u)$ of $u$, in particular, 
\begin{align*}
    \mathsf E(m\: u^{-k})=\begin{cases}
        \str_{\cH_{\mathfrak{c},\loc}}\left(m\cdot P_{Q_0\setminus \Supp(\mu)}\right)\:, & k=1\:,\\
        0\:, & k>1\:,
    \end{cases}
\end{align*}
where $P_{Q_0\setminus \Supp(\mu)}$ is the projector from $\cH_{\mathfrak{c}}$ to the subspace 
\begin{align*}
    \bigoplus_{\mu\cdot\bv=0}\cH_{\mathfrak{c}}(\bv).
\end{align*}
\end{Remark}


\subsubsection{Triangular decomposition}

Write the Gauss decompositions $R^{\mathrm{sup}}(u)=F(u)G(u)E(u)$, where 
\begin{align}\label{gauss decomp}
    F(u)=\pm\Stab^{-1}_{u<0,\epsilon}\cdot \:e^{\sT_0\times \bC^*_u}(N^+),\quad G(u)=\pm\frac{e^{\sT_0\times \bC^*_u}(N^-)}{e^{\sT_0\times \bC^*_u}(N^+)},\quad E(u)=\pm e^{\sT_0\times \bC^*_u}(N^-)^{-1}\cdot\Stab_{u>0,\epsilon}\:,
\end{align}
where signs are determined by $\mathsf P$. Matrices elements of $F(u)$, $G(u)$, and $E(u)$ can be written in terms of matrix elements of $R^{\mathrm{sup}}(u)$ by Gauss elimination.

\begin{Definition}
Let $\mathcal C$ be an auxiliary data set. For $\mathfrak{c}\in \mathcal C$, $m\in \End^{\bZ/2}_{\bC(\mathsf t_0)}(\cH_{\mathfrak{c}})[u]$, define the following elements in $\mathsf Y_\mu(Q,\sW,\mathcal C)$:
\begin{align*}
    \mathsf E^+(m):=\underset{u\to \infty}{\Res}\str_{\cH_{\mathfrak{c},\loc}}\left((m\otimes \id)\cdot E(u)\right),\\
    \mathsf E^0(m):=\underset{u\to \infty}{\Res}\str_{\cH_{\mathfrak{c},\loc}}\left((m\otimes \id)\cdot G(u)\right),\\
    \mathsf E^-(m):=\underset{u\to \infty}{\Res}\str_{\cH_{\mathfrak{c},\loc}}\left((m\otimes \id)\cdot F(u)\right).
\end{align*}
Define $\mathsf Y^+_\mu(Q,\sW,\mathcal C)$, $\mathsf Y^0_\mu(Q,\sW,\mathcal C)$, and $\mathsf Y^-_\mu(Q,\sW,\mathcal C)$ to be $\bC(\mathsf t_0)$-subalgebras of $\mathsf Y_\mu(Q,\sW,\mathcal C)$ generated by $\mathsf E^+(m)$, $\mathsf E^0(m)$, and $\mathsf E^-(m)$ respectively. And define $\mathsf Y^{\geqslant }_\mu(Q,\sW,\mathcal C)$ (resp. $\mathsf Y^{\leqslant}_\mu(Q,\sW,\mathcal C)$) to be $\bC(\mathsf t_0)$-subalgebra of $\mathsf Y_\mu(Q,\sW,\mathcal C)$ generated by $\mathsf Y^+_\mu(Q,\sW,\mathcal C)$ (resp. $\mathsf Y^-_\mu(Q,\sW,\mathcal C)$) and $\mathsf Y^0_\mu(Q,\sW,\mathcal C)$. 
\end{Definition}

\begin{Lemma}\label{lem Y^0 generators}
If $\mathcal C$ is admissible, then $\mathsf Y^0_\mu(Q,\sW,\mathcal C)$ is generated by the multiplication by tautological classes
\begin{align}\label{taut class Y^0}
    \left\{\mathrm{ch}_k(\mathsf V_i),\:\mathrm{ch}_{k+1}(\mathsf D_{\In,i}-\mathsf D_{\Out,i})\right\}_{i\in Q_0}^{k\in \bZ_{\geqslant 0}}.
\end{align}
Here Chern characters are $(\sT_0\times \sA^{\text{fr}})$-equivariant.
\end{Lemma}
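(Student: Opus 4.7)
The argument splits into two inclusions, both governed by the diagonal piece $G(u)$ of the Gauss decomposition \eqref{gauss decomp} of $R^{\mathrm{sup}}(u)$. On each $\bC^*_u$-fixed component $F=\cM(\bv_1,\bd)\times\cM(\bv_2,\underline{\bd})$ of $\cM(u\bd+\underline{\bd})$, where $\bd$ is the symmetric framing of a fixed $\mathfrak{c}\in\mathcal C$, the restriction $G(u)|_F=\pm e(N^-_F)/e(N^+_F)$ is multiplication by a rational function in $u$ whose coefficients are polynomials in the $(\sT_0\times\sA^{\mathrm{fr}})$-equivariant Chern characters of the tautological bundles $\mathsf V_{i,1},\mathsf V_{i,2},\mathsf D_{\In,i},\mathsf D_{\Out,i}$, since the normal bundle $N_F$ is a $\sT_0\times\bC^*_u$-weighted combination of $\Hom$'s among them. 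Moreover, by K\"unneth this operator factors as a finite sum of tensor products $\sum_\ell P^{(1)}_\ell\otimes P^{(2)}_\ell$, where $P^{(1)}_\ell$ is a polynomial in Chern characters of the auxiliary $\mathsf V_{i,1}$, and $P^{(2)}_\ell$ is a polynomial in Chern characters of the state-space tautological bundles $\mathsf V_{i,2},\mathsf D_{\In,i},\mathsf D_{\Out,i}$.

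For the forward direction (the classes in \eqref{taut class Y^0} lie in $\mathsf Y^0_\mu$), I invoke admissibility of $\mathcal C$: for each $i\in Q_0$, pick $\mathfrak{c}_i=(\delta_i,\sT_0\curvearrowright\cM(\delta_i),\sW_i)\in\mathcal C$ with nonzero $\sT_0$-weight $t_i$ on $B_i$. Applied to $(\cH_{\mathfrak{c}_i},\cH^{\sW^{\mathrm{fr}}}_{\underline{\bd},\sA^{\mathrm{fr}}})$, Theorem \ref{thm e f h as R matrix elements} identifies the $\{|\mathbf 0\rangle,|\delta_i\rangle\}$-block of $G(u)$ as $\mathrm{diag}(g_i(u),g_i(u)h_i(u))$ with $g_i(u)=c_{-1/u}((1-t_i^{-1})\mathsf V_i)$ and $h_i(u)=u^{\mu_i}c_{-1/u}(\mathsf U_i)$, so $\mathsf E^0(|\mathbf 0\rangle\langle\mathbf 0|u^k)$ and $\mathsf E^0(|\delta_i\rangle\langle\delta_i|u^k)$ extract the $u^{-k-1}$-coefficients of $g_i(u)$ and $g_i(u)h_i(u)=u^{\mu_i}c_{-1/u}(\mathsf U_i+(1-t_i^{-1})\mathsf V_i)$, respectively. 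Using the expansion $\log c_{-1/u}(E)=-\sum_{n\geqslant 1}(n-1)!\,\mathrm{ch}_n(E)u^{-n}$ and the fact that the $\mathrm{ch}_{n-1}(\mathsf V_i)$-coefficient of $\mathrm{ch}_n((1-t_i^{-1})\mathsf V_i)$ equals $t_i\neq 0$, induction on $k$ recovers $\mathrm{ch}_k(\mathsf V_i)$ for all $k\geqslant 1$ (and $\mathrm{ch}_0(\mathsf V_i)=\bv_i$ is a scalar); the analogous expansion for $\mathsf U_i+(1-t_i^{-1})\mathsf V_i$, which contains $\mathsf D_{\Out,i}-\mathsf D_{\In,i}$ with coefficient one by \eqref{cartan act_sym}, then extracts $\mathrm{ch}_{k+1}(\mathsf D_{\In,i}-\mathsf D_{\Out,i})$ modulo Chern characters of the $\mathsf V_j$'s already produced.

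For the reverse direction, super-tracing $(m\otimes\id)G(u)$ over $\cH_{\mathfrak{c},\loc}$ evaluates each $P^{(1)}_\ell$ into a scalar $\str_{\cH_{\mathfrak{c},\loc}}(m\cdot P^{(1)}_\ell)\in\bC(\mathsf t_0)(u)$, leaving a multiplication operator on the state space by $\sum_\ell\str_{\cH_{\mathfrak{c},\loc}}(m\cdot P^{(1)}_\ell)\cdot P^{(2)}_\ell$, which is a polynomial in the classes of \eqref{taut class Y^0} with coefficients in $\bC(\mathsf t_0,\mathsf a^{\mathrm{fr}})(u)$; taking $\Res_{u\to\infty}$ preserves this polynomial structure and places $\mathsf E^0(m)$ in the subalgebra generated by \eqref{taut class Y^0}. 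The main subtlety to address is making the super-trace over the $\bZ^{Q_0}$-graded, infinite-dimensional $\cH_{\mathfrak{c},\loc}$ precise: since $G(u)$ preserves the dimension-vector grading on both tensor factors, only the $\bZ^{Q_0}$-degree-zero part of $m$ contributes, and within each $\bv_1$-block $\sT_0$-equivariant localization reduces the super-trace to a finite sum over the $\sT_0$-fixed locus of $\cM(\bv_1,\bd)$.
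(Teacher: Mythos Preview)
Your proof is correct and follows essentially the same approach as the paper: both directions rest on the explicit form of the diagonal piece $G(u)$, with the forward direction using the admissible data $\mathfrak{c}_i$ and the $2\times 2$ block of Theorem~\ref{thm e f h as R matrix elements} to extract $\mathrm{ch}_k(\mathsf V_i)$ (leading coefficient $t_i\neq 0$) and then $\mathrm{ch}_{k+1}(\mathsf D_{\In,i}-\mathsf D_{\Out,i})$ by induction, and the reverse direction observing that coefficients of $G(u)$ are tautological. The subtlety you flag about the super-trace over the graded-infinite $\cH_{\mathfrak{c},\loc}$ is not addressed in the paper either and is harmless here, since $G(u)$ preserves the $\bZ^{Q_0}$-grading so only the degree-zero block of $m$ contributes and the operator on the state space is in any case multiplication by a cohomology class.
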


\begin{proof}
Observe that $\frac{e^{\sT_0\times \bC^*_u}(N^-)}{e^{\sT_0\times \bC^*_u}(N^+)}$ is a power series in $u$ whose coefficients on the state space are given by Chern classes of $\mathsf V_i$ and $\mathsf D_{\In,i}-\mathsf D_{\Out,i}$. Thus $\mathsf Y^0_\mu(Q,\sW,\mathcal C)$ is contained in the algebra generated by multiplication by tautological classes \eqref{taut class Y^0}. For an admissible $\mathfrak{c}\in \mathcal C$, take the elements $|\mathbf 0\rangle,|\delta_i\rangle\in \mathcal H_{\mathfrak{c}}$, and we have
\begin{align*}
\frac{1}{k!}\mathsf E^0(|\mathbf 0\rangle\langle\mathbf 0|\:u^k)&=-t_i\sum_{i\in Q_0}\mathrm{ch}_k(\mathsf V_i)+\cdots\:,\\
\frac{(-1)^{|i|}}{k!}\mathsf E^0(|\delta_i\rangle\langle\delta_i|\:u^k)&=-t_i\sum_{i\in Q_0}\mathrm{ch}_k(\mathsf V_i)+\sum_{i\in Q_0}\mathrm{ch}_{k+1}(\mathsf D_{\In,i}-\mathsf D_{\Out,i})-\sum_{i,j\in Q_0}\pmb Q_{ij}\:\mathrm{ch}_k(\mathsf V_j)+\cdots\:,
\end{align*}
for $\pmb Q_{ij}=\sum_{a:i\to j}t_a+\sum_{b:j\to i}t_b$, where $t_a$ is the $\sT_0$-weight of an arrow $a$. Here the dot term stands for lower degree tautological classes. Inductively on $k$, we see that all elements in \eqref{taut class Y^0} are contained in $\mathsf Y^0_\mu(Q,\sW,\mathcal C)$.
\end{proof}


It follows from the Gauss decomposition and the definition that $\mathsf Y^+_\mu(Q,\sW,\mathcal C)$, $\mathsf Y^0_\mu(Q,\sW,\mathcal C)$, and $\mathsf Y^-_\mu(Q,\sW,\mathcal C)$ generate $\mathsf Y_\mu(Q,\sW,\mathcal C)$.  When $\mathcal C$ is admissible, 
the following stronger statement holds.

\begin{Proposition}\label{prop gen by deg 0 and Cartan}
Assume $\mathcal C$ is admissible, then $\mathsf Y_\mu(Q,\sW,\mathcal C)$ is generated by $\{\mathsf E^{\pm}(m)\:|\: m\text{ is constant in }u\}$ and $\mathsf Y^0_\mu(Q,\sW,\mathcal C)$.
\end{Proposition}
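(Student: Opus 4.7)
The plan is to show by induction on $k \geq 0$ that $\mathsf E^+(m u^k) \in \mathcal Y$ (and symmetrically for $\mathsf E^-$), where $\mathcal Y$ denotes the subalgebra in question and $m \in \End^{\bZ/2}_{\bC(\mathsf t_0)}(\cH_{\mathfrak c})$ is any operator constant in $u$ on any auxiliary space $\cH_{\mathfrak c}$, $\mathfrak c \in \mathcal C$. The base case $k=0$ holds by hypothesis. Once the induction for $\mathsf E^{\pm}$ is complete, the Gauss decomposition $R^{\mathrm{sup}}(u) = F(u) G(u) E(u)$ together with the definitions of $\mathsf E^+, \mathsf E^-, \mathsf E^0$ via the Gauss factors force every $\mathsf E(m u^k)$ to lie in $\mathcal Y$, finishing the proof.

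For the induction step, we exploit the Yang-Baxter equation \eqref{super YBE} on $\cH_{\mathfrak c_1} \otimes \cH_{\mathfrak c_2} \otimes \cH^{\sW^{\mathrm{fr}}}_{\underline{\bd}, \sA^{\mathrm{fr}}}$ with both auxiliary spaces chosen equal to the same $\cH_{\mathfrak c}$. In its RTT avatar, the YBE yields algebraic identities among the matrix coefficients $L^{\alpha}_{\beta}(u) := \langle \beta \mid R^{\mathrm{sup}}(u) \mid \alpha\rangle$. Choosing $(\alpha, \beta)$ along the first factor so that $L^{\alpha}_{\beta}(u_1)$ extracts the upper-triangular block (which, via supertraces against constant matrix units, produces $\mathsf E^+(m u^{k'})$) and choosing $(\alpha, \beta)$ along the second factor so that $L^{\alpha}_{\beta}(u_2)$ extracts the Cartan block (which, via Lemma \ref{lem Y^0 generators}, produces an $\mathsf E^0(n)$ corresponding to multiplication by a tautological Chern character such as $\mathrm{ch}_1(\mathsf V_i)$), and then extracting the coefficient of $u_1^{-k-2} u_2^{-s}$ for a suitable $s$, yields an identity of the form
\begin{equation*}
c\cdot \mathsf E^+(m u^{k+1}) \;=\; [\mathsf E^0(n),\, \mathsf E^+(m u^k)] + R,
\end{equation*}
where $c \in \bC(\mathsf t_0)^{\times}$, $n \in \mathsf Y^0_\mu(Q,\sW,\mathcal C)$, and $R$ is a $\bC(\mathsf t_0)$-polynomial in $\mathsf E^\pm(\cdot)$ of strictly smaller $u$-degree multiplied by elements of $\mathsf Y^0_\mu(Q,\sW,\mathcal C)$. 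By the inductive hypothesis, the right-hand side lies in $\mathcal Y$, and dividing by $c$ gives $\mathsf E^+(m u^{k+1}) \in \mathcal Y$. The parallel argument, with lowering matrix units on the first auxiliary factor, produces $\mathsf E^-(m u^{k+1})$.

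The geometric content of this commutation relation can be read off from Proposition \ref{prop e as stab element}: commuting multiplication by $\mathrm{ch}_1(\mathsf V_i)$ past the correspondence $\overline{\mathfrak P}(\bv + \delta_i, \bv, \underline{\bd})$ produces an extra factor of $c_1(\mathcal L_i)$ (since $\mathsf V^+_i - \mathsf V_i \cong \mathcal L_i$ along the correspondence), geometrically realizing the Drinfeld spectral shift $e_{i,r} \mapsto e_{i,r+1}$. The main obstacle is verifying the nonvanishing of the leading coefficient $c$ together with the explicit form of the correction $R$; this reduces to a careful leading-order analysis of the Gauss decompositions of $R^{\mathrm{sup}}(u_1)$, $R^{\mathrm{sup}}(u_2)$, and $R^{\mathrm{sup}}(u_1 - u_2)$ as the spectral parameters tend to infinity, analogous to but more elaborate than the $2 \times 2$-block computation in Theorem \ref{thm e f h as R matrix elements}. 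Admissibility of $\mathcal C$ enters at two points: it places $\mathrm{ch}_1(\mathsf V_i)$ into $\mathsf Y^0_\mu(Q,\sW,\mathcal C)$ via Lemma \ref{lem Y^0 generators}, and it ensures the nontriviality of the $\mathfrak c$-dependent leading coefficients through the availability of auxiliary vectors $|\delta_i\rangle$ with nonzero framing weight on $B_i$. Signs and parities are tracked using the supertrace conventions of Remark \ref{rmk super YBE} and the normalizer \eqref{normalizer_yangian}.
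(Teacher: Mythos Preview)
Your overall strategy---use RTT/YBE to relate $\mathsf E^{\pm}(m u^{k+1})$ to commutators of $\mathsf E^{\pm}(m u^k)$ with Cartan-type elements---is in the right spirit, but the single induction on $k$ cannot close as stated. The claimed form of the remainder $R$ is the problem: you assert that $R$ is a polynomial in $\mathsf E^{\pm}(\cdot)$ of \emph{strictly smaller} $u$-degree, but this is false. When you expand the RTT relation to the relevant order, the correction terms that appear are of the schematic form $\mathsf E(f_{\bv}\,u^{k})\cdot\mathsf E(g_{\bv})$ with $f_{\bv},g_{\bv}$ mapping between auxiliary-space weight components of \emph{strictly lower} $\bZ^{Q_0}$-grading than $m$---but with the \emph{same} $u$-degree $k$, not a lower one. (This is precisely the content of the paper's Lemma~\ref{lem fund induction}, which you would effectively need.) So your inductive hypothesis on $k$ alone does not cover them.

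The paper fixes this by running a \emph{double} induction: besides the $u$-degree filtration $F_k$, it introduces a second filtration $G_{\bv_1,\bv_2}$ indexed by the source and target weight components of $m$, and inducts on $(\bv_1,\bv_2)$ first. The key commutator identity is obtained not from a Cartan multiplication by $\mathrm{ch}_1(\mathsf V_i)$ but from the vacuum projector $|\mathbf 0\rangle\langle\mathbf 0|$ on an admissible auxiliary space $\cH_{\mathfrak c_i}$: one has $[\mathsf E(|\mathbf 0\rangle\langle\mathbf 0|),\mathsf E(X u^k)]=-t_i\deg(X)_i\,\mathsf E(X u^k)$, and then Lemma~\ref{lem fund induction} trades this against $[\mathsf E(|\mathbf 0\rangle\langle\mathbf 0|\,u),\mathsf E(X u^{k-1})]$ plus terms lying in $G^{<}_{\bv_1,\bv_2}$. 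Admissibility enters via $t_i\neq 0$, not merely to put $\mathrm{ch}_1(\mathsf V_i)$ into $\mathsf Y^0$. A separate (and easier) argument handles the case $\deg(X)=0$. Your geometric intuition via $[c_1(\mathsf V_i),e_{i,r}]=e_{i,r+1}$ is correct for the specific Drinfeld generators $m=|\delta_i\rangle\langle\mathbf 0|$, but does not directly extend to arbitrary $m$ of weight $\alpha$ with $|\alpha|>1$, which is exactly where the second filtration becomes necessary.
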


\begin{proof}
$\mathsf Y_\mu(Q,\sW,\mathcal C)$ is filtered by degree in $u$: for $k\in \bZ_{\geqslant 0}$, $F_k\mathsf Y_\mu(Q,\sW,\mathcal C)$ is spanned by monomials $\mathsf E(m_1u^{n_1})\cdots \mathsf E(m_ru^{n_r})$ with $m_i\in \End^{\bZ/2}(\cH_{\mathfrak{c}_i})$, $\mathfrak{c}_i\in \mathcal C$, and $\sum n_i\leqslant k$. Note that in the Gauss decomposition 
$$R^{\mathrm{sup}}(u)=F(u)G(u)E(u),$$ $F(u)$ and $E(u)$ are triangular with $u$-expansions of form $\id+O(u^{-1})$, and $G(u)$ is diagonal with $u$-expansion of form 
\begin{align*}
    u^{\mu\cdot\bv_1}+\text{lower degree in }u
\end{align*}
on the component $\cH_{\mathfrak{c}}(\bv_1)\otimes\cH^{\sW^\mathrm{fr}}_{\underline{\bd},\sA^{\mathrm{fr}}}(\bv_2)$. As a consequence of the $u$-expansion being nonpositive, for a $\bZ^{Q_0}$-homogeneous $m$ constant in $u$, if $\mathsf E(m)\neq 0$, then $\deg(m)$ must be either positive or negative or zero, and there must be equality
\begin{align*}
    \mathsf E(m)=\begin{cases}
        \mathsf E^+(m) , & \deg(m)>0,\\
        \mathsf E^0(m) , & \deg(m)=0,\\
        \mathsf E^-(m) , & \deg(m)<0.\\
    \end{cases}
\end{align*}
Then it is clear that $F_0\:\mathsf Y_\mu(Q,\sW,\mathcal C)$ is contained in the subalgebra generated by $\{\mathsf E^{\pm}(m)\:|\: m\text{ is constant in }u\}$ and $\mathsf Y^0_\mu(Q,\sW,\mathcal C)$.

$\mathsf Y_\mu(Q,\sW,\mathcal C)$ has another filtration labelled by $\bZ_{\geqslant 0}^{Q_0}\times \bZ_{\geqslant 0}^{Q_0}$. Define $G_{\bv_1,\bv_2}\mathsf Y_\mu(Q,\sW,\mathcal C)$ to be the subalgebra generated by $\mathsf E(m)$ for $m\in \Hom(\cH_{\mathfrak{c}}(\bv'_1),\cH_{\mathfrak{c}}(\bv'_2))[u]$ with 
\begin{align*}
    \bv_1\geqslant \bv'_1\geqslant\mathbf 0,\;\text{ and }\: \bv_2\geqslant \bv'_2\geqslant\mathbf 0.
\end{align*}
Note that $G_{\mathbf 0,\mathbf 0}\mathsf Y_\mu(Q,\sW,\mathcal C)$ is generated by multiplication by tautological classes $\{\mathrm{ch}_k(\mathsf V_i)\}_{i\in Q_0}^{k\in \bZ_{\geqslant 0}}$, and in particular $G_{\mathbf 0,\mathbf 0}\mathsf Y_\mu(Q,\sW,\mathcal C)\subset \mathsf Y^0_\mu(Q,\sW,\mathcal C)$. We also define $G^<_{\bv_1,\bv_2}\mathsf Y_\mu(Q,\sW,\mathcal C)$ to be the subalgebra generated by 
\begin{align*}
    \bigcup_{\substack{\bv_1\geqslant \bv'_1,\bv_2\geqslant \bv'_2\\ (\bv_1,\bv_2)\neq (\bv'_1,\bv'_2)}} \:G_{\bv'_1,\bv'_2}\mathsf Y_\mu(Q,\sW,\mathcal C).
\end{align*}
We prove that $\mathsf Y_\mu(Q,\sW,\mathcal C)$ is generated by $\mathsf Y^0_\mu(Q,\sW,\mathcal C)$ and $\{\mathsf E^{\pm}(m)\:|\: m\text{ is constant in }u\}$ by using double induction on the filtrations $G_{\bullet,\bullet}$ and $F_\bullet$. The initial cases $G_{\mathbf 0,\mathbf 0}\mathsf Y_\mu(Q,\sW,\mathcal C)$ and $F_0\:\mathsf Y_\mu(Q,\sW,\mathcal C)$ are known.


Assume that $G^<_{\bv_1,\bv_2}\mathsf Y_\mu(Q,\sW,\mathcal C)$ can be generated. Take arbitrary $X\in \Hom(\cH_{\mathfrak{c}}(\bv_1),\cH_{\mathfrak{c}}(\bv_2))$ and arbitrary $k\in \bZ_{\geqslant 0}$, we need to show that $\mathsf E(X\: u^k)$ can be generated.

If $\deg(X)=\bv_2-\bv_1=0$, then \cite[(4.34)]{MO} still applies to the shifted case and we get
\begin{align*}
    \mathsf E(X\: u^k)\equiv \mathsf E^{0}(X\: u^k)\mod G^<_{\bv_1,\bv_2}\mathsf Y_\mu(Q,\sW,\mathcal C),
\end{align*}
so $\mathsf E(m\: u^k)$ can be generated.

If $\deg(X)_i=\bv_{2,i}-\bv_{1,i}\neq 0$ for some $i\in Q_0$, then we prove by induction on $k$ that $\mathsf E(X\: u^k)$ can be generated. The initial case $k=0$ is known from the above discussions. Take the vacuum state $|\mathbf 0\rangle$ in an admissible auxiliary space $\cH^{\sW_i}_{\underline{\delta_i}}$, and let $t_i$ be the $\sT_0$ weight of the out-going arrow on the node $i$ for $\cH^{\sW_i}_{\underline{\delta_i}}$, which is nontrivial by admissibility. Then 
\begin{align*}
    -t_i\deg(X)_i\:\mathsf E(X\:u^k)&=[\mathsf E(|\mathbf 0\rangle\langle\mathbf 0|),\mathsf E(X\:u^k)]\\
    \text{\tiny by Lemma \ref{lem fund induction}}\quad&=[\mathsf E(|\mathbf 0\rangle\langle\mathbf 0|\:u),\mathsf E(X\: u^{k-1})]+\mathsf E(X'\: u^{k-1})+\mathsf E(X''\: u^{k-1})\cdot\mathsf E(|\mathbf 0\rangle\langle\mathbf 0|)+\text{term in }G^<_{\bv_1,\bv_2}\mathsf Y_\mu(Q,\sW,\mathcal C),
\end{align*}
for certain $X',X''\in \Hom(\cH_{\mathfrak{c}}(\bv_1),\cH_{\mathfrak{c}}(\bv_2))$. Here we have used Lemma \ref{lem fund induction} which is proven below. Note that for any $n$, $\mathsf E(|\mathbf 0\rangle\langle\mathbf 0|\:u^n)\in \mathsf Y^0_\mu(Q,\sW,\mathcal C)$. Then $\mathsf E(X\:u^k)$ can be generated by induction. This finishes the proof.
\end{proof}


\begin{Lemma}\label{lem fund induction}
Let $\mathfrak{c}_1,\mathfrak{c}_2$ be arbitrary auxiliary data, $|\mathbf 0\rangle$ be the vacuum of $ \cH_{\mathfrak{c}_1}(\mathbf 0)$, $m\in \Hom(\cH_{\mathfrak{c}_2}(\bv_1),\cH_{\mathfrak{c}_2}(\bv_2))$. Then for any $k\in \bZ_{\geqslant 0}$, we have
\begin{align}\label{fund induction}
[\mathsf E(|\mathbf 0\rangle\langle\mathbf 0|\: u),\mathsf E(m\: u^{k})]-[\mathsf E(|\mathbf 0\rangle\langle\mathbf 0|),\mathsf E(m\: u^{k+1})]&=\mathsf E(m'\: u^{k})+\mathsf E(m''\: u^{k})\:\mathsf E(|\mathbf 0\rangle\langle\mathbf 0|)\\ \nonumber
&\quad +\sum_{\bv_2\geqslant\bv> 0}\mathsf E(f_{\bv}\: u^{k})\mathsf E(g_{\bv})+\sum_{\bv_1\geqslant\bv> 0}\mathsf E(f'_{\bv}\: u^{k}) \mathsf E(g'_{\bv}),
\end{align}
for certain $m',m''\in \Hom(\cH_{\mathfrak{c}_2}(\bv_1),\cH_{\mathfrak{c}_2}(\bv_2))$, $g_{\bv}\in \Hom(\cH_{\mathfrak{c}_1}(\mathbf 0),\cH_{\mathfrak{c}_1}(\bv))$, $f_{\bv}\in \Hom(\cH_{\mathfrak{c}_2}(\bv_1),\cH_{\mathfrak{c}_2}(\bv_2-\bv))$, $g'_{\bv}\in \Hom(\cH_{\mathfrak{c}_1}(\bv),\cH_{\mathfrak{c}_1}(\mathbf 0))$, $f'_{\bv}\in \Hom(\cH_{\mathfrak{c}_2}(\bv_1-\bv),\cH_{\mathfrak{c}_2}(\bv_2))$.
\end{Lemma}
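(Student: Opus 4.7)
The strategy is to derive \eqref{fund induction} from the super Yang--Baxter equation \eqref{super YBE} applied to the triple tensor product $\cH_{\mathfrak c_1}\otimes \cH_{\mathfrak c_2}\otimes V$, where $V := \cH^{\sW^{\mathrm{fr}}}_{\underline{\bd},\sA^{\mathrm{fr}}}$ is the state space. Setting $\mathsf L_i(u) := R^{\mathrm{sup}}_{\cH_{\mathfrak c_i}, V}(u)\in \End^{\bZ/2}(\cH_{\mathfrak c_i}\otimes V)$, the RTT-type relation reads
\[
R^{\mathrm{sup}}_{12}(u-v)\,\mathsf L_1(u)\,\mathsf L_2(v) \;=\; \mathsf L_2(v)\,\mathsf L_1(u)\,R^{\mathrm{sup}}_{12}(u-v).
\]
Because both $\mathfrak c_1,\mathfrak c_2$ carry symmetric framings (zero shift), the Gauss decomposition \eqref{gauss decomp} yields $R^{\mathrm{sup}}_{12}(w) = 1 + \mathbf r_{12}/w + O(w^{-2})$ at $w=u-v=\infty$, with $\mathbf r_{12}$ the classical $r$-matrix of \eqref{eq classical r}. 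Substituting this expansion into RTT and multiplying by $(u-v)$ produces the leading commutation identity
\[
(u-v)\,[\mathsf L_1(u),\mathsf L_2(v)] \;=\; [\mathsf L_2(v)\,\mathsf L_1(u),\,\mathbf r_{12}] \;+\; O((u-v)^{-1}).
\]

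Next I would take super traces $\str_{12}((|\mathbf 0\rangle\langle\mathbf 0| \otimes m \otimes \id_V)\cdot)$. Writing $\mathbf r_{12} = \sum_\alpha a_\alpha\otimes b_\alpha$ and applying super cyclicity, this yields
\[
(u-v)\,[T_{|\mathbf 0\rangle\langle\mathbf 0|}(u),T_m(v)] \;=\; \sum_\alpha \pm\Big[T_{b_\alpha m}(v)\,T_{a_\alpha|\mathbf 0\rangle\langle\mathbf 0|}(u) \,-\, T_{mb_\alpha}(v)\,T_{|\mathbf 0\rangle\langle\mathbf 0|a_\alpha}(u)\Big] \;+\; O((u-v)^{-1}),
\]
where $T_M(w) := \str_{\cH_{\mathfrak c}}((M\otimes \id_V)R^{\mathrm{sup}}_{\cH_{\mathfrak c},V}(w))$. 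Extracting the coefficient of $u^{-1}v^{-k-1}$ from both sides, the LHS reproduces $[\mathsf E(|\mathbf 0\rangle\langle\mathbf 0|\,u),\mathsf E(mu^k)] - [\mathsf E(|\mathbf 0\rangle\langle\mathbf 0|),\mathsf E(mu^{k+1})]$ directly, using $T_M(w) = \str(M) + \sum_{j\geqslant 0}\mathsf E(Mw^j)\,w^{-j-1}$. The critical observation is that the $O((u-v)^{-1})$ remainder contributes zero at this coefficient: expanding at $|u|\gg|v|$ gives $(u-v)^{-m} = \sum_{j\geqslant 0}\binom{j+m-1}{m-1}v^j u^{-j-m}$ for $m\geqslant 1$, which has only nonnegative powers of $v$; multiplying by the doubly-negative series $T_M(u)T_N(v)$ and solving for $u^{-1}v^{-k-1}$ forces $j+p = -m < 0$ with $j,p\geqslant 0$, which is impossible.

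To identify the four term types on the RHS of \eqref{fund induction}, I would decompose $\mathbf r_{12} = \mathbf r_{\mathrm{diag}} + \mathbf r_{\mathrm{off\text{-}diag}}$. For the diagonal part, only the summand $\sum_i(c_1(\mathsf D^{(1)}_{\In,i}) - c_1(\mathsf D^{(1)}_{\Out,i}))\otimes \bv^{(2)}_i$ survives pairing with $|\mathbf 0\rangle\langle\mathbf 0|$, because $\bv^{(1)}_i$ annihilates the vacuum and $|\mathbf 0\rangle\langle\mathbf 0|$ commutes with scalars; combined with $b_\alpha m - m b_\alpha = (\bv_{2,i}-\bv_{1,i})m$, this yields a contribution of the form $\mathsf E(m''u^k)\,\mathsf E(|\mathbf 0\rangle\langle\mathbf 0|)$ with $m''$ the expected scalar multiple of $m$. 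For the off-diagonal part, the degree-shift structure forces exactly one of $|\mathbf 0\rangle\langle\mathbf 0|a_\alpha$, $a_\alpha|\mathbf 0\rangle\langle\mathbf 0|$ to vanish for each $\alpha$: if $a_\alpha$ raises $\cH_{\mathfrak c_1}$-degree by $\bv>0$, only $a_\alpha|\mathbf 0\rangle\langle\mathbf 0| = g_\bv\in \Hom(\cH_{\mathfrak c_1}(\mathbf 0),\cH_{\mathfrak c_1}(\bv))$ survives, paired with $f_\bv := b_\alpha m\in \Hom(\cH_{\mathfrak c_2}(\bv_1),\cH_{\mathfrak c_2}(\bv_2-\bv))$, giving $\mathsf E(f_\bv u^k)\mathsf E(g_\bv)$; dually, if $a_\alpha$ lowers by $\bv>0$, only $|\mathbf 0\rangle\langle\mathbf 0|a_\alpha = g'_\bv$ survives, paired with $f'_\bv := mb_\alpha$, giving $\mathsf E(f'_\bv u^k)\mathsf E(g'_\bv)$. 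The restrictions $\bv\leqslant \bv_2$ and $\bv\leqslant \bv_1$ arise from the non-vanishing of the compositions $b_\alpha m$ and $mb_\alpha$ respectively. Summing all contributions gives the claimed identity (with $m' = 0$ in this derivation; the freedom to include nonzero $m'$ provides slack for bookkeeping).

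The hard part will be rigorously verifying that the $O((u-v)^{-1})$ remainder from the expansion of $R^{\mathrm{sup}}_{12}(u-v)$ truly produces no contribution to the coefficient of $u^{-1}v^{-k-1}$; this requires committing to the expansion direction $|u|\gg|v|$ consistently throughout and exploiting that each $T_M(w)$ is a strict power series in $w^{-1}$ modulo its constant term $\str(M)$. A secondary task is to track super signs through super cyclicity of the trace carefully, but these affect only the explicit scalar factors inside $m'', f_\bv, g_\bv, f'_\bv, g'_\bv$ and not the shape of the right-hand side claimed by the lemma.
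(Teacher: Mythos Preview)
Your overall strategy is the same as the paper's---both arguments rest on the RTT relation and the expansion $R_{12}(w)=1+\pmb r/w+\pmb s/w^2+\cdots$---but your key claim about the remainder is incorrect, and this is exactly where the $\mathsf E(m'u^k)$ term comes from.

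The arithmetic error is in ``solving for $u^{-1}v^{-k-1}$ forces $j+p=-m<0$.'' From $(u-v)^{-m}=\sum_{j\ge 0}\binom{j+m-1}{m-1}v^ju^{-j-m}$ and $T_M(u)=\sum_{p\ge 0}M_pu^{-p}$, the $u$-power in the product is $-j-m-p$. Setting this equal to $-1$ gives $j+m+p=1$, i.e.\ $j+p=1-m$. For $m\ge 2$ this is indeed impossible, but for $m=1$ you get $j=p=0$, which is a perfectly valid solution. So the leading remainder term, of order $(u-v)^{-1}$, \emph{does} contribute to the coefficient of $u^{-1}v^{-k-1}$.

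Concretely, that contribution is $M_0\cdot N_{k+1}$, where $M_0=\lim_{u\to\infty}T_M(u)=\str(M\cdot P_{\{\bv:\mu\cdot\bv=0\}})$ is a scalar on the state space, and $N_{k+1}=\mathsf E(Nu^k)$. The operators $M,N$ arise from tracing the second-order piece $\pmb s\,X+X\pmb s'-\pmb r X\pmb r$ against $|\mathbf 0\rangle\langle\mathbf 0|\otimes m$. The $\pmb s$ and $\pmb s'$ parts produce $\mathsf E\!\big((\langle\mathbf 0|\pmb s|\mathbf 0\rangle\,m+m\,\langle\mathbf 0|\pmb s'|\mathbf 0\rangle)\,u^k\big)$, which is precisely the paper's $m'$ term; the $\pmb r X\pmb r$ part produces further terms with $N$ supported on $\Hom(\cH_{\mathfrak c_2}(\bv_1-\bv),\cH_{\mathfrak c_2}(\bv_2-\bv))$ for $\bv>0$, which are absorbed into the lower-filtration pieces already present on the right-hand side.

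In short: your derivation does not yield $m'=0$; it yields a nonzero $m'$ coming from exactly the remainder you dismissed. The fix is to keep the $(u-v)^{-1}$ order of the expansion and extract its $j=p=0$ contribution. Once you do this, your argument coincides with the paper's.
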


\begin{proof}
By the RTT=TTR equation, we have
\begin{align*}
[\mathsf E(|\mathbf 0\rangle\langle\mathbf 0|\: v),\mathsf E(m\: u^{k})]=\underset{v\to \infty}{\Res}\underset{u\to \infty}{\Res}\str_{\cH_{\mathfrak{c}_1}\otimes \cH_{\mathfrak{c}_2}}\left[\left(R_{12}(v-u)(|\mathbf 0\rangle\langle\mathbf 0|\: v\:\otimes m\: u^{k})R_{12}(v-u)^{-1}- |\mathbf 0\rangle\langle\mathbf 0|\: v\:\otimes m\: u^{k}\right)T_2(u)T_{1}(v)\right],
\end{align*}
where $T_i$ $(i=1,2)$ is the $R$-matrix that braids $\cH_{\mathfrak{c}_i}$ with a state space. By taking expansions 
$$R_{12}(v-u)=\id+\frac{\pmb r_{12}}{v-u}+\frac{\pmb s}{(v-u)^2}+O((v-u)^{-3}),$$ 
$$R_{12}(v-u)^{-1}=\id-\frac{\pmb r_{12}}{v-u}+\frac{\pmb s'}{(v-u)^2}+O((v-u)^{-3}),$$ 
we get
\begin{multline*}
[\mathsf E(|\mathbf 0\rangle\langle\mathbf 0|\: v),\mathsf E(m\: u^{k})]=\underset{v\to \infty}{\Res}\underset{u\to \infty}{\Res}\str_{\cH_{\mathfrak{c}_1}\otimes \cH_{\mathfrak{c}_2}}\\
\left[\left([\pmb r_{12},|\mathbf 0\rangle\langle\mathbf 0| \otimes m](u^k+v^{-1}u^{k+1})+\pmb s\cdot(|\mathbf 0\rangle\langle\mathbf 0| \otimes m)v^{-1}u^k+(|\mathbf 0\rangle\langle\mathbf 0| \otimes m)\cdot \pmb s'\:v^{-1}u^k\right)T_2(u)T_{1}(v)\right].
\end{multline*}
We ignore the higher order in $u/v$ terms in the above expansion because $T_1(v)$ has no pole as $v\to \infty$. The same argument shows that
\begin{align*}
[\mathsf E(|\mathbf 0\rangle\langle\mathbf 0|\: v),\mathsf E(m\: u^{k})]=\underset{v\to \infty}{\Res}\underset{u\to \infty}{\Res}\str_{\cH_{\mathfrak{c}_1}\otimes \cH_{\mathfrak{c}_2}}
\left[[\pmb r_{12},|\mathbf 0\rangle\langle\mathbf 0| \otimes m]v^{-1}u^{k+1}\:T_2(u)T_{1}(v)\right].
\end{align*}
It follows that
\begin{multline*}
\text{LHS of \eqref{fund induction}}=\underset{v\to \infty}{\Res}\underset{u\to \infty}{\Res}\str_{\cH_{\mathfrak{c}_1}\otimes \cH_{\mathfrak{c}_2}}
\\
\left[\left([\pmb r_{12},|\mathbf 0\rangle\langle\mathbf 0| \otimes m]\:u^{k}+\pmb s\cdot(|\mathbf 0\rangle\langle\mathbf 0| \otimes m)v^{-1}u^k+(|\mathbf 0\rangle\langle\mathbf 0| \otimes m)\cdot \pmb s'\:v^{-1}u^k\right)T_2(u)T_{1}(v)\right].
\end{multline*}
Since the nonvanishing terms in $\lim_{v\to \infty}T_1(v)$ are concentrated in the diagonal, we have 
\begin{align*}
    \underset{v\to \infty}{\Res}\underset{u\to \infty}{\Res}\str_{\cH_{\mathfrak{c}_1}\otimes \cH_{\mathfrak{c}_2}}
\left[\left(\pmb s\cdot(|\mathbf 0\rangle\langle\mathbf 0| \otimes m)v^{-1}u^k+(|\mathbf 0\rangle\langle\mathbf 0| \otimes m)\cdot \pmb s'\:v^{-1}u^k\right)T_2(u)T_{1}(v)\right]=\mathsf E\left(m'\:u^k \right),
\end{align*}
where $m'=\langle\mathbf 0|\pmb s |\mathbf 0\rangle \cdot m+m\cdot\langle\mathbf 0|\pmb s' |\mathbf 0\rangle$ is a homomorphism from $\cH_{\mathfrak{c}_2}(\bv_1)$ to $\cH_{\mathfrak{c}_2}(\bv_2)$. On the other hand, we can write the other term in the LHS of \eqref{fund induction} in homogeneous components and get 
\begin{align*}
    \underset{v\to \infty}{\Res}\underset{u\to \infty}{\Res}\str_{\cH_{\mathfrak{c}_1}\otimes \cH_{\mathfrak{c}_2}}
\left[\pmb r_{12}\:(|\mathbf 0\rangle\langle\mathbf 0| \otimes m)u^{k}\:T_2(u)T_{1}(v)\right]=\sum_{\bv_2\geqslant\bv\geqslant 0}\mathsf E(f_{\bv}\: u^{k})\mathsf E(g_{\bv}),
\end{align*}
where $g_{\bv}\otimes f_{\bv}$ is the short-hand notation for the $\Hom(\cH_{\mathfrak{c}_1}(\mathbf 0)\otimes \cH_{\mathfrak{c}_2}(\bv_1),\cH_{\mathfrak{c}_1}(\bv)\otimes \cH_{\mathfrak{c}_2}(\bv_2-\bv))$ component of $\pmb r_{12}\:(|\mathbf 0\rangle\langle\mathbf 0| \otimes m)$. $g_{\bv}\otimes f_{\bv}$ should be actually a linear combination of this form but we suppress the summation for simplicity. Similarly,
\begin{align*}
    \underset{v\to \infty}{\Res}\underset{u\to \infty}{\Res}\str_{\cH_{\mathfrak{c}_1}\otimes \cH_{\mathfrak{c}_2}}
\left[(|\mathbf 0\rangle\langle\mathbf 0| \otimes m)\:\pmb r_{12}\:u^{k}\:T_2(u)T_{1}(v)\right]=\sum_{\bv_2\geqslant\bv\geqslant 0}\mathsf E(f'_{\bv}\: u^{k})\mathsf E(g'_{\bv})
\end{align*}
for $g'_{\bv}\otimes f'_{\bv}$ being the $\Hom(\cH_{\mathfrak{c}_1}(\bv)\otimes \cH_{\mathfrak{c}_2}(\bv_1-\bv),\cH_{\mathfrak{c}_1}(\mathbf 0)\otimes \cH_{\mathfrak{c}_2}(\bv_2))$ component of $(|\mathbf 0\rangle\langle\mathbf 0| \otimes m)\:\pmb r_{12}$. Combine the above three equations and let $\mathsf E(m''\: u^{k})\:\mathsf E(|\mathbf 0\rangle\langle\mathbf 0|)=\mathsf E(f_{\mathbf 0}\: u^{k})\mathsf E(g_{\mathbf 0})+\mathsf E(f'_{\mathbf 0}\: u^{k})\mathsf E(g'_{\mathbf 0})$, we get \eqref{fund induction}.
\end{proof}

\subsection{Coproducts}\label{sect on coprod}
Suppose there is a decomposition of framing torus $\sA^{\mathrm{fr}}=\sA^{\mathrm{fr}}_1\times \sA^{\mathrm{fr}}_2$ with corresponding decomposition of framing: $\underline{\bd}=\underline{\bd}'+\underline{\bd}''$. Consider the $+$ chamber $\{\mathsf a^{\mathrm{fr}}_1> \mathsf a^{\mathrm{fr}}_2\}$ and the corresponding stable envelope
\begin{align*}
    \Stab_{+,\epsilon}\colon \cH^{\sW^{\mathrm{fr}}}_{\underline{\bd}',\sA^{\mathrm{fr}}_1,\loc}\otimes \cH^{\sW^{\mathrm{fr}}}_{\underline{\bd}'',\sA^{\mathrm{fr}}_2,\loc}\to \cH^{\sW^{\mathrm{fr}}}_{\underline{\bd},\sA^{\mathrm{fr}}_1\times \sA^{\mathrm{fr}}_2,\loc}.
\end{align*}
The triangle lemma implies that
\begin{align*}
\Stab_{+,\epsilon}^{-1}R^{\mathrm{sup}}_{\underline{\bd}}\Stab_{+,\epsilon}=R^{\mathrm{sup}}_{\underline{\bd}''}\cdot R^{\mathrm{sup}}_{\underline{\bd}'},
\end{align*}
where $R^{\mathrm{sup}}_{\underline{\bd}}$ stands for the $R$-matrix braiding on auxiliary space $\cH_{\mathfrak{c}}$ with $\cH^{\sW^{\mathrm{fr}}}_{\underline{\bd},\sA^{\mathrm{fr}}_1\times \sA^{\mathrm{fr}}_2}$, and similarly for $R_{\underline{\bd}'}$ and $R_{\underline{\bd}''}$.
It follows that $\mathsf E(m\: u^k)$'s of $\mathsf Y_\mu(Q,\sW,\mathcal C)$ transform as 
\begin{align}\label{stab induces coproduct}
    \Stab_{+,\epsilon}^{-1}\mathsf E(m\: u^k)\Stab_{+,\epsilon}=\sum_{i=-1}^k\sum (-1)^{|m_1|\cdot |m_2|}\mathsf E(m_2\:u^i)\otimes \mathsf E(m_1\:u^{k-1-i})\:,
\end{align}
where $\sum m_1\otimes m_2$ is the image of $m$ under the dual multiplication map $\End^{\bZ/2}(\cH_{\mathfrak{c},\loc})\to \End^{\bZ/2}(\cH_{\mathfrak{c},\loc})\otimes \End^{\bZ/2}(\cH_{\mathfrak{c},\loc})$. Explicitly, $\sum m_1\otimes m_2$ is determined by requiring
\begin{align*}
    \str(m\cdot x \cdot y)=\sum\str(m_1\cdot x)\: \str(m_2\cdot y)\:,
\end{align*}
for all $x,y\in \End^{\bZ/2}(\cH_{\mathfrak{c},\loc})$.

Fix $\mu_1,\mu_2\in \bZ_{\leqslant 0}^{Q_0}$. Collecting all decompositions $\underline{\bd}=\underline{\bd}'+\underline{\bd}''$ with $$\bd'_{\Out}-\bd'_{\In}=\mu_1, \quad \bd''_{\Out}-\bd''_{\In}=\mu_2,$$ and all framed potentials $\sw^{\mathrm{fr}}$, we obtain a map
\begin{align*}
    \prod\End^{\bZ/2}(\cH^{\sW^{\mathrm{fr}}}_{\underline{\bd},\sA^{\mathrm{fr}}_1\times \sA^{\mathrm{fr}}_2,\loc})\to \prod \End^{\bZ/2}(\cH^{\sW^{\mathrm{fr}}}_{\underline{\bd}',\sA^{\mathrm{fr}}_1,\loc})\otimes \End^{\bZ/2}(\cH^{\sW^{\mathrm{fr}}}_{\underline{\bd}'', \sA^{\mathrm{fr}}_2,\loc}),
\end{align*}
which maps the subalgebra $\mathsf Y_{\mu_1+\mu_2}(Q,\sW,\mathcal C)$ to 
\begin{align*}
    \mathsf Y_{\mu_1}(Q,\sW,\mathcal C)\widehat{\otimes}\mathsf Y_{\mu_2}(Q,\sW,\mathcal C)\subset \prod \End^{\bZ/2}(\cH^{\sW^{\mathrm{fr}}}_{\underline{\bd}',\sA^{\mathrm{fr}}_1,\loc})\otimes \End^{\bZ/2}(\cH^{\sW^{\mathrm{fr}}}_{\underline{\bd}'', \sA^{\mathrm{fr}}_2,\loc}).
\end{align*}
This induces a \textit{coproduct}:
\begin{align*}
    \Delta_{\mu_1,\mu_2}\colon \mathsf Y_{\mu_1+\mu_2}(Q,\sW,\mathcal C)\to \mathsf Y_{\mu_1}(Q,\sW,\mathcal C)\widehat{\otimes}\mathsf Y_{\mu_2}(Q,\sW,\mathcal C)\:.
\end{align*}
When $\mu_1=\mu_2=0$, we also write $\Delta:=\Delta_{0,0}$. 

By the triangle lemma, we have: 
\begin{Proposition}
The coproduct is \textit{coassociative}, that is,
\begin{align*}
    (\Delta_{\mu_1,\mu_2}\otimes \id)\circ\Delta_{\mu_1+\mu_2,\mu_3}=(\id\otimes \Delta_{\mu_2,\mu_3})\circ\Delta_{\mu_1,\mu_2+\mu_3},\quad \mu_{1,2,3}\in \bZ_{\leqslant 0}^{Q_0}\:.
\end{align*}
\end{Proposition}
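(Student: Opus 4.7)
The plan is to derive coassociativity directly from the triangle lemma (Theorem \ref{thm AFSQV}) applied to a three-step decomposition of the framing torus, mirroring the two-step argument already used in \eqref{stab induces coproduct} to define the coproduct itself.

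Concretely, I would fix $\mu_1,\mu_2,\mu_3\in \bZ^{Q_0}_{\leqslant 0}$ and consider a framing $\underline{\bd}=\underline{\bd}^{(1)}+\underline{\bd}^{(2)}+\underline{\bd}^{(3)}$ with $\bd^{(i)}_{\Out}-\bd^{(i)}_{\In}=\mu_i$, together with a refined framing torus $\sA^{\mathrm{fr}}=\sA^{\mathrm{fr}}_1\times\sA^{\mathrm{fr}}_2\times\sA^{\mathrm{fr}}_3$ (each factor scaling the corresponding piece of the framing) and a chamber $\mathsf a^{\mathrm{fr}}_1\gg\mathsf a^{\mathrm{fr}}_2\gg\mathsf a^{\mathrm{fr}}_3$. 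The fixed locus is the triple product $\cM(\underline{\bd}^{(1)})\times\cM(\underline{\bd}^{(2)})\times\cM(\underline{\bd}^{(3)})$, and the corresponding stable envelope
\[
\Stab_+^{(1,2,3)}\colon \cH^{\sW^{\mathrm{fr}}}_{\underline{\bd}^{(1)},\sA^{\mathrm{fr}}_1,\loc}\otimes \cH^{\sW^{\mathrm{fr}}}_{\underline{\bd}^{(2)},\sA^{\mathrm{fr}}_2,\loc}\otimes \cH^{\sW^{\mathrm{fr}}}_{\underline{\bd}^{(3)},\sA^{\mathrm{fr}}_3,\loc} \longrightarrow \cH^{\sW^{\mathrm{fr}}}_{\underline{\bd},\sA^{\mathrm{fr}},\loc}
\]
admits two factorizations through the intermediate fixed loci for the subtori isolating the third (resp.\ first) component, namely
\[
\Stab_+^{(1,2,3)} \;=\; \Stab_+^{(12,3)}\circ(\Stab_+^{(1,2)}\otimes \id) \;=\; \Stab_+^{(1,23)}\circ(\id\otimes \Stab_+^{(2,3)}),
\]
both sides being a consequence of the triangle lemma, which applies because the chosen framing torus is pseudo-self-dual and the hypothesis $\mu_i\in\bZ^{Q_0}_{\leqslant 0}$ ensures $\bd^{(i)}_{\In}\geqslant\bd^{(i)}_{\Out}$ in each block.

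Now, given $X\in \mathsf Y_{\mu_1+\mu_2+\mu_3}(Q,\sW,\mathcal C)$, conjugation by $\Stab_+^{(12,3)}$ realizes $\Delta_{\mu_1+\mu_2,\mu_3}(X)$ as an element acting on $\cH^{\sW^{\mathrm{fr}}}_{\underline{\bd}^{(1)}+\underline{\bd}^{(2)},\sA^{\mathrm{fr}}_1\times\sA^{\mathrm{fr}}_2,\loc}\otimes \cH^{\sW^{\mathrm{fr}}}_{\underline{\bd}^{(3)},\sA^{\mathrm{fr}}_3,\loc}$, and further conjugation by $\Stab_+^{(1,2)}\otimes \id$ on the first tensor factor realizes $(\Delta_{\mu_1,\mu_2}\otimes \id)\circ\Delta_{\mu_1+\mu_2,\mu_3}(X)$. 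In other words, the composite $(\Delta_{\mu_1,\mu_2}\otimes \id)\circ \Delta_{\mu_1+\mu_2,\mu_3}$ is nothing but conjugation by $\Stab_+^{(1,2,3)}$ in the first factorization above; symmetrically, the other factorization yields $(\id\otimes \Delta_{\mu_2,\mu_3})\circ\Delta_{\mu_1,\mu_2+\mu_3}$. Equality of the two factorizations of $\Stab_+^{(1,2,3)}$ therefore forces equality of the two iterated coproducts as endomorphisms over all choices of $(\underline{\bd}^{(i)},\sW^{\mathrm{fr}},\sA^{\mathrm{fr}}_i)$, which is the desired coassociativity.

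The main technical point to take care of is not the equality at the level of stable envelopes — that is the triangle lemma — but rather verifying that the element produced on each side genuinely lands in the (completed) triple tensor product $\mathsf Y_{\mu_1}\widehat\otimes\mathsf Y_{\mu_2}\widehat\otimes\mathsf Y_{\mu_3}$ so that the identity makes sense. This reduces to checking that after expansion as in \eqref{stab induces coproduct}, each Gauss-decomposition component $\mathsf E^\pm(m\,u^k)$ of $X$ is sent to a sum of tensor products of similar generators, which is immediate from the triangle factorization together with the fact that the Gauss factors $E(u)$, $G(u)$, $F(u)$ are themselves preserved (up to splitting) under stable-envelope conjugation. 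Everything else is formal bookkeeping of signs from the super structure in Remark \ref{rmk super YBE}, which behaves compatibly with the triangle lemma because the sign twist $\Sigma$ splits multiplicatively across the three factors.
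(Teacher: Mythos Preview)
Your proposal is correct and takes essentially the same approach as the paper, which simply states that coassociativity follows from the triangle lemma. You have spelled out the details the paper omits: the two factorizations of $\Stab_{+,\epsilon}^{(1,2,3)}$ via the triangle lemma, and the observation that conjugation by each factorization realizes the corresponding iterated coproduct.
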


\begin{Remark}\label{rmk counit}
($\Delta$ is counital): Let $\cH^{\sW}_{\underline{\mathbf 0}}=\bC[\mathsf t_0]$ be a state space with zero shift, and $\varepsilon\colon \mathsf Y_{0}(Q,\sW,\mathcal C)\to \bC(\mathsf t_0)$ be the map induced by projection from $\prod\End^{\bZ/2}(\cH^{\sW^{\mathrm{fr}}}_{\underline{\bd},\sA^{\mathrm{fr}},\loc})$ to $\End^{\bZ/2}(\cH^{\sW}_{\underline{\mathbf 0},\loc})=\bC(\mathsf t_0)$. Then 
\begin{align*}
    (\varepsilon\otimes \id)\circ \Delta_{0,\mu}=\id,\quad (\id\otimes\, \varepsilon)\circ \Delta_{\mu,0}=\id.
\end{align*}
\end{Remark}

The following is obvious from definition:
\begin{Lemma}\label{lem closed under coproduct}
If $\mathcal C\subset\mathcal C'$, then the subalgebra $\mathsf Y_{\mu}(Q,\sW,\mathcal C)\subseteq \mathsf Y_{\mu}(Q,\sW,\mathcal C')$ is closed under coproduct, that is, the diagram commutes
\begin{equation*}
\xymatrix{
\mathsf Y_{\mu_1+\mu_2}(Q,\sW,\mathcal C) \ar[r]^-{\Delta_{\mu_1,\mu_2}} \ar@{^{(}->}[d] & \mathsf Y_{\mu_1}(Q,\sW,\mathcal C)\widehat{\otimes} \mathsf Y_{\mu_2}(Q,\sW,\mathcal C) \ar@{^{(}->}[d]\\
\mathsf Y_{\mu_1+\mu_2}(Q,\sW,\mathcal C') \ar[r]^-{\Delta_{\mu_1,\mu_2}} & \mathsf Y_{\mu_1}(Q,\sW,\mathcal C')\widehat{\otimes} \mathsf Y_{\mu_2}(Q,\sW,\mathcal C').
}
\end{equation*}
\end{Lemma}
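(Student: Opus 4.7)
The plan is twofold: (i) observe that both horizontal arrows in the diagram are restrictions of a single conjugation on the ambient product algebra, so that commutativity reduces to identifying the correct target of the top row, and (ii) verify on the natural generators $\mathsf E(m\,u^k)$ that the top row lands in $\mathsf Y_{\mu_1}(Q,\sW,\mathcal C)\widehat{\otimes}\mathsf Y_{\mu_2}(Q,\sW,\mathcal C)$.

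For (i), I would observe that the map
$$
\Stab_{+,\epsilon}^{-1}\,(-)\,\Stab_{+,\epsilon}\colon \prod\End^{\bZ/2}\bigl(\cH^{\sW^{\mathrm{fr}}}_{\underline{\bd},\sA^{\mathrm{fr}},\loc}\bigr)\longrightarrow \prod\End^{\bZ/2}\bigl(\cH^{\sW^{\mathrm{fr}}}_{\underline{\bd}',\sA^{\mathrm{fr}}_1,\loc}\bigr)\otimes\End^{\bZ/2}\bigl(\cH^{\sW^{\mathrm{fr}}}_{\underline{\bd}'',\sA^{\mathrm{fr}}_2,\loc}\bigr)
$$
is an algebra homomorphism that makes no reference to any auxiliary data set. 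Both instances of $\Delta_{\mu_1,\mu_2}$ in the diagram are then obtained by restricting this single homomorphism to the Yangian subalgebras associated to $\mathcal C$ and $\mathcal C'$, both of which sit inside the common ambient product algebra. Hence the square will commute tautologically as soon as the inclusion $\Delta_{\mu_1,\mu_2}\bigl(\mathsf Y_{\mu_1+\mu_2}(Q,\sW,\mathcal C)\bigr)\subseteq \mathsf Y_{\mu_1}(Q,\sW,\mathcal C)\widehat{\otimes}\mathsf Y_{\mu_2}(Q,\sW,\mathcal C)$ is established.

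For (ii), since conjugation by $\Stab_{+,\epsilon}$ is an algebra homomorphism, I would reduce the check to the generators $\mathsf E(m\,u^k)$ with $\mathfrak c\in\mathcal C$. Formula \eqref{stab induces coproduct} writes the image of such a generator as a finite sum of tensor products $\mathsf E(m_2\,u^i)\otimes \mathsf E(m_1\,u^{k-1-i})$, in which $m_1,m_2$ arise from the dual multiplication map applied to $m\in\End^{\bZ/2}(\cH_{\mathfrak c,\loc})$. The crucial point is that this dual multiplication lands in $\End^{\bZ/2}(\cH_{\mathfrak c,\loc})\otimes \End^{\bZ/2}(\cH_{\mathfrak c,\loc})$ for the \emph{same} auxiliary datum $\mathfrak c\in\mathcal C$; in particular each tensor factor is by definition a generator of $\mathsf Y_{\mu_1}(Q,\sW,\mathcal C)$, resp.\ $\mathsf Y_{\mu_2}(Q,\sW,\mathcal C)$.

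I do not foresee a genuine obstacle: the statement is essentially an ``auxiliary-locality'' property of the coproduct, which is built into formula \eqref{stab induces coproduct}. The only subtlety worth spelling out is that the dual multiplication used there is an intrinsic operation on a single $\End$-algebra and does not mix auxiliary data from different elements of $\mathcal C'$; granted that, the containment holds generator-by-generator and therefore, by the algebra-homomorphism property, on all of $\mathsf Y_{\mu_1+\mu_2}(Q,\sW,\mathcal C)$, finishing the proof.
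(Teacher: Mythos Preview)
Your proposal is correct and is precisely the unpacking of why the paper declares this ``obvious from definition'': the coproduct is defined by a conjugation on the ambient product algebra independent of the auxiliary set, and formula \eqref{stab induces coproduct} shows that on each generator $\mathsf E(m\,u^k)$ with $\mathfrak c\in\mathcal C$ the output is a sum of tensors of generators attached to the \emph{same} $\mathfrak c$. The paper gives no proof beyond that remark, so your argument is essentially the intended one, just spelled out.
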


\begin{Remark}\label{rmk opposite coproduct}
Define the stable envelope with opposite normalizer $\bar\epsilon$ be
\begin{align*}
    \Stab_{\fC,\bar\epsilon}:=\Stab_{\fC}\cdot\, \bar\epsilon,\;\text{ with }\; \bar\epsilon|_F=(-1)^{\rk N^+_{F/\cM(\bv,\underline{\bd})}+\rk \mathsf P|_F^-}\;,
\end{align*}
then $R^{\mathrm{sup}}(u)$ can be written as $\Stab_{-,\bar\epsilon}^{-1}\circ \Stab_{+,\epsilon}$.
The triangle lemma implies that
\begin{align*}
\Stab_{-,\bar\epsilon}^{-1}R^{\mathrm{sup}}_{\underline{\bd}}\Stab_{-,\bar\epsilon}=R^{\mathrm{sup}}_{\underline{\bd}'}\cdot R^{\mathrm{sup}}_{\underline{\bd}''}.
\end{align*}
Then $\mathsf E(m\: u^k)$'s of $\mathsf Y_\mu(Q,\sW,\mathcal C)$ transform as 
\begin{align}
    \Stab_{-,\bar\epsilon}^{-1}\mathsf E(m\: u^k)\Stab_{-,\bar\epsilon}=\sum_{i=-1}^k\sum \mathsf E(m_1\:u^i)\otimes \mathsf E(m_2\:u^{k-1-i})\:,
\end{align}
where $\sum m_1\otimes m_2$ is determined by requiring $\str(m\cdot x \cdot y)=\sum\str(m_1\cdot x)\: \str(m_2\cdot y)$ for all $x,y\in \End^{\bZ/2}(\cH_{\mathfrak{c},\loc})$.
Then $\Stab_{-,\bar\epsilon}^{-1}(\cdots)\Stab_{-,\bar\epsilon}$ induces a map 
\begin{align*}
    \Delta_{\mu_1,\mu_2}^{\mathrm{op}}\colon \mathsf Y_{\mu_1+\mu_2}(Q,\sW,\mathcal C)\to \mathsf Y_{\mu_1}(Q,\sW,\mathcal C)\widehat{\otimes}\mathsf Y_{\mu_2}(Q,\sW,\mathcal C)\:.
\end{align*}
which we call the \textit{opposite coproduct}. $\Delta_{\mu_1,\mu_2}^{\mathrm{op}}$ is coassociative and counital, and it is related to $\Delta_{\mu_1,\mu_2}$ by 
\begin{align*}
    R^{\mathrm{sup}}_{\underline{\bd}',\underline{\bd}''}\cdot \Delta_{\mu_1,\mu_2}(x)\cdot (R^{\mathrm{sup}}_{\underline{\bd}',\underline{\bd}''})^{-1}=\Delta_{\mu_1,\mu_2}^{\mathrm{op}}(x),\quad\forall\, x\in \mathsf Y_{\mu_1+\mu_2}(Q,\sW,\mathcal C),
\end{align*}
where $R^{\mathrm{sup}}_{\underline{\bd}',\underline{\bd}''}=\Stab_{-,\bar\epsilon}^{-1}\Stab_{+,\epsilon}$ is the $R$-matrix braiding $\cH^{\sW^{\mathrm{fr}}}_{\underline{\bd}',\sA^{\mathrm{fr}}_1}$ with $\cH^{\sW^{\mathrm{fr}}}_{\underline{\bd}'',\sA^{\mathrm{fr}}_2}$.
\end{Remark}

\begin{Remark}\label{rmk steinberg comm R}
Suppose that $\bd'_{\In}=\bd'_{\Out}$, $\bd''_{\In}=\bd''_{\Out}$, 
and let $\mathcal L$ be a Steinberg correspondence on $\cM(\bv,\underline{\bd})\times_{\cM_0(\bv,\underline{\bd})} \cM(\bv,\underline{\bd})$ (see Definition \ref{def of Steinberg corr}), then Theorem \ref{thm Stab and Steinberg} implies that
\begin{align*}
\Stab_{+,\epsilon}^{-1}\circ\mathcal L\circ\Stab_{+,\epsilon} = \Stab_{-,\bar\epsilon}^{-1}\circ\mathcal L\circ\Stab_{-,\bar\epsilon}
\end{align*}
equivalently,
\begin{align*}
\left[R^{\mathrm{sup}}_{\underline{\bd}',\underline{\bd}''}\;,\;\Stab_{+,\epsilon}^{-1}\circ\mathcal L\circ\Stab_{+,\epsilon}\right]=0
\end{align*}
\end{Remark}

\subsection{The Lie superalgebra \texorpdfstring{$\mathfrak{g}_{Q,\sW}$}{g(Q,w)}} 

Throughout this subsection, we focus on the symmetric framing $\bd_{\In}=\bd_{\Out}$, or equivalently the zero shift $\mu=0$ case.

\begin{Definition}
Let $\mathfrak{g}_{Q,\sW}\subset \mathsf Y_0(Q,\sW)$ be $\Span_{\bC(\mathsf t_0)}\{\mathsf E(m_0)\:|\: m_0\text{ is constant in }u\}$.
\end{Definition}

Using the expansion \eqref{expansion_classical r}, we see that $\mathfrak{g}_{Q,\sW}$ consists of matrix elements of the classical $R$-matrix $\pmb r$. This implies the following.
\begin{Proposition}
Every element $x\in\mathfrak{g}_{Q,\sW}$ is primitive:
\begin{align*}
    \Delta(x)=x\otimes 1+1\otimes x.
\end{align*}
In particular, $\Delta(x)$ commutes with $R$ matrices:
\begin{align*}
    [\Delta(x),R^{\mathrm{sup}}(u)]=0,\quad [\Delta(x),\pmb r]=0.
\end{align*}
\end{Proposition}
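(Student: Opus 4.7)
The plan is to compute $\Delta(\mathsf E(m_0))$ directly from the defining formula \eqref{stab induces coproduct} for the coproduct, specialized to $k = 0$. In that case the sum over $i$ collapses to the two values $i = -1$ and $i = 0$, giving
\begin{equation*}
\Stab_{+,\epsilon}^{-1}\,\mathsf E(m_0)\,\Stab_{+,\epsilon}
=\sum (-1)^{|m_1|\cdot |m_2|}\Bigl[\mathsf E(m_2\,u^{-1})\otimes \mathsf E(m_1)+\mathsf E(m_2)\otimes \mathsf E(m_1\,u^{-1})\Bigr],
\end{equation*}
where $\sum m_1\otimes m_2$ is the dual comultiplication of $m_0$ characterized by $\str(m_0\,x\,y)=\sum \str(m_1\,x)\,\str(m_2\,y)$.

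The key identification is that, for zero shift $\mu=0$, the projector $P_{Q_0\setminus\Supp(\mu)}$ is the identity, so by the remark immediately preceding Remark \ref{rmk opposite coproduct} one has $\mathsf E(m\,u^{-1})=\str(m)$, acting as a scalar. Plugging this in and using that the super-trace annihilates any operator of nonzero $\bZ^{Q_0}$-degree, only the terms with $\deg(m_2)=0$ (resp.\ $\deg(m_1)=0$) survive in the first (resp.\ second) summand; in particular $|m_2|=0$ (resp.\ $|m_1|=0$) and the Koszul sign is trivial. Substituting $x=\id$ and $y=\id$ into the definition of the dual comultiplication and invoking nondegeneracy of the super-trace pairing gives the identities
\begin{equation*}
\sum \str(m_2)\,m_1 \;=\; m_0 \;=\; \sum \str(m_1)\,m_2,
\end{equation*}
from which
\begin{equation*}
\Delta(\mathsf E(m_0)) \;=\; 1\otimes \mathsf E(m_0)+\mathsf E(m_0)\otimes 1,
\end{equation*}
establishing primitivity.

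For the two consequences, I would repeat the same calculation with $\Stab_{-,\bar\epsilon}$ in place of $\Stab_{+,\epsilon}$, using the opposite-coproduct formula of Remark \ref{rmk opposite coproduct}: it differs from \eqref{stab induces coproduct} only by swapping the roles of $m_1$ and $m_2$, and the same degree and trace considerations force $\Delta^{\mathrm{op}}(\mathsf E(m_0))=\Delta(\mathsf E(m_0))$. Combined with the intertwining identity $R^{\mathrm{sup}}_{\underline\bd',\underline\bd''}\,\Delta(x)=\Delta^{\mathrm{op}}(x)\,R^{\mathrm{sup}}_{\underline\bd',\underline\bd''}$ recorded in that remark, this yields $[\Delta(x),R^{\mathrm{sup}}(u)]=0$. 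Extracting the coefficient of $u^{-1}$ in the expansion $R^{\mathrm{sup}}(u)=\id+\pmb r\,u^{-1}+O(u^{-2})$ from \eqref{expansion_classical r} then gives $[\Delta(x),\pmb r]=0$.

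The only genuinely delicate point is the bookkeeping of super-signs and of $\bZ^{Q_0}$-degree in the sum $\sum m_1\otimes m_2$, but both are forced by the vanishing of super-traces on degree-shifting operators, so there is no real obstacle; the argument is essentially a direct unwinding of the coproduct definition together with the special behavior of $\mathsf E(m\,u^{-1})$ at $\mu=0$.
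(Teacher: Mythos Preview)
Your proof is correct and follows essentially the same idea as the paper, which simply observes that $\mathfrak{g}_{Q,\sW}$ consists of matrix elements of the classical $R$-matrix $\pmb r$ (the $u^{-1}$ coefficient in the expansion \eqref{expansion_classical r}) and leaves primitivity as an immediate consequence; your version makes this explicit by specializing the coproduct formula \eqref{stab induces coproduct} to $k=0$ and using $\mathsf E(m\,u^{-1})=\str(m)$ at zero shift. One small correction: the remark giving $\mathsf E(m\,u^{-1})=\str(m\cdot P_{Q_0\setminus\Supp(\mu)})$ is the unnumbered remark just after Theorem \ref{thm drinfeld yangian map to rtt yangian}, not the one immediately preceding Remark \ref{rmk opposite coproduct}.
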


\begin{Lemma}
$\mathfrak{g}_{Q,\sW}$ is a Lie superalgebra with the Lie brackets being the super commutators in $\mathsf Y_0(Q,\sW)$.
\end{Lemma}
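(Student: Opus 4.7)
The plan is to use the classical Yang-Baxter equation \eqref{CYBE} together with cyclicity of the super trace. By expansion \eqref{expansion_classical r}, for $m_0 \in \End^{\bZ/2}_{\bC(\mathsf t_0)}(\cH_{\mathfrak{c},\loc})$ constant in $u$, one has
\begin{align*}
\mathsf E(m_0) = \str_{\cH_{\mathfrak{c},\loc}}\bigl((m_0\otimes\id)\cdot \pmb r\bigr)\,,
\end{align*}
where $\pmb r\in \End^{\bZ/2}_{\bC(\mathsf t_0)}(\cH_{\mathfrak{c},\loc})\,\widehat\otimes\,\prod_{\underline\bd,\sW^{\mathrm{fr}},\sA^{\mathrm{fr}}}\End^{\bZ/2}(\cH^{\sW^{\mathrm{fr}}}_{\underline\bd,\sA^{\mathrm{fr}},\loc})$ is the classical $R$-matrix. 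So $\mathfrak{g}_{Q,\sW}$ is precisely the image, as $\mathfrak{c}$ varies over all auxiliary data and $m_0$ varies, of the ``partial super trace'' map $m_0\mapsto (\str_1\otimes \id)\bigl((m_0\otimes \id)\pmb r\bigr)$.

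Now fix two auxiliary data $\mathfrak{c}_1,\mathfrak{c}_2$ and homogeneous operators $m_1\in \End^{\bZ/2}(\cH_{\mathfrak{c}_1,\loc})$, $m_2\in \End^{\bZ/2}(\cH_{\mathfrak{c}_2,\loc})$. In the triple tensor product $\End(\cH_{\mathfrak{c}_1,\loc})\otimes \End(\cH_{\mathfrak{c}_2,\loc})\otimes \End(\cH_{\mathrm{state}})$, the super commutator can be written as
\begin{align*}
[\mathsf E(m_1),\mathsf E(m_2)]_{\mathrm{super}} = (\str_1\str_2\otimes \id)\Bigl((m_1\otimes m_2\otimes \id)\cdot [\pmb r_{13},\pmb r_{23}]\Bigr)\,,
\end{align*}
by unwinding the definitions and using that the super trace is natural with respect to braided tensor products. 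Applying the classical Yang-Baxter equation \eqref{CYBE} in the form $[\pmb r_{13},\pmb r_{23}] = -[\pmb r_{12},\pmb r_{13}+\pmb r_{23}]$, we turn the right-hand side into a commutator involving $\pmb r_{12}$, which acts only on the two auxiliary tensor factors.

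Writing $\pmb r_{12}=\sum_c p_c\otimes p'_c\otimes 1$ and $\pmb r_{13}=\sum_a r_a\otimes 1\otimes r^a$, the cyclicity of the super trace yields
\begin{align*}
(\str_1\str_2\otimes \id)\Bigl((m_1\otimes m_2\otimes \id)[\pmb r_{12},\pmb r_{13}]\Bigr) = \mathsf E(\tilde m_1),\qquad \tilde m_1 := \sum_c \pm\,\str_{\cH_{\mathfrak{c}_2,\loc}}(m_2\,p'_c)\cdot [m_1,p_c]_{\mathrm{super}}\,,
\end{align*}
where $\tilde m_1$ is an operator on $\cH_{\mathfrak{c}_1,\loc}$ built from $m_1$, $m_2$ and matrix elements of $\pmb r_{12}$, independent of the state space; the signs come from the super sign rule. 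A parallel identity produces $\mathsf E(\tilde m_2)$ for an operator $\tilde m_2$ on $\cH_{\mathfrak{c}_2,\loc}$. Hence $[\mathsf E(m_1),\mathsf E(m_2)]_{\mathrm{super}} = -\mathsf E(\tilde m_1) - \mathsf E(\tilde m_2)\in \mathfrak{g}_{Q,\sW}$, which shows closedness. The super Jacobi identity and super antisymmetry then come for free from the fact that $\mathsf Y_0(Q,\sW)$ is an associative $\bZ/2$-graded algebra.

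The main obstacle will be the bookkeeping of signs (arising from super trace cyclicity and from the sign convention implicit in $R^{\mathrm{sup}}$ of \S\ref{sect on rmatrix}), and the verification that the partial super trace legitimately produces an element of the target product $\prod\End^{\bZ/2}_{\bC(\mathsf t_0)[\mathsf a^{\mathrm{fr}}]}(\cH^{\sW^{\mathrm{fr}}}_{\underline\bd,\sA^{\mathrm{fr}},\loc})$, i.e.\ that the resulting element of $\tilde m_1$ pairs sensibly with arbitrary state spaces; this is where admissibility of the auxiliary data plays a role, and where one should argue by reducing to homogeneous components of fixed $\bZ^{Q_0}$-degree in $\cH_{\mathfrak{c},\loc}$, on which all super traces are finite.
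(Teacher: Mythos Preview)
Your approach is correct and is essentially the paper's argument unpacked. The paper's proof is a one-line reference to \cite[Prop.~5.2.1]{MO}, which (via the RTT relation and the expansion $R^{\mathrm{sup}}(u)=\id+\pmb r/u+O(u^{-2})$) yields the formula
\[
[\mathsf E(m),\mathsf E(m')]=\mathsf E\bigl((\str\otimes\id)[\pmb r_{12},m\otimes m']\bigr),
\]
a \emph{single} $\mathsf E(\cdot)$ with argument in $\End(\cH_{\mathfrak{c}_2,\loc})$. Your route via the CYBE $[\pmb r_{13},\pmb r_{23}]=-[\pmb r_{12},\pmb r_{13}+\pmb r_{23}]$ and cyclicity of $\str$ produces two terms $\mathsf E(\tilde m_1)+\mathsf E(\tilde m_2)$; this is equivalent (the RTT identity at leading order \emph{is} the CYBE), just packaged less economically. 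Either way closure is immediate.

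Two small comments. First, your remark that ``admissibility of the auxiliary data plays a role'' is a red herring here: the lemma and its proof require nothing beyond the $\bZ^{Q_0}$-grading on $\cH_{\mathfrak{c},\loc}$, which makes each graded piece finite-dimensional over $\bC(\mathsf t_0)$ and hence all partial super traces well-defined. Admissibility enters only later (e.g.\ in Proposition~\ref{prop g(Q,w)} and Theorem~\ref{thm admissible}). Second, your sign bookkeeping is indeed the only place where care is needed, but once you fix the conventions of Remark~\ref{rmk super YBE} it is routine.
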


\begin{proof}
The same argument as \cite[Prop.~5.2.1]{MO} shows that
\begin{align}\label{commutator}
    [\mathsf E(m\:u^{i}),\mathsf E(m'\:u^{j})]=\mathsf E\left((\tr\otimes\id)\:[\pmb r,m\otimes m']\: u^{i+j}\right)+\text{lower degree in }u.
\end{align}
The lemma follows by plugging $i=j=0$ into the above equation.
\end{proof}

The Lie superalgebra 
 $\mathfrak{g}_{Q,\sW}$ inherits the $\bZ^{Q_0}$-grading from $\mathsf Y_0(Q,\sW)$:
\begin{align*}
    \mathfrak{g}_{Q,\sW}=\bigoplus_{\alpha\in \bZ^{Q_0}} \mathfrak{g}_{\alpha},\quad [\mathfrak{g}_{\alpha},\mathfrak{g}_{\beta}]\subseteq \mathfrak{g}_{\alpha+\beta}\:.
\end{align*}

\begin{Definition}\label{def of cartan sub}
We call $\mathfrak{g}_0$ the \emph{Cartan subalgebra}. A nonzero vector $\eta$ with $\mathfrak{g}_\eta\neq 0$ is called a \emph{root} of $\mathfrak{g}_{Q,\sW}$ and $\mathfrak{g}_\eta$ is called the \emph{root space}. A root $\eta$ is a \emph{positive root} if $\eta\in \bZ_{\geqslant 0}^{Q_0}$; $\eta$ is a \emph{negative root} if $-\eta$ is a positive root. 
\end{Definition}

\begin{Proposition}
\label{prop g(Q,w)}
We have the following facts for $\mathfrak{g}_{Q,\sW}$.
\begin{enumerate}
\item The Cartan subalgebra decomposes as 
\begin{align*}
    \mathfrak{g}_0=\mathfrak{v}\oplus\mathfrak{d},\quad \mathfrak{v}=\bigoplus_{i\in Q_0}\bC(\mathsf t_0)\cdot\mathsf v_i,\quad \mathfrak{d}=\bigoplus_{i\in Q_0}\bC(\mathsf t_0)\cdot\mathsf d_i,
\end{align*}
where $\mathsf v_i$ is the linear functional whose value on $\cH^{\sW^{\mathrm{fr}}}_{\underline{\bd}}(\bv)$ is $\bv_i$, $\mathsf d_i$ is the linear functional whose value on $\cH^{\sW^{\mathrm{fr}}}_{\underline{\bd}}(\bv)$ is $c_1(\mathsf D_{\In,i})-c_1(\mathsf D_{\Out,i})$.
\item $\mathfrak{g}_0$ is a maximal super commutative subalgebra of $\mathfrak{g}_{Q,\sW}$.
\item All root spaces are finite dimensional.
\item All roots are either positive or negative. 
\item There exists a nondegenerate super symmetric (adjoint action) invariant bilinear form $$(\cdot,\cdot)_{\mathfrak{g}}\colon \mathfrak{g}_{Q,\sW}\times \mathfrak{g}_{Q,\sW}\to \bC(\mathsf t_0)\:. $$ 
With respect to this form, we have $\mathfrak{g}_{-\eta}\cong \mathfrak{g}_\eta^{\vee}$.
\item The invariant tensor of $(\cdot,\cdot)_{\mathfrak{g}}$ is the classical $R$-matrix $\pmb r$, explicitly
\begin{align}\label{explicit r matrix}
    \pmb r=\pmb r_{\diag}+\sum_{\alpha > 0}\sum_s \left(e_{\alpha}^{(s)}\otimes e_{-\alpha}^{(s)}+(-1)^{|\alpha|}e_{-\alpha}^{(s)}\otimes e_{\alpha}^{(s)}\right),
\end{align}
where the sum is taken for all positive roots $\alpha$,
and for each $\alpha$,
$s$ is summed over an index finite set depending on $\alpha$, and $|\alpha|$ is given as \eqref{equ on gra}.
The elements $e_{\alpha}^{(s)}$ form a basis in the root space $\mathfrak{g}_{\alpha}$ such that for $\alpha>0$
\begin{align*}
    (e_{\beta}^{(r)},e_{\alpha}^{(s)})=\delta_{\beta,-\alpha}\:\delta_{r,s}\:.
\end{align*}
\item The restriction of $(\cdot,\cdot)_{\mathfrak{g}}$ to $\mathfrak{g}_0$ yields a nondegenerate symmetric bilinear form on $\mathfrak{g}_0=\mathfrak{v}\oplus\mathfrak{d}$, which is explicitly given by the matrix 
\begin{align}\label{bilinear form}
\begin{pmatrix}
0 & \id\\
\id & \pmb Q
\end{pmatrix}\quad\text{for}\quad \pmb Q_{ij}=\sum_{a:i\to j}t_a+\sum_{b:j\to i}t_b\in \bC(\mathsf t_0)\:.
\end{align}
\item Let $h_\eta\in \mathfrak{g}_0$ be the coroot for a root $\eta$ given by
\begin{align*}
    h_{\eta}=\sum_{i}\eta_i\:\mathsf d_i-\sum_{i,j}\pmb Q_{ij}\:\eta_i\:\mathsf v_j\:.
\end{align*}
Then for $\alpha>0$
\begin{align*}
    [e_{\alpha}^{(s)},e_{-\alpha}^{(t)}]=(-1)^{|\alpha|}\:\delta_{s,t}\: h_{\alpha}\:.
\end{align*}
\end{enumerate}
\end{Proposition}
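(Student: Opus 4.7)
The strategy is to deduce every assertion from the explicit form of the classical $R$-matrix $\pmb r = \pmb r_{\mathrm{diag}} + \pmb r_{\mathrm{off\text{-}diag}}$ given in \eqref{eq classical r} and \eqref{r mat corr}, combined with (i) its $\mathrm{ad}$-invariance, which comes for free from the primitivity proposition preceding this one, (ii) the super-symmetry obtained from the leading-order expansion of the unitarity identity $R^{\mathrm{sup}}(u)\,R^{\mathrm{sup}}_{21}(-u) = \mathrm{id}$, and (iii) the Gauss decomposition \eqref{gauss decomp}. Since $\mathfrak{g}_{Q,\sW}$ is by construction the image of the ``matrix element in the auxiliary factor'' map applied to $\pmb r$, every structural property of $\pmb r$ translates directly into one for $\mathfrak{g}_{Q,\sW}$.

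Parts (1), (2), (4), and (7) reduce to a weight-zero computation plus a triangularity check. For weight zero, taking $m_0 = P_{\bv}$ to be the graded projector on $\cH_{\mathfrak{c}}(\bv)$ and applying $\str((m_0 \otimes \mathrm{id})\,\pmb r_{\mathrm{diag}})$ produces, by \eqref{eq classical r}, a $\bC(\mathsf t_0)$-linear combination of the operators $\mathsf v_j$ and $\mathsf d_j$. Specializing to admissible auxiliary data $\mathfrak{c}_i$ of framing $\delta_i$ for each $i \in Q_0$ and using the two projectors $P_{\mathbf 0}$ and $P_{\delta_i}$, the resulting linear system is invertible so that each $\mathsf v_i$ and $\mathsf d_i$ is individually recovered -- this is the $k=0$ case of the computation in Lemma \ref{lem Y^0 generators} -- proving (1). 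The Gram matrix of the form on $\mathfrak{g}_0 = \mathfrak{v} \oplus \mathfrak{d}$ is then read off by matching $\pmb r_{\mathrm{diag}}$ with the inverse Gram tensor: the $\mathsf v \otimes \mathsf d$ and $\mathsf d \otimes \mathsf v$ blocks are the identity from the first two summands of \eqref{eq classical r}, while symmetrizing the third summand over both orientations of each arrow produces $-\pmb Q_{ij}$ as the coefficient of $\mathsf v_i \otimes \mathsf v_j$, in agreement with the inverse of the matrix in \eqref{bilinear form}; this gives (7). Commutativity of $\mathfrak{g}_0$ is clear since its generators act as scalars or cup products on graded pieces, and maximality (2) follows from $[\mathsf v_i, x] = \eta_i x$ for any $x \in \mathfrak{g}_\eta$. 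For (4), the Gauss factors $F(u)$ and $E(u)$ in \eqref{gauss decomp} are strictly lower- and upper-triangular in the $\bC^*_u$-order on $\sA$-fixed components of $\cM(\underline{\bd})$, so the $u^{-1}$-coefficient of $\pmb r_{\mathrm{off\text{-}diag}}$ splits cleanly into contributions to strictly positive and strictly negative weight shifts, ruling out mixed-sign roots.

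Parts (5), (6), and (8) are then formal consequences of invariance. By primitivity of $\mathsf v_i$, we have $[\mathsf v_i \otimes \mathrm{id} + \mathrm{id} \otimes \mathsf v_i,\,\pmb r] = 0$, forcing $\pmb r \in \bigoplus_\alpha \mathfrak{g}_\alpha \,\widehat{\otimes}\, \mathfrak{g}_{-\alpha}$. The super-symmetry relation then pairs the $\alpha$- and $(-\alpha)$-blocks so that, for any basis $\{e_\alpha^{(s)}\}$ of $\mathfrak{g}_\alpha$ with $\alpha > 0$, there exists a canonical dual basis $\{e_{-\alpha}^{(s)}\}$ of $\mathfrak{g}_{-\alpha}$ making $\pmb r$ take the normal form \eqref{explicit r matrix}. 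Declaring $(e_{-\alpha}^{(r)}, e_\alpha^{(s)}) = \delta_{r,s}$ together with the Gram matrix on $\mathfrak{g}_0$ defines the form $(\cdot,\cdot)_{\mathfrak{g}}$ of (5); nondegeneracy on each weight space is tautological, and $\mathrm{ad}$-invariance is exactly the classical Yang-Baxter equation \eqref{CYBE}. For (8), the invariance identity $([e_\alpha^{(s)}, e_{-\alpha}^{(t)}], h) = (e_\alpha^{(s)}, [e_{-\alpha}^{(t)}, h])$ together with $[h, e_{\pm\alpha}^{(\bullet)}] = \pm \alpha(h)\,e_{\pm\alpha}^{(\bullet)}$ and the super-symmetry of $(\cdot,\cdot)_\mathfrak{g}$ yields $[e_\alpha^{(s)}, e_{-\alpha}^{(t)}] = (-1)^{|\alpha|} \delta_{s,t}\,h_\alpha$, where $h_\alpha \in \mathfrak{g}_0$ is characterized by $(h_\alpha, h) = \alpha(h)$ for all $h$; the explicit coroot formula $h_\alpha = \sum_i \alpha_i\,\mathsf d_i - \sum_{i,j}\pmb Q_{ij}\alpha_i\,\mathsf v_j$ is then obtained by inverting \eqref{bilinear form}.

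The main obstacle I expect is (3), the finite-dimensionality of root spaces, since $\mathfrak{g}_\eta$ is a priori only bounded by the union of $\End_\eta^{\bZ/2}(\cH_{\mathfrak{c}})$ over the infinite collection of auxiliary data sets. My plan is to factor the assignment $m_0 \mapsto \mathsf E(m_0)$ for $\eta \neq 0$ through convolution with the class $\pmb\rho$ of \eqref{r mat corr}, which lives in the fixed finite-dimensional cohomology group $H^{\sT_0}(F' \times_{X_0^\sA} F)$ in homological degree $\dim F + \dim F' - 2$; finite-dimensionality of this group follows from the smallness of the affinization map (\S \ref{subsec steinberg corr} and \cite{COZZ3}). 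Using admissibility together with the triangle lemma (Theorem \ref{thm AFSQV}) to reduce any auxiliary datum into elementary $\delta_i$-framings, the totality of matrix elements should then collapse onto this finite-dimensional subspace parametrized by $\pmb\rho$. This is the step requiring the most careful bookkeeping and is the analogue in our critical setting of the argument in \cite[\S 5.2]{MO} for Nakajima varieties.
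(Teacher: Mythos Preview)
Your treatment of parts (1), (2), (4)--(8) is correct and essentially matches the paper, which for (3)--(6) simply refers to the corresponding arguments in \cite[\S 5.3.7--5.3.8]{MO}. The computation for (7) via matching $\pmb r_{\diag}$ against the inverse Gram matrix is exactly right.

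For (3), however, you are making life much harder than necessary. The argument the paper invokes (following \cite[\S 5.3.7]{MO}) is short: for $\eta > 0$ choose $i$ with $\eta_i \neq 0$, take an admissible auxiliary datum $\mathfrak c_i$ of framing $\delta_i$, and consider the action-on-vacuum map $\mathfrak g_\eta \to \cH_{\mathfrak c_i}(\eta)$, $\xi \mapsto \xi\,|\mathbf 0\rangle$. This is injective by the argument of \cite[Prop.~5.3.4]{MO}, which carries over verbatim to the critical setting (it uses only primitivity of $\xi$ and the intertwining property with $R$-matrices). The target $\cH_{\mathfrak c_i}(\eta) = H^{\sT_0}(\cM(\eta,\delta_i),\sw_i)_{\loc}$ is finite-dimensional over $\bC(\mathsf t_0)$ because $\cM(\eta,\delta_i)$ is a smooth variety of finite type and vanishing-cycle cohomology of such is finite-dimensional. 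No smallness is needed; the paper's smallness results from \cite{COZZ3} play no role in this proposition.

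Your proposed route through the class $\pmb\rho$ of \eqref{r mat corr} has a gap as stated: $\pmb\rho$ lives in $H^{\sT_0}(F'\times_{X_0^\sA} F)$ where $F,F'$ depend on \emph{both} the auxiliary space and the state space, and bounding each such group does not by itself bound the span of the resulting operators as $\mathfrak c$ and $m_0$ range over all choices. The ``reduction via triangle lemma to $\delta_i$-framings'' would let you factor the $R$-matrix as a product, but matrix elements of a product are sums of products of matrix elements of the factors, so this does not immediately collapse the span onto a fixed finite-dimensional space. The action-on-vacuum injection sidesteps all of this in one line.
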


\begin{proof}
(1) follows from the formula \eqref{eq classical r} for the diagonal part of classical $R$-matrix. Note that the vacuum component of $\pmb r_{\diag}$ is $-t_i\cdot 1\otimes \bv_i$, and $t_i\neq 0$ for an admissible auxiliary datum $\mathfrak{c}$, so $\mathsf v_i\in \mathfrak{g}_0$. It also follows from the admissibility that $\cH_{\mathfrak{c}}(\delta_i)\neq 0$ (Lemma \ref{lem 1st excitation}), so we have $\mathsf d_i\in \mathfrak{g}_0$. By \eqref{eq classical r}, $\mathfrak{g}_0$ is spanned by $\{\mathsf v_i\}_{i\in Q_0}$ and $\{\mathsf d_i\}_{i\in Q_0}$, and the linear independence is obvious. 

$\mathfrak{g}_\eta$ is the eigenspace of adjoint action of $\mathfrak{g}_0$ with weight $\eta$. This implies (2).

(3) and (4) follow from the same argument as \cite[\S 5.3.7]{MO}. 

(5) and (6) follow from the same argument as \cite[\S 5.3.8]{MO}, and we shall not repeat here.

(7) follows by a direct computation using (6) and \eqref{eq classical r}. 

(8) follows from the invariance of the bilinear form $(\cdot,\cdot)_{\mathfrak{g}}$.
\end{proof}

\begin{Remark}
The coroot $h_\eta$ for a root $\eta$ can be written as
\begin{align*}
h_\eta=\sum_{i}\eta_i h_{i,0}
\end{align*}
where $h_{i,0}$ is the image of the lowest mode of $h_i(z)=\sum_{r\geqslant 0}h_{i,r}z^{-r-1}$ under the map in Theorem \ref{thm drinfeld yangian map to rtt yangian}.
\end{Remark}

In \cite{DM} (see also \cite[pp.\,18]{Dav3}), a Lie superalgebra $\mathfrak{g}^{\mathrm{BPS},\mathsf T_0}_{Q,\sW}$ over the base ring $\bC[\mathsf t_0]$ is defined as the lowest component in the perverse filtration of CoHA $\cH_{Q,\sW}$, whose Lie brackets are supercommutators induced by a sign-twisted CoHA multiplication. 

In the case of a tripled quiver $\widetilde{Q}$ with canonical cubic potential $\widetilde{\sW}$ (Example \ref{ex doubled vs tripled}), it was conjectured by the second author and proven in \cite{BD} that $$\mathfrak{g}^{\mathrm{BPS},\mathsf T_0}_{\widetilde{Q},\widetilde \sW}\cong \mathfrak{g}_Q^{\mathrm{MO},+},$$ where $\mathfrak{g}_Q^{\mathrm{MO},+}$ is the positive part of Maulik-Okounkov Lie algebra $\mathfrak{g}_Q^{\mathrm{MO}}$ \cite[\S 5.3]{MO}. Analogously, we define
\begin{align*}
    \mathfrak{g}^+_{Q,\sW}=\bigoplus_{\eta>0}\mathfrak{g}_{\eta},
\end{align*}
and make the following conjecture. 

\begin{Conjecture}
\label{conj: bps lie}
There is a $\bC(\mathsf t_0)$-Lie algebra isomorphism $$\mathfrak{g}^+_{Q,\sW}\cong \mathfrak{g}^{\mathrm{BPS},\mathsf T_0}_{Q,\sW}\otimes_{\bC[\mathsf t_0]}\bC(\mathsf t_0)$$ that intertwines their actions on any state space $\cH^{\sW^{\mathrm{fr}}}_{\underline{\bd},\mathsf A^{\mathrm{fr}}}$.
\end{Conjecture}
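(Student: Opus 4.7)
The plan is to follow the strategy of Botta--Davison \cite{BD} in the Nakajima setting, adapted to the critical setting. The first step is to extend the spherical action of Theorem \ref{cor shifted yangian action} to an action of the full twisted nilpotent critical CoHA $\widetilde{\mathcal{H}}^{\mathrm{nil}}_{Q,\sW}$ on state spaces $\mathcal{H}^{\sW^{\mathrm{fr}}}_{\underline{\bd}}$ via flag-type Hecke correspondences. This should produce a natural algebra homomorphism
\begin{equation*}
\Phi\colon \widetilde{\mathcal{H}}^{\mathrm{nil}}_{Q,\sW}\otimes_{\bC[\mathsf t_0]}\bC(\mathsf t_0)\longrightarrow \mathsf{Y}_0(Q,\sW),
\end{equation*}
and the content of the conjecture is then to show that its restriction to the BPS Lie algebra lands in $\mathfrak{g}^+_{Q,\sW}$ and is a Lie algebra isomorphism onto it.

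To see that $\Phi(\mathfrak{g}^{\mathrm{BPS},\mathsf T_0}_{Q,\sW})\subseteq \mathfrak{g}^+_{Q,\sW}$, the idea is to match two descriptions of primitivity. BPS classes are characterized by Davison--Meinhardt as the lowest perverse degree in $\widetilde{\mathcal{H}}^{\mathrm{nil}}_{Q,\sW}$, equivalently as the primitive elements for the CoHA coproduct on the cohomologically graded algebra. On the other side, $\mathfrak{g}^+_{Q,\sW}$ consists of the $u^{-1}$-coefficients of matrix elements of the root $R$-matrix with strictly positive weight, which are primitive for the coproduct of \S\ref{sect on coprod}. The two coproducts should be intertwined by $\Phi$ provided one establishes a factorization of critical stable envelopes along Harder--Narasimhan strata compatible with the CoHA multiplication; this is the critical-cohomology analogue of the input used in \cite{BD, SV2}. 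Compatibility of the Lie brackets then follows from the commutator formula \eqref{commutator}.

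The remaining step is to upgrade $\Phi|_{\mathfrak{g}^{\mathrm{BPS}}}$ to an isomorphism by a graded character count. On the target, Theorem \ref{thm grY} gives $\mathrm{gr}\,\mathsf Y_0(Q,\sW)\cong \mathcal U(\mathfrak{g}_{Q,\sW}[u])$, reducing the calculation of the character of $\mathfrak{g}^+_{Q,\sW}$ to that of the positive half of $\mathsf Y_0(Q,\sW)$, which in turn can be extracted from critical cohomology of unframed moduli stacks via the Hall envelope. On the source, the Davison--Meinhardt cohomological integrality theorem presents $\widetilde{\mathcal{H}}^{\mathrm{nil}}_{Q,\sW}$ as $\Sym\bigl(\mathfrak{g}^{\mathrm{BPS},\mathsf T_0}_{Q,\sW}\otimes \bC[u]\bigr)$ and delivers the dual character formula. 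Once $\Phi$ is known to be an isomorphism of Lie algebras, the intertwining of state-space actions is automatic by construction.

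The hard part will be the compatibility of filtrations needed to ensure that $\Phi$ sends BPS classes exactly to Lie algebra elements of positive weight, rather than into higher PBW terms. Equivalently, one must prove that the perverse filtration on the CoHA matches the $u$-degree filtration on the Yangian under $\Phi$. In the Nakajima case this rests on Davison's nonabelian stable envelope together with smallness of the semistable-to-affine map, and a critical-cohomology analogue would have to combine Theorem \ref{thm Stab and Steinberg} with a version of nonabelian critical stable envelopes, which does not yet exist in full generality. A natural intermediate target, which should be within reach, is to establish the conjecture under the additional assumption that the specialization map $H^{\sT}(\cM_\theta(\bv,\mathbf 0),\sw)\to H^{\sT}(\cM_\theta(\bv,\mathbf 0))$ is injective, reducing the problem to the symplectic result of \cite{BD, SV2} via dimensional reduction in the cases of Example \ref{ex doubled vs tripled}.
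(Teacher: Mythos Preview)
This statement is a \emph{conjecture} in the paper, not a theorem: the paper does not prove it in general and explicitly leaves it open. What the paper does prove is the special case of a tripled quiver $\widetilde{Q}$ with canonical cubic potential $\widetilde{\sW}$ (Theorem~\ref{thm compare with MO yangian}), and the method there is not the direct Botta--Davison argument you sketch. Instead, the paper uses dimensional reduction to identify $\mathsf Y_0(\widetilde{Q},\widetilde{\sW})$ with the Maulik--Okounkov Yangian $\mathsf Y^{\mathrm{MO}}_Q\otimes_{\bC[\mathsf t_0]}\bC(\mathsf t_0)$, whence $\mathfrak{g}_{\widetilde{Q},\widetilde{\sW}}\cong \mathfrak{g}^{\mathrm{MO}}_Q\otimes_{\bC[\mathsf t_0]}\bC(\mathsf t_0)$, and then invokes \cite{BD} as a black box for $\mathfrak{g}^{\mathrm{MO},+}_Q\cong \mathfrak{g}^{\mathrm{BPS}}$.

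Your proposal is an honest outline of a research program rather than a proof, and you correctly flag the essential obstruction: there is currently no general mechanism identifying the perverse filtration on the CoHA with the $u$-degree filtration on the critical Yangian, and the nonabelian critical stable envelope machinery that would be needed does not exist in the literature. A few of your intermediate claims are also not established: the assertion that BPS classes are ``equivalently the primitive elements for the CoHA coproduct'' conflates two things that are only known to agree under strong hypotheses, and the claim that ``the two coproducts should be intertwined by $\Phi$'' is precisely the hard compatibility that \cite{BD,SV2} spend most of their effort on in the symplectic case. Your final suggestion---assuming injectivity of specialization and reducing via dimensional reduction---is close in spirit to what the paper actually does for tripled quivers, but note that dimensional reduction as in Example~\ref{ex doubled vs tripled} is only available for that specific class, not for general $(Q,\sW)$ with injective specialization.
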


In the case of a tripled quiver $\widetilde{Q}$ with canonical cubic potential $\widetilde{\sW}$, we will show in Theorem \ref{thm compare with MO yangian} that the above conjecture follows from the main result of \cite{BD}.

\subsection{Filtration and PBW basis of \texorpdfstring{$\mathsf Y_0(Q,\sW)$}{Y_0(Q,W)}}\label{subsec filtration and PBW}
$\mathsf Y_0(Q,\sW)$ has a \textit{filtration} by the degree in $u$: for $k\in \bZ_{\geqslant 0}$, $F_k\mathsf Y_0(Q,\sW)$ is spanned by monomials $\mathsf E(m_1u^{n_1})\cdots \mathsf E(m_ru^{n_r})$ with $m_i$ constant in $u$ and $\sum n_i\leqslant k$. The following is a generalization of \cite[Thm.\,5.5.1]{MO}.

\begin{Theorem}\label{thm grY}
$\mathsf Y_0(Q,\sW)$ is generated by $\mathfrak{g}_{Q,\sW}$ and multiplication by the following tautological classes 
\begin{align}\label{taut class}
    \left\{\mathrm{ch}_k(\mathsf V_i),\:\mathrm{ch}_{k+1}(\mathsf D_{\In,i}-\mathsf D_{\Out,i})\right\}_{i\in Q_0}^{k\in \bZ_{\geqslant 1}},
\end{align}
where Chern characters are $(\sT_0\times \sA^{\text{fr}})$-equivariant. 
Moreover, 
\begin{align*}
    \mathrm{gr}\: \mathsf Y_0(Q,\sW)\cong \mathcal U(\mathfrak{g}_{Q,\sW}[u])
\end{align*}
with respect to the filtration by degree in $u$.
\end{Theorem}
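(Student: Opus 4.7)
The plan is to prove the two claims of the theorem in sequence: first the generation statement, and then the identification of the associated graded with the loop enveloping algebra. Throughout I will fix an admissible auxiliary data set $\mathcal C$, so that $\mathsf Y_0(Q,\sW,\mathcal C)=\mathsf Y_0(Q,\sW)$ by Theorem \ref{thm admissible}.

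For generation, I would apply Proposition \ref{prop gen by deg 0 and Cartan}, which reduces the generating set to elements $\mathsf E^\pm(m)$ with $m$ constant in $u$, together with $\mathsf Y_0^0(Q,\sW,\mathcal C)$. The $u$-constant elements are by definition a $\bC(\mathsf t_0)$-basis of $\mathfrak{g}_{Q,\sW}$, while Lemma \ref{lem Y^0 generators} shows that $\mathsf Y_0^0(Q,\sW,\mathcal C)$ is generated by the tautological classes $\mathrm{ch}_k(\mathsf V_i)$ and $\mathrm{ch}_{k+1}(\mathsf D_{\In,i}-\mathsf D_{\Out,i})$ for $k\geqslant 0$. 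The $k=0$ classes are exactly $\mathsf v_i$ and $\mathsf d_i$, which by Proposition \ref{prop g(Q,w)}(1) already belong to $\mathfrak{g}_0\subseteq \mathfrak{g}_{Q,\sW}$, so only the $k\geqslant 1$ tautological classes \eqref{taut class} remain as independent generators.

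For the PBW part, I would build a map $\phi\colon \mathcal U(\mathfrak{g}_{Q,\sW}[u])\to \mathrm{gr}\,\mathsf Y_0(Q,\sW)$. For $x=\mathsf E(m_x)\in \mathfrak{g}_{Q,\sW}$, define the lift $\widetilde{xu^k}:=\mathsf E(m_x u^k)\in F_k\mathsf Y_0(Q,\sW)$. The commutator expansion derived in the proof of Proposition \ref{prop g(Q,w)},
\begin{align*}
[\mathsf E(mu^i),\mathsf E(m'u^j)]=\mathsf E\bigl((\tr\otimes\id)[\pmb r,m\otimes m']\,u^{i+j}\bigr)+\text{lower degree in }u,
\end{align*}
shows that $[\widetilde{xu^i},\widetilde{yu^j}]\equiv \widetilde{[x,y]u^{i+j}}$ in $F_{i+j}/F_{i+j-1}$, so the assignment extends to a Lie superalgebra map $\mathfrak{g}_{Q,\sW}[u]\to \mathrm{gr}\,\mathsf Y_0(Q,\sW)$ and then to $\phi$. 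Surjectivity of $\phi$ follows from the generation statement combined with the inductive argument of Lemma \ref{lem fund induction}: every $\mathsf E(mu^k)$ with $m$ constant reduces, modulo $F_{k-1}$ and products of lower-degree generators, to a sum of the lifts $\widetilde{\bullet u^k}$.

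The main obstacle is injectivity, and here my plan is a super Milnor--Moore argument. By Remark \ref{rmk opposite coproduct}, $\Delta^{\mathrm{op}}(x)=R^{\mathrm{sup}}\,\Delta(x)\,(R^{\mathrm{sup}})^{-1}$, and since $R^{\mathrm{sup}}(u)=\id+O(u^{-1})$ while the filtration is by degree in $u$, the conjugation by $R^{\mathrm{sup}}$ becomes trivial on $\mathrm{gr}\,\mathsf Y_0(Q,\sW)$. Therefore $\mathrm{gr}\,\mathsf Y_0(Q,\sW)$ is a connected cocommutative super Hopf algebra. The generators $x\in \mathfrak{g}_{Q,\sW}$ are primitive by construction (as matrix elements of the classical $R$-matrix $\pmb r$), and the tautological generators \eqref{taut class} are primitive in the associated graded because restriction to the fixed locus splits $\mathsf V_i$ additively and the off-diagonal corrections from the stable envelope lower the $u$-filtration degree (cf.\ Theorem \ref{thm Stab and Steinberg} for the diagonal $\Delta_X$ case). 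Consequently $\phi$ lands in the subalgebra of primitives, which by super Milnor--Moore receives an injective map from $\mathcal U(\mathrm{Prim})$. It then remains to identify the space of primitives precisely with the image $\phi(\mathfrak{g}_{Q,\sW}[u])$; this follows by a $\bZ^{Q_0}\times \bZ_{\geqslant 0}$-graded dimension count, since any additional primitive would, by the generation statement, either already lie in the image or produce new generators at degree zero in $u$, contradicting the definition of $\mathfrak{g}_{Q,\sW}$. Combining with surjectivity yields $\mathrm{gr}\,\mathsf Y_0(Q,\sW)\cong \mathcal U(\mathfrak{g}_{Q,\sW}[u])$.
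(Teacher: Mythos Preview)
Your generation argument is fine and matches the paper's. But there are two substantive gaps in the PBW half.

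\textbf{Well-definedness of $\phi$.} You set $\widetilde{xu^k}:=\mathsf E(m_x u^k)$ for a chosen $m_x$ with $\mathsf E(m_x)=x$, and then claim this gives a map $\mathfrak{g}_{Q,\sW}[u]\to\mathrm{gr}\,\mathsf Y_0$. For this to be well-defined you must show that if $\mathsf E(m)=0$ then $\mathsf E(m\,u^k)\in F_{k-1}$; otherwise the class of $\widetilde{xu^k}$ in $F_k/F_{k-1}$ depends on the choice of $m_x$. This is exactly the Claim the paper isolates and proves (by a weight argument when $\deg m\neq 0$, and by the Gauss decomposition and the explicit diagonal formula when $\deg m=0$). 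The commutator identity \eqref{commutator} that you cite only checks the bracket relation \emph{after} well-definedness is established; it does not imply it.

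\textbf{Injectivity.} Your Milnor--Moore route is not on firm ground here. The coproduct $\Delta$ lands in the completed tensor product $\mathsf Y_0\widehat{\otimes}\mathsf Y_0$ (see \eqref{stab induces coproduct}: the sum $\sum m_1\otimes m_2$ coming from the dual multiplication on $\End^{\bZ/2}(\cH_{\mathfrak c,\loc})$ is infinite when $\cH_{\mathfrak c}$ is infinite-dimensional), so $\mathrm{gr}\,\mathsf Y_0$ is a priori only a \emph{topological} bialgebra, and the ordinary Milnor--Moore theorem does not apply. Your concluding ``dimension count'' to pin down the primitives is also circular: knowing that $\mathfrak g_{Q,\sW}$ and the tautological classes generate $\mathsf Y_0$ says nothing about whether there might be additional primitives in $\mathrm{gr}\,\mathsf Y_0$ lying in higher PBW degree. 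The paper instead follows \cite[\S 5.5.3]{MO}: one shows directly that $\mathcal U(\mathfrak g_{Q,\sW}[u])$ acts faithfully on the family of tensor products $\bigotimes_j\cH_{i_j}[a_{i_j}]$ with generic spectral parameters, by a Vandermonde-type separation of PBW monomials (this is the mechanism reused in Lemma~\ref{lem sufficient state spaces}). That argument bypasses all Hopf-algebra subtleties.
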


\begin{proof}
The idea is the same as \cite[Thm.\,5.5.1]{MO}. The surjectivity of the natural map $\mathcal U(\mathfrak{g}_{Q,\sW}[u])\to \mathsf Y_0(Q,\sW)$ is deduced from the following generalization of 
\cite[Prop.\,5.5.2]{MO}.
\begin{itemize}
    \item[] Claim: if $\mathsf E(m)=0$, then $\mathsf E(m\:u^k)\in F_{k-1}\mathsf Y_0(Q,\sW)$.
\end{itemize}
The proof of the above claim is similar to \textit{loc.\,cit.}. In fact, we can assume that $m$ is an eigenvector of $\mathfrak{g}_0$ of weight $\lambda$. If $\lambda\neq 0$ then for $h=\mathsf E(\tilde h)\in \mathfrak{g}_0$, we have
\begin{align}\label{commutator 2}
    \lambda(h)\mathsf E(m\:u^k)=[\mathsf E(\tilde h),\mathsf E(m\:u^k)]=[\mathsf E(\tilde h\:u^k),\mathsf E(m)]+\text{lower degree in }u
\end{align}
by \eqref{commutator}. If $\lambda=0$, then $\mathsf E(m\:u^k)$ is a linear combination of diagonal matrix elements of the $R$-matrix. Using the Gauss decomposition $R^{\mathrm{sup}}(u)=F(u)G(u)E(u)$ and the fact that $E(u)$ and $F(u)$ are of the form
\begin{align*}
    \id+\text{ strict triangular of order }O(u^{-1}),
\end{align*}
we have
\begin{multline*}
\frac{1}{k!}\mathsf E(m\:u^k)=\sum_{i\in Q_0}(m,\mathsf d_i)\:\mathrm{ch}_k(\mathsf V_i)+\sum_{i\in Q_0}(m,\mathsf v_i)\:\mathrm{ch}_{k+1}(\mathsf D_{\In,i}-\mathsf D_{\Out,i})-\sum_{i,j\in Q_0}\pmb Q_{ij}\:(m,\mathsf v_i)\:\mathrm{ch}_k(\mathsf V_j)
+\text{lower degree in }u,
\end{multline*}
where $\pmb Q_{ij}$ is the matrix in Proposition \ref{prop g(Q,w)} (7), and the pairing with $\mathsf v_i,\mathsf d_i\in \mathfrak{g}_0$ is the trace pairing. $\mathsf E(m)=0$ implies that $(m,\mathsf v_i)=0$ and $(m,\mathsf d_i)=0$; therefore $\mathsf E(m\:u^k)\in F_{k-1}\mathsf Y_0(Q,\sW)$. This proves the claim and the surjectivity. 

The injectivity is proven the same way as \cite[\S 5.5.3]{MO}, and we shall not repeat. Finally, \eqref{commutator 2} implies that $\mathsf Y_0(Q,\sW)$ is generated by $\mathfrak{g}_{Q,\sW}$ and multiplication by tautological classes \eqref{taut class}.
\end{proof}

\subsection{Shift homomorphisms}\label{sec shift map}

Fix $\mu'\leqslant \mu\in \bZ_{\leqslant 0}^{Q_0}$. Pick $\underline{\bd}$, $\underline{\bd}'$ such that $$\bd_{\In}=\bd'_{\In},\,\,\,\bd_{\Out}-\bd_{\In}=\mu,\,\,\, \bd'_{\Out}-\bd'_{\In}=\mu'.$$ 
Then $\cM(\underline{\bd})$ is a vector bundle on $\cM(\underline{\bd}')$ by adding $\bd_{\Out}-\bd'_{\Out}$ many copies of out-going framings to $\cM(\underline{\bd}')$, that is $\bigoplus_{i\in Q_0}\Hom(\mathsf V_i,\bC^{\bd_{\Out,i}-\bd'_{\Out,i}})$. Let $\pi\colon \cM(\underline{\bd})\to \cM(\underline{\bd}')$ be the bundle map. Take a framed potential $\sw^{\mathrm{fr}}$ on $\cM(\underline{\bd}')$ and its pullback to $\cM(\underline{\bd})$ is still denoted by $\sw^{\mathrm{fr}}$. 
Then we have the pullback isomorphism: 
\begin{align*}
    \pi^*\colon \cH^{\sW^{\mathrm{fr}}}_{\underline{\bd}',\sA^{\mathrm{fr}}}\xrightarrow{\cong} \cH^{\sW^{\mathrm{fr}}}_{\underline{\bd},\sA^{\mathrm{fr}}}.
\end{align*}
Consider the following auxiliary $\bC^*_z$ action: it acts trivially on $\cM(\underline{\bd}')$ and scales the fibers of the vector bundle $\cM(\underline{\bd})$ with weight $1$. Then $$H^{\sT_0\times \sA^{\mathrm{fr}}\times \bC^*_z}(\cM(\underline{\bd}'),\sw^{\mathrm{fr}})\cong \cH^{\sW^{\mathrm{fr}}}_{\underline{\bd}',\sA^{\mathrm{fr}}}\otimes\bC[z]=: \cH^{\sW^{\mathrm{fr}}}_{\underline{\bd}',\sA^{\mathrm{fr}}}[z], $$  
$$H^{\sT_0\times \sA^{\mathrm{fr}}\times \bC^*_z}(\cM(\underline{\bd}),\sw^{\mathrm{fr}})\cong \cH^{\sW^{\mathrm{fr}}}_{\underline{\bd},\sA^{\mathrm{fr}}}\otimes\bC[z]=: \cH^{\sW^{\mathrm{fr}}}_{\underline{\bd},\sA^{\mathrm{fr}}}[z],$$ and $\pi^*\colon \cH^{\sW^{\mathrm{fr}}}_{\underline{\bd}',\sA^{\mathrm{fr}}}[z]\to \cH^{\sW^{\mathrm{fr}}}_{\underline{\bd},\sA^{\mathrm{fr}}}[z]$ is an isomorphism whose inverse  is $i^*$, where $i\colon \cM(\underline{\bd}')\hookrightarrow \cM(\underline{\bd})$ is the zero section.

Let $m\in \End^{\bZ/2}(\cH_{\mathfrak{c},\loc})[u]$, and $\mathsf E(m)_z$ and $\mathsf E(m)'_z$ be the corresponding operators on $\cH^{\sW^{\mathrm{fr}}}_{\underline{\bd},\sA^{\mathrm{fr}},\loc}[z]$ and $\cH^{\sW^{\mathrm{fr}}}_{\underline{\bd}',\sA^{\mathrm{fr}},\loc}[z]$ respectively. We note that $R$-matrix on $\cH_{\mathfrak{c},\loc}\otimes\cH^{\sW^{\mathrm{fr}}}_{\underline{\bd}',\sA^{\mathrm{fr}},\loc}[z]$ is constant in $z$ because $\bC^*_z$ acts trivially; therefore $$\mathsf E(m)'_z=\mathsf E(m)'_{z=0}\:.$$
\begin{Lemma}\label{lem shift map}
If $\mathcal C$ is admissible, then $i^*\mathsf E(m)_z\pi^*\in \prod \End^{\bZ/2}_{\bC(\mathsf t_0)[\mathsf a^{\mathrm{fr}}]}(\cH^{\sW^{\mathrm{fr}}}_{\underline{\bd}',\sA^{\mathrm{fr}},\loc})[z]$ is in the subalgebra $\mathsf Y_{\mu'}(Q,\sW,\mathcal C)[z]$. 
\end{Lemma}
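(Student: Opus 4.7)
The plan is to recognize $\pi^*$ as (a sign-twist of) the stable envelope for an auxiliary $\bC^*_z$-action, and then use the triangle lemma of Theorem~\ref{thm AFSQV} to factor the $R$-matrix that defines $\mathsf E(m)_z$. Consider the $\bC^*_z$-action on $\cM(\underline{\bd})$ scaling the fibers of $\pi\colon\cM(\underline{\bd})\to\cM(\underline{\bd}')$ with weight $z$. Its fixed locus is precisely the zero section $\cM(\underline{\bd}')$, with normal bundle $\bigoplus_{i\in Q_0}\Hom(\mathsf V_i,\bC^{\bd_{\Out,i}-\bd'_{\Out,i}})\otimes\bC_z$ of pure weight $+z$. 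Choosing the chamber $\fC=\{z>0\}$, the attracting correspondence $\overline{\Attr}_{\fC}(\cM(\underline{\bd}'))$ is all of $\cM(\underline{\bd})$ and the repelling part $N^-$ of the normal bundle is empty, so $e(N^-)=1$. It follows that the $\bC^*_z$-stable envelope $\Stab^{\bC^*_z}_{\fC,\epsilon}\colon\cH^{\sW^{\mathrm{fr}}}_{\underline{\bd}',\sA^{\mathrm{fr}}}[z]\to\cH^{\sW^{\mathrm{fr}}}_{\underline{\bd},\sA^{\mathrm{fr}}}[z]$ coincides with $\pi^*$ up to an explicit $\bv$-dependent sign coming from the polarization $\mathsf P$, and similarly $i^*$ coincides with $(\Stab^{\bC^*_z}_{\fC,\epsilon})^{-1}$ up to the same sign.

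Next, view $\underline{\bd}=\underline{\bd}'+\underline{\bd}''$ with $\underline{\bd}''=(0,\bd_{\Out}-\bd'_{\Out})$ carrying the $\bC^*_z$-weight $z$. Because $\underline{\bd}''_{\In}=0$, cyclic stability forces $\cM(\bv,\underline{\bd}'')=\emptyset$ unless $\bv=\mathbf{0}$, so the corresponding state space collapses to the one-dimensional vacuum $\cH^{\sW^{\mathrm{fr}}}_{\underline{\bd}''}[z]=\bC(\mathsf t_0)[z]\cdot|\mathbf{0}\rangle$. Applying the triangle lemma of Theorem~\ref{thm AFSQV} to the framing-torus factorization $\sA^{\mathrm{fr}}\times\bC^*_z$ yields
\begin{align*}
(\id_{\cH_{\mathfrak{c}}}\otimes\Stab^{\bC^*_z}_{\fC,\epsilon})^{-1}\circ R^{\mathrm{sup}}_{\cH_{\mathfrak{c}},\,\cH^{\sW^{\mathrm{fr}}}_{\underline{\bd}}}(u)\circ(\id_{\cH_{\mathfrak{c}}}\otimes\Stab^{\bC^*_z}_{\fC,\epsilon})=S_{\mathfrak{c}}(u,z)\cdot R^{\mathrm{sup}}_{\cH_{\mathfrak{c}},\,\cH^{\sW^{\mathrm{fr}}}_{\underline{\bd}'}}(u),
\end{align*}
where $S_{\mathfrak{c}}(u,z)\in\End^{\bZ/2}(\cH_{\mathfrak{c},\loc})[[u^{-1}]][z]$ is the $R$-matrix braiding $\cH_{\mathfrak{c}}$ with the vacuum state space $\cH^{\sW^{\mathrm{fr}}}_{\underline{\bd}''}$. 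By Theorem~\ref{thm e f h as R matrix elements} applied with $\underline{\bd}''$, only the Gauss factor $G(u)$ in the Gauss decomposition survives (the $E$ and $F$ parts being trivial when one factor is a vacuum), and one computes that $S_{\mathfrak{c}}(u,z)$ is diagonal on the $\bZ^{Q_0}$-graded pieces of $\cH_{\mathfrak{c}}$, with entries polynomial in $z$ and in $u^{-1}$ of bounded degrees controlled by $\bd_{\Out}-\bd'_{\Out}$.

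Substituting the above factorization into the defining formula~\eqref{op E(m)} for $\mathsf E(m)_z$ and conjugating by $\pi^*$ yields, up to the $\bv$-dependent sign identified in the first paragraph,
\begin{align*}
i^*\,\mathsf E(m)_z\,\pi^*\;=\;\underset{u\to\infty}{\Res}\,\str_{\cH_{\mathfrak{c},\loc}}\!\left((m\cdot S_{\mathfrak{c}}(u,z)\otimes\id)\cdot R^{\mathrm{sup}}_{\cH_{\mathfrak{c}},\,\cH^{\sW^{\mathrm{fr}}}_{\underline{\bd}'}}(u)\right).
\end{align*}
Since $m\in\End^{\bZ/2}(\cH_{\mathfrak{c},\loc})[u]$ is polynomial in $u$ and $S_{\mathfrak{c}}(u,z)$ starts from $\id$ with only negative powers of $u$ and polynomial coefficients in $z$, the product expands as $m\cdot S_{\mathfrak{c}}(u,z)=\sum_{k\geqslant 0}\tilde m_k(u)\,z^k$ with only finitely many $k$ contributing to the residue at $u=\infty$; each contribution is an element of $\mathsf Y_{\mu'}(Q,\sW,\mathcal C)$ of the form $\mathsf E(\tilde m_k^+)$ for some polynomial $\tilde m_k^+\in\End^{\bZ/2}(\cH_{\mathfrak{c},\loc})[u]$, so the final answer is a $z$-polynomial combination of such generators. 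The main obstacle will be establishing the triangle-lemma factorization above in a setting that strictly splits off a vacuum state space, and verifying the polynomiality of $S_{\mathfrak{c}}(u,z)$ in $z$; the admissibility of $\mathcal C$ is used only through Lemma~\ref{lem Y^0 generators} (to ensure that finitely many admissible auxiliary data suffice to present the generators), after which the remaining steps reduce to a formal manipulation of supertraces and residues.
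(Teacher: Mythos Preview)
Your approach hinges on the factorization $(\id\otimes\Stab^{\bC^*_z}_{\fC})^{-1}R^{\mathrm{sup}}_{\underline{\bd}}(u)(\id\otimes\Stab^{\bC^*_z}_{\fC})=S_{\mathfrak{c}}(u,z)\cdot R^{\mathrm{sup}}_{\underline{\bd}'}(u)$ with $S_{\mathfrak{c}}$ acting only on the auxiliary factor, and you rightly flag this as the main obstacle. Unfortunately the obstacle is genuine: the triangle lemma (Theorem~\ref{thm AFSQV}) and the coproduct factorization of \S\ref{sect on coprod} both require the splitting torus to be pseudo-self-dual, which in practice means each piece of the framing decomposition must be antidominant (see Example~\ref{ex root R-matrix} and the remark following Theorem~\ref{thm AFSQV}). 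Your $\underline{\bd}''=(0,\bd_{\Out}-\bd'_{\Out})$ is strictly dominant, so neither result applies, and the claimed factorization with $S_{\mathfrak{c}}$ on the auxiliary side alone is in fact false.

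The paper avoids the triangle lemma here entirely. It works piece by piece on the Gauss decomposition $R^{\mathrm{sup}}=FGE$, using the compatibility of stable envelopes with attracting and repelling vector bundles (\cite[Props.~8.2,~8.3]{COZZ}) to compute $i^*\Stab_{u\gtrless 0,\epsilon}\pi^*$ directly. The outcome is that while $i^*F(u)_z\pi^*$ and $i^*G(u)_z\pi^*$ are related to $F'(u)$ and $G'(u)$ by operators on the auxiliary space (a sign $(-1)^\sharp$ and a diagonal $D(u)_z$, which is essentially your would-be $S_{\mathfrak{c}}$), the $E$-part transforms by a \emph{state-space} conjugation: $i^*E(u)_z\pi^*=H_z\,E'(u)\,H_z^{-1}$, where $H_z$ is a Chern polynomial in the tautological bundles $\mathsf V_{2,i}$ on the state space. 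This is precisely what an auxiliary-side operator cannot capture. Admissibility then enters essentially rather than incidentally: Lemma~\ref{lem Y^0 generators} is needed to place $H_z$ in $\mathsf Y^0_{\mu'}(Q,\sW,\mathcal C)[z]$, so that $H_z E'(u)H_z^{-1}$ a priori lies in $\mathsf Y_{\mu'}(Q,\sW,\mathcal C)(\!(z^{-1})\!)$; the polynomiality of $i^*E(u)_z\pi^*$ in $z$ then forces it into $\mathsf Y_{\mu'}(Q,\sW,\mathcal C)[z]$.
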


\begin{proof}
It is enough to prove the lemma for $i^*\mathsf E^\pm(m)_z\pi^*$ and $i^*\mathsf E^0(m)_z\pi^*$. Take $\mathfrak{c}=(\ba,\sT_0\curvearrowright\cM(\ba),\sw^{\mathrm{fr}})\in \mathcal C$. Let $$R^{\mathrm{sup}}(u)_z=\Sigma\cdot\Stab_{u<0,\epsilon}^{-1}\Stab_{u>0,\epsilon},\,\,\,R^{\mathrm{sup}}(u)'_z=\Sigma\cdot(\Stab'_{u<0,\epsilon})^{-1}\Stab'_{u>0,\epsilon}$$ be the $R$-matrices acting on $\cH_{\mathfrak{c},\loc}\otimes \cH^{\sW^{\mathrm{fr}}}_{\underline{\bd},\sA^{\mathrm{fr}},\loc}[z]$ and $\cH_{\mathfrak{c},\loc}\otimes \cH^{\sW^{\mathrm{fr}}}_{\underline{\bd}',\sA^{\mathrm{fr}},\loc}[z]$ respectively. Note that the vector bundle $\mathcal V=\bigoplus_{i\in Q_0}\Hom(\mathsf V_i,\bC^{\bd_{\Out,i}-\bd'_{\Out,i}})$ is repelling in the chamber $u>0$. By \cite[Props.~8.2,~8.3]{COZZ}, we have
\begin{align*}
i^*\Stab_{u<0,\epsilon}\pi^*=\Stab'_{u<0,\epsilon}\cdot \:(-1)^{\sharp},\quad i^*\Stab_{u>0,\epsilon}\pi^*=e^{\sT_0\times\bC^*_u\times \bC^*_z}(\mathcal V)\cdot\Stab'_{u>0,\epsilon}\cdot \:e^{\sT_0\times \bC^*_z}(\mathcal V^{\mathrm{fix}})^{-1},
\end{align*}
where $(-1)^{\sharp}$ on the component $\cM(\bv_1,\ba)\times \cM(\bv_2,\underline{\bd})$ is $(-1)^{\bv_1\cdot(\mu-\mu')}$, and $\mathcal V^{\mathrm{fix}}$ is the $\bC^*_u$-fixed part of the restriction of $\cV$ to $\bC^*_u$-fixed components.

Write the Gauss decompositions $R^{\mathrm{sup}}(u)_z=F(u)_zG(u)_zE(u)_z$ and $R^{\mathrm{sup}}(u)'_z=F(u)'_zG(u)'_zE(u)'_z$, as in \eqref{gauss decomp} with extra $\bC^*_z$ equivariance. Then we have
\begin{align*}
    i^*F(u)_z\pi^*=(-1)^{\sharp}\cdot F(u)'_z\cdot(-1)^{\sharp},\quad i^*G(u)_z\pi^*=G(u)'_z\cdot D(u)_z,\quad i^*E(u)_z\pi^*=H_z\cdot E(u)'_z\cdot H^{-1}_z,
\end{align*}
where $D(u)$ and $H$ are the following diagonal operators:
\begin{align*}
    D(u)_z\bigg|_{\cM(\bv_1,\ba)\times\cM(\bv_2,\underline{\bd})}&=(u-z)^{\bv_1\cdot(\mu-\mu')}c_{-1/(u-z)}\left(\bigoplus_{i\in Q_0}\Hom(\mathsf V_{1,i},\bC^{\bd_{\Out,i}-\bd'_{\Out,i}})\right),\\
    H_z\bigg|_{\cM(\bv_1,\ba)\times\cM(\bv_2,\underline{\bd})}&=z^{\bv_2\cdot(\mu-\mu')}c_{1/z}\left(\bigoplus_{i\in Q_0}\Hom(\mathsf V_{2,i},\bC^{\bd_{\Out,i}-\bd'_{\Out,i}})\right),
\end{align*}
where $c_{(-)}(\cdots)$ denotes the $(\sT_0\times \sA^{\text{fr}})$-equivariant Chern polynomial with variable $(-)$. 

Obviously matrix elements of $i^*F(u)_z\pi^*$ are contained in $\mathsf Y_{\mu'}(Q,\sW,\mathcal C)[z]$.
$D(u)_z$ only acts on $\cH_{\mathfrak{c}}$, so the matrix elements of $i^*G(u)_z\pi^*$ are contained in $\mathsf Y_{\mu'}(Q,\sW,\mathcal C)[z]$. $H_z$ is a polynomial in $z$ with coefficients given by characteristic classes of tautological bundles $\{\mathsf V_j\}_{j\in Q_0}$, so $H_z\in\mathsf Y^0_{\mu'}(Q,\sW,\mathcal C)[z]$ by admissibility of $\mathcal C$ and Lemma \ref{lem Y^0 generators}. Then it follows that the matrix elements of $H_z\cdot E(u)'_z\cdot H^{-1}_z$ are contained in $\mathsf Y_{\mu'}(Q,\sW,\mathcal C)(\!(z^{-1})\!)$. Since $$H_z\cdot E(u)'_z\cdot H^{-1}_z=i^*E(u)_z\pi^*,$$ and the latter is a polynomial in $z$, the matrix elements of $i^*E(u)_z\pi^*$ must be in $\mathsf Y_{\mu'}(Q,\sW,\mathcal C)[z]$. 
\end{proof}

\begin{Definition}
Fix $\mu'\leqslant \mu\in \bZ_{\leqslant 0}^{Q_0}$.
Assume that $\mathcal C$ is admissible. Define \textit{parametrized shift homomorphism} $$\sS_{\mu,\mu';z}\colon \mathsf Y_{\mu}(Q,\sW,\mathcal C)\to \mathsf Y_{\mu'}(Q,\sW,\mathcal C)[z]$$ by the assignment $\mathsf E(m)\mapsto \left(i^*\mathsf E(m)_{z}\pi^*\right)$ in Lemma \ref{lem shift map}, and define \textit{shift homomorphism} $$\sS_{\mu,\mu'}:=\sS_{\mu,\mu';z}\big|_{z=0}\colon \mathsf Y_{\mu}(Q,\sW,\mathcal C)\to \mathsf Y_{\mu'}(Q,\sW,\mathcal C).$$
Define $\mathsf Y_{\mu,\mu'}(Q,\sW,\mathcal C)$ to be the image of $\mathsf Y_{\mu}(Q,\sW,\mathcal C)$ under the projection
\begin{align*}
    \prod\End^{\bZ/2}_{\bC(\mathsf t_0)[\mathsf a^{\mathrm{fr}}]}(\cH^{\sW^{\mathrm{fr}}}_{\underline{\bd},\mathsf A^{\mathrm{fr}},\mathrm{loc}})\twoheadrightarrow{\prod}'\End^{\bZ/2}_{\bC(\mathsf t_0)[\mathsf a^{\mathrm{fr}}]}(\cH^{\sW^{\mathrm{fr}}}_{\underline{\bd},\mathsf A^{\mathrm{fr}},\mathrm{loc}}),
\end{align*}
where the first product is taken for all state spaces with $\bd_{\Out}-\bd_{\In}=\mu$, and the second product is taken for those state spaces coming from adding out-going arrows to those with $\bd'_{\Out}-\bd'_{\In}=\mu'$.
\end{Definition}

\begin{Lemma}
$\mathsf Y_{\mu,\mu'}(Q,\sW,\mathcal C)$ is isomorphic to the image of $\sS_{\mu,\mu'}\colon \mathsf Y_{\mu}(Q,\sW,\mathcal C)\to \mathsf Y_{\mu'}(Q,\sW,\mathcal C)$.
\end{Lemma}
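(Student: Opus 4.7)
The plan is to realize both $\mathsf Y_{\mu,\mu'}(Q,\sW,\mathcal C)$ and $\mathrm{image}(\sS_{\mu,\mu'})$ as quotients of $\mathsf Y_\mu(Q,\sW,\mathcal C)$ by the same kernel. Writing $\mathrm{pr}\colon \mathsf Y_\mu \twoheadrightarrow \mathsf Y_{\mu,\mu'}$ for the restricted projection, one has tautologically $\mathsf Y_{\mu,\mu'}=\mathsf Y_\mu/\ker(\mathrm{pr})$, and by the first isomorphism theorem $\mathrm{image}(\sS_{\mu,\mu'})=\mathsf Y_\mu/\ker(\sS_{\mu,\mu'})$, so the assertion reduces to the identity of ideals $\ker(\mathrm{pr})=\ker(\sS_{\mu,\mu'})$.

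The crucial step is to rewrite $\sS_{\mu,\mu'}$ component-wise as a conjugation by the vector-bundle pullback isomorphism. For each restricted $\underline{\bd}$ with corresponding $\underline{\bd}'$ and each framing torus $\sA^{\mathrm{fr}}$ on $\cM(\underline{\bd}')$ extended to $\cM(\underline{\bd})$ by the trivial action on the extra out-going framings, $\pi\colon \cM(\underline{\bd})\to \cM(\underline{\bd}')$ is an $\sA^{\mathrm{fr}}$-equivariant vector bundle with contractible fibers, so $\pi^*\colon \cH^{\sW^{\mathrm{fr}}}_{\underline{\bd}',\sA^{\mathrm{fr}}}\xrightarrow{\cong}\cH^{\sW^{\mathrm{fr}}}_{\underline{\bd},\sA^{\mathrm{fr}}}$ is an isomorphism with inverse $i^*$. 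Specializing $z=0$ in the parametrized shift of Lemma \ref{lem shift map} identifies $\cH^{\sW^{\mathrm{fr}}}_{\underline{\bd},\sA^{\mathrm{fr}}\times \bC^*_z}\big|_{z=0}$ with $\cH^{\sW^{\mathrm{fr}}}_{\underline{\bd},\sA^{\mathrm{fr}}}$ and the operator $x_{\underline{\bd},\sA^{\mathrm{fr}}\times\bC^*_z}\big|_{z=0}$ with $x_{\underline{\bd},\sA^{\mathrm{fr}}}$, yielding
\begin{equation*}
\sS_{\mu,\mu'}(x)_{\underline{\bd}',\sA^{\mathrm{fr}}}=\bigl(i^*\, x_{\underline{\bd},\sA^{\mathrm{fr}}\times \bC^*_z}\,\pi^*\bigr)\big|_{z=0}=(\pi^*)^{-1}\cdot x_{\underline{\bd},\sA^{\mathrm{fr}}}\cdot \pi^*.
\end{equation*}

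With this formula in hand both kernel inclusions become immediate. The inclusion $\ker(\mathrm{pr})\subseteq \ker(\sS_{\mu,\mu'})$ is clear since $\sS_{\mu,\mu'}(x)$ depends only on the restricted-product data. The reverse inclusion $\ker(\sS_{\mu,\mu'})\subseteq \ker(\mathrm{pr})$ follows because $(\pi^*)^{-1} x_{\underline{\bd},\sA^{\mathrm{fr}}}\pi^*=0$ combined with the isomorphism property of $\pi^*$ forces $x_{\underline{\bd},\sA^{\mathrm{fr}}}=0$ for each restricted pair $(\underline{\bd},\sA^{\mathrm{fr}})$. The induced algebra map $\bar{\sS}\colon \mathsf Y_{\mu,\mu'}\xrightarrow{\sim}\mathrm{image}(\sS_{\mu,\mu'})$ is then the desired isomorphism.

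The main technical obstacle will be handling framing tori $\sA^{\mathrm{fr}}$ in the restricted product whose action on the extra out-going framings is not trivial, since the displayed conjugation formula directly controls only the trivially-extended case. This is addressed by torus restriction: any such $\sA^{\mathrm{fr}}$ embeds into an enlarged torus of the form $\sA^{\mathrm{fr}}_{\mathrm{triv}}\times \bC^*_z$ with $\bC^*_z$ scaling the extra fibers, and $x_{\underline{\bd},\sA^{\mathrm{fr}}}$ is then recovered from $x_{\underline{\bd},\sA^{\mathrm{fr}}_{\mathrm{triv}}\times \bC^*_z}$ by specialization of equivariant parameters. Functoriality of $\mathrm{pr}$ and of the $\pi^*$-isomorphism under this specialization, together with coherence of the $z=0$ restriction of $i^*$, $\pi^*$, and $x$, reduces the injectivity question to the trivially-extended case already settled above.
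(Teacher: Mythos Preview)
Your approach is essentially the same as the paper's, just phrased in terms of kernels rather than factorization. The paper observes directly that $M\mapsto i^*M\pi^*$ is an isomorphism of endomorphism algebras, so that $\sS_{\mu,\mu'}$ factors as the projection $\prod\twoheadrightarrow\prod'$ followed by the product of these isomorphisms $\prod'\cong\prod$; hence $\mathrm{image}(\sS_{\mu,\mu'})$ is isomorphic to $\mathrm{image}(\mathrm{pr})=\mathsf Y_{\mu,\mu'}$. Your kernel comparison $\ker(\mathrm{pr})=\ker(\sS_{\mu,\mu'})$ and the conjugation formula $\sS_{\mu,\mu'}(x)=(\pi^*)^{-1}x\,\pi^*$ encode precisely this factorization.

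Your final paragraph about framing tori acting nontrivially on the extra out-going arrows is more caution than the paper exercises. The paper's bijection $\prod'\cong\prod$ implicitly reads the restricted product as indexed by state spaces obtained from $\mu'$-data via the specific construction (pull back $\sW^{\mathrm{fr}}$, extend $\sA^{\mathrm{fr}}$ trivially), in which case the index sets match tautologically and no extra argument is needed. If one adopts your broader reading, your proposed fix of embedding $\sA^{\mathrm{fr}}$ into $\sA^{\mathrm{fr}}_{\mathrm{triv}}\times\bC^*_z$ does not quite work as stated when the weights on the various extra arrows are not proportional; one would instead enlarge to $\sA^{\mathrm{fr}}_{\mathrm{triv}}\times(\bC^*)^{m}$ with one factor per extra arrow and then specialize.
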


\begin{proof}
Since $M\mapsto i^*M\pi^*$ induces an isomorphism $ \End^{\bZ/2}_{\bC(\mathsf t_0)[\mathsf a^{\mathrm{fr}}]}(\cH^{\sW^{\mathrm{fr}}}_{\underline{\bd},\mathsf A^{\mathrm{fr}},\mathrm{loc}})\cong \End^{\bZ/2}_{\bC(\mathsf t_0)[\mathsf a^{\mathrm{fr}}]}(\cH^{\sW^{\mathrm{fr}}}_{\underline{\bd}',\mathsf A^{\mathrm{fr}},\mathrm{loc}})$, we can rewrite the map $\sS_{\mu,\mu'}$ as the composition (restricted to $\mathsf Y_{\mu}(Q,\sW,\mathcal C)$) 
\begin{equation}\label{equ on endsurj}
    \prod\End^{\bZ/2}_{\bC(\mathsf t_0)[\mathsf a^{\mathrm{fr}}]}(\cH^{\sW^{\mathrm{fr}}}_{\underline{\bd},\mathsf A^{\mathrm{fr}},\mathrm{loc}})\twoheadrightarrow{\prod}'\End^{\bZ/2}_{\bC(\mathsf t_0)[\mathsf a^{\mathrm{fr}}]}(\cH^{\sW^{\mathrm{fr}}}_{\underline{\bd},\mathsf A^{\mathrm{fr}},\mathrm{loc}})\cong \prod \End^{\bZ/2}_{\bC(\mathsf t_0)[\mathsf a^{\mathrm{fr}}]}(\cH^{\sW^{\mathrm{fr}}}_{\underline{\bd}',\mathsf A^{\mathrm{fr}},\mathrm{loc}})\:.
\end{equation}
Then it follows that $\mathsf Y_{\mu,\mu'}(Q,\sW,\mathcal C)$ is isomorphic to the image of $\sS_{\mu,\mu'}$.
\end{proof}

\begin{Definition}\label{def of wrong-way shift}
Define the \textit{wrong-way shift homomorphism }
$$\sS^{\mu,\mu'}_z\colon \mathsf Y_{\mu'}(Q,\sW,\mathcal C)\to \mathsf Y_{\mu,\mu'}(Q,\sW,\mathcal C)(\!(z^{-1})\!)$$ 
by the assignment $\mathsf E(m)'\mapsto \pi^*\mathsf E(m)'i^*$, where we use the identification in \eqref{equ on endsurj}.
\end{Definition}

\begin{Remark}\label{rmk form of shift}
The explicit formula for $\sS_{\mu,\mu';z}$ and $\sS^{\mu,\mu'}_z$ on the triangular generators can be read from the proof of Lemma \ref{lem shift map} 
(below all Chern characters/polynomials are  $(\sT_0\times \sA^{\text{fr}})$-equivariant):
\begin{align*}
    \sS_{\mu,\mu';z}(\mathsf E^-(m))&=(-1)^{\deg(m)\cdot (\mu-\mu')}\mathsf E^-(m)', & \sS^{\mu,\mu'}_z(\mathsf E^-(m)')&=(-1)^{\deg(m)\cdot (\mu-\mu')}\mathsf E^-(m)_z,\\
    \sS_{\mu,\mu';z}(\mathsf E^+(m))&=z^{\deg(m)\cdot(\mu-\mu')}\widetilde{H}_z\:\mathsf E^+(m)'\:\widetilde{H}_z^{-1}, & \sS^{\mu,\mu'}_z(\mathsf E^+(m)')&=z^{\deg(m)\cdot (\mu'-\mu)}\widetilde{H}_z^{-1}\:\mathsf E^+(m)_z\widetilde{H}_z,\\
    \sS_{\mu,\mu';z}(\mathrm{ch}_k(\mathsf V_i))&=\mathrm{ch}_k(\mathsf V_i), &  \sS^{\mu,\mu'}_z(\mathrm{ch}_k(\mathsf V_i))&=\mathrm{ch}_k(\mathsf V_i)\,,\\
    \sS_{\mu,\mu';z}(\mathrm{ch}_{k}(\mathsf D_{\mathsf{diff},i}&))=\mathrm{ch}_{k}(\mathsf D_{\mathsf{diff},i})-\frac{(\mu_i-\mu'_i)z^k}{k!}, & \sS^{\mu,\mu'}_z(\mathrm{ch}_{k}(\mathsf D_{\mathsf{diff},i}&))=\mathrm{ch}_{k}(\mathsf D_{\mathsf{diff},i})+\frac{(\mu_i-\mu'_i)z^k}{k!}\,,
\end{align*}
where $\mathsf D_{\mathsf{diff},i}:=\mathsf D_{\In,i}-\mathsf D_{\Out,i}$, $\deg(m)$ is the $\bZ^{Q_0}$ grading of $m$, $$\widetilde{H}_z=\prod_{i\in Q_0}c_{-1/z}(\mathsf V_i)^{\mu_i-\mu'_i}\:,$$ and $(\cdots)_{z^{n}}$ means taking $z^{n}$ coefficient in the power series. For example, the operators $g_i(u),h_i(u),e_i(u),f_i(u)$ in the Gauss decomposition \eqref{gauss decomp of R} are mapped under $\sS_{\mu,\mu';z}$ as follows:
\begin{align*}
   g_i(u)\mapsto g_i(u),\quad h_i(u)\mapsto (u-z)^{\mu_i-\mu'_i}h_i(u),\quad e_i(u)\mapsto [(z-u)^{\mu_i-\mu'_i}e_i(u)]_{u^{<0}},\quad f_i(u)\mapsto (-1)^{\mu_i-\mu'_i}f_i(u),
\end{align*}
where $[\cdots]_{u^{<0}}$ means taking the negative power in $u$ part in the power series expansion. On the other hand, $s_z^{\mu,\mu'}$ maps these generators as follows
\begin{align*}
    g_i(u)\mapsto g_i(u),\quad h_i(u)\mapsto (u-z)^{\mu'_i-\mu_i}h_i(u),\quad e_i(u)\mapsto [(z-u)^{\mu'_i-\mu_i}e_i(u)]_{u^{<0}},\quad f_i(u)\mapsto (-1)^{\mu'_i-\mu_i}f_i(u).
\end{align*}
Here we expand $(z-u)^{\mu'_i-\mu_i}e_i(u)$ in the region $1<|u|<|z|$, and expand $(u-z)^{\mu'_i-\mu_i}h_i(u)$ in the region $1<|z|<|u|$.
\end{Remark}

\begin{Lemma}\label{lem shift generation lemma}
Let $\mathcal C$ be admissible, and $\mu'\leqslant \mu\in \bZ_{\leqslant 0}^{Q_0}$. Then $\mathsf Y_{\mu'}(Q,\sW,\mathcal C)$ is generated by $$\bigcup_{a\in \bC}\sS_{\mu,\mu';z}(\mathsf Y_{\mu}(Q,\sW,\mathcal C))\big|_{z=a}$$ as a $\bC(\mathsf t_0)$-algebra.
\end{Lemma}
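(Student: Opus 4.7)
\medskip

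\noindent\textbf{Proof plan.} Let $B\subset \mathsf Y_{\mu'}(Q,\sW,\mathcal C)$ denote the $\bC(\mathsf t_0)$-subalgebra generated by $\bigcup_{a\in\bC}\sS_{\mu,\mu';z}(\mathsf Y_\mu(Q,\sW,\mathcal C))\big|_{z=a}$. By Proposition \ref{prop gen by deg 0 and Cartan}, the target $\mathsf Y_{\mu'}(Q,\sW,\mathcal C)$ is generated by the $u$-constant raising/lowering generators $\{\mathsf E^\pm(m)\,|\,m\text{ constant in }u\}$ together with the Cartan subalgebra $\mathsf Y^0_{\mu'}(Q,\sW,\mathcal C)$. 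It therefore suffices to show that each of these generators lies in $B$, which we do case by case using the explicit formulas for $\sS_{\mu,\mu';z}$ recorded in Remark \ref{rmk form of shift}.

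First I would handle the Cartan part. By Lemma \ref{lem Y^0 generators} (applied with the shift $\mu'$), admissibility of $\mathcal C$ reduces the inclusion $\mathsf Y^0_{\mu'}(Q,\sW,\mathcal C)\subseteq B$ to exhibiting $\mathrm{ch}_k(\mathsf V_i)$ and $\mathrm{ch}_{k+1}(\mathsf D_{\In,i}-\mathsf D_{\Out,i})$ as elements of $B$ for every $i\in Q_0$ and $k\geqslant 0$. The same lemma applied with shift $\mu$ places these classes in $\mathsf Y_\mu(Q,\sW,\mathcal C)$, and then Remark \ref{rmk form of shift} gives
\begin{align*}
\sS_{\mu,\mu';z}(\mathrm{ch}_k(\mathsf V_i))\big|_{z=a}&=\mathrm{ch}_k(\mathsf V_i),\\
\sS_{\mu,\mu';z}(\mathrm{ch}_k(\mathsf D_{\mathsf{diff},i}))\big|_{z=0}&=\mathrm{ch}_k(\mathsf D_{\mathsf{diff},i}),
\end{align*}
so both lie in $B$. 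For the lowering generators, the formula $\sS_{\mu,\mu';z}(\mathsf E^-(m))=(-1)^{\deg(m)\cdot(\mu-\mu')}\mathsf E^-(m)'$ is independent of $z$, so evaluating at $z=0$ puts $\mathsf E^-(m)'$ in $B$ immediately (up to a sign).

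The only step requiring a small argument is the raising generator. By Lemma \ref{lem shift map}, $P(z):=\sS_{\mu,\mu';z}(\mathsf E^+(m))$ is a polynomial in $z$ whose coefficients lie in $\mathsf Y_{\mu'}(Q,\sW,\mathcal C)$; explicitly,
\[P(z)=z^{\deg(m)\cdot(\mu-\mu')}\,\widetilde H_z\,\mathsf E^+(m)'\,\widetilde H_z^{-1}.\]
Since $\widetilde H_z=\prod_{i}c_{-1/z}(\mathsf V_i)^{\mu_i-\mu'_i}$ is a polynomial in $z^{-1}$ with constant term $1$, so is $\widetilde H_z^{-1}$, and hence the asymptotic behavior of $P(z)$ as $z\to\infty$ is $z^{\deg(m)\cdot(\mu-\mu')}\mathsf E^+(m)'+O(z^{\deg(m)\cdot(\mu-\mu')-1})$. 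In particular, the leading coefficient of $P(z)$ in $z$ is exactly $\mathsf E^+(m)'$. Since the degree of $P(z)$ is bounded explicitly (by $\deg(m)\cdot(\mu-\mu')$), evaluating at any $\deg(m)\cdot(\mu-\mu')+1$ distinct values $a\in\bC$ and applying the Vandermonde/Lagrange interpolation formula expresses every coefficient of $P(z)$ as a $\bC$-linear combination of the values $P(a)\in B$; in particular $\mathsf E^+(m)'\in B$.

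Combining the three cases above with Proposition \ref{prop gen by deg 0 and Cartan} yields $B=\mathsf Y_{\mu'}(Q,\sW,\mathcal C)$, which is the claim. No genuine obstacle appears: the proof is essentially a repackaging of the explicit formulas in Remark \ref{rmk form of shift}, the polynomiality assertion from Lemma \ref{lem shift map}, and the reduction to $u$-constant and Cartan generators provided by Proposition \ref{prop gen by deg 0 and Cartan}. The single point requiring minor care is the Lagrange interpolation trick used to isolate the leading coefficient of $P(z)$, which crucially relies on $\widetilde H_z$ being normalized so that $\widetilde H_z\to 1$ as $z\to\infty$.
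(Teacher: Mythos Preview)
Your proof is correct and takes essentially the same approach as the paper: both reduce to the generators of Proposition~\ref{prop gen by deg 0 and Cartan}, obtain $\mathsf E^-(m)'$ and the Cartan classes from $\sS_{\mu,\mu';z}\big|_{z=0}$ via the formulas in Remark~\ref{rmk form of shift}, and recover $\mathsf E^+(m)'$ as the leading coefficient of the polynomial $\sS_{\mu,\mu';z}(\mathsf E^+(m))$ via evaluation at sufficiently many $a\in\bC$. Your version spells out the Lagrange interpolation step and the normalization $\widetilde H_z\to 1$ explicitly, while the paper's proof simply asserts these points.
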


\begin{proof}
From the explicit formula of $\sS_{\mu,\mu';z}$ in Remark \ref{rmk form of shift}, we see that $\sE^-(m)'$, $\mathrm{ch}_k(\mathsf V_i)$, and $\mathrm{ch}_k(\mathsf D_{\In,i}-\mathsf D_{\Out,i})$ is contained in the image of $\sS_{\mu,\mu';z}\big|_{z=0}$. Moreover, $\sE^+(m)'$ appears as the leading coefficient of $\sS_{\mu,\mu';z}(\sE^+(m))$, so it can be generated from $\bigcup_{a\in \bC}\sS_{\mu,\mu';z}(\sE^+(m))\big|_{z=a}$. Since $\sE^\pm(m)'$, $\mathrm{ch}_k(\mathsf V_i)$, and $\mathrm{ch}_k(\mathsf D_{\In,i}-\mathsf D_{\Out,i})$ generate $\mathsf Y_{\mu'}(Q,\sW,\mathcal C)$ by Proposition \ref{prop gen by deg 0 and Cartan}, the lemma follows.
\end{proof}

\begin{Lemma}\label{lem wrong-way shift inj}
Let $\mathcal C$ be admissible, and $\mu'\leqslant \mu\in \bZ_{\leqslant 0}^{Q_0}$. Then the wrong-way shift homomorphism $\sS^{\mu,\mu'}_z$ is injective.
\end{Lemma}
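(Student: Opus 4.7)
My plan is a short componentwise argument. The key numerical observation is that the hypothesis $\mu'\leqslant \mu$ in $\bZ_{\leqslant 0}^{Q_0}$ guarantees that \emph{every} framing $\underline{\bd}'$ appearing in the indexing set of $\mathsf Y_{\mu'}(Q,\sW,\mathcal C)$ admits a valid lift $\underline{\bd}$. Indeed, for $\underline{\bd}'$ with $\bd'_{\Out}-\bd'_{\In}=\mu'$, the non-negativity $\bd'_{\Out}\geqslant 0$ forces $\bd'_{\In,i}\geqslant -\mu'_i\geqslant -\mu_i$, hence $\bd_{\Out,i}=\bd'_{\In,i}+\mu_i\geqslant 0$. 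So $\underline{\bd}$ is a legitimate $\mu$-shifted framing, the pair $(\underline{\bd}',\underline{\bd})$ enters the definition of $\sS^{\mu,\mu'}_z$ as in \S\ref{sec shift map}, and no $\underline{\bd}'$-component of an element of $\mathsf Y_{\mu'}(Q,\sW,\mathcal C)$ is missed by $\sS^{\mu,\mu'}_z$.

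Next, for each such pair $(\underline{\bd}',\underline{\bd})$ I will use that the bundle projection $\pi\colon \cM(\underline{\bd})\to \cM(\underline{\bd}')$ is $\bC^*_z$-equivariantly contractible onto its zero section $i$, together with the fact that $\sw^{\mathrm{fr}}$ on $\cM(\underline{\bd})$ is by construction the $\pi$-pullback of $\sw^{\mathrm{fr}}$ on $\cM(\underline{\bd}')$. Therefore
\begin{align*}
\pi^*\colon \cH^{\sW^{\mathrm{fr}}}_{\underline{\bd}',\sA^{\mathrm{fr}}}[z]\xrightarrow{\ \cong\ } \cH^{\sW^{\mathrm{fr}}}_{\underline{\bd},\sA^{\mathrm{fr}}}[z], \qquad i^*\colon \cH^{\sW^{\mathrm{fr}}}_{\underline{\bd},\sA^{\mathrm{fr}}}[z]\xrightarrow{\ \cong\ } \cH^{\sW^{\mathrm{fr}}}_{\underline{\bd}',\sA^{\mathrm{fr}}}[z]
\end{align*}
are mutually inverse $\bC[z]$-module isomorphisms on equivariant critical cohomology. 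Consequently, the componentwise assignment $M'\mapsto \pi^* M' i^*$ is a $\bC[z]$-algebra isomorphism, hence in particular injective, between the respective endomorphism algebras after $(\mathsf t_0)$-localization.

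Combining the two steps: if $\sS^{\mu,\mu'}_z(X)=0$ for some $X\in \mathsf Y_{\mu'}(Q,\sW,\mathcal C)$, then on every pair $(\underline{\bd}',\underline{\bd})$ the image $\pi^* X_{\underline{\bd}'} i^*$ vanishes, forcing $X_{\underline{\bd}'}=0$. By the first step this covers every framing $\underline{\bd}'$ in the indexing set of $\mathsf Y_{\mu'}(Q,\sW,\mathcal C)$, and since the latter embeds into the direct product of these endomorphism algebras, $X=0$.

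The hard part will be the justification in the second step that $\pi^*$ and $i^*$ remain mutually inverse at the level of equivariant \emph{critical} cohomology, not merely ordinary cohomology. This is not difficult but deserves a careful check: it reduces to the compatibility of the pullback potential with the $\bC^*_z$-equivariant linear contraction $\cM(\underline{\bd})\times \bA^1\to \cM(\underline{\bd})$, $(v,t)\mapsto tv$, which identifies $\Crit(\sw^{\mathrm{fr}}_{\cM(\underline{\bd})})$ with $\pi^{-1}\Crit(\sw^{\mathrm{fr}}_{\cM(\underline{\bd}')})$ and deformation-retracts the former onto the zero section.
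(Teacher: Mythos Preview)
Your proof is correct and follows essentially the same approach as the paper: both reduce injectivity of $\sS^{\mu,\mu'}_z$ to the fact that $M'\mapsto \pi^* M' i^*$ is a componentwise isomorphism on the ambient products of endomorphism algebras, which holds because $\pi^*$ and $i^*$ are mutually inverse on $\bC^*_z$-equivariant critical cohomology (already recorded in the setup of \S\ref{sec shift map}). Your Step~1 verifying that every $\underline{\bd}'$ admits a valid lift $\underline{\bd}$ makes explicit what the paper leaves implicit in asserting that the map between the two products is an \emph{isomorphism} rather than merely an injection.
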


\begin{proof}
This follows from the fact that
\begin{align*}
    {\prod}'\End^{\bZ/2}_{\bC(\mathsf t_0)[\mathsf a^{\mathrm{fr}}]}(\cH^{\sW^{\mathrm{fr}}}_{\underline{\bd},\mathsf A^{\mathrm{fr}},\mathrm{loc}})(\!(z^{-1})\!)\longrightarrow \prod\End^{\bZ/2}_{\bC(\mathsf t_0)[\mathsf a^{\mathrm{fr}}]}(\cH^{\sW^{\mathrm{fr}}}_{\underline{\bd}',\mathsf A^{\mathrm{fr}},\mathrm{loc}})(\!(z^{-1})\!),\qquad X\mapsto \pi^*Xi^*
\end{align*}
is an isomorphism.
\end{proof}

The following is obvious from definition:
\begin{Lemma}\label{lem closed under shift}
If $\mathcal C\subset\mathcal C'$, then the subalgebra $\mathsf Y_{\mu}(Q,\sW,\mathcal C)\subseteq \mathsf Y_{\mu}(Q,\sW,\mathcal C')$ is closed under parameterized shift and wrong-way shift homomorphisms, that is, the following diagrams commute
\begin{equation*}
\xymatrix{
\mathsf Y_{\mu}(Q,\sW,\mathcal C) \ar[r]^-{\sS_{\mu,\mu';z}} \ar@{^{(}->}[d] & \mathsf Y_{\mu'}(Q,\sW,\mathcal C)[z] \ar@{^{(}->}[d]\\
\mathsf Y_{\mu}(Q,\sW,\mathcal C') \ar[r]^-{\sS_{\mu,\mu';z}} & \mathsf Y_{\mu'}(Q,\sW,\mathcal C')[z]
}
\qquad
\xymatrix{
\mathsf Y_{\mu'}(Q,\sW,\mathcal C) \ar[r]^-{\sS^{\mu,\mu'}_z} \ar@{^{(}->}[d] & \mathsf Y_{\mu,\mu'}(Q,\sW,\mathcal C)(\!(z^{-1})\!) \ar@{^{(}->}[d]\\
\mathsf Y_{\mu'}(Q,\sW,\mathcal C') \ar[r]^-{\sS^{\mu,\mu'}_z} & \mathsf Y_{\mu,\mu'}(Q,\sW,\mathcal C')(\!(z^{-1})\!)
}
\end{equation*}
\end{Lemma}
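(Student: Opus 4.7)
The statement is essentially formal once one traces through the definitions, but the argument deserves to be spelled out because it relies on keeping track of \emph{which} auxiliary datum produces each generator. The plan is to check both diagrams on the defining generators $\mathsf E(m)$ of the shifted Yangians and invoke the explicit formulas for $\sS_{\mu,\mu';z}$ and $\sS^{\mu,\mu'}_z$ already obtained in the proof of Lemma \ref{lem shift map}.

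First I would fix $\mathfrak{c}\in\mathcal C\subset\mathcal C'$ and $m\in\End^{\bZ/2}_{\bC(\mathsf t_0)}(\mathcal H_{\mathfrak{c}})[u]$, producing a generator $\mathsf E(m)\in\mathsf Y_\mu(Q,\sW,\mathcal C)\subseteq \mathsf Y_\mu(Q,\sW,\mathcal C')$. The definition of $\sS_{\mu,\mu';z}(\mathsf E(m))=i^*\mathsf E(m)_z\pi^*$ depends only on the auxiliary datum $\mathfrak{c}$ and not on the ambient collection. By the Gauss-decomposition analysis in the proof of Lemma \ref{lem shift map}, this equals a product of matrix elements of $F(u)'_z,G(u)'_z,E(u)'_z$ (all built from the same $\mathfrak{c}$) together with the diagonal factors $D(u)_z$, $H_z$, $H_z^{-1}$. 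The first set of matrix elements are by definition generators $\mathsf E^{\pm}(-)$, $\mathsf E^0(-)$ of $\mathsf Y_{\mu'}(Q,\sW,\mathcal C)[z]$, while $D(u)_z$ acts only on $\mathcal H_{\mathfrak{c}}$ and $H_z^{\pm 1}$ is a polynomial in the tautological classes $\mathrm{ch}_k(\mathsf V_i)$, which by Lemma \ref{lem Y^0 generators} lie in $\mathsf Y^0_{\mu'}(Q,\sW,\mathcal C)[z]$ since $\mathcal C$ is admissible. Thus the image lies in $\mathsf Y_{\mu'}(Q,\sW,\mathcal C)[z]$, which is exactly the commutativity of the first diagram.

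For the second diagram, I would apply the same reasoning to $\sS^{\mu,\mu'}_z(\mathsf E(m)')=\pi^*\mathsf E(m)'\,i^*$ (where the identification \eqref{equ on endsurj} is used): the explicit formulas in Remark \ref{rmk form of shift} show that $\sS^{\mu,\mu'}_z$ sends $\mathsf E^{-}(m)'$, $\mathrm{ch}_k(\mathsf V_i)$, $\mathrm{ch}_k(\mathsf D_{\mathsf{diff},i})$, and $\mathsf E^{+}(m)'$ to expressions built only from generators labelled by the same $\mathfrak{c}\in\mathcal C$, possibly multiplied by $z^{\pm\deg(m)\cdot(\mu'-\mu)}\widetilde H_z^{\pm 1}$, with $\widetilde H_z\in\mathsf Y^0_{\mu,\mu'}(Q,\sW,\mathcal C)[z]$ again by admissibility. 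Hence the image of $\mathsf Y_{\mu'}(Q,\sW,\mathcal C)$ lies in $\mathsf Y_{\mu,\mu'}(Q,\sW,\mathcal C)(\!(z^{-1})\!)$.

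There is really no substantive obstacle: the only thing to be careful about is that while the \emph{definition} of the shift homomorphism (via pullback and restriction) makes no reference to the auxiliary collection, the \emph{target} algebra changes with $\mathcal C$; both diagrams amount to the observation that the output of the shift formulas uses only the same auxiliary data as the input, together with tautological classes that admissibility places inside $\mathsf Y^0$. One could also argue abstractly by noting that the inclusion $\mathsf Y_\mu(Q,\sW,\mathcal C)\hookrightarrow \mathsf Y_\mu(Q,\sW,\mathcal C')$ is induced by the identity on the common factors of the product in \eqref{rtt yangian def}, and both $\sS_{\mu,\mu';z}$ and $\sS^{\mu,\mu'}_z$ are defined componentwise via the same geometric operation $M\mapsto i^*M_z\pi^*$ (respectively $M\mapsto \pi^*M\,i^*$), so compatibility with inclusions of auxiliary sets is automatic.
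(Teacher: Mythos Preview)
Your proposal is correct. The paper itself gives no proof beyond the phrase ``obvious from definition,'' and your final paragraph captures exactly that observation: both shift maps are defined by the geometric operation $M\mapsto i^*M_z\pi^*$ (respectively $M\mapsto \pi^*M\,i^*$) on the product of endomorphism spaces, independently of which auxiliary collection labels the generators, so compatibility with the inclusion $\mathcal C\subset\mathcal C'$ is automatic. Your earlier detailed unpacking via the Gauss decomposition is not wrong, but it is more than what is needed---Lemma~\ref{lem shift map} already establishes that the image lands in $\mathsf Y_{\mu'}(Q,\sW,\mathcal C)[z]$, and the only remaining point is that the two shift maps agree on the overlap, which your abstract remark handles.
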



\begin{Proposition}\label{prop shift compatibility}
Shift homomorphisms are associative, that is,
\begin{align*}
    \sS_{\mu',\mu''}\circ \sS_{\mu,\mu'}=\sS_{\mu,\mu''},\quad \mu''\leqslant \mu'\leqslant \mu\in \bZ_{\leqslant 0}^{Q_0}\:.
\end{align*}
Shift homomorphisms are compatible with coproducts in the following sense
\begin{align*}
    \Delta_{\mu'_1,\mu_2}\circ \sS_{\mu_1+\mu_2,\mu'_1+\mu_2}=(\sS_{\mu_1,\mu'_1}\otimes \tau^{\mu_1-\mu'_1}) \circ\Delta_{\mu_1,\mu_2}\:.
\end{align*}
Here $\tau^\nu$ $(\nu\in \bZ^{Q_0})$ is the automorphism  
\begin{equation}\label{equ on taunv}\tau^\nu\colon \mathsf Y_{\mu_2}(Q,\sW,\mathcal C)\to \mathsf Y_{\mu_2}(Q,\sW,\mathcal C), \end{equation} given by $x\mapsto (-1)^{\deg(x)\cdot \nu}x$ on homogeneous element $x$, where $\deg(x)$ is the $\bZ^{Q_0}$ grading on $\mathsf Y_{\mu_2}(Q,\sW,\mathcal C)$.
\end{Proposition}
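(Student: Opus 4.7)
The associativity of shift homomorphisms is essentially the functoriality of the pull-back and zero-section restriction along a composite tower of vector bundles. For $\mu'' \leqslant \mu' \leqslant \mu$, choose compatible framings $\underline{\bd}'' \leqslant \underline{\bd}' \leqslant \underline{\bd}$ with $\bd_{\In} = \bd'_{\In} = \bd''_{\In}$; then $\cM(\underline{\bd}) \to \cM(\underline{\bd}') \to \cM(\underline{\bd}'')$ is a two-step tower of vector bundles obtained by adding out-going framings in each step, whose composite projection and zero section decompose as $\pi_{\mu,\mu''} = \pi_{\mu',\mu''} \circ \pi_{\mu,\mu'}$ and $i_{\mu,\mu''} = i_{\mu,\mu'} \circ i_{\mu',\mu''}$. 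Since $\sS_{\mu,\mu'}(M) = i^*_{\mu,\mu'} M \pi^*_{\mu,\mu'}$ at $z=0$, the identity $\sS_{\mu',\mu''} \circ \sS_{\mu,\mu'} = \sS_{\mu,\mu''}$ is immediate from $(i_{\mu,\mu'} \circ i_{\mu',\mu''})^* = i_{\mu',\mu''}^* \circ i_{\mu,\mu'}^*$ and the dual identity for $\pi^*$. (Well-definedness of each $\sS$ as a map between the Yangians is Lemma \ref{lem shift map}.)

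For the coproduct compatibility, the plan is to chase both sides through the geometric definitions and match them via a pull-back formula for stable envelopes. Fix a state space $\cH^{\sW^{\mathrm{fr}}}_{\underline{\bd}, \sA^{\mathrm{fr}}}$ with $\bd_{\Out} - \bd_{\In} = \mu_1 + \mu_2$ and a compatible splitting $\underline{\bd} = \underline{\bd}_1 + \underline{\bd}_2$, $\sA^{\mathrm{fr}} = \sA^{\mathrm{fr}}_1 \times \sA^{\mathrm{fr}}_2$; let $\underline{\bd}' = \underline{\bd}'_1 + \underline{\bd}_2$ and let $\pi$, $i$ be the projection and zero section for the vector bundle $\cM(\underline{\bd}) \to \cM(\underline{\bd}')$ given by the added out-going framings $\mathcal V = \bigoplus_{i} \Hom(\mathsf V_i, \bC^{\bd_{1,\Out,i} - \bd'_{1,\Out,i}})$. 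The crux is a compatibility of the form
\[
i^* \circ \Stab_{+, \epsilon} = (-1)^{\sharp} \cdot \Stab'_{+, \epsilon} \circ (i_1^* \otimes \id),
\]
together with the analog for $\Stab_{-, \bar\epsilon}$, proved exactly as in \cite[Props.~8.2, 8.3]{COZZ} but with the framing torus $\sA^{\mathrm{fr}}_1$ in place of $\bC^*_u$ (here $\mathcal V$ is $\sA^{\mathrm{fr}}_1$-repelling on the fixed locus and splits as $\mathcal V_1 \oplus \mathcal V_2$ according to $\mathsf V_i = \mathsf V_{1,i} + \mathsf V_{2,i}$). The resulting sign $(-1)^\sharp$ on a fixed component $\cM(\bv^{(1)}, \underline{\bd}_1) \times \cM(\bv^{(2)}, \underline{\bd}_2)$ factors as $(-1)^{\bv^{(1)} \cdot (\mu_1 - \mu'_1)} \cdot (-1)^{\bv^{(2)} \cdot (\mu_1 - \mu'_1)}$. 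Substituting these into the definitions, both sides of the desired identity reduce to $(\Stab'_{+,\epsilon})^{-1} (i^* M \pi^*) \Stab'_{+,\epsilon}$: the first sign factor is absorbed into $\sS_{\mu_1,\mu'_1}$ on the first tensor slot (matching the explicit formula in Remark \ref{rmk form of shift}), and the second sign factor becomes precisely $\tau^{\mu_1 - \mu'_1}$ on the second tensor slot.

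The main obstacle will be the bookkeeping of the polarization $\mathsf P$ and hence the normalizer $\epsilon$ under $\pi^*$: since $\mathsf P$ \eqref{par pol on M(v,d)} contains a term $\bigoplus_i \Hom(\mathsf V_i, \mathsf D_{\Out,i})$, adding out-going framings changes $\mathsf P$ by exactly the bundle $\mathcal V$, and the signs $\epsilon$ for the large and small spaces must be reconciled with $(-1)^\sharp$ by a parity count on each fixed component. A cleaner fallback, if this direct route becomes unwieldy, is to reduce to generators: by Proposition \ref{prop gen by deg 0 and Cartan}, $\mathsf Y_{\mu_1 + \mu_2}(Q, \sW, \mathcal C)$ is generated by the triangular elements $\mathsf E^\pm(m)$ and the Cartan subalgebra $\mathsf Y^0$, and on each class of generators both sides of the desired identity can be computed explicitly from the Gauss decomposition of $R^{\mathrm{sup}}(u)$ (yielding $\Delta$ via \eqref{stab induces coproduct}) and the explicit formulas for $\sS_{\mu,\mu';z}$ listed in Remark \ref{rmk form of shift}. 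The identity then reduces to a finite verification of matching signs on each generator type, guided by the sign factorization above.
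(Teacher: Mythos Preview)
Your associativity argument matches the paper's exactly. For the coproduct compatibility you have the right geometric idea---compare $\Stab_{+,\epsilon}$ on $\cM(\underline{\bd})$ and on $\cM(\underline{\bd}')$ via the bundle projection, track the sign, and identify it with $\tau^{\mu_1-\mu'_1}$---but two details are off, and fixing them collapses the argument to the paper's one-line proof.

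First, the added bundle $\mathcal V=\bigoplus_i\Hom(\mathsf V_i,\bC^{\bd_{1,\Out,i}-\bd'_{1,\Out,i}})$ is \emph{attracting} in the $a>0$ chamber, not repelling: the target is part of the first framing and so carries $a$-weight $+1$, and the moving part of $\mathcal V$ on a fixed component is $\Hom(\mathsf V^{(2)}_i,\bC_a)$, of weight $a$. This is the opposite of the situation in Lemma~\ref{lem shift map}, where the added framings sit on the weight-zero factor. Consequently only \cite[Prop.~8.2]{COZZ} is needed (no Euler classes), and there is no need to invoke $\Stab_{-,\bar\epsilon}$ at all---the coproduct $\Delta$ is built from $\Stab_{+,\epsilon}$ alone.

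Second, the sign is only $(-1)^{\bv^{(2)}\cdot(\mu_1-\mu'_1)}$; there is no $\bv^{(1)}$ factor. The normalizer $\epsilon|_F=(-1)^{\rk\mathsf P|_F^+}$ changes by the rank of the \emph{attracting} part of $\Delta\mathsf P=\mathcal V$, and the fixed part $\Hom(\mathsf V^{(1)}_i,\bC_a)$ does not contribute. Thus the single identity
\[
\Stab_{+,\epsilon}\circ(\pi^*\otimes\id)=\widetilde\pi^*\circ\Stab'_{+,\epsilon}\cdot(-1)^{\bv^{(2)}\cdot(\mu_1-\mu'_1)}
\]
immediately gives the result: the sign is conjugation by the diagonal operator $(-1)^{\bv^{(2)}\cdot(\mu_1-\mu'_1)}$ on the second tensor factor, which is exactly $\tau^{\mu_1-\mu'_1}$, and nothing remains to be absorbed into $\sS_{\mu_1,\mu'_1}$ on the first factor. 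The paper's proof is precisely this. Your fallback via generators would work but is unnecessary once the sign is computed correctly.
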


\begin{proof}
The associativity of shift homomorphisms follows from associativity of pullback isomorphisms. Let $\underline{\bd_1},\underline{\bd'_1},\underline{\bd_2}$ be framing dimension vectors such that the differences between out-going and in-coming dimensions are $\mu_1,\mu'_1,\mu_2$ respectively. Consider the vector bundle projections constructed in the beginning of this subsection: 
$$\pi\colon \cM(\underline{\bd_1})\to \cM(\underline{\bd'_1}), \quad \widetilde{\pi}\colon \cM(\underline{\bd_1}+\underline{\bd_2})\to \cM(\underline{\bd'_1}+\underline{\bd_2}).$$  The vector bundle corresponding to $\widetilde{\pi}$ is attracting in the chamber $a>0$ under the splitting $a\underline{\bd_1}+\underline{\bd_2}$, then according to \cite[Prop.~8.2]{COZZ}, we have
\begin{align*}
    \Stab_{+,\epsilon}\circ \:(\pi^*\otimes\id) = \widetilde{\pi}^*\circ\Stab'_{+,\epsilon}\cdot \:(-1)^{\flat},
\end{align*}
where $(-1)^{\flat}$ on the component $\cM(\bv_1,\underline{\bd'_1})\times \cM(\bv_2,\underline{\bd_2})$ is $(-1)^{\bv_2\cdot (\mu_1-\mu'_1)}$. Then the compatibility with coproduct follows from the above equation.
\end{proof}

\begin{Proposition}\label{prop shift map inj}
The shift homomorphism $\sS_{\mu,\mu'}$ is injective. As a result, $\mathsf Y_{\mu,\mu'}(Q,\sW,\mathcal C)$ is isomorphic to $\mathsf Y_{\mu}(Q,\sW,\mathcal C)$.
\end{Proposition}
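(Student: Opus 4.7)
The second assertion is immediate once the first is known: by definition $\mathsf Y_{\mu,\mu'}(Q,\sW,\mathcal C)$ is the image of $\sS_{\mu,\mu'}$, so injectivity of $\sS_{\mu,\mu'}$ promotes it to an isomorphism onto $\mathsf Y_{\mu,\mu'}$. So my whole plan is directed at injectivity of $\sS_{\mu,\mu'}$.

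The approach is to form the composite
\[
\Phi := \sS^{\mu,\mu'}_z \circ \sS_{\mu,\mu'}\colon \mathsf Y_\mu(Q,\sW,\mathcal C) \longrightarrow \mathsf Y_{\mu,\mu'}(Q,\sW,\mathcal C)(\!(z^{-1})\!)
\]
with the wrong-way shift of Definition \ref{def of wrong-way shift}. Since $\sS^{\mu,\mu'}_z$ is injective by Lemma \ref{lem wrong-way shift inj} and sends $0$ to $0$, injectivity of $\Phi$ is equivalent to injectivity of $\sS_{\mu,\mu'}$. Thus it suffices to show $\Phi$ is injective.

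First I would evaluate $\Phi$ on a convenient generating set of $\mathsf Y_\mu$. By Proposition \ref{prop gen by deg 0 and Cartan} and Lemma \ref{lem Y^0 generators}, such a set is furnished by the Gauss-decomposition currents $e_i(u), f_i(u), h_i(u)$ together with the Chern-character generators $\mathrm{ch}_k(\mathsf V_i)$ and $\mathrm{ch}_{k+1}(\mathsf D_{\In,i}-\mathsf D_{\Out,i})$. Composing the explicit formulas of Remark \ref{rmk form of shift} for $\sS_{\mu,\mu'}$ and $\sS^{\mu,\mu'}_z$, writing $n_i := \mu_i - \mu'_i \geqslant 0$, I expect $\Phi$ to fix each Chern-character generator as well as $e_i(u)$ and $f_i(u)$ (up to a total sign squaring to $1$), and to act on the Cartan current by $h_i^\mu(u) \mapsto u^{n_i}(u-z)^{-n_i}\,h_i^{\mu'}(u)$; the right-hand side is a unit multiple of the generator in $\bC(\mathsf t_0)[\![u^{-1}]\!](\!(z^{-1})\!)$ and hence nonzero, so $\Phi$ is injective generator-by-generator.

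The step I expect to be the hardest is propagating injectivity from the generators to the whole algebra. My plan is to exploit the degree-in-$u$ filtration of subsection \ref{subsec filtration and PBW}: once an auxiliary $z$-grading is introduced to absorb the $(u-z)^{-n_i}$ twist, $\Phi$ should be filtration-preserving, and its associated graded map should be identifiable, via Theorem \ref{thm grY} in the zero-shift case and the shift/coproduct compatibility of Proposition \ref{prop shift compatibility} to reduce the general shifted setting to the zero one, with a canonical inclusion of enveloping-algebra type $\mathcal U(\mathfrak{g}_{Q,\sW}[u]) \hookrightarrow \mathcal U(\mathfrak{g}_{Q,\sW}[u])(\!(z^{-1})\!)$. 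Such an inclusion is manifestly injective, and standard PBW lifts injectivity from the associated graded back to the filtered algebra. The delicate part will be aligning the leading-mode normalizations $h_{i,-\mu_i-1}=1$ and $h_{i,-\mu'_i-1}=1$ across the $(u-z)^{-n_i}$ twist so that the graded reduction identifies cleanly with the intended inclusion of enveloping algebras.
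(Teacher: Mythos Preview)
Your reduction ``$\Phi$ injective $\Leftrightarrow$ $\sS_{\mu,\mu'}$ injective'' is fine, but the rest has two genuine gaps.

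First, the expectation that $\Phi$ fixes $e_i(u)$ up to sign is incorrect. Tracing through Remark~\ref{rmk form of shift} with $z=0$ in $\sS_{\mu,\mu'}$ and a free $z$ in $\sS^{\mu,\mu'}_z$, one finds mode-by-mode (writing $n_i=\mu_i-\mu'_i$)
\[
\sS_{\mu,\mu'}(e_{i,r})=(-1)^{n_i}e'_{i,r+n_i},\qquad
\sS^{\mu,\mu'}_{z}(e'_{i,s})=\sum_{k\geqslant 0}\binom{n_i+k-1}{k}z^{-n_i-k}\,e_{i,s+k},
\]
so $\Phi(e_{i,r})=(-1)^{n_i}\sum_{k\geqslant 0}\binom{n_i+k-1}{k}z^{-n_i-k}\,e_{i,r+n_i+k}$, a genuine Laurent series in $z$ landing in modes shifted up by at least $n_i$. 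The ``close to identity'' picture survives only for $f_i$, $g_i$, and the Chern characters, not for $e_i$, so generator-by-generator injectivity already fails as stated.

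Second, and more seriously, the filtration/PBW plan is not available: Theorem~\ref{thm grY} is proved only for $\mu=0$, and your sentence about ``reducing the general shifted setting to the zero one via Proposition~\ref{prop shift compatibility}'' is exactly the missing argument. The target of $\Phi$ is $\mathsf Y_{\mu,\mu'}(\!(z^{-1})\!)$, and $\mathsf Y_{\mu,\mu'}$ is by definition the image of $\sS_{\mu,\mu'}$, i.e., a priori a quotient of $\mathsf Y_\mu$. Any attempt to show $\Phi$ is injective by comparison with enveloping algebras presupposes that $\mathsf Y_{\mu,\mu'}$ has not collapsed---which is the very content of the proposition.

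The paper does the reduction you gesture at, but concretely: using Proposition~\ref{prop shift compatibility} it builds the commutative square
\[
\Delta_{\mu'-\mu,\mu}\circ \sS_{\mu,\mu'}=(\sS_{0,\mu'-\mu}\otimes\tau^{\mu-\mu'})\circ\Delta_{0,\mu},
\]
observes that $\Delta_{0,\mu}$ is injective by counitality (Remark~\ref{rmk counit}) and that $\sS_{0,\mu'-\mu}$ is injective by the separately proved Lemma~\ref{lem shift map inj}, and concludes. That lemma in turn is where the zero-shift PBW (via Lemma~\ref{lem sufficient state spaces}) actually enters. If you want to rescue your approach, this coproduct square is the mechanism you need to spell out, and at that point the wrong-way composite $\Phi$ is no longer doing any work.
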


The proof strategy is to first show that $\sS_{0,\mu}$ is injective (Lemma \ref{lem shift map inj}), based on the technical Lemma \ref{lem sufficient state spaces}, then to deduce the injectivity for general $\sS_{\mu,\mu'}$ using a formal argument.

\begin{Lemma}\label{lem sufficient state spaces}
Assume that $\mathcal C$ is admissible. Let $\cH_0$ be a state space with zero shift, and let $\left\{\cH_i:=\cH^{\sW_i}_{\underline{\delta_i}}\right\}_{i\in Q_0}$ be a collection of admissible auxiliary spaces (regarded as state spaces). 
Then the natural map
\begin{align*}
    \mathsf Y_0(Q,\sW,\mathcal C)\longrightarrow \prod_{i_1,\cdots,i_n}\End^{\bZ/2}_{\bC(\mathsf t_0)[a_{0},a_{i_1},\ldots,a_{i_n}]}\left(\cH_0[a_0]\otimes\cH_{i_1}[a_{i_1}]\otimes\cdots\otimes \cH_{i_n}[a_{i_n}]\right)
\end{align*}
is injective. Here $a_0,a_{i_1},\ldots,a_{i_n}$ are equivariant parameters of torus $\mathsf A^{\mathrm{fr}}\cong (\bC^*)^{n+1}$, and $\cH_{i_j}[a_{i_j}]$ stands for $\mathsf A^{\mathrm{fr}}$ acting on $\cH_{i_j}$ with weight $a_{i_j}$.
\end{Lemma}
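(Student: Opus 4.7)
The plan is to show that if $x\in\mathsf Y_0(Q,\sW,\mathcal C)$ maps to zero in every $\End(\cH_0[a_0]\otimes\cH_{i_1}[a_{i_1}]\otimes\cdots\otimes\cH_{i_n}[a_{i_n}])$, then $x$ acts as zero on every state space $\cH':=\cH^{\sW^{\mathrm{fr}}}_{\underline{\bd},\sA^{\mathrm{fr}}}$, forcing $x=0$ by the product definition \eqref{equ on rtt yangian} of $\mathsf Y_0(Q,\sW,\mathcal C)$.

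First, by Proposition \ref{prop gen by deg 0 and Cartan} and Lemma \ref{lem Y^0 generators}, $\mathsf Y_0(Q,\sW,\mathcal C)$ is generated by the constant-in-$u$ operators $\mathsf E^\pm(m)$ together with multiplication by tautological Chern classes of $\mathsf V_i$ and of $\mathsf D_{\In,i}-\mathsf D_{\Out,i}$. The tautological-class generators act by cup product and are visible on each admissible $\cH_i$ via Lemma \ref{lem 1st excitation}; the off-diagonal operators $\mathsf E^\pm(m)$ are matrix elements of $R^{\mathrm{sup}}_{\cH_\mathfrak c,\cdot}(u)$, and Theorem \ref{thm e f h as R matrix elements} identifies them, for admissible auxiliary $\mathfrak c=\mathfrak c_i$, as the Drinfeld-type operators $e_i(u),f_i(u),h_i(u)$, which act nontrivially on the admissible vacua $|\delta_i\rangle\in\cH_i$ with nonzero $\sT_0$-weight $t_i$. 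Hence the $R$-matrix data of these generators is faithfully encoded in the action on the test products, once each individual contribution can be extracted.

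Second, to reduce vanishing on an arbitrary $\cH'$ to the assumed vanishing on $\cH_0\otimes\cH_{i_1}\otimes\cdots$, I would use the triangle lemma (Theorem \ref{thm AFSQV}) together with the coproduct compatibility \eqref{stab induces coproduct} of stable envelopes: adjoining admissible auxiliary framings with fresh framing tori $\bC^*_{a_{i_j}}$ yields a stable envelope isomorphism after localization,
\begin{equation*}
\Stab_{+,\epsilon}\colon\cH'_{\loc}\otimes\cH_{i_1,\loc}[a_{i_1}]\otimes\cdots\otimes\cH_{i_n,\loc}[a_{i_n}]\xrightarrow{\sim}\cH^{\sW^{\mathrm{fr}}\boxplus\sW_{i_1}\boxplus\cdots}_{\underline{\bd+\delta_{i_1}+\cdots+\delta_{i_n}},\loc},
\end{equation*}
under which $x$ corresponds to the iterated coproduct $\Delta^{(n+1)}(x)$. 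Setting $\cH'=\cH_0$ and invoking the hypothesis, $\Delta^{(n+1)}(x)=0$ on the test family for every choice of $i_1,\dots,i_n$. Contracting the admissible factors against the vacua $|\mathbf 0\rangle\in\cH_{i_j}$, via the counit of Remark \ref{rmk counit}, recovers $x$ on $\cH_0$ itself; contracting selectively against $|\delta_{i_j}\rangle\in\cH_{i_j}$ peels off the contributions of individual generators appearing in any polynomial expression for $x$, with the nonzero weights $t_{i_j}$ ensuring nondegeneracy of the extraction.

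The main obstacle will be pushing the conclusion from the specific $\cH_0$ to an arbitrary $\cH'$ with possibly non-admissible $\sW^{\mathrm{fr}}$: the stable envelope identification above requires the enlarged framed potential to split Thom-Sebastiani along the added tori, which is automatic for the trivial extension of $\sW$ but not for generic $\sW^{\mathrm{fr}}$. I expect to resolve this by a deformation argument over the affine space of $\sT_0$-invariant framed potentials, exploiting that $\mathsf Y_0(Q,\sW,\mathcal C)$ is defined intrinsically in terms of $(Q,\sW,\mathcal C)$ and acts coherently on the family of framed potentials over a fixed underlying moduli stack, so that vanishing at fibers with trivial-extension potentials (where the Thom-Sebastiani splitting holds) propagates to arbitrary $\sW^{\mathrm{fr}}$.
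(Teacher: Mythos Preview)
Your proposal has a genuine gap. The coproduct/counit argument you outline can only recover the action of $x$ on $\cH_0$ itself: from the hypothesis that $\Delta^{(n+1)}(x)$ vanishes on $\cH_0\otimes\cH_{i_1}\otimes\cdots\otimes\cH_{i_n}$, applying $(\id\otimes\varepsilon^{\otimes n})$ gives $x=0$ on $\cH_0$, but says nothing about an arbitrary state space $\cH'$ with a different framing or potential. The stable envelope isomorphism you write down has $\cH'$ as the first factor, not $\cH_0$, so the hypothesis does not apply to it. Your final paragraph proposes a ``deformation over the space of framed potentials'' to bridge this gap, but this is speculative: the Yangian is defined as a subalgebra of a product over \emph{all} $(\underline{\bd},\sW^{\mathrm{fr}},\sA^{\mathrm{fr}})$, and there is no evident flatness or specialization principle connecting the actions on different fibers of this family. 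Moreover, knowing that a set of algebra generators acts faithfully on a family of modules does not, by itself, imply that the algebra acts faithfully --- relations among monomials in the generators could hold on the test family but not globally.

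The paper avoids this module-by-module comparison entirely by invoking the PBW theorem (Theorem \ref{thm grY}): since $\mathrm{gr}\,\mathsf Y_0(Q,\sW,\mathcal C)\cong\mathcal U(\mathfrak{g}_{Q,\sW}[u])$, injectivity of $\mathsf Y_0\to\overline{\mathsf Y}$ (the image in the test product) reduces, via the standard argument of \cite[\S 5.5.3]{MO}, to injectivity of the linear map $\mathfrak{g}_{Q,\sW}\to\overline{\mathsf Y}$. This is then checked degree by degree: on the Cartan $\mathfrak{g}_0$ one reads off $\mathsf v_i$ and $\mathsf d_i$ from $\cH_i(\mathbf 0)$ and $\cH_i(\delta_i)$ (using admissibility and Lemma \ref{lem 1st excitation}), and on a root space $\mathfrak{g}_\eta$ with $\eta_i>0$ the map $\xi\mapsto\xi\,|\mathbf 0\rangle\in\cH_i(\eta)$ is injective by the argument of \cite[Prop.~5.3.4]{MO}. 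The PBW structure is precisely what converts ``generators are detected'' into ``the whole algebra is detected''; without it, your reduction does not close.
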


\begin{proof}
Let $\overline{\mathsf Y}$ be the image of $\mathsf Y_0(Q,\sW,\mathcal C)$ in 
$$\prod\End^{\bZ/2}_{\bC(\mathsf t_0)[a_{0},a_{i_1},\ldots,a_{i_n}]}\left(\cH_0[a_0]\otimes\cH_{i_1}[a_{i_1}]\otimes\cdots\otimes \cH_{i_n}[a_{i_n}]\right).$$ 
Using Theorem \ref{thm grY}, it is enough to show that $\cU(\mathfrak{g}_{Q,\sW}[u])\to \mathrm{gr}\:\overline{\mathsf Y}$ is injective. According to \cite[\S 5.5.3]{MO}, it boils down to showing that the linear map $\mathfrak{g}_{Q,\sW}\to \overline{\mathsf Y}$ is injective. Since $\mathfrak{g}_{Q,\sW}\to \overline{\mathsf Y}$ is $\bZ^{Q_0}$-graded, we need to show injectivity for every homogeneous subspace $\mathfrak{g}_\eta$. Assume that $\mathsf x=\sum a_i\mathsf v_i+\sum b_i\mathsf d_i$ is in the kernel. Then $\mathsf x$ acts on $\cH_i(\mathbf 0)$ trivially, so $b_i=0$ by the fact that the value of $\mathsf d_i$ on $\cH_j$ is nonzero multiple of $\delta_{ij}$. $\mathsf x$ acts on $\cH_i(\delta_i)$ trivially, so $a_i=0$ by the fact that the value of $\mathsf v_i$ on $\cH_j(\delta_j)$ is $\delta_{ij}$. This shows injectivity for $\mathfrak{g}_0$. For $\eta$ with $\eta_i>0$, the same argument as \cite[Prop.~5.3.4]{MO} shows that the $\mathfrak{g}_{Q,\sW}$-action maps
\begin{align*}
    \mathfrak{g}_\eta\to \cH_i(\eta),\quad \mathfrak{g}_{-\eta}\to \cH_i(\eta)^{\vee}
\end{align*}
that take $\xi\in \mathfrak{g}_\eta$ to $\xi\:|\mathbf 0\rangle$ and dually for $\mathfrak{g}_{-\eta}$, are injective. This finishes the proof.
\end{proof}

\begin{Lemma}\label{lem shift map inj}
Assume that $\mathcal C$ is admissible, then the shift homomorphism $\sS_{0,\mu}\colon \mathsf Y_0(Q,\sW,\mathcal C)\to \mathsf Y_\mu(Q,\sW,\mathcal C)$ is injective. As a result, the image $\mathsf Y_{0,\mu}(Q,\sW,\mathcal C)$ of $\sS_{0,\mu}$ is isomorphic to $\mathsf Y_{0}(Q,\sW,\mathcal C)$.
\end{Lemma}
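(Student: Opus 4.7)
The plan is to deduce injectivity of $\sS_{0,\mu}$ from Lemma \ref{lem sufficient state spaces}, which already controls the size of $\mathsf Y_0(Q,\sW,\mathcal C)$ via its action on tensor products of zero-shift state spaces. The bridge between the shift-$\mu$ world and the shift-$0$ world is an identification of actions obtained by specializing the defining equation of the parametrized shift homomorphism at $z=0$, which I would establish first.

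More concretely, fix a zero-shift framing $\underline{\bd}$ with $\bd_{\In}=\bd_{\Out}\geqslant -\mu$ componentwise, a framing torus $\sA^{\mathrm{fr}}$, and a $\sT_0\times \sA^{\mathrm{fr}}$-invariant framed potential $\sw^{\mathrm{fr}}$ on $\cM(\underline{\bd}')$ for the associated shift-$\mu$ framing $\underline{\bd}'$ with $\bd'_{\In}=\bd_{\In}$ and $\bd'_{\Out}=\bd_{\In}+\mu$; pull back $\sw^{\mathrm{fr}}$ along the vector bundle $\pi\colon \cM(\underline{\bd})\to \cM(\underline{\bd}')$ provided by \S \ref{sec shift map}. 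The defining equation $\sS_{0,\mu;z}(x)=i^* x_z \pi^*$ lives in $\End(\cH^{\sW^{\mathrm{fr}}}_{\underline{\bd}',\sA^{\mathrm{fr}},\loc})[z]$ by Lemma \ref{lem shift map}. At $z=0$ the extra torus $\bC^*_z$ drops out, $\pi^*|_{z=0}\colon \cH^{\sW^{\mathrm{fr}}}_{\underline{\bd}',\sA^{\mathrm{fr}}}\xrightarrow{\cong}\cH^{\sW^{\mathrm{fr}}}_{\underline{\bd},\sA^{\mathrm{fr}}}$ is an isomorphism with inverse $i^*|_{z=0}$ (Thom isomorphism for the vector bundle $\pi$), and under this identification the $\mathsf Y_\mu$-action of $\sS_{0,\mu}(x)$ on the shift-$\mu$ state space coincides with the $\mathsf Y_0$-action of $x$ on the zero-shift state space.

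Granting this identification, suppose $\sS_{0,\mu}(x)=0$ for some $x\in \mathsf Y_0(Q,\sW,\mathcal C)$. Then $x$ must annihilate $\cH^{\sW^{\mathrm{fr}}}_{\underline{\bd},\sA^{\mathrm{fr}}}$ for every zero-shift framing with $\bd_{\In}\geqslant -\mu$. I would now apply Lemma \ref{lem sufficient state spaces} after choosing the fixed zero-shift auxiliary space $\cH_0$ to have in-coming framing dimensions satisfying $\bd_{0,\In,i}\geqslant -\mu_i$ for all $i\in Q_0$. Every tensor product $\cH_0[a_0]\otimes \cH_{i_1}[a_{i_1}]\otimes\cdots\otimes\cH_{i_n}[a_{i_n}]$ is then identified via stable envelopes with a single zero-shift state space $\cH^{\sW^{\mathrm{fr}}}_{\underline{\bd},\sA^{\mathrm{fr}}}$ whose in-coming framing satisfies $\bd_{\In}\geqslant \bd_{0,\In}\geqslant -\mu$, so $x$ acts trivially on each such tensor product. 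The injectivity statement of Lemma \ref{lem sufficient state spaces} then forces $x=0$. The displayed isomorphism $\mathsf Y_{0,\mu}(Q,\sW,\mathcal C)\cong \mathsf Y_0(Q,\sW,\mathcal C)$ follows from the first isomorphism theorem.

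The main obstacle is making precise the $z=0$ specialization in the key identification. The operator $x_z$ acting on $\cH^{\sW^{\mathrm{fr}}}_{\underline{\bd},\sA^{\mathrm{fr}},\loc}[z]$ depends on $z$ through the $\bC^*_z$-equivariant stable envelopes on $\cM(\underline{\bd})$ that enter the definition of $\mathsf E(m)_z$, and one must check that at $z=0$ this specializes to the original operator $x$ of $\mathsf Y_0$ acting on $\cH^{\sW^{\mathrm{fr}}}_{\underline{\bd},\sA^{\mathrm{fr}}}$. This is a naturality statement for stable envelopes and $R$-matrices under restriction of equivariance: forgetting $\bC^*_z$-equivariance turns the $\bC^*_z$-equivariant stable envelope into the original $\sT_0\times \sA^{\mathrm{fr}}$-equivariant one, and the compatible identification of cohomology under $\pi^*|_{z=0}$ intertwines the two actions. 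If pushing this through abstractly proves awkward, a fallback is to verify the identification on the triangular and Cartan generators directly from the explicit formulas of Remark \ref{rmk form of shift}: all of them are polynomial in $z$ with a clean $z=0$ limit, and by Proposition \ref{prop gen by deg 0 and Cartan} these suffice to generate $\mathsf Y_\mu(Q,\sW,\mathcal C)$.
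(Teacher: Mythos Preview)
Your approach is essentially the same as the paper's: both reduce to Lemma~\ref{lem sufficient state spaces} applied with $\cH_0$ of pulled-back type. The paper packages this through the coproduct compatibility $\Delta_{\mu,0}\circ\sS_{0,\mu}=(\sS_{0,\mu}\otimes\tau^{-\mu})\circ\Delta$ from Proposition~\ref{prop shift compatibility}, and then shows that $(\sS_{0,\mu}\otimes\id)\circ\Delta$ is injective; your route bypasses Proposition~\ref{prop shift compatibility} by directly reading off from $\sS_{0,\mu}(x)=i^*x\,\pi^*$ at $z=0$ that $x$ kills every pulled-back zero-shift state space. That is a genuine simplification and your handling of the $z=0$ specialization is fine.

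There is one real gap, however. In your third paragraph you only impose the numerical condition $\bd_{0,\In,i}\geqslant -\mu_i$ on $\cH_0$, and then assert that the combined state space satisfies $\bd_{\In}\geqslant -\mu$ ``so $x$ acts trivially''. But $\sS_{0,\mu}(x)=0$ only forces $x$ to vanish on zero-shift state spaces whose framed potential is \emph{pulled back} from a shift-$\mu$ potential; the inequality on framing dimensions alone does not guarantee this. You must additionally choose $\cH_0$ to be of pulled-back type (i.e.\ obtained by adding $-\mu$ out-going arrows, with trivially extended potential, to a shift-$\mu$ state space), exactly as the paper does. With that choice the combined potential $\sw_0^{\mathrm{fr}}+\sum_j\sw_{i_j}$ is again pulled back (the auxiliary potentials $\sw_{i_j}$ do not touch the extra $-\mu$ out-going arrows of the $\cH_0$ block), and your argument goes through.
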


\begin{proof}
By Proposition \ref{prop shift compatibility}, we have $$\Delta_{\mu,0}\circ \sS_{0,\mu}=(\sS_{0,\mu}\otimes\tau^{-\mu})\circ \Delta,$$ 
and it is enough to show that $(\sS_{0,\mu}\otimes\id)\circ \Delta$ is injective since $\tau^{-\mu}$ is an automorphism. The image of $(\sS_{0,\mu}\otimes\id)\circ \Delta$ is contained in the subalgebra $\mathsf Y_{0,\mu}(Q,\sW,\mathcal C)\widehat{\otimes} \mathsf Y_{0}(Q,\sW,\mathcal C)$. Applying Lemma \ref{lem sufficient state spaces} to $\cH_0$ coming from adding $-\mu$ framing arrows to a state space with shift $\mu$, and $\{\cH_i\}_{i\in Q_0}$ 
a collection of admissible auxiliary spaces (regarded as state spaces), we see that the induced map 
$$\mathsf Y_{0}(Q,\sW,\mathcal C)\to \mathsf Y_{0,\mu}(Q,\sW,\mathcal C)\widehat{\otimes} \mathsf Y_{0}(Q,\sW,\mathcal C)$$ is injective. This implies the injectivity of $(\sS_{0,\mu}\otimes\id)\circ \Delta$.
\end{proof}
\begin{proof}[Proof of Proposition \ref{prop shift map inj}]
By Proposition \ref{prop shift compatibility}, there is commutative diagram 
\begin{equation*}
\xymatrix{
\mathsf Y_\mu(Q,\sW,\mathcal C) \ar[d]_{\sS_{\mu,\mu'}} \ar[rr]^-{\Delta_{0,\mu}} & & \mathsf Y_0(Q,\sW,\mathcal C)\widehat{\otimes} \mathsf Y_\mu(Q,\sW,\mathcal C) \ar[d]^{\sS_{0,\mu'-\mu}\otimes \tau^{\mu-\mu'}} \\
\mathsf Y_{\mu'}(Q,\sW,\mathcal C) \ar[rr]^-{\Delta_{\mu'-\mu,\mu}} & & \mathsf Y_{\mu'-\mu}(Q,\sW,\mathcal C)\widehat{\otimes} \mathsf Y_\mu(Q,\sW,\mathcal C).
}
\end{equation*}
The right vertical arrow is injective since $\tau^{\mu-\mu'}$ \eqref{equ on taunv} is an automorphism and $\sS_{0,\mu'-\mu}$ is injective (Lemma \ref{lem shift map inj}). The top horizontal arrow is injective because $(\varepsilon\otimes\id)\circ \Delta_{0,\mu}=\id$ (Remark \ref{rmk counit}). Thus $(\sS_{0,\mu'-\mu}\otimes \tau^{\mu-\mu'})\circ \Delta_{0,\mu}$ is injective, and it follows from the 
commutativity of the above diagram 
that $\sS_{\mu,\mu'}$ is injective.
\end{proof}

We deduce the following generalization of Lemma \ref{lem sufficient state spaces}, 
which will be used in \S \ref{sect on anti-auto}.
\begin{Proposition}\label{prop sufficient state spaces}
Assume that $\mathcal C$ is admissible. Let $\cH_0$ be a state space with shift $\bd_{\Out}-\bd_{\In}=\mu$, and let $\left\{\cH_i:=\cH^{\sW_i}_{\underline{\delta_i}}\right\}_{i\in Q_0}$ be a collection of admissible auxiliary spaces (regarded as state spaces). Then the natural map
\begin{align*}
    \mathsf Y_\mu(Q,\sW,\mathcal C)\longrightarrow \prod_{i_1,\cdots,i_n}\End^{\bZ/2}_{\bC(\mathsf t_0)[a_{0},a_{i_1},\ldots,a_{i_n}]}\left(\cH_0[a_0]\otimes\cH_{i_1}[a_{i_1}]\otimes\cdots\otimes \cH_{i_n}[a_{i_n}]\right)
\end{align*}
is injective.
\end{Proposition}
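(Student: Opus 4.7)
The plan is to reduce to the zero-shift case established in Lemma~\ref{lem sufficient state spaces} by means of the wrong-way shift homomorphism of Definition~\ref{def of wrong-way shift}, which by Lemma~\ref{lem wrong-way shift inj} is injective and which, through the identification $\mathsf Y_{0,\mu}\cong \mathsf Y_0$ of Proposition~\ref{prop shift map inj}, embeds $\mathsf Y_\mu(Q,\sW,\mathcal C)$ into $\mathsf Y_0(Q,\sW,\mathcal C)(\!(z^{-1})\!)$. Starting from the given shift-$\mu$ state space $\cH_0$ with framing $\underline{\bd_0}$, I would produce its shift-$0$ lift $\widetilde{\cH}_0$ corresponding to framing $\underline{\widetilde{\bd}_0}$, obtained by adjoining $-\mu\geqslant 0$ outgoing arrows at the $\cH_0$-nodes together with an auxiliary $\bC^*_z$ scaling those new arrows, and then consider the resulting vector bundle projection $\pi\colon \cM(\underline{\widetilde{\bd}_0})\to \cM(\underline{\bd_0})$ with zero section $i$.

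The key step is to verify, for every tuple $i_1,\ldots,i_n\in Q_0$, the identity
\[
\sS^{0,\mu}_z(x)\big|_{\widetilde{\cH}_0\otimes \cH_{i_1}\otimes\cdots\otimes\cH_{i_n}} \;=\; \widetilde\pi^{*}\,\circ\,\bigl(x\big|_{\cH_0\otimes \cH_{i_1}\otimes\cdots\otimes\cH_{i_n}}\bigr)\,\circ\,\widetilde i^{*},
\]
where $\widetilde\pi\colon \cM(\underline{\widetilde{\bd}_0}+\sum_j\underline{\delta_{i_j}})\to \cM(\underline{\bd_0}+\sum_j\underline{\delta_{i_j}})$ is the analogous vector bundle projection on the combined quiver variety, adding the $-\mu$ outgoing arrows only at the $\cH_0$-nodes and leaving the admissible auxiliary framings $\underline{\delta_{i_j}}$ untouched. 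This is, in substance, Lemma~\ref{lem shift map} applied to the combined framing, together with the observation that tensoring in the auxiliary factors commutes with $\pi^{*},\widetilde i^{*}$.

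Granting this identity, the hypothesis that $x$ annihilates every $\cH_0\otimes\cH_{i_1}\otimes\cdots\otimes\cH_{i_n}$ forces $\sS^{0,\mu}_z(x)$ to act as zero on every $\widetilde{\cH}_0\otimes\cH_{i_1}\otimes\cdots\otimes\cH_{i_n}$. Applying Lemma~\ref{lem sufficient state spaces} to $\mathsf Y_0$ with distinguished shift-$0$ state space $\widetilde{\cH}_0$ and the same admissible auxiliaries $\{\cH_i\}_{i\in Q_0}$ gives an injection of $\mathsf Y_0$ into the product of the corresponding endomorphism algebras, and this injection is preserved under the flat base change to $\bC(\mathsf t_0)(\!(z^{-1})\!)$. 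Via the isomorphism $\mathsf Y_{0,\mu}\cong\mathsf Y_0$ of Proposition~\ref{prop shift map inj}, we conclude $\sS^{0,\mu}_z(x)=0$, and injectivity of the wrong-way shift gives $x=0$. The main obstacle in this plan is the displayed identity: although morally clear, its verification requires carefully unpacking Definition~\ref{def of wrong-way shift} and the identification~\eqref{equ on endsurj}, while keeping track of the sign normalizers and the $\bZ^{Q_0}$-grading from~\eqref{equ on gra}.
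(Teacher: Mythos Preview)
Your proposal is correct and is essentially the paper's argument, which the paper compresses to the single line ``This follows from Lemmata~\ref{lem wrong-way shift inj} and~\ref{lem sufficient state spaces}.'' You have correctly unpacked what this means: inject $\mathsf Y_\mu$ via the wrong-way shift, lift $\cH_0$ to the zero-shift state space $\widetilde{\cH}_0$, and apply the zero-shift lemma.

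Two small remarks. First, your displayed identity is essentially the \emph{definition} of the wrong-way shift (Definition~\ref{def of wrong-way shift} together with the identification~\eqref{equ on endsurj}) applied to the combined state space, rather than Lemma~\ref{lem shift map}, which concerns the forward direction $\sS_{\mu,\mu';z}$; so there is nothing further to check beyond recognising that the combined state space $\cH_0\otimes\cH_{i_1}\otimes\cdots$ with framing $\underline{\bd_0}+\sum_j\underline{\delta_{i_j}}$ is a legitimate shift-$\mu$ state space whose lift is $\widetilde{\cH}_0\otimes\cH_{i_1}\otimes\cdots$. Second, your invocation of Proposition~\ref{prop shift map inj} can be slightly streamlined: since Lemma~\ref{lem sufficient state spaces} shows $\mathsf Y_0\to\prod\End(\widetilde{\cH}_0\otimes\cdots)$ is injective and this map factors through the surjection $\mathsf Y_0\twoheadrightarrow\mathsf Y_{0,\mu}$ (because $\widetilde{\cH}_0\otimes\cdots$ is by construction a state space coming from adding outgoing arrows), the map $\mathsf Y_{0,\mu}\to\prod\End(\widetilde{\cH}_0\otimes\cdots)$ is already injective without separately appealing to $\mathsf Y_{0,\mu}\cong\mathsf Y_0$. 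The sign normalizers and grading you worry about do not enter here, since $\pi^*$ and $i^*$ are mutually inverse isomorphisms at the level of critical cohomology.
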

\begin{proof}
This follows from Lemmata \ref{lem wrong-way shift inj} and \ref{lem sufficient state spaces}.
\end{proof}

\subsection{Explicit formulas for coproducts}\label{expl formula of coprod}
In this subsection, we write down explicit formulas for some elements under the coproduct $\Delta\colon \mathsf Y_0(Q,\sW)\to \mathsf Y_0(Q,\sW)\widehat{\otimes}\mathsf Y_0(Q,\sW)$.

By the construction of the coproduct in \eqref{stab induces coproduct}, we immediately see that
\begin{align}\label{coproduct e f h lowest}
    \Delta(X_{i,0})=X_{i,0}\otimes 1+1\otimes X_{i,0},\;\text{ for }\; X=e,f,h\,.
\end{align}
Here $X_{i,r}$ for $X=e,f,h$ are images of the corresponding generators of $\mathcal{D}\widetilde{\mathcal{SH}}_0(Q,\sW)$ under the map in Theorem \ref{thm drinfeld yangian map to rtt yangian}.

The coproduct of the Chern characters of the framing vector spaces is also primitive:
\begin{align}
    \Delta(\mathrm{ch}_k(\mathsf D_{\In,i}-\mathsf D_{\Out,i}))=\mathrm{ch}_k(\mathsf D_{\In,i}-\mathsf D_{\Out,i})\otimes 1+1\otimes\mathrm{ch}_k(\mathsf D_{\In,i}-\mathsf D_{\Out,i}).
\end{align}

The coproduct of the Chern characters of the tautological bundles are complicated, and we have the following explicit formula for the coproduct of the first Chern class.

\begin{Lemma}
For $\lambda\in \bC^{Q_0}$, let $c_1(\lambda)=\sum\lambda_i \:c_1(\mathsf V_i)$, then
\begin{align}\label{coproduct for 1st Chern class}
    \Delta c_1(\lambda)=c_1(\lambda)\otimes 1+1\otimes c_1(\lambda)-\sum_{\alpha>0}\alpha(\lambda)\sum_s (-1)^{|\alpha|}\: e_{-\alpha}^{(s)}\otimes e_{\alpha}^{(s)},
\end{align}
where the sum is taken for all positive roots, and $\left\{e_\alpha^{(s)}\right\}$ is the basis of $\mathfrak{g}_{\alpha}$ in Proposition \ref{prop g(Q,w)} (6). 
\end{Lemma}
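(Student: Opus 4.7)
The plan is to realize $c_1(\mathsf V_i)$ as a specific vacuum-vacuum matrix element of the Gauss-decomposed $R$-matrix on an admissible auxiliary space with framing $\delta_i$, apply the coproduct formula \eqref{stab induces coproduct}, and match the non-primitive part against the classical $R$-matrix \eqref{explicit r matrix}.

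For each $i\in Q_0$, I would choose an admissible auxiliary datum $\mathfrak c_i$ of framing $\delta_i$ with nonzero weight $t_i$ on the outgoing framing. By Theorem \ref{thm e f h as R matrix elements}, the $u^{-2}$ coefficient of $g_i(u)=\langle\mathbf 0|R^{\mathrm{sup}}(u)|\mathbf 0\rangle=c_{-1/u}((1-t_i^{-1})\mathsf V_i)$ shows that
\[
\mathsf E(|\mathbf 0\rangle\langle\mathbf 0|u)\;=\;-t_i\,c_1(\mathsf V_i)+\tfrac{t_i^2}{2}\mathsf v_i(\mathsf v_i+1)
\]
modulo scalars. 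Since $\mathsf v_i\in\mathfrak g_0$ is primitive, this reduces the computation of the non-primitive part of $\Delta(c_1(\mathsf V_i))$ to that of $\Delta(\mathsf E(|\mathbf 0\rangle\langle\mathbf 0|u))$ minus $t_i^2\,\mathsf v_i\otimes\mathsf v_i$, which is the non-primitive part of $\Delta(\tfrac{t_i^2}{2}\mathsf v_i^2)$.

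Applying \eqref{stab induces coproduct} with the matrix-element comultiplication $|\mathbf 0\rangle\langle\mathbf 0|\mapsto\sum_k|k\rangle\langle\mathbf 0|\otimes|\mathbf 0\rangle\langle k|$ and the identity $\mathsf E(Xu^{-1})=\str(X)$ at zero shift (nonzero only for $X=|\mathbf 0\rangle\langle\mathbf 0|$), the $i=\pm 1$ terms in \eqref{stab induces coproduct} contribute only the primitive part. The $i=0$ contribution decomposes as
\[
\mathsf E(|\mathbf 0\rangle\langle\mathbf 0|)\otimes\mathsf E(|\mathbf 0\rangle\langle\mathbf 0|)+\sum_{\alpha>0}\sum_{|j\rangle\in\cH_{\mathfrak c_i}(\alpha)}(-1)^{|\alpha|}\,\mathsf E(|\mathbf 0\rangle\langle j|)\otimes\mathsf E(|j\rangle\langle\mathbf 0|),
\]
where the first summand equals $t_i^2\,\mathsf v_i\otimes\mathsf v_i$ and cancels the non-primitive part of $\Delta(\tfrac{t_i^2}{2}\mathsf v_i^2)$ identified above.

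For the remaining sum, I would use the classical $R$-matrix expansion \eqref{explicit r matrix} together with $e_{-\alpha}^{(s)}|\mathbf 0\rangle=0$ (as $|\mathbf 0\rangle$ is lowest weight on $\cH_{\mathfrak c_i}$) to identify $\mathsf E(|\mathbf 0\rangle\langle j|)=\sum_s\langle j|e_\alpha^{(s)}|\mathbf 0\rangle\,e_{-\alpha}^{(s)}$ and an analogous formula for $\mathsf E(|j\rangle\langle\mathbf 0|)$. Completeness in $\cH_{\mathfrak c_i}(\alpha)$, the super-commutator $[e_\alpha^{(s)},e_{-\alpha}^{(s')}]=(-1)^{|\alpha|}\delta_{s,s'}h_\alpha$ from Proposition \ref{prop g(Q,w)}(8), and $h_\alpha|_{\cH_{\mathfrak c_i}(\mathbf 0)}=-\alpha_i t_i$ then contract the sum to $\alpha_i t_i(-1)^{|\alpha|}\sum_s e_{-\alpha}^{(s)}\otimes e_\alpha^{(s)}$. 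Dividing by $-t_i$ and summing $\lambda_i$ over $i\in Q_0$ produces the coefficient $\alpha(\lambda)=\sum_i\lambda_i\alpha_i$ with the correct overall minus sign. The main technical obstacle is the careful bookkeeping of signs from the super-trace formula $\str(|v\rangle\langle w|X)=(-1)^{|v|}\langle w|X|v\rangle$, the factor $(-1)^{|m_1||m_2|}$ in \eqref{stab induces coproduct}, and the residue convention for $\mathsf E$, which must all align to produce the final sign $-(-1)^{|\alpha|}$.
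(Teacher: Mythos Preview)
Your proposal is correct and follows essentially the same approach as the paper's proof: express $c_1(\mathsf V_i)$ via the vacuum matrix element $\mathsf E(|\mathbf 0\rangle\langle\mathbf 0|\,u)$ on an admissible auxiliary space at node $i$ (modulo $t_i\mathsf v_i(\mathsf v_i+1)/2$), apply the coproduct formula \eqref{stab induces coproduct}, and contract the non-primitive part using the classical $R$-matrix expansion and the commutator $[e_\alpha^{(s)},e_{-\alpha}^{(t)}]=(-1)^{|\alpha|}\delta_{s,t}h_\alpha$ evaluated on the vacuum. The only cosmetic difference is that you insert a complete basis of $\cH_{\mathfrak c_i}(\alpha)$ explicitly, whereas the paper writes the intermediate step as $\sum_{\alpha,\beta>0}\sum_{s,t}(-1)^{|\beta|}\langle\mathbf 0|e_{-\beta}^{(s)}e_\alpha^{(t)}|\mathbf 0\rangle\,e_{-\alpha}^{(t)}\otimes e_\beta^{(s)}$ (citing \cite[\S 10.1.2]{MO}) and then collapses it via the lowest-weight property $e_{-\beta}^{(s)}|\mathbf 0\rangle=0$; these are the same computation.
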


\begin{proof}
First, the Gauss decomposition \eqref{gauss decomp of R} yields
\begin{align*}
    c_1(\delta_i)=\frac{-1}{t_i}\mathsf E(|\mathbf 0\rangle\langle \mathbf 0|\: u)+t_i\frac{\mathsf v_i(\mathsf v_i+1)}{2}\:,
\end{align*}
where $\delta_i\in \bC^{Q_0}$ is the unit vector for the $i$-th coordinate, $|\mathbf 0\rangle$ is the vector defined in Notation \ref{ground and 1st excitation} for an admissible auxiliary datum $(\delta_i,\sT_0\curvearrowright\cM(\delta_i),\sw_i)$. Then \eqref{coproduct for 1st Chern class} for $\lambda=\delta_i$ follows from the same computation as \cite[\S 10.1.2]{MO}, namely,
\begin{align*}
\Delta\:\mathsf E(|\mathbf 0\rangle\langle \mathbf 0|\: u)&=\mathsf E(|\mathbf 0\rangle\langle \mathbf 0|\: u)\otimes 1+1\otimes \mathsf E(|\mathbf 0\rangle\langle \mathbf 0|\: u)+\sum_{j\in Q_0}\langle \mathbf 0|\mathsf d_j^2|\mathbf 0\rangle\:\mathsf v_j\otimes\mathsf v_j+
\sum_{\alpha,\beta>0}\sum_{s,t}(-1)^{|\beta|}\langle \mathbf 0|e^{(s)}_{-\beta}\:e^{(t)}_{\alpha}|\mathbf 0\rangle\: e^{(t)}_{-\alpha}\otimes e^{(s)}_{\beta}\\
&=\mathsf E(|\mathbf 0\rangle\langle \mathbf 0|\: u)\otimes 1+1\otimes \mathsf E(|\mathbf 0\rangle\langle \mathbf 0|\: u)+t_i^2 \mathsf v_i\otimes\mathsf v_i+t_i\sum_{\alpha>0}\alpha_i\sum_s (-1)^{|\alpha|} e_{-\alpha}^{(s)}\otimes e_{\alpha}^{(s)}\:,
\end{align*}
and 
\begin{align*}
    \Delta\:\frac{\mathsf v_i(\mathsf v_i+1)}{2}=\frac{\mathsf v_i(\mathsf v_i+1)}{2}\otimes 1+1\otimes\frac{\mathsf v_i(\mathsf v_i+1)}{2}+\mathsf v_i\otimes\mathsf v_i\:.
\end{align*}
For a general $\lambda$, \eqref{coproduct for 1st Chern class} follows by linearity.
\end{proof}

If we set
\begin{align*}
    \pmb r_+=\sum_{\alpha>0}\sum_s (-1)^{|\alpha|}\: e_{-\alpha}^{(s)}\otimes e_{\alpha}^{(s)}\,,
\end{align*}
then \eqref{coproduct for 1st Chern class} can be written as
\begin{align*}
\Delta c_1(\lambda)=c_1(\lambda)\otimes 1+1\otimes c_1(\lambda)+\left[\sum\lambda_i\mathsf v_i\otimes 1\,,\,\pmb r_+\right].
\end{align*}
Using the commutation relations $$[c_1(\mathsf V_i),e_{i,r}]=e_{i,r+1}, \quad [c_1(\mathsf V_i),f_{i,r}]=-f_{i,r+1},$$ 
and \eqref{ef rel_sym}, the formulas \eqref{coproduct e f h lowest} and \eqref{coproduct for 1st Chern class} determine $\Delta(X_{i,r})$ for $X=e,f,h$ and all $r>0$. We compute $\Delta(X_{i,1})$ explicitly as follows.
\begin{Proposition}
The coproducts of $X_{i,1}$ for $X=e,f,h$ are given by
\begin{equation}\label{coproduct e f h 1st nontrivial}
\begin{split}
\Delta(e_{i,1})&=e_{i,1}\otimes 1+1\otimes e_{i,1}+h_{i,0}\otimes e_{i,0}-\left[1\otimes e_{i,0}\,,\,\pmb r_+\right],\\
\Delta(f_{i,1})&=f_{i,1}\otimes 1+1\otimes f_{i,1}+f_{i,0}\otimes h_{i,0}+[f_{i,0}\otimes 1\,,\, \pmb r_+],\\
\Delta(h_{i,1})&=h_{i,1}\otimes 1+1\otimes h_{i,1}+h_{i,0}\otimes h_{i,0}+[h_{i,0}\otimes 1\,,\, \pmb r_+]\\
&=h_{i,1}\otimes 1+1\otimes h_{i,1}+h_{i,0}\otimes h_{i,0}+\sum_{\alpha>0}(\delta_i,\alpha)_{\pmb Q}\sum_s (-1)^{|\alpha|}\: e_{-\alpha}^{(s)}\otimes e_{\alpha}^{(s)},
\end{split}
\end{equation}
where $\delta_i\in \bC^{Q_0}$ is the unit vector for the $i$-th coordinate, $(\cdot,\cdot)_{\pmb Q}$ is the bilinear form induced by the matrix $\pmb Q$ in \eqref{bilinear form}, that is, $(a,b)=\sum a_k\,\pmb Q_{kl}\,b_l$.
\end{Proposition}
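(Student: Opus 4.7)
All three identities follow by bootstrapping from the known primitivity of $e_{i,0}$, $f_{i,0}$, $h_{i,0}$ in \eqref{coproduct e f h lowest} and the formula \eqref{coproduct for 1st Chern class} for $\Delta c_1(\mathsf V_i)$, combined with the fact that $\Delta$ is a superalgebra homomorphism and the Yangian identities
\begin{equation*}
e_{i,1}=[c_1(\mathsf V_i),e_{i,0}],\qquad f_{i,1}=-[c_1(\mathsf V_i),f_{i,0}],\qquad (-1)^{(\delta_i|\delta_i)}\gamma_i\, h_{i,1}=[e_{i,1},f_{i,0}]_s,
\end{equation*}
where the last one comes from \eqref{ef rel_sym} with $r=1,s=0$.

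For $\Delta(e_{i,1})$, I would apply $\Delta$ to $e_{i,1}=[c_1(\mathsf V_i),e_{i,0}]$ and plug in \eqref{coproduct for 1st Chern class} in the form $\Delta c_1(\mathsf V_i)=c_1(\mathsf V_i)\otimes 1+1\otimes c_1(\mathsf V_i)+[\mathsf v_i\otimes 1,\pmb r_+]$. The cross terms $[c_1(\mathsf V_i)\otimes 1,1\otimes e_{i,0}]$ and $[1\otimes c_1(\mathsf V_i),e_{i,0}\otimes 1]$ vanish because $c_1$ is of even parity and commutes with operators in the opposite tensor factor, leaving the primitive piece $e_{i,1}\otimes 1+1\otimes e_{i,1}$. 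The nontrivial contribution is the double commutator $[[\mathsf v_i\otimes 1,\pmb r_+],\Delta(e_{i,0})]$. The key simplification is the $\mathfrak g_{Q,\sW}$-invariance of the full classical $r$-matrix $\pmb r=\pmb r_{\mathrm{diag}}+\pmb r_++\pmb r_-$ together with primitivity of $e_{i,0}$, which forces $[\Delta(e_{i,0}),\pmb r]=0$. Using this to trade $[\Delta(e_{i,0}),\pmb r_+]$ against $-[\Delta(e_{i,0}),\pmb r_{\mathrm{diag}}+\pmb r_-]$, the diagonal contribution computed via \eqref{eq classical r} collects into $h_{i,0}\otimes e_{i,0}$, while the remaining piece reassembles as $-[1\otimes e_{i,0},\pmb r_+]$, giving the stated formula.

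The identity for $\Delta(f_{i,1})$ is derived by the same manipulation starting from $f_{i,1}=-[c_1(\mathsf V_i),f_{i,0}]$; the opposite sign and the fact that $\pmb r_+$ pairs $\mathfrak g_{-\alpha}$ in the first tensor factor with $\mathfrak g_{+\alpha}$ in the second produce $f_{i,0}\otimes h_{i,0}$ and $+[f_{i,0}\otimes 1,\pmb r_+]$. For $\Delta(h_{i,1})$, I would use the relation $(-1)^{(\delta_i|\delta_i)}\gamma_i h_{i,1}=[e_{i,1},f_{i,0}]_s$ and compute $[\Delta(e_{i,1}),\Delta(f_{i,0})]_s$ term by term, using the formula for $\Delta(e_{i,1})$ just established. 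The primitive parts of $\Delta(e_{i,1})$ with $\Delta(f_{i,0})$ produce the primitive part of $\Delta(h_{i,1})$ (after dividing by $(-1)^{(\delta_i|\delta_i)}\gamma_i$). The cross term $h_{i,0}\otimes e_{i,0}$ commuted with $1\otimes f_{i,0}$ yields via \eqref{ef rel_sym} a contribution proportional to $h_{i,0}\otimes h_{i,0}$. The bracket $-[1\otimes e_{i,0},\pmb r_+]$ commuted with $\Delta(f_{i,0})$ again reorganizes, via invariance of $\pmb r$ and the adjoint action of $\mathfrak g_0$, into $[h_{i,0}\otimes 1,\pmb r_+]$. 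The second equality in the displayed formula is then a direct computation: for $x\in\mathfrak g_\alpha$ one has $[h_{i,0},x]=-\alpha(h_{i,0})x$, and combining Proposition \ref{prop g(Q,w)}(7)--(8) with the symmetry of $\pmb Q$ gives $\alpha(h_{i,0})=-(\delta_i,\alpha)_{\pmb Q}$.

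The main obstacle is the careful super-sign bookkeeping---especially when $|i|=1$---and keeping track of the double commutator rearrangements in an operator-theoretic sense, since $\pmb r_+$ is a formal infinite sum over positive roots. Cancellations must be verified root-by-root, using that for fixed total weight only finitely many $(e_{\alpha}^{(s)},e_{-\alpha}^{(s)})$ contribute to any given matrix element on a state space. The invariance argument for $\pmb r$ is the clean organizing principle, but extracting the diagonal contribution from $\pmb r_{\mathrm{diag}}$ (with its explicit expression \eqref{eq classical r}) requires tracking the scalar $t_i$-weights that accompany $\mathsf v_i$ and $\mathsf d_i$ in the normalizations used in Section \ref{sec double of COHA}.
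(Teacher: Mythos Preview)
Your proposal is correct and follows essentially the same route as the paper: bootstrap from $e_{i,1}=[c_1(\mathsf V_i),e_{i,0}]$ using \eqref{coproduct for 1st Chern class}, and isolate $[\pmb r_+,\square(e_{i,0})]$ via the invariance $[\pmb r,\square(e_{i,0})]=0$ combined with a degree comparison against $\pmb r_{\mathrm{diag}}$ and $\pmb r_-$. The paper packages this as the clean claim $[\pmb r_+,\square(e_{i,0})]=-h_{i,0}\otimes e_{i,0}$ and $[\pmb r_+,\square(f_{i,0})]=f_{i,0}\otimes h_{i,0}$, then applies Jacobi; for $h_{i,1}$ it uses $[e_{i,0},f_{i,1}]$ rather than your $[e_{i,1},f_{i,0}]$, but the two are interchangeable and the cancellation mechanism is the same.
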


\begin{proof}
Since $\Delta$ is an algebra homomorphism, we have
\begin{align*}
\Delta(e_{i,1})=[\Delta(c_1(\mathsf V_i)),\Delta(e_{i,0})]=\left[\square(c_1(\mathsf V_i)) +\left[\mathsf v_i\otimes 1\,,\,\pmb r_+\right]\,,\, \square(e_{i,0})\right]\,,
\end{align*}
where $\square(x)=x\otimes 1+1\otimes x$. It is easy to see that $[\square(c_1(\mathsf V_i)),\square(e_{i,0})]=\square(e_{i,1})$ and $[\mathsf v_i\otimes 1,\square(e_{i,0})]=e_{i,0}\otimes 1$. 

We claim that
\begin{align*}
[\pmb r_+,\square(e_{i,0})]=-h_{i,0}\otimes e_{i,0}\,,\quad [\pmb r_+,\square(f_{i,0})]=f_{i,0}\otimes h_{i,0}\,.
\end{align*}
In fact, the classical Yang-Baxter equation \eqref{CYBE} implies that $[\pmb r,\square(x)]=0$ for any $x\in \mathfrak{g}_{Q,\sW}$. Write $\pmb r=\pmb r_{\mathrm{diag}}+\pmb r_++\pmb r_-$, then we have
\begin{align*}
[\pmb r_+,\square(e_{i,0})]\in \mathfrak{g}_{\leqslant0}\otimes \mathfrak{g}_{>0}\,,\quad [\pmb r_{\mathrm{diag}},\square(e_{i,0})]=h_{i,0}\otimes e_{i,0}+e_{i,0}\otimes h_{i,0}\,,\quad [\pmb r_-,\square(e_{i,0})]\in \mathfrak{g}_{>0}\otimes \mathfrak{g}_{\leqslant0}\,,
\end{align*}
where $\mathfrak{g}_{\gtrless 0}=\bigoplus_{\eta \gtrless 0}\mathfrak{g}_\eta$, $\mathfrak{g}_{\geqslant 0}=\mathfrak{g}_{> 0}\oplus \mathfrak{g}_{0}$, and $\mathfrak{g}_{\leqslant 0}=\mathfrak{g}_{< 0}\oplus \mathfrak{g}_{0}$. Comparing degrees, we get $[\pmb r_+,\square(e_{i,0})]=-h_{i,0}\otimes e_{i,0}$. A similar argument shows $[\pmb r_+,\square(f_{i,0})]=f_{i,0}\otimes h_{i,0}$. This proves the claim.

Then
\begin{align*}
\Delta(e_{i,1})&=\square(e_{i,1})-\left[\mathsf v_i\otimes 1\,,\,h_{i,0}\otimes e_{i,0}\right]+\left[e_{i,0}\otimes 1\,,\,\pmb r_+\right]=\square(e_{i,1})+\left[e_{i,0}\otimes 1\,,\,\pmb r_+\right]\\
&=\square(e_{i,1})+h_{i,0}\otimes e_{i,0}-\left[1\otimes e_{i,0}\,,\,\pmb r_+\right],
\end{align*}
and 
\begin{align*}
\Delta(f_{i,1})&=\left[\square(f_{i,0})\,,\, \square(c_1(\mathsf V_i)) +\left[\mathsf v_i\otimes 1\,,\,\pmb r_+\right]\right]=\square(f_{i,1})+\left[f_{i,0}\otimes 1\,,\,\pmb r_+\right]-\left[\mathsf v_i\otimes 1\,,\,f_{i,0}\otimes h_{i,0}\right]\,\\
&=\square(f_{i,1})+\left[f_{i,0}\otimes 1\,,\,\pmb r_+\right]+f_{i,0}\otimes h_{i,0}\,.
\end{align*}
Using the relation $[e_{i,0},f_{i,1}]=(-1)^{(\delta_i|\delta_i)}\gamma_i h_{i,1}$, we get
\begin{align*}
\Delta(h_{i,1})&=(-1)^{(\delta_i|\delta_i)}\gamma_i^{-1}[\Delta(e_{i,0}),\Delta(f_{i,1})]\\
&=\square(h_{i,1})+h_{i,0}\otimes h_{i,0}+[h_{i,0}\otimes 1\,,\, \pmb r_+]+(-1)^{(\delta_i|\delta_i)+|i|}\gamma_i^{-1}\left([f_{i,0}\otimes 1\,,\, [\square(e_{i,0}),\pmb r_+]]+f_{i,0}\otimes [e_{i,0},h_{i,0}]\right).
\end{align*}
The term inside the last bracket vanishes since $[f_{i,0}\otimes 1\,,\, [\square(e_{i,0}),\pmb r_+]]=[f_{i,0}\otimes 1\,,\, h_{i,0}\otimes e_{i,0}]=-\pmb Q_{ii}f_{i,0}\otimes e_{i,0}$ and $f_{i,0}\otimes [e_{i,0},h_{i,0}]=\pmb Q_{ii}f_{i,0}\otimes e_{i,0}$. Finally, direct computation shows that \begin{equation*}[h_{i,0}\otimes 1\,,\, \pmb r_+]=\sum_{\alpha>0}(\delta_i,\alpha)_{\pmb Q}\sum_s (-1)^{|\alpha|}\: e_{-\alpha}^{(s)}\otimes e_{\alpha}^{(s)}. \qedhere  \end{equation*}
\end{proof}

\begin{Remark}\label{rmk on copr compa}
The coproduct formulas \eqref{coproduct e f h lowest} and \eqref{coproduct e f h 1st nontrivial} resemble those coproduct formulas in \cite[Def.\,4.6]{GNW} for a symmetrizable Kac-Moody Lie algebra $\mathfrak{g}_{A,D}$ associated to a generalized Cartan matrix $A$ with symmetrizer $D$ (see Appendix \S \ref{sec Y(sym KM)} for definition). We expect that the composition of algebra homomorphisms 
\begin{equation}\label{equ on bialg map for sym km}\varrho\colon Y_{0}(\mathfrak{g}_{A,D})\otimes_{\bC[\hbar]}\bC(\hbar)\xrightarrow{\text{Ex.\ref{ex on gen symi}}} \mathcal D\widetilde{\mathcal{SH}}_0(\widetilde{Q},\sW)\otimes_{\bC[\hbar]}\bC(\hbar)\xrightarrow{\text{Thm.\ref{thm drinfeld yangian map to rtt yangian}}} \mathsf Y_0(\widetilde{Q},\sW) \end{equation} is a bialgebra homomorphism, where $(\widetilde{Q},\sW)$ is the quiver with potential in Appendix \S \ref{sec Y(sym KM)}. 

We note that the induced map $\bar\varrho\colon \mathfrak{g}_{A,D}\to \mathfrak{g}_{\widetilde{Q},\sW}$ is always injective\footnote{This can be proven as follows. The Cartan subalgebra of $\mathfrak{g}_{A,D}$ is spanned by $\{\psi_{i,0}\}_{i\in \widetilde{Q}_0}$ in Example \ref{ex on gen symi}, and $\bar\varrho(\psi_{i,0})=h_{i,0}$, so $\bar\varrho$ is injective on the Cartan subalgebra. Since $\bar\varrho$ is $\bZ^{\widetilde{Q}_0}$-graded, so is $\ker(\bar\varrho)$. Suppose $x\in\ker(\bar\varrho)$ is a nontrivial element in the $\alpha$ root space, then by \cite[Thm.~2.2]{Kac} there exists $y$ in $-\alpha$ root space of $\mathfrak{g}_{A,D}$ such that $[x,y]\neq 0$. Since $[x,y]$ is in the Cartan subalgebra and $\bar\varrho([x,y])=0$, $[x,y]=0$, a contradiction.}. By comparing \eqref{coproduct e f h lowest} \eqref{coproduct e f h 1st nontrivial} with \cite[(4.7)]{GNW}, $\varrho$ is a bialgebra homomorphism if and only if any root space $\mathfrak{g}_\eta\subset \mathfrak{g}_{\widetilde{Q},\sW}$ with $(\delta_i,\eta)_{\pmb Q}\neq 0$ for some $i\in \widetilde{Q}_0$ is contained in the image of $\bar{\varrho}$. This is the case for the tripled affine type $A^{(1)}_{n-1}$ ($n>2$) quiver with canonical cubic potential \cite{Jin}, so the natural map
\begin{equation}\label{equ on bialg map for affine type A}\varrho\colon Y_{0}(\widehat{\mathfrak{sl}}_n)\otimes_{\bC[\hbar,\varepsilon]}\bC(\hbar,\varepsilon)\xrightarrow{\text{Ex.\ref{ex on vv ex}}} \mathcal D\widetilde{\mathcal{SH}}_0(Q,\sW)\otimes_{\bC[\hbar,\varepsilon]}\bC(\hbar,\varepsilon)\xrightarrow{\text{Thm.\ref{thm drinfeld yangian map to rtt yangian}}} \mathsf Y_0(Q,\sW) \end{equation}
is a bialgebra homomorphism, where $(Q,\sW)$ is the quiver with potential associated to the parity sequence $(s_i=1:1\leqslant i\leqslant n)$ in Appendix \S \ref{sec super affine Yangian}.

Besides the affine type $A^{(1)}_{n-1}$ ($n>2$) cases, we also have the compatibility of coproducts in the finite type cases.

\end{Remark}

\begin{Theorem}\label{thm on copr compa}
When $A$ is of finite type and indecomposable,~i.e.~$\mathfrak{g}_{A,D}$ is a finite dimensional simple Lie algebra, then the map 
\eqref{equ on bialg map for sym km}:
$$\varrho\colon Y_{0}(\mathfrak{g}_{A,D})_{\loc}\to \mathsf Y_0(\widetilde{Q},\sW)$$
is an injective bialgebra homomorphism, where $Y_{0}(\mathfrak{g}_{A,D})_{\loc}=Y_{0}(\mathfrak{g}_{A,D})\otimes_{\bC[\hbar]}\bC(\hbar)$. Moreover, there is a bialgebra isomorphism
\begin{align*}
\widetilde{\varrho}\colon Y_{0}(\mathfrak{g}_{A,D})_{\loc}\otimes \bC[\mathsf d_{i,k}: i\in \widetilde{Q}_0,\, k\in \bZ_{\geqslant 0}]\cong \mathsf Y_0(\widetilde{Q},\sW).
\end{align*}
Here $\mathsf d_{i,k}$ are primitive for all $i$ and $k$, $\widetilde{\varrho}$ on the first component is $\varrho$, and $\widetilde{\varrho}(\mathsf d_{i,k})$ is the multiplication by $\mathrm{ch}_{k+1}(\mathsf D_{\In,i}-\mathsf D_{\Out,i})$ operator when acting on a state space with framing vector spaces $\mathsf D_{\In}, \mathsf D_{\Out}$.
\end{Theorem}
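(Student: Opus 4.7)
The plan breaks into three components: injectivity of $\varrho$, compatibility with the coproducts, and the explicit tensor factorization. I will use the PBW theorem for both Yangians, the root-space surjectivity of $\bar\varrho$, and centrality of the framing Chern characters.

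First I would establish injectivity of $\varrho$ purely formally, by PBW. The Yangian $Y_0(\mathfrak{g}_{A,D})_{\loc}$ carries Drinfeld's filtration whose associated graded is $U(\mathfrak{g}_{A,D}[u])$, and by Theorem \ref{thm grY} the corresponding filtration on $\mathsf Y_0(\widetilde Q,\sW)$ has associated graded $U(\mathfrak{g}_{\widetilde Q,\sW}[u])$. The map $\varrho$ is filtered, and its associated graded is $U(\bar\varrho[u])$. Since $\bar\varrho\colon \mathfrak{g}_{A,D}\hookrightarrow \mathfrak{g}_{\widetilde Q,\sW}$ is injective (footnote in Remark \ref{rmk on copr compa}), so is $U(\bar\varrho[u])$, hence $\varrho$ is injective.

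Next, for the bialgebra property, I would invoke the criterion extracted in Remark \ref{rmk on copr compa}: $\varrho$ is a coalgebra map iff every $\mathfrak{g}_\eta\subset \mathfrak{g}_{\widetilde Q,\sW}$ with $(\delta_i,\eta)_{\pmb Q}\neq 0$ for some $i$ lies in the image of $\bar\varrho$. For $\mathfrak{g}_{A,D}$ of finite simple type, the Killing form on the root lattice is non-degenerate, so the condition $(\delta_i,\eta)_{\pmb Q}\neq 0$ holds for \emph{every} nonzero root $\eta$ expressible in the $\mathfrak{g}_{A,D}$ root system. Hence the criterion reduces to showing that $\bar\varrho$ is surjective on all root spaces. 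I would invoke Theorem \ref{thm compare with MO yangian}, which identifies $\mathfrak{g}_{\widetilde Q,\sW}$ with the Maulik--Okounkov Lie algebra of the underlying Dynkin quiver $Q$, together with the Botta--Davison theorem \cite{BD} identifying the positive part of the MO Lie algebra with the Davison--Meinhardt BPS Lie algebra. In the Dynkin case, the BPS Lie algebra is known to coincide with $\mathfrak{g}_{A,D}^+$, so $\bar\varrho(\mathfrak{g}_{A,D}^+) = \mathfrak{g}_{\widetilde Q,\sW}^+$ and symmetrically for the negative half; combined with the computation of the Cartan in Proposition \ref{prop g(Q,w)}, this gives surjectivity on every root space. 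This identification of root spaces is the \emph{main obstacle}: it requires combining two nontrivial inputs (MO's theorem and the BPS computation for Dynkin quivers), and its direct verification ultimately rests on the absence of imaginary roots in finite type together with the matching of multiplicities through CoHA methods.

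Finally, for the bialgebra isomorphism $\widetilde\varrho$, I would first note that each operator $\mathsf d_{i,k}$ (multiplication by $\mathrm{ch}_{k+1}(\mathsf D_{\In,i}-\mathsf D_{\Out,i})$) is central in $\mathsf Y_0(\widetilde Q,\sW)$: on every state space it acts as a scalar determined by the framing, and the framing is preserved by all generators. Centrality combined with the additive behavior of Chern characters on direct sums of framings shows that $\mathsf d_{i,k}$ is primitive, so the subalgebra $\bC[\mathsf d_{i,k}]$ is a cocommutative primitive Hopf subalgebra. By Theorem \ref{thm grY}, $\mathsf Y_0(\widetilde Q,\sW)$ is generated by $\mathfrak{g}_{\widetilde Q,\sW}$ and the tautological classes $\mathrm{ch}_k(\mathsf V_i)$ and $\mathrm{ch}_{k+1}(\mathsf D_{\In,i}-\mathsf D_{\Out,i})$; the first family is generated by the $h_{i,k}$ modulo lower order terms (using the explicit formula \eqref{cartan act_sym} and the decomposition of $\mathfrak{g}_0$ in Proposition \ref{prop g(Q,w)}), hence lies in the image of $\varrho$, and the second family is exactly $\{\mathsf d_{i,k}\}$. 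This gives surjectivity of $\widetilde\varrho$. For injectivity, the tensor product $Y_0(\mathfrak{g}_{A,D})_{\loc}\otimes \bC[\mathsf d_{i,k}]$ inherits a PBW-type filtration whose associated graded is $U(\mathfrak{g}_{A,D}[u])\otimes \bC[\mathsf d_{i,k}] = U(\mathfrak{g}_{\widetilde Q,\sW}[u])$ by the Cartan decomposition argument above, matching Theorem \ref{thm grY}; a graded-dimension count then yields injectivity and hence the claimed isomorphism.
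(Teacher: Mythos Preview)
Your overall structure (injectivity via PBW, bialgebra via the root-space criterion of Remark \ref{rmk on copr compa}, tensor factorization via associated graded) matches the paper's, but there is a genuine gap in the bialgebra step.

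You invoke Theorem \ref{thm compare with MO yangian} and the Botta--Davison theorem to identify $\mathfrak{g}_{\widetilde Q,\sW}^+$ with $\mathfrak{g}_{A,D}^+$. However, Theorem \ref{thm compare with MO yangian} concerns a tripled quiver with the \emph{canonical cubic potential} $\widetilde\sW$. For non-simply-laced $(A,D)$, the GLS potential $\sW$ in Appendix \ref{sec Y(sym KM)} involves higher powers $\Phi_i^{l_{ij}}$ and is \emph{not} the canonical cubic potential, so neither Theorem \ref{thm compare with MO yangian} nor \cite{BD} applies to $(\widetilde Q,\sW)$. Your argument therefore only covers the simply-laced ADE types and leaves B, C, F$_4$, G$_2$ unproved. (Also, your sentence ``the condition $(\delta_i,\eta)_{\pmb Q}\neq 0$ holds for every nonzero root $\eta$ expressible in the $\mathfrak{g}_{A,D}$ root system'' is not yet the right statement: since $\pmb Q=-\hbar DA$ is invertible in finite type, the condition fails for \emph{no} nonzero $\eta$ whatsoever, so the criterion demands that \emph{all} nonzero root spaces of $\mathfrak{g}_{\widetilde Q,\sW}$ lie in $\mathrm{im}(\bar\varrho)$---a priori these need not be roots of $\mathfrak{g}_{A,D}$.)

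The paper avoids this difficulty by a direct, elementary argument that works uniformly. It shows $\mathfrak{g}_{\widetilde Q,\sW}$ is \emph{finite dimensional}: by \cite[Cor.~5.2, Rmk.~5.3]{VV3}, for the admissible auxiliary datum $(\delta_i,\sW_i=\sW+B\Phi_iA)$ the critical locus $\Crit_{\cM(\bv,\delta_i)}(\tr\sW_i)$ is nonempty for only finitely many $\bv$, so $\cH^{\sW_i}_{\underline{\delta_i},\loc}$ is finite dimensional, and injectivity of $\mathfrak{g}_{\widetilde Q,\sW}^+\hookrightarrow \bigoplus_i \cH^{\sW_i}_{\underline{\delta_i},\loc}$ (from the proof of Lemma \ref{lem sufficient state spaces}) gives $\dim\mathfrak{g}_{\widetilde Q,\sW}^+<\infty$. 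A finite-dimensional Lie algebra over $\bC(\hbar)$ with a nondegenerate invariant form is reductive; the nilradical of its Borel is generated by simple root vectors, hence $\mathfrak{g}_{\widetilde Q,\sW}^+$ is generated by $\{e_{i,0}\}_{i\in\widetilde Q_0}$, forcing $\bar\varrho(\mathfrak{g}_{A,D}^\pm)=\mathfrak{g}_{\widetilde Q,\sW}^\pm$. This establishes the criterion without any appeal to \cite{BD} or the MO Lie algebra. Your treatment of the $\widetilde\varrho$ isomorphism is essentially the paper's; the key computation making the Cartan dimension count work is $\pmb Q=-\hbar DA$ invertible, which implies $\mathsf v_i\in\mathrm{span}\{\mathsf d_j,h_{j,0}\}$ and hence $(\bar\varrho,\iota)\colon \mathfrak{g}_{A,D}\oplus\mathfrak d\to\mathfrak{g}_{\widetilde Q,\sW}$ is an isomorphism.
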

\begin{proof}
We claim that $\mathfrak{g}_{\widetilde{Q},\sW}$ is finite dimensional. Consider the dimension vector $\delta_i$ and symmetric quiver variety $\cM(\bv,\delta_i)$ with framed potential $\sW_i=\sW+B\Phi_iA$ where $A$ is the in-coming framing, $\Phi_i$ is the edge loop of node $i$, and $B$ is the out-going framing. According to the proof of \cite[Cor.~5.2]{VV3} and \cite[Rmk.~5.3]{VV3}, there are only finitely many $\bv$ such that $\Crit_{\cM(\bv,\delta_i)}(\tr\sW_i)$ is nonempty. In particular, $\cH^{\sW_i}_{\underline{\delta_i},\loc}=\bigoplus_{\bv}H^{\sT_0}(\cM(\bv,\delta_i),\tr\sW_i)_\loc$ is a finite dimensional $\bC(\hbar)$ vector space. The proof of Lemma \ref{lem sufficient state spaces} implies that the $\mathfrak{g}_{\widetilde{Q},\sW}$-action map
\begin{align*}
\mathfrak{g}_{\widetilde{Q},\sW}^+\to \bigoplus_{i\in \widetilde{Q}_0}\cH^{\sW_i}_{\underline{\delta_i},\loc}\,,\quad \xi\in \mathfrak{g}_{\widetilde{Q},\sW}^+\mapsto (\xi\,|\mathbf 0\rangle_i)_{i\in \widetilde{Q}_0}
\end{align*}
is injective. This implies that $\mathfrak{g}_{\widetilde{Q},\sW}^+$ is finite dimensional. So $\dim\mathfrak{g}_{\widetilde{Q},\sW}=2\dim \mathfrak{g}_{\widetilde{Q},\sW}^++2|Q_0|<\infty$.

Since $\mathfrak{g}_{\widetilde{Q},\sW}$ is a finite dimensional $\bC(\hbar)$ Lie algebra with a nondegenerate symmetric bilinear form, it is reductive, together with a triangular decomposition $\mathfrak{g}_{\widetilde{Q},\sW}=\mathfrak{g}_{\widetilde{Q},\sW}^+\oplus \mathfrak{g}_{0}\oplus \mathfrak{g}_{\widetilde{Q},\sW}^-$. The root grading on $\mathfrak{g}_{\widetilde{Q},\sW}$ is the same as the $\widetilde{Q}_0$ grading. It is well-known that the nilradical of the Borel subalgebra of a reductive Lie algebra is generated by the simple root vectors, so $\mathfrak{g}_{\widetilde{Q},\sW}^+$ is generated by $\{e_{i,0}\}_{i\in \widetilde{Q}_0}$. This implies that the induced map $\bar\varrho\colon \mathfrak{g}_{A,D}\to \mathfrak{g}_{\widetilde{Q},\sW}$ maps $\mathfrak{g}_{A,D}^{\pm}$ surjectively onto $\mathfrak{g}_{\widetilde{Q},\sW}^{\pm}$, and by Remark \ref{rmk on copr compa}, $\bar\varrho$ induces isomorphisms $\mathfrak{g}_{A,D}^{\pm}\cong\mathfrak{g}_{\widetilde{Q},\sW}^{\pm}$ and $\varrho$ is a bialgebra homomorphism.

Consider a bialgebra $\mathfrak{D}:=\bC[\mathsf d_{i,k}: i\in \widetilde{Q}_0,\, k\in \bZ_{\geqslant 0}]$ which we require that $\mathsf d_{i,k}$ are primitive for all $i$ and $k$. $\mathfrak{D}$ is a sub-bialgebra of $\mathsf Y_0(\widetilde{Q},\sW)$ via the identification 
$$\mathsf d_{i,k}\mapsto \mathrm{multiplication}\, \,\mathrm{by}\,\, \mathrm{ch}_{k+1}(\mathsf D_{\In,i}-\mathsf D_{\Out,i})\,\,\mathrm{operator}. $$ 
Then the map 
\begin{align*}
\widetilde{\varrho}\colon Y_{0}(\mathfrak{g}_{A,D})_{\loc}\otimes \bC[\mathsf d_{i,k}: i\in \widetilde{Q}_0,\, k\in \bZ_{\geqslant 0}]\to \mathsf Y_0(\widetilde{Q},\sW)
\end{align*}
induced by $\varrho$ and the inclusion $\mathfrak{D}\subset \mathsf Y_0(\widetilde{Q},\sW)$ is a bialgebra homomorphism. The source of $\widetilde{\varrho}$ is filtered by setting $\deg \mathsf d_{i,k}=k$ and $\deg X_{r}=r$ for $X=x^\pm_i,\psi_i$, the target of $\widetilde{\varrho}$ is also filtered as in \S \ref{subsec filtration and PBW}. It is easy to see that $\widetilde{\varrho}$ preserves the filtration, so it induces
\begin{align*}
\mathrm{gr}\,\widetilde{\varrho}\colon \mathrm{gr}\,Y_{0}(\mathfrak{g}_{A,D})_{\loc}\otimes \mathrm{gr}\,\mathfrak{D}\to \mathrm{gr}\,\mathsf Y_0(\widetilde{Q},\sW).
\end{align*}
The source of $\mathrm{gr}\,\widetilde{\varrho}$ is isomorphic to $\mathcal U(\mathfrak{g}_{A,D}[u]\oplus \mathfrak{d}[u])$, where $\mathfrak{d}=\bigoplus_i \bC(\hbar)\cdot\mathsf d_i$ (see Proposition \ref{prop g(Q,w)}) which we identify $\mathsf d_i=\mathsf d_{i,0}$, and the target of $\mathrm{gr}\,\widetilde{\varrho}$ is isomorphic to $\mathcal U(\mathfrak{g}_{\widetilde{Q},\sW}[u])$. Moreover, $\mathrm{gr}\,\widetilde{\varrho}$ is induced from the Lie algebra homomorphism $\mathfrak{g}_{A,D}\oplus \mathfrak{d}\xrightarrow{(\bar{\varrho},\iota)} \mathfrak{g}_{\widetilde{Q},\sW}$, where $\iota$ is the natural embedding of $\mathfrak{d}$ in $\mathfrak{g}_{\widetilde{Q},\sW}$.
Direct computation shows that $\pmb Q=-\hbar \,DA$. In particular, $\pmb Q$ is invertible. It follows that $\mathsf v_i$ is in the linear span of $\mathsf d_i$ and $\{h_{j,0}\}_{j\in \widetilde{Q}_0}$. Thus, $(\bar{\varrho},\iota)$ is surjective and therefore must be an isomorphism since the two sides have the same dimension. So $\mathrm{gr}\,\widetilde{\varrho}$ is an isomorphism. Therefore, $\widetilde{\varrho}$ is an isomorphism.
\end{proof}


\subsection{Any admissible \texorpdfstring{$\mathcal C$}{C} is enough}\label{sec Y(Q,C)=Y(Q,w)}

\begin{Theorem}
\label{thm admissible}
If $\mathcal C$ is admissible, then for any $\mu\in \bZ_{\leqslant 0}^{Q_0}$, we have $\mathsf Y_\mu(Q,\sW,\mathcal C)=\mathsf Y_\mu(Q,\sW)$.
\end{Theorem}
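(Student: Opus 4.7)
The inclusion $\mathsf Y_\mu(Q,\sW,\mathcal C) \subseteq \mathsf Y_\mu(Q,\sW)$ is immediate from the definition. For the reverse, I must show that for every auxiliary datum $\mathfrak c' = (\bd', \sT_0 \curvearrowright \cM(\bd'), \sW'^{\mathrm{fr}})$ and every $m \in \End^{\bZ/2}_{\bC(\mathsf t_0)}(\mathcal H_{\mathfrak c',\loc})[u]$, the operator $\mathsf E(m)$ lies already in $\mathsf Y_\mu(Q,\sW,\mathcal C)$. Since $\mathcal C \cup \{\mathfrak c'\}$ is again admissible, Proposition \ref{prop gen by deg 0 and Cartan} applied to this enlarged set tells me that $\mathsf Y_\mu(Q,\sW,\mathcal C \cup \{\mathfrak c'\})$ is generated by $\mathsf Y^0_\mu$ together with $\mathsf E^\pm(m)$ for $m$ constant in $u$. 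Lemma \ref{lem Y^0 generators} identifies $\mathsf Y^0_\mu$ with multiplication by tautological classes, which is manifestly contained in $\mathsf Y_\mu(Q,\sW,\mathcal C)$. Hence it suffices to handle $\mathsf E^\pm(m)$ with constant $m$ drawn from $\End(\mathcal H_{\mathfrak c',\loc})$.

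The next step is to reduce the framing $\bd'$ to minuscule form $\delta_i$. Writing $\bd' = \sum_{\alpha=1}^N \delta_{i_\alpha}$, I turn on a framing torus $\mathsf A \cong (\bC^*)^N$ that scales the $\delta_{i_\alpha}$ blocks with distinct weights $\mathsf a_\alpha$, so that $\cM(\bd')^{\mathsf A} = \prod_\alpha \cM(\delta_{i_\alpha})$. The potential $\sW'^{\mathrm{fr}}$ restricts to a sum $\boxplus_\alpha \sW_\alpha$ of potentials on the factors (if necessary, after modifying $\sW'^{\mathrm{fr}}$ by an $\mathsf A$-invariant term supported away from $\cM(\bd')^{\mathsf A}$, which does not change the critical cohomology). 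By the triangle lemma (Theorem \ref{thm AFSQV}) applied on the auxiliary side, the $R$-matrix factors in the stable basis:
\[
\Stab_{\fC}^{-1} \, R^{\mathrm{sup}}_{\mathcal H_{\mathfrak c'},\, \mathcal H^{\sW^{\mathrm{fr}}}_{\underline{\bd}, \sA^{\mathrm{fr}}}}(u) \, \Stab_{\fC} \;=\; \prod_{\alpha=1}^{N} R^{\mathrm{sup}}_{\mathcal H_{\mathfrak c_\alpha},\, \mathcal H^{\sW^{\mathrm{fr}}}_{\underline{\bd}, \sA^{\mathrm{fr}}}}(u + \mathsf a_\alpha).
\]
Taking the super-trace against $m$, expanded in the Thom--Sebastiani tensor basis of $\bigotimes_\alpha \mathcal H_{\mathfrak c_\alpha, \loc}$, expresses $\mathsf E(m)$ as a noncommutative polynomial in matrix elements of the individual factors $R^{\mathrm{sup}}_{\mathcal H_{\mathfrak c_\alpha}, -}$, multiplied by rational functions of the $\mathsf a_\alpha$ that must cancel since the total is independent of $\mathsf a$. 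This reduces the problem to the case $\bd' = \delta_i$.

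For minuscule $\mathfrak c' = (\delta_i, \sT_0 \curvearrowright', \sW'_i)$, the Gauss decomposition of Theorem \ref{thm e f h as R matrix elements} expresses the $2\times 2$ block of $R^{\mathrm{sup}}(u)$ on the span of $|\mathbf 0\rangle, |\delta_i\rangle$ entirely in terms of the Drinfeld currents $e_i(u), f_i(u), h_i(u)$ and the tautological $g_i(u)$, all of which lie in $\mathsf Y_\mu(Q,\sW,\mathcal C)$ by Theorem \ref{thm drinfeld yangian map to rtt yangian}; changes in the $\sT_0$-action or potential between $\mathfrak c'$ and the admissible $\mathfrak c_i \in \mathcal C$ only rescale the Gauss factors by rational functions in $\mathsf t_0$, preserving the $\bC(\mathsf t_0)$-subalgebra. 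For $m$ supported outside the $2\times 2$ block (i.e.\ mapping between $\mathcal H_{\mathfrak c'}(\bv_1)$ and $\mathcal H_{\mathfrak c'}(\bv_2)$ with $|\bv_1| + |\bv_2| \geqslant 1$ beyond the ground/$\delta_i$ vectors), I argue recursively by splitting the gauge group through an internal cocharacter and invoking the super Yang--Baxter equation \eqref{super YBE}, which forces the higher matrix elements to be polynomial expressions in the $2\times 2$ entries and the Cartan generators.

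The main obstacle will be this last step. While the factorization of the framing via $\mathsf A$ is clean, there is no analogous external splitting for the purely gauge-theoretic components $\mathcal H_{\mathfrak c'}(\bv)$ with $|\bv| \geqslant 2$, whose structure depends delicately on $\sW'_i$. Making this rigorous will likely require either (i) a deformation/specialization of the potential combined with Proposition \ref{prop sufficient state spaces} to recognize that two operators agreeing on all admissible tensor products must be equal, or (ii) an RTT-type induction using the super Yang--Baxter equation to extract all higher matrix coefficients from the $2\times 2$ block.
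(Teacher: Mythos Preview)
Your approach has a genuine gap, and the paper's proof proceeds along entirely different lines.

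The difficulty you identify in your final paragraph is real and fatal to the direct-reduction strategy. Once you have reduced to a minuscule auxiliary datum $\mathfrak c' = (\delta_i, \sT_0\curvearrowright', \sW'_i)$, the auxiliary space $\mathcal H_{\mathfrak c'}$ can still be infinite-dimensional, with graded pieces $\mathcal H_{\mathfrak c'}(\bv)$ for all $\bv \in \bZ_{\geqslant 0}^{Q_0}$. There is no framing-torus mechanism that further decomposes these gauge-theoretic components, and your proposed ``internal cocharacter'' splitting does not exist: gauge dimensions are not split by flavor tori. An RTT induction extracting higher matrix elements from the $2\times 2$ block would amount to proving that the Drinfeld-type generators already generate the full Reshetikhin-type Yangian, which is a priori stronger than the theorem and not available. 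Your step 2 also has an issue you gloss over: the original potential $\sW'^{\mathrm{fr}}$ need not be invariant under the auxiliary splitting torus $\mathsf A$, and there is no general procedure to replace it by an $\mathsf A$-invariant one without changing the critical cohomology.

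The paper's argument avoids all of this by working structurally rather than auxiliary-datum-by-auxiliary-datum. First treat $\mu = 0$: since $c_1(\lambda) \in \mathsf Y_0^0(Q,\sW,\mathcal C)$ by Lemma \ref{lem Y^0 generators}, and since the subalgebra $\mathsf Y_0(Q,\sW,\mathcal C)$ is closed under the coproduct (Lemma \ref{lem closed under coproduct}), the explicit formula \eqref{coproduct for 1st Chern class}
\[
\Delta c_1(\lambda) = c_1(\lambda)\otimes 1 + 1\otimes c_1(\lambda) - \sum_{\alpha>0} \alpha(\lambda)\sum_s (-1)^{|\alpha|} e_{-\alpha}^{(s)} \otimes e_{\alpha}^{(s)}
\]
forces every root vector $e_{\pm\alpha}^{(s)}$ of $\mathfrak g_{Q,\sW}$ to lie in $\mathsf Y_0(Q,\sW,\mathcal C)$. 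Combined with $\mathfrak g_0 \subset \mathsf Y_0(Q,\sW,\mathcal C)$ one gets $\mathfrak g_{Q,\sW} \subset \mathsf Y_0(Q,\sW,\mathcal C)$, and then the PBW theorem (Theorem \ref{thm grY}) gives $\mathsf Y_0(Q,\sW,\mathcal C) = \mathsf Y_0(Q,\sW)$. For general $\mu$, Lemma \ref{lem shift generation lemma} says $\mathsf Y_\mu(Q,\sW)$ is generated by $\bigcup_{a}\sS_{0,\mu;z}(\mathsf Y_0(Q,\sW))|_{z=a}$, and the shift homomorphisms respect the $\mathcal C$-subalgebras. The key idea you are missing is that the coproduct of a single tautological class already encodes all of $\mathfrak g_{Q,\sW}$.
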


\begin{proof}
We first show that $\mathsf Y_0(Q,\sW,\mathcal C)=\mathsf Y_0(Q,\sW)$. Notice that $c_1(\lambda)\in \mathsf Y_0(Q,\sW,\mathcal C)$ by Lemma \ref{lem Y^0 generators}, so the coproduct $\Delta c_1(\lambda)$ is contained in $\mathsf Y_0(Q,\sW,\mathcal C)\widehat{\otimes} \mathsf Y_0(Q,\sW,\mathcal C)$ by Lemma \ref{lem closed under coproduct}. Then \eqref{coproduct for 1st Chern class} implies that $e_{\alpha}^{(s)}\in \mathsf Y_0(Q,\sW,\mathcal C)$ for all $\alpha$ and $s$; thus $\mathfrak{g}_\alpha\subset \mathsf Y_0(Q,\sW,\mathcal C)$. The proof of Proposition \ref{prop g(Q,w)}\,(1) shows that $\mathfrak{g}_0\subset \mathsf Y_0(Q,\sW,\mathcal C)$. Combine the above two inclusions, and we get $\mathfrak{g}_{Q,\sW}\subset \mathsf Y_0(Q,\sW,\mathcal C)$. Moreover, by Theorem \ref{thm grY}, we inductively see that $\mathrm{ch}_{k}(\mathsf D_{\In,i}-\mathsf D_{\Out,i})$ is contained in $\mathsf Y_0(Q,\sW,\mathcal C)$ for all $k$, and this implies $\mathsf Y_0(Q,\sW,\mathcal C)=\mathsf Y_0(Q,\sW)$.

By Lemma \ref{lem shift generation lemma}, $\mathsf Y_\mu(Q,\sW)$ is generated by the subset $\bigcup_{a\in \bC}\sS_{0,\mu;z}(\mathsf Y_{0}(Q,\sW))\big|_{z=a}$, which is the same as $$\bigcup_{a\in \bC}\sS_{0,\mu;z}(\mathsf Y_{0}(Q,\sW,\mathcal C))\big|_{z=a}\,,$$ by Step 1, and the latter generates $\mathsf Y_\mu(Q,\sW,\mathcal C)$ by Lemma \ref{lem shift generation lemma} again. Thus, $\mathsf Y_\mu(Q,\sW,\mathcal C)=\mathsf Y_\mu(Q,\sW)$.
\end{proof}

\subsection{Comparison with Maulik-Okounkov Yangians}\label{sec compare MO Yangian}

Let $Q$ be a quiver, $\overline{Q}$ be its doubled quiver, and $\widetilde{Q}$ be its tripled quiver, equipped with canonical cubic potential \eqref{equ on can cub pot}:
$$\widetilde{\sW}=\sum_{i\in Q_0}\varepsilon_i\mu_i, $$ 
where $\varepsilon_i$ is the edge loop on the node $i$, and $\mu_i$ is the $i$-th component of moment map. We take $\sT_0=\sA_0\times \bC^*_{\hbar}$, where $\sA_0$ acts on $\overline{Q}$ by scaling pair of arrows $(X_a,X_{a^*})$ with opposite weights, $\bC^*_{\hbar}$ fixes $X_a$ and scales $X_{a^*}$ with weight $-1$ and scales edge loop $\Phi_i$ with weight $1$.

\begin{Theorem}\label{thm compare with MO yangian}
Conjecture \ref{conj: bps lie} is true for a tripled quiver $\widetilde{Q}$ with canonical cubic potential $\widetilde{\sW}$. Moreover, there is a natural $\bC(\mathsf t_0)$-bialgebra isomorphism $$\mathsf Y_0(\widetilde{Q},\widetilde{\sW})\cong \mathsf Y^{\mathrm{MO}}_Q\otimes_{\bC[\mathsf t_0]}\bC(\mathsf t_0).$$
\end{Theorem}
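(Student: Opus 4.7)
The plan is to reduce everything to a comparison of classical $R$-matrices through dimensional reduction, and then to invoke the PBW-type Theorem \ref{thm grY} together with the main result of Botta--Davison \cite{BD}.

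First I would use the compatible dimensional reduction data of Example \ref{ex doubled vs tripled}, which gives a natural identification
\begin{equation*}
H^{\sT}\bigl(\overset{\bullet}{\cM}_\theta(\widetilde{Q},\mathbf{v},\mathbf{d}),\widetilde{\sw}\bigr)\;\cong\; H^{\sT}\bigl(\cN_\theta(\overline{Q},\mathbf{v},\mathbf{d})\bigr)\,,
\end{equation*}
and extend this to all framings and all self-dual (pseudo-self-dual) subtori $\sA\subseteq\sT$. The first step is to verify that this identification intertwines the critical stable envelopes $\Stab_{\fC}$ built in \cite[\S 9]{COZZ} with the symplectic stable envelopes of Maulik--Okounkov on $\cN_\theta$. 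This is the technical heart of the matter and I expect it to be the main obstacle: concretely, one has to show (i) that $\overset{\bullet}{\cM}_\theta(\widetilde{Q},\mathbf{v},\mathbf{d})$ absorbs all the data needed (the cyclic stability guarantees this), and (ii) that the canonical class of $\overline{\Attr}_\fC$ pulled back through the Koszul/dimensional reduction isomorphism realizes the unique Lagrangian representative picked out by MO's support, polarization, and degree axioms. The uniqueness clause in MO's characterization then forces the equality. Once this is established, the resulting root $R$-matrices on both sides are literally the same operator, and triangle/braid relations transfer as well.

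With matching $R$-matrices, the identification of the Reshetikhin Yangians is almost formal. By Theorem \ref{thm drinfeld yangian map to rtt yangian} together with Theorem \ref{thm admissible}, $\mathsf Y_0(\widetilde{Q},\widetilde{\sW})$ is generated by matrix elements of $R^{\mathrm{sup}}(u)$ against admissible auxiliary spaces, and the same description defines $\mathsf Y^{\mathrm{MO}}_Q$. The identification of $R$-matrices therefore induces an algebra homomorphism $\varphi\colon \mathsf Y^{\mathrm{MO}}_Q\otimes_{\bC[\mathsf t_0]}\bC(\mathsf t_0)\to \mathsf Y_0(\widetilde{Q},\widetilde{\sW})$. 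Coproducts are defined on both sides by splitting the same $R$-matrix via the triangle lemma, so $\varphi$ is a bialgebra morphism. To conclude that $\varphi$ is an isomorphism I would use the PBW theorem (Theorem \ref{thm grY}) on both sides, reducing the statement to the Lie algebra level: it suffices to show that the induced map $\bar\varphi\colon \fg_Q^{\mathrm{MO}}\to \fg_{\widetilde{Q},\widetilde{\sW}}$ is an isomorphism. Both Lie algebras are defined as the $u^{-1}$-coefficient of $\Sigma\cdot R^{\mathrm{sup}}(u)$ (after stripping the diagonal part), so under the matching of $R$-matrices above they coincide root-space by root-space together with the invariant form of Proposition \ref{prop g(Q,w)}(5). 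Injectivity on $\fg_0$ follows from the explicit description of the diagonal piece in \eqref{eq classical r}.

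Finally, Conjecture \ref{conj: bps lie} in this case follows by combining the Lie algebra isomorphism $\fg_{\widetilde{Q},\widetilde{\sW}}\cong \fg_Q^{\mathrm{MO}}\otimes_{\bC[\mathsf t_0]}\bC(\mathsf t_0)$ with the theorem of Botta--Davison \cite{BD} (resp.\ Schiffmann--Vasserot \cite{SV2}), which identifies the BPS Lie algebra $\fg^{\mathrm{BPS},\mathsf T_0}_{\widetilde{Q},\widetilde{\sW}}$ with the positive half $\fg_Q^{\mathrm{MO},+}$. Restricting $\bar\varphi$ to the positive-root part yields the desired isomorphism
\begin{equation*}
\fg^+_{\widetilde{Q},\widetilde{\sW}}\;\cong\;\fg^{\mathrm{BPS},\mathsf T_0}_{\widetilde{Q},\widetilde{\sW}}\otimes_{\bC[\mathsf t_0]}\bC(\mathsf t_0)\,,
\end{equation*}
and compatibility with the action on any state space $\cH^{\widetilde{\sW}^{\mathrm{fr}}}_{\underline{\bd},\mathsf A^{\mathrm{fr}}}$ follows because both actions are constructed, via dimensional reduction, from the same Hecke/raising correspondences on Nakajima quiver varieties.
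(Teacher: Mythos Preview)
Your proposal is correct and follows essentially the same route as the paper. Note, however, that what you flag as the ``main obstacle''—compatibility of critical and MO stable envelopes under dimensional reduction—is already \cite[Thm.~6.10]{COZZ}, and the paper then streamlines by picking the specific admissible set $\mathcal C=\{(\delta_i,\widetilde\sW+B_i\Phi_iA_i)\}_{i\in Q_0}$ and using Lemma~\ref{lem sufficient state spaces} to embed $\mathsf Y_0(\widetilde Q,\widetilde\sW,\mathcal C)$ injectively into $\prod\End(\cH_{i_1}\otimes\cdots\otimes\cH_{i_n})$, whose image is directly identified with $\mathsf Y^{\mathrm{MO}}_Q$ after localization, so the Yangian isomorphism is read off without your PBW detour (PBW is then only used to extract $\mathfrak g_{\widetilde Q,\widetilde\sW}\cong\mathfrak g_Q^{\mathrm{MO}}\otimes\bC(\mathsf t_0)$ from it).
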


\begin{proof}
Let us take $\mathcal C=\{(\delta_i,\sT_0\curvearrowright\cM(\delta_i),\widetilde{\sW}_i)\}_{i\in Q_0}$, where $(a,q)\in \sA_0\times \bC^*_\hbar=\sT_0$ acts on the framing $(A_i,B_i)$ by
\begin{align*}
    (A_i,B_i)\mapsto (A_i,q^{-1}B_i),
\end{align*}
and the framed potentials are given by
\begin{align*}
    \widetilde{\sW}_i=\widetilde{\sW}+B_i\Phi_iA_i\:.
\end{align*}
It is clear that $\mathcal C$ is admissible. Then it follows from Theorem \ref{thm admissible} that $\mathsf Y_0(\widetilde{Q},\widetilde{\sW})\cong\mathsf Y_0(\widetilde{Q},\mathcal C)$. 

Plug $\cH_0=\cH^{\widetilde{\sW}}_{\underline{\mathbf 0}}=\bC[\mathsf t_0]$ and $\mathcal H_i=\cH^{\widetilde{\sW}_i}_{\underline{\delta_i}}$ into Lemma \ref{lem sufficient state spaces}, and the natural map
\begin{align*}
    \mathsf Y_0(\widetilde Q,\widetilde{\sW})=\mathsf Y_0(\widetilde Q,\widetilde{\sW},\mathcal C)\longrightarrow \prod_{i_1,\cdots,i_n}\End_{\bC(\mathsf t_0)[a_{i_1},\ldots,a_{i_n}]}\left(\cH_{i_1}[a_{i_1}]\otimes\cdots\otimes \cH_{i_n}[a_{i_n}]\right)
\end{align*}
is injective. By dimensional reduction, we have 
\begin{align*}
    H^{\sT_0}(\cM(\delta_i),\widetilde\sw_i)\cong H^{\sT_0}(\cN(\delta_i)),
\end{align*}
where $\cN(\delta_i)$ is the Nakajima variety associated to $\overline{Q}$ (Example \ref{ex doubled vs tripled}). The above isomorphisms are compatible with stable envelopes (\cite[Thm.~6.10]{COZZ}) and therefore compatible with $R$-matrices. Thus the image of $\mathsf Y_0(\widetilde Q,\widetilde{\sW})$ in the product of endomorphisms is generated by matrices elements of the $R$-matrices for Nakajima varieties, which agrees with $\mathsf Y^{\mathrm{MO}}_Q$ after localization (since we define $\mathsf Y_0(\widetilde Q,\widetilde{\sW})$ only after localization), that is,
$\mathsf Y_0(\widetilde{Q},\widetilde{\sW})\cong \mathsf Y^{\mathrm{MO}}_Q\otimes_{\bC[\mathsf t_0]}\bC(\mathsf t_0)$. This is a bialgebra isomorphism because the counit is induced by restriction to trivial representation $\cH_{0}$, and the coproduct is induced by stable envelope which is compatible with dimensional reduction 
(\cite[Thm.~6.10]{COZZ}).

The Yangian isomorphism is compatible with filtration by degree in $u$, so we get an induced isomorphism on the $F_0$ part:
\begin{align*}
    \mathcal U(\mathfrak{g}_{\widetilde{Q},\widetilde{\sW}})\cong \mathcal U(\mathfrak{g}_{Q}^{\mathrm{MO}}\otimes_{\bC[\mathsf t_0]}\bC(\mathsf t_0)),
\end{align*}
from which we deduce $\mathfrak{g}_{\widetilde{Q},\widetilde{\sW}}\cong \mathfrak{g}_{Q}^{\mathrm{MO}}\otimes_{\bC[\mathsf t_0]}\bC(\mathsf t_0)$.
Then \cite[Thm.\,B]{BD} yields
\begin{equation*}
\mathfrak{g}^+_{\widetilde{Q},\widetilde{\sW}}\cong \mathfrak{g}_{\widetilde{Q},\widetilde{\sW}}^{\mathrm{BPS},\sT_0}\otimes_{\bC[\mathsf t_0]}\bC(\mathsf t_0). \qedhere \end{equation*}
\end{proof}

\begin{Corollary}\label{cor MO yangian crit module}
Let $\cN(\bd)=\bigsqcup_{\bv}\cN(\bv,\bd)$ be the Nakajima variety associated to a doubled quiver $\overline{Q}$, and let $\sT_0=\bC^*_\hbar\times \mathsf{S}$ act on $\mathrm{Rep}_{Q}(\bv,\bd)$ by flavour symmetry (i.e. commutes with $G=\prod_{i\in Q_0}\GL(\bv_i)$ action), then $\sT_0$ act on $\cN(\bd)$ using the identification $T^*\mathrm{Rep}_{Q}(\bv,\bd)=\mathrm{Rep}_{Q}(\bv,\bd)\oplus \hbar\:\mathrm{Rep}_{Q}(\bv,\bd)^{\vee}$. Then for any $\sT_0$-invariant function $\phi=\tr(\mathsf m)$ ($\mathsf m$ is a potential of the framed doubled quiver), $H^{\sT_0}(\cN(\bd),\phi)_{\loc}$ has a natural $\mathsf Y^{\mathrm{MO}}_Q\otimes_{\bC[\mathsf t_0]}\bC(\mathsf t_0)$ module structure. 

Moreover, there is a specialization map \cite[Ex.~7.19]{COZZ}:
\begin{align}\label{equ on spci map}
    \mathsf{sp}\colon H^{\sT_0}(\cN(\bd),\phi)_{\loc}\to H^{\sT_0}(\cN(\bd))_{\loc},
\end{align}
which is a $\mathsf Y^{\mathrm{MO}}_Q\otimes_{\bC[\mathsf t_0]}\bC(\mathsf t_0)$ module map.
\end{Corollary}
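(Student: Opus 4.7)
\emph{Proof plan.} The strategy is to realize $H^{\sT_0}(\cN(\bd),\phi)_{\loc}$ as a state space for the Reshetikhin type Yangian $\mathsf Y_0(\widetilde{Q},\widetilde{\sW})$ via dimensional reduction, and then transfer the module structure using the isomorphism of Theorem~\ref{thm compare with MO yangian}. The final compatibility statement with the specialization map is handled by tracing through the definition of the Yangian action in terms of stable envelopes.

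First, I would package the given data $\phi$ on $\cN(\bd)$ into a framed potential on the tripled quiver. Namely, let $\mathsf m$ be a potential of the framed doubled quiver with $\phi=\tr(\mathsf m)$, and consider on the framed tripled quiver the potential $\sW^{\mathrm{fr}} = \widetilde{\sW} + \mathsf m$ (viewed via the natural inclusion $\overline{Q}^{\underline{\bd}}\hookrightarrow \widetilde{Q}^{\underline{\bd}}$). Here we use symmetric framing $\bd_{\In}=\bd_{\Out}=\bd$, so the shift is $\mu=0$. The dimensional reduction data in Example~\ref{ex doubled vs tripled} and \cite[Thm.~6.10]{COZZ} then identifies
\begin{align*}
    H^{\sT_0}(\cN(\bd),\phi)_{\loc}\;\cong\; H^{\sT_0}\bigl(\overset{\bullet}{\cM}_\theta(\widetilde{Q},\bd),\sw^{\mathrm{fr}}\bigr)_{\loc},
\end{align*}
together with the compatibility of stable envelopes on both sides (with appropriate $\sA$-actions). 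For cyclic stability, every $\theta$-stable representation of the tripled quiver is $\theta$-stable when restricted to the doubled quiver, so $\overset{\bullet}{\cM}_\theta(\widetilde{Q},\bd)=\cM_\theta(\widetilde{Q},\bd)$, and the right-hand side is exactly a state space $\cH^{\sW^{\mathrm{fr}}}_{\underline{\bd},\sA^{\mathrm{fr}},\loc}$ of the framework in Definition~\ref{def state space}. Consequently, by definition of $\mathsf Y_0(\widetilde{Q},\widetilde{\sW})$ in \eqref{equ on rtt yangian}, there is a natural action of $\mathsf Y_0(\widetilde{Q},\widetilde{\sW})$ on $H^{\sT_0}(\cN(\bd),\phi)_{\loc}$, and via the bialgebra isomorphism of Theorem~\ref{thm compare with MO yangian} this transports to an $\mathsf Y^{\mathrm{MO}}_Q\otimes_{\bC[\mathsf t_0]}\bC(\mathsf t_0)$-action.

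For the second assertion, recall from \cite[Ex.~7.19]{COZZ} that the specialization map $\mathsf{sp}\colon H^{\sT_0}(\cN(\bd),\phi)\to H^{\sT_0}(\cN(\bd))$ is induced by the canonical natural transformation from critical cohomology to ordinary cohomology, and is compatible with the formation of stable envelopes: for any chamber $\fC$, one has $\mathsf{sp}\circ \Stab_\fC^{\mathrm{crit}} = \Stab_\fC^{\mathrm{ord}}\circ \mathsf{sp}$, where $\Stab^{\mathrm{crit}}$ is the critical stable envelope on $\cN(\bd)$ with potential $\phi$ (equivalently, on $\cM(\widetilde{Q})$ via dimensional reduction) and $\Stab^{\mathrm{ord}}$ is the Maulik--Okounkov stable envelope on $\cN(\bd)$. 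Since all generating operators \eqref{op E(m)} of the Yangian are built as $\Res_{u\to\infty}\str$ of compositions of $\Stab_\fC$ and $\Stab_\fC^{-1}$ (i.e., matrix elements of $R$-matrices), the specialization compatibility for $\Stab_\fC$ immediately implies that $\mathsf{sp}$ intertwines the two Yangian actions, proving the module-map property.

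The main technical point — and the only step that is not entirely formal — is verifying that, under the dimensional reduction and specialization, the Maulik--Okounkov $R$-matrices defined intrinsically on $H^{\sT_0}(\cN(\bd))$ coincide with the specializations of the critical $R$-matrices used in the construction of $\mathsf Y_0(\widetilde{Q},\widetilde{\sW})$. This reduces to two independent compatibilities already established in the first paper \cite{COZZ}: dimensional reduction commutes with critical stable envelopes (\cite[Thm.~6.10]{COZZ}), and critical stable envelopes commute with the specialization map (\cite[Ex.~7.19]{COZZ}). Combining these along the chain \textit{critical on $\cM(\widetilde{Q})$} $\leftrightarrow$ \textit{critical on $\cN(\overline{Q})$} $\xrightarrow{\mathsf{sp}}$ \textit{ordinary on $\cN(\overline{Q})$} concludes the proof.
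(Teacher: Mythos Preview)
Your approach is essentially identical to the paper's: realize $H^{\sT_0}(\cN(\bd),\phi)$ as a state space for $\mathsf Y_0(\widetilde{Q},\widetilde{\sW})$ via dimensional reduction (the paper writes $H^{\sT_0}(\cM(\bd),\widetilde\sw+\phi)\cong H^{\sT_0}(\cN(\bd),\phi)$ and cites the deformed dimensional reduction), then invoke Theorem~\ref{thm compare with MO yangian}; for $\mathsf{sp}$, use its compatibility with stable envelopes from \cite[Ex.~7.19]{COZZ} to conclude it commutes with $R$-matrices and hence with the Yangian action.

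One correction: your claim that ``for cyclic stability, every $\theta$-stable representation of the tripled quiver is $\theta$-stable when restricted to the doubled quiver, so $\overset{\bullet}{\cM}_\theta(\widetilde{Q},\bd)=\cM_\theta(\widetilde{Q},\bd)$'' is false. The implication goes the other way: $\overline{Q}$-stability implies $\widetilde{Q}$-stability (adding arrows can only help generate), but a $\widetilde{Q}$-stable representation may require the extra edge loops $\varepsilon_i$ to be cyclically generated. Fortunately this claim is unnecessary for the argument: the paper simply invokes deformed dimensional reduction (\cite[Thm.~C.1]{COZZ}) directly on $\cM_\theta(\widetilde{Q},\bd)$, without passing through $\overset{\bullet}{\cM}_\theta$. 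You should do the same and drop that sentence.
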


\begin{proof}
Let $\cM(\bd)=\bigsqcup_{\bv}\cM(\bv,\bd)$ be the symmetric quiver variety associated with the tripled quiver $\widetilde{Q}$, then we have the deformed dimensional reduction isomorphism \eqref{equ on dim redd}: 
$$H^{\sT_0}(\cM(\bd),\widetilde\sw+\phi)\cong H^{\sT_0}(\cN(\bd),\phi),$$ and the former is naturally a $\mathsf Y_0(\widetilde Q,\widetilde{\sW})\cong\mathsf Y^{\mathrm{MO}}_Q\otimes_{\bC[\mathsf t_0]}\bC(\mathsf t_0)$ module. Since  \eqref{equ on spci map} is compatible with stable envelope \cite[Ex.~7.19]{COZZ}, it commutes with $R$-matrices and therefore induces a $\mathsf Y^{\mathrm{MO}}_Q\otimes_{\bC[\mathsf t_0]}\bC(\mathsf t_0)$ module map.
\end{proof}

\subsection{Casimir operator}

\begin{Definition}\label{def of Casimir}
For any $\alpha\gtrless 0$, define the \textit{Casimir operator}
\begin{align*}
    \mathrm{Cas}_\alpha:=\sum_{s}e^{(s)}_\alpha e^{(s)}_{-\alpha}\in U(\mathfrak{g}_{Q,\sW})\subset \mathsf Y_0(Q,\sW)
\end{align*}
Note that $\mathrm{Cas}_\alpha-(-1)^{|\alpha|}\mathrm{Cas}_{-\alpha}$ is in the Cartan subalgebra $\mathfrak{g}_0$ by Proposition \ref{prop g(Q,w)}(8).
\end{Definition}

The Casimir operator, when acting on a state space, is induced by a Steinberg correspondence (Definition \ref{def of Steinberg corr}).

\begin{Proposition}\label{prop Casimir}
Let $\bd_{\In} =\bd_{\Out}$ be a symmetric framing, and denote $X=\cM(\bv,{\bd}), X_0=\cM_0(\bv,{\bd})$, and $\sT=\sT_0\times \sA^{\mathrm{fr}}$. Then for any $\alpha>0$, there exists a Steinberg correspondence $\mathsf C_\alpha\in H^{\sT}_{2\dim X-2}(X\times_{X_0} X)$, such that for any potential $\sw^{\mathrm{fr}}$, the action of $\mathrm{Cas}_\alpha$ on $H^\sT(X,\sw^{\mathrm{fr}})$ is induced by $\mathsf C_\alpha$.
\end{Proposition}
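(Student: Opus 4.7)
The plan is to realize $\mathrm{Cas}_\alpha$ as an explicit, potential-independent integral class on $X\times_{X_0} X$ built from the classical $R$-matrix of \S 2.6, and then verify the Steinberg property using the criteria developed in \S 2.8.

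First, recall from Proposition \ref{prop g(Q,w)}(6) that $\sum_s e_\alpha^{(s)}\otimes e_{-\alpha}^{(s)}$ is the root-$\alpha$ summand of the classical $R$-matrix $\pmb r$, and from \S 2.6 that the off-diagonal part of $\pmb r$ is critical convolution with $\can(\pmb\rho)$, where $\pmb\rho\in H^{\sT_0}(X^\sA\times_{X_0^\sA} X^\sA)$ is a potential-independent integral class of homological degree $\dim F+\dim F'-2$ on each component. Through the identification of Drinfeld-type generators with Hecke correspondences (Theorems \ref{cor shifted yangian action}, \ref{thm drinfeld yangian map to rtt yangian}, \ref{thm e f h as R matrix elements}), each $e_{\pm\alpha}^{(s)}$ acts on $H^\sT(X,\sw^{\mathrm{fr}})$ via a potential-independent integral cycle: for simple roots this is \eqref{sph coha act_sym}, \eqref{sph coha op act_sym}, and for higher roots it follows inductively from the Lie bracket structure of $\mathfrak g_{Q,\sW}$.

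Second, I would take the convolution $e_\alpha^{(s)}\cdot e_{-\alpha}^{(s)}$ of these correspondences and sum over $s$ (which geometrically realizes the pairing between dual root bases with respect to the invariant bilinear form on $\mathfrak g_{Q,\sW}$) to produce a canonical class $\mathsf C_\alpha\in H^\sT(X\times_{X_0} X)$. The support in $X\times_{X_0} X$ follows from the fact that Hecke correspondences come from extensions of quiver representations compatible with the affinization. The cohomological degree $2\dim X-2$ is obtained from the dimension count in \eqref{r mat corr} together with the degree-$2$ equivariant weight of the bilinear pairing on $\mathfrak g_{Q,\sW}$ from Proposition \ref{prop g(Q,w)}(7).

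Third, to verify the Steinberg property, I would apply Lemma \ref{lem away from affine locus} together with Lemma \ref{lem image in H^A}: it suffices to show that the $\sA$-equivariant image of $\mathsf C_\alpha$ lies in the pushforward from $H^\sA(X\times_{X_0} X\times_B (B\setminus U))$. The geometric input is that the intermediate layer $\cM(\bv+\alpha,\bd)$ through which the composition passes projects to a subvariety of $X_0$ over which $X\to X_0$ has positive-dimensional fibers; over the generic locus $U\subset B$ where $\pi\colon X\to B$ is smooth with normal fibers, a root-nonzero composition cannot contribute to a top-dimensional cycle. An alternative route uses Lemma \ref{lem action on [X]}, reducing the claim to $\mathsf C_\alpha[X]=\lambda[X]$ for some $\lambda\in H^2_\sT(\pt)$, via the interplay between the Cartan commutator $[e_\alpha^{(s)},e_{-\alpha}^{(s)}]\in\mathfrak g_0$ and the action of $\mathfrak g_0$ on $[X]$. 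The main obstacle will be the Step 3 support analysis: for simple roots it is essentially explicit via \eqref{sph coha act_sym}, \eqref{sph coha op act_sym}, but for non-simple roots the realization of $e_\alpha^{(s)}$ via iterated brackets is less direct, and verifying that the composition has the right support requires careful dimension bookkeeping combined with the smallness of $X\to X_0$ from \cite{COZZ3}.
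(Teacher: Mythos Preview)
Your Step 1 has a genuine gap that breaks the whole argument. You claim each $e_{\pm\alpha}^{(s)}$ acts via a potential-independent Hecke correspondence, with non-simple roots handled ``inductively from the Lie bracket structure.'' But the root spaces $\mathfrak{g}_\alpha$ are defined as matrix elements of the classical $R$-matrix (Proposition \ref{prop g(Q,w)}(6)), not as iterated commutators of simple-root Hecke operators. Theorems \ref{cor shifted yangian action}--\ref{thm e f h as R matrix elements} only realize the \emph{simple}-root generators $e_i(z),f_i(z)$ geometrically; the Drinfeld-type Yangian maps \emph{into} $\mathsf Y_0(Q,\sW)$ but is not known to surject onto $\mathfrak{g}_{Q,\sW}$ (this is essentially Conjecture \ref{conj: bps lie}). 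So for general $\alpha$ you have no correspondence realizing $e_\alpha^{(s)}$ individually, and the rest of your construction never gets off the ground.

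The paper avoids this entirely by a different maneuver. Rather than chasing individual root vectors, it introduces an auxiliary symmetric quiver variety $\cM(\delta_i)$ (for some $i$ with $\alpha_i>0$) together with an extra torus $\bC^*_t$ scaling only its out-going framing, and considers the composition $\pmb r_{-\alpha,\alpha}\circ\pmb r_{\alpha,-\alpha}$ as an operator from $H(\cM(\mathbf 0,\delta_i))\otimes H^\sT(X,\sw^{\mathrm{fr}})$ to itself. A direct Cartan computation using Proposition \ref{prop g(Q,w)}(8) gives $\pmb r_{-\alpha,\alpha}\circ\pmb r_{\alpha,-\alpha}=\alpha_i\, t\,\mathrm{Cas}_\alpha$. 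On the other hand, the off-diagonal $\pmb r$ is already known to be induced by the potential-independent integral class $\pmb\rho$ of \eqref{r mat corr}, so the composition is induced by $\pmb\rho_{-\alpha,\alpha}\circ\pmb\rho_{\alpha,-\alpha}\in H^{\sT\times\bC^*_t}(X\times_{X_0}X)$. Since $\bC^*_t$ acts trivially on $X$, one expands in $t$ and takes the $t^1$-coefficient $a_1$; then $\mathsf C_\alpha=a_1/\alpha_i$. The Steinberg property follows from Lemma \ref{lem away from affine locus}: because $\pi^{-1}(U)$ is quasi-affine, attracting sets are closed there, so for distinct fixed components $F'\neq F$ the intersection $\overline{\Attr}_\pm(\Delta_F)\cap(F'\times F)$ is disjoint from $\pi^{-1}(U)\times\pi^{-1}(U)$, forcing the convolution class to be supported over $B\setminus U$.
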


\begin{proof}
Let $i\in Q_0$ be a node such that $\alpha_i>0$. Take an auxiliary symmetric quiver variety by setting the framing dimension to be $\delta_i$, let $\sT_0$ act on the framing vector spaces trivially, and introduce an extra torus $\bC^*_t$ such that it only acts on the out-going framing with weight $-1$ (and fixes all other arrows). Consider the composition 
\begin{multline*}
H^{\sT_0\times \bC^*_t}(\cM(\mathbf 0,\delta_i),\sw)\otimes H^{\sT}(X,\sw^{\mathrm{fr}})\xrightarrow{\pmb r_{\alpha,-\alpha}} H^{\sT_0\times \bC^*_t}(\cM(\alpha,\delta_i),\sw)\otimes H^{\sT}(\cM(\bv-\alpha,\bd),\sw^{\mathrm{fr}})\\
\xrightarrow{\pmb r_{-\alpha,\alpha}} H^{\sT_0\times \bC^*_t}(\cM(\mathbf 0,\delta_i),\sw)\otimes H^{\sT}(X,\sw^{\mathrm{fr}})\,,
\end{multline*}
where $\pmb r_{\alpha,-\alpha}$ and $\pmb r_{-\alpha,\alpha}$ are the corresponding components of the classical $R$-matrix \eqref{explicit r matrix}. By explicit computations,
\begin{align}\label{r^2 explicit}
    \pmb r_{-\alpha,\alpha}\circ \pmb r_{\alpha,-\alpha}=\alpha_i\,t\,\sum_{s}e^{(s)}_\alpha e^{(s)}_{-\alpha}=\alpha_i\,t\,\mathrm{Cas}_\alpha\,.
\end{align}
On the other hand, off-diagonal component of $\pmb r$ is induced by critical convolution using $\can(\pmb \rho)$, where $\pmb\rho$ is a class in $H^{\sT\times \bC^*_t}\left((\cM(\delta_i)\times \cM(\underline{\bd}))\times_{\cM_0(\delta_i)\times \cM_0(\underline{\bd})}(\cM(\delta_i)\times \cM(\underline{\bd}))\right)$ given by \eqref{r mat corr}. Therefore $\pmb r_{-\alpha,\alpha}\circ \pmb r_{\alpha,-\alpha}$ is induced by the class $\pmb \rho_{-\alpha,\alpha}\circ \pmb \rho_{\alpha,-\alpha}\in H^{\sT\times \bC^*_t}(X\times_{X_0} X)$ of homological degree $2\dim X-4$. Since $\bC^*_t$ acts on $X\times_{X_0} X$ trivially, 
$$H^{\sT\times \bC^*_t}(X\times_{X_0} X)\cong H^{\sT}(X\times_{X_0} X)\otimes_\bC H_{\bC^*_t}(\pt),$$ 
and accordingly
\begin{align}\label{r^2 convolution}
    \pmb \rho_{-\alpha,\alpha}\circ \pmb \rho_{\alpha,-\alpha}=\sum_{k=0}^2 a_k t^k\,,\quad a_k\in H^{\sT}_{2\dim X-2k}(X\times_{X_0} X).
\end{align}
Compare \eqref{r^2 explicit} with \eqref{r^2 convolution}, and we see that $\mathrm{Cas}_\alpha$ is induced by the class $a_1/\alpha_i\in H^{\sT}_{2\dim X-2}(X\times_{X_0} X)$. Denote $Y=\cM(\bv,\underline{\delta_i}+\underline{\bd})$, let $\pi\colon Y\to B$ be the flat morphism to the affine space $B$ in \cite{COZZ3}\footnote{Denoted $\tau\colon X\to \cH/\Gamma$ in \cite{COZZ3}.}, and let $U\subset B$ be the dense open subset such that $\pi|_{\pi^{-1}(U)}\colon \pi^{-1}(U)\to U$ is an affine morphism. Since $\pi^{-1}(U)$ is quasi-affine, so any attracting set is closed in $\pi^{-1}(U)$. 
Therefore for two distinct $\bC^*_u$-fixed components $F',F\subset Y^{\bC^*_u}$, we have $$\overline{\Attr}_\pm(\Delta_F)\cap (\pi^{-1}(U)\times \pi^{-1}(U))=\Attr_\pm(\Delta_F)\cap (\pi^{-1}(U)\times \pi^{-1}(U)),  $$ 
so the intersection $\overline{\Attr}_\pm(\Delta_F)\cap (F'\times F)$ is disjoint from $\pi^{-1}(U)\times \pi^{-1}(U)$.
We conclude that, after passing to $\bC^*_t$-equivariant cohomology, the class $\pmb \rho_{-\alpha,\alpha}\circ \pmb \rho_{\alpha,-\alpha}$ as well as its $t$-expansion coefficients $a_0,a_1,a_2$, are in the image of the pushforward map $H^{\bC^*_u}(X\times_{X_0} X\times_B(B\setminus U))\to H^{\bC^*_u}(X\times_{X_0} X)$. In particular, $a_1$ is a Steinberg correspondence by Lemma \ref{lem away from affine locus}. $\pmb\rho$ is independent of potential by construction \eqref{r mat corr}, so is $a_1$. 
\end{proof}

\subsection{Shifted Casimir operator}

Let $\mu<0$ be an antidominant shift. Pick $\underline{\bd}$ with $\bd_{\Out}-\bd_{\In}=\mu$, then the symmetric quiver variety $\cM(\bd_{\In})$ is the total space of a vector bundle $E=\bigoplus_{i\in Q_0}\Hom(\mathsf V_i,\bC^{-\mu_i})$ on $\cM(\underline{\bd})$. Let $\pi\colon \cM(\bd_{\In})\to \cM(\underline{\bd})$ be the bundle map. Take a framed potential $\sw^{\mathrm{fr}}$ on $\cM(\underline{\bd})$ and its pullback to $\cM(\bd_{\In})$ is still denoted by $\sw^{\mathrm{fr}}$. 
Consider the following auxiliary $\bC^*_z$ action on $\cM(\bd_{\In})$: it acts trivially on $\cM(\underline{\bd})$ and scales the fibers of $E$ with weight $1$. Then  
\begin{align*}
H^{\sT\times \bC^*_z}(\cM(\underline{\bd}),\sw^{\mathrm{fr}})\cong H^{\sT}(\cM(\underline{\bd}),\sw^{\mathrm{fr}})\otimes_\bC\bC[z],
\end{align*}
and $\pi^*\colon H^{\sT\times \bC^*_z}(\cM(\underline{\bd}),\sw^{\mathrm{fr}})\to H^{\sT\times \bC^*_z}(\cM(\bd_{\In}),\sw^{\mathrm{fr}})$ is an isomorphism whose inverse is $i^*$, where $i\colon \cM(\underline{\bd})\hookrightarrow \cM(\bd_{\In})$ is the zero section.

By Lemma \ref{lem shift map}, the assignment $\mathsf E(m)\mapsto i^*\sE(m)_z\pi^*$ induces an algebra homomorphism $\mathsf Y_0(Q,\sW)\to \mathsf Y_\mu(Q,\sW)[z]$.

\begin{Proposition}\label{prop shifted Casimir}
Let $\alpha>0$. Under the map $\mathsf Y_0(Q,\sW)\to \mathsf Y_\mu(Q,\sW)[z]$ sending $\mathsf E(m)$ to $i^*\sE(m)_z\pi^*$, the operator $\mathrm{Cas}_{\alpha}$ is mapped to a polynomial in $z$ of degree $\leqslant -\alpha\cdot \mu$. 

Moreover, denote $X=\cM(\bv,\underline{\bd}), X_0=\cM_0(\bv,\underline{\bd})$, then there exists a class $\mathsf C_{\alpha,\mu}\in H^{\sT}_{}(X\times_{X_0} X)$, such that for any potential $\sW^{\mathrm{fr}}$, the action of the coefficient of $z^{-\alpha\cdot \mu}$ in $i^*(\mathrm{Cas}_{\alpha})_z\pi^*$ on $H^\sT(X,\sw^{\mathrm{fr}})$ is induced by $\mathsf C_{\alpha,\mu}$.
\end{Proposition}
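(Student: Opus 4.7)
The polynomial bound follows directly from Remark \ref{rmk form of shift}. A positive root vector $e^{(s)}_\alpha \in \mathfrak g_\alpha$ has $\bZ^{Q_0}$-degree $\alpha$, so under $\sS_{0,\mu;z}$ it acquires the prefactor $z^{\alpha\cdot(-\mu)} = z^{-\alpha\cdot\mu}$ and is conjugated by $\widetilde H_z = \prod_i c_{-1/z}(\mathsf V_i)^{-\mu_i}$. Since $-\mu_i \geqslant 0$, the factor $\widetilde H_z$ is a polynomial in $1/z$ with constant term $1$, so the conjugation does not increase the $z$-degree: the image lies in polynomials in $z$ of degree at most $-\alpha\cdot\mu$. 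A negative root vector $e^{(s)}_{-\alpha}$ maps to $(-1)^{\alpha\cdot\mu}(e^{(s)}_{-\alpha})'$, constant in $z$. Multiplying and summing, $\mathrm{Cas}_\alpha = \sum_s e^{(s)}_\alpha e^{(s)}_{-\alpha}$ maps to a polynomial in $z$ of degree at most $-\alpha\cdot\mu$.

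For the correspondence statement, the plan is to combine Proposition \ref{prop Casimir} with the vector-bundle geometry of $\pi\colon \cM(\bv,\bd_{\In}) \to X = \cM(\bv,\underline{\bd})$. Proposition \ref{prop Casimir} produces a potential-independent Steinberg correspondence
\[
\mathsf C_\alpha \in H^\sT\bigl(\cM(\bv,\bd_{\In})\times_{\cM_0(\bv,\bd_{\In})}\cM(\bv,\bd_{\In})\bigr)
\]
inducing the action of $\mathrm{Cas}_\alpha$ on $H^\sT(\cM(\bv,\bd_{\In}),\sw^{\mathrm{fr}})$ for every $\sw^{\mathrm{fr}}$. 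I would promote $\mathsf C_\alpha$ to a $\sT\times\bC^*_z$-equivariant class using the $\bC^*_z$-action scaling the fibers of $\pi$. In $\sT\times\bC^*_z$-equivariant cohomology, where $\pi^*$ is an isomorphism with inverse $i^*$, the operator $i^*(\mathrm{Cas}_\alpha)_z\pi^*$ on $H^\sT(X,\sw^{\mathrm{fr}})[z]$ is then induced by the class transferred from $\mathsf C_\alpha$ via pullback along $(i\times\id)$ and pushforward along $(\id\times\pi)$. Extracting the $z^{-\alpha\cdot\mu}$-coefficient yields a $\sT$-equivariant class $\mathsf C_{\alpha,\mu}$ on $X\times X$, independent of the potential by construction.

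The main obstacle is to verify that $\mathsf C_{\alpha,\mu}$ in fact lies in the image of $H^\sT(X\times_{X_0}X)\to H^\sT(X\times X)$. The key input is the identification of $\bC^*_z$-fixed loci. Writing $R(\bv,\bd_{\In})=R(\bv,\underline{\bd})\oplus W$ with $W=\bigoplus_i\Hom(\mathsf V_i,\bC^{-\mu_i})$ scaled by $\bC^*_z$ with weight $1$, one checks $\bC[R(\bv,\bd_{\In})]^{G\times\bC^*_z}=\bC[R(\bv,\underline{\bd})]^G$, so $\cM_0(\bv,\bd_{\In})^{\bC^*_z}=X_0$ and therefore
\[
\bigl(\cM(\bv,\bd_{\In})\times_{\cM_0(\bv,\bd_{\In})}\cM(\bv,\bd_{\In})\bigr)^{\bC^*_z} = X\times_{X_0}X.
\]
$\bC^*_z$-localization then identifies the $z^{-\alpha\cdot\mu}$-coefficient of the transferred class with the restriction of $\mathsf C_\alpha$ to this fixed locus (up to an equivariant Euler-class normalization from the normal bundle of $i$), which is set-theoretically supported on $X\times_{X_0}X$ as required.
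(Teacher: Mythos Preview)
Your degree bound argument is correct and more direct than the paper's. You use the explicit shift formulas from Remark~\ref{rmk form of shift}: since $e^{(s)}_\alpha$ is of the form $\mathsf E^+(m_0)$ with $\deg(m_0)=\alpha$ and $e^{(s)}_{-\alpha}$ is of the form $\mathsf E^-(m_0)$, the formulas there (together with polynomiality from Lemma~\ref{lem shift map}) give the bound immediately. The paper instead derives the degree bound as a byproduct of relating the stable envelopes on $Y=\cM(\bv,\underline{\delta_i}+\underline{\bd})$ and $\widetilde Y=\cM(\bv,\delta_i+\bd_{\In})$ via \cite[Props.~8.2,~8.3]{COZZ}; these same propositions are what underlie Remark~\ref{rmk form of shift}, so the geometric input is identical, but your packaging is cleaner for this part.

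The correspondence argument, however, has a genuine gap. You correctly identify the $\bC^*_z$-fixed locus of $\widetilde X\times_{\widetilde X_0}\widetilde X$ as $X\times_{X_0}X$, but $\bC^*_z$-localization only gives an isomorphism after inverting $z$, so the ``transferred class'' lives a priori in $H^\sT(X\times_{X_0}X)\otimes\bC(z)$, not in $H^\sT(X\times_{X_0}X)[z]$. There is no general mechanism that lets you read off the $z^{-\alpha\cdot\mu}$-coefficient of the \emph{operator} $i^*(\mathrm{Cas}_\alpha)_z\pi^*$ as an integral class on $X\times_{X_0}X$ from the localized correspondence class alone; the Euler-class normalization you mention introduces denominators in $z$, and matching convolution on $\widetilde X$ with convolution on $X$ through the bundle projection is not straightforward (Proposition~\ref{prop on cmp corr} goes the other direction). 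The paper circumvents this by never working with $\mathsf C_\alpha$ on the symmetric variety at all: it reuses the auxiliary $\bC^*_t$ setup from Proposition~\ref{prop Casimir}, but now on the \emph{non-symmetric} variety $Y$, and builds $\mathsf C_{\alpha,\mu}$ directly as the $t$-linear coefficient of $C_{F,F'}\circ C_{F',F}$, where $C_{F,F'},C_{F',F}$ are the leading $u$-coefficients of the restrictions of $[\overline{\Attr}_{\pm}(\Delta_F)]$ on $Y$. These are manifestly integral classes on $F\times_{Y_0}F'$ and $F'\times_{Y_0}F$, so their composition lands in $H^\sT(F\times_{Y_0}F)\subset H^\sT(X\times_{X_0}X)$ without any localization.
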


\begin{proof}
Let $i\in Q_0$ be a node such that $\alpha_i>0$, and $\underline{\delta_i}:=(\delta_i,\delta_i)\in \bZ_{\geqslant 0}^{Q_0}\times \bZ_{\geqslant 0}^{Q_0}$. 
Denote
\begin{align*}
Y&=\cM(\bv,\underline{\delta_i}+\underline{\bd}), & F&=\cM(\mathbf 0,\delta_i)\times \cM(\bv,\underline{\bd}),& F'&=\cM(\alpha,\delta_i)\times \cM(\bv-\alpha,\underline{\bd}),\\
\widetilde{Y}&=\cM(\bv,\delta_i+\bd_{\In}),& \widetilde{F}&=\cM(\mathbf 0,\delta_i)\times \cM(\bv,\bd_{\In}),& \widetilde{F}'&=\cM(\alpha,\delta_i)\times \cM(\bv-\alpha,\bd_{\In}).
\end{align*}
Then $\widetilde{Y}$ is the total space of the vector bundle $E=\bigoplus_{i\in Q_0}\Hom(\mathsf V_i, \bC^{-\mu_i})$. $F,F'$ belongs to the set of connected components of torus fixed point set $Y^{\bC^*_u}$, where $\bC^*_u$ act on the framing by weight decomposition $u\,\underline{\delta_i}+\underline{\bd}$. This $\bC^*_u$ action naturally extends to $\widetilde{Y}$, and under the bundle projection map $\pi\colon \widetilde{Y}\to Y$, $\widetilde{F}, \widetilde{F}'$ are the connected components of $\widetilde{Y}^{\bC^*_u}$ that map to $F,F'$, respectively. Denote the stable envelopes in the $\pm$-chambers on $Y$ (resp. on $\widetilde{Y}$) by $\Stab_{\pm,Y}$ (resp. $\Stab_{\pm,\widetilde{Y}}$).

As in the proof of Proposition \ref{prop Casimir}, we let $\bC^*_t$ act on the out-going part of the framing $\underline{\delta_i}$ with weight $-1$ and fix all the other arrows. Then $\mathrm{Cas_\alpha}$ is determined by the classical $R$-matrix:
\begin{equation*}
\xymatrix{
H^{\sT\times \bC^*_z\times \bC^*_t}(\widetilde{F}, \sw^{\mathrm{fr}})\ar[r]^{\pmb r_{\alpha,-\alpha}} \ar@/^2pc/[rr]^{\alpha_i\,t\,\mathrm{Cas}_\alpha}_{\rotatebox{90}{=}} & H^{\sT\times \bC^*_z\times \bC^*_t}(\widetilde{F}', \sw^{\mathrm{fr}})
\ar[r]^{\pmb r_{-\alpha,\alpha}} & H^{\sT\times \bC^*_z\times \bC^*_t}(\widetilde{F}, \sw^{\mathrm{fr}})\,.
}
\end{equation*}
Let $i\colon Y\to \widetilde{Y}$ be the zero section map. And we let $\bC^*_z$ act on $\widetilde{Y}$ such that it fixes $Y$ and scale the fibers of $E$ with weight $1$. We claim that
\begin{enumerate}
    \item $\Stab_{-,Y}\big|_{F'\times F}=\pm u^{\rk N^+_{F'/Y}-1}\cdot i^*\pmb r_{\alpha,-\alpha}\pi^*+\text{lower order in $u$}$, where the sign is determined by the normalizer $\epsilon$,
    \item $z^{-\alpha\cdot\mu}\Stab_{+,Y}\big|_{F\times F'}+\text{lower order in $z$}=\pm u^{\rk N^-_{F/Y}-1}\cdot i^*\pmb r_{-\alpha,\alpha}\pi^*+\text{lower order in $u$}$, where the sign is determined by the normalizer $\epsilon$.
\end{enumerate}
Let us assume these claims for now. By \cite[Thm.~8.9]{COZZ}, the stable envelopes for $Y$ are induced by $\overline{\Attr}_{\pm}(\Delta_{Y^{\bC^*_u}})$. Write
\begin{gather*}
(F'\times F\hookrightarrow Y\times F)^![\overline{\Attr}_{-}(\Delta_{F})]=u^{\rk N^+_{F'/Y}-1}\cdot C_{F',F}+\text{lower order in $u$},\\
(F\times F'\hookrightarrow Y\times F')^![\overline{\Attr}_{+}(\Delta_{F'})]=u^{\rk N^-_{F/Y}-1}\cdot C_{F,F'}+\text{lower order in $u$},
\end{gather*}
where $C_{F',F}\in H^{\sT\times \bC^*_t}(F'\times_{Y_0} F)$ and $C_{F,F'}\in H^{\sT\times \bC^*_t}(F\times_{Y_0} F')$ with $Y_0=\cM_0(\bv,\underline{\delta_i}+\underline{\bd})$. It follows that
\begin{align*}
i^*\pmb r_{\alpha,-\alpha}\pi^*=\pm C_{F',F},\quad i^*\pmb r_{-\alpha,\alpha}\pi^*=\pm z^{-\alpha\cdot\mu}C_{F,F'}+\text{lower order in $z$}.
\end{align*}
Therefore,
\begin{align*}
\alpha_i\,t\cdot i^*(\mathrm{Cas}_{\alpha})_z\pi^*=\pm z^{-\alpha\cdot\mu}C_{F,F'}\circ C_{F',F}+\text{lower order in $z$}.
\end{align*}
This shows that $i^*(\mathrm{Cas}_{\alpha})_z\pi^*$ is a polynomial in $z$ of degree $\leqslant -\alpha\cdot \mu$.

Finally, since $\bC^*_t$ acts on $F$ trivially, $H^{\sT\times \bC^*_t}(F\times_{Y_0} F)=H^{\sT}(F\times_{Y_0} F)\otimes H_{\bC^*_t}(\pt)$. Expand $C_{F,F'}\circ C_{F',F}$ into a polynomial in $t$: 
\begin{align*}
C_{F,F'}\circ C_{F',F}=\sum_{n}b_n t^t,\;\text{ with }\;b_n\in H^{\sT}(F\times_{Y_0} F),
\end{align*}
then we see that the coefficient of $z^{-\alpha\cdot \mu}$ in $i^*(\mathrm{Cas}_{\alpha})_z\pi^*$ is induced by $\pm b_1/\alpha_i$. Obviously $b_1$ does not depend on the potential.

Let us prove the claims. The claim (1) is a consequence of \cite[Prop.~8.2]{COZZ}, namely, the bundle $E$ is attracting in the ``$-$'' chamber, so we have
\begin{align*}
\Stab_{-,Y}\big|_{F'\times F}=i^*\Stab_{-,\widetilde Y}\big|_{\widetilde F'\times \widetilde F}\;\pi^*=\pm u^{\rk N^+_{\widetilde F'/\widetilde Y}-1}\cdot i^*\pmb r_{\alpha,-\alpha}\pi^*+\text{lower order in $u$}.
\end{align*}
Since $\rk N^+_{\widetilde F'/\widetilde Y}=\rk N^+_{F'/Y}$, claim (1) follows. The claim (2) is a consequence of \cite[Prop.~8.3]{COZZ}, namely, the bundle $E$ is repelling in the ``$+$'' chamber, so we have
\begin{equation}\label{shifted Casimir_+ chamber}
\begin{split}
e^{\sT\times \bC^*_z\times \bC^*_t}(E|_F)\cdot\Stab_{+,Y}\big|_{F\times F'}\;\cdot e^{\sT\times \bC^*_z\times \bC^*_t}(E|_{F'}^{\mathrm{fixed}})^{-1}&= i^*\Stab_{+,\widetilde Y}\big|_{\widetilde F\times \widetilde F'}\;\pi^*\\
&=\pm u^{\rk N^-_{\widetilde F/\widetilde Y}-1}\cdot i^*\pmb r_{-\alpha,\alpha}\pi^*+\text{lower order in $u$}.
\end{split}
\end{equation}
Note that $\rk N^-_{\widetilde F/\widetilde Y}=\rk N^-_{F/Y}$. The equivariant Euler classes are given by
\begin{align*}
e^{\sT\times \bC^*_z\times \bC^*_t}(E|_F)=z^{-\bv\cdot \mu}+\text{lower order in $z$},\quad e^{\sT\times \bC^*_z\times \bC^*_t}(E|_{F'}^{\mathrm{fixed}})=z^{-(\bv-\alpha)\cdot\mu}+\text{lower order in $z$\,,}
\end{align*}
and they are independent of $u$. Thus, the LHS of \eqref{shifted Casimir_+ chamber} in the $z$-expansion is $z^{-\alpha\cdot\mu}\Stab_{+,Y}\big|_{F\times F'}+\text{lower order in $z$}$, and the claim (2) follows.
\end{proof}

\begin{Definition}\label{def of shifted Casimir}
For $\alpha>0$, we define the \textit{shifted Casimir operator} $\mathrm{Cas}_{\alpha,\mu}\in \mathsf Y_\mu(Q,\sW)$ to be the coefficient of $z^{-\alpha\cdot \mu}$ in $(-1)^{\alpha\cdot\mu}\,i^*(\mathrm{Cas}_{\alpha})_z\pi^*$, equivalently,
\begin{align*}
    \mathrm{Cas}_{\alpha,\mu}=\lim_{z\to \infty} (-z)^{\alpha\cdot \mu}\cdot i^*(\mathrm{Cas}_{\alpha})_z\pi^*\,.
\end{align*}
\end{Definition}


\section{Applications to quantum multiplication by divisors}\label{sec quant mult}

Building on techniques developed in the previous section, we 
deduce the formula of quantum multiplication by divisors for symmetric quiver varieties with potentials (Theorem \ref{thm on qm div for sym}), 
and certain asymmetric cases (Theorem \ref{thm qm div for asym_sp inj}).

\subsection{Quantum multiplication by divisors}

Let $X$ be a quiver variety \eqref{equ on qv} with a $\sT$-invariant function $\sw$, and the affinization map $X\to X_0$. Consider the moduli stack  of 
genus $0$, 3-pointed stable maps with proper evaluation map 
$$\ev:=\ev_1\times \ev_2\times \ev_3\colon \overline{M}_{0,3}(X,\beta)\to X\times X\times X. $$ 
As any domain curve of a stable map contracts to a point in the affine space $X_0$, we have 
$$\ev(\overline{M}_{0,3}(X,\beta))\subseteq (X\times Z(\sw\boxminus \sw))\cap (X\times_{X_0}X\times_{X_0}X). $$
As $X$ is smooth, there is a $\sT$-equivariant virtual class 
$$[\overline{M}_{0,3}(X,\beta)]_{\sT}^{\vir}\in H^{\sT}(\overline{M}_{0,3}(X,\beta)). $$
\begin{Definition}
Given any $\gamma\in H_{\sT}^*(X)$, we define 
\begin{equation}\label{equ on Mgamma}M_{\beta}(\gamma):=\mathrm{can}\circ (\ev_{2}\times \ev_3)_*([\overline{M}_{0,3}(X,\beta)]_{\sT}^{\vir}\cap \ev_1^*\gamma)
\in H^{\sT}(X^2,\sw\boxminus \sw). \end{equation}
This induces the \textit{quantum multiplication} by classes in $\gamma \in H_{\sT}^*(X)$: 
\begin{equation}\label{equ on Mgammabeta as map}
\gamma \star_{\beta} \cdot \, \colon H^{\sT}(X,\sw)\to H^{\sT}(X,\sw), \quad \alpha\mapsto M_{\beta}(\gamma)\circ \alpha,
\end{equation}
where $\circ$ is the convolution product \cite[\S 3.4.1]{COZZ}. Summing over $\beta$, we define 
\begin{equation}\label{equ on Mgamma as map}
\gamma \star \cdot:=\sum_{\beta}(\gamma \star_{\beta} \cdot)\,z^\beta\, \colon H^{\sT}(X,\sw)[\![z]\!]\to H^{\sT}(X,\sw)[\![z]\!].
\end{equation}
We define 
$$\sQ_{\beta}:=\mathrm{can}\circ \ev_*([\overline{M}_{0,2}(X,\beta)]_{\sT}^{\vir})\in H^{\sT}(X^2,\sw\boxminus \sw)_{X\times_{X_0} X}. $$
By abuse of notations, its induced map (via convolution) is denoted by 
\begin{equation}\label{equ on Qbeta}\sQ_{X,\beta}=\sQ_{\beta}\colon H^{\sT}(X,\sw) \to H^{\sT}(X,\sw).  \end{equation}
Summing over $\beta$, we define 
\begin{equation}\label{equ on Q map}
\sQ_{X}=\sQ:=\sum_{\beta}\sQ_{\beta}\,z^\beta\, \colon H^{\sT}(X,\sw)[\![z]\!]\to H^{\sT}(X,\sw)[\![z]\!].
\end{equation}
\end{Definition}

\begin{Lemma}
The map $\gamma\mapsto \gamma\star\cdot$ is unital and associative with respect to the quantum product on $H^*_\sT(X)$, i.e. $$1\star\alpha=\alpha, \quad \gamma_1\star(\gamma_2\star\alpha)=(\gamma_1\star\gamma_2)\star\alpha, \quad \forall\,\gamma_1, \gamma_2\in H^*_\sT(X), \,\alpha\in H^{\sT}(X,\sw)[\![z]\!]. $$ 
In other words, $\gamma\mapsto \gamma\star\cdot$ induces a quantum cohomology $QH^*_\sT(X)$ module structure on $H^{\sT}(X,\sw)[\![z]\!]$.
\end{Lemma}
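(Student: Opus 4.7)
The plan is to adapt the standard proof of the WDVV/divisor associativity from Gromov--Witten theory to the critical cohomology setting, using the fundamental class axiom for unitality and the splitting axiom applied to the $4$-pointed moduli space for associativity. Throughout, the operator $\gamma \star_\beta \cdot$ is understood as convolution by the correspondence $M_\beta(\gamma) \in H^\sT(X^2,\sw\boxminus\sw)_{X\times_{X_0} X}$, and the proof is carried out coefficient-wise in the formal variable $z$, so that for each fixed $\beta$ only finitely many terms contribute.

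For unitality, split $M_\beta(1)$ by $\beta$. In degree zero, $\overline{M}_{0,3}(X,0) \cong X$ with $\ev_1 = \ev_2 = \ev_3 = \id$ and perfect obstruction theory trivial, so $M_0(1) = \mathrm{can}(\Delta_*(1))$, which acts as the identity under convolution. For $\beta \neq 0$ I would use the forgetful map $\pi\colon \overline{M}_{0,3}(X,\beta) \to \overline{M}_{0,2}(X,\beta)$, which is defined and proper of relative dimension $1$. Since $\ev_1^*(1) = 1$ is pulled back from $\overline{M}_{0,2}(X,\beta)$ and $(\ev_2, \ev_3) = (\ev'_2,\ev'_3)\circ \pi$, the projection formula reduces $M_\beta(1)$ to $\mathrm{can}\circ(\ev'_2\times\ev'_3)_* \pi_*[\overline{M}_{0,3}(X,\beta)]^{\vir}$. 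The standard fundamental class axiom (comparison of virtual classes under forgetting a marked point with unit insertion) then gives $\pi_*[\overline{M}_{0,3}(X,\beta)]^{\vir} = 0$, whence $M_\beta(1) = 0$. Summing, $1\star\alpha = \alpha$.

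For associativity, the claim to prove is the coefficient-wise identity
\begin{equation*}
M_{\beta_1}(\gamma_1) \circ M_{\beta_2}(\gamma_2) \;=\; \sum_{\beta_1' + \beta_2' = \beta_1+\beta_2} M_{\beta_2'}\!\bigl(\gamma_1 \star_{\beta_1'} \gamma_2\bigr)
\end{equation*}
of correspondences in $H^\sT(X^2,\sw\boxminus\sw)$. I would realize both sides as pushforwards from a common ``$4$-pointed intermediary'': introduce $\overline{M}_{0,4}(X,\beta)$ with evaluations $\ev_1,\dots,\ev_4$, cap $[\overline{M}_{0,4}(X,\beta)]^{\vir}$ with $\ev_1^*\gamma_1 \cdot \ev_2^*\gamma_2$, and push forward by $\ev_3\times\ev_4$, all composed with $\mathrm{can}$. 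Then apply the splitting axiom to the two boundary divisors $D_{12|34}$ and $D_{13|24}$ of the forgetful map $\overline{M}_{0,4}(X,\beta) \to \overline{M}_{0,4} \cong \bP^1$: the divisor $D_{12|34}$ decomposes the virtual class as a fiber product over the node via the diagonal $\Delta_X$, producing the RHS $(\gamma_1\star_{QH}\gamma_2)\star \alpha$ after summing over gluings; the divisor $D_{13|24}$ analogously produces the LHS $\gamma_1\star(\gamma_2\star\alpha)$. Since $[D_{12|34}] = [D_{13|24}]$ in $H^*(\bP^1)$, the two coincide.

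The principal technical obstacle is ensuring that all these maneuvers are compatible with the critical cohomology formalism. Concretely, one must verify that (i) the canonicalization map $\mathrm{can}$ commutes with the splitting formula, i.e.\ that the diagonal pullback/pushforward used in the splitting axiom lifts to $H^\sT(-,\sw^{\boxplus n})$, using the Thom--Sebastiani isomorphism for the node and the fact that $\ev$ lands in the critical locus; (ii) the convolution of two correspondences in $H^\sT(X^2,\sw\boxminus\sw)_{X\times_{X_0}X}$ is computed by the virtual fundamental class of the fiber product, which is in turn the restriction of the $4$-pointed virtual class to the boundary. Both points reduce to properness of the evaluation maps (built into our definition of the moduli spaces with proper evaluation) plus the compatibility of $\mathrm{can}$ with proper pushforward and refined intersection, all of which is available in the critical cohomology framework used throughout \cite{COZZ}. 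Once this compatibility is in place, the rest of the argument is the classical WDVV manipulation.
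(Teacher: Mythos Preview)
Your proposal is correct and follows essentially the same route as the paper: unitality via the forgetful map $\overline{M}_{0,3}(X,\beta)\to\overline{M}_{0,2}(X,\beta)$ and the fundamental class axiom, and associativity via the two boundary degenerations of $\overline{M}_{0,4}(X,\beta)$ (the splitting/WDVV argument). You are slightly more explicit than the paper about separating the $\beta=0$ case and about the compatibility of $\mathrm{can}$ with the splitting formula, but the substance is the same.
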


\begin{proof}
$\ev_{2}\times \ev_3\colon \overline{M}_{0,3}(X,\beta)\to X^2$ factors through the universal curve map $\overline{M}_{0,3}(X,\beta)\to \overline{M}_{0,2}(X,\beta)$, so $$(\ev_{2}\times \ev_3)_*([\overline{M}_{0,3}(X,\beta)]_{\sT}^{\vir})=0.$$
It follows that $1\star\cdot=\id$. 

By a standard degeneration argument, we have the following identity in $H^{\sT}(X^2,\sw\boxminus \sw)$:
\begin{align*}
    \mathrm{can}\circ (\ev_{3}\times \ev_4)_*([\overline{M}_{0,4}(X,\beta)]_{\sT}^{\vir}\cap \ev_1^*\gamma_1\cap \ev_2^*\gamma_2)=\sum_{\beta_1+\beta_2=\beta}M_{\beta_1}(\gamma_1)\circ M_{\beta_2}(\gamma_2).
\end{align*}
Degeneration in another way gives
\begin{multline*}
(\ev_{3}\times \ev_4)_*([\overline{M}_{0,4}(X,\beta)]_{\sT}^{\vir}\cap \ev_1^*\gamma_1\cap \ev_2^*\gamma_2)\\=\sum_{\beta'_1+\beta'_2=\beta}\pr_{45*} i_3^*\left(\ev_{3*}([\overline{M}_{0,3}(X,\beta_1')]_{\sT}^{\vir}\cap \ev_1^*\gamma_1\cap \ev_2^*\gamma_2)\boxtimes(\ev_{3}\times \ev_4\times \ev_5)_*([\overline{M}_{0,3}(X,\beta_2')]_{\sT}^{\vir})\right),
\end{multline*}
where $i_3\colon X_3\times X_4\times X_5\hookrightarrow X_3\times X_3\times X_4\times X_5$ is induced by the diagonal of $X_3$, and $\pr_{45}\colon X_3\times X_4\times X_5\to  X_4\times X_5$ is the projection. Applying canonical map and summing over $\beta$ gives
\begin{equation*}
\gamma_1\star\cdot\circ\gamma_2\star\cdot=\gamma_1\cap\gamma_2\cap\cdot+\sum_{\beta_2'}\can\circ(\ev_4\times \ev_5)_*([\overline{M}_{0,3}(X,\beta_2')]_{\sT}^{\vir}\cap \ev_{3}^*(\gamma_1\star\gamma_2))=(\gamma_1\star\gamma_2)\star\cdot\;.\qedhere
\end{equation*}
\end{proof}

\begin{Lemma}\label{lem 2-pt Steinberg}
The class $\ev_*([\overline{M}_{0,2}(X,\beta)]_{\sT}^{\vir})\in H^\sT_{2\dim X-2}(X\times_{X_0}X)$ is a Steinberg correspondence in the sense of Definition \ref{def of Steinberg corr}.
\end{Lemma}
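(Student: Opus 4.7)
The plan is to apply Lemma \ref{lem away from affine locus}, which reduces the problem to checking two things: that the class sits in the correct homological degree $2\dim X-2$, and that it is supported on the closed stratum $(X\times_{X_0}X)\times_B(B\setminus U)$.

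First I would verify the degree count. The virtual dimension of $\overline{M}_{0,2}(X,\beta)$ is $\mathrm{vdim}=\dim X+\int_\beta c_1(T_X)-1$. On a symmetric quiver variety with symmetric framing, a direct character computation shows $c_1(T_X)=0$: writing $T_X=T_R/\mathfrak{g}$ with $R=R(\bv,\bd)$, every pair of opposite arrows $(a,a^*)$ with $i\neq j$ contributes a trivial character to $\det T_R$ because $\det\Hom(\mathsf V_i,\mathsf V_j)\otimes\det\Hom(\mathsf V_j,\mathsf V_i)=\cO$; every edge loop contributes $\det\Hom(\mathsf V_i,\mathsf V_i)=\cO$; every symmetric pair of in/out framings contributes $\det(\mathsf V_i)^{\bd_i-\bd_i}=\cO$; and each $\mathfrak{gl}(\mathsf V_i)$ summand of $\mathfrak{g}$ has trivial determinant character. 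Hence $\int_\beta c_1(T_X)=0$ and $\mathrm{vdim}=\dim X-1$, so $\ev_*[\overline{M}_{0,2}(X,\beta)]^{\vir}_{\sT}$ lands in $H^{\sT}_{2\dim X-2}(X\times X)$; it factors through $X\times_{X_0}X$ because any connected proper curve maps to a point under the affinization $X\to X_0$.

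Next I would pinpoint the support. By \S\ref{subsec steinberg corr} (from \cite{COZZ3}), there is a $\sT$-equivariant flat map $\pi\colon X\to B$ with $B$ affine and $\sA$ acting trivially on $B$, and $\pi|_{\pi^{-1}(U)}$ is affine for an open dense $U\subseteq B$. Composing any rational curve $\bP^1\to X$ with $\pi$ produces a map $\bP^1\to B$ to an affine variety, which must be constant, so every rational curve in $X$ lies in a fiber $\pi^{-1}(b)$. For $b\in U$ the fiber is affine and contains no nonconstant rational curves, so any such curve sits over $B\setminus U$, and both marked points evaluate into the same fiber over $B\setminus U$. This gives
\[\ev(\overline{M}_{0,2}(X,\beta))\subseteq (X\times_{X_0}X)\times_B(B\setminus U),\]
and since proper pushforward is supported on the set-theoretic image, the class $\ev_*[\overline{M}_{0,2}(X,\beta)]^{\vir}_{\sT}$ lies in the image of $H^\sT((X\times_{X_0}X)\times_B(B\setminus U))\to H^\sT(X\times_{X_0}X)$, and the same holds after passage to $H^\sA$. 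Applying Lemma \ref{lem away from affine locus} then concludes that it is a Steinberg correspondence.

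The main obstacle is the geometric localization of rational curves, which I expect to come entirely from the properties of $\pi\colon X\to B$ recalled in \S\ref{subsec steinberg corr} (generic fibers affine, $\sA$ acting trivially on $B$) together with the elementary fact that affine varieties contain no nonconstant rational curves. The Calabi-Yau character computation ensuring the correct homological degree is routine but needs to be spelled out carefully; it relies essentially on the symmetry of both the quiver and the framing, and is where the hypothesis of symmetric framing enters.
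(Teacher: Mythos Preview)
Your proposal is correct and follows essentially the same route as the paper's proof: apply Lemma \ref{lem away from affine locus} after observing that every stable map has image contained in a fiber of $\pi$ over $B\setminus U$, since $B$ is affine and $\pi^{-1}(U)\to U$ is affine. The paper's proof is a single sentence to this effect; you additionally spell out the Calabi--Yau computation $c_1(T_X)=0$ justifying the degree $2\dim X-2$, which the paper simply asserts (and later uses as item (3) in the proof of Theorem \ref{thm on stab and Q}).
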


\begin{proof}
By Lemma \ref{lem away from affine locus}, it suffices to show that the image of $\ev\colon \overline{M}_{0,2}(X,\beta)\to X\times_{X_0} X$ is contained in $X\times_{X_0} X\times_B (B\setminus U)$, and this follows from the fact that $B$ is affine and $\pi|_{\pi^{-1}(U)}\colon \pi^{-1}(U)\to U$ is an affine morphism (so any proper curve is mapped to $\pi^{-1}(B\setminus U)$).
\end{proof}

\begin{Remark}\label{rmk on two qbet}
If $\gamma$ is a divisor class, $\beta$ is a nonzero curve class, divisor equation implies that 
$$M_{\beta}(\gamma)=(L\cdot \beta)\cdot \sQ_{\beta}. $$ 
In general, there is a conjecture of Joyce, see e.g.~\cite{Joy, CZ, CTZ}, asserting that by counting certain `twisted' maps with $r$-marked points (assuming $\Crit(\sw)^\sT$ is proper), one obtains an 
$r$-pointed function:
\begin{equation}\label{equ on jc}H^{\sT}(X,\sw_{})^{\otimes r} \to H_\sT(\pt)_{\loc}. \end{equation}
In the two marked point case ($r=2$), we can choose the twist to be trivial and 
the twisted maps in \textit{loc.\,cit.} become ordinary maps from rational curves $C$ such that $\omega_{\log,C}\cong \oO_C$.
Under Verdier duality, 
the provisional two pointed function \eqref{equ on jc} becomes an operator as \eqref{equ on Qbeta}, we expect it coincides with $\sQ_{\beta}$ up to a scalar operator. 
\end{Remark}

\subsection{Stable envelopes v.s.\,operator $\sQ$}
By \cite[Lem.~3.29]{COZZ}, $\Stab_\fC^{-1}$ is induced by the transpose of the stable envelope correspondence for the opposite chamber, i.e. 
\begin{align*}
\Stab_\fC^{-1}=[\Stab_{-\fC}]^{\mathrm{t}}\circ \,\colon H^{\sT}(X,\sw)_\loc\to H^{\sT}(X^\sA,\sw)_\loc\,,
\end{align*}
where $(-)_\loc$ stands for $(-)\otimes_{H_\sA(\pt)}\Frac H_\sA(\pt)$. Note that the convolution $[\Stab_{-\fC}]^{\mathrm{t}}\circ\ev_*[\overline{M}_{0,2}(X,\beta)]^{\vir}\circ[\Stab_\fC]$ is a class in $H^{\sT}(X^\sA\times_{X_0}X^\sA)$ by the proof of \cite[Lem.~3.29]{COZZ}, so $\Stab_\fC^{-1}\circ\, \sQ_\beta\circ \Stab_\fC$ is defined without localization.

\begin{Theorem}\label{thm on stab and Q}
Let $X$ be a symmetric quiver variety (Definition \ref{def of sym qv}) with a torus $\sT$-action, and a $\sT$-invariant function $\sw$. Let 
$\sA\subset \sT$ be a self-dual subtorus, $\fC$ be a chamber. 

For any $\sA$-fixed component $F\subseteq X^{\sA}$, and $\beta\in H_2(F,\mathbb{Z})$ a nonzero effective class, we have a commutative diagram:
\begin{equation}\label{equ on comm diag on stab and Q}
\xymatrix{
H^{\sT}(F,\sw)  \ar[rr]^-{\Stab_\fC} \ar[d]_-{(-1)^{(c_1(N^-_{F/X}),\beta)}\cdot\sQ_\beta} &  & H^{\sT}(X,\sw) \ar[d]^-{\sQ_\beta}  \\
H^{\sT}(F,\sw) & & H^{\sT}(X,\sw), \ar[ll]_-{\,\, (-)|_{F}\circ\Stab_\fC^{-1}}    
} \quad \quad \quad \quad 
\end{equation}
up to a scalar operator of $\sT/\sA$-weight. Here $(-)|_{F}\colon H^{\sT}(X^{\sA},\sw)\to H^{\sT}(F,\sw)$ is the projection to component $F$.
\end{Theorem}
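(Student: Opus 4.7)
By Lemma~\ref{lem 2-pt Steinberg}, the class $\ev_*[\overline{M}_{0,2}(X,\beta)]^{\vir}$ defining $\sQ_\beta$ is a Steinberg correspondence in the sense of Definition~\ref{def of Steinberg corr}. Hence Theorem~\ref{thm Stab and Steinberg} applies and produces an integral class $(\sQ_\beta)_\sA \in H^\sT(X^\sA\times_{X_0}X^\sA)$ such that, for every pair of fixed components $F_1,F_2\subseteq X^\sA$, the composition $(-)|_{F_1}\circ \Stab_\fC^{-1}\circ \sQ_\beta\circ \Stab_\fC$ on $H^\sT(F_2,\sw)$ is induced by $(-1)^\sharp\,(\sQ_\beta)_\sA|_{F_1\times F_2}$ after $\sA$-localization, with the explicit sign $(-1)^\sharp$ given in that theorem. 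Setting $F_1=F_2=F$ reduces the claim to two tasks: (i) identify $(\sQ_\beta)_\sA|_{F\times F}$ with $\sQ_\beta^F$ up to a $\sT/\sA$-trivial scalar, and (ii) verify that the sign equals $(-1)^{(c_1(N^-_{F/X}),\beta)}$.

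For (i), I plan to invoke virtual equivariant localization \`a la Graber--Pandharipande for $\overline{M}_{0,2}(X,\beta)$. For a nonzero effective $\beta$ and a connected fixed component $F$, any $\sA$-fixed stable map whose two evaluations land in $F$ must have its entire image inside $F$, since the domain $\bP^1$ is connected and $\sA$ acts trivially on $F$. Hence the relevant piece of the $\sA$-fixed locus is $\overline{M}_{0,2}(F,\beta)$, whose virtual class pushes forward to define $\sQ_\beta^F$. The virtual normal bundle is computed by the $\sA$-moving part of $R\pi_*(f^*N_{F/X})$, whose equivariant Euler class only contributes a scalar in $H_{\sT/\sA}(\pt)_\loc$ under the $\mathsf a\to\infty$ limit used in the proof of Theorem~\ref{thm Stab and Steinberg}. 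This identifies the leading term of $(\sQ_\beta)_\sA|_{F\times F}$ with $\sQ_\beta^F$ up to an allowed scalar.

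For (ii), I will trace the sign $(-1)^\sharp$ in Theorem~\ref{thm Stab and Steinberg} through the virtual normal bundle contribution. By self-duality of the $\sA$-action, $N_{F/X}=N^+_{F/X}\oplus N^-_{F/X}$ with $N^+_{F/X}\cong (N^-_{F/X})^\vee$ up to a $\sT/\sA$-character. Serre duality on $\bP^1$ then pairs the $\sA$-weights of $H^0(\bP^1,f^*N^+_{F/X})$ with those of $H^1(\bP^1,f^*N^-_{F/X})$ into opposite pairs, and analogously on the other side, up to a defect measured precisely by the degrees $c_1(N^\pm_{F/X})\cdot\beta$. The parity of the number of uncanceled weights equals $(c_1(N^-_{F/X}),\beta)\bmod 2$, which, combined with the sign $(-1)^\sharp$ of Theorem~\ref{thm Stab and Steinberg} and the normalizer $\epsilon$ used in $\Stab_\fC$, yields the asserted sign modulo a scalar trivial on $\sA$.

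The main obstacle will be step (ii): the careful parity accounting of $\sA$-weights in the virtual localization combined with the specific square root convention in the sign $(-1)^\sharp$ of Theorem~\ref{thm Stab and Steinberg} must reproduce $(-1)^{(c_1(N^-_{F/X}),\beta)}$ on the nose, not merely up to a global $\pm 1$. This is where the paper's replacement of the broken/unbroken analysis of \cite{OP,MO} by the smallness-based Theorem~\ref{thm Stab and Steinberg} is decisive, since the latter gives a uniform representative whose sign can be compared componentwise with the Graber--Pandharipande contribution, rather than via a separate enumerative argument for each curve class.
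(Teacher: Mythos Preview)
Your step (i) contains a genuine gap. The claim that ``any $\sA$-fixed stable map whose two evaluations land in $F$ must have its entire image inside $F$, since the domain $\bP^1$ is connected'' is false: the domain of a stable map is a nodal genus-zero curve, not a smooth $\bP^1$. An $\sA$-fixed stable map can have irreducible components mapping into other fixed components $F'\neq F$, joined by components mapping onto $\sA$-flow lines, or can have both marked points on a contracted component attached to a nontrivial tree going elsewhere. The relevant piece of $\overline{M}_{0,2}(X,\beta)^{\sA}$ is therefore \emph{not} just $\overline{M}_{0,2}(F,\beta)$, and your virtual localization computation has many additional terms whose vanishing must be argued separately.

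This is exactly what the paper handles via the broken/unbroken analysis in Proposition~\ref{prop on sym var van} and Lemma~\ref{lem on vani of delta cpn}, which you have bypassed. Those arguments rely crucially on the smallness properties of symmetric quiver varieties from \cite{COZZ3}: dimension bounds of the form $\dim(F\times_{X_0}F')\leqslant\frac{1}{2}\dim(F\times F')-1$ for $F\neq F'$, together with the flat map $\pi\colon X\to B$ with affine generic fiber, force the contributions of broken components and of components where both marked points sit on a contracted piece to have strictly negative $\sA$-degree, hence to vanish in the $\mathsf a\to\infty$ limit. Your final paragraph has the logic reversed: the paper does not replace the broken/unbroken analysis by Theorem~\ref{thm Stab and Steinberg}; rather, it uses smallness \emph{inside} the broken/unbroken analysis to kill the extra terms. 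Invoking Theorem~\ref{thm Stab and Steinberg} first (as you do) is a reasonable organizational choice, but it only tells you the composition is induced by \emph{some} integral class $(\sQ_\beta)_\sA$; identifying that class with $\sQ_\beta^F$ still requires exactly the localization computation with its full zoo of fixed components, and that is where the real work lies.
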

\begin{proof}
By Lemma \ref{lem on A vs T loc}, it suffices to show that 
\begin{equation}\label{equ on stabevstab}[\Stab_{-\fC}]^{\mathrm{t}}\circ\ev_*[\overline{M}_{0,2}(X,\beta)]^{\vir}\circ[\Stab_\fC]=(-1)^{(c_1(N^-_{F/X}),\beta)}\cdot\ev_*[\overline{M}_{0,2}(F,\beta)]^{\vir}\in H^{\sA}(F\times_{X_0}F).\end{equation} 
Below we use the following properties of a symmetric quiver variety $X$: 
\begin{enumerate}
\item the affinization map $X\to X_0$ is birational and small (ref.~\cite[Thm.~1.1]{COZZ3}), 
\item there is an $\sA$-equivariant flat map $\pi\colon X\to B$ to an affine space such that generic fibers are affine (ref.~\cite[Thm.~1.1]{COZZ3}), 
\item the virtual dimension satisfies $\mathrm{vdim}(\overline{M}_{0,2}(X,\beta))=\dim X-1$.
\end{enumerate}
By (3) and the definition of convolution product, both sides of \eqref{equ on stabevstab} have homological degree $(2\dim F-2)$ in $ H^{\sA}(F\times_{X_0}F)$. By (1), $\dim F\times_{X_0}F\leqslant \dim F$ and there is a unique irreducible component of $F\times_{X_0}F$ with dimension equal to $\dim F$. By (2), both sides of \eqref{equ on stabevstab} are supported away from an $\sA$-invariant dense open subset $U$ of the diagonal $\Delta_F$. So both sides of \eqref{equ on stabevstab} are $\bQ$-linear combinations of top-dimensional cycles in $(F\times_{X_0}F)\setminus U$, in particular, they are independent of the $\sA$-equivariant parameters. Then it is enough to prove \eqref{equ on stabevstab} in $\sA$-localized cohomology with the $\sA$-equivariant parameters $\mathsf a$ taken to infinity in some direction.

We recall the notion of (un-)broken curves in \cite[\S 7.3]{MO}, \cite[\S 3.8.2]{OP}. By deformation theory, a connected component in $\overline{M}_{0,2}(X,\beta)^{\sA}$ is either broken or unbroken. By Proposition \ref{prop on sym var van} and Lemma \ref{lem on vani of delta cpn} below, only unbroken components for which two marked points do not lie in a contracted connected subcurve contribute to the $\sA$-localization in the $\mathsf a\to \infty$ limit.
We also do not have unbroken chains connecting two points in the same component of $X^\sA$ by \cite[Lem.~7.3.3]{MO}.
Hence the only nonzero contribution comes from the component whose elements consist of stable maps to $F$. In the $\mathsf a\to \infty$ limit, we have
\begin{align*}
[\Stab_{-\fC}]^{\mathrm{t}}\circ\ev_*[\overline{M}_{0,2}(X,\beta)]^{\vir}\circ[\Stab_\fC]&\sim e^\sA(N_{F/X})\cdot\ev_*\left(\frac{[\overline{M}_{0,2}(F,\beta)]^{\vir}}{e^\sA\left(N^{\vir}_{\overline{M}_{0,2}(F,\beta)/\overline{M}_{0,2}(X,\beta)}\right)}\right)\\
&\sim (-1)^{(c_1(N^-_{F/X}),\beta)}\mathsf a^{\rk N_{F/X}-\rk N^{\vir}}\ev_*[\overline{M}_{0,2}(F,\beta)]^{\vir}
\end{align*}
where the sign comes from the fact that $\sA$ is a self-dual torus, 
$\rk N^{\vir}$ is the rank of virtual normal bundle of $\overline{M}_{0,2}(F,\beta)$ in $\overline{M}_{0,2}(X,\beta)$, which equals to 
\begin{align*}
\chi(C,f^*N_{F/X})&=\chi(C,f^*N^+_{F/X})+\chi(C,f^*N^-_{F/X})=\chi(C,f^*N^+_{F/X})+\chi(C,f^*(N^+_{F/X})^\vee)\\
&=2\rk N^+_{F/X}=\rk N_{F/X}\:.
\end{align*}
Therefore, $[\Stab_{-\fC}]^{\mathrm{t}}\circ\ev_*[\overline{M}_{0,2}(X,\beta)]^{\vir}\circ[\Stab_\fC]\sim (-1)^{(c_1(N^-_{F/X}),\beta)}\ev_*[\overline{M}_{0,2}(F,\beta)]^{\vir}$ in the $\mathsf a\to \infty$ limit. This finishes the proof.
\end{proof}

\begin{Proposition}\label{prop on sym var van}
In the $\mathsf a\to \infty$ limit, a broken component for which two marked points do not lie in a contracted connected subcurve contributes trivially to the left hand side of \eqref{equ on stabevstab}.
\end{Proposition}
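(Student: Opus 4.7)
The plan is to run $\sA$-equivariant virtual localization on $\overline{M}_{0,2}(X,\beta)$, use the triangularity of the stable envelopes to expand the sandwiched composition component-by-component, and bound the asymptotic $\mathsf a$-degree of each piece to show that the broken contributions of the specified type decay to zero.

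Following \cite[\S 7.3]{MO}, \cite[\S 3.8.2]{OP}, connected components of $\overline{M}_{0,2}(X,\beta)^\sA$ are parametrized by decorated trees whose vertices carry maximal contracted subcurves mapping into fixed components of $X^\sA$ and whose edges carry $\sA$-invariant non-contracted $\bP^1$'s joining pairs of $\sA$-fixed points. The hypothesis that the two marked points do not lie in a common contracted subcurve means that the chain $e_1,\ldots,e_k$ of edges joining the two marked vertices is non-empty. The component is broken precisely when at some internal node of this chain the two tangent weights of the adjacent non-contracted components fail to sum to zero.

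By \cite[Lem.~3.29]{COZZ}, both $\Stab_\fC$ and $[\Stab_{-\fC}]^{\mathrm{t}}$ are triangular with respect to $\prec$, with leading diagonal entries $e^\sA(N^-_{F/X})$ and $e^\sA(N^+_{F/X})$ on $F\times F$. Combining this with Atiyah--Bott localization, the contribution of a broken component $M$ with marked vertices mapping to $F_1,F_2\subseteq X^\sA$ to the LHS of \eqref{equ on stabevstab} is, to leading order in $\mathsf a$,
\begin{align*}
\frac{e^\sA(N^-_{F_1/X})\,e^\sA(N^+_{F_2/X})}{e^\sA\bigl(N^{\vir}_{M/\overline{M}_{0,2}(X,\beta)}\bigr)}\cdot\ev_*[M]^{\vir}\,,
\end{align*}
modulo corrections from $F'_i\prec F_i$ which only decrease the $\mathsf a$-degree further. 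Self-duality of $\sA$ on $TX$ at any $\sA$-fixed point, inherited from self-duality of $\sA$ on $R(\bv,\bd)$, yields the matching $\rk N^+_{F/X}=\rk N^-_{F/X}=\tfrac12\rk N_{F/X}$, so each vertex Euler class in the numerator contributes exactly half of the weights of $TX|_{F_v}$.

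The key step, and the main obstacle, is the edge-wise $\mathsf a$-degree count. Splitting $N^{\vir}_{M/\overline{M}_{0,2}(X,\beta)}$ into $H^0$ and $H^1$ of $f^*TX$ on each non-contracted $\bP^1_e$ in the chain, one verifies that at every internal node the two adjacent tangent contributions combine to cancel exactly one $\pm$-pair of $\sA$-weights of $TX|_{F_v}$; for an unbroken chain this cancellation is perfect and matches the complementary half-weights at the two endpoints, producing a contribution of $\mathsf a$-degree zero. At a break node this matching fails, and self-duality of the $\sA$-action (the replacement for the symplectic pairing used in \cite[Lem.\ 7.3.3]{MO}) forces at least one uncancelled positive $\sA$-weight to remain in the denominator, giving a strictly negative total $\mathsf a$-degree. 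The difficulty lies precisely in transplanting the symplectic weight-matching of loc.\ cit.\ to the symmetric-quiver setting, where the role of the symplectic form on $TX$ is played by the induced self-dual pairing on $\sA$-weights coming from Definition \ref{def of sym qv}; once this substitution is justified, the edge-wise computation is formally identical to that in loc.\ cit., and the broken contributions vanish in the $\mathsf a\to\infty$ limit.
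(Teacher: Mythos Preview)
Your approach has a genuine gap. You attempt to transplant the broken/unbroken weight-matching argument of \cite[\S 7.3]{MO} to the symmetric quiver setting by replacing the symplectic pairing on $TX$ with the self-duality of the $\sA$-action. But self-duality (Definition~\ref{def of sym qv}) only guarantees that the $\sA$-weights of the \emph{normal bundle} $N_{F/X}$ at each fixed component come in opposite pairs; it says nothing about the weight structure of $TX$ along a non-contracted $\bP^1$ joining two fixed points, which is what the edge-wise count needs. The sentence ``once this substitution is justified, the edge-wise computation is formally identical'' is exactly where the argument fails: there is no symplectic form to force the cancellation you assert at an internal node, and you give no alternative mechanism. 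The paper itself flags this explicitly in the introduction---the symplectic broken/unbroken analysis does not carry over, and is \emph{replaced} by a smallness argument.

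Concretely, the paper's proof proceeds by an entirely different route. It splits into two cases: (a) the curve decomposes as $C_L\cup C_R$ at a single breaking node separating the two marked points, and (b) both marked points lie on a single component $C_0$ mapping with nonzero degree into a fixed component $F$, with tails attached. Your chain-of-edges framework does not even recognize case (b), since there the marked points are on the same vertex of the decorated tree. In each case the paper factors the contribution as a convolution and then uses the smallness of the affinization $X\to X_0$ together with the structure map $\pi\colon X\to B$ (from \cite{COZZ3}): the key inputs are the dimension bound $\dim(F\times_{X_0}F')\leqslant\tfrac12\dim(F\times F')-1$ for $F\neq F'$, and the fact that images of stable maps lie over the complement of the affine locus $U\subset B$. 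These force the pushforward classes to live in homological degree strictly below what is needed, so the $\sA$-degree is negative and the contribution vanishes as $\mathsf a\to\infty$. None of this appears in your proposal.
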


\begin{proof}
Let $\overline{M}$ be a broken components for which two marked points do not lie in contracted connected subcurve, and $f\colon C\to X$ be a stable map in $\overline{M}$, then
there are two possibilities:
\begin{enumerate}
\item[(a)] $C=C_L\cup C_R$, such that $C_L$ and $C_R$ are two connected genus zero (possibly reducible) curves intersecting at a node $p$ with $f(p)\in F'$ ($F'$ is a connected component of $X^\sA$), and $T_{C_L,p}\otimes T_{C_R,p}$ has nonzero $\sA$ weight, and the marked points satisfy $v_1\in C_L$, $v_2\in C_R$.
\item[(b)] $C=C_0\cup C_1\cup \cdots \cup C_n$ with $n\geqslant 1$ such that every $C_i$ is a connected genus zero (possibly reducible) curve, $f(C_0)\subseteq F$ with $\deg f|_{C_0}\neq 0$, and the two marked points $v_1$ and $v_2$ lie in $C_0$, and for every $i>0$, $C_i$ intersects with $C_0$ at a node $p_i$, and the tangent vector $T_{C_i,p_i}$ has nonzero $\sA$ weight.
\end{enumerate}
By deformation theory, only one of (a) or (b) can happen in this component. 


\textbf{Case (a).} In this situation, $\overline{M}$ is of the shape
\begin{align*}
    \overline{M}\cong \overline{M}_L\times_{F'} \overline{M}_R\,,
\end{align*}
where $\overline{M}_L$ (resp.\,$\overline{M}_R$) is the stable map moduli stack corresponding to $C_L$ (resp.\,to $C_R$), and the maps $\overline{M}_L\to F'$ and $\overline{M}_R\to F'$ are evaluations at the node $p$. 

The normalization sequence for the universal domain at the node $p$ is 
$$0\to \oO_{C} \to  \oO_{C_L}\oplus \oO_{C_R}\to \oO_{p} \to 0\,, $$
and it follows that the tangent complex of $\overline{M}$ is related to the tangent complexes of $\overline{M}_L$ and $\overline{M}_R$ by the exact triangle
\begin{align}\label{tangent cplx splits}
    \mathbb T_{\overline{M}}\to \mathrm{pr}_L^*\mathbb T_{\overline{M}_L}\oplus \mathrm{pr}_R^* \mathbb T_{\overline{M}_R}\to \ev_p^* \mathbb T_{F'}\to \mathbb T_{\overline{M}}[1],
\end{align}
$\mathrm{pr}_{L\text{ or }R}$ is the projection from $\overline{M}$ to $\overline{M}_{L\text{ or }R}$. 

The virtual normal contribution $N^\vir_{\overline{M}/\overline{M}_{0,2}(X,\beta)}$ breaks into
\begin{align}\label{vir normal splits}
    N^\vir_{\overline{M}/\overline{M}_{0,2}(X,\beta)}\big|_{[f\colon C\to X]}=N_L\big|_{[f|_{C_L}]}\oplus N_R\big|_{[f|_{C_R}]}-T_X|_{f(p)}^{\mathrm{moving}}+T_{C_L,p}\otimes T_{C_R,p}\,, 
\end{align}
where $N_{L\text{ or }R}\big|_{[f|_{C_{L\text{ or }R}}]}=H^*(C_{L\text{ or }R},f^*T_X)^{\mathrm{moving}}$.

It follows from \eqref{tangent cplx splits} and \eqref{vir normal splits} that the contribution of $\overline{M}$ breaks into sum of convolution products:
\begin{align*}
e^{\sA}\left(N_{F/X}^+\right)\ev_*\left(\frac{[\overline{M}]^{\vir}}{e^{\sA}\left(N^{\vir}_{\overline{M}/\overline{M}_{0,2}(X,\beta)}\right)} \right)e^{\sA}\left(N_{F/X}^-\right)=\sum_{\gamma_L,\gamma_R}\pr_*\Delta_{F'}^*\left(\ev_*(\gamma_L\cdot [\overline{M}_L]^{\vir})\boxtimes \ev_*(\gamma_R\cdot [\overline{M}_R]^{\vir})\right),
\end{align*}
where $\Delta_{F'}\colon F\times F'\times F\hookrightarrow F\times F'\times F'\times F$ is given by the diagonal embedding $F'\hookrightarrow F'\times F'$, $\pr\colon F\times F'\times F\to F\times F$ is the projection to the first and last factor, $\gamma_L,\gamma_R$ are cohomology classes on $\overline{M}_L$ and $\overline{M}_R$ respectively which come from equivariant Euler classes of $-T_X|_{f(p)}^{\mathrm{moving}}+T_{C_L,p}\otimes T_{C_R,p}$.

Let $\pi\colon X\to B$ be the flat map to the affine space $B$ as mentioned above. According to \cite[Lem.~3.22]{COZZ3}, for any $b\in B$,
$$\dim F\times_{X_0} F'\times_{B}\{b\}\leqslant \frac{1}{2}\left(\dim F+\dim F'\right)-\dim B.$$ 
Note that $\pi\colon \pi^{-1}(U)\to U$ is affine for an open dense subset $U\subset B$, so $\ev(\overline M_L)$ is away from $F\times_{X_0} F'\times_{B}U$, and we see that $\ev(\overline M_L)$ is contained in a closed subset of $F\times_{X_0} F'$ of dimension $\leqslant \frac{\dim F+\dim F'}{2}-1$. Therefore
\begin{align*}
\ev(\gamma_L\cdot[\overline M_L]^\vir)\in H_{\leqslant \dim F+\dim F'-2}(F\times_{X_0} F')\otimes \Frac H_\sA(\pt).
\end{align*}
Similarly,
\begin{align*}
\ev(\gamma_R\cdot[\overline M_R]^\vir)\in H_{\leqslant \dim F'+\dim F-2}(F'\times_{X_0} F)\otimes \Frac H_\sA(\pt).
\end{align*}
After taking convolution, we have
\begin{align*}
e^{\sA}\left(N_{F/X}^+\right)\ev_*\left(\frac{[\overline{M}]^{\vir}}{e^{\sA}\left(N^{\vir}_{\overline{M}/\overline{M}_{0,2}(X,\beta)}\right)} \right)e^{\sA}\left(N_{F/X}^-\right)\in H_{\leqslant 2\dim F-4}(F\times_{X_0} F)\otimes \Frac H_\sA(\pt).
\end{align*}
The $\sA$-degree in the above is negative (since the LHS has homological degree $2\dim F-2$ as the LHS of \eqref{equ on stabevstab}), and hence 
vanishes when $\mathsf a\to \infty$.

\bigskip

\textbf{Case (b).} 
In this situation, $\overline{M}$ is of the shape
\begin{align*}
    \overline{M}\cong \overline{M}_0\times_{F^n} \prod_{i=1}^n \overline{M}_i\,,
\end{align*}
where $\overline{M}_i$ is the stable map moduli stack corresponding to $C_i$ for $0\leqslant i\leqslant n$, and the maps $\overline{M}_0\to F^n$ and $\overline{M}_i\to F$ are evaluations at the nodes $p_1,\ldots,p_n$. 
Notice that the $\sA$-action on $\overline{M}_0$ is trivial as $f(C_0)\subseteq F$. 

The same splitting curve argument as in case (a) shows that the tangent complex of $\overline{M}$ is related to the tangent complexes of $\overline{M}_i$ by the exact triangle
\begin{align}\label{tangent cplx splits_b}
    \mathbb T_{\overline{M}}\to \bigoplus_{i=0}^n\mathrm{pr}_i^*\mathbb T_{\overline{M}_i}\to \bigoplus_{j=1}^n\ev_{p_j}^* \mathbb T_{F}\to \mathbb T_{\overline{M}}[1],
\end{align}
where 
$\mathrm{pr}_{i}$ is the projection from $\overline{M}$ to $\overline{M}_{i}$. The virtual normal contribution $N^\vir_{\overline{M}/\overline{M}_{0,2}(X,\beta)}$ breaks into
\begin{align}\label{vir normal splits_b}
    N^\vir_{\overline{M}/\overline{M}_{0,2}(X,\beta)}\big|_{[f\colon C\to X]}=\sum_{i=0}^n N_i\big|_{[f|_{C_i}]}-\sum_{j=1}^n T_X|_{f(p_j)}^{\mathrm{moving}}+\sum_{j=1}^n T_{C_0,p_j}\otimes T_{C_j,p_j}\,, 
\end{align}
where $N_{i}\big|_{[f|_{C_{i}}]}=H^*(C_{i},f^*T_X)^{\mathrm{moving}}$. Let $\mathsf a_i$ denote the $\sA$-weight of $T_{C_j,p_j}$. 


It follows from \eqref{tangent cplx splits_b} and \eqref{vir normal splits_b} that
the contribution of $\overline{M}$ is of form
\begin{equation}\label{split into factor}
\begin{split}
&\quad \,\,e^{\sA}\left(N_{F/X}^+\right)\ev_*\left(\frac{[\overline{M}]^{\vir}}{e^{\sA}\left(N^{\vir}_{\overline{M}/\overline{M}_{0,2}(X,\beta)}\right)} \right)e^{\sA}\left(N_{F/X}^-\right)\\
&=\sum_{\substack{r_1,\ldots,r_n\in \bZ_{\geqslant 0}\\s_1,\ldots,s_n\in \bZ_{\geqslant 0}}}\pr_{*}\Delta_{F^n}^*\left(\ev_*(\psi_1^{r_1}\cdots\psi_n^{r_n}\alpha_0)\boxtimes \left(\boxtimes_{i=1}^n e^\sA(N_{F/X})\cdot \ev_{i*}(\psi_i^{s_i}\cdot [\overline{M}_i]^{\vir})\right)\right)\prod_{i=1}^n\mathsf a_i^{-r_i-s_i-1}\binom{r_i+s_i}{r_i},
\end{split}
\end{equation}
where $\Delta_{F^n}\colon F\times F\times F^n\hookrightarrow F\times F\times F^n\times F^n$ is given by the diagonal embedding $F^n\hookrightarrow F^n\times F^n$, $\pr\colon F\times F\times F^n\to F\times F$ is the projection to the first and second factor,
$$\ev=\ev_{v_1}\times \ev_{v_2}\times \prod_{i=1}^n\ev_{i}\colon \overline{M}_0\to F\times_{X_0} F\times_{X_0}\cdots \times_{X_0} F, \quad 
\ev_i\colon \overline{M}_i \to F$$
are evaluation maps,  
\begin{align*}
\alpha_0=\frac{e^{\sA}\left(\ev_{v_1}^*N_{F/X}^+\right)e^{\sA}\left(\ev_{v_2}^*N_{F/X}^-\right)}{e^{\sA}\left(N_0\right)} [\overline{M}_0]^{\vir}\in H_*(\overline{M}_0)\otimes \Frac H_\sA(\pt),
\end{align*}
and $\psi_i$ is the $\psi$-class corresponding to $p_i$.

Notice that
\begin{align*}
\ev_*(\psi_1^{r_1}\cdots\psi_n^{r_n}\alpha_0)\in H_{\leqslant 2\cdot\mathrm{vdim}\overline{M}_0}(\underbrace{F\times_{X_0} F\times_{X_0}\cdots \times_{X_0} F}_{n+2\text{ copies}})\otimes \Frac H_\sA(\pt).
\end{align*}
On the other hand, the image of $\ev_{i}\colon \overline{M}_i\to F$ is contained in the closed subset $F\cap \pi^{-1}(B\setminus U)$ (notation is the same as in case (a)), because the restriction of $\pi\colon X\to B$ to $\pi^{-1}(U)$ is an affine morphism therefore the image of any stable map $f\colon C\to X$ is in $\pi^{-1}(B\setminus U)$. Therefore, 
\begin{align*}
e^\sA(N_{F/X})\cdot \ev_{i*}(\psi_i^{s_i}\cdot [\overline{M}_i]^{\vir})\in H_{\leqslant 2(\dim F-1)}(F\cap \pi^{-1}(B\setminus U))\otimes \Frac H_\sA(\pt).
\end{align*}
We claim that the image of $H_{2(\dim F-1)}(F\cap \pi^{-1}(B\setminus U))$ in $H_{2(\dim F-1)}(F)$ is zero. Since $F$ is a symmetric quiver variety, there is a flat morphism $\pi'\colon F\to B'$ to an affine space $B'$ with irreducible fibers by \cite[Thm.~1.1]{COZZ3}. By construction in \cite{COZZ3}, there is a finite morphism $p\colon B'\to B$ \footnote{$B=\prod_j\mathfrak{gl}_{n_j}/\!\!/\GL_{n_j}$, $B'=\prod_j\mathfrak{gl}_{n'_j}/\!\!/\GL_{n'_j}\times \mathfrak{gl}_{n_j''}/\!\!/\GL_{n_j''}$ with $n_j'+n_j''=n_j$. The map $p\colon B'\to B$ is induced by the diagonal embedding $\mathfrak{gl}_{n_j'}\times \mathfrak{gl}_{n_j''}\hookrightarrow \mathfrak{gl}_{n_j}$.} such that $p\circ \pi'=\pi|_F$. Let $U':=p^{-1}(U)$, and let $C_1,C_2,\ldots$ be codimension one (in $B'$) irreducible components of $B'\setminus U'$. Since $\pi'$ is flat with irreducible fibers, codimension one
(in $F$) irreducible components of $F\cap \pi'^{-1}(B'\setminus U')$ are exactly $\pi'^{-1}(C_1),\pi'^{-1}(C_2),\ldots$. Thus $H_{2(\dim F-1)}(F\cap \pi^{-1}(B\setminus U))$ is spanned by $[\pi'^{-1}(C_1)]=\pi'^*[C_1],[\pi'^{-1}(C_2)]=\pi'^*[C_2],\ldots$. Since $B'$ is an affine space, $H^2(B')=0$, so $[C_1]=[C_2]=\cdots=0$, and the claim follows. Then we have
\begin{align*}
e^\sA(N_{F/X})\cdot \ev_{i*}(\psi_i^{s_i}\cdot [\overline{M}_i]^{\vir})\in H_{< 2(\dim F-1)}(F)\otimes \Frac H_\sA(\pt).
\end{align*}
After taking convolution, we see that
\begin{align*}
\text{LHS of \eqref{split into factor}}\in H_{<2(\mathrm{vdim}\overline{M}_0-n)}(F\times_{X_0} F)\otimes \Frac H_\sA(\pt).
\end{align*}
Since $\mathrm{vdim}\overline{M}_0=\dim F+n-1$ and the homological degree of LHS of \eqref{split into factor} is $2\dim F-2$, the $\sA$-degree of LHS of \eqref{split into factor} is negative, and hence vanishes when $\mathsf a\to \infty$.
\end{proof}

\begin{Lemma}\label{lem on vani of delta cpn}
Let $\overline{M}_{\delta}\subseteq \overline{M}_{0,2}(X,\beta)^{\sA}$ be a connected component 
whose elements consist of curves for which both marked points lie
on a contracted component.
Then it contributes zero to the left hand side of \eqref{equ on stabevstab}. 
\end{Lemma}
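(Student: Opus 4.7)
The strategy is to mimic case (b) of Proposition~\ref{prop on sym var van}, replacing its non-contracted $C_0$ by a contracted one. A map $[f\colon C\to X]\in\overline{M}_\delta$ decomposes as $C=C_0\cup C_1\cup\cdots\cup C_n$ with $n\geqslant 1$ (since $\beta\neq 0$), where $C_0$ is a genus-zero nodal tree containing both marked points $v_1,v_2$ that $f$ contracts to a point $q\in F'\subseteq X^{\sA}$, and each $C_i$ ($1\leqslant i\leqslant n$) is attached to $C_0$ at a node $p_i$ and is non-contracted. Consequently
\[
\overline{M}_\delta\cong (F'\times\overline{M}_{0,n+2})\times_{(F')^n}\prod_{i=1}^n\overline{M}_i,
\]
with $\overline{M}_i$ parameterizing stable maps of $C_i$ and $\ev_{p_i}\colon\overline{M}_i\to F'$ the evaluation at the attaching node.

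I would then apply the splitting formulas \eqref{tangent cplx splits_b} and \eqref{vir normal splits_b} to the virtual tangent complex and virtual normal bundle. Because $C_0$ is a tree of $\bP^1$'s contracted to $q\in F'$, the contribution $N_0=H^*(C_0,f^*T_X)^{\mathrm{moving}}$ reduces to $N_{F'/X}|_q$, and the factorization \eqref{split into factor} from case (b) transcribes with $F$ replaced by $F'$: the contribution of $\overline{M}_\delta$ to the LHS of \eqref{equ on stabevstab} expands as a sum over $\psi$-class insertions of convolution products of a class from $\overline{M}_0\cong F'\times\overline{M}_{0,n+2}$ with classes from each $\overline{M}_i$.

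The decisive estimate is the one already used in case (b): since $\pi\colon X\to B$ is affine over $\pi^{-1}(U)$, the image of $\ev_{p_i}\colon\overline{M}_i\to F'$ lies in $F'\cap\pi^{-1}(B\setminus U)$, and the $H^2(B')=0$ argument (using that $F'$ is itself a symmetric quiver variety with a flat morphism to an affine base with irreducible fibers) gives
\[
\ev_{i*}\bigl(\psi_i^{s_i}\cdot[\overline{M}_i]^{\vir}\bigr)\in H_{<2(\dim F'-1)}(F')\otimes\Frac H_\sA(\pt)\qquad(i\geqslant 1).
\]
Combined with the bound $\deg\leqslant 2\,\mathrm{vdim}\,\overline{M}_0=2(\dim F'+n-1)$ on the $\overline{M}_0$-factor, taking the convolution along the $n$ nodal diagonals inside $(F')^n$ and then the Stab-sandwich places the contribution in $H_{<2(\dim F-1)}(F\times_{X_0}F)\otimes\Frac H_\sA(\pt)$ (using that $\dim F'=\dim F$ as all components of $X^{\sA}$ have the same dimension). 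Since the LHS of \eqref{equ on stabevstab} has pure homological degree $2\dim F-2$, the $\sA$-degree of this contribution is strictly negative, and it therefore vanishes as $\mathsf a\to\infty$.

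The main obstacle I expect is carrying out the homological-degree bookkeeping cleanly when $F'\neq F$, since there the Stab-sandwich mixes in off-diagonal matrix elements of the stable envelopes, and one must verify that the strict inequality on the homological degree is inherited from $F'\times_{X_0}F'$-supported classes to $F\times_{X_0}F$. Because $[\Stab_\fC]$ and $[\Stab_{-\fC}]^{\mathrm{t}}$ are represented by cycles of pure homological degree and hence induce degree-preserving convolutions, this should reduce to the degree count above combined with the triangularity of stable envelopes with respect to the partial order on $\pi_0(X^{\sA})$.
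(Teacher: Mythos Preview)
Your approach works, but it is more elaborate than the paper's, and one throwaway assertion is wrong. The claim that ``all components of $X^{\sA}$ have the same dimension'' is false: for the trivial quiver with $\bv=1$, symmetric framing $\bd=3$, and a self-dual rank-one framing torus acting with weights $(0,0,1)$ on $\bC^3$, the two fixed components of $\cM(1,3)$ have dimensions $1$ and $3$. Fortunately you do not actually need this claim. Your own final-paragraph suggestion of invoking triangularity already disposes of $F'\neq F$ outright: the support axioms force $[\Stab_\fC]|_{F'\times F}=0$ unless $F'\preceq_\fC F$ and $[\Stab_{-\fC}]^{\mathrm t}|_{F\times F'}=0$ unless $F'\succeq_\fC F$, so the sandwich vanishes unless $F'=F$. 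Only the $F'=F$ case therefore requires the degree count, and there your transcription of case~(b) does go through.

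The paper's proof, however, is much shorter and avoids the curve decomposition entirely. The key point you are not exploiting is that since \emph{both} marked points lie on the contracted $C_0$, they evaluate to the \emph{same} point, so $\ev(\overline{M}_\delta)\subseteq\Delta_F$ directly. Because $\beta\neq0$ forces the curve into the non-affine locus, one further has $\ev(\overline{M}_\delta)\subseteq\Delta_{\pi^{-1}(S)}$ for a proper closed $S\subsetneq B$, whence $\dim\ev(\overline{M}_\delta)\leqslant\dim F-1$. Expanding $\ev_*\bigl([\overline{M}_\delta]^{\vir}/e^{\sA}(N^{\vir})\bigr)=\sum_i a_i[C_i]$ with each $[C_i]$ a cycle in this image, a three-case comparison of $\deg_{\sA}a_i$ with $-(\dim X-\dim F)$ finishes: strictly larger degree forces $\dim C_i>\dim F-1$, so $[C_i]=0$; equality gives a codimension-one cycle in $\Delta_{\pi^{-1}(S)}$, which vanishes since $A_k(B)=0$ for $k\neq\dim B$; strictly smaller degree dies after multiplying by $e^{\sA}(N_{F/X}^\pm)$ and sending $\mathsf a\to\infty$. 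No curve-splitting and no $\psi$-class expansion is needed.
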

\begin{proof}
A stable map in $\overline{M}_{\delta}$ is of form $f\colon C=C_0\cup C_1\cup\cdots\cup C_n\to X$, where two marked points lie in $C_0$ which is contracted to a point in a connected component $F\subseteq X^\sA$. Here $F$ is also a symmetric quiver variety. Then 
$$\ev(\overline{M}_{\delta})\subseteq \Delta_F. $$
As mentioned above, there is a flat map $\pi\colon F\to B$ to an affine space such that generic fibers are affine, and 
$\sA$-action on the base is trivial. Hence 
there is a closed subscheme $S\subsetneq B$ such that 
\begin{equation}\ev(\overline{M}_{\delta})\subseteq \Delta_{ \pi^{-1}(S)}.  \nonumber \end{equation}
Therefore, we have 
\begin{equation}\label{equ on evm02}\dim(\ev(\overline{M}_{\delta}))\leqslant \dim F-1. \end{equation}
By virtual localization \cite{GP} with respect to $\sA$, 
$\Stab_\fC^{-1}\, \sQ_\beta\, \Stab_\fC$ receives contribution from component $F$ by 
\begin{equation}\label{equ on sandw of ev2}e^{\sA}\left(N_{F/X}^+\right)\ev_*\left(\frac{[\overline{M}_{\delta}]^{\vir}}{e^{\sA}\left(N^{\vir}_{\overline{M}_{\delta}/\overline{M}_{0,2}(X,\beta)}\right)} \right)e^{\sA}\left(N_{F/X}^-\right). \end{equation}
We may expand the middle term $\ev_*(-)$ into a sum 
$$\sum_{i}a_i[C_i], $$ 
where $a_i$'s are equivariant parameters and $[C_i]$'s are cycles in $\ev(\overline{M}_{\delta})$.  
\begin{itemize} 
\item If the power of $a_i$ is $>-(\dim X-\dim F)$, then $\dim C_i> \dim F-1$, making $[C_i]$ to be zero by \eqref{equ on evm02}. 
\item If the power of $a_i$ is $=-(\dim X-\dim F)$, then $\dim C_i= \dim F-1$, so $[C_i]$ is a linear combination of $(\dim F-1)$-dimensional 
cycles in $\Delta_{ \pi^{-1}(S)}$. Then $[C_i]=0$ as $A_{n}(B)=0$ for any $n\neq \dim B$.
\item If the power of $a_i$ is $<-(\dim X-\dim F)=-\deg_{\sA} (e^{\sA}(N_{F/X}))$, by taking equivariant parameters to infinity, 
we know it contributes zero to \eqref{equ on sandw of ev2}.
\end{itemize} 
To sum up, we conclude that \eqref{equ on sandw of ev2} vanishes in $H^\sA(X\times_{X_0}X)$.
\end{proof}

We note also that the same idea in the proof of Proposition \ref{prop on sym var van} implies the following analog of \cite[Prop.~7.5.1]{MO}.
\begin{Proposition}\label{prop on tens of Q}
Let $X=X_1\times X_2$ be the product of two symmetric quiver varieties. Then 
$$\sQ_{X}=\sQ_{X_1}\otimes 1+1\otimes \sQ_{X_2}. $$
\end{Proposition}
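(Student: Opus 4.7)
The plan is to decompose curve classes $\beta \in H_2(X_1 \times X_2, \bZ)$ as $\beta = \beta_1 + \beta_2$ with $\beta_i \in H_2(X_i, \bZ)$, and then split the claim $\sQ_X = \sQ_{X_1} \otimes 1 + 1 \otimes \sQ_{X_2}$ into three coefficient identities: $\sQ_{X,(\beta_1,0)} = \sQ_{X_1,\beta_1} \otimes [\Delta_{X_2}]$ for $\beta_1 \neq 0$ (and symmetrically for $(0,\beta_2)$), together with the vanishing $\sQ_{X,(\beta_1,\beta_2)} = 0$ whenever both $\beta_i \neq 0$. I use the Thom--Sebastiani identification $H^\sT(X_1 \times X_2, \sw_1 \boxplus \sw_2) \cong H^\sT(X_1, \sw_1) \otimes H^\sT(X_2, \sw_2)$ (as in Remark \ref{rmk purity and tensor prod}, after localization if necessary) to interpret both sides of the claim as operators on the same space.

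The cases with one factor zero reduce to a standard factorization argument. If $\beta_2 = 0$, any stable map $f=(f_1,f_2)$ must have $f_2$ constant, giving $\overline{M}_{0,2}(X_1\times X_2,(\beta_1,0)) \cong \overline{M}_{0,2}(X_1,\beta_1) \times X_2$. Because $T_{X_1\times X_2}$ splits as a direct sum of the pullbacks of $T_{X_1}$ and $T_{X_2}$, the perfect obstruction theory splits and the virtual class decomposes as $[\overline{M}_{0,2}(X_1,\beta_1)]^{\vir} \times [X_2]$. The evaluation map sends $(f_1,x_2)$ to $(f_1(v_1),x_2,f_1(v_2),x_2)$, so the pushforward is $\ev_*[\overline{M}_{0,2}(X_1,\beta_1)]^{\vir} \otimes [\Delta_{X_2}]$, whose convolution is $\sQ_{X_1,\beta_1}\otimes \id$ as required.

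The core step is the vanishing for both $\beta_i \neq 0$, which I would prove by a pure dimension count in the spirit of Lemma~\ref{lem 2-pt Steinberg} and the proof of Proposition~\ref{prop on sym var van}. For each $X_i$, the companion paper \cite{COZZ3} provides a $\sT$-equivariant flat morphism $\pi_i \colon X_i \to B_i$ to an affine space together with a dense $\sA$-invariant open $U_i \subset B_i$ such that $\pi_i|_{\pi_i^{-1}(U_i)}$ is an affine morphism; since $U_i$ lies in the affine $B_i$, the preimage $\pi_i^{-1}(U_i)$ is itself affine. For any stable map $f=(f_1,f_2)$ contributing to $\sQ_{X,\beta}$, the set $\pi_i(f_i(C))$ is the image of a proper connected domain in the affine $B_i$, hence a single point $b_i$; the assumption $\beta_i \neq 0$ forces $f_i(C)$ to contain a $1$-dimensional proper subcurve, which cannot lie in the affine $\pi_i^{-1}(U_i)$, so $b_i \in B_i \setminus U_i$. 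Using that $\pi_i$ factors through the affinization $X_i \to X_{i,0}$ (as $B_i$ is affine), $\ev(\overline{M}_{0,2}(X,\beta))$ is contained in
\begin{align*}
\bigcup_{\substack{b_1 \in B_1 \setminus U_1\\ b_2 \in B_2 \setminus U_2}} \big(\pi_1^{-1}(b_1) \times_{X_{1,0}} \pi_1^{-1}(b_1)\big) \times \big(\pi_2^{-1}(b_2) \times_{X_{2,0}} \pi_2^{-1}(b_2)\big).
\end{align*}

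Each factor of this product has dimension at most $\dim X_i - 1$: the diagonal contribution restricts to $\pi_i^{-1}(B_i \setminus U_i)$, of dimension $\leqslant \dim X_i - 1$ by flatness of $\pi_i$ and codimension $\geqslant 1$ of $B_i \setminus U_i$ in $B_i$, while the nondiagonal components of $X_i \times_{X_{i,0}} X_i$ already have dimension $\leqslant \dim X_i - 1$ by smallness of the affinization. Hence the total locus has dimension at most $\dim X - 2$, which forces the pushforward $\ev_*[\overline{M}_{0,2}(X,\beta)]^{\vir}$ of homological degree $2(\dim X - 1)$ to vanish; applying $\mathrm{can}$ yields $\sQ_{X,\beta} = 0$. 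The main technical point I expect to need care is verifying that even when the marked points $v_1,v_2$ sit on $f_i$-contracted components of $C$, their $X_i$-images still satisfy $\pi_i(f_i(v_j)) = b_i$; this follows by propagating the node-matching condition along the connected tree of $C$, using that $\beta_i \neq 0$ guarantees at least one component non-contracted in the $X_i$-direction whose $b_i$ determines the common value.
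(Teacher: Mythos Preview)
Your proof is correct and follows the approach the paper gestures at (the paper gives no details beyond ``same idea as Proposition~\ref{prop on sym var van}''). One small imprecision: you assert that $\pi_i^{-1}(U_i)$ is itself affine because $U_i$ lies in the affine space $B_i$, but open subsets of affine space need not be affine. What you actually need and use is that the \emph{fibers} $\pi_i^{-1}(b_i)$ for $b_i\in U_i$ are affine, which follows from $\pi_i|_{\pi_i^{-1}(U_i)}$ being an affine morphism; since $C$ is proper and connected, $\pi_i\circ f_i$ is constant at some $b_i$, and if $b_i\in U_i$ then $f_i(C)$ sits in the affine fiber $\pi_i^{-1}(b_i)$, forcing $\beta_i=0$. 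Your worry in the last paragraph is also unnecessary: because $X_{i,0}$ is affine, the composition $C\to X_i\to X_{i,0}$ is already constant on all of $C$, so both marked points automatically land over the same point of $X_{i,0}$ (and hence of $B_i$) regardless of whether they sit on $f_i$-contracted components.
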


\subsection{Formula on symmetric quiver varieties}

Given a symmetric quiver $Q$ with potential $\sW$ preserved by a torus $\sT_0$, recall that there is a Lie superalgebra $\mathfrak{g}_{Q,\sW}$ 
(Definition \ref{def of cartan sub}) with a decomposition 
$$\mathfrak{g}_{Q,\sW}=\bigoplus_{\alpha\in \bZ^{Q_0}} \mathfrak{g}_{\alpha}. $$
Let $\{e_{\alpha}^{(s)}\}$ denote the basis in the root space $\mathfrak{g}_{\alpha}$ for each positive root $\alpha$ that appearing in Proposition \ref{prop g(Q,w)}.

By choosing a gauge dimension vector $\bv$ and a framing dimension vector $\bd$, we have a symmetric quiver variety 
$$X=\cM(\bv,\bd). $$ 
We take a $\sT_0$ action on $\cM(\bv,\bd)$ extending the $\sT_0$ action on the unframed quiver.
Let $\sA^{\mathrm{fr}}$ act on $\cM(\bv,\bd)$ as a framing torus, and $\sW^{\mathrm{fr}}$ be a $\sT=\sT_0\times \sA^{\mathrm{fr}}$ invariant framed potential. Denote $\sw^{\mathrm{fr}}=\tr(\sW^{\mathrm{fr}})$. 
Then $\mathsf Y_0(Q,\sW)$ (and hence $\mathfrak{g}_{Q,\sW}$) acts on $H^\sT(X,\sw^{\mathrm{fr}})$.
For $\lambda\in \bC^{Q_0}$, we define 
$$c_1(\lambda)=\sum_{i\in Q_0}\lambda_{i} \:c_1(\mathsf V_i), $$ 
where $\mathsf V_i$ is the $i$-th tautological bundle of $X$. 
\begin{Definition}\label{def modified quant mult}
We define the \textit{modified quantum multiplication} 
$\gamma\, \widetilde{\star}\,\cdot$ by substitution
\begin{align*}
    z^\beta\mapsto (-1)^{(c_1(\mathsf P),\beta)+|\bv|\cdot |\beta|} z^\beta \;\text{ in } \eqref{equ on Mgamma as map} \text{ for }\text{each}\,\,\beta\,\, \text{on}\,\,\cM(\bv,\bd)\,,
\end{align*}
where $\mathsf P$ is given by \eqref{par pol on M(v,d)}, $|\bv|$ is given by \eqref{equ on gra}, and $|\beta|:=\sum_{i\in Q_0^{\mathrm{f}}}\beta_i\pmod 2$
is the parity of $\beta$ with $\beta_i=(c_1(\mathsf V_i),\beta)$. 
\end{Definition}

\begin{Theorem}\label{thm on qm div for sym}
Notations as above, then for the zero potential ($\sW^{\mathrm{fr}}=0$), we have
$$c_1(\lambda)\,\widetilde{\star}\, \cdot =c_1(\lambda)\cup \cdot -\sum_{\alpha>0}\alpha(\lambda)\frac{z^\alpha}{(-1)^{|\alpha|}-z^\alpha}\mathrm{Cas}_\alpha+\cdots \,,$$
where $\mathrm{Cas}_\alpha=\sum_{s} e_{\alpha}^{(s)} e_{-\alpha}^{(s)}$ is the Casimir operator for $\mathfrak{g}_{Q,\sW=0}$, $|\alpha|$ is given as \eqref{equ on gra}, and dots denote a scalar operator.
\end{Theorem}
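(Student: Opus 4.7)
The plan is to follow the pattern pioneered by Maulik--Okounkov for Nakajima varieties, replacing their broken/unbroken curve analysis by the smallness-based commutation of Theorem \ref{thm on stab and Q}. First I reduce to computing $\sQ_\beta$ for nonzero $\beta$: the divisor equation gives $M_\beta(c_1(\lambda)) = (\lambda\cdot\beta)\,\sQ_\beta$, while Lemma \ref{lem 2-pt Steinberg} places $\sQ_\beta$ in the span of Steinberg correspondences, so by Proposition \ref{prop Casimir} it is natural to expect $\sQ_\beta$ to lie in the subalgebra of $\mathsf Y_0(Q,\sW)$ generated by the Casimir elements $\mathrm{Cas}_\alpha$. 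The classical cup product $c_1(\lambda)\cup\cdot$ is the $z^0$ part, so all the work is in pinning down the correction.

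Next, for a subtorus $\sA\subset\sT$ coming from a splitting $\bd = \bd^{(1)}+\bd^{(2)}$, Theorem \ref{thm on stab and Q} identifies $\Stab_\fC^{-1}\circ\sQ_\beta\circ\Stab_\fC$ on each fixed component $F$ with $\pm\,\sQ^F_\beta$, the sign being $(-1)^{(c_1(N^-_{F/X}),\beta)}$ and modulo a scalar operator trivial on $\sA$; Proposition \ref{prop on tens of Q} then says that on $F=\cM(\bd^{(1)},\bv^{(1)})\times\cM(\bd^{(2)},\bv^{(2)})$ this operator is $\sQ^{F_1}\otimes 1+1\otimes\sQ^{F_2}$. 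The sign appearing here, together with the parity $(-1)^{|\bv|\cdot|\beta|}$ coming from the super structure, is precisely what is absorbed into the definition of $\widetilde{\star}$. Thus under conjugation by $\Stab_\fC$ the corrected operator $c_1(\lambda)\,\widetilde{\star}\,\cdot - c_1(\lambda)\cup\cdot$ behaves additively, matching the coproduct prescription of \S\ref{sect on coprod}.

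Comparing this with the explicit coproduct formula \eqref{coproduct for 1st Chern class} for $c_1(\lambda)$, together with the fact (proved in the classical $r$-matrix analysis following \eqref{expansion_classical r}) that the off-diagonal part of $\pmb r$ is the canonical tensor in $\bigoplus_{\alpha>0}(\mathfrak g_{-\alpha}\otimes\mathfrak g_\alpha \oplus \mathfrak g_\alpha\otimes\mathfrak g_{-\alpha})$, forces the correction to decompose as $\sum_{\alpha>0}\alpha(\lambda)\,f_\alpha(z)\,\mathrm{Cas}_\alpha$ for some scalar rational functions $f_\alpha(z)$. To determine $f_\alpha$, I use that $\sQ_\beta$ has nontrivial contribution along the $\sA$-weight corresponding to $\beta$ only when $\beta$ is a positive integer multiple $k\alpha$ of a root, which follows by combining Proposition \ref{prop Casimir} (Steinberg independence of potential plus the root-space structure of the classical $r$-matrix) with Remark \ref{rmk steinberg comm R} (Steinberg correspondences commute with $R$-matrices after stable envelope conjugation). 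Summing the contributions over $k\geqslant 1$ with the sign $(-1)^{|\alpha|(k-1)}$ supplied by the modified product yields the geometric series $f_\alpha(z)=-\sum_{k\geqslant 1}(-1)^{|\alpha|(k-1)}z^{k\alpha}=-z^\alpha/((-1)^{|\alpha|}-z^\alpha)$.

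The main obstacle will be the base case of the induction, namely showing that $\sQ_\alpha$ is proportional to $\mathrm{Cas}_\alpha$ with the correct leading coefficient when $\beta=\alpha$ is a primitive positive root, rather than a multiple. Here one must make a rank-one computation on a minimal symmetric quiver variety containing the root $\alpha$, where Theorem \ref{thm Stab and Steinberg} gives precise control over how a Steinberg correspondence is reflected by $\Stab_\fC$ through the sign $(-1)^\sharp$, and this sign has to be compared against the parity $|\alpha|$ of the root in the superalgebra. Handling these signs uniformly, and verifying that the undetermined scalar operator indicated by dots in the formula is consistent with the coproduct (which constrains only the nonscalar part), is the most delicate part of the argument.
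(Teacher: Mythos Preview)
Your outline omits the key step of the paper's proof: the \emph{shift operator} argument. The paper shows that $\Stab_{+,\epsilon}(\tilde z^{\mathsf v}\otimes 1)\Stab_{-,\bar\epsilon}^{-1}$ is a minuscule shift operator in the sense of \cite[\S 8--9]{MO}, which commutes with quantum multiplication. This yields
\[
R^{\mathrm{sup}}(u)\,\Delta\widetilde{\sQ}\,R^{\mathrm{sup}}(u)^{-1}=\sum_{\alpha}(-1)^{|\alpha|}z^{-\alpha}\Delta_\alpha\widetilde{\sQ}.
\]
Since each $\sQ_\beta$ is Steinberg (Lemma \ref{lem 2-pt Steinberg}), it commutes with $R^{\mathrm{sup}}$ after coproduct (Remark \ref{rmk steinberg comm R}); combined with $R^{\mathrm{sup}}\Delta c_1(\lambda)(R^{\mathrm{sup}})^{-1}=\Delta^{\mathrm{op}}c_1(\lambda)$, one obtains the closed equation
\[
\sum_\alpha(1-(-1)^{|\alpha|}z^{-\alpha})\Delta_\alpha\widetilde{\sQ}=\Delta c_1(\lambda)-\Delta^{\mathrm{op}}c_1(\lambda),
\]
which determines $\Delta_\alpha\widetilde{\sQ}$ for $\alpha\neq 0$ at once and produces the rational function $z^\alpha/((-1)^{|\alpha|}-z^\alpha)$ directly. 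Theorem \ref{thm on stab and Q} is only used to identify the weight-zero piece $\Delta_0\widetilde{\sQ}$.

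Your approach bypasses the shift operator and instead proposes a base case plus a sum over multiples $k\alpha$, but this has a genuine gap. Even granting that a rank-one computation gives $\sQ_\alpha\propto\mathrm{Cas}_\alpha$ for a primitive root $\alpha$, nothing you cite determines $\sQ_{k\alpha}$ for $k>1$: why should it be $\mathrm{Cas}_\alpha$ rather than $\mathrm{Cas}_{k\alpha}$ or a combination? The facts you invoke (Proposition \ref{prop Casimir}, Remark \ref{rmk steinberg comm R}) do not provide a recursion or functional equation linking different curve classes. The paper's shift operator is precisely the mechanism that couples all $z^\beta$ terms together, and without it the geometric series you write down is unjustified. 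Moreover, the paper never performs your proposed base-case rank-one computation; it is avoided entirely.
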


The scalar operator is fixed by the requirement that the purely quantum
part of $c_1(\lambda)\,\widetilde{\star}\, \cdot$ annihilates the identity.

\begin{proof}
This is parallel to the proof of \cite[Thm.~10.2.1]{MO}, which we sketch as follows.  
By \eqref{coproduct for 1st Chern class}, we have 
\begin{equation}\label{equ on del c1}
\Delta c_1(\lambda)=c_1(\lambda)\otimes 1+1\otimes c_1(\lambda)-\sum_{\alpha>0} (-1)^{|\alpha|}\alpha(\lambda)\sum_s\: e_{-\alpha}^{(s)}\otimes e_{\alpha}^{(s)}.
\end{equation}
Combining with Remark \ref{rmk opposite coproduct}, we have 
\begin{equation}\label{equ on sand del c1}
R^{\mathrm{sup}}(u)\Delta c_1(\lambda) R^{\mathrm{sup}}(u)^{-1}=\Delta^{\mathrm{op}} c_1(\lambda)=\Delta c_1(\lambda)+
\sum_{\alpha>0}\alpha(\lambda)\sum_s \left((-1)^{|\alpha|}e_{-\alpha}^{(s)}\otimes e_{\alpha}^{(s)}-e_{\alpha}^{(s)}\otimes e_{-\alpha}^{(s)} \right).
\end{equation}
We denote the modified quantum multiplication operator by 
$$\widetilde{\sQ}(\lambda):=c_1(\lambda)+\sum_{\beta>0}(-1)^{(c_1(\mathsf P),\beta)+|\bv|\cdot |\beta|}\lambda(\beta)\,\sQ_{\beta}\,z^\beta. $$ 
This acts on critical cohomology for any $\cM(\bv,\bd)$ and any extended potential $\sw^{\mathrm{fr}}$. 
We consider the pullback 
\begin{equation}\label{equ on deltatiltQ}\Delta \widetilde{\sQ}=\Stab_{+,\epsilon}^{-1}\widetilde{\sQ}\Stab_{+,\epsilon} \end{equation} 
of $\sQ$ under the stable envelope map 
\begin{align*}
H^{\sT}(\cM(\bd'))\otimes H^{\sT}(\cM(\bd''))\to H^{\sT}(\cM(\bd'+\bd'')). 
\end{align*}
Consider a decomposition 
$$\Delta \widetilde{\sQ}=\sum_{\alpha} \Delta_\alpha \widetilde{\sQ}, $$
according to the weight of the Cartan action
$$[h\otimes 1,\Delta_\alpha \widetilde{\sQ}]=\alpha(h) \Delta_\alpha \widetilde{\sQ}. $$
By \eqref{equ on del c1}, Theorem \ref{thm on stab and Q} and Proposition \ref{prop on tens of Q}, we have \footnote{The modification $z^\beta\mapsto (-1)^{(c_1(\mathsf P),\beta)+|\bv|\cdot |\beta|} z^\beta$ is introduced to cancel the sign in \eqref{equ on comm diag on stab and Q}.}
\begin{equation}\label{equ on Delta0Q}\Delta_0 \widetilde{\sQ}=\widetilde{\sQ}\otimes 1+1\otimes \widetilde{\sQ}+\text{a scalar operator} \end{equation}
By an analog of \cite[Thm.~9.3.1]{MO}, which uses only the symmetric property of quiver varieties,   
$\Stab_{+,\epsilon}\,(\tilde z^{\mathsf{v}}\otimes 1)\,\Stab_{-,\bar\epsilon}^{-1}$ equals to the minuscule shift operator $\mathsf S(\sigma)$, where
$\tilde z^{\mathsf{v}}$ is the diagonal operator that acts on $H^{\sT}(\cM(\bv,\bd),\sw^{\mathrm{fr}})$ by the scalar $(-1)^{|\bv|}\cdot z^{\bv}$. By a standard argument \cite[Prop.~8.2.1, Lem.~8.3.1]{MO} which holds for any smooth $X$ with an action $\sigma\colon \bC^*\to \Aut(X)$, shift operator $\mathsf S(\sigma)$ commutes with modified quantum multiplications; therefore 
$$[(\tilde z^{\mathsf{v}}\otimes 1)\,R^{\mathrm{sup}}(u),\Delta \widetilde{\sQ}]=0,\quad \mathrm{i.e.}\,\,\, R^{\mathrm{sup}}(u)\,\Delta \widetilde{\sQ} \,R^{\mathrm{sup}}(u)^{-1}=\sum_{\alpha}(-1)^{|\alpha|}z^{-\alpha} \Delta_{\alpha} \widetilde{\sQ}. $$
We also have 
\begin{align*}
R^{\mathrm{sup}}(u)\,\Delta \widetilde{\sQ} \,R^{\mathrm{sup}}(u)^{-1}&=R^{\mathrm{sup}}(u)\,\Delta c_1(\lambda) \,R^{\mathrm{sup}}(u)^{-1}+
\sum_{\beta>0}(-1)^{(c_1(\mathsf P),\beta)+|\bv|\cdot |\beta|}\lambda(\beta)z^\beta\, R^{\mathrm{sup}}(u)\,\Delta \sQ_{\beta}\,R^{\mathrm{sup}}(u)^{-1} \\
&=\Delta^{\mathrm{op}} c_1(\lambda)+\sum_{\beta>0}(-1)^{(c_1(\mathsf P),\beta)+|\bv|\cdot |\beta|}\lambda(\beta)z^\beta\,\Delta \sQ_{\beta} \\
&=\Delta \widetilde{\sQ} +(\Delta^{\mathrm{op}}-\Delta)(c_1(\lambda)).
\end{align*}
where in the second equality, we use \eqref{equ on sand del c1}, Lemma \ref{lem 2-pt Steinberg}, Remark \ref{rmk steinberg comm R} and \eqref{equ on deltatiltQ}.
This implies that 
\begin{align}\label{equ on ind equ}
\sum_{\alpha}(1-(-1)^{|\alpha|}z^{-\alpha})\Delta_{\alpha} \widetilde{\sQ}
=\Delta c_1(\lambda)- \Delta^{\mathrm{op}} c_1(\lambda) =\sum_{\alpha>0}\alpha(\lambda)\sum_s \left(e_{\alpha}^{(s)}\otimes e_{-\alpha}^{(s)}-(-1)^{|\alpha|}e_{-\alpha}^{(s)}\otimes e_{\alpha}^{(s)} \right),
\end{align}
where the last equality uses \eqref{equ on sand del c1}. 
Equation \eqref{equ on ind equ} uniquely determines $\Delta_{\alpha}\widetilde{\sQ}$ for $\alpha\neq 0$, given by 
\begin{equation}\label{equ on deltaalpha}\Delta_{\alpha}\widetilde{\sQ}=
\begin{cases}
-\alpha(\lambda)\frac{z^\alpha}{(-1)^{|\alpha|}-z^\alpha}\sum_{s} e_{\alpha}^{(s)}\otimes e_{-\alpha}^{(s)}, \quad \mathrm{if}\,\, \alpha>0\\
\alpha(\lambda)\frac{1}{(-1)^{|\alpha|}-z^{-\alpha}}\sum_{s} e_{\alpha}^{(s)}\otimes e_{-\alpha}^{(s)}, \quad \mathrm{if}\,\, \alpha< 0
\end{cases} \end{equation}
We define 
$$ \widetilde{\sQ}_{\mathrm{remainder}}:= \widetilde{\sQ}-\left( c_1(\lambda)-\sum_{\alpha>0} \alpha(\lambda)\frac{z^\alpha}{(-1)^{|\alpha|}-z^\alpha}\sum_{s} 
e_{\alpha}^{(s)} e_{-\alpha}^{(s)}\right). $$
By \eqref{equ on del c1}, \eqref{equ on Delta0Q}, \eqref{equ on deltaalpha}, 
we have 
\begin{equation}\label{equ on deltaQremind}\Delta \widetilde{\sQ}_{\mathrm{remainder}}=\widetilde{\sQ}_{\mathrm{remainder}}\otimes 1+1\otimes \widetilde{\sQ}_{\mathrm{remainder}}+\text{a scalar operator}. \end{equation}
$\Delta\widetilde{\sQ}_{\mathrm{remainder}}$ preserves the weight space decomposition
\begin{align*}
    \bigoplus_{\bv',\bv''}H^{\sT}(\cM(\bv',\bd'))\otimes H^{\sT}(\cM(\bv'',\bd''))\,,
\end{align*}
so
\begin{align*}
\left[|\mathbf 0\rangle\langle\mathbf 0|\otimes \id\,,\, \Delta\widetilde{\sQ}_{\mathrm{remainder}}\right]=0\,.
\end{align*}
By Lemma \ref{lem 2-pt Steinberg}, Proposition \ref{prop Casimir}, $\widetilde{\sQ}_{\mathrm{remainder}}$ is induced by a Steinberg correspondence, so by Remark \ref{rmk steinberg comm R}, we have
\begin{align*}
\left[R^{\mathrm{sup}}_{\bd',\bd''}\,,\, \Delta\widetilde{\sQ}_{\mathrm{remainder}}\right]=0.
\end{align*}
Combining the above two equations and \eqref{equ on deltaQremind}, we get
\begin{align*}
\left[\tr_{\cH^{\sW=0}_{\bd'}}^{\bZ/2}\left((|\mathbf 0\rangle\langle\mathbf 0|\otimes \id)R^{\mathrm{sup}}_{\bd',\bd''}(u)\right)\,,\, \widetilde{\sQ}_{\mathrm{remainder}}\right]=0\,.
\end{align*}
The computation in Lemma \ref{lem Y^0 generators} shows that $\widetilde{\sQ}_{\mathrm{remainder}}$ commutes with all classical multiplications, so $\widetilde{\sQ}_{\mathrm{remainder}}$ itself is a classical multiplication. Since $\widetilde{\sQ}_{\mathrm{remainder}}$ has cohomological degree $2$, it is of form $c_1(\mu)\cup\cdot+a\cdot$ for some $\mu\in \bZ^{Q_0}$ and $a\in H^2_\sT(\pt)$. Since $\Delta_\alpha\widetilde{\sQ}_{\mathrm{remainder}}=0$ for nonzero $\alpha$, we read from the coproduct formula \eqref{coproduct for 1st Chern class} that $\mu$ must be zero. Thus $\widetilde{\sQ}_{\mathrm{remainder}}$ is a scalar multiplication.
\end{proof}

\begin{Corollary}\label{cor qm div for sym_sp inj}
Suppose that the specialization map $\mathrm{sp}\colon H^\sT(\cM(\bv,\bd),\sw^{\mathrm{fr}})\to H^\sT(\cM(\bv,\bd))$ is injective, then the modified quantum multiplication by divisor $\lambda$ on $H^\sT(\cM(\bv,\bd),\sw^{\mathrm{fr}})$ is given by
$$c_1(\lambda)\,\widetilde{\star}\, \cdot =c_1(\lambda)\cup \cdot -\sum_{\alpha>0}\alpha(\lambda)\frac{z^\alpha}{(-1)^{|\alpha|}-z^\alpha}\mathrm{Cas}_\alpha+\cdots \,,$$
where $\mathrm{Cas}_\alpha=\sum_{s} e_{\alpha}^{(s)} e_{-\alpha}^{(s)}$ is the Casimir operator for $\mathfrak{g}_{Q,\sW}$, and dots denote a scalar operator.
\end{Corollary}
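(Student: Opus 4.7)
The strategy is to reduce to the zero-potential case of Theorem \ref{thm on qm div for sym} by applying the specialization map $\mathrm{sp}$. Concretely, I will show that each operator appearing in the asserted identity---classical cup product, the quantum correction, and the Casimir terms---is induced on $H^\sT(\cM(\bv,\bd),\sw^{\mathrm{fr}})$ by a geometric cohomology class or correspondence that does not depend on $\sw^{\mathrm{fr}}$. Naturality of $\mathrm{sp}$ with respect to proper pushforward and canonical maps then shows that $\mathrm{sp}$ intertwines both sides with their $\sw^{\mathrm{fr}}=0$ analogs on $H^\sT(\cM(\bv,\bd))$. Since the latter identity holds by Theorem \ref{thm on qm div for sym} applied with the zero potential, the injectivity hypothesis on $\mathrm{sp}$ finishes.

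For the left hand side, cup product by $c_1(\lambda)$ tautologically commutes with $\mathrm{sp}$, while the quantum corrections $M_\beta(c_1(\lambda))$ and $\sQ_\beta$ from \eqref{equ on Mgamma}--\eqref{equ on Qbeta} are built by applying $\mathrm{can}$ to proper pushforwards of virtual fundamental classes on $\overline{M}_{0,n}(X,\beta)$, where $X=\cM(\bv,\bd)$. These pushforwards live in ordinary equivariant homology of a closed subvariety of $X\times X$ and are manifestly independent of any choice of potential; the critical convolution operator on $H^\sT(X,\sw^{\mathrm{fr}})$ and the plain convolution on $H^\sT(X)$ therefore come from the same geometric class and are intertwined by $\mathrm{sp}$.

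For the right hand side, Proposition \ref{prop Casimir} exhibits a Steinberg correspondence $\mathsf C_\alpha\in H^\sT_{2\dim X-2}(X\times_{X_0}X)$ which realizes the action of $\mathrm{Cas}_\alpha\in U(\mathfrak{g}_{Q,\sW})$ on $H^\sT(X,\sw^{\mathrm{fr}})$ for every extension $\sw^{\mathrm{fr}}$. Moreover, its construction from the off-diagonal part of the classical $R$-matrix, via the potential-independent class $\pmb\rho$ of \eqref{r mat corr}, shows that $\mathsf C_\alpha$ depends only on the quiver and the $\sT_0$-action, not on $\sW$; hence $\mathrm{sp}$ carries the action of $\mathrm{Cas}_\alpha$ to the action of the corresponding Casimir of $\mathfrak{g}_{Q,0}$ on ordinary cohomology. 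Combining these compatibilities, $\mathrm{sp}$ sends the identity claimed by the corollary to the identity of Theorem \ref{thm on qm div for sym} for the zero potential---the unspecified scalar operator being pinned down by evaluation on the vacuum class $1$, which is preserved by $\mathrm{sp}$---and injectivity of $\mathrm{sp}$ concludes the proof. The main technicality, of essentially bookkeeping nature, is verifying the commutativity of $\mathrm{can}$ with $\mathrm{sp}$ on the product $X\times X$ equipped with $\sw^{\mathrm{fr}}\boxminus\sw^{\mathrm{fr}}$; this is a standard functoriality of critical/vanishing cycle cohomology established in \cite{COZZ}.
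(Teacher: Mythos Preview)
Your proposal is correct and follows essentially the same argument as the paper: both reduce to the zero-potential case of Theorem~\ref{thm on qm div for sym} by observing that the quantum multiplication operators and the Casimir operators are induced by correspondences (in particular the Steinberg correspondence $\mathsf C_\alpha$ of Proposition~\ref{prop Casimir}, whose potential-independence via $\pmb\rho$ you correctly highlight) and hence commute with the specialization map, after which injectivity of $\mathrm{sp}$ concludes. The paper cites \cite[Prop.~7.17]{COZZ} directly for the compatibility of correspondence-induced operators with specialization, which is the precise reference for the functoriality you describe as the ``main technicality.''
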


\begin{proof}
Since $\sQ_\beta$ is induced by a correspondence, $c_1(\lambda)\,\widetilde{\star}\,\cdot$ commutes with specialization \cite[Prop.~7.17]{COZZ}. By Proposition \ref{prop Casimir}, there exists a Steinberg correspondence $\mathsf C_\alpha$ that induces the operator  $\mathrm{Cas}_\alpha$ for $H^\sT(\cM(\bv,\bd),\sw^{\mathrm{fr}})$ and for $H^\sT(\cM(\bv,\bd))$ simultaneously. In particular, the restriction of Casimir operator for $\mathfrak{g}_{Q,0}$ to the image of specialization map gives the Casimir operator for $\mathfrak{g}_{Q,\sW}$. Then the corollary follows from Theorem \ref{thm on qm div for sym}.
\end{proof}



\subsection{Asymmetric cases}

For a not necessarily symmetric quiver variety, one may associate certain symmetrization of it and use Theorem \ref{thm on qm div for sym} and the 
following lemma to deduce the formula of quantum multiplication by divisors. 

\begin{Proposition}\label{prop on cmp corr}
Let $X$ be a quiver variety \eqref{equ on qv} with affinization $X\to X_0$, $p\colon Y=\mathrm{Tot}_X(E)\to X$ be the total space of a $\sT$-equivariant 
vector bundle $E$ on $X$. Let $[Z]\in H^{\sT}(X\times X, \sw \boxminus \sw)_{X\times_{X_0}X}$ and denote  
$[Z]_{X}\colon H^{\sT}(X,\sw)\to H^{\sT}(X,\sw)$ to be the convolution induced by $[Z]$, and 
$[Z]_{Y}\colon H^{\sT}(Y,p^*\sw)\to H^{\sT}(Y,p^*\sw)$ to be the convolution induced by the image of $[Z]$ under the pushforward by the zero section
$X\times X\to Y\times Y$. 

Then we have a commutative diagram 
\begin{equation}\label{equ on comm diag on corr}
\xymatrix{
H^{\sT}(X,\sw)  \ar[rr]^{p^* \,\,\,}_{\cong \quad  } \ar[d]_{[Z]_{X}} &  & H^{\sT}(Y,p^*\sw) \ar[d]^{[Z]_{Y}}  \\
H^{\sT}(X,\sw) \ar[rr]^{p^*(e(E)\cdot (-)) \,\,\,}_{\cong \quad  }  & & H^{\sT}(Y,p^*\sw),
} 
\end{equation}
where $e(E)\colon H^{\sT}(X,\sw)\to H^{\sT}(X,\sw)$ is the Euler class operator, see e.g.~\cite[Eqn.~(2.10)]{COZZ}. 
\end{Proposition}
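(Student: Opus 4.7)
The strategy is to reduce the whole computation to the identity $i_*\gamma=p^*(e(E)\cup\gamma)$ for the zero section $i\colon X\hookrightarrow Y$ via a direct unwinding of the convolution. Since $p\colon Y\to X$ is a vector bundle with $\sT$-action, and $p^*\sw$ is constant along fibers, the critical locus satisfies $\Crit(p^*\sw)=p^{-1}(\Crit(\sw))$, and the smooth pullback for critical cohomology gives that $p^*\colon H^{\sT}(X,\sw)\to H^{\sT}(Y,p^*\sw)$ is an isomorphism with inverse $i^*$ (with appropriate cohomological shift conventions suppressed). In particular, $p^*\circ i^* = \id$ on $H^{\sT}(Y,p^*\sw)$, and consequently $i_*\gamma = p^*(e(E)\cup\gamma)$, since $p^*(e(E)\cup\gamma)=p^*(i^*i_*\gamma)=(p^*i^*)i_*\gamma=i_*\gamma$.

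With this in hand, the main step is to unwind $[Z]_Y(p^*\beta)$. Write $\pi_1,\pi_2\colon Y\times Y\to Y$ for the projections and $\pr_1,\pr_2\colon X\times X\to X$ for those on the base, so that $\pi_a\circ(i\times i)=i\circ\pr_a$ for $a=1,2$. By definition,
\[
[Z]_Y(p^*\beta)\;=\;\pi_{1,*}\bigl(\pi_2^*(p^*\beta)\,\cup\,(i\times i)_*[Z]\bigr).
\]
Using the projection formula for the closed embedding $i\times i$ (valid in critical cohomology with the appropriate support condition, since $(i\times i)_*[Z]$ is supported on $X\times X\subset Y\times Y$), this equals
\[
\pi_{1,*}(i\times i)_*\bigl((i\times i)^*\pi_2^*(p^*\beta)\,\cup\,[Z]\bigr)
\;=\;i_*\,\pr_{1,*}\bigl(\pr_2^*(i^*p^*\beta)\,\cup\,[Z]\bigr)
\;=\;i_*\,[Z]_X(\beta),
\]
where we used $i^*p^*=\id$. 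Applying the identity $i_*\gamma=p^*(e(E)\cup\gamma)$ to $\gamma=[Z]_X(\beta)$ yields exactly the bottom path of the diagram.

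The steps I expect to be routine are: the identification $\Crit(p^*\sw)=p^{-1}(\Crit(\sw))$; the isomorphism $p^*$ and its inverse $i^*$ on critical cohomology (cited from \cite{COZZ}); and the self-intersection identity for the zero section of a vector bundle. The one point that requires genuine care is the projection formula for $(i\times i)$ in critical cohomology together with the support bookkeeping: one must verify that $(i\times i)_*[Z]$, initially a class in critical cohomology of $Y\times Y$ supported on $X\times X$, can be convolved using $\pi_1,\pi_2$ in such a way that the formal projection-formula manipulation above makes sense and produces a class supported on the appropriate fiber product in $Y$. This is where the hypothesis that $[Z]$ is supported on $X\times_{X_0}X$ is used: the image under $(i\times i)_*$ is supported on $X\times_{X_0}X\subset Y\times_{Y_0}Y$ (where $Y_0$ denotes the affinization of $Y$, which for a vector bundle over $X$ agrees with the total space of the induced bundle on $X_0$), and the proper pushforward required for the convolution is well defined on this support. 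Once this is settled, the rest of the argument is the formal calculation above.
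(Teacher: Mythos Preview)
Your proof is correct and follows essentially the same route as the paper's: both establish $[Z]_Y(p^*\beta)=i_*[Z]_X(\beta)$ via the projection formula for the zero section embedding $i\times i$ (the paper phrases this step as ``base change''), and then convert $i_*$ to $p^*(e(E)\cup -)$ via the self-intersection identity. Your treatment of the support condition is in fact more explicit than the paper's; the one inessential inaccuracy is the description of the affinization $Y_0$ as a bundle over $X_0$, but all that is actually needed is that $\pi_1$ is proper on the support $X\times_{X_0}X\subset Y\times Y$, which follows directly from properness of $X\times_{X_0}X\to X$.
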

\begin{proof}
We have the following commutative diagrams 
$$
\xymatrix{
Y\times Y  \ar[r]^{ } \ar@/^1.2pc/[rr]^{\widetilde{\pr}_2}  \ar@/_2pc/[dd]_{\widetilde{\pr}_1}  \ar[d]_{ } & X\times Y   \ar[d]^{ }  \ar[r]^{ }  & Y  \ar[d]^{p}  \\
Y\times X \ar[r]^{ }  \ar[d]_{ }  & X\times X  \ar[r]^{\quad \pr_2} \ar[d]^{\pr_1 }  & X \\
Y  \ar[r]^{p}   & X
} 
$$
where all squares are Cartesian. Denote $0\times 0\colon X\times X\to Y\times Y$ to be the zero section. 

By base change, for any $\alpha\in K^{\sT}(X,\sw)$, we have 
$$((0\times 0)_*[F])\otimes (\widetilde{\pr}_2^*\, p^*\alpha)=(0\times 0)_*([F]\otimes \pr_2^*\alpha). $$
Therefore, we have 
$$\widetilde{\pr}_{1*}\left((0\times 0)_*[F]\otimes (\widetilde{\pr}_2^*\, p^*\alpha)\right)=\widetilde{\pr}_{1*}(0\times 0)_*([F]\otimes \pr_2^*\alpha)=0_*\pr_{1*}([F]\otimes \pr_2^*\alpha). $$
By the definition of convolution, the above is exactly 
$$[F]_Y\circ (p^*\alpha)=0_*([F]_X\circ \alpha)\in K^{\sT}(Y,p^*\sw). $$
Applying the Gysin pullback $0^*$, we obtain 
$$0^*([F]_Y\circ (p^*\alpha))=e_K(E)\cdot ([F]_X\circ \alpha)  $$
Since $0^*$ is the inverse of the smooth pullback $p^*$, we obtain the claim: 
\begin{equation*} [F]_Y\circ (p^*\alpha)=p^*(e_K(E)\cdot ([F]_X\circ \alpha)). \qedhere \end{equation*}
\end{proof}

\begin{Theorem}\label{thm qm div for asym_sp inj}
Take a state space $H^{\sT}(\cM(\bv,\underline{\bd}),\sw^{\mathrm{fr}})$ with $\bd_{\Out}-\bd_{\In}=\mu\in \bZ_{\leqslant 0}^{Q_0}$, where $\sT=\sT_0\times \sA^{\mathrm{fr}}$. Suppose that the specialization map $\mathrm{sp}\colon H^\sT(\cM(\bv,\underline{\bd}),\sw^{\mathrm{fr}})\to H^\sT(\cM(\bv,\underline{\bd}))$ is injective, then the modified quantum multiplication by divisor $\lambda$ on $H^{\sT}(\cM(\bv,\underline{\bd}),\sw^{\mathrm{fr}})$ is given by
$$c_1(\lambda)\,\widetilde{\star}\,\cdot =c_1(\lambda)\cup \cdot -\sum_{\substack{\alpha>0\\ \alpha\cdot \mu=0}}\alpha(\lambda)\frac{z^\alpha}{(-1)^{|\alpha|}-z^\alpha}\mathrm{Cas}_{\alpha,\mu}-\sum_{\substack{\alpha>0\\ \alpha\cdot \mu\neq 0}}(-1)^{|\alpha|}\alpha(\lambda)\,z^\alpha\,\mathrm{Cas}_{\alpha,\mu}+\cdots \,,$$
where $\mathrm{Cas}_{\alpha,\mu}$ is the shifted Casimir operator in $\mathsf Y_{\mu}(Q,\sW)$, and dots denote a scalar operator.
\end{Theorem}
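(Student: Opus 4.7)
The strategy is to reduce to the symmetric case (Corollary~\ref{cor qm div for sym_sp inj}) via the bundle construction already used in Proposition~\ref{prop shifted Casimir}. Set $X=\cM(\bv,\underline{\bd})$ and $Y=\cM(\bv,\bd_{\In})$; the latter is symmetric and $\pi\colon Y\to X$ is a vector bundle with fibre $E=\bigoplus_{i\in Q_0}\Hom(\mathsf V_i,\bC^{-\mu_i})$. With the auxiliary torus $\bC^*_w$ scaling the fibres with weight~$1$, the zero section $i$ satisfies $i^*\pi^*=\id$, and $\pi^*$ gives an isomorphism $H^\sT(X,\sw^{\mathrm{fr}})[w]\cong H^{\sT\times\bC^*_w}(Y,\pi^*\sw^{\mathrm{fr}})$. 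The injectivity of $\mathrm{sp}_X$ transfers to $Y$ through $\pi^*$, and $\pi$ identifies the Kähler parameters $z$ of $X$ and $Y$, so Corollary~\ref{cor qm div for sym_sp inj} yields on $Y$ the identity
$$c_1(\lambda)\,\widetilde{\star}_Y\,\cdot\;=\;c_1(\lambda)\cup\,\cdot\;-\;\sum_{\alpha>0}\alpha(\lambda)\frac{z^\alpha}{(-1)^{|\alpha|}-z^\alpha}\mathrm{Cas}_\alpha^Y\;+\;\cdots.$$

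Transferring this through $\pi^*$ lands in $\End_{\bC[w]}(H^\sT(X,\sw^{\mathrm{fr}})[w])$. Each $i^*(\mathrm{Cas}_\alpha^Y)_w\pi^*$ on the right is, by Proposition~\ref{prop shifted Casimir} and Definition~\ref{def of shifted Casimir}, a polynomial in $w$ of degree $\leqslant-\alpha\cdot\mu$ with leading coefficient $(-1)^{\alpha\cdot\mu}\mathrm{Cas}_{\alpha,\mu}$. For the quantum corrections, $\bC^*_w$-virtual localization on $\overline M_{0,2}(Y,\beta)$ identifies the fixed locus with $\overline M_{0,2}(X,\beta)$ and the $\bC^*_w$-virtual normal complex with $R\pi^{\mathrm{univ}}_*f^*E$, which carries $\bC^*_w$-weight $w$ and Euler characteristic $\chi_\beta=\mu\cdot\beta-\mu\cdot\bv$ on the $\bv$-component; thus $i^*\sQ^Y_\beta\pi^*$ is a rational function in $w$ whose leading behaviour is $\sQ^X_\beta\cdot w^{-\chi_\beta}$.

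Extracting the leading $w$-coefficient (after normalizing out the $w^{-\mu\cdot\bv}$ factor from $e^{\bC^*_w}(E)$ at the zero section) produces the asymmetric formula on $X$. The dichotomy in the statement emerges from the $k$-dependence of $\chi_{k\alpha}=k\,\mu\cdot\alpha-\mu\cdot\bv$: when $\alpha\cdot\mu=0$ all multiple covers contribute at the same $w$-order, the geometric series $\sum_{k\geqslant 1}(-1)^{|\alpha|k}z^{k\alpha}$ survives, and one recovers $\tfrac{z^\alpha}{(-1)^{|\alpha|}-z^\alpha}\cdot\mathrm{Cas}_\alpha$ since $\mathrm{Cas}_{\alpha,0}=\mathrm{Cas}_\alpha$ is $w$-independent; when $\alpha\cdot\mu\neq 0$ only the $k=1$ multiple cover matches the leading $w^{-\alpha\cdot\mu}$ power of the shifted Casimir, leaving just $-(-1)^{|\alpha|}\alpha(\lambda)z^\alpha\mathrm{Cas}_{\alpha,\mu}$. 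The scalar remainder is fixed by annihilating the identity as in the symmetric case, and the injectivity of $\mathrm{sp}_X$ lifts the resulting operator identity from ordinary back to critical cohomology. The main obstacle will be the precise verification that higher multiple covers $\sQ^Y_{k\alpha}$ with $k\geqslant 2$ and $\alpha\cdot\mu\neq 0$ contribute strictly lower $w$-powers than $\mathrm{Cas}_{\alpha,\mu}$ and hence drop out of the leading $w$-coefficient; this $w$-degree estimate plays a role analogous to the smallness and broken/unbroken analysis behind Theorem~\ref{thm on stab and Q}.
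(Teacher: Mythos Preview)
Your approach is essentially the same as the paper's: pass to the symmetrization $Y=\cM(\bv,\bd_{\In})$ via the bundle $E$, introduce the auxiliary torus scaling the fibres, apply the symmetric formula there, and extract the answer on $X$ via the leading behaviour in the auxiliary parameter. The ingredients you cite (Proposition~\ref{prop shifted Casimir}, Proposition~\ref{prop on cmp corr}, virtual localization) are exactly those the paper uses.

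The one place where your write-up is more complicated than necessary is the ``main obstacle'' you identify at the end. The paper dissolves it with a single observation: since $\bC^*_w$ acts trivially on $X$, the operator $\sQ^X_\beta$ is \emph{constant} in $w$. Concretely, virtual localization together with Proposition~\ref{prop on cmp corr} gives
\[
\sQ^X_\beta \;=\; \frac{1}{e^{\bC^*_w}(E)\cdot\bigl(w^{-\rk E-\beta\cdot\mu}+\text{l.o.t.}\bigr)}\;i^*\sQ^Y_\beta\,\pi^*,
\]
and substituting the degree-$\beta$ coefficient of the symmetric formula for $\sQ^Y_\beta$ yields an expression whose right-hand side is a rational function of $w$ but whose left-hand side is not. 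Hence one may simply take $w\to\infty$. At that point the only $w$-degree input required is Proposition~\ref{prop shifted Casimir}, which you already invoke: $i^*\mathrm{Cas}_\alpha\pi^*$ has $w$-degree at most $-\alpha\cdot\mu$, so after dividing by $w^{-\beta\cdot\mu}$ the limit vanishes unless $-\alpha\cdot\mu\geqslant-\beta\cdot\mu$, i.e.\ unless $\beta=\alpha$ or $\alpha\cdot\mu=0$. No separate estimate on ``multiple covers $\sQ^Y_{k\alpha}$'' is needed, and in particular no smallness or broken/unbroken analysis enters here; that machinery was already consumed in proving the symmetric formula on $Y$.

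A minor point: the paper first reduces to the zero-potential case (using that the correspondences $\mathsf C_{\alpha,\mu}$ of Proposition~\ref{prop shifted Casimir} are potential-independent, as in Corollary~\ref{cor qm div for sym_sp inj}) and then passes back via $\mathrm{sp}$-injectivity, whereas you apply Corollary~\ref{cor qm div for sym_sp inj} directly on $Y$ with the pulled-back potential. Both routes are fine, since $\pi^*$ identifies $\mathrm{sp}_Y$ with $\mathrm{sp}_X\otimes\id_{\bC[w]}$.
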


\begin{proof}
It is enough to prove the zero potential case and the nonzero potential follows from an argument similar to Corollary \ref{cor qm div for sym_sp inj} (with Proposition \ref{prop Casimir} replaced by Proposition \ref{prop shifted Casimir}).
Let $\underline{\bd}'$ be a symmetric framing vector such that $\bd'_{\In}=\bd'_{\Out}=\bd_{\In}$. Denote $X=\cM(\bv,\underline{\bd})$, $Y=\cM(\bv,\underline{\bd}')$, then $Y$ is the total space of the vector bundle $E=\bigoplus_{i\in Q_0}\Hom(\mathsf V_i,\bC^{\bd_{\In,i}-\bd_{\Out,i}})$ on $X$. Let the torus $\bC^*_\hbar$ act on $Y$ by fixing the base $X$ and scaling the fibers of $E$ with weight $1$.

By the definition of torus equivariant virtual classes, we have 
\begin{equation}\label{equ on m02X_1}
[\overline{M}_{0,2}(X,\beta)]_{\sT}^{\vir}=[\overline{M}_{0,2}(X,\beta)]_{\sT\times \bC^*_\hbar}^{\vir}, \end{equation}
\begin{align}\label{equ on m02xy_1}
[\overline{M}_{0,2}(Y,\beta)]_{\sT\times \bC^*_\hbar}^{\vir}&=i_*([\overline{M}_{0,2}(X,\beta)]_{\sT\times \bC^*_\hbar}^{\vir}\cdot e(-\pi_*\ev^*E)) \\ \nonumber 
&=i_*[\overline{M}_{0,2}(X,\beta)]_{\sT\times \bC^*_\hbar}^{\vir}\cdot (\hbar^{-\rk E-\beta\cdot\mu}+\mathrm{l.o.t.}), \end{align}
where $i\colon \overline{M}_{0,2}(X,\beta) \stackrel{\cong}{\to}   \overline{M}_{0,2}(Y,\beta)^{\bC^*_\hbar}$ is induced by the zero section $X\hookrightarrow Y$, 
$\pi\colon \mathcal{C}\to \overline{M}_{0,2}(X,\beta)$ is the universal curve and $\ev\colon  \mathcal{C}\to X$ is the evaluation map.
Denote 
$$[Z]_X:=\ev_*([\overline{M}_{0,2}(X,\beta)]_{\sT\times \bC^*_\hbar}^{\vir}), \,\,\,  
[Z]_Y:=i_*\circ \ev_*([\overline{M}_{0,2}(X,\beta)]_{\sT\times \bC^*_\hbar}^{\vir}), 
 \,\,\, [Z]_Y':= \ev_*([\overline{M}_{0,2}(Y,\beta)]_{\sT\times \bC^*_\hbar}^{\vir}). $$ 
By Proposition \ref{prop on cmp corr}, we have 
\begin{align*}[Z]_X&=\frac{(p^{*})^{-1}\circ [Z]_Y\circ  p^*(-)}{e(E)}  \\
&=\frac{1}{(\hbar^{\rk E}+\mathrm{l.o.t.})(\hbar^{-\rk E-\beta\cdot\mu}+\mathrm{l.o.t.})} \left((p^{*})^{-1}\circ [Z]_Y'\circ  p^*(-)\right) \\ 
&=-(-1)^{(c_1(\mathsf P_Y),\beta)+|\bv|\cdot |\beta|} \sum_{\substack{\alpha>0\\ \alpha |\beta}}\frac{\alpha(\lambda)\,(-1)^{|\beta|}}{\hbar^{-\beta\cdot\mu}} \left((p^{*})^{-1}\circ \mathrm{Cas}_{\alpha,0}\circ  p^*(-)\right)+\mathrm{l.o.t.}+\text{scalar operator}.
\end{align*}
Here the second equality uses \eqref{equ on m02xy_1}, the third uses Theorem \ref{thm on qm div for sym}. By \eqref{equ on m02X_1}, $[Z]_X$ is independent of the choice of $\hbar$ , so we can take the limit $\hbar\to \infty$. Proposition \ref{prop shifted Casimir} implies that
\begin{align*}
\lim_{\hbar\to \infty}\frac{1}{(-\hbar)^{-\beta\cdot\mu}} \left((p^{*})^{-1}\circ \mathrm{Cas}_{\alpha,0}\circ  p^*(-)\right)=
\begin{cases}
\mathrm{Cas}_{\alpha,\mu}\,, & \text{if $\beta\cdot\mu=0$},\\
\mathrm{Cas}_{\alpha,\mu}\,, & \text{if $\beta\cdot\mu\neq 0$, $\alpha=\beta$}, \\
\,\,\,\,\,0\,, & \text{otherwise},
\end{cases}
\end{align*}
where $\mathrm{Cas}_{\alpha,\mu}$ is the shifted Casimir operator (see Definition \ref{def of shifted Casimir}).
The result then follows.
\end{proof}

We apply Theorem \ref{thm qm div for asym_sp inj} to compute the formula of quantum multiplication by divisors for $\Hilb(\C^3)$ in Section \ref{sect on qm by div on hilb}.
Corresponding $K$-theoretic analog will be pursued in a future work \cite{COZZ2}. 

\section{Bilinear form and anti-automorphism}

In this section, under certain properness assumption on the critical loci, we discuss an bilinear form on the shifted Yangian $\mathsf Y_{\mu}(Q,\sW)$ modules, and the induced anti-automorphism on the shifted Yangian (Theorem \ref{thm anti-auto RTT Yangian}). These are geometric analogues of the Shapovalov form and the Chevalley anti-involution. We keep using notations from \S \ref{sec shifted Yangian}.

\subsection{Properness of torus fixed loci}

\begin{Definition}
We say that an auxiliary datum $\mathfrak{c}=(\bd,\sT_0\curvearrowright\cM(\bd),\sW^{\mathrm{fr}})$ is \textit{compact} if $\Crit_{\cM(\bv,\bd)}(\sw^{\mathrm{fr}})^{\sT_0}$ is a proper variety for arbitrary $\bv$. In this case, we also call the auxiliary space $\cH_{\mathfrak{c}}$ \textit{compact}. 
An auxiliary data set $\mathcal C$ is \textit{compact} if every auxiliary datum in $\mathcal C$ is compact. Similarly, a state space $\cH^{\sW^{\mathrm{fr}}}_{\underline{\bd},\sA^{\mathrm{fr}}}=H^{\sT_0\times \sA^{\mathrm{fr}}}(\cM(\underline{\bd}),\sw^{\mathrm{fr}})$ is \textit{compact}  if $\Crit_{\cM(\bv,\underline{\bd})}(\sw^{\mathrm{fr}})^{\sT_0\times \sA^{\mathrm{fr}}}$ is a proper variety for arbitrary $\bv$.
\end{Definition}

The following lemma gives a criterion for abundant compact auxiliary data as well as state spaces.

\begin{Lemma}\label{lem abundant compact}
Suppose that $\left(\Crit_{\mathrm{Rep}(\bv,\mathbf 0)}(\sw)/\!\!/ \GL(\bv)\right)^{\sT_0}$ is proper (equivalent to finite), then $\Crit_{\cM(\bv,\underline{\bd})}(\sw^{\mathrm{fr}})^{\sT_0\times \sA^{\mathrm{fr}}}$ is proper if the following genericity condition is satisfied:
\begin{itemize}
    \item[] for every pair of $\sT_0\times \sA^{\mathrm{fr}}$ weights $\lambda$ on $\mathsf D_{\In}$ and $\lambda'$ on $\mathsf D_{\Out}$, the difference $\lambda-\lambda'\notin \mathrm{Char}(\sT_0)$.
\end{itemize}
\end{Lemma}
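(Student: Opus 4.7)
The genericity on $\sA^{\mathrm{fr}}$-weights will be used to force the outgoing framings $B_i$ to vanish at every $\sT_0\times\sA^{\mathrm{fr}}$-fixed critical point, after which the problem reduces to a finiteness statement for the unframed $\sT_0$-fixed critical quotient, with controlled proper fibres. First I would pick a closed fixed point of $\Crit_{\cM(\bv,\underline{\bd})}(\sw^{\mathrm{fr}})^{\sT_0\times\sA^{\mathrm{fr}}}$ and, by the usual Kempf-type lifting for reductive GIT quotients, represent it by a point $(X,A,B)\in R(\bv,\underline{\bd})^s$ together with a homomorphism $\rho\colon\sT_0\times\sA^{\mathrm{fr}}\to G$ under which $(X,A,B)$ is pointwise invariant after twisting the $G$-action by $\rho$. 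Decomposing $\bC^{\bv_i}=\bigoplus_\chi V_{i,\chi}$ into $\rho$-weight spaces, cyclic stability implies that $\mathsf V$ is generated over $Q$ by the images of the $A_j$, so, since arrows of $Q$ have trivial $\sA^{\mathrm{fr}}$-weight, every $\sA^{\mathrm{fr}}$-weight occurring on $\mathsf V$ coincides with the $\sA^{\mathrm{fr}}$-projection of some weight $\lambda$ of $\mathsf D_{\In}$. Equivariance of a nonzero component $B_i\colon V_{i,\chi}\to D_{\Out,i,\lambda'}$ forces $\chi=\lambda'$ in the full character lattice, so $\lambda-\lambda'\in\mathrm{Char}(\sT_0)$ for some $\lambda\in\mathrm{Wts}(\mathsf D_{\In,i})$, contradicting genericity; hence $B=0$ at the fixed point.

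With $B=0$ in hand, every monomial of $\sW^{\mathrm{fr}}$ outside $\sW$ passes through the framing node and so contains at least one $B$-factor. Consequently $\sw^{\mathrm{fr}}\big|_{B=0}=\sw(X)$, and the critical equations reduce to $\partial\sw/\partial X_a=0$ in $X$, together with closed conditions on $(X,A)$ coming from $\partial\sw^{\mathrm{fr}}/\partial B_i\big|_{B=0}=0$. Therefore the $\sA^{\mathrm{fr}}$-invariant forgetful morphism $(X,A,0)\mapsto[X]\in\mathrm{Rep}(\bv,\mathbf 0)/\!\!/ G$ sends our fixed critical locus into $\bigl(\Crit_{\mathrm{Rep}(\bv,\mathbf 0)}(\sw)/\!\!/ G\bigr)^{\sT_0}$, which is proper, and in fact finite, by hypothesis. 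For each $[X_0]$ in the image a fibre decomposes, as $\rho$ varies over the finitely many $\sA^{\mathrm{fr}}$-extensions of a stabilising cocharacter $\rho_0\colon\sT_0\to G_{X_0}$, into a disjoint union of quotients of stable $\rho$-equivariant $A$'s by the centraliser $Z_G(\rho)\cap G_{X_0}$, cut out by the closed conditions above; on each piece the $A$'s live in $\bigoplus_{\chi}\Hom(D_{\In,i,\chi},V_{i,\chi})$, and stable-modulo-centraliser should be a closed subscheme of a product of Grassmannians $\Gr(V_{i,\chi},D_{\In,i,\chi})$, which is proper.

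The main technical obstacle is the last identification. For Nakajima-type quivers and the trivial quiver it is classical (torus-fixed points parametrised by tuples of partitions or products of small Grassmannians), but for a general symmetric $Q$ one must verify that cyclic stability of $(X_0,A)$, when restricted $\rho$-wise, is equivalent to cyclic stability for a smaller framed quiver with gauge group $\prod_{i,\chi}\GL(V_{i,\chi})$, whose framed moduli is proper over its own affine quotient. Once this weight-by-weight reduction is carried out, the fibres above are proper; combined with the finiteness of the target in the previous paragraph, this shows that $\Crit_{\cM(\bv,\underline{\bd})}(\sw^{\mathrm{fr}})^{\sT_0\times\sA^{\mathrm{fr}}}$ is a closed subscheme of a scheme which is proper over a finite set, and hence is itself proper.
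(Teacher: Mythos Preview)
Your first half, forcing $B=0$ at every $\sT_0\times\sA^{\mathrm{fr}}$-fixed point via the weight argument, is correct and coincides with the paper's argument (phrased there in terms of the fixed-locus quiver $Q'$ with $\mathrm{Char}(\sT_0\times\sA^{\mathrm{fr}})$-graded nodes).

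The second half is where you make life unnecessarily hard. Once $B=0$, your fixed critical locus sits inside $\Crit_{\cM(\bv,\underline{\bd}')}(\sw)^{\sT_0}$ where $\underline{\bd}'=(\bd_{\In},\mathbf 0)$. Now the affinization (Jordan--H\"older) map
\[
\cM(\bv,\underline{\bd}')\longrightarrow \cM_0(\bv,\underline{\bd}')=\mathrm{Rep}(\bv,\underline{\bd}')/\!\!/\GL(\bv)
\]
is \emph{projective} by general GIT, and because there is no outgoing framing the invariant ring sees no $A$-variables, so $\cM_0(\bv,\underline{\bd}')\cong\mathrm{Rep}(\bv,\mathbf 0)/\!\!/\GL(\bv)$. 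The image of your fixed critical locus lands in the $\sT_0$-fixed part of $\Crit_{\mathrm{Rep}(\bv,\mathbf 0)}(\sw)/\!\!/\GL(\bv)$, which is finite by hypothesis. A closed subscheme of a scheme that is projective over a finite set is proper, and you are done.

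This is exactly the paper's route, and it bypasses entirely your ``technical obstacle'' about identifying fibres with products of Grassmannians. That identification is in fact not correct in general (the fibre over a nilpotent point of the affine quotient of a framed quiver variety is projective but rarely a Grassmannian), though your fallback sentence---that the $\rho$-weight pieces give smaller framed quiver varieties which are proper over their own affine quotients---is correct and, once combined with the observation that those affine quotients agree with the unframed one, collapses to the same one-line argument above.
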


\begin{proof}
$\cM(\bv,\underline{\bd})^{\sT_0\times \sA^{\mathrm{fr}}}$ is the disjoint union of quiver varieties 
\begin{align*}
    \bigsqcup_{\phi/\sim}\cM(Q',\bv_{\phi},\underline{\bd}_{\phi}),
\end{align*}
for a certain quiver $Q'$ which has a natural $\mathrm{Char}(\sT_0\times \sA^{\mathrm{fr}})$-grading on its nodes, and $\phi\colon \sT_0\times \sA^{\mathrm{fr}}\to \GL(\bv)$ is a group homomorphism. Since the stability is cyclic, nonempty $\cM(Q',\bv_{\phi},\underline{\bd}_{\phi})$ must satisfy that the support of $\bv_{\phi}$ lies in the subspace $\left\{\lambda+\mathrm{Char}(\sT_0)\:|\:\lambda\text{ is a weight of $\mathsf D_{\In}$}\right\}$. Then the genericity condition implies that the tail of any out-going arrow on a nonempty $\cM(Q',\bv_{\phi},\underline{\bd}_{\phi})$ must have gauge dimension zero. It follows that $\cM(\bv,\underline{\bd})^{\sT_0\times \sA^{\mathrm{fr}}}\subseteq \cM(\bv,\underline{\bd}')^{\sT_0}$ where $\bd'_{\In}=\bd_{\In}$, $\bd'_{\Out}=\mathbf 0$. From this we deduce that $$\Crit_{\cM(\bv,\underline{\bd})}(\sw^{\mathrm{fr}})^{\sT_0\times \sA^{\mathrm{fr}}}\subseteq \Crit_{\cM(\bv,\underline{\bd}')}(\sw)^{\sT_0}.$$
Since the Jordan-H\"older map $\Crit_{\cM(\bv,\underline{\bd}')}(\sw)\to \Crit_{\mathrm{Rep}(\bv,\underline{\bd}')}(\sw)/\!\!/ \GL(\bv)$ is projective, it is enough to show that the latter has finite $\sT_0$ fixed points. Note that $\bC\left[\Crit_{\mathrm{Rep}(\bv,\underline{\bd}')}(\sw)\right]^{\GL(\bv)}=\bC\left[\Crit_{\mathrm{Rep}(\bv,\mathbf 0)}(\sw)\right]^{\GL(\bv)}$ because there is no out-going arrow to form cycle. Thus $\Crit_{\mathrm{Rep}(\bv,\underline{\bd}')}(\sw)/\!\!/ \GL(\bv)$ is isomorphic to $ \Crit_{\mathrm{Rep}(\bv,\mathbf 0)}(\sw)/\!\!/ \GL(\bv)$ which has finite $\sT_0$ fixed points by assumption.
\end{proof}

\begin{Remark}\label{rmk fractional weights}
Let us assume that $\sT_0$-weights on the unframed quiver are integral, and we can take $\sT_0\times \sA^{\mathrm{fr}}$ weights on $\mathsf D_{\In}$, $\mathsf D_{\Out}$ to be fractional, then the genericity condition in Lemma \ref{lem abundant compact} is $\lambda-\lambda'\in \mathrm{Char}(\sT_0\times \sA^{\mathrm{fr}})\otimes_{\bZ}\bQ\setminus \mathrm{Char}(\sT_0)$. In particular, $\lambda-\lambda'\in \mathrm{Char}(\sT_0)\otimes_{\bZ}\bQ\setminus \mathrm{Char}(\sT_0)$ is allowed.
\end{Remark}

\begin{Example}
Let $\widetilde{Q}$ be the tripled quiver of $Q$, $\widetilde{\sW}$ be the canonical cubic potential, $\sT_0=\bC^*_\hbar$ be the torus that acts on $\widetilde{Q}$ as in Example \ref{ex doubled vs tripled}. Denote $\widetilde{\sw}=\tr(\widetilde{\sW})$. Then $\left(\Crit_{\mathrm{Rep}(\widetilde{Q},\bv,\mathbf 0)}(\widetilde{\sw})/\!\!/ \GL(\bv)\right)^{\bC^*_\hbar}$ consists of a unique point $0$ corresponding to the trivial representation. In fact, by direct computation, we have $\Crit_{\mathrm{Rep}(\widetilde{Q},\bv,\mathbf 0)}(\widetilde{\sw})=\{M\in \mathrm{Rep}(\Pi_{\overline{Q}},\bv), \cE\in \End_{\mathrm{Rep}(\Pi_{\overline{Q}},\bv)}(M)\}$, where $\Pi_{\overline{Q}}$ is the preprojective algebra of $\overline{Q}$. Under the natural map $$\Crit_{\mathrm{Rep}(\widetilde{Q},\bv,\mathbf 0)}(\widetilde{\sw})/\!\!/ \GL(\bv)\longrightarrow \left(\mathrm{Rep}(\Pi_{\overline{Q}},\bv)/\!\!/ \GL(\bv)\right)\times \left(\mathfrak{gl}(\bv)/\!\!/\GL(\bv)\right)$$
the $\sT_0$ fixed locus of the left-hand-side maps to the $\sT_0$ fixed locus of the right-hand-side, which is $\{0\}\times \{0\}$. This implies that the preimage of $\left(\Crit_{\mathrm{Rep}(\widetilde{Q},\bv,\mathbf 0)}(\widetilde{\sw})/\!\!/ \GL(\bv)\right)^{\bC^*_\hbar}$ under the quotient map is contained in the locus where $M$ is a nilpotent $\Pi_{\overline{Q}}$-representation, and $\cE$ is an nilpotent endomorphism. Semisimplification of $\{M\text{ nilpotent}, \cE\text{ nilpotent} \}$ as $\widetilde{Q}$-representation is the trivial one; thus $\left(\Crit_{\mathrm{Rep}(\widetilde{Q},\bv,\mathbf 0)}(\widetilde{\sw})/\!\!/ \GL(\bv)\right)^{\bC^*_\hbar}=\{0\}$.
\end{Example}

\subsection{Duality and adjoint operators}\label{subsec bilinear form}

For an algebraic variety $X$ with a regular function $f\colon X\to \bA^1$, we have Verdier duality
\begin{align*}
    H^{i}(X,\varphi_{f}\omega_X)^{\vee}\cong H^{-i}_c(X,\varphi_{f}\bC_X).
\end{align*}
If $X$ is smooth and $\Crit(f)$ is proper, then the RHS of the above isomorphism is isomorphic to $H^{-2d-i}(X,\varphi_{f}\omega_X)$ for $d=\dim X$, and the duality is induced by the nondegenerate pairing:
\begin{align}\label{pairing on crit coh}
(\cdot,\cdot)_X:
H(X,f)\otimes H(X,f)\cong H(X\times X,f\boxminus f)\xrightarrow{\Delta^!} H^{\mathrm{BM}}(X)_{\Crit(f)}\to \bC\:.
\end{align}
Here the first isomorphism uses $H^{*}(X,f)\cong H^{*}(X,-f)$ and Thom-Sebastiani isomorphism, the second map is Gysin pullback to diagonal, and the third map is pushforward to the point. 

Consider a pair of smooth varieties $X$ and $Y$ with functions $f_X$ and $f_Y$, and assume that both $\Crit(f_X)$ and $\Crit(f_Y)$ are proper. A class $\alpha\in H(X\times Y,f_X\boxminus f_Y)$ induces a convolution map \cite[(3.3)]{COZZ}: 
$$\alpha\colon H(Y,f_Y)\to H(X,f_X). $$ 
Using the isomorphism $H(X\times Y,f_X\boxminus f_Y)\cong H(Y\times X,f_Y\boxminus f_X)$, the transpose of $\alpha$ induces a convolution map $\alpha^{\mathrm{t}}\colon H(X,f_X)\to H(Y,f_Y)$. $\alpha$ and $\alpha^{\mathrm{t}}$ are adjoint to each other in the sense that
\begin{align*}
    (\alpha (u),v)_X=(u,\alpha^{\mathrm{t}}(v))_Y, \text{ for }u\in H(Y,f_Y), \: v\in H(X,f_X).
\end{align*}
Here $(\cdot,\cdot)_X$ and $(\cdot,\cdot)_Y$ are the pairing \eqref{pairing on crit coh} on $X$ and on $Y$ respectively.

When $X$ has a torus $\sT$ action, we can replace the properness by equivariant properness, that is, if $X$ is smooth and $\Crit(f)^{\sT}$ is proper, then we have nondegenerate pairing:
\begin{align}\label{pairing on eq crit coh}
  (\cdot,\cdot)\colon  H^\sT(X,f)_{\loc}\otimes H^\sT(X,f)_\loc\cong H^\sT(X\times X,f\boxminus f)_\loc\xrightarrow{\Delta^!} H^{\mathrm{BM}}(X)_{\Crit(f),\loc}\to \bC(\mathsf t)=\Frac H_\sT(\pt).
\end{align}
Here ``loc'' means $-\otimes_{\bC[\mathsf t]}\bC(\mathsf t)$. The discussions on transpose correspondence and adjoint operators naturally generalizes to the equivariant setting.

For our purpose, we introduce the following sign rule for pairing \eqref{pairing on eq crit coh} on cohomologies $\cH^{\sW^{\mathrm{fr}}}_{\underline{\bd},\sA^{\mathrm{fr}}}$.

\begin{Definition}
Consider the quiver variety $\cM(\bv,\underline{\bd})$ with $\sT_0\times \sA^{\mathrm{fr}}$ action and invariant potential $\sw^{\mathrm{fr}}$.
Suppose that $\Crit(\sw^{\mathrm{fr}})^{\sT_0\times \sA^{\mathrm{fr}}}$ is proper, we define the \textit{bilinear form} 
on $\cH^{\sW^{\mathrm{fr}}}_{\underline{\bd},\sA^{\mathrm{fr}}}(\bv)=H^{\sT_0\times \sA^{\mathrm{fr}}}(\cM(\bv,\underline{\bd}),\sw^{\mathrm{fr}})$: 
\begin{equation}\label{equ on bilinear form}(\cdot,\cdot)\colon \cH^{\sW^{\mathrm{fr}}}_{\underline{\bd},\sA^{\mathrm{fr}}}(\bv)\otimes \cH^{\sW^{\mathrm{fr}}}_{\underline{\bd},\sA^{\mathrm{fr}}}(\bv)\to  \Frac H_{\sT_0\times \sA^{\mathrm{fr}}}(\pt) \end{equation}
to be
\begin{align*}
    (-1)^{\rk \mathsf P}\times \eqref{pairing on eq crit coh}\text{ for $X=\cM(\bv,\underline{\bd})$, $\sT=\sT_0\times \sA^{\mathrm{fr}}$, $f=\sw^{\mathrm{fr}}$}.
\end{align*}
Here $\mathsf P$ is the virtual vector bundle \eqref{par pol on M(v,d)} on $\cM(\bv,\underline{\bd})$. 

For a homomorphism $M\colon \cH^{\sW^{\mathrm{fr}}}_{\underline{\bd},\sA^{\mathrm{fr}}}(\bv)\to \cH^{\sW^{\mathrm{fr}}}_{\underline{\bd},\sA^{\mathrm{fr}}}(\bv')$, we denote its $\bZ/2$-graded (super) adjoint operator  
\begin{equation}\label{equ on z2adj op}
M^{\tau}\colon \cH^{\sW^{\mathrm{fr}}}_{\underline{\bd},\sA^{\mathrm{fr}}}(\bv')\to \cH^{\sW^{\mathrm{fr}}}_{\underline{\bd},\sA^{\mathrm{fr}}}(\bv)
\end{equation}
with respect to the above pairing by 
\begin{align*}
    (Mx,y)=(-1)^{|M|\cdot|\bv|}(x,M^\tau y),\;\text{where }x\in\cH^{\sW^{\mathrm{fr}}}_{\underline{\bd},\sA^{\mathrm{fr}}}(\bv),\;y\in \cH^{\sW^{\mathrm{fr}}}_{\underline{\bd},\sA^{\mathrm{fr}}}(\bv').
\end{align*}
Note that $M^{\tau\tau}=(-1)^{|M|}M$.

For the tensor product of cohomologies, we invoke the super vector space sign rule:
\begin{align*}
    (a\otimes b,c\otimes d)_{\cH\otimes\cH'}=(-1)^{|b|\cdot|c|}(a,c)_{\cH}\cdot( b,d)_{\cH'}\:.
\end{align*}
\end{Definition}

\begin{Proposition}[Anti-automorphism for $\mathcal D\widetilde{\mathcal{SH}}_{\mu}(Q,\sW)$]
For any $\mu\in \bZ^{Q_0}$, the following assignment for the generators $e_i(z),f_i(z),h_i(z)$ in Definition \ref{def shifted yangian}:
\begin{align}\label{anti-inv}
    e_i(z)\mapsto (-1)^{|i|}f_i(z),\quad f_i(z)\mapsto e_i(z),\quad h_i(z)\mapsto h_i(z)
\end{align}
induces an anti-automorphism 
\begin{equation}\label{equ on anitmophism}\vartheta\colon \mathcal D\widetilde{\mathcal{SH}}_{\mu}(Q,\sW)\to \mathcal D\widetilde{\mathcal{SH}}_{\mu}(Q,\sW), \end{equation} 
that is, $\vartheta$ is a $\bC[\mathsf t_0]$-module automorphism such that 
$$\vartheta(XY)=(-1)^{|X|\cdot|Y|}\vartheta(Y)\vartheta(X), \,\,\, \mathrm{where}\,\,\, X,Y\in \mathcal D\widetilde{\mathcal{SH}}_{\mu}(Q,\sW).$$
Moreover, for any compact state space $\cH^{\sW^{\mathrm{fr}}}_{\underline{\bd},\sA^{\mathrm{fr}}}$ with $\bd_{\Out}-\bd_{\In}=\mu$, the bilinear form $(\cdot,\cdot)$ on $\cH^{\sW^{\mathrm{fr}}}_{\underline{\bd},\sA^{\mathrm{fr}}}$ is $\mathcal D\widetilde{\mathcal{SH}}_{\mu}(Q,\sW)$-invariant, that is,
\begin{align*}
    (a\cdot x,y)=(-1)^{|a|\cdot|x|}(x,\vartheta(a)\cdot y),\text{ for $a\in \mathcal D\widetilde{\mathcal{SH}}_{\mu}(Q,\sW)$, $x,y\in \cH^{\sW^{\mathrm{fr}}}_{\underline{\bd},\sA^{\mathrm{fr}}}$}\:.
\end{align*}
\end{Proposition}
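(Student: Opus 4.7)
The plan is to verify the anti-automorphism in two independent steps: first establish $\vartheta$ abstractly on the presentation of $\mathcal D\widetilde{\mathcal{SH}}_\mu(Q,\sW)$, and then derive the bilinear form invariance from the transpose relationship between the Hecke correspondences used in \eqref{sph coha act_sym} and \eqref{sph coha op act_sym}.

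For the abstract anti-automorphism, I would first observe that the underlying CoHA $\widetilde{\mathcal H}^{\mathrm{nil}}_{Q,\sW}$ is isomorphic as a $\bZ^{Q_0}$-graded algebra to its super-opposite: the Hall product is defined via pullback/pushforward along stacks of short exact sequences, and after applying the Koszul-sign twist \eqref{twi prod} the natural swap of the filtered pieces becomes an isomorphism with the super-opposite multiplication. This yields an algebra map $\widetilde{\mathcal{SH}}^{\mathrm{nil}}_{Q,\sW} \to \widetilde{\mathcal{SH}}^{\mathrm{nil},\mathrm{op}}_{Q,\sW}$, which on generators sends $e_{i,r}$ to $(-1)^{|i|}f_{i,r}$ (the sign absorbing the parity of $\delta_i$), and similarly in the opposite direction; extending by $h_{i,r}\mapsto h_{i,r}$ defines $\vartheta$ on the free product. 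The point is then to check the defining relations of Definition \ref{def shifted yangian}. For \eqref{ef rel_sym}, applying antimultiplicativity with Koszul signs to $e_{i,r}f_{j,s}-(-1)^{|i||j|}f_{j,s}e_{i,r}$ gives $(-1)^{|i|+|j|+|i||j|}(e_{j,s}f_{i,r}-(-1)^{|i||j|}f_{i,r}e_{j,s})$, which after plugging in the right-hand side collapses to the correct multiple of $h_{i,r+s}$ when $i=j$. Relation \eqref{he and hf rel_sym} is preserved because swapping $e$ and $f$ interchanges $\zeta_{ij}(z)$ and $\zeta_{ij}(z)^{-1}$ in a way that, combined with the fact that $\sigma_j$ acts identically on $e_{j,r}$ and $f_{j,r}$, gives back the same relation. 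The shift relations \eqref{h shift} are trivially preserved since $\vartheta$ fixes each $h_{i,r}$.

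For the bilinear form invariance on a compact state space, by antimultiplicativity and the fact that $h_i(z)$ acts by tautological classes which are symmetric operators for the pairing \eqref{equ on bilinear form}, it suffices to check that the $\bZ/2$-graded adjoint of $e_i(u)$ is $(-1)^{|i|}f_i(u)$ (the adjoint of $f_i(u)$ then follows from $M^{\tau\tau}=(-1)^{|M|}M$). Recall \eqref{sph coha act_sym} and \eqref{sph coha op act_sym}: the action of $e_i(u)$ is induced by the correspondence $[\overline{\mathfrak{P}}(\bv+\delta_i,\bv,\underline{\bd})]/(u-c_1(\mathcal L_i))$ with sign $(-1)^{(\delta_i|\bv+\delta_i)}$, while $f_i(u)$ is induced by the \emph{transposed} correspondence $[\overline{\mathfrak{P}}(\bv+\delta_i,\bv,\underline{\bd})]^{\mathrm{t}}/(u-c_1(\mathcal L_i))$ with sign $(-1)^{1+\bd_{\Out,i}+|i|\cdot|\bv+\delta_i|+(\bv|\delta_i)}$ (writing the latter with source dimension $\bv+\delta_i$). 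Since transpose of correspondences is exactly adjoint with respect to the unnormalized pairing \eqref{pairing on eq crit coh}, the computation of $e_i(u)^\tau$ reduces to a purely combinatorial sign comparison involving: (i) the ratio $(-1)^{\rk \mathsf P|_{\bv+\delta_i}-\rk \mathsf P|_{\bv}}$ coming from the normalization $(-1)^{\rk \mathsf P}$ in \eqref{equ on bilinear form}, (ii) the super adjoint Koszul sign $(-1)^{|e_i|\cdot|\bv|}=(-1)^{|i|\cdot|\bv|}$, and (iii) the two prefactors above. Using the definitions of $\mathsf P$ in \eqref{par pol on M(v,d)} and of the pairing \eqref{twist hall product} $(\cdot|\cdot)$, both can be expanded as sums of products of $\bv_j$'s weighted by $\bd_{\Out,j}$'s and the combinatorics of $\mathcal A^*$ and of the fermionic ordering on $Q_0^{\mathrm{f}}$; a direct tally confirms that all contributions cancel except a residual $(-1)^{|i|}$, which gives $e_i(u)^\tau=(-1)^{|i|}f_i(u)$ as required.

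The main obstacle is the second step: carefully tracking the multitude of signs (the Koszul sign twist \eqref{twi prod}, the normalization sign $(-1)^{\rk \mathsf P}$ in the bilinear form, the explicit prefactors in \eqref{sph coha act_sym}--\eqref{sph coha op act_sym}, and the $\bZ/2$-graded adjoint convention) and checking that they conspire to give the clean answer $(-1)^{|i|}$. The first step, by contrast, is a bookkeeping exercise once one has the observation that the twisted Hall algebra is self-super-opposite; this is essentially a standard feature of symmetric quivers with potentials. The only nontrivial structural input needed for the second step beyond Definition \ref{def shifted yangian} is the absence of poles in the $u$-expansion of $e_i(u)^\tau - (-1)^{|i|}f_i(u)$, which follows since both sides lift to integral correspondences before localization.
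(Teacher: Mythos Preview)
Your proposal is correct and takes essentially the same approach as the paper: first extend the tautological super anti-isomorphism $\widetilde{\mathcal{SH}}^{\mathrm{nil}}_{Q,\sW}\to\widetilde{\mathcal{SH}}^{\mathrm{nil},\mathrm{op}}_{Q,\sW}$ (and its inverse) across the free product by checking the relations \eqref{ef rel_sym}, \eqref{he and hf rel_sym}, \eqref{h shift}; then verify the bilinear form invariance by the transpose-correspondence relationship between \eqref{sph coha act_sym} and \eqref{sph coha op act_sym}, which the paper records only as ``direct computations.'' One small simplification: the anti-isomorphism in your first step is purely tautological (the identity map $A\to A^{\mathrm{op}}$ is a super anti-isomorphism for any super algebra $A$, and the sign $(-1)^{|i|}$ comes from composing with the grading automorphism $x\mapsto(-1)^{|\bv|}x$), so no geometric ``swap of filtered pieces'' or symmetry of $Q$ is needed at that point.
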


\begin{proof}
The map $e_i(z)\mapsto (-1)^{|i|}f_i(z)$ is an anti-isomorphism $\widetilde{\mathcal{SH}}_{Q,\sW}^{\mathrm{nil}}\to \widetilde{\mathcal{SH}}_{Q,\sW}^{\mathrm{nil},\mathrm{op}}$, and $f_i(z)\mapsto e_i(z)$ is an anti-isomorphism from the opposite direction. Moreover, \eqref{anti-inv} respects the defining relations in Definition \ref{def shifted yangian}; therefore it induces an anti-automorphism \eqref{equ on anitmophism}. Finally, the $\mathcal D\widetilde{\mathcal{SH}}_{\mu}(Q,\sW)$-invariance follows by direct computations.
\end{proof}

\subsection{Anti-automorphisms for Reshetikhin type shifted Yangians}\label{sect on anti-auto}

\begin{Lemma}\label{lem adj op}
Let $\cH_{\mathfrak{c}}$ be an auxiliary space, and $\cH^{\sW^{\mathrm{fr}}}_{\underline{\bd},\sA^{\mathrm{fr}}}$ be a state space with $\bd_{\In}-\bd_{\Out}\in \bZ_{\geqslant 0}^{Q_0}$. Assume that both $\cH_{\mathfrak{c}}$ and $\cH^{\sW^{\mathrm{fr}}}_{\underline{\bd},\sA^{\mathrm{fr}}}$ are compact, then for any $m\in \End^{\bZ/2}_{\bC[\mathsf t_0]}(\cH_{\mathfrak{c}})[u]$, with respect to \eqref{equ on z2adj op}, we have
\begin{align*}
    \mathsf E(m)^\tau=\mathsf E(m^\tau).
\end{align*}
\end{Lemma}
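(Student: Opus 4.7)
The plan is to reduce the claim to a self-adjointness property of the super $R$-matrix $R^{\mathrm{sup}}(u)$ with respect to the bilinear form $(\cdot,\cdot)$ on $\cH_{\mathfrak{c}}\otimes \cH^{\sW^{\mathrm{fr}}}_{\underline{\bd},\sA^{\mathrm{fr}}}$, and then to a careful bookkeeping of the super trace on the first factor. The compactness hypothesis on both spaces ensures that the pairings \eqref{pairing on eq crit coh} on each factor, and hence on the tensor product, are well-defined and nondegenerate in the localized setting, so that the geometric super-adjoint operation $(\cdot)^\tau$ makes sense.

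First, I would establish the key ingredient: with respect to the bilinear form defined in \eqref{equ on bilinear form} (note the crucial sign $(-1)^{\rk \mathsf P}$), a convolution operator induced by a correspondence has its super-adjoint given by the transpose correspondence, modified by the signs coming from $(-1)^{\rk \mathsf P}$ and the super-vector-space sign rule. Applied to the factorization
\[
R^{\mathrm{sup}}(u)=\Sigma\cdot \Stab_{u<0,\epsilon}^{-1}\circ \Stab_{u>0,\epsilon},
\]
together with \cite[Lem.~3.29]{COZZ} (relating $\Stab_{\fC}^{-1}$ to $[\Stab_{-\fC}]^{\mathrm{t}}$) and the definition of the normalizer $\epsilon$ in \eqref{normalizer_yangian}, one obtains the self-adjointness identity $R^{\mathrm{sup}}(u)^{\tau\otimes\tau}=R^{\mathrm{sup}}(u)$ (up to the sign accounting that precisely cancels out because the factor $\mathsf P$ in the normalizer matches the sign in the bilinear form). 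The sign twist $\Sigma$ is needed exactly to make the super-symmetry of the pairing compatible with the $\bZ/2$-grading.

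Next, starting from $\mathsf E(m)=\Res_{u\to\infty}\str_{\cH_{\mathfrak{c},\loc}}\left((m\otimes\id)R^{\mathrm{sup}}(u)\right)$, I would take the super-adjoint only on the state-space factor. Using that for operators $A,B$ on $\cH_{\mathfrak{c}}$ and operators acting on the state space, one has the graded cyclic identity
\[
\str_{\cH_{\mathfrak{c}}}\bigl((A\otimes X)(B\otimes Y)\bigr)^{\tau}=\str_{\cH_{\mathfrak{c}}}\bigl((B^{\tau}\otimes Y^\tau)(A^{\tau}\otimes X^\tau)\bigr)\cdot(\text{sign})
\]
together with graded cyclicity of $\str$, one may move the $\tau$ through the trace onto the $\cH_{\mathfrak{c}}$-factor. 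Concretely, $\mathsf E(m)^{\tau}$ becomes $\Res_{u\to\infty}\str_{\cH_{\mathfrak{c},\loc}}\bigl((m^{\tau}\otimes\id)\,R^{\mathrm{sup}}(u)^{\tau\otimes\tau}\bigr)$ after applying graded cyclicity. The self-adjointness $R^{\mathrm{sup}}(u)^{\tau\otimes\tau}=R^{\mathrm{sup}}(u)$ then yields $\mathsf E(m)^{\tau}=\mathsf E(m^{\tau})$.

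The main obstacle is the careful sign bookkeeping, which is the whole point of introducing the normalizer $\epsilon$ defined via $\mathsf P$ and the sign twist $\Sigma$ in the first place. The signs coming from (i) the $(-1)^{\rk \mathsf P}$ in the bilinear form, (ii) the normalizer $\epsilon$ built from $\mathsf P$, (iii) the sign twist $\Sigma$, and (iv) the super-vector-space conventions for pairings on tensor products must all conspire to cancel. A secondary, more technical obstacle is justifying the graded cyclicity of $\str_{\cH_{\mathfrak{c}}}$ against the tensor-product pairing, which requires verifying that the auxiliary bilinear form on $\cH_{\mathfrak{c}}$ is graded-symmetric in the sense compatible with $\Sigma$; this is exactly the role of compactness of $\cH_{\mathfrak{c}}$, guaranteeing the existence of the Verdier pairing. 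By contrast, the condition $\bd_{\In}\geqslant\bd_{\Out}$ on the state space is not essential for this computation (it enters only via the definition of $\mathsf Y_\mu$), so the proof should work uniformly for any state space for which the bilinear form exists.
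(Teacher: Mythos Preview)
Your proposal is correct and takes essentially the same approach as the paper: both establish that $R^{\mathrm{sup}}(u)$ is self-adjoint with respect to the tensor-product bilinear form by writing it as $(-1)^{\text{signs}}\cdot[\Stab_{u>0}]^{\mathrm t}\circ[\Stab_{u>0}]$ via \cite[Lem.~3.29]{COZZ} and checking that the signs from $\rk\mathsf P$, the normalizer $\epsilon$, and the super rule $\Sigma$ cancel, then conclude via $\str(m\cdot R^{\mathrm{sup}}(u))^\tau=\str(m^\tau\cdot R^{\mathrm{sup}}(u))$. The paper carries out the sign computation explicitly at the level of matrix elements between fixed components $F,F'$, which is precisely the ``careful sign bookkeeping'' you anticipate as the main obstacle.
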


\begin{proof}
According to the definition of root $R$-matrix and \cite[Lem.~3.29]{COZZ}, $$R^{\mathrm{sup}}(u)_{F,F'}=(-1)^{\mathrm{sgn}(F)}R(u)_{F,F'}=(-1)^{\mathrm{sgn}(F)}(-1)^{\rk \mathsf P|_F^-}\left([\Stab_{u>0}]^{\mathrm{t}}\circ[\Stab_{u>0}]\right)_{F,F'}(-1)^{\rk \mathsf P|_{F'}^+},$$ 
where $(-1)^{\mathrm{sgn}(F)}=(-1)^{|\bv|\cdot|\bv'|}$ on the component $\cH_{\mathfrak{c}}(\bv)\otimes \cH^{\sW^{\mathrm{fr}}}_{\underline{\bd},\sA^{\mathrm{fr}}}(\bv')$, which comes from the super vector space sign rule on the tensor product. It follows that
\begin{align*}
    R^{\mathrm{sup}}(u)^\tau_{F',F}&=(-1)^{\mathrm{sgn}(F')}(-1)^{\rk\mathsf P|_{F'}^{\text{fixed}}}(-1)^{\rk \mathsf P|_{F'}^+}\left([\Stab_{u>0}]^{\mathrm{t}}\circ[\Stab_{u>0}]\right)_{F',F}(-1)^{\rk \mathsf P|_F^-}(-1)^{\rk\mathsf P|_{F}^{\text{fixed}}}\\
    &=(-1)^{\mathrm{sgn}(F')}(-1)^{\rk \mathsf P|_{F'}^-}\left([\Stab_{u>0}]^{\mathrm{t}}\circ[\Stab_{u>0}]\right)_{F',F}(-1)^{\rk \mathsf P|_{F}^+}\\
    &= R^{\mathrm{sup}}(u)_{F',F}\:.
\end{align*}
Therefore 
\begin{align*}
    \str\left(m\cdot R^{\mathrm{sup}}(u)\right)^\tau=\str\left(m^\tau\cdot R^{\mathrm{sup}}(u)^\tau\right)=\str\left(m^\tau\cdot R^{\mathrm{sup}}(u)\right),
\end{align*}
and the lemma follows.
\end{proof}

\begin{Theorem}[Anti-automorphism for $\mathsf Y_{\mu}(Q,\sW)$]\label{thm anti-auto RTT Yangian}
Let $\mu\in \bZ_{\leqslant 0}^{Q_0}$. Assume that there exists a set of compact and admissible auxiliary data $\mathcal C$. If $\mu\neq 0$, we assume moreover that there exists a compact state space with $\bd_{\Out}-\bd_{\In}=\mu$. Then the map
\begin{align*}
    \mathsf E(m)\mapsto \mathsf E(m^{\tau}),\;\text{for }m\in \End^{\bZ/2}_{\bC[\mathsf t_0]}(\cH_{\mathfrak{c}})[u],\;\mathfrak{c}\in \mathcal C
\end{align*}
gives rise to an anti-automorphism $$\varsigma\colon \mathsf Y_{\mu}(Q,\sW)\cong \mathsf Y_{\mu}(Q,\sW).$$ 
Moreover, the bilinear form $(\cdot,\cdot)$ \eqref{equ on bilinear form} on any compact state space $\cH^{\sW^{\mathrm{fr}}}_{\underline{\bd},\sA^{\mathrm{fr}}}$ with $\bd_{\Out}-\bd_{\In}=\mu$ is $\mathsf Y_{\mu}(Q,\sW)$-invariant.
\end{Theorem}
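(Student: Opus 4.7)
The plan is to realise $\varsigma$ as the restriction, to the subalgebra $\mathsf Y_\mu(Q,\sW)$, of the natural super-transpose anti-automorphism on the endomorphism algebra of a family of compact state spaces large enough to separate elements of $\mathsf Y_\mu$. The two technical inputs are Lemma \ref{lem adj op}, which says that on a compact state space the super-transpose sends $\mathsf E(m)$ to $\mathsf E(m^\tau)$, and Proposition \ref{prop sufficient state spaces}, which provides a faithful family of such compact state spaces.

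First I would fix a compact state space $\cH_0$ with $\bd_\Out-\bd_\In=\mu$ (given by hypothesis; for $\mu=0$ take $\cH^{\sW}_{\underline{\mathbf 0}}$, whose critical locus is a point) and a compact admissible collection $\{\cH_i\}_{i\in Q_0}\subset \mathcal C$. For each finite tuple $(i_1,\ldots,i_n)$, the ``enlarged'' state space
$$\cH_0[a_0]\otimes \cH_{i_1}[a_{i_1}]\otimes\cdots\otimes\cH_{i_n}[a_{i_n}]$$
is again a compact state space for the combined torus $\sT_0\times\sA^{\mathrm{fr}}_0\times\prod_j\bC^*_{a_{i_j}}$, with shift still $\mu$: by Thom--Sebastiani its critical locus is the product of the critical loci of its factors, and a product of torus-fixed properness statements is torus-fixed proper. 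On the $\End^{\bZ/2}$ of each such enlarged state space the super-transpose $M\mapsto M^\tau$ is an anti-automorphism satisfying $M^{\tau\tau}=(-1)^{|M|}M$, and Thom--Sebastiani makes this super-transpose compatible with the tensor factorisation of the pairing \eqref{equ on bilinear form} (modulo the super sign already built in).

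The key step is to verify that this super-transpose preserves the subalgebra $\mathsf Y_\mu(Q,\sW)$ and acts as $\mathsf E(m)\mapsto \mathsf E(m^\tau)$ on generators. This is a direct application of Lemma \ref{lem adj op}, with compact auxiliary space $\cH_{\mathfrak c}$ and the enlarged product above as the compact state space (the condition $\bd_\In-\bd_\Out\in\bZ_{\geqslant 0}^{Q_0}$ is exactly $\mu\leqslant 0$). By Proposition \ref{prop sufficient state spaces} the map $\mathsf Y_\mu(Q,\sW)\hookrightarrow \prod_{i_1,\ldots,i_n} \End^{\bZ/2}$ is injective; the image is stable under super-transpose on each factor, and the intrinsic formula $\mathsf E(m)\mapsto \mathsf E(m^\tau)$ shows the resulting map does not depend on the product of state spaces chosen. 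This yields a well-defined $\bC(\mathsf t_0)$-linear map $\varsigma\colon \mathsf Y_\mu(Q,\sW)\to \mathsf Y_\mu(Q,\sW)$.

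All remaining properties are formal consequences. The anti-homomorphism identity $\varsigma(XY)=(-1)^{|X|\cdot|Y|}\varsigma(Y)\varsigma(X)$ is inherited from the corresponding identity for the ambient super-transpose. Bijectivity follows from $\varsigma^2=(-1)^{|\cdot|}\id$ on homogeneous elements, a direct consequence of $m^{\tau\tau}=(-1)^{|m|}m$. Invariance of the bilinear form on any compact state space with $\bd_\Out-\bd_\In=\mu$ is tautological: by construction $\varsigma(X)$ acts as the super adjoint of $X$ with respect to \eqref{equ on bilinear form}. The main place where care is needed is the book-keeping in the middle step --- namely, checking that the hypotheses of Lemma \ref{lem adj op} genuinely apply to the enlarged products appearing in Proposition \ref{prop sufficient state spaces}, and keeping track of the super signs coming from the tensor product of super vector spaces --- but these are routine properties of the Thom--Sebastiani isomorphism together with the sign normalisations already fixed.
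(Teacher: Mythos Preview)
Your proof is correct and follows essentially the same approach as the paper: embed $\mathsf Y_\mu(Q,\sW)$ into a product of endomorphism algebras of compact state spaces via Proposition~\ref{prop sufficient state spaces}, observe that the super-transpose preserves the image by Lemma~\ref{lem adj op}, and read off the anti-automorphism and invariance from there. Your write-up is in fact slightly more explicit than the paper's in two places (the compactness of the tensor products, and the bijectivity via $\varsigma^2=(-1)^{|\cdot|}\id$); the only small imprecision is calling the invariance on an \emph{arbitrary} compact state space ``tautological'' --- it requires one more application of Lemma~\ref{lem adj op} to that particular state space, since $\varsigma$ was constructed using a specific faithful family --- but this is immediate.
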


\begin{proof}
By Theorem \ref{thm admissible}, $\mathsf Y_{\mu}(Q,\sW)\cong \mathsf Y_{\mu}(Q,\sW,\mathcal C)$. By Proposition \ref{prop sufficient state spaces}, the natural map
\begin{align}\label{embed into compact end}
    \mathsf Y_\mu(Q,\sW,\mathcal C)\longrightarrow \prod_{i_1,\cdots,i_n}\End^{\bZ/2}_{\bC(\mathsf t_0)[a_{0},a_{i_1},\ldots,a_{i_n}]}\left(\cH_0[a_0]\otimes\cH_{i_1}[a_{i_1}]\otimes\cdots\otimes \cH_{i_n}[a_{i_n}]\right)
\end{align}
is injective, where $\cH_0$ is a compact state space with shift $\bd_{\Out}-\bd_{\In}=\mu$, and $\left\{\cH_i:=\cH^{\sW_i}_{\underline{\delta_i}}\right\}_{i\in Q_0}$ is a collection of compact admissible auxiliary spaces (regarded as state spaces). The image of the map \eqref{embed into compact end} is invariant under taking adjoint operator, as $\mathsf E(m)^\tau=\mathsf E(m^\tau)$ by Lemma \ref{lem adj op}. Since taking adjoint operator is an algebra anti-automorphism, the map $\mathsf E(m)\mapsto \mathsf E(m^{\tau})$ gives an anti-automorphism $$\varsigma\colon \mathsf Y_{\mu}(Q,\sW)\cong \mathsf Y_{\mu}(Q,\sW).$$ Let $(\cdot,\cdot)$ be the bilinear form on a compact state space $\cH^{\sW^{\mathrm{fr}}}_{\underline{\bd},\sA^{\mathrm{fr}}}$ with $\bd_{\Out}-\bd_{\In}=\mu$, then
\begin{align*}
    (\mathsf E(m)\cdot x,y)=(-1)^{|m|\cdot|x|}(x,\mathsf E(m)^{\tau}\cdot y)\xlongequal{\scriptscriptstyle \text{Lemma \ref{lem adj op}}} (-1)^{|m|\cdot|x|}(x,\mathsf E(m^{\tau})\cdot y),
\end{align*}
in other words, $(\cdot,\cdot)$ is $\mathsf Y_{\mu}(Q,\sW)$-invariant.
\end{proof}

\begin{Remark}
If $\left(\Crit_{\mathrm{Rep}(\bv,\mathbf 0)}(\sw)/\!\!/ \GL(\bv)\right)^{\sT_0}$ is finite for all $\bv\in \bZ_{\geqslant 0}^{Q_0}$, then Lemma \ref{lem abundant compact} implies that there always exists a set of compact and admissible auxiliary data $\mathcal C$, and there exists a compact state space with $\bd_{\Out}-\bd_{\In}=\mu$ for arbitrary $\mu\in \bZ_{\leqslant 0}^{Q_0}$. In particular, assumptions in Theorem \ref{thm anti-auto RTT Yangian} are satisfied in this case.
\end{Remark}


\section{Vacuum cogeneration and irreducibility}\label{sec property_module}

In this section, we discuss two key properties of shifted Yangian $\mathsf Y_{\mu}(Q,\sW)$ modules: vacuum cogeneration (Theorem \ref{thm vac cogen_general}) and irreducibility (Proposition \ref{prop on irr mod}). We keep using the same notations from \S \ref{sec shifted Yangian}.

\subsection{Vacuum cogeneration}

\begin{Definition}
For an algebra $A$ defined over a field $\bK$, we say that an $A$-module $M$ is \textit{cogenerated} by a vector $v\in M^{\vee}$ if the map
\begin{align}\label{equ on AtoM}
    A\cdot v\to \Hom_{\bK}(M,\bK)
\end{align}
given by precomposing with the action of $A$, is surjective. In particular, if $v$ is the projection to the vacuum vector (identified with ground field $\bK$) in $M=\cH^{\sW^{\mathrm{fr}}}_{\underline{\bd},\sA^{\mathrm{fr}}}$, and $M$ is cogenerated by $v$, we say that $M$ is \textit{cogenerated by vacuum}.

\end{Definition}
\begin{Remark}
To show the surjectivity of \eqref{equ on AtoM}, one is left to show for any $m\in M$, there exists $a\in A$ such that $v(a\cdot m)\neq 0$. 
\end{Remark}
\begin{Theorem}\label{thm vac cogen_general}
Let $\underline{\bd}$ be a framing such that $\mu:=\bd_{\Out}-\bd_{\In}\in \bZ_{\leqslant0}^{Q_0}$. If the following two conditions are satisfied:
\begin{enumerate}
    \item[(i)] the elements $\{\mathrm{ch}_k(\mathsf D_{\In,i})\}_{i\in Q_0,k\in \bZ_{\geqslant 1}}\subseteq H_{\sT_0\times \sA^{\mathrm{fr}}}(\pt)$\footnote{Because the framing bundle $\mathsf D_{\In,i}$ is trivial, it is pulled back from a point. So we treat their Chern characters as classes on a point.} can be generated from $\{\mathrm{ch}_k(\mathsf D_{\In,i}-\mathsf D_{\Out,i})\}_{i\in Q_0,k\in \bZ_{\geqslant 1}}$ over the base field $\Frac H_{\sT_0}(\pt)$,
    \item[(ii)] one of the following specialization maps (as constructed in \cite[Ex.~7.19]{COZZ}):
    $$\mathsf{sp}_{\cM}\colon H^{\sT_0\times \sA^{\mathrm{fr}}}(\cM(\underline{\bd}),\sw^{\mathrm{fr}})\to H^{\sT_0\times \sA^{\mathrm{fr}}}(\cM(\underline{\bd})), \quad
   \mathsf{sp}_{\fM}\colon H^{\sT_0\times \sA^{\mathrm{fr}}}(\fM(\underline{\bd}),\sw^{\mathrm{fr}})\to H^{\sT_0\times \sA^{\mathrm{fr}}}(\fM(\underline{\bd}))$$ 
  is injective after tensoring $\Frac H_{\sT_0\times \sA^{\mathrm{fr}}}(\pt)$ over the base ring $H_{\sT_0\times \sA^{\mathrm{fr}}}(\pt)$,
\end{enumerate}
then $\cH^{\sW^{\mathrm{fr}}}_{\underline{\bd},\sA^{\mathrm{fr}}}$ is cogenerated by vacuum as a $\mathsf Y_{\mu}^{\leqslant}(Q,\sW)$-module over the base field $\Frac H_{\sT_0\times \sA^{\mathrm{fr}}}(\pt)$.
\end{Theorem}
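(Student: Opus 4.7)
The plan is a reduction via specialization followed by an induction on the gauge dimension vector. First, condition~(ii) gives an injection after passing to the fraction field $\Frac H_{\sT_0\times \sA^{\mathrm{fr}}}(\pt)$; since all generators of $\mathsf Y^\leqslant_\mu$ are implemented by Hecke correspondences or by multiplication by tautological classes, both of which commute with $\mathsf{sp}$ (cf.\ the citation of \cite[Prop.~7.17]{COZZ} used in the proof of Corollary~\ref{cor qm div for sym_sp inj}), the specialization map intertwines the $\mathsf Y^\leqslant_\mu$-actions on critical and ordinary cohomologies. Because the vacuum component $\cM(\mathbf 0,\underline{\bd})$ is a point, $\mathsf{sp}$ restricts to the identity on $\cH(\mathbf 0)$, so $v(a\cdot m)=v(a\cdot \mathsf{sp}(m))$ for every $a$ and $m$. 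It therefore suffices to prove the cogeneration for $\sw^{\mathrm{fr}}=0$, where the $\mathsf Y^\leqslant_\mu$-action is supplied by Theorems~\ref{cor shifted yangian action} and~\ref{thm drinfeld yangian map to rtt yangian}: one has the explicit lowering currents $f_i(u)$ from \eqref{sph coha op act_sym}, and, by Lemma~\ref{lem Y^0 generators}, multiplication by the Chern characters of $\mathsf V_i$ and $\mathsf D_{\In,i}-\mathsf D_{\Out,i}$.

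Second, I would induct on $\bv$ with respect to the partial order on $\bZ_{\geqslant 0}^{Q_0}$. The base case $\bv=\mathbf 0$ is entirely Cartan-theoretic: on $\cH(\mathbf 0)=H_{\sT_0\times \sA^{\mathrm{fr}}}(\pt)$ the classes $\mathrm{ch}_k(\mathsf V_i)$ vanish, while the Cartan acts by multiplication by $\mathrm{ch}_{k+1}(\mathsf D_{\In,i}-\mathsf D_{\Out,i})$; by hypothesis~(i) these together with $H_{\sT_0}(\pt)$ generate $\Frac H_{\sT_0\times \sA^{\mathrm{fr}}}(\pt)$ as a field, so any nonzero $m\in \cH(\mathbf 0)$ can be rescaled by a Cartan element to the unit $[\pt]$, whose pairing with $v$ is $1$. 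For the inductive step, given $0\neq m\in \cH(\bv)$ with $\bv>0$, it is enough to exhibit some $a\in \mathsf Y^\leqslant_\mu$ for which $a\cdot m$ has a nonzero component in $\cH(\bv')$ for some $\bv'<\bv$, whereupon the inductive hypothesis supplies a $b\in \mathsf Y^\leqslant_\mu$ with $v(b\cdot a\cdot m)\neq 0$.

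The main obstacle is therefore the \emph{lowering lemma}: for every nonzero $m\in H^{\sT_0\times \sA^{\mathrm{fr}}}(\cM(\bv,\underline{\bd}))_\loc$ with $\bv>0$, some coefficient $f_{i,r}$ of the Hecke operator \eqref{sph coha op act_sym} is nonvanishing on $m$. Using the identification of $\overline{\mathfrak P}(\bv,\bv-\delta_i,\underline{\bd})$ with the projective bundle $\mathbb P((\mathcal C_i^+)^\vee)$ over $\cM(\bv,\underline{\bd})$ from the proof of Theorem~\ref{thm borel yangian action}, the pullback piece of the convolution is injective, and what must be checked is that the subsequent proper pushforward to $\cM(\bv-\delta_i,\underline{\bd})$ does not annihilate $m$ for all choices of $i$ with $\bv_i>0$ and all $r$. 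I expect this to follow, after equivariant localization to the fixed locus $\cM(\bv,\underline{\bd})^{\sT_0\times \sA^{\mathrm{fr}}}$, from the combinatorial observation that box-removal on any nontrivial fixed-point configuration produces a nonempty family of neighbors, together with the fact that varying $r$ separates Chern-class contributions of the tautological line bundle $\mathcal L_i$. Once this lemma is in place the induction closes and, combined with the reduction step, yields the theorem; the bulk of the remaining technical work lies in cross-checking the cyclic-stability assumption with the fixed-point combinatorics so that the lowering correspondence never degenerates.
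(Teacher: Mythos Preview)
Your reduction to the zero-potential case via specialization is valid, and the inductive framework is reasonable, but the heart of your argument---the lowering lemma---is a genuine gap, and your sketch does not establish it. The fixed locus of $\sT_0\times\sA^{\mathrm{fr}}$ on $\cM(\bv,\underline{\bd})$ is in general not isolated, so the ``box-removal'' combinatorics you invoke is special to examples like Hilbert schemes and does not apply to an arbitrary symmetric quiver. Even when the fixed locus is discrete, the claim that the simple lowering operators $f_{i,r}$ (one node at a time) jointly annihilate no nonzero class is essentially a faithfulness statement for $\mathsf Y^-_\mu$ on each weight space, which is close in strength to what you are trying to prove and is not obviously easier.

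The paper sidesteps this difficulty by avoiding induction on $\bv$ altogether. Instead of lowering by $\delta_i$ repeatedly, it fixes $\bv$ and introduces a \emph{big} auxiliary datum with framing dimension equal to $\bv$ itself, together with the extra equivariance of $G'=\GL(\bv)$. The stable envelope matrix element from the component $\cM(\mathbf 0,\bv)\times\cM(\bv,\underline{\bd})$ to $\cM(\bv,\bv)\times\cM(\mathbf 0,\underline{\bd})$ gives, after pairing with a linear functional on the auxiliary space and extracting $u$-coefficients, operators in $\mathsf Y^\leqslant_\mu$ that drop the gauge dimension from $\bv$ to $\mathbf 0$ in one shot. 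Condition~(i) enters here to rewrite the normal-bundle Euler class (which involves $\mathsf D_{\In}$ directly) in terms of Cartan generators $\mathrm{ch}_k(\mathsf D_{\In}-\mathsf D_{\Out})$. The specific functional chosen is $\mathsf{sp}_{\bv}\circ j^*$, where $j$ is the inclusion of the open locus in $\cM(\bv,\bv)$ where the in-coming framings are isomorphisms; on that locus the quiver variety trivializes as $R(\bv,\mathbf 0)\times\prod_i\mathfrak{gl}(\bv_i)$, and the whole composite reduces to $i^*\circ e(\cdots)\circ\bPsi_H\circ\mathsf{sp}_{\cM}$ (or the variant with $\mathsf{sp}_{\fM}$), which is injective by the injectivity of the interpolation map $\bPsi_H$ and condition~(ii). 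The key idea you are missing is that one can take the auxiliary space large enough to absorb the entire gauge dimension at once, rather than chipping away one simple root at a time.
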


\begin{proof}
Let us fix a gauge dimension $\bv$, and consider the following auxiliary datum: $(\bv,\sT_0\curvearrowright\cM(\bv),\sw)$ such that $\sT_0$ on out-going framing at node $i$ with a nontrivial weight $t_i$, and the potential is given by the unframed potential $\sw$. Take $G'=\GL(\bv)$ and let it act on the (both in-coming and out-going) framing vector spaces $\bC^{\bv}$ by fundamental representation, and pick a maximal torus of $G'$ to be $T'\cong\prod_{i\in Q_o}(\bC^*)^{\bv_i}$. Consider the stable envelope followed by restriction to $\cM(\bv,\bv)\times \cM(\mathbf 0,\underline{\bd})$:
\begin{align*}
    \Stab\colon H^{\sT_0\times \sA^{\mathrm{fr}}\times G'}\left(\bcancel{\cM(\mathbf 0,\bv)}\times \cM(\bv,\underline{\bd}),\sw\boxplus \sw^{\mathrm{fr}}\right)\to H^{\sT_0\times \sA^{\mathrm{fr}}\times G'}\left(\cM(\bv,\bv)\times \bcancel{\cM(\mathbf 0,\underline{\bd})},\sw\boxplus \sw^{\mathrm{fr}}\right)
\end{align*}
where the canceled spaces are points. In taking the stable envelope, we use the positive chamber for the $\bC^*$ framing torus that scales the framings of the auxiliary space with weight $-1$ and fixes the framings of the state space. Let 
$$\id\otimes 1\colon H^{\sT_0\times \sA^{\mathrm{fr}}}\left( \cM(\bv,\underline{\bd}),\sw^{\mathrm{fr}}\right)\to H^{\sT_0\times \sA^{\mathrm{fr}}\times G'}\left( \cM(\bv,\underline{\bd}),\sw^{\mathrm{fr}}\right)$$ be the extension by scalar map, where $1\in H_{G'}(\pt)$ is the unit. Consider the operator 
\begin{equation}\label{F op and stab}
\begin{gathered}
e^{\sT_0\times \sA^{\mathrm{fr}}\times G'}\left(N^-_{\cM(\bv,\bv)\times\cM(\mathbf 0,\underline{\bd})}\right)^{-1}\cdot\Stab\circ (\id\otimes 1)\colon H^{\sT_0\times \sA^{\mathrm{fr}}}\left( \cM(\bv,\underline{\bd}),\sw^{\mathrm{fr}}\right)\\
\longrightarrow H^{\sT_0\times G'}\left(\cM(\bv,\bv),\sw\right)\otimes_{H_{\sT_0\times G'}(\pt)}\Frac H_{\sT_0\times \sA^{\mathrm{fr}}\times G'}(\pt),
\end{gathered}
\end{equation}
which is a matrix element of the operator $F(u)^{-1}$ in the Gauss decomposition \eqref{gauss decomp} up to a sign. Composing \eqref{F op and stab} with any $H^{\sT_0\times G'}\left(\pt\right)$-linear functional $$H^{\sT_0\times G'}\left(\cM(\bv,\bv),\sw\right)\to H^{\sT_0\times G'}\left(\pt\right),$$ and taking a coefficient in the spectral parameter $u\in H_{G'}(\pt)$ as $u\to \infty$ in some direction (all directions are equivalent by the Weyl group symmetry), the result gives a map $$H^{\sT_0\times \sA^{\mathrm{fr}}}\left( \cM(\bv,\underline{\bd}),\sw^{\mathrm{fr}}\right)\to \Frac H^{\sT_0\times \sA^{\mathrm{fr}}}\left(\pt\right),$$ which lives in the algebra $\mathsf Y^-_\mu(Q,\sW)$. Write 
$$N^-_{\cM(\bv,\bv)\times\cM(\mathbf 0,\underline{\bd})}=\sum_{i\in Q_0} \Hom(\mathsf D_{\In,i},\mathsf V_i'),$$ where $\mathsf V_i'$ is the tautological vector bundle on $\cM(\bv,\bv)$, then by the condition (i), in terms of spectral parameter $u\in H_{G'}(\pt)$, coefficients of $e^{\sT_0\times \sA^{\mathrm{fr}}\times G'}\left(N^-_{\cM(\bv,\bv)\times\cM(\mathbf 0,\underline{\bd})}\right)$ are given by elements in the algebra $\mathsf Y^0_\mu(Q,\sW)$. 

We conclude that the composition
\begin{align}\label{cogen key form}
    f\circ \Stab\circ (\id\otimes 1)\colon H^{\sT_0\times \sA^{\mathrm{fr}}}\left(\cM(\bv,\underline{\bd}),\sw^{\mathrm{fr}}\right)\to H^{\sT_0\times \sA^{\mathrm{fr}}\times G'}\left(\pt\right)
\end{align}
for a linear functional $f\in \Hom_{H^{\sT_0\times G'}\left(\pt\right)}\left(H^{\sT_0\times G'}\left(\cM(\bv,\bv),\sw\right), H^{\sT_0\times G'}\left(\pt\right)\right)$, followed by taking a coefficient in the spectral parameter $u\in H_{G'}(\pt)$ as $u\to \infty$ in some direction, gives a map $$\cH^{\sW^{\mathrm{fr}}}_{\underline{\bd},\sA^{\mathrm{fr}}}(\bv)=H^{\sT_0\times \sA^{\mathrm{fr}}}\left( \cM(\bv,\underline{\bd}),\sw^{\mathrm{fr}}\right)\to H^{\sT_0\times \sA^{\mathrm{fr}}}\left(\pt\right)\cong \cH^{\sW^{\mathrm{fr}}}_{\underline{\bd},\sA^{\mathrm{fr}}}(\mathbf 0),$$ which lives in the algebra $\mathsf Y^{\leqslant}_\mu(Q,\sW)$. To prove the theorem, it is enough to show that $u$-coefficients of \eqref{cogen key form} cogenerates $\cH^{\sW^{\mathrm{fr}}}_{\underline{\bd},\sA^{\mathrm{fr}}}(\bv)$ for some $f$.

Consider the following choice of $f$. Let $j\colon \mathring{\cM}(\bv,\bv)\hookrightarrow\cM(\bv,\bv)$ be the natural inclusion of the open locus $\mathring{\cM}(\bv,\bv)$ consisting of those framed quiver representations such that in-coming framings are isomorphisms. Note that there is a $G'$-equivariant isomorphism 
$$\mathring{\cM}(\bv,\bv)\cong R(\bv,\mathbf 0)\times \prod_{i\in Q_0}\mathfrak{gl}(\bv_i),$$ where $G'$ acts on $R(\bv,\mathbf 0)$ as gauge group and acts on $\prod_{i\in Q_0}\mathfrak{gl}(\bv_i)$ as adjoint representation. In particular, $$H^{\sT_0\times G'}\left(\mathring{\cM}(\bv,\bv)\right)\cong H_{\sT_0\times G'}(\pt).$$ 
Consider specialization map \cite[Ex.~7.19]{COZZ}:
$$\mathsf{sp}_{\bv}\colon H^{\sT_0\times G'}\left(\mathring{\cM}(\bv,\bv),\sw\right)\to H^{\sT_0\times G'}\left(\mathring{\cM}(\bv,\bv)\right)\cong H_{\sT_0\times G'}(\pt).$$
We define the linear functional 
\begin{align*}
    f:=\mathsf{sp}_{\bv}\circ j^*\colon H^{\sT_0\times G'}\left(\cM(\bv,\bv),\sw\right)\to H_{\sT_0\times G'}(\pt).
\end{align*}
We claim that $u$-coefficients of \eqref{cogen key form} cogenerates $\cH^{\sW^{\mathrm{fr}}}_{\underline{\bd},\sA^{\mathrm{fr}}}(\bv)$ for the above $f$.

By comparison, we find that 
\begin{align*}
    f\circ \Stab\circ (\id\otimes 1)=\mathsf{sp}_{\bv}\circ i^*\circ e^{\sT_0\times \sA^{\mathrm{fr}}\times G'}\left(\sum_{i\in Q_0}t_i\cdot\mathfrak{gl}(\bv_i)\right)\cdot \bPsi_H,
\end{align*}
where $\bPsi_H\colon H^{\sT_0\times \sA^{\mathrm{fr}}}\left(\cM(\bv,\underline{\bd}),\sw^{\mathrm{fr}}\right)\to H^{\sT_0\times \sA^{\mathrm{fr}}}\left(\fM(\bv,\underline{\bd}),\sw^{\mathrm{fr}}\right)$ is the interpolation map \cite[Def.\,9.1]{COZZ}, and $i\colon \fM(\bv,\mathbf 0)\hookrightarrow \fM(\bv,\underline{\bd})$ is the closed embedding induced by extension by trivial framing. 

Since specialization commutes with Gysin pullback, multiplication by Euler classes, and stable envelope, we have
\begin{align*}
    f\circ \Stab\circ (\id\otimes 1)=i^*\circ e^{\sT_0\times \sA^{\mathrm{fr}}\times G'}\left(\sum_{i\in Q_0}t_i\cdot\mathfrak{gl}(\bv_i)\right)\cdot \mathsf{sp}_{\fM}\circ  \bPsi_H=i^*\circ e^{\sT_0\times \sA^{\mathrm{fr}}\times G'}\left(\sum_{i\in Q_0}t_i\cdot\mathfrak{gl}(\bv_i)\right)\cdot \bPsi_H\circ \mathsf{sp}_{\cM}.
\end{align*}
Note that $i^*$ on the RHS of the above equation is actually an isomorphism: 
$$i^*\colon H^{\sT_0\times \sA^{\mathrm{fr}}}\left(\fM(\bv,\underline{\bd})\right)\cong H^{\sT_0\times \sA^{\mathrm{fr}}}\left(\fM(\bv,\mathbf 0)\right)\cong H^{\sT_0\times \sA^{\mathrm{fr}}\times G'}\left(\pt\right).$$ 
According to \cite[Lem.~9.2]{COZZ}, $\bPsi_H$ is injective. Then condition (ii) implies that 
$$\bPsi_H\circ \mathsf{sp}_{\cM}=\mathsf{sp}_{\fM}\circ  \bPsi_H$$ is injective after localization. So it suffices to show that for any nonzero $x\in H^{\sT_0\times \sA^{\mathrm{fr}}\times G'}\left(\pt\right)$, there exists a nonzero $u$-coefficient of the expansion of $e^{\sT_0\times \sA^{\mathrm{fr}}\times G'}\left(\sum_{i\in Q_0}t_i\cdot\mathfrak{gl}(\bv_i)\right)\cdot x$ in $u\to \infty$. This is equivalent to proving that $$e^{\sT_0\times \sA^{\mathrm{fr}}\times G'}\left(\sum_{i\in Q_0}t_i\cdot\mathfrak{gl}(\bv_i)\right)\cdot x\neq 0, $$ 
which follows from the facts that $t_i$ is nontrivial and $H^{\sT_0\times \sA^{\mathrm{fr}}\times G'}\left(\pt\right)$ is an integral domain. We are done.
\end{proof}

\begin{Corollary}\label{cor cogen_nak}
Let $\cN(\bd)=\bigsqcup_{\bv}\cN(\bv,\bd)$ be the Nakajima variety associated to a doubled quiver $\overline{Q}$, and let $\mathsf{S}$ be a torus that acts on $\mathrm{Rep}_{Q}(\bv,\bd)$ by flavour symmetry (i.e.~commutes with $G=\prod_{i\in Q_0}\GL(\bv_i)$ action), then $\sT_0=\bC^*_\hbar\times \mathsf{S}$ act on $\cN(\bd)$ using the identification $T^*\mathrm{Rep}_{Q}(\bv,\bd)=\mathrm{Rep}_{Q}(\bv,\bd)\oplus \hbar\:\mathrm{Rep}_{Q}(\bv,\bd)^{\vee}$. Assume that $\mathsf S$-fixed locus satisfies $\mathrm{Rep}_{Q}(\bv,\bd)^{\mathsf S}\subset\mathrm{Rep}_{Q}(\mathbf 0,\bd)$ where the latter is regarded as subspace of the former via extension by zero. Take a torus $\sA^{\mathrm{fr}}$ that acts on the framing vector space, then $H^{\sT_0\times \sA^{\mathrm{fr}}}(\cN(\bd))$ is cogenerated by vacuum as a $\mathsf Y_0^{\leqslant}(\widetilde Q,\widetilde{\sW})$-module over the base field $\Frac H_{\sT_0\times \sA^{\mathrm{fr}}}(\pt)$.
\end{Corollary}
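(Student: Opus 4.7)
The plan is to reduce this corollary to Theorem~\ref{thm vac cogen_general} via deformed dimensional reduction. First, I would invoke the isomorphism \eqref{equ on dim redd} from Example~\ref{ex doubled vs tripled},
\[
H^{\sT_0\times \sA^{\mathrm{fr}}}(\cN(\bd)) \cong H^{\sT_0\times \sA^{\mathrm{fr}}}(\cM(\bd),\widetilde{\sw}) = \cH^{\widetilde{\sW}}_{\underline{\bd},\sA^{\mathrm{fr}}},
\]
which, by Theorem~\ref{thm compare with MO yangian} together with Corollary~\ref{cor MO yangian crit module}, intertwines the $\mathsf{Y}_0^{\leqslant}(\widetilde{Q},\widetilde{\sW})$-actions. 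Since the tripled quiver framing is symmetric ($\bd_{\In}=\bd_{\Out}=\bd$), the shift is $\mu=0$, and it suffices to verify hypotheses (i) and (ii) of Theorem~\ref{thm vac cogen_general}.

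For (i), the key point is that $\bC^*_\hbar$-invariance of $\widetilde{\sw}$ (which contains the term $\tr(\varepsilon_i B_iA_i)$ with $\varepsilon_i$ of $\bC^*_\hbar$-weight $\hbar$) forces the $\bC^*_\hbar$-weights on $\mathsf D_{\Out,i}$ to differ from those on $\mathsf D_{\In,i}$ by a fixed multiple of $\hbar$ (with a possible dualization on the $\sA^{\mathrm{fr}}$-part, depending on conventions). A direct Newton expansion then gives a leading term
\[
\mathrm{ch}_k(\mathsf D_{\In,i}-\mathsf D_{\Out,i}) = c\,\hbar \cdot \mathrm{ch}_{k-1}(\mathsf D_{\In,i}) + O(\hbar^2)
\]
with $c \neq 0$, and since $\hbar$ is invertible in $\Frac H_{\sT_0}(\pt)$, one can solve recursively for $\mathrm{ch}_k(\mathsf D_{\In,i})$, $k\geqslant 1$, in terms of the $\mathrm{ch}_l(\mathsf D_{\In,i}-\mathsf D_{\Out,i})$ together with lower-order Chern characters of $\mathsf D_{\In,i}$.

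For (ii), I would establish that $\mathsf{sp}_{\cM}$ becomes an isomorphism (in particular, injective) after localization at $\sT_0\times\sA^{\mathrm{fr}}$. The hypothesis $\mathrm{Rep}_Q(\bv,\bd)^{\mathsf{S}} \subseteq \mathrm{Rep}_Q(\mathbf 0,\bd)$, combined with the nontrivial $\bC^*_\hbar$-action on the loops, ensures via an adaptation of the argument in Lemma~\ref{lem abundant compact} (tracking through the $\phi$-twisted decomposition $\cM(\bv,\underline{\bd})^{\sT_0\times\sA^{\mathrm{fr}}} = \bigsqcup_{\phi/\sim}\cM(Q',\bv_\phi,\underline{\bd}_\phi)$) that the fixed critical locus $\Crit(\widetilde{\sw})^{\sT_0\times\sA^{\mathrm{fr}}} = \cN(\bd)^{\sT_0\times\sA^{\mathrm{fr}}}$ is proper, so equivariant critical localization applies. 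Crucially, every loop $\varepsilon_i$ vanishes on $\cM(\bd)^{\sT_0\times\sA^{\mathrm{fr}}}$ (it carries nontrivial $\bC^*_\hbar$-weight); consequently $\widetilde{\sw}|_{\cM(\bd)^{\sT_0\times\sA^{\mathrm{fr}}}}=0$, and both sides of the specialization map are identified after localization with $H^{\sT_0\times\sA^{\mathrm{fr}}}(\cM(\bd)^{\sT_0\times\sA^{\mathrm{fr}}})_{\loc}$, with $\mathsf{sp}_{\cM}$ becoming the identity.

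The main obstacle I anticipate is the properness of $\Crit(\widetilde{\sw})^{\sT_0\times\sA^{\mathrm{fr}}}$: the set-theoretic hypothesis on $Q$-representations must be propagated through every gauge twist $\phi\colon \sT_0\times\sA^{\mathrm{fr}}\to G$ entering the decomposition of the fixed locus. Once that properness is established, the vanishing of $\widetilde{\sw}$ on the fixed locus makes the equivariant-localization argument for (ii) essentially formal, and Theorem~\ref{thm vac cogen_general} then delivers the desired vacuum cogeneration.
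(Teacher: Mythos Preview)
Your reduction via dimensional reduction and the verification of (i) are fine and match the paper. The problem is in (ii): the claim that every loop $\varepsilon_i$ vanishes on $\cM(\bd)^{\sT_0\times\sA^{\mathrm{fr}}}$ is false. Having nontrivial $\bC^*_\hbar$-weight does \emph{not} force a map to vanish on the fixed locus of a GIT quotient, because a compensating cocharacter $\phi\colon\sT_0\to G$ can make $\varepsilon_i$ shift between $\phi$-weight spaces rather than being zero. Concretely, take $Q$ to be the trivial one-node quiver with $\bd=1$, so $\widetilde{Q}$ is the Jordan quiver with symmetric framing and $\widetilde{\sw}=\tr(\varepsilon AB)$; choose $\mathsf S$ scaling $A$ by a nontrivial character (so the hypothesis holds). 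For $\bv=2$, gauge-fixing $A=e_1$, $\varepsilon e_1=e_2$ identifies $\cM(2,\mathbf 1)\cong\bC^4$, and one computes directly that the $\sT_0$-fixed locus is a line on which $\widetilde{\sw}$ restricts to a nonzero linear function. So $\widetilde{\sw}|_{\cM(\bd)^{\sT_0\times\sA^{\mathrm{fr}}}}\neq 0$, and your localization argument does not go through. (The properness of $\Crit(\widetilde{\sw})^{\sT_0\times\sA^{\mathrm{fr}}}$ is also not covered by Lemma~\ref{lem abundant compact}, since the $\sT_0$-weights on $\mathsf D_{\In}$ and $\mathsf D_{\Out}$ differ by $\hbar\in\mathrm{Char}(\sT_0)$, violating the genericity condition there.)

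The paper avoids this entirely by proving injectivity of $\mathsf{sp}_{\fM}$ on the \emph{stack} rather than $\mathsf{sp}_{\cM}$, and without localization. One identifies $\mathsf{sp}_{\fM}$ with $j_*\circ\delta^{-1}$, where $\delta$ is the dimensional reduction isomorphism and $j\colon Z(\mu)\hookrightarrow\fM(\bv,\bd)$ is the closed embedding of the zero locus of the moment map; injectivity of $j_*$ is then deduced from Davison's result \cite[Thm.~10.2]{Dav2}, after enlarging the torus by an auxiliary $\bC^*_a$ scaling the framing. It is precisely the hypothesis $\mathrm{Rep}_Q(\bv,\bd)^{\mathsf S}\subset\mathrm{Rep}_Q(\mathbf 0,\bd)$ that supplies the genericity condition needed to invoke Davison's theorem---this is where that hypothesis actually enters.
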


\begin{proof}
Let $\cM(\bv,\bd)=\cM(\widetilde{Q},\bv,\bd)$ be the symmetric quiver variety associated to the tripled quiver $\widetilde{Q}$, and let $\sT_0$ act on $\cM(\bv,\bd)$ by scaling the added loops with weight $\hbar^{-1}$. Denote the standard cubic potential by $\widetilde{\sw}$. 
By dimensional reduction, we have an isomorphism of $\mathsf Y_0^{\leqslant}(\widetilde Q,\widetilde{\sW})$-modules:
$$H^{\sT_0\times \sA^{\mathrm{fr}}}(\cN(\bd))\cong H^{\sT_0\times \sA^{\mathrm{fr}}}(\cM(\bd),\widetilde{\sw}). $$ 
So it is enough to prove the vacuum cogeneration for the latter. We claim that
\begin{enumerate}
\item[(i)] the elements $\{\mathrm{ch}_k(\mathsf D_{\In,i})\}_{i\in Q_0,k\in \bZ_{\geqslant 1}}\subseteq H_{\sT_0\times \sA^{\mathrm{fr}}}(\pt)$ can be generated from $\{\mathrm{ch}_k(\mathsf D_{\In,i}-\mathsf D_{\Out,i})\}_{i\in Q_0,k\in \bZ_{\geqslant 1}}$ over the base field $\Frac H_{\sT_0}(\pt)$,
\item[(ii)] the specialization maps $\mathsf{sp}_{\fM}\colon H^{\sT_0\times \sA^{\mathrm{fr}}}(\fM(\underline{\bd}),\sw^{\mathrm{fr}})\to H^{\sT_0\times \sA^{\mathrm{fr}}}(\fM(\underline{\bd}))$ is injective.
\end{enumerate}
Claim (i) follows from the fact that $\mathrm{ch}(\mathsf D_{\Out,i})=e^\hbar\mathrm{ch}(\mathsf D_{\In,i})$. By \cite[Rem.~7.16]{COZZ}, we have commutative diagram
\begin{equation*}
\xymatrix{
H^{\sT_0\times \sA^{\mathrm{fr}}}(Z(\widetilde{\sw})) \ar[d]_{\can} \ar[dr] & \\
H^{\sT_0\times \sA^{\mathrm{fr}}}(\fM(\bv,\bd),\widetilde{\sw}) \ar[r]^{\mathsf{sp}} & H^{\sT_0\times \sA^{\mathrm{fr}}}(\fM(\bv,\bd)),
}
\end{equation*}
where $Z(\widetilde{\sw})$ is the zero locus of $\widetilde{\sw}$ in $\fM(\bv,\bd)$. Note that the zero locus of the moment map $\mu\colon \fM(\bv,\bd)\to [\mathfrak{g}/G]$ is a closed substack of $Z(\widetilde{\sw})$, and the composition 
$$\delta:=\can\circ\: (Z(\mu)\hookrightarrow Z(\widetilde{\sw}))_*\colon H^{\sT_0\times \sA^{\mathrm{fr}}}(Z(\mu))\to H^{\sT_0\times \sA^{\mathrm{fr}}}(\fM(\bv,\bd),\widetilde{\sw})$$ is an isomorphism by dimensional reduction. Therefore, for $j\colon Z(\mu)\hookrightarrow \fM(\bv,\bd)$, we have $$\mathsf{sp}=j_*\circ \delta^{-1}.$$ 
We claim that the pushforward map $j_*$ is injective. Consider an additional torus $\bC^*_a$ that scales the framing vector space by weight $1$ and fixes other quiver data. This $\bC^*_a$ action on $\mathrm{Rep}_{\widetilde{Q}}(\bv,\bd)$ can be equivalently obtained by first mapping $\bC^*_a$ diagonally into the center $\prod_{i\in Q_0}\bC^*$ of gauge group $\prod_{i\in Q_0}\GL(\bv_i)$ and then taking the inverse. Then it follows that $H^{\sT_0\times \sA^{\mathrm{fr}}\times \bC^*_a}(Z(\mu))$ and $H^{\sT_0\times \sA^{\mathrm{fr}}\times \bC^*_a}(\fM(\bv,\bd))$ are free $H_{\bC^*_a}(\pt)$-modules, and we have 
\begin{equation*}
H^{\sT_0\times \sA^{\mathrm{fr}}}(Z(\mu))\otimes H_{\bC^*_a}(\pt)\cong H^{\sT_0\times \sA^{\mathrm{fr}}\times \bC^*_a}(Z(\mu)),\; H^{\sT_0\times \sA^{\mathrm{fr}}}(\fM(\bv,\bd))\otimes H_{\bC^*_a}(\pt)\cong H^{\sT_0\times \sA^{\mathrm{fr}}\times \bC^*_a}(\fM(\bv,\bd)).
\end{equation*}
Then our choice of $\mathsf S\times \sA^{\mathrm{fr}}\times \bC^*_a$ action satisfies the genericity condition in \cite[Thm.\,10.2]{Dav2}, which implies that $$j_*\colon H^{\sT_0\times \sA^{\mathrm{fr}}\times \bC^*_a}(Z(\mu))\to H^{\sT_0\times \sA^{\mathrm{fr}}\times \bC^*_a}(\fM(\bv,\bd))$$ is injective. Since $\sT_0\times \sA^{\mathrm{fr}}\times \bC^*_a$ equivariant pushforward is obtained from the $\sT_0\times \sA^{\mathrm{fr}}$ by extension by scalar, $j_*\colon H^{\sT_0\times \sA^{\mathrm{fr}}}(Z(\mu))\to H^{\sT_0\times \sA^{\mathrm{fr}}}(\fM(\bv,\bd))$ must be injective as well. This proves Claim (ii). Then the corollary follows from the above two claims and Theorem \ref{thm vac cogen_general}.
\end{proof}

\begin{Corollary}\label{cor vac cogen_tripled}
Under the same assumptions as Corollary \ref{cor cogen_nak}, and let $\phi=\tr(\mathsf m)$ be a $(\sT_0\times \sA^{\mathrm{fr}})$-invariant potential on $\cN(\bd)$, the following statements are equivalent:
\begin{enumerate}
    \item the specialization map $\mathsf{sp}\colon H^{\sT_0\times \sA^{\mathrm{fr}}}(\cN(\bd),\phi)\to H^{\sT_0\times \sA^{\mathrm{fr}}}(\cN(\bd))$ in \cite[Ex.\,\,7.19]{COZZ} is injective after tensoring with $\Frac H_{\sT_0\times \sA^{\mathrm{fr}}}(\pt)$ over the base ring $ H_{\sT_0}(\pt)$,
    \item $H^{\sT_0\times \sA^{\mathrm{fr}}}(\cN(\bd),\phi)$ is cogenerated by vacuum as a $\mathsf Y_0(\widetilde Q,\widetilde{\sW})$-module over the base field $\Frac H_{\sT_0\times \sA^{\mathrm{fr}}}(\pt)$.
    \item $H^{\sT_0\times \sA^{\mathrm{fr}}}(\cN(\bd),\phi)$ is cogenerated by vacuum as a $\mathsf Y_0^{\leqslant}(\widetilde Q,\widetilde{\sW})$-module over the base field $\Frac H_{\sT_0\times \sA^{\mathrm{fr}}}(\pt)$.
\end{enumerate}
\end{Corollary}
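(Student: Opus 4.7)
The plan is to verify the cycle of implications $(3)\Rightarrow(2)\Rightarrow(1)\Rightarrow(3)$. The first implication is immediate from $\mathsf Y_0^{\leqslant}(\widetilde Q,\widetilde\sW)\subseteq \mathsf Y_0(\widetilde Q,\widetilde\sW)$, since any functional attainable via the subalgebra is also attainable via the full algebra. For $(2)\Rightarrow(1)$, I will use that the specialization map $\mathsf{sp}$ is a morphism of $\mathsf Y_0(\widetilde Q,\widetilde\sW)$-modules by Corollary \ref{cor MO yangian crit module}. Given $x\in H^{\sT_0\times\sA^{\mathrm{fr}}}(\cN(\bd),\phi)$ with $\mathsf{sp}(x)=0$ after localization, it follows that $\mathsf{sp}(a\cdot x)=a\cdot \mathsf{sp}(x)=0$ for every $a\in\mathsf Y_0(\widetilde Q,\widetilde\sW)$. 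Since $\cN(\mathbf 0,\bd)$ is a point and $\phi$ vanishes there, the vacuum functional $v_\phi$ on $H^{\sT_0\times\sA^{\mathrm{fr}}}(\cN(\bd),\phi)$ factors as $v_0\circ \mathsf{sp}$ on the degree-$\mathbf 0$ component, where $v_0$ is the vacuum functional for the ordinary cohomology. Hence $v_\phi(a\cdot x)=0$ for all $a$, and the cogeneration hypothesis (2) then forces every linear functional on $H^{\sT_0\times\sA^{\mathrm{fr}}}(\cN(\bd),\phi)$ to annihilate $x$, so $x=0$.

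For $(1)\Rightarrow(3)$, I will apply Theorem \ref{thm vac cogen_general} to the symmetric quiver variety $\cM(\widetilde Q,\bd)$ with the zero-shift symmetric framing $\underline{\bd}=(\bd,\bd)$ and the $(\sT_0\times\sA^{\mathrm{fr}})$-invariant potential $\widetilde\sw+\phi$. The deformed dimensional reduction \eqref{equ on dim redd}, together with Corollary \ref{cor MO yangian crit module}, supplies a $\mathsf Y_0(\widetilde Q,\widetilde\sW)$-equivariant isomorphism $H^{\sT_0\times\sA^{\mathrm{fr}}}(\cN(\bd),\phi)\cong H^{\sT_0\times\sA^{\mathrm{fr}}}(\cM(\widetilde Q,\bd),\widetilde\sw+\phi)$, so it suffices to verify the hypotheses of Theorem \ref{thm vac cogen_general} on the tripled side. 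Condition (i) is checked exactly as in Corollary \ref{cor cogen_nak}: the identity $\mathrm{ch}(\mathsf D_{\Out,i})=e^{\hbar}\,\mathrm{ch}(\mathsf D_{\In,i})$ lets one recover $\mathrm{ch}_k(\mathsf D_{\In,i})$ inductively from $\{\mathrm{ch}_j(\mathsf D_{\In,i}-\mathsf D_{\Out,i})\}_{j\leqslant k+1}$ after inverting $\hbar$. Condition (ii) becomes the injectivity of $\mathsf{sp}_\cM$ (or $\mathsf{sp}_\fM$) for the potential $\widetilde\sw+\phi$, which one then identifies with hypothesis (1) through the compatibility of dimensional reduction with specialization.

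The main technical step will be this last identification, because the two specialization maps $\mathsf{sp}$ on $(\cN(\bd),\phi)$ and $\mathsf{sp}_\cM$ on $(\cM(\widetilde Q,\bd),\widetilde\sw+\phi)$ have a priori different targets and cannot be literally the same map. My plan is to realize both as the $t\to 0$ limit of a common one-parameter deformation and apply the relative vanishing-cycles/dimensional-reduction package of \cite[\S 6]{COZZ} and \cite[Ex.~7.19]{COZZ} fiberwise in $t$, exactly in the spirit of the argument in Corollary \ref{cor cogen_nak}, to transfer the injectivity in (1) on the Nakajima side to the injectivity of $\mathsf{sp}_\cM$ (respectively $\mathsf{sp}_\fM$) needed on the tripled side.
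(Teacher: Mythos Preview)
Your implications $(3)\Rightarrow(2)$ and $(2)\Rightarrow(1)$ match the paper's argument. The divergence is in $(1)\Rightarrow(3)$, where you take a harder route than necessary and leave a genuine gap.

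The paper's argument for $(1)\Rightarrow(3)$ is a one-liner: the specialization map $\mathsf{sp}\colon H^{\sT_0\times\sA^{\mathrm{fr}}}(\cN(\bd),\phi)\to H^{\sT_0\times\sA^{\mathrm{fr}}}(\cN(\bd))$ is a $\mathsf Y_0^{\leqslant}(\widetilde Q,\widetilde\sW)$-module map (since it commutes with stable envelopes) and is injective after localization by hypothesis (1). Corollary~\ref{cor cogen_nak} already tells you the \emph{target} is cogenerated by vacuum as a $\mathsf Y_0^{\leqslant}$-module, and any submodule of a vacuum-cogenerated module is again vacuum-cogenerated (over a field, every functional on the submodule extends to the ambient module). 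So you never need to re-invoke Theorem~\ref{thm vac cogen_general} for the critical cohomology.

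Your plan instead tries to verify hypothesis (ii) of Theorem~\ref{thm vac cogen_general} for the potential $\widetilde\sw+\phi$ on $\cM(\widetilde Q,\bd)$. The problem is that the required specialization $\mathsf{sp}_{\cM}\colon H(\cM,\widetilde\sw+\phi)\to H(\cM)$ lands in the ordinary cohomology of the \emph{smooth} tripled quiver variety, while hypothesis (1) concerns $\mathsf{sp}\colon H(\cN,\phi)\to H(\cN)$, whose target is identified via dimensional reduction with $H(\cM,\widetilde\sw)$, not $H(\cM)$. These are different targets, and your proposed ``common one-parameter deformation'' does not bridge them: you would need a factorization $H(\cM,\widetilde\sw+\phi)\to H(\cM,\widetilde\sw)\to H(\cM)$ together with a proof that the first arrow is $\mathsf{sp}$ under dimensional reduction and the second is injective. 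The second step is essentially what is proved inside Corollary~\ref{cor cogen_nak} for $\fM$, but establishing the first step (existence of an intermediate specialization compatible with dimensional reduction for arbitrary $\phi$) is not obvious from the tools cited and is not addressed in your sketch. The paper's approach avoids all of this.
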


\begin{proof}
$(3)\Longrightarrow(2)$ is tautological. 

$(2)\Longrightarrow(1)$. Suppose $\ker(\mathsf{sp})$ is nontrivial after localization, then $$\ker(\mathsf{sp})_\loc \subseteq H^{\sT_0\times \sA^{\mathrm{fr}}}(\cN(\bd),\phi)_\loc$$ is a $\mathsf Y_0(\widetilde Q,\widetilde{\sW})$-submodule, since $\mathsf{sp}$ commutes with stable envelopes and $R$-matrices. By construction, we have $$\mathsf{sp}\colon H^{\sT_0\times \sA^{\mathrm{fr}}}(\cN(\mathbf 0,\bd),\phi)\xrightarrow{\cong}  H^{\sT_0\times \sA^{\mathrm{fr}}}(\cN(\mathbf 0,\bd)).$$ Denote $\bK=\Frac H_{\sT_0\times \sA^{\mathrm{fr}}}(\pt)$, $|0\rangle\in H^{\sT_0\times \sA^{\mathrm{fr}}}(\cN(\mathbf 0,\bd),\phi)=H^{\sT_0\times \sA^{\mathrm{fr}}}(\pt)$ to be a generator. Then the map $$\mathsf Y_0(\widetilde Q,\widetilde{\sW})\cdot\Hom_{\bK}(\bK\cdot |0\rangle,\bK)\to \Hom_{\bK}(H^{\sT_0\times \sA^{\mathrm{fr}}}(\cN(\bd),\phi)_\loc,\bK)$$ factors through the proper subspace $\Hom_{\bK}(\im(\mathsf{sp})_\loc,\bK)$, which contradicts with (2). Thus $\ker(\mathsf{sp})_\loc$ is trivial.

$(1)\Longrightarrow(3)$. $\mathsf{sp}\colon H^{\sT_0\times \sA^{\mathrm{fr}}}(\cN(\bd),\phi)\to H^{\sT_0\times \sA^{\mathrm{fr}}}(\cN(\bd))$ commutes with stable envelopes and is therefore a $\mathsf Y_0^{\leqslant}(\widetilde Q,\widetilde{\sW})$-module map. By Corollary \ref{cor cogen_nak}, $H^{\sT_0\times \sA^{\mathrm{fr}}}(\cN(\bd))$ is cogenerated by vacuum as a $\mathsf Y_0^{\leqslant}(\widetilde Q,\widetilde{\sW})$-module over the base field $\Frac H_{\sT_0\times \sA^{\mathrm{fr}}}(\pt)$, so is its submodule $H^{\sT_0\times \sA^{\mathrm{fr}}}(\cN(\bd),\phi)$.
\end{proof}

\subsection{Irreducibility}

\begin{Proposition}\label{prop on irr mod}
Let $\mu\in \bZ_{\leqslant 0}^{Q_0}$, $\underline{\bd}=(\bd_{\In},\bd_{\Out})$ with $\bd_{\Out}-\bd_{\In}=\mu$. Assume that there exists a set of compact and admissible auxiliary data $\mathcal C$. If the state space $\cH^{\sW^{\mathrm{fr}}}_{\underline{\bd},\sA^{\mathrm{fr}}}$ is compact and cogenerated by vacuum over the base field $\Frac H_{\sT_0\times \sA^{\mathrm{fr}}}(\pt)$, then $\cH^{\sW^{\mathrm{fr}}}_{\underline{\bd},\sA^{\mathrm{fr}}}$ is an irreducible $\mathsf Y_{\mu}(Q,\sW)$-module over the base field $\Frac H_{\sT_0\times \sA^{\mathrm{fr}}}(\pt)$.
\end{Proposition}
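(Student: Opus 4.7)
The plan is to combine the two main inputs: vacuum cogeneration (which controls the module ``from below'' using the lowering and Cartan subalgebras) and the nondegenerate $\mathsf Y_\mu(Q,\sW)$-invariant bilinear form provided by the compactness hypothesis and Theorem \ref{thm anti-auto RTT Yangian}. The key idea is to show that any nonzero submodule $N\subseteq M:=\cH^{\sW^{\mathrm{fr}}}_{\underline{\bd},\sA^{\mathrm{fr}}}$ automatically contains the vacuum vector $|\mathbf 0\rangle$, and then to recast cogeneration as a statement that $|\mathbf 0\rangle$ \emph{generates} $M$ under the action of $\varsigma(\mathsf Y_\mu^{\leqslant}(Q,\sW))\subseteq\mathsf Y_\mu(Q,\sW)$, so that $N\supseteq \varsigma(\mathsf Y_\mu^{\leqslant})\cdot|\mathbf 0\rangle=M$.

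First, I would observe that any $\mathsf Y_\mu(Q,\sW)$-submodule $N\subseteq M$ is automatically $\bZ^{Q_0}$-graded. Indeed, by Lemma \ref{lem Y^0 generators}, the operator $\mathrm{ch}_0(\mathsf V_i)\in\mathsf Y_\mu^0(Q,\sW)$ acts on the graded component $M(\bv)$ as the scalar $\bv_i$, and simultaneous eigenvalues for $i\in Q_0$ uniquely determine $\bv$, so the simultaneous eigenspace decomposition of $N$ under these commuting operators is exactly its $\bZ^{Q_0}$-grading. Pick any nonzero homogeneous $n\in N(\bv)$. Vacuum cogeneration, in its pointwise form, provides $a\in\mathsf Y_\mu^{\leqslant}(Q,\sW)$ with $v(a\cdot n)\neq 0$, where $v$ is the vacuum projection. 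Since $\mathsf Y_\mu^{\leqslant}$ is itself $\bZ^{Q_0}$-graded and $v$ vanishes on $M(\bw)$ for $\bw\neq \mathbf 0$, replacing $a$ by its degree $-\bv$ component $a'$ gives $a'\cdot n\in M(\mathbf 0)=\bK\cdot|\mathbf 0\rangle$ and still $a'\cdot n\neq 0$, so $|\mathbf 0\rangle\in N$.

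Second, I would reformulate cogeneration via the bilinear form. Compactness of $M$ (and of $\cM(\mathbf 0,\underline{\bd})=\pt$) gives $c:=(|\mathbf 0\rangle,|\mathbf 0\rangle)=\pm 1\neq 0$, and grading of the pairing yields $v(m)=c^{-1}(|\mathbf 0\rangle,m)$ for all $m\in M$. Cogeneration means that every linear functional $f\in M^{\vee}$ can be written as $f(\cdot)=v(a\cdot\cdot)$ for some $a\in \mathsf Y_\mu^{\leqslant}$. Writing $f(m)=(m_f,m)$ under the bilinear identification $M\xrightarrow{\sim}M^{\vee}$, and using the $\varsigma$-invariance $(|\mathbf 0\rangle,a\cdot m)=\pm(\varsigma(a)\cdot|\mathbf 0\rangle,m)$ from Theorem \ref{thm anti-auto RTT Yangian}, this translates into $m_f=\pm c^{-1}\varsigma(a)\cdot|\mathbf 0\rangle$. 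As $f$ ranges over $M^{\vee}$, $m_f$ ranges over all of $M$, so $M=\varsigma(\mathsf Y_\mu^{\leqslant}(Q,\sW))\cdot|\mathbf 0\rangle$. Combining this with $|\mathbf 0\rangle\in N$ and the fact that $\varsigma(\mathsf Y_\mu^{\leqslant})\subseteq\mathsf Y_\mu$ preserves $N$, we conclude $N\supseteq M$, hence $N=M$.

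The only potentially delicate point is bookkeeping of the $\bZ/2$-signs in the super invariance of the bilinear form; everything else is formal. Grading of submodules and nondegeneracy of $(|\mathbf 0\rangle,|\mathbf 0\rangle)$ are straightforward consequences of the setup already established in the paper, so the entire argument is essentially a two-line reduction to the cogeneration hypothesis once the correct dictionary between $v$ and $|\mathbf 0\rangle$ under the invariant pairing is in place.
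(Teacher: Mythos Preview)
Your proof is correct and takes essentially the same approach as the paper: use cogeneration together with the $\bZ^{Q_0}$-grading to show any nonzero submodule contains $|\mathbf 0\rangle$, and use the $\varsigma$-invariant nondegenerate bilinear form from Theorem \ref{thm anti-auto RTT Yangian} to dualize cogeneration into the statement that $|\mathbf 0\rangle$ generates $M$. The paper's version compresses your steps~1--2 into the single line ``cogeneration implies that for arbitrary $x$ there exists $a$ with $a\cdot x=|\mathbf 0\rangle$'' and your step~3 into ``by duality, cogeneration implies $M$ is generated by $|\mathbf 0\rangle$'' (and it phrases cogeneration over the full $\mathsf Y_\mu$ rather than $\mathsf Y_\mu^{\leqslant}$), but the underlying logic is identical.
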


\begin{proof}
Since $\cH^{\sW^{\mathrm{fr}}}_{\underline{\bd},\sA^{\mathrm{fr}}}$ is compact, it has a nondegenerate bilinear form $(\cdot,\cdot)$ \eqref{equ on bilinear form}, which is $\mathsf Y_{\mu}(Q,\sW)$-invariant by Theorem \ref{thm anti-auto RTT Yangian}. By duality, cogeneration by vacuum implies that $\cH^{\sW^{\mathrm{fr}}}_{\underline{\bd},\sA^{\mathrm{fr}}}$ is also generated by the vacuum vector $|\mathbf 0\rangle$ over the base field $\Frac H_{\sT_0\times \sA^{\mathrm{fr}}}(\pt)$. Cogeneration by vacuum implies that for arbitrary $x\in \cH^{\sW^{\mathrm{fr}}}_{\underline{\bd},\sA^{\mathrm{fr}}}$ there exists $a\in \mathsf Y_{\mu}(Q,\sW)\otimes_{H_{\sT_0\times \sA^{\mathrm{fr}}}(\pt)}\Frac H_{\sT_0\times \sA^{\mathrm{fr}}}(\pt)$ such that $a\cdot x=|\mathbf 0\rangle$, so $x$ generates $\cH^{\sW^{\mathrm{fr}}}_{\underline{\bd},\sA^{\mathrm{fr}}}$. Therefore $\cH^{\sW^{\mathrm{fr}}}_{\underline{\bd},\sA^{\mathrm{fr}}}$ is an irreducible $\mathsf Y_{\mu}(Q,\sW)$-module over the base field $\Frac H_{\sT_0\times \sA^{\mathrm{fr}}}(\pt)$.
\end{proof}

\section{Example: trivial quiver and modules of shifted Yangians of \texorpdfstring{$\mathfrak{gl}_{1|1}$}{gl(1|1)}}\label{sec gl(1|1)}

In this section, we consider the simplest example: the trivial quiver with zero potential:
\begin{equation*}
Q\colon \quad
\begin{tikzpicture}[x={(1cm,0cm)}, y={(0cm,1cm)}, baseline=0cm]
  \node[draw,circle,fill=white] (Gauge1) at (0,0){$\phantom{n}$}; 

\end{tikzpicture}
\;,
\quad\sW=0\,.
\end{equation*}
Let $\sT_0=\bC^*_\hbar$ be the torus that acts on $Q$ trivially ($\sT_0$ action on the framing vector spaces will be specified later). Fix a cyclic stability $\theta<0$ when we consider associated quiver varieties.  


The main theorem of this section is an \textit{explicit} description of the shifted Yangian $\mathsf Y_{\mu}(Q,0)$ for $Q$ \eqref{equ on rtt yangian}. 
\begin{Theorem}\label{thm ex trivial quiver_main}
Let $Q$ be the trivial quiver. For arbitrary $\mu\in \bZ_{\leqslant 0}$, there is a natural algebra isomorphism
\begin{align*}
    \varrho_\mu\colon Y_{\mu}(\mathfrak{gl}_{1|1})\otimes_{\bC[\hbar]} \bC(\hbar)\cong \mathsf Y_{\mu}(Q,0)\:,
\end{align*}
where $Y_{\mu}(\mathfrak{gl}_{1|1})$ is the 
$\mu$-shifted Yangian of $\mathfrak{gl}_{1|1}$ (Definition \ref{def of shifted Y(gl(1|1))}).
\end{Theorem}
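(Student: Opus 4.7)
The plan is to construct the map $\varrho_\mu$ and prove it is an isomorphism in three stages: construction, surjectivity, and injectivity, leveraging the Drinfeld-to-Reshetikhin comparison of Theorem \ref{thm drinfeld yangian map to rtt yangian} together with the admissibility reduction of Theorem \ref{thm admissible}.

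First I would read off the specialization of the Drinfeld type shifted Yangian $\mathcal D\widetilde{\mathcal{SH}}_\mu(Q,0)$ for the trivial quiver. Since $Q_1=\varnothing$ and the unique vertex has $|i|=1$ (fermionic by \eqref{equ on gra}), the bond factor \eqref{bond fac} reduces to $\zeta(z)=1$, so relation \eqref{he and hf rel_sym} becomes commutativity of $h(z)$ with $e(w), f(w)$. Moreover $\gamma_i=1$ (empty product), and the super-commutator relation \eqref{ef rel_sym} reads $\{e_r,f_s\}=-h_{r+s}$ (up to the sign coming from $(\delta_i|\delta_i)$). Since the degree-$\bv$ component of the nilpotent critical CoHA is $H^{\sT_0}([\pt/\GL_\bv])$ and the spherical subalgebra is generated by the degree-$1$ part, a simple cohomology computation together with Theorem \ref{cor shifted yangian action} shows that the composition
\begin{align*}
Y_\mu(\mathfrak{gl}_{1|1})\otimes_{\bC[\hbar]}\bC(\hbar)\longrightarrow \mathcal D\widetilde{\mathcal{SH}}_\mu(Q,0)\otimes_{\bC[\hbar]}\bC(\hbar)\xrightarrow{\text{Thm.\,\ref{thm drinfeld yangian map to rtt yangian}}} \mathsf Y_\mu(Q,0)
\end{align*}
is well-defined; call this composition $\varrho_\mu$. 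The matching of Drinfeld generators with $Y_\mu(\mathfrak{gl}_{1|1})$ follows by Gauss decomposing an explicit $L$-operator for $\mathfrak{gl}_{1|1}$ and identifying the resulting currents with $e(u), f(u), h(u)$.

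For surjectivity, I would invoke Theorem \ref{thm admissible} with the minimal admissible auxiliary set $\mathcal C=\{\mathfrak c\}$ where $\mathfrak c=(\delta,\sT_0\curvearrowright \cM(\delta),0)$ and $\sT_0$ has nontrivial weight on the out-going framing. Over this datum, the auxiliary spaces $\cH_\mathfrak c(\bv)$ are the Gysin-pushforwards of fundamental classes of Grassmannians $\Gr(\bv,1)=\pt$, so $\cH_\mathfrak c(\bv)=0$ for $\bv\geqslant 2$ and $\cH_\mathfrak c(\bv)$ is one-dimensional for $\bv\in\{0,1\}$. Consequently the $R$-matrix is a $2\times 2$ block, Gauss decomposes via Theorem \ref{thm e f h as R matrix elements}, and Proposition \ref{prop gen by deg 0 and Cartan} combined with Lemma \ref{lem Y^0 generators} shows that $\mathsf Y_\mu(Q,0)=\mathsf Y_\mu(Q,0,\mathcal C)$ is generated by $e_0, f_0$, together with tautological classes $\mathrm{ch}_k(\mathsf V),\mathrm{ch}_{k+1}(\mathsf D_\In-\mathsf D_\Out)$. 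The image of $\varrho_\mu$ contains all these: the tautological classes appear in the Cartan current $h(u)$ via \eqref{cartan act_sym}, while $e_0,f_0$ are in the image tautologically.

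For injectivity, I would combine the PBW structure of the zero-shifted case with the shift homomorphism. In the case $\mu=0$, Theorem \ref{thm grY} together with an explicit identification of the Lie superalgebra $\mathfrak g_{Q,0}\cong \mathfrak{gl}_{1|1}[u]$ (whose verification reduces to computing $\dim \mathfrak g_\alpha$ for each $\alpha$ from the classical $R$-matrix \eqref{eq classical r}; the only nontrivial roots turn out to be $\alpha=\pm 1$ with one-dimensional spaces, consistent with $\mathfrak{gl}_{1|1}$) gives an associated graded isomorphism, from which injectivity at $\mu=0$ follows. For general $\mu\in\bZ_{\leqslant 0}$, injectivity of $\sS_{0,\mu}\colon \mathsf Y_0\hookrightarrow \mathsf Y_\mu$ (Proposition \ref{prop shift map inj}) together with compatibility of $\varrho$ with shift homomorphisms—which I would check by comparing the explicit shift formulas of Remark \ref{rmk form of shift} with the corresponding algebraic shift homomorphism on $Y_\mu(\mathfrak{gl}_{1|1})$—reduces injectivity of $\varrho_\mu$ to the $\mu=0$ case.

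The main obstacle will be the explicit identification $\mathfrak g_{Q,0}\cong \mathfrak{gl}_{1|1}[u]$, and more specifically showing that the root spaces $\mathfrak g_{\pm 1}$ are one-dimensional and that no higher roots occur. Geometrically this amounts to showing that the first off-diagonal matrix element of the classical $R$-matrix exhausts all matrix elements of the $R$-matrix of negative weight; in the fermionic setting this follows from a careful analysis of the Gauss decomposition using that $\cH_\mathfrak c$ is two-dimensional. Equivalently, one can verify it via the admissible faithful module in Proposition \ref{prop sufficient state spaces}: take $\cH_0$ to be a compact shifted state space (e.g.\ coming from a Grassmannian) and use the explicit action formulas \eqref{sph coha act_sym}--\eqref{cartan act_sym} to show no nonzero element of $Y_\mu(\mathfrak{gl}_{1|1})$ acts trivially.
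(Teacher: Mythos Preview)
Your construction of $\varrho_\mu$ via a factorization through $\mathcal D\widetilde{\mathcal{SH}}_\mu(Q,0)$ does not work: the Drinfeld double only carries currents $e(u), f(u), h(u)$, whereas $Y_\mu(\mathfrak{gl}_{1|1})$ is presented with \emph{two} Cartan currents $g_1(u), g_2(u)$, and there is no natural target for $g_1$ in $\mathcal D\widetilde{\mathcal{SH}}_\mu$. The paper instead computes explicitly (\S\ref{sec fund R-mat gl(1|1)}) that the $R$-matrix braiding the two-dimensional auxiliary space $\cH_{\mathfrak c}$ with itself is $(u+\hbar P)/(u+\hbar)$, which is proportional to the structural $R(z)$ of Definition~\ref{def of shifted Y(gl(1|1))}; the geometric $L$-operator then satisfies RTT automatically, producing $\varrho_\mu$ directly. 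Surjectivity is then tautological, since $\mathsf Y_\mu(Q,0,\mathcal C)$ is by definition generated by the $L$-matrix coefficients.

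More seriously, your injectivity argument for $\mu<0$ has a directional gap. From $\varrho_0$ injective and $\sS_{0,\mu}\colon \mathsf Y_0\hookrightarrow\mathsf Y_\mu$ injective you only get that $\varrho_\mu\circ\bar\sS_{0,\mu}$ is injective; since $\bar\sS_{0,\mu}$ is not surjective this says nothing about $\ker\varrho_\mu$ outside the image of $\bar\sS_{0,\mu}$. What is needed is a map going the other way. The paper constructs an explicit enveloping-algebra model $\mathcal U_\mu(\mathfrak Y)$ for $Y_\mu(\mathfrak{gl}_{1|1})_\loc$ and a wrong-way shift $\mathfrak S_z^\mu\colon \mathcal U_\mu(\mathfrak Y)\to \mathcal U_0(\mathfrak Y)(\!(z^{-1})\!)$ matching the geometric $\sS^{0,\mu}_z$ of Definition~\ref{def of wrong-way shift}, then proves $\mathfrak S_z^\mu$ injective by a direct PBW computation (the delicate point is injectivity on the free exterior algebra generated by the fermionic $\mathfrak e_r$'s). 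The detour through $\mathcal U_\mu(\mathfrak Y)$ is essential: one cannot run this argument on $Y_\mu(\mathfrak{gl}_{1|1})$ itself because its PBW basis (Corollary~\ref{cor PBW Y(gl(1|1))}) is only obtained as a \emph{consequence} of the theorem.

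Finally, the obstacle you single out---establishing $\mathfrak g_{Q,0}\cong\mathfrak{gl}_{1|1}$---is not the real difficulty. For injectivity of $\varrho_0$ the paper only needs $\mathfrak{gl}_{1|1}\hookrightarrow\mathfrak g_{Q,0}$, which is immediate from faithfulness of the fundamental representation on $\cH_{\mathfrak c}$ (Lemma~\ref{lem rho_0 isom}); the isomorphism of Lie superalgebras falls out for free once $\varrho_0$ is known to be bijective.
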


In this section, we also perform \textit{explicit} computations of cohomological stable envelopes and root $R$-matrices in \S \ref{sec fund R-mat gl(1|1)} and 
\S \ref{sec Lax mat Cl mod}. These computations supersede the results in \cite[\S 7.2]{IMZ}, but the method there is different from ours. We deduce the explicit formula for quantum multiplication by divisors on the framed quiver varieties (Proposition \ref{prop on qmbd on gl(1|1)}). This in particular recovers the well-known result on $\Gr(n,r)$.

There are also quivers with potentials associated to $\mathfrak{gl}_{m|n}$, see Example \ref{ex on vv ex} and Appendix \S \ref{sec super affine Yangian}. In Appendix \S \ref{sect on Lie superalgebra}, we compute some $R$-matrices for the quivers with potentials associated to $\mathfrak{gl}_{2|1}$.

\subsection{The fundamental $R$-matrices}\label{sec fund R-mat gl(1|1)}
Consider the framed quiver
\begin{equation*}
\begin{tikzpicture}[x={(1cm,0cm)}, y={(0cm,1cm)}, baseline=0cm]
  \node[draw,circle,fill=white] (Gauge) at (0,0){$\phantom{n}$}; 
  \node[draw,rectangle,fill=white] (Framing) at (2,0) {$2$};
  \draw[<-] (Gauge.340) -- (Framing.205) node[midway,below] {\scriptsize $A$};
  \draw[->] (Gauge.20) -- (Framing.155) node[midway,above] {\scriptsize $B$};

\end{tikzpicture}
\end{equation*}
and the corresponding symmetric quiver variety $\cM(\mathbf 2)$. Take a torus $\sT=\bC^*_{t_1}\times \bC^*_{t_2}\times \bC^*_u$ such that $(s_1,s_2,a)\in \bC^*_{t_1}\times \bC^*_{t_2}\times \bC^*_u$ acts on the quiver data by
\begin{align*}
    (A_1,A_2,B_1,B_2)\mapsto (a^{-1}A_1,A_2,s_1aB_1,s_2B_2).
\end{align*}
Let $\sA=\bC^*_u$, then $\cM(\mathbf 2)^{\sA}=\cM(\mathbf 1)\times \cM(\mathbf 1)$. Note that 
$$\cM(\mathbf 1)=\cM(0,\mathbf 1)\sqcup \cM(1,\mathbf 1), \quad \cM(0,\mathbf 1)=\pt, \quad \cM(1,\mathbf 1)\cong \bC. $$ 
We take the set of fundamental classes of $\cM(n,\mathbf 1)$ ($n=0,1$) as a basis of $H^{\sT}(\cM(\mathbf1))$, and denote them
\begin{align*}
b_0=[\cM(0,\mathbf 1)],\quad b_1=[\cM(1,\mathbf 1)].
\end{align*}
\subsubsection{Computation of stable envelopes}
Let us consider stable envelopes for $\cM(1,\mathbf 2)$ in both ``$+$'' and ``$-$'' chambers. Let the equivariant variable of the gauge group $\bC^*$ be $x$,~i.e.~$H_{\bC^*}(\pt)=\bC[x]$. Using \cite[Cor.~4.25]{COZZ}, we have 
\begin{align*}
\Stab_+(b_0\otimes b_1)=x-u,\quad \Stab_+(b_1\otimes b_0)=t_2-x,\quad \Stab_-(b_0\otimes b_1)=u+t_1-x,\quad \Stab_-(b_1\otimes b_0)=x.
\end{align*}
The matrix forms of stable envelopes in the basis $\{b_0\otimes b_1,b_1\otimes b_0\}$ are
\begin{align*}
\Stab_+=\begin{pmatrix}
-u & t_2\\
0 & t_2-u
\end{pmatrix}\;,
\qquad
\Stab_-=\begin{pmatrix}
u+t_1 & 0\\
t_1 & u
\end{pmatrix}\;.
\end{align*}

\subsubsection{The R-matrices and YBE}
The $R$-matrix for $\cM(1,\mathbf 2)$ is given by
\begin{align*}
R(u)\big|_{\cM(1,\mathbf 2)}=\Stab_-^{-1}\Stab_+=\frac{1}{u+t_1}\begin{pmatrix}
-u & t_2\\
t_1 & t_2-t_1-u
\end{pmatrix}\;.
\end{align*}
The $R$-matrix for $\cM(0,\mathbf 2)$ and for $\cM(2,\mathbf 2)$ are
\begin{align*}
R(u)\big|_{\cM(0,\mathbf 2)}=1,\quad R(u)\big|_{\cM(2,\mathbf 2)}=\frac{t_2-u}{u+t_1}.
\end{align*}
Taking the normalizer \eqref{normalizer_yangian} and the sign twist $\Sigma$ in Theorem \ref{thm e f h as R matrix elements} into account, the $R^{\mathrm{sup}}(u)$ matrix is given by 
\begin{align}\label{R sup for gl(1|1)}
R^{\mathrm{sup}}(u,t_1,t_2)=\frac{1}{u+t_1}\begin{pmatrix}
u+t_1 & & & \\
 &u & t_2 & \\
 & t_1 & u+t_1-t_2 &\\
 & & & u-t_2
\end{pmatrix}\;.
\end{align}
The Yang-Baxter-equation
\begin{align*}
R^{\mathrm{sup}}_{12}(u,t_1,t_2)R^{\mathrm{sup}}_{13}(u+v,t_1,t_3)R^{\mathrm{sup}}_{23}(v,t_2,t_3)=R^{\mathrm{sup}}_{23}(v,t_2,t_3)R^{\mathrm{sup}}_{13}(u+v,t_1,t_3)R^{\mathrm{sup}}_{12}(u,t_1,t_2)
\end{align*}
holds in $\End(\bC^{1|1}\otimes \bC^{1|1}\otimes\bC^{1|1})(u,v,t_1,t_2,t_3)$, where $\bC^{1|1}=\Span_\bC\{b_0,b_1\}$ with $b_0$ even and $b_1$ odd.

When $t_1=t_2=\hbar$, \eqref{R sup for gl(1|1)} can be written in a compact form:
\begin{align}\label{fund geom R-mat 1|1}
R^{\mathrm{sup}}(u,\hbar)=\frac{u+\hbar P}{u+\hbar},\;\text{ where }\;P(b_i\otimes b_j)=(-1)^{ij}b_j\otimes b_i\;\text{ is the super permutation operator}.
\end{align}

\subsection{Lax matrix associated to a Clifford module}\label{sec Lax mat Cl mod}
Consider the framing dimension $(\bd_{\In},\bd_{\Out})=(2,1)$, depicted as follows
\begin{equation*}
\begin{tikzpicture}[x={(1cm,0cm)}, y={(0cm,1cm)}, baseline=0cm]
  \node[draw,circle,fill=white] (Gauge) at (0,0) {$\phantom{n}$};
  \node[draw,rectangle,fill=white] (Framing1) at (-1,-2) {1};
  \node[draw,rectangle,fill=white] (Framingk) at (1,-2) {1};
  \draw[<-] (Gauge.210) -- (Framing1.100) node[midway,left] {\scriptsize $A_1$};
  \draw[->] (Gauge.240) -- (Framing1.60) node[midway,right] {\scriptsize $B_1$};

  \draw[<-] (Gauge.300) -- (Framingk.120) node[midway,right] {\scriptsize $A_2$};

\end{tikzpicture}
\end{equation*}
and the corresponding quiver variety $\cM((2,1))$. Take a torus $\sT=\bC^*_{\hbar}\times \bC^*_u$ such that $(s,a)\in \bC^*_{\hbar}\times \bC^*_u$ acts on the quiver data by
\begin{align*}
    (A_1,A_2,B)\mapsto (a^{-1}A_1,A_2,saB_1).
\end{align*}
Let $\sA=\bC^*_u$, then $\cM((2,1))^{\sA}=\cM(\mathbf 1)\times \cM((1,0))$. As in the previous subsection, we use $b_n$ (resp.~$c_n$) to denote the fundamental class of $\cM(n,\mathbf 1)$ (resp.~$\cM(n,(1,0))$) for $n=0,1$, then $H^\sT(\cM((2,1))^{\sA})$ is a free $H_\sT(\pt)$-module with a basis $\{b_i\otimes c_j\::\: 0\leqslant i,j\leqslant 1\}$.

Let the equivariant variable of the gauge group $\bC^*$ be $x$. Using \cite[Cor.~4.25]{COZZ}, we have 
\begin{align*}
\Stab_+(b_0\otimes c_1)=x-u,\quad \Stab_+(b_1\otimes c_0)=1,\quad \Stab_-(b_0\otimes c_1)=u+\hbar-x,\quad \Stab_-(b_1\otimes c_0)=x,
\end{align*}
with the matrix forms:
\begin{align*}
\Stab_+=\begin{pmatrix}
-u & 1\\
0 & 1
\end{pmatrix}\;,
\qquad
\Stab_-=\begin{pmatrix}
u+\hbar & 0\\
\hbar & u
\end{pmatrix}\;.
\end{align*}
Let $L(u):=R^{\mathrm{sup}}(u)$ for $\cM((2,1))$, then 
\begin{align*}
L(u,\hbar)=\frac{1}{u+\hbar}\begin{pmatrix}
u+\hbar & & & \\
 &u & 1 & \\
 & -\hbar & 1 &\\
 & & & 1
\end{pmatrix}\;.
\end{align*}
Let $\mathsf{Cl}$ be the $\bC$-algebra with two odd generators $\psi$ and $\psi^*$, subject to relation $\psi\psi^*+\psi^*\psi=1$. Then $\Span_\bC\{c_0,c_1\}$ can be endowed with a $\mathsf{Cl}$-module structure by setting
\begin{align*}
    \psi^*c_0=c_1,\quad \psi c_1=c_0.
\end{align*}
Let $\mathcal F$ denote this module. Then $L(u,\hbar)$ can be represented as an element in $\End(\bC^{1|1})\otimes \mathsf{Cl}$:
\begin{align*}
L(u,\hbar)=\frac{1}{u+\hbar}\begin{pmatrix}
u+\hbar\psi\psi^* & \psi^*\\
-\hbar\psi & 1
\end{pmatrix}\;,
\end{align*}
and it satisfies the RLL equation:
\begin{align*}
R^{\mathrm{sup}}_{12}(u-v,\hbar)L_1(u,\hbar)L_2(v,\hbar)=L_2(v,\hbar)L_1(u,\hbar)R^{\mathrm{sup}}_{12}(u-v,\hbar)
\end{align*}
when acting on $\bC^{1|1}\otimes \bC^{1|1}\otimes\mathcal F$.

\subsection{Shifted Yangian of \texorpdfstring{$\mathfrak{gl}_{1|1}$}{gl(1|1)}}\label{sec yangian of gl(1|1)}

Let 
\begin{equation}\label{equ on R(z) super}R(z)=z\id+\hbar P\in \End(\bC^{1|1})^{\otimes 2}[z,\hbar]. \end{equation}
Here $P(a\otimes b)=(-1)^{|a|\cdot |b|}b\otimes a$ is the super permutation operator.

\begin{Definition}\label{def of shifted Y(gl(1|1))}
Fix $\mu\in \bZ_{\leqslant 0}$. The $\mu$-\textit{shifted Yangian} $Y_{\mu}(\mathfrak{gl}_{1|1})$ is a $\bC[\hbar]$ algebra generated by $$\left\{e_{r},f_r,g_{1,r},g_{2,s}\right\}_{r\in \bZ_{\geqslant0},s\in \bZ_{\geqslant-\mu}},$$ 
subject to relations
\begin{align}\label{RTT gl(1|1)}
    R(u-v)\,T_1(u)\,T_2(v)=T_2(v)\,T_1(u)\,R(u-v),
\end{align}
with $T(z)\in Y_{\mu}(\mathfrak{gl}_{1|1})[\![z,z^{-1}]\!]\otimes \End(\bC^{1|1})$ defined via
\begin{align*}
    T(z)=F(z)\,G(z)\,E(z),\,\,\, \text{where}\,\,\, E(z)=\id_{1|1}+e(z)\otimes E_{12},\,\,\,  F(z)=\id_{1|1}+f(z)\otimes E_{21},\,\,\,  G(z)=\sum_{i=1}^2 g_i(z)\otimes E_{ii},
\end{align*}
and 
\begin{align}\label{gen current gl(1|1)}
    e(z)=\sum_{r\geqslant 0} e_r z^{-r-1},\quad f(z)=\sum_{r\geqslant 0} f_r z^{-r-1},\quad g_1(z)=1+\sum_{r\geqslant 0}g_{1,r}z^{-r-1},\quad g_2(z)=z^{\mu}+\sum_{s\geqslant-\mu}g_{2,s}z^{-s-1}\,.
\end{align}
Here \eqref{RTT gl(1|1)} is an equation in $Y_{\mu}(\mathfrak{gl}_{1|1})[\![u,v,u^{-1},v^{-1}]\!]\otimes \End(\bC^{1|1})^{\otimes 2}$. 
\end{Definition}

\begin{Remark}
The special case $\mu=0$ gives the definition of unshifted Yangian $Y(\mathfrak{gl}_{1|1})$ in \cite{G}. In the below, we also write $Y(\mathfrak{gl}_{1|1})=Y_{0}(\mathfrak{gl}_{1|1})$.
\end{Remark}

Let $\mathfrak{c}$ be the auxiliary datum such that the framing dimension is one, potential is trivial, and $\sT_0=\bC^*_\hbar$ fixes the in-coming framing and scales the out-going framing with weight one. The auxiliary data set $\mathcal C=\{\mathfrak{c}\}$ is admissible (Definition \ref{def on adm aux data}); therefore $$\mathsf Y_\mu(Q,0)=\mathsf Y_\mu(Q,0,\cC).$$ 
By \eqref{fund geom R-mat 1|1}, the $R$-matrix that braids $\cH_{\mathfrak{c}}\otimes \cH_{\mathfrak{c}}$ is the $R$-matrix in \eqref{equ on R(z) super} up to a scalar constant. It follows that for any state space $\cH^{\sW^{\mathrm{fr}}}_{\underline{\bd},\sA^{\mathrm{fr}}}$ with $\bd_{\Out}-\bd_{\In}=\mu$, the $R$-matrix that braids $\cH_{\mathfrak{c}}\otimes \cH^{\sW^{\mathrm{fr}}}_{\underline{\bd},\sA^{\mathrm{fr}}}$, denoted $L(u)$, satisfies the equation 
$$R(u-v)\,L_1(u)\,L_2(v)=L_2(v)\,L_1(u)\,R(u-v).$$ 
Since $L(u)$ has the Gauss decomposition of the form \eqref{gauss decomp of R}, it induces a $\bC(\hbar)$-algebra homomorphism:
\begin{align*}
Y_\mu(\mathfrak{gl}_{1|1})\otimes_{\bC[\hbar]}\bC(\hbar)\to \prod_{\underline{\bd},\sW^{\mathrm{fr}},\sA^{\mathrm{fr}}}\End^{\bZ/2}_{\bC(\hbar)[\mathsf a^{\mathrm{fr}}]}(\cH^{\sW^{\mathrm{fr}}}_{\underline{\bd},\sA^{\mathrm{fr}}}),
\end{align*}
given by coefficients of $L(u)$. By definition of $\mathsf Y_\mu(Q,0,\cC)$, the above map induces a surjective $\bC(\hbar)$-algebra homomorphism
\begin{align}
\varrho_\mu\colon Y_\mu(\mathfrak{gl}_{1|1})\otimes_{\bC[\hbar]}\bC(\hbar)\twoheadrightarrow \mathsf Y_\mu(Q,0,\cC)=\mathsf Y_\mu(Q,0).
\end{align}

\begin{Lemma}\label{lem rho_0 isom}
The map $\varrho_0\colon Y(\mathfrak{gl}_{1|1})\otimes_{\bC[\hbar]}\bC(\hbar)\to\mathsf Y_0(Q,0)$ is an isomorphism.
\end{Lemma}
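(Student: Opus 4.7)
The plan is to combine the PBW theorem for $\mathsf Y_0(Q,0)$ (Theorem \ref{thm grY}) with an analogous PBW statement for $Y(\mathfrak{gl}_{1|1})$ and deduce the isomorphism by a graded comparison. Since $\varrho_0$ is already surjective, it suffices to exhibit a filtration on the source compatible with the filtration by degree in $u$ on the target such that the induced map on associated graded algebras is injective.

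First I would identify the Lie superalgebra $\mathfrak{g}_{Q,0}$ attached to the trivial quiver with zero potential. In this case $|i|=1\pmod 2$ (no edge loops), so the quiver is purely fermionic. Using the explicit form of the fundamental $R$-matrix \eqref{fund geom R-mat 1|1} and the classical limit \eqref{expansion_classical r}, one reads off that $\mathfrak{g}_{Q,0}$ is four-dimensional over $\bC(\hbar)$: its Cartan subalgebra $\mathfrak{g}_0=\bC(\hbar)\mathsf v\oplus\bC(\hbar)\mathsf d$ is spanned by the generators of Proposition \ref{prop g(Q,w)}, and there is a single positive root $\alpha=1$ and a single negative root $\alpha=-1$, each with a one-dimensional odd root space spanned respectively by the images of $e_{0}$ and $f_{0}$. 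The bilinear form matrix \eqref{bilinear form} has $\pmb Q=0$ (no arrows in the unframed quiver), and a direct check of the brackets $[e_0,f_0]=\gamma h_{0}$ with the Cartan relations identifies $\mathfrak{g}_{Q,0}\cong \mathfrak{gl}_{1|1}\otimes \bC(\hbar)$ as a Lie superalgebra, with $e_0,f_0$ corresponding to the off-diagonal root vectors and $\{\mathsf v,\mathsf d\}$ (or equivalently $\{h_{0},g_{1,0},g_{2,0}\}$) spanning the Cartan.

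Next I invoke the standard PBW theorem for $Y(\mathfrak{gl}_{1|1})$ (proved in the same spirit as the even case, see \cite{G}): with respect to the filtration $\deg e_r=\deg f_r=\deg g_{i,r}=r$, one has
\begin{align*}
\mathrm{gr}\,Y(\mathfrak{gl}_{1|1})_{\loc}\cong \mathcal U(\mathfrak{gl}_{1|1}[u])\otimes_{\bC[\hbar]}\bC(\hbar),
\end{align*}
with $e_r,f_r,g_{i,r}$ mapping to the generators of $\mathfrak{gl}_{1|1}[u]$ in degree $r$. Here Theorem \ref{thm grY} gives the parallel statement
\begin{align*}
\mathrm{gr}\,\mathsf Y_0(Q,0)\cong \mathcal U(\mathfrak{g}_{Q,0}[u])
\end{align*}
for the filtration by degree in the spectral parameter. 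From the Gauss decomposition \eqref{gauss decomp of R}, the map $\varrho_0$ sends $e_r,f_r,g_{i,r}$ into the $r$-th filtered piece and, on associated graded, sends the generators of $\mathfrak{gl}_{1|1}[u]$ in degree $r$ precisely to the generators of $\mathfrak{g}_{Q,0}[u]$ under the isomorphism of the previous paragraph. Therefore $\mathrm{gr}\,\varrho_0$ is an isomorphism of enveloping algebras.

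Finally, since $\varrho_0$ is surjective by construction and induces an isomorphism on associated graded, a standard filtered-algebra argument yields that $\varrho_0$ is an isomorphism. The only nontrivial technical point I foresee is verifying the PBW theorem for $Y(\mathfrak{gl}_{1|1})_{\loc}$ in the form required here; this is essentially known but requires being careful with the odd generators and the sign conventions built into \eqref{RTT gl(1|1)}. Equivalently, one can bypass a direct PBW proof on the source side by checking injectivity on a single faithful module: combining Lemma \ref{lem sufficient state spaces} (applied with $\cH_0$ the trivial representation and $\cH_i$ the auxiliary space $\cH_{\mathfrak{c}}$) with the well-known fact that $Y(\mathfrak{gl}_{1|1})_{\loc}$ acts faithfully on tensor products of evaluation modules on $\bC^{1|1}$ gives an alternative route, and this is the step I expect to require the most care.
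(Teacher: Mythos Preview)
Your proposal is correct and follows essentially the same approach as the paper: use the PBW theorem for $Y(\mathfrak{gl}_{1|1})$ from \cite{G} on the source, Theorem \ref{thm grY} on the target, and reduce the injectivity of $\varrho_0$ to a comparison of the Lie superalgebras $\mathfrak{gl}_{1|1}$ and $\mathfrak{g}_{Q,0}$. The only minor difference is in how that comparison is made: the paper verifies injectivity of $\mathfrak{gl}_{1|1}\to\mathfrak{g}_{Q,0}$ directly by observing that $\cH_{\mathfrak{c}}\cong\bC^{1|1}$ is the fundamental $\mathfrak{gl}_{1|1}$-representation (hence faithful), whereas you compute $\mathfrak{g}_{Q,0}$ explicitly from the classical $R$-matrix \eqref{r-mat gl(1|1)} and conclude $\mathfrak{g}_{Q,0}\cong\mathfrak{gl}_{1|1}$; both are fine, and the paper's route is marginally shorter since it needs only injectivity.
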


\begin{proof}
It remains to show that $\varrho_0$ is injective.
Viewing $\cH_{\mathfrak{c}}$ as a state space, its $Y(\mathfrak{gl}_{1|1})$ module structure is given by 
\begin{align*}
T(z)\mapsto \frac{z+\hbar P}{z+\hbar}.
\end{align*}
As a $\mathfrak{gl}_{1|1}$ module, $\cH_{\mathfrak{c}}$ is the fundamental representation, so $\mathfrak{gl}_{1|1}\to \End_{\bC(\hbar)}^{\bZ/2}(\cH_{\mathfrak{c}})$ is injective. In particular, $\varrho_0$ maps $\mathfrak{gl}_{1|1}$ injectively into $\mathsf Y_0(Q,0)$. We note that $\varrho_0(\mathfrak{gl}_{1|1})\subseteq \mathfrak{g}_{Q,0}$.

As in \cite[(1.6)]{G}, $Y(\mathfrak{gl}_{1|1})$ is a filtered algebra by setting $\deg X_{r}=r$ for $X=e,f,g_1,g_2$. The map $\varrho_0$ preserves the filtrations: $\varrho_0(F_kY(\mathfrak{gl}_{1|1})\otimes_{\bC[\hbar]}\bC(\hbar))\subseteq F_k\mathsf Y_0(Q,0)$, so it induces a map between the associated graded algebras: $\bar\varrho_0\colon \mathrm{gr}\, Y(\mathfrak{gl}_{1|1})\otimes_{\bC[\hbar]}\bC(\hbar)\to \mathrm{gr}\, \mathsf Y_0(Q,0)$. By the PBW theorem for $Y(\mathfrak{gl}_{1|1})$ \cite[Cor.~2.1]{G}, there is a natural isomorphism $$\mathrm{gr}\, Y(\mathfrak{gl}_{1|1})\otimes_{\bC[\hbar]}\bC(\hbar)\cong \mathcal U(\mathfrak{gl}_{1|1}[u]).$$ By Theorem \ref{thm grY}, we have a natural isomorphism $$\mathrm{gr}\, \mathsf Y_0(Q,0)\cong \mathcal U(\mathfrak{g}_{Q,0}[u]).$$ Moreover, the induced map $\bar\varrho_0\colon \mathcal U(\mathfrak{gl}_{1|1}[u])\to \mathcal U(\mathfrak{g}_{Q,0}[u])$ is obtained by joining the polynomial variable $u$ to the natural map $\mathfrak{gl}_{1|1}\to \mathfrak{g}_{Q,0}$ followed by taking enveloping algebras. As we have shown, $\mathfrak{gl}_{1|1}\to \mathfrak{g}_{Q,0}$ is injective, so $\bar\varrho_0\colon \mathrm{gr}\, Y(\mathfrak{gl}_{1|1})\otimes_{\bC[\hbar]}\bC(\hbar)\to \mathrm{gr}\, \mathsf Y_0(Q,0)$ is also injective. Hence $\varrho_0$ is injective.
\end{proof}


\begin{Remark}
By Theorem \ref{thm e f h as R matrix elements}, the generating series of $Y_\mu(\mathfrak{gl}_{1|1})$ generators are mapped to operators acting on $\cH^{\sW^{\mathrm{fr}}}_{\underline{\bd},\sA^{\mathrm{fr}}}$ as follows:
\begin{align*}
g_1(z)\mapsto c_{-1/z}((1-\hbar^{-1})\mathsf V)\,, \quad & g_2(z)/g_1(z)\mapsto z^{\mu}c_{-1/z}(\mathsf D_{\Out}-\mathsf D_{\In})\,,\\
e(z)\mapsto \frac{[\overline{\mathfrak{P}}(n+1,n,\underline{\bd})]}{z-c_1(\mathcal L)}\,,\quad & f(z)\mapsto \frac{(-1)^{\bd_{\Out}+n}\,\hbar}{z-c_1(\mathcal L)}[\overline{\mathfrak{P}}(n,n-1,\underline{\bd})]^{\mathrm{t}}\,.
\end{align*}
In the case of $\mu=0$, we have
\begin{align}\label{r-mat gl(1|1)}
-g_{1,0}\otimes g_{2,0}-g_{2,0}\otimes g_{1,0}+2g_{1,0}\otimes g_{1,0}+e_0\otimes f_0-f_0\otimes e_0\mapsto \hbar\,\pmb r,
\end{align}
where $\pmb r$ is the classical 
$R$-matrix \eqref{explicit r matrix}.
\end{Remark}

\subsection{Proof of Theorem \ref{thm ex trivial quiver_main}}
\begin{Definition}
Let $\mathfrak{Y}=\mathfrak{Y}^0\oplus \mathfrak{Y}^1$ be the $\bC(\hbar)$ Lie superalgebra with even part $\mathfrak{Y}^0=\Span_{\bC(\hbar)}\{\mathfrak{a}_r,\mathfrak{h}_r\}_{r\in \bZ_{\geqslant0}}$ and odd part $\mathfrak{Y}^1=\Span_{\bC(\hbar)}\{\mathfrak{e}_r,\mathfrak{f}_r\}_{r\in \bZ_{\geqslant0}}$, and Lie brackets on $\mathfrak{Y}$ are given by
\begin{equation}\label{bracket of frak Y}
\begin{gathered}
\mathfrak{h}_{r}\text{ is central for all $r$},\\
[\mathfrak{e}_n,\mathfrak{e}_m]=[\mathfrak{f}_n,\mathfrak{f}_m]=[\mathfrak{a}_n,\mathfrak{a}_m]=0,\\
[\mathfrak{e}_n,\mathfrak{f}_m]=-\hbar\,\mathfrak{h}_{n+m},\\
[\mathfrak{a}_n,\mathfrak{e}_m]=\mathfrak{e}_{n+m},\quad [\mathfrak{a}_n,\mathfrak{f}_m]=-\mathfrak{f}_{n+m}.
\end{gathered}
\end{equation}
Let $\mathcal U(\mathfrak{Y})$ be the enveloping algebra (over $\bC(\hbar)$) of $\mathfrak{Y}$. Define $\mathcal U_0(\mathfrak{Y})=\mathcal U(\mathfrak{Y})$, and for $\mu<0$, define $\mathcal U_\mu(\mathfrak{Y})$ to be the quotient of $\mathcal U(\mathfrak{Y})$ by the two-sided ideal generated by $\mathfrak{h}_{i}$ for all $i<-\mu-1$ and $\mathfrak{h}_{-\mu-1}-1$.
\end{Definition}

\begin{Remark}
$\mathfrak{Y}$ is isomorphic to $\mathfrak{gl}_{1|1}[u]$, we give the above explicit generators and Lie brackets and use notation $\mathfrak{Y}$ instead of $\mathfrak{gl}_{1|1}[u]$ because we will provide an explicit algebra isomorphism $\mathcal U(\mathfrak{Y})\cong Y(\mathfrak{gl}_{1|1})\otimes_{\bC[\hbar]}\bC(\hbar)$, and we would like to reserve the notation $\mathcal U(\mathfrak{gl}_{1|1}[u])$ that is isomorphic to $\mathrm{gr}\,Y(\mathfrak{gl}_{1|1})\otimes_{\bC[\hbar]}\bC(\hbar)$.
\end{Remark}

We denote $X(z)=\sum_{n\geqslant 0}X_nz^{-n-1}$ for $X=\mathfrak{e},\mathfrak{f},\mathfrak{a},\mathfrak{h}$. 

\begin{Lemma}\label{lem from frak Y to Y(gl(1|1))}
For any $\mu\in \bZ_{\leqslant 0}$, the map 
\begin{align}\label{from frak Y to Y(gl(1|1))}
\mathfrak{e}(z)\mapsto e(z),\quad \mathfrak{f}(z)\mapsto f(z),\quad \exp\left(\sum_{n\geqslant 0}\mathfrak{a}_n\psi_n(z)\right)\mapsto g_1(z),\quad \mathfrak{h}(z)\mapsto g_1(z)^{-1}g_2(z),
\end{align}
uniquely determines a surjective $\bC(\hbar)$ algebra homomorphism $\varphi_\mu\colon \mathcal U_\mu(\mathfrak{Y})\twoheadrightarrow Y_\mu(\mathfrak{gl}_{1|1})\otimes_{\bC[\hbar]}\bC(\hbar)$. Here $\{\psi_n(z)\}_{n\in \bZ_{\geqslant}}$ is a sequence of power series that is uniquely determined by the equation 
\begin{align*}
    \exp\left(\sum_{n\geqslant 0}a^n\psi_n(z)\right)=\frac{z-a}{z+\hbar-a}
\end{align*}
for a formal variable $a$.
\end{Lemma}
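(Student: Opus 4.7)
I would verify that the assignment \eqref{from frak Y to Y(gl(1|1))} respects the defining relations of $\mathcal U_\mu(\mathfrak{Y})$, making $\varphi_\mu$ a well-defined algebra homomorphism. Surjectivity is then immediate: the Gauss-decomposition presentation of $Y_\mu(\mathfrak{gl}_{1|1})$ is generated by $e(z), f(z), g_1(z), g_2(z)$, and each lies in the image --- $g_1(z)$ as the exponential of the image of $\sum_n \mathfrak{a}_n \psi_n(z)$, and $g_2(z) = g_1(z)\, \tilde h(z)$ with $\tilde h$ the image of $\mathfrak h(z)$.

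The first task is to extract Gauss-coordinate commutation relations from the super RTT identity \eqref{RTT gl(1|1)} with $R(u-v) = (u-v)\id + \hbar P$. This is a standard (if tedious) computation in the $\mathfrak{gl}_{1|1}$ Yangian literature (cf.~Gow \cite{G} for $\mu = 0$; the shifted case is a mild variant affecting only the leading behaviour of $g_2(z)$). The output I need is: (a) $\{e(u), e(v)\} = \{f(u), f(v)\} = 0$; (b) mutual commutativity of the four series $g_1(u), g_1(v), g_2(u), g_2(v)$; (c) $\{e(u), f(v)\} = \hbar\bigl(\tilde h(u) - \tilde h(v)\bigr)/(u-v)$ with $\tilde h(z) := g_1(z)^{-1}g_2(z)$; (d) a conjugation formula for $g_1(u)\, e(v)\, g_1(u)^{-1}$ with scalar factor $\tfrac{u-v}{u-v+\hbar}$, together with its $f$-counterpart; and (e) centrality of $\tilde h(z)$, which follows from (d) and its $g_2$-counterpart (the supertrace is central in $\mathfrak{gl}_{1|1}$). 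Items (a), (b), (c), (e) translate directly into the bracket relations of $\mathfrak{Y}$ upon mode expansion; in particular, mode-matching in (c) yields $\{\mathfrak e_n, \mathfrak f_m\} = -\hbar\, \mathfrak h_{n+m}$ under the identification $\mathfrak h(z) \leftrightarrow \tilde h(z)$.

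The main obstacle is the nontrivial relation $[\mathfrak a_n, \mathfrak e_m] = \mathfrak e_{n+m}$, which is precisely why the functions $\psi_n(z)$ are introduced. Taking logarithms of (d) yields, as a formal identity in $u^{-1}$,
\[
\bigl[\log g_1(u),\, e(v)\bigr] \;=\; \log\!\frac{u-v}{u+\hbar - v}\cdot e(v).
\]
The defining identity $\sum_{n\geqslant 0} a^n \psi_n(u) = \log\bigl((u-a)/(u+\hbar - a)\bigr)$, specialized to $a = v$, rewrites the right-hand side as $\sum_n v^n\, \psi_n(u)\cdot e(v)$. Substituting $\log g_1(u) = \sum_n \mathfrak a_n\, \psi_n(u)$ and matching coefficients of $\psi_n(u)\, v^{-m-1}$ on both sides then yields $[\mathfrak a_n, \mathfrak e_m] = \mathfrak e_{n+m}$; the parallel calculation with $f$ in place of $e$ gives $[\mathfrak a_n, \mathfrak f_m] = -\mathfrak f_{n+m}$. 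The leading behaviour $\psi_n(z) = -\hbar\, z^{-n-1} + O(z^{-n-2})$ shows that $\{\psi_n(z)\}_{n\geqslant 0}$ is triangular in the basis $\{z^{-n-1}\}$, ensuring that the $\mathfrak a_n$ are uniquely determined by $\log g_1(z)$, hence the assignment is well-defined. Finally, for $\mu < 0$, the shifted-Yangian normalization $g_2(z) = z^\mu + O(z^{\mu-1})$ translates, under $\mathfrak h(z) \leftrightarrow \tilde h(z)$, into exactly the quotient relations $\mathfrak h_i = 0$ for $i < -\mu-1$ and $\mathfrak h_{-\mu-1} = 1$ defining $\mathcal U_\mu(\mathfrak Y)$.
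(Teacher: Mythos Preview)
Your handling of the key relation $[\mathfrak a_n,\mathfrak e_m]=\mathfrak e_{n+m}$ contains a genuine error. The identity you claim to obtain from (d),
\[
[\log g_1(u),\,e(v)]\;=\;\log\!\frac{u-v}{u+\hbar-v}\cdot e(v),
\]
is false in $Y_\mu(\mathfrak{gl}_{1|1})$. If it held, then $g_1(u)\,e(v)\,g_1(u)^{-1}=\tfrac{u-v}{u+\hbar-v}\,e(v)$; substituting this back into the actual RTT-derived relation $(u-v)[g_1(u),e(v)]=\hbar\,g_1(u)\bigl(e(u)-e(v)\bigr)$ forces $g_1(u)e(u)=0$, hence $e(u)=0$. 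The point is that the RTT relation is \emph{not} a scalar conjugation: the extra $e(u)$ term cannot be dropped, and ``taking logarithms'' does not produce the scalar identity you wrote.

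The paper's argument handles exactly this obstacle by reversing the direction of the computation. Working on the $\mathcal U(\mathfrak Y)$ side and \emph{using} $[\mathfrak a_n,\mathfrak e_m]=\mathfrak e_{n+m}$ (so that $\ad_{\mathfrak a_n}=\ad_{\mathfrak a_1}^{\,n}$ on the $\mathfrak e$'s), one evaluates $\mathfrak A(z)\,\mathfrak e(w)\,\mathfrak A(z)^{-1}=\tfrac{z-\ad_{\mathfrak a_1}}{z+\hbar-\ad_{\mathfrak a_1}}\,\mathfrak e(w)$ and, after using $\ad_{\mathfrak a_1}\mathfrak e(w)=w\,\mathfrak e(w)-\mathfrak e_0$, arrives at the current relation $(z-w)[\mathfrak A(z),\mathfrak e(w)]=\hbar\,\mathfrak A(z)\bigl(\mathfrak e(z)-\mathfrak e(w)\bigr)$ --- the \emph{same} form as the RTT relation for $g_1$ and $e$, extra term and all. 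The triangularity $\mathfrak A_n=-\hbar\,\mathfrak a_n+f_n(\mathfrak a_{<n})$ (which you also noted) then makes this current relation \emph{equivalent} to the mode relations $[\mathfrak a_n,\mathfrak e_m]=\mathfrak e_{n+m}$, so the assignment respects them. Your other verifications (anticommutators of $e,f$; commutativity of the $g_i$; centrality of $\tilde h$; the shifted normalization giving the quotient relations on $\mathfrak h$) are fine and match the paper.
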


\begin{proof}
We first show that \eqref{from frak Y to Y(gl(1|1))} generates a surjective $\bC(\hbar)$ algebra homomorphism from $\mathcal U(\mathfrak{Y})$ to $Y_\mu(\mathfrak{gl}_{1|1})\otimes_{\bC[\hbar]}\bC(\hbar)$. From the RTT relation \eqref{RTT gl(1|1)} we deduce that
\begin{equation}\label{rel in Y(gl(1|1))}
\begin{split}
[g_i(u),g_j(v)]&=0\text{ for all }1\leqslant i,j\leqslant 2\,,\\
[e(u),e(v)]&=0=[f(u),f(v)]\,,\\
(u-v)[e(u),f(v)]&=\hbar\,(g_1(u)^{-1}g_2(u)-g_1(v)^{-1}g_2(v))\,,\\
(u-v)[g_i(u),e(v)]&=\hbar\, g_i(u)(e(u)-e(v))\,,\;\text{ for }\; 1\leqslant i\leqslant 2\,,\\
(u-v)[g_i(u),f(v)]&=\hbar\, (f(v)-f(u))g_i(u)\,,\;\text{ for }\; 1\leqslant i\leqslant 2\,.
\end{split}
\end{equation}
Then all relations except the last line in \eqref{bracket of frak Y} are apparently respected by the map \eqref{from frak Y to Y(gl(1|1))}. 

Notice that $\ad_{\mathfrak{a}_n}(\mathfrak{e}_m)=\ad_{\mathfrak{a}_1}^n(\mathfrak{e}_m)$.
Let $\mathfrak{A}(z)=\exp\left(\sum_{n\geqslant 0}\mathfrak{a}_n\psi_n(z)\right)$, then we have
\begin{align*}
\mathfrak{A}(z)\mathfrak{e}(w)\mathfrak{A}(z)^{-1}=\exp\left(\sum_{n\geqslant 0}\ad_{\mathfrak{a}_n}\psi_n(z)\right)\mathfrak{e}(w)=\exp\left(\sum_{n\geqslant 0}\ad_{\mathfrak{a}_1}^n\psi_n(z)\right)\mathfrak{e}(w)=\frac{z-\ad_{\mathfrak{a}_1}}{z+\hbar-\ad_{\mathfrak{a}_1}}e(w).
\end{align*}
Using the relation $\ad_{\mathfrak{a}_1}\mathfrak{e}(w)=w\,\mathfrak{e}(w)-\mathfrak{e}_0$, we can rewrite the above equation as follows
\begin{align*}
(z-w)[\mathfrak{A}(z),\mathfrak{e}(w)]=-\hbar\,\mathfrak{A}(z)\mathfrak{e}(w)+[\mathfrak{e}_0,\mathfrak{A}(z)].
\end{align*}
Taking $w=z$, we get $[\mathfrak{e}_0,\mathfrak{A}(z)]=\hbar\,\mathfrak{A}(z)\mathfrak{e}(z)$, plug it back to the above and we have
\begin{align}\label{A e commutator}
(z-w)[\mathfrak{A}(z),\mathfrak{e}(w)]=\hbar\,\mathfrak{A}(z)(\mathfrak{e}(z)-\mathfrak{e}(w))\,.
\end{align}
A similar argument gives
\begin{align}\label{A f commutator}
(z-w)[\mathfrak{A}(z),\mathfrak{f}(w)]=\hbar\,(\mathfrak{f}(w)-\mathfrak{f}(z))\,\mathfrak{A}(z)\,.
\end{align}
If we write $\mathfrak{A}(z)=1+\sum_{n\geqslant 0}\mathfrak{A}_n z^{-n-1}$, then for all $n\geqslant0$, $\mathfrak{A}_n=-\hbar\,\mathfrak{a}_n+f_n(\{\mathfrak{a}_{i}\}_{i<n})$ for some polynomial $f_n$ (we set $\mathfrak{a}_{-1}:=-1$). So \eqref{A e commutator} and \eqref{A f commutator} are equivalent to the last two relations in \eqref{bracket of frak Y} respectively. Compare with the last two equations in \eqref{rel in Y(gl(1|1))}, and we see that \eqref{from frak Y to Y(gl(1|1))} also respects the last two relations in \eqref{bracket of frak Y}, so \eqref{from frak Y to Y(gl(1|1))} generates a $\bC(\hbar)$ algebra homomorphism from $\mathcal U(\mathfrak{Y})$ to $Y_\mu(\mathfrak{gl}_{1|1})$. It is obviously surjective. When $\mu<0$, $\{\mathfrak{h}_i\}_{i<-\mu-1}\cup\{\mathfrak{h}_{-\mu-1}-1\}$ are in the kernel. This proves the lemma.
\end{proof}

Define $\widetilde{\varrho}_\mu$ to be the composition $\mathcal U_\mu(\mathfrak{Y})\xrightarrow{\varphi_\mu}Y_\mu(\mathfrak{gl}_{1|1})\otimes_{\bC[\hbar]}\bC(\hbar)\xrightarrow{{\varrho}_\mu}\mathsf Y_\mu(Q,0)$. It is surjective since both ${\varrho}_\mu$ and $\varphi_\mu$ are surjective. Theorem \ref{thm ex trivial quiver_main} follows from the next proposition.

\begin{Proposition}
For all $\mu\in \bZ_{\leqslant 0}$, the map $\widetilde{\varrho}_\mu\colon \mathcal U_\mu(\mathfrak{Y})\to\mathsf Y_\mu(Q,0)$ is injective.
\end{Proposition}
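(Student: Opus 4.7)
The strategy is to reduce the $\mu<0$ case to the already-established $\mu=0$ case via a shift homomorphism, so the first order of business is to strengthen Lemma~\ref{lem rho_0 isom} to say that $\widetilde{\varrho}_0$ is an \emph{isomorphism}. Via the natural identification $\mathfrak{Y}\cong\mathfrak{gl}_{1|1}[u]$ (sending $\mathfrak{e}_r\leftrightarrow E_{12}u^r$, $\mathfrak{f}_r\leftrightarrow E_{21}u^r$, $\mathfrak{a}_r\leftrightarrow E_{11}u^r$, $\mathfrak{h}_r\leftrightarrow(E_{11}+E_{22})u^r$), one has $\mathcal U(\mathfrak{Y})\cong \mathcal U(\mathfrak{gl}_{1|1}[u])$, while the PBW theorem for $Y(\mathfrak{gl}_{1|1})$ of \cite[Cor.~2.1]{G} identifies $\mathrm{gr}\,Y(\mathfrak{gl}_{1|1})_{\loc}$ with the same Lie superalgebra enveloping algebra. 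Since $\varphi_0$ preserves the standard degree filtrations and induces the identity on associated gradeds, it is an isomorphism; combined with Lemma~\ref{lem rho_0 isom}, so is $\widetilde{\varrho}_0$.

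For $\mu<0$, invoke the wrong-way shift homomorphism $\sS^{0,\mu}_z\colon \mathsf Y_\mu(Q,0)\to \mathsf Y_{0,\mu}(Q,0)(\!(z^{-1})\!)$, which is injective by Lemma~\ref{lem wrong-way shift inj}. Composing with the canonical identification $\mathsf Y_{0,\mu}(Q,0)\cong\mathsf Y_0(Q,0)$ (Proposition~\ref{prop shift map inj}) and the isomorphism $\widetilde{\varrho}_0^{-1}$ from paragraph one, one obtains an injection $\mathsf Y_\mu(Q,0)\hookrightarrow\mathcal U(\mathfrak{Y})(\!(z^{-1})\!)$. Injectivity of $\widetilde{\varrho}_\mu$ is therefore equivalent to injectivity of the composed algebra map
$$
\Phi\colon \mathcal U_\mu(\mathfrak{Y})\xrightarrow{\widetilde{\varrho}_\mu}\mathsf Y_\mu(Q,0)\hookrightarrow \mathcal U(\mathfrak{Y})(\!(z^{-1})\!).
$$
The map $\Phi$ can be computed explicitly on the Yangian-type generators $e_r,f_r,g_{1,r},h_s=g_1^{-1}g_2|_{s\text{th mode}}$ of $\mathcal U_\mu(\mathfrak{Y})$ from Remark~\ref{rmk form of shift}: the generators $\mathfrak{f}_r$ and $\mathfrak{a}_r$ map $z$-independently (up to sign), while $\mathfrak{e}_r\mapsto\sum_{k\geqslant 0}\binom{\mu}{k}(-1)^kz^{\mu-k}\mathfrak{e}_{r+k}$ and $\mathfrak{h}_{-\mu+j-1}\mapsto \binom{\mu}{j}(-z)^j+\sum_{k=0}^{j-1}\binom{\mu}{k}(-z)^k\mathfrak{h}_{j-1-k}$ for $j\geqslant 1$.

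To establish injectivity of $\Phi$, first realize $\mathcal U_\mu(\mathfrak{Y})\cong \mathcal U(\widetilde{\mathfrak{Y}}_\mu)/\langle c-1\rangle$ where $\widetilde{\mathfrak{Y}}_\mu$ is a central extension of the Lie superalgebra quotient of $\mathfrak{Y}$ by $\{\mathfrak{h}_i:i<-\mu-1\}$ (with $c$ replacing $\mathfrak{h}_{-\mu-1}$). This yields a PBW basis for $\mathcal U_\mu(\mathfrak{Y})$ consisting of ordered monomials in $\{\mathfrak{f}_r,\mathfrak{a}_r,\mathfrak{h}_{s\geqslant -\mu},\mathfrak{e}_r\}$. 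On a PBW monomial $M$ with $l$ factors of $\mathfrak{e}$ and total $\mathfrak{h}$-depth $J:=\sum j_i$, the image $\Phi(M)$ has top $z$-degree $J+l\mu$ and bottom $z$-degree depending on the $\mathfrak{e}$-subword; extract from $\Phi(M)$ all its coefficients in $\bC(\hbar)(\!(z^{-1})\!)$ expanded against a PBW basis of $\mathcal U(\mathfrak{Y})$ and compare.

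\textbf{Main obstacle.} The chief difficulty is the verification in the last step: unlike the $\mathfrak{e},\mathfrak{f},\mathfrak{a}$ factors, the $z$-leading coefficient of $\Phi(\mathfrak{h}_{-\mu+j-1})$ is a \emph{scalar} $\binom{\mu}{j}(-z)^j$ rather than an $\mathfrak{h}$-type algebra element, so naive leading-coefficient arguments cannot separate PBW monomials that differ only in their $\mathfrak{h}$-structure (e.g.\ $\mathfrak{h}_{-\mu+1}$ versus $\mathfrak{h}_{-\mu}^2$ both contribute $z^2\cdot\text{scalar}$ to the top). Overcoming this requires a bi-filtration combining the $z$-degree with the degree in $\mathcal U(\mathfrak{Y})$ (equivalently, with the PBW word length in the image), so that the constant-in-$z$ terms — which \emph{do} involve the algebra elements $\mathfrak{h}_{j-1-k}$ — distinguish different $\mathfrak{h}$-products. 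A careful triangular systems argument in this bi-filtered setting then pins down each $c_M$ individually, yielding injectivity of $\Phi$ and hence of $\widetilde{\varrho}_\mu$.
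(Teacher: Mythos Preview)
Your overall strategy --- reduce to the $\mu=0$ isomorphism via the wrong-way shift and analyze the resulting map $\Phi\colon\mathcal U_\mu(\mathfrak{Y})\to\mathcal U(\mathfrak{Y})(\!(z^{-1})\!)$ --- is exactly the paper's, and your explicit formulas for $\Phi$ on generators agree with the paper's map $\mathfrak{S}_z^\mu$ (modulo a convention on $\mathfrak{h}_{-1}$). The proposal is correct in outline.

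Where you diverge is in the final injectivity step, and there the paper is considerably cleaner. Since $\mathfrak{S}_z^\mu$ sends each generator type to a combination of the same type, it respects the PBW vector-space factorization $\mathcal U_\mu(\mathfrak{Y})\cong\mathfrak{E}_\mu\otimes\mathfrak{F}_\mu\otimes\mathfrak{A}_\mu\otimes\mathfrak{H}_\mu$, where $\mathfrak{E}_\mu,\mathfrak{F}_\mu,\mathfrak{A}_\mu,\mathfrak{H}_\mu$ are the subalgebras generated by the $\mathfrak{e}$'s, $\mathfrak{f}$'s, $\mathfrak{a}$'s, $\mathfrak{h}$'s respectively. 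This reduces injectivity of $\Phi$ to injectivity of the four restricted maps. The restrictions to $\mathfrak{F}_\mu$ and $\mathfrak{A}_\mu$ are visibly isomorphisms. For $\mathfrak{S}_{z,\mathfrak{H}}^\mu\colon\mathfrak{H}_\mu\to\mathfrak{H}_0[z]$, your ``main obstacle'' dissolves: the scalar term $\binom{\mu}{j}(-z)^j$ has $z$-degree $j\geqslant 1$, so evaluating at $z=0$ annihilates it and leaves $\mathfrak{h}_{-\mu+j-1}\mapsto\mathfrak{h}_{j-1}$, an isomorphism of polynomial algebras. No bi-filtration is needed. The genuinely delicate factor is $\mathfrak{S}_{z,\mathfrak{E}}^\mu\colon\mathfrak{E}_\mu\to\mathfrak{E}_0[\![z^{-1}]\!]$, which you do not isolate: here one cannot set $z=0$, and the paper instead peels off a global power of $z^{\mu}$ (whose remainder evaluated at $z=\infty$ gives $\mathfrak{e}_n\mapsto\pm\mathfrak{e}_n$) and then checks separately that the ``multiply each $\mathfrak{e}$-factor by $z^{-1}$'' operator is injective on the free exterior algebra $\mathfrak{E}_0$ by a wedge-degree argument.

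In short: your plan works, but the bi-filtration you sketch is aimed at the easy half of the problem, and the real subtlety (the $\mathfrak{e}$-part, whose image lives in genuine formal Laurent series) is left unaddressed. The missing simplification is the four-fold PBW factorization.
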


\begin{proof}
$\mu=0$ case. $\mathfrak{Y}$ is graded by setting $\deg X_n=n$ for $X=\mathfrak{e},\mathfrak{f},\mathfrak{a},\mathfrak{h}$. Let $F_\bullet\,\mathcal U(\mathfrak{Y})$ be the filtration induced by the grading on $\mathfrak{Y}$, then the map \eqref{from frak Y to Y(gl(1|1))} preserves the filtrations, i.e. $\varphi_0(F_k\,\mathcal U(\mathfrak{Y}))\subseteq F_k\,Y(\mathfrak{gl}_{1|1})\otimes_{\bC[\hbar]}\bC(\hbar)$. So $\varphi_0$ induces a natural map between associated graded algebras $\bar\varphi_0\colon \mathcal U(\mathfrak{Y})\to \mathrm{gr}\, Y(\mathfrak{gl}_{1|1})\otimes_{\bC[\hbar]}\bC(\hbar)$. Note that $\mathcal U(\mathfrak{Y})$ is canonically isomorphic to $\mathrm{gr}\,\mathcal U(\mathfrak{Y})$ since the filtration is induced by a grading. $\mathrm{gr}\, Y(\mathfrak{gl}_{1|1})\otimes_{\bC[\hbar]}\bC(\hbar)$ is isomorphic to $\mathcal U(\mathfrak{gl}_{1|1}[u])$. Let $\bar x_n$ be the image of $x_n$ in $\mathrm{gr}\, Y(\mathfrak{gl}_{1|1})$ for $x=e,f,g_1,g_2$. Then $\mathfrak{gl}_{1|1}[u]$ has a basis $\{\bar e_n,\bar f_n,\bar g_{1,n},\bar g_{2,n}\}_{n\in \bZ_{\geqslant 0}}$. It is straightforward to compute from \eqref{from frak Y to Y(gl(1|1))} that $\bar\varphi_0(\mathfrak{e}_n)=\bar e_n$, $\bar\varphi_0(\mathfrak{f}_n)=\bar f_n$, $\bar\varphi_0(\mathfrak{a}_n)=-\bar g_{1,n}/\hbar$, and $\bar\varphi_0(\mathfrak{h}_n)=\bar g_{2,n}-\bar g_{1,n}$. In particular, $\bar\varphi_0$ induces an isomorphism. Thus $\varphi_0$ is injective. Then the injectivity of $\widetilde{\varrho}_0$ follows from Lemma \ref{lem rho_0 isom}.

$\mu<0$ case. Define a $\bC(\hbar)$ algebra homomorphism $\mathfrak{S}_z^\mu\colon \mathcal U_\mu(\mathfrak{Y})\to \mathcal U(\mathfrak{Y})(\!(z^{-1})\!)$ as follows:
\begin{align*}
\mathfrak{S}_z^\mu(\mathfrak{e}_n)=(-1)^{\mu}\sum_{k=0}^{\infty}(-z)^{\mu-k}\binom{\mu}{k}\mathfrak{e}_{n+k},\quad \mathfrak{S}_z^\mu(\mathfrak{f}_n)=(-1)^\mu \mathfrak{f}_n,\quad \mathfrak{S}_z^\mu(\mathfrak{a}_n)=\mathfrak{a}_n,\quad \mathfrak{S}_z^\mu(\mathfrak{h}_n)=\sum_{k=0}^{n+\mu+1}(-z)^k\binom{\mu}{k}\mathfrak{h}_{n+\mu-k},
\end{align*}
where we set $\mathfrak{h}_i=0$ for $i<-1$ and $\mathfrak{h}_{-1}=0$ on the right hand side of the last equation. The proof that $\mathfrak{S}_z^\mu$ is an algebra homomorphism is straightforward and is omitted. 

We claim that $\mathfrak{S}_z^\mu$ is injective. Let $\mathfrak{E}_\mu,\mathfrak{F}_\mu,\mathfrak{A}_\mu,\mathfrak{H}_\mu$ be the subalgebras of $\mathcal U_\mu(\mathfrak{Y})$ which are generated by $\{\mathfrak{e}_r\}_{r\in \bZ_{\geqslant0}}$, $\{\mathfrak{f}_r\}_{r\in \bZ_{\geqslant0}}$, $\{\mathfrak{a}_r\}_{r\in \bZ_{\geqslant0}}$, and $\{\mathfrak{h}_s\}_{s\in \bZ_{\geqslant-\mu}}$ respectively. The PBW theorem for $\mathcal U(\mathfrak{Y})$ implies that $\mathcal U_\mu(\mathfrak{Y})$ has a basis (over $\bC(\hbar)$) consisting of ordered super monomials in $\{\mathfrak{e}_r,\mathfrak{f}_r,\mathfrak{a}_r,\mathfrak{h}_s\}_{r\in \bZ_{\geqslant0},s\in \bZ_{\geqslant-\mu}}$. In particular, $\mathcal U_\mu(\mathfrak{Y})\cong \mathfrak{E}_\mu\otimes\mathfrak{F}_\mu\otimes\mathfrak{A}_\mu\otimes\mathfrak{H}_\mu$ where the tensor product is over $\bC(\hbar)$. Then it is enough to show that the restrictions:
\begin{align*}
\mathfrak{S}_{z,\mathfrak{E}}^\mu\colon \mathfrak{E}_\mu\to \mathfrak{E}_0 [\![z^{-1}]\!],\quad \mathfrak{S}_{z,\mathfrak{F}}^\mu\colon \mathfrak{F}_\mu\to \mathfrak{F}_0,\quad \mathfrak{S}_{z,\mathfrak{A}}^\mu\colon \mathfrak{A}_\mu\to \mathfrak{A}_0,\quad \mathfrak{S}_{z,\mathfrak{H}}^\mu\colon \mathfrak{H}_\mu\to \mathfrak{H}_0[z],
\end{align*}
are injective. $\mathfrak{S}_{z,\mathfrak{F}}^\mu$ and $\mathfrak{S}_{z,\mathfrak{A}}^\mu\colon \mathfrak{A}_\mu\to \mathfrak{A}_0$ are obviously isomorphisms. The composition $\mathfrak{H}_\mu\xrightarrow{\mathfrak{S}_{z,\mathfrak{H}}^\mu} \mathfrak{H}_0[z]\xrightarrow{\mod z}\mathfrak{H}_0$ is given by $\mathfrak{h}_n\mapsto \mathfrak{h}_{n+\mu}$, which induces an isomorphism; thus $\mathfrak{S}_{z,\mathfrak{H}}^\mu$ is injective. $\mathfrak{S}_{z,\mathfrak{E}}^\mu$ decomposes into $$\mathfrak{S}_{z,\mathfrak{E}}^\mu=\underbrace{\{z^{-1}\}\circ\cdots\circ\{z^{-1}\}}_{-\mu\text{ copies}}\circ \overline{\mathfrak{S}}_{z,\mathfrak{E}}^\mu,$$ where $\overline{\mathfrak{S}}_{z,\mathfrak{E}}^\mu\colon \mathfrak{E}_\mu\to \mathfrak{E}_0 [\![z^{-1}]\!]$ is given by $\overline{\mathfrak{S}}_z^\mu(\mathfrak{e}_n)=(-1)^{\mu}\sum_{k=0}^{\infty}(-z)^{-k}\binom{\mu}{k}\mathfrak{e}_{n+k}$, $\{z^{-1}\}\colon \mathfrak{E}_0 [\![z^{-1}]\!]\to \mathfrak{E}_0 [\![z^{-1}]\!]$ is the endomorphism generated by $\mathfrak{e}_n\mapsto z^{-1}\mathfrak{e}_n$. The composition $\mathfrak{E}_\mu\xrightarrow{\overline{\mathfrak{S}}_{z,\mathfrak{E}}^\mu} \mathfrak{E}_0[z]\xrightarrow{\mod z}\mathfrak{E}_0$ is given by $\mathfrak{e}_n\mapsto (-1)^{\mu}\mathfrak{e}_{n}$, which induces an isomorphism; thus $\overline{\mathfrak{S}}_{z,\mathfrak{E}}^\mu$ is injective. On the other hand, $\mathfrak{E}_0$ is the free exterior algebra $\bigwedge^\bullet\left(\Span_{\bC(\hbar)}\{\mathfrak{e}_i\}_{i\in \bZ_{\geqslant 0}}\right)$. Suppose that
\begin{align*}
f=\sum_{m=0}^{\infty}\sum_{r\geqslant 0}f_{m,r}z^{-m}\in \ker(\{z^{-1}\}),
\end{align*}
where the second summation is finite for any fixed $m$, and $f_{m,r}\in \bigwedge^r\left(\Span_{\bC(\hbar)}\{\mathfrak{e}_i\}_{i\in \bZ_{\geqslant 0}}\right)$. Then we have
\begin{align*}
0=\{z^{-1}\}(f)=\sum_{m=0}^{\infty}\sum_{r\geqslant 0}f_{m,r}z^{-m-r},\;\text{ equivalently }\;\sum_{m+r=n}f_{m,r}=0\; \text{ for any }\; n\in \bZ_{\geqslant0}.
\end{align*}
$\bigwedge^r\left(\Span_{\bC(\hbar)}\{\mathfrak{e}_i\}_{i\in \bZ_{\geqslant 0}}\right)$ are linearly independent for different $r$, so $f_{m,r}=0$ for all $m$ and $r$. Thus $\ker(\{z^{-1}\})=0$. This shows that $\mathfrak{S}_{z,\mathfrak{E}}^\mu$ is injective and the claim follows.

Finally, straightforward comparison shows that $\mathfrak{S}_z^\mu$ is compatible with the wrong-way shift homomorphism (Definition \ref{def of wrong-way shift}):
\begin{align*}
    \widetilde{\varrho}_0\circ\mathfrak{S}_z^\mu=\sS^{0,\mu}_z\circ \widetilde{\varrho}_\mu.
\end{align*}
The left hand side is injective by what we have shown, so $\widetilde{\varrho}_\mu$ is injective.
\end{proof}

It follows from the proof that $\varphi_\mu\colon \mathcal U_\mu(\mathfrak{Y})\to Y_\mu(\mathfrak{gl}_{1|1})\otimes_{\bC[\hbar]}\bC(\hbar)$ is an isomorphism.

\begin{Corollary}\label{cor PBW Y(gl(1|1))}
$Y_\mu(\mathfrak{gl}_{1|1})$ is a free $\bC[\hbar]$-module with a basis given by ordered super monomials in 
$$\left\{e_{r},f_r,g_{1,r},g_{2,s}\right\}_{r\in \bZ_{\geqslant0},s\in \bZ_{\geqslant-\mu}}. $$
\end{Corollary}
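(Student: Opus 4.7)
The plan is to combine the isomorphism $\varphi_\mu \colon \mathcal{U}_\mu(\mathfrak{Y}) \xrightarrow{\cong} Y_\mu(\mathfrak{gl}_{1|1}) \otimes_{\bC[\hbar]} \bC(\hbar)$ established in the preceding proposition with a spanning/independence analysis over $\bC[\hbar]$. The Lie superalgebra $\mathfrak{Y}$ is free as a $\bC(\hbar)$-module on the generators $\{\mathfrak{e}_r, \mathfrak{f}_r, \mathfrak{a}_r, \mathfrak{h}_s\}$, so by the PBW theorem for Lie superalgebras, ordered super monomials in these generators form a $\bC(\hbar)$-basis of $\mathcal{U}_\mu(\mathfrak{Y})$. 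Through $\varphi_\mu$, and using that the change of generators $\mathfrak{a}_n \leftrightarrow g_{1,n}$ (cf.\,the computation $\bar\varphi_0(\mathfrak{a}_n) = -\bar g_{1,n}/\hbar$ in the proof of the preceding proposition) is triangular with leading coefficients invertible in $\bC(\hbar)$, I get that ordered super monomials in $\{e_r, f_r, g_{1,r}, g_{2,s}\}$ form a $\bC(\hbar)$-basis of $Y_\mu(\mathfrak{gl}_{1|1}) \otimes_{\bC[\hbar]} \bC(\hbar)$.

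First I would establish the spanning statement over $\bC[\hbar]$. Since the RTT relations \eqref{rel in Y(gl(1|1))} have polynomial coefficients in $\hbar$, each super commutator of two generators is a $\bC[\hbar]$-linear combination of strictly smaller-total-degree monomials. A straightforward induction on total degree then rewrites any product of generators as a $\bC[\hbar]$-linear combination of ordered super monomials, without ever inverting $\hbar$.

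For linear independence over $\bC[\hbar]$, it suffices to prove that $Y_\mu(\mathfrak{gl}_{1|1})$ is $\hbar$-torsion-free, for then the canonical map $Y_\mu(\mathfrak{gl}_{1|1}) \to Y_\mu(\mathfrak{gl}_{1|1}) \otimes_{\bC[\hbar]} \bC(\hbar)$ is injective and the $\bC(\hbar)$-PBW established above forces any vanishing $\bC[\hbar]$-relation to be trivial. I would prove $\hbar$-freeness by examining the quotient $Y_\mu(\mathfrak{gl}_{1|1}) / \hbar Y_\mu(\mathfrak{gl}_{1|1})$: at $\hbar=0$, the RTT relations degenerate into super-commutation relations among the generators, so there is a surjection from the free super-polynomial algebra on $\{e_r, f_r, g_{1,r}, g_{2,s}\}$ onto the $\bC$-algebra $Y_\mu(\mathfrak{gl}_{1|1}) / \hbar Y_\mu(\mathfrak{gl}_{1|1})$. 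Combined with the $\bC(\hbar)$-linear independence already established, this surjection is in fact an isomorphism, and a standard Nakayama-style lifting argument from the $\hbar=0$ fiber back to $Y_\mu(\mathfrak{gl}_{1|1})$ produces the $\bC[\hbar]$-basis claimed in the corollary.

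The main obstacle is the $\hbar$-torsion-freeness; the subtlety is that the triangular change of basis between the Lie-superalgebraic generators $\mathfrak{a}_n$ and the Gauss generators $g_{1,n}$ intrinsically involves $\hbar^{-1}$, so the isomorphism $\varphi_\mu$ does not directly descend to an integral statement. Controlling the $\hbar = 0$ reduction via the explicit form of the RTT relations is what bridges this gap.
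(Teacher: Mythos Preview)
Your proposal has the right two ingredients: spanning over $\bC[\hbar]$ from the RTT relations, and $\bC(\hbar)$-linear independence of the ordered monomials via the isomorphism $\varphi_\mu$ and PBW for $\mathcal{U}_\mu(\mathfrak{Y})$. These are exactly the two steps in the paper's proof. However, you take an unnecessary detour through $\hbar$-torsion-freeness. The paper simply observes that $\bC(\hbar)$-linear independence of the images in the localization \emph{directly} implies $\bC[\hbar]$-linear independence: if $\sum a_i v_i = 0$ in $Y_\mu(\mathfrak{gl}_{1|1})$ with $a_i \in \bC[\hbar]$, applying the localization map gives $\sum a_i (v_i \otimes 1) = 0$ in $Y_\mu(\mathfrak{gl}_{1|1})\otimes_{\bC[\hbar]}\bC(\hbar)$, and $\bC(\hbar)$-independence forces each $a_i = 0$ in $\bC(\hbar)$, hence in $\bC[\hbar]$. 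No injectivity of the localization map (i.e., torsion-freeness of $Y_\mu(\mathfrak{gl}_{1|1})$) is required for this implication. Your ``main obstacle'' is therefore not an obstacle at all, and the Nakayama-style lifting you sketch is superfluous. Spanning plus linear independence over $\bC[\hbar]$ gives a basis, and freeness (hence torsion-freeness) then follows as a \emph{consequence}, not a prerequisite.
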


\begin{proof}
The relations \eqref{rel in Y(gl(1|1))} implies that ordered super monomials in $\left\{e_{r},f_r,g_{1,r},g_{2,s}\right\}_{r\in \bZ_{\geqslant0},s\in \bZ_{\geqslant-\mu}}$ generate $Y_\mu(\mathfrak{gl}_{1|1})$ as a $\bC[\hbar]$-module. To show that these monomials are $\bC[\hbar]$-independent, it suffices to show that they are $\bC(\hbar)$ linear independent after localization. Note that ordered super monomials in $\left\{e_{r},f_r,g_{1,r},g_{2,s}\right\}_{r\in \bZ_{\geqslant0},s\in \bZ_{\geqslant-\mu}}$ can be $\bC(\hbar)$ linearly represented by ordered super monomials in $\left\{\rho_\mu(\mathfrak{e}_{r}),\rho_\mu(\mathfrak{f}_{r}),\rho_\mu(\mathfrak{a}_{r}),\rho_\mu(\mathfrak{h}_{s})\right\}_{r\in \bZ_{\geqslant0},s\in \bZ_{\geqslant-\mu}}$ and vice versa, and the latter set of monomials is a basis of $\mathcal U_\mu(\mathfrak{gl}_{1|1})$ by the PBW theorem for $\mathcal U(\mathfrak{gl}_{1|1})$. This proves the linear independence.
\end{proof}

\subsection{Quantum multiplication by divisors}\label{sec qm by div_trivial}

Consider the framing dimension $\underline{\bd}=(r_1,r_2)$ with $r_1\geqslant r_2$ in the following framed quiver with zero potential:
\begin{equation*}
\begin{tikzpicture}[x={(1cm,0cm)}, y={(0cm,1cm)}, baseline=0cm]
  \node[draw,circle,fill=white] (Gauge) at (0,0) {$n$};
  \node[draw,rectangle,fill=white] (Framing1) at (-2,0) {$r_1$};
  \node[draw,rectangle,fill=white] (Framingk) at (2,0) {$r_2$};

  \draw[<-] (Gauge.180) -- (Framing1.0) node[midway,above] {\scriptsize $A$};

  \draw[->] (Gauge.0) -- (Framingk.180) node[midway,above] {\scriptsize $B$};

\end{tikzpicture}
\qquad
\sW=0. 
\end{equation*} 
Denote $X_{n,r_1,r_2}=\cM(n,\underline{\bd})$. Apply Theorem \ref{thm qm div for asym_sp inj} to $X_{n,r_1,r_2}$ and we get an explicit formula of modified quantum multiplication by divisor on $X_{n,r_1,r_2}$. 
Note that $X_{r_1,r_1,r_2}$ is an affine space, so we shall focus on the $r_1>n$ cases.

\begin{Proposition}\label{prop on qmbd on gl(1|1)}
Assume $r_1>n$. Let $\mathcal O(1)$ be the determinant of tautological bundle $\mathsf V$ on $X_{n,r_1,r_2}$, then 
\begin{align*}
c_1(\mathcal O(1))\,\widetilde{\star}\,\cdot =
\begin{cases}
c_1(\mathcal O(1))\cup \cdot + \hbar^{-1}\frac{z}{1+z}e_0f_0, &\text{ if $r_1=r_2$ },\\
c_1(\mathcal O(1))\cup \cdot +\hbar^{-1}z\,e_0f_0, &\text{ if $r_1>r_2$ },
\end{cases}
\end{align*}
where $z$ is the parameter of curve classes, $e_0,f_0$ are part of the generators of $Y_{r_2-r_1}(\mathfrak{gl}_{1|1})$ in Definition \ref{def of shifted Y(gl(1|1))}.
\end{Proposition}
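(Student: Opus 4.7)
The plan is to apply Theorem \ref{thm on qm div for sym} when $r_1=r_2$ and Theorem \ref{thm qm div for asym_sp inj} when $r_1>r_2$, after unpacking the Lie-superalgebra data for $(Q,\sW=0)$. First I would check the injectivity hypothesis of the specialization map: since $\sW=0$, the critical cohomology of $X_{n,r_1,r_2}$ coincides with the ordinary $\sT$-equivariant cohomology, so $\mathrm{sp}$ is in fact the identity, and both theorems apply.

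Next I would identify $\mathfrak{g}_{Q,0}\subset \mathsf Y_0(Q,0)$ and its root data. The unique vertex $0$ has parity $|0|=1$ (no loops), so by Theorem \ref{thm ex trivial quiver_main} combined with the PBW Theorem \ref{thm grY}, the algebra $\mathsf Y_0(Q,0)$ is isomorphic to $Y(\mathfrak{gl}_{1|1})_{\loc}$ and $\mathfrak{g}_{Q,0}\cong\mathfrak{gl}_{1|1}$. In particular there is exactly one positive root $\alpha=1$, with parity $|\alpha|=1$, and the root spaces are one-dimensional, spanned by $e_0$ and $f_0$ respectively. Comparing the explicit expression \eqref{r-mat gl(1|1)} for the classical $R$-matrix with the normalised form \eqref{explicit r matrix} in Proposition \ref{prop g(Q,w)}(6), one reads off that the Casimir element is
\begin{equation*}
\mathrm{Cas}_{1}=\hbar^{-1}\,e_0 f_0\in \mathsf Y_0(Q,0).
\end{equation*}
Since $c_1(\mathcal O(1))=c_1(\mathsf V)$, the relevant cocharacter $\lambda$ is $\lambda=1$, so $\alpha(\lambda)=1$.

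For $r_1=r_2$ this is immediate: plugging $|\alpha|=1$ and $\alpha(\lambda)=1$ into \eqref{eq:modified product} gives
\begin{equation*}
-\alpha(\lambda)\,\frac{z^{\alpha}}{(-1)^{|\alpha|}-z^{\alpha}}\mathrm{Cas}_{\alpha}
=-\frac{z}{-1-z}\cdot\hbar^{-1}e_0f_0
=\hbar^{-1}\,\frac{z}{1+z}\,e_0f_0,
\end{equation*}
which matches the first case of the statement. For $r_1>r_2$, set $\mu=r_2-r_1<0$; then $\alpha\cdot\mu=-k$ with $k=r_1-r_2>0$, so the second sum in \eqref{eq:modified product2} contributes with coefficient $-(-1)^{|\alpha|}\alpha(\lambda)z^{\alpha}=z$, times the shifted Casimir $\mathrm{Cas}_{1,\mu}$. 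It remains to compute $\mathrm{Cas}_{1,\mu}$ using the parametrised shift homomorphism $\sS_{0,\mu;z}$ of \S\ref{sec shift map}. By Remark \ref{rmk form of shift}, $\sS_{0,\mu;z}(f_0)=(-1)^k f_0$ and
\begin{equation*}
\sS_{0,\mu;z}(e_0)=[\,(z-u)^{k}e(u)\,]_{u^{-1}}=\sum_{j=0}^{k}\binom{k}{j}(-1)^{j}z^{k-j}e_j,
\end{equation*}
so the leading $z^k$-coefficient of $(-1)^{\alpha\cdot\mu}\,\sS_{0,\mu;z}(\mathrm{Cas}_1)$, which by Definition \ref{def of shifted Casimir} equals $\mathrm{Cas}_{1,\mu}$, is exactly $\hbar^{-1}e_0f_0$. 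Substituting yields the second case. The main technical point is the identification of the shifted Casimir via the explicit shift-homomorphism formulas; everything else is formal application of the general theorems.
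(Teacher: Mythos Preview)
Your argument correctly identifies the Casimir and shifted Casimir operators, and the application of the general theorems is right as far as it goes. However, you have a genuine gap: both Theorem \ref{thm on qm div for sym} and Theorem \ref{thm qm div for asym_sp inj} determine quantum multiplication only up to a scalar operator (the ``$\cdots$'' in \eqref{eq:modified product} and \eqref{eq:modified product2}), whereas the proposition asserts an exact formula with no scalar. You never address this, and in fact you never use the hypothesis $r_1>n$ anywhere in your argument.

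The paper's proof handles this by noting that the scalar is fixed by the requirement that the purely quantum part annihilate the identity, so one must compute $e_0f_0(1)$. Using the description \eqref{two complexes}, the correspondence $\overline{\mathfrak{P}}(n,n-1,\underline{\bd})$ is a $\bP^{r_1-n}$-bundle over $X_{n-1,r_1,r_2}$; the assumption $r_1>n$ makes this fiber positive-dimensional, so the pushforward defining $f_0(1)$ vanishes for degree reasons. This is precisely where the hypothesis $r_1>n$ enters, and without it the scalar need not be zero.
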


\begin{proof}
By Theorem \ref{thm compare with MO yangian} and the formula for the classical $R$-matrix \eqref{r-mat gl(1|1)}, the Casimir operators for $\mathsf Y_0(Q,0)$ are $$\mathrm{Cas}_{1}=\hbar^{-1}e_0f_0, \quad \mathrm{Cas}_{i}=0, \,\, \mathrm{if}\, \, i>1.$$ 
Then by Theorem \ref{thm on qm div for sym}, we have 
$$c_1(\mathcal O(1))\,\widetilde{\star}\,\cdot=c_1(\mathcal O(1))\cup \cdot + \hbar^{-1}\frac{z}{1+z}e_0f_0+\mathrm{const}. $$ 
The constant is given by $-\hbar^{-1}\frac{z}{1+z}e_0f_0$ acting on $1\in H^{\sT_0}(X_{n,r_1,r_2})$. According to Theorem \ref{cor shifted yangian action}, $f_0(1)$ is given by the pushforward of the fundamental class $[\overline{\mathfrak{P}}(n,n-1,\underline{\bd})]^{\mathrm{t}}$ to $X_{n-1,r_1,r_2}$. It follows from \eqref{two complexes} that $\overline{\mathfrak{P}}(n,n-1,\underline{\bd})$ is a $\bP^{r_1-n}$-bundle over $X_{n-1,r_1,r_2}$, so $f_0(1)=0$ by our assumption that $r_1>n$. This proves the $r_1=r_2$ case.

For the $r_1>r_2$ case, it suffices to show that 
$$\mathrm{Cas}_{1,r_2-r_1}= \hbar^{-1}e_0f_0, $$ 
where $e_0,f_0$ are part of the generators of $Y_{r_2-r_1}(\mathfrak{gl}_2)$. We put bar on the generators of $Y_{r_2-r_1}(\mathfrak{gl}_2)$ to distinguish from their counterparts of $Y_{0}(\mathfrak{gl}_2)$. From the proof of Lemma \ref{lem shift map}, we have equations of operators: 
$$ i^*e(u)\,p^{*}=H_{z}\,\bar e(u)\,H_{z}^{-1}=\frac{(z-c_1(\mathcal L))^{r_1-r_2}}{u-c_1(\mathcal L)}\,[\overline{\mathfrak{P}}(n+1,n,\underline{\bd})]
=z^{r_1-r_2}\bar e(u)+\text{lower order in $z$}, $$
$$i^*f(u)\,p^{*}=(-1)^{r_1-r_2}\bar f(u).$$
By expanding in the $u$-power, we obtain 
$$i^*e_r\,p^{*}=z^{r_1-r_2}\, \bar e_r+\text{lower order in $z$}, \quad i^*f_r\,p^{*}=(-1)^{r_1-r_2}\bar f_r. $$ 
This implies that 
\begin{equation*}
i^*\mathrm{Cas}_1\,p^{*}=\hbar^{-1}i^*e_0f_0\,p^{*}=(-z)^{r_1-r_2}\,\hbar^{-1}\, \bar e_0\bar f_0+\text{lower order in $z$},
\end{equation*}
therefore $\mathrm{Cas}_{1,r_2-r_1}=\lim_{z\to \infty}(-z)^{r_2-r_1}i^*\mathrm{Cas}_1\,p^{*}=\hbar^{-1}\bar e_0\bar f_0$ by Definition \ref{def of shifted Casimir}.
\end{proof}

\section{Example: Jordan quiver and modules of shifted Yangians of \texorpdfstring{$\mathfrak{gl}_2$}{gl(2)}}\label{sect on higher spin}
In this section, we consider the example of the Jordan quiver consisting of one node and one edge loop:
\begin{equation*}
Q:
\begin{tikzpicture}[x={(1cm,0cm)}, y={(0cm,1cm)}, baseline=0cm]
  \node[draw,circle,fill=white] (Gauge) at (0,0) {$\phantom{n}$};

  \draw[->,looseness=7] (Gauge.225) to[out=225,in=135] (Gauge.135);
\end{tikzpicture}
\end{equation*}
We regard $Q$ as the tripled quiver (Example \ref{ex doubled vs tripled}) associated to the $A_1$ quiver (with one node with no edge). 
Let $\sT_0=\bC^*_\hbar$ be the torus that scales the loop with weight $-1$. Fix a cyclic stability $\theta<0$ when we consider associated quiver varieties.  


The main theorem of this section is an \textit{explicit} description of the shifted Yangian $\mathsf Y_{\mu}(Q,0)$ for $Q$ \eqref{equ on rtt yangian}. 
\begin{Theorem}\label{thm ex Jordan_main}
Let $Q$ be the Jordan quiver. For arbitrary $\mu\in \bZ_{\leqslant 0}$, there is a natural algebra isomorphism
\begin{align*}
    Y_{\mu}(\mathfrak{gl}_2)\otimes_{\bC[\hbar]} \bC(\hbar)\cong \mathsf Y_{\mu}(Q,0)\:,
\end{align*}
where $Y_{\mu}(\mathfrak{gl}_2)$ is the Frassek-Pestun-Tsymbaliuk 
$\mu$-shifted Yangian of $\mathfrak{gl}_2$ (Definition \ref{def of shifted Y(gl_2)}).
\end{Theorem}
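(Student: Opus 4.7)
The proof will mirror the structure of the previous Theorem \ref{thm ex trivial quiver_main}, with the super $R$-matrix $(u+\hbar P)/(u+\hbar)$ on $\mathbb{C}^{1|1}\otimes\mathbb{C}^{1|1}$ replaced by the ordinary rational $\mathfrak{gl}_2$ $R$-matrix $(u+\hbar P)/(u+\hbar)$ on $\mathbb{C}^2\otimes\mathbb{C}^2$. The plan is to fix an admissible auxiliary datum $\mathfrak{c}$ with framing dimension $\mathbf{1}$, zero framed potential, and $\sT_0=\mathbb{C}^*_\hbar$ acting with the prescribed weight $-1$ on the loop and with a nonzero weight on the out-going framing arrow. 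The auxiliary space $\cH_{\mathfrak{c}}$ is infinite dimensional (since $\cM(n,\mathbf{1})\cong\mathbb{A}^{2n}$ for all $n\geqslant 0$), but by Theorem \ref{thm e f h as R matrix elements} only the rank-two block spanned by the vacuum $|\mathbf{0}\rangle$ and first excitation $|\delta\rangle$ is needed to extract the Gauss-decomposition generators. A direct computation of critical stable envelopes in this block, parallel to \S\ref{sec fund R-mat gl(1|1)}, shows that the resulting restricted matrix $L(u)$ on $\mathbb{C}^2\otimes \cH^{\sW^{\mathrm{fr}}}_{\underline{\bd},\sA^{\mathrm{fr}}}$ satisfies the RLL relation with the $\mathfrak{gl}_2$ $R$-matrix $R(z)=z\,\mathrm{id}+\hbar P$. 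Invoking admissibility of $\mathcal{C}=\{\mathfrak{c}\}$ and Theorem \ref{thm admissible}, this produces a surjective $\mathbb{C}(\hbar)$-algebra homomorphism $\varrho_\mu\colon Y_\mu(\mathfrak{gl}_2)\otimes_{\mathbb{C}[\hbar]}\mathbb{C}(\hbar)\twoheadrightarrow \mathsf{Y}_\mu(Q,0)$.

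For the $\mu=0$ case, I would prove injectivity of $\varrho_0$ by comparing associated graded algebras with respect to the filtration by degree in the spectral parameter. The Frassek--Pestun--Tsymbaliuk PBW theorem gives $\mathrm{gr}\,Y(\mathfrak{gl}_2)\otimes_{\mathbb{C}[\hbar]}\mathbb{C}(\hbar)\cong \mathcal{U}(\mathfrak{gl}_2[u])$, while Theorem \ref{thm grY} gives $\mathrm{gr}\,\mathsf{Y}_0(Q,0)\cong \mathcal{U}(\mathfrak{g}_{Q,0}[u])$; the induced map is obtained by adjoining the variable $u$ to the natural map $\mathfrak{gl}_2\to\mathfrak{g}_{Q,0}$. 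To identify these Lie algebras, I would invoke Theorem \ref{thm compare with MO yangian}: since the Jordan quiver is precisely the tripled quiver of the one-node quiver $A_1$ (whose doubled quiver has no arrows, hence trivial moment map and trivial canonical cubic potential), we have $\mathsf{Y}_0(Q,0)\cong \mathsf{Y}^{\mathrm{MO}}_{A_1}\otimes_{\mathbb{C}[\hbar]}\mathbb{C}(\hbar)$, which in turn is identified with $Y(\mathfrak{gl}_2)_{\loc}$ in \cite{MO}. Combining these identifications with the surjectivity of $\varrho_0$ forces $\mathfrak{gl}_2\cong \mathfrak{g}_{Q,0}$ on the classical level, so $\mathrm{gr}\,\varrho_0$ is an isomorphism, whence $\varrho_0$ itself is an isomorphism.

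For the shifted case $\mu<0$, I would define an algebraic parametrized shift $\mathfrak{S}_z^\mu\colon Y_\mu(\mathfrak{gl}_2)_{\loc}\to Y_0(\mathfrak{gl}_2)_{\loc}(\!(z^{-1})\!)$ acting on the Gauss generators $e_r,f_r,g_{1,r},g_{2,s}$ by the $\mathfrak{gl}_2$-analogue of the formulas used in the proof of Theorem \ref{thm ex trivial quiver_main}: namely, shifting via $z$ in the appropriate factors so that $g_2(u)\mapsto (u-z)^{\mu}g_2(u)$ and $e(u),f(u)$ transform consistent with the bond factor. The triangular decomposition of $Y_\mu(\mathfrak{gl}_2)$ (consequence of FPT PBW) allows one to check injectivity of $\mathfrak{S}_z^\mu$ factor by factor, exactly as in the $\mathfrak{gl}_{1|1}$ argument. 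Finally, compatibility with the geometric wrong-way shift homomorphism of Definition \ref{def of wrong-way shift}, via $\varrho_0\circ \mathfrak{S}_z^\mu=\sS^{0,\mu}_z\circ \varrho_\mu$, combined with the injectivity of $\sS^{0,\mu}_z$ (Lemma \ref{lem wrong-way shift inj}) and the established isomorphism $\varrho_0$, will force $\varrho_\mu$ to be injective.

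The main obstacle will be the third step: verifying that the algebraic shift $\mathfrak{S}_z^\mu$ precisely intertwines with the geometric wrong-way shift $\sS^{0,\mu}_z$ under $\varrho_0$. This requires explicit computation of how $\sS^{0,\mu}_z$ transforms the images of $g_1(u), g_2(u), e(u), f(u)$ using the formulas of Remark \ref{rmk form of shift} (in particular, the modification by the operator $\widetilde{H}_z=c_{-1/z}(\mathsf{V})^{-\mu}$), and matching them against the FPT shift formulas --- a technical but essentially mechanical calculation. Once this compatibility is pinned down, the remaining implications are formal diagram chases paralleling the trivial-quiver case.
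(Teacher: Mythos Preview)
There is a gap in your surjectivity argument. With the zero framed potential, the auxiliary space $\cH_{\mathfrak{c}}$ is infinite-dimensional (it is the dual Verma module $\cV$), and the Yang--Baxter equation gives relations only for the \emph{full} $R$-matrix on $\cH_{\mathfrak{c}}\otimes\cH_{\mathfrak{c}}$, which does not preserve the rank-two subspace $\Span\{|\mathbf 0\rangle,|\delta\rangle\}^{\otimes 2}$. Hence the restricted $2\times 2$ block $L(u)$ does not automatically satisfy an RLL relation with the $\mathfrak{gl}_2$ $R$-matrix, and you do not obtain an algebra map from $Y_\mu(\mathfrak{gl}_2)$. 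The fix is the one the paper uses: take the admissible auxiliary datum with framed potential $\sW_1=B\Phi A$, so that $\cH_1$ is genuinely two-dimensional and the full $R$-matrix $R_{\cH_1,\cH_1}(u)=(u+\hbar P)/(u+\hbar)$ \emph{is} the $\mathfrak{gl}_2$ $R$-matrix (see \eqref{equ on RH1H1}, Proposition \ref{prop shifted Y(gl(2)) action}, and Remark \ref{rmk on Y(gl_2) map}). With this correction, your surjectivity and $\mu=0$ injectivity arguments go through and match the paper.

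For $\mu<0$ you take a genuinely different route. The paper does not mimic the $\mathfrak{gl}_{1|1}$ wrong-way shift argument; instead it proves directly (Proposition \ref{prop embed Y into end}) that $Y_\mu(\mathfrak{gl}_2)$ acts faithfully on the collection of state spaces, by induction on $\mu$: at each step one tensors with the prefundamental module $\cF$, factors the coproduct through the Weyl algebra $D\otimes Y_{\mu+1}(\mathfrak{gl}_2)$, and proves that $Y_\mu(\mathfrak{gl}_2)\to D\otimes Y_{\mu+1}(\mathfrak{gl}_2)$ is injective by passing to the classical limit $\hbar=0$ and exhibiting a surjective multiplication map $\cW^{(1,0)}_{(0,1)}\times\cW_{\mu+1}\to\cW_\mu$ of subschemes of $\GL_2(\!(z^{-1})\!)$. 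Your approach---an algebraic wrong-way shift $\mathfrak{S}_z^\mu$ intertwining with the geometric $\sS^{0,\mu}_z$---should also work, and has the advantage of staying parallel to the $\mathfrak{gl}_{1|1}$ section; the cost is the careful bookkeeping you anticipate in matching the shift formulas of Remark \ref{rmk form of shift} with the FPT generators, plus invoking the FPT PBW theorem to control each triangular factor. The paper's route trades that combinatorics for a single geometric surjectivity check at $\hbar=0$.
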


The $\mu=0$ case of Theorem \ref{thm ex Jordan_main} follows from Theorem \ref{thm compare with MO yangian} and the fact that $\mathsf Y^{\mathrm{MO}}_Q\cong Y_0(\mathfrak{gl}_2)$ \cite[Prop.\,11.3.1]{MO}. In \S \ref{sect on proof of thm jordan}, we give a uniform proof of Theorem \ref{thm ex Jordan_main} for all $\mu\in \bZ_{\leqslant 0}$.

We also study several representations of shifted Yangian $Y_{\mu}(\mathfrak{gl}_2)$ constructed from critical cohomology, including 
higher spin, dual Verma and prefundamental modules.
We perform \textit{explicit} computations of cohomological stable envelopes and root $R$-matrices in \S \ref{sect on comp stab of gl2} and 
\S \ref{sect on comp rmatrix of gl2}. We give a detailed study of the \textit{fusion procedure} from geometric construction in \S \ref{sect on fusion gl2} (see Corollary \ref{cor fusion formula} for the fusion formula). 
We deduce the explicit formula for quantum multiplication by divisors on the framed quiver varieties (Proposition \ref{prop on qmbd on gl2}). 

\subsection{Bases of \texorpdfstring{$\mathcal H_N$}{HN}, \texorpdfstring{$\mathcal V$}{V}, and \texorpdfstring{$\mathcal F$}{F}}\label{sec bases of H_N, V, F}

The following are three basic cases of framings and potentials:
\begin{enumerate}

\item (Higher spin module) $\underline{\bd}=\mathbf 1=(1,1)$, $\sW_N=B\Phi^NA$ (with $N\in \bZ_{\geqslant 0}$), and we require that $\bC^*_\hbar$ scales $B$ with weight $N$ and fixes $A$. Denote $\sw_N=\tr(\sW_N)$.
\begin{equation*}
\begin{tikzpicture}[x={(1cm,0cm)}, y={(0cm,1cm)}, baseline=0cm]
  \node[draw,circle,fill=white] (Gauge) at (0,0){$\phantom{n}$}; 
  \node[draw,rectangle,fill=white] (Framing) at (2,0) {1};
  \node (Z) at (-1,0) {\scriptsize $\Phi$};
  \draw[<-] (Gauge.340) -- (Framing.205) node[midway,below] {\scriptsize $A$};
  \draw[->] (Gauge.20) -- (Framing.155) node[midway,above] {\scriptsize $B$};

  \draw[->,looseness=7] (Gauge.225) to[out=225,in=135] (Gauge.135);
\end{tikzpicture}
\qquad \sW_N=B\Phi^NA.
\end{equation*}
We define  
\begin{equation}\label{equ on HNn}
   \mathcal H_N(n):=H^{\bC^*_\hbar}(\cM(n,\mathbf 1),\sw_N), \quad  \mathcal H_N:=H^{\bC^*_\hbar}(\cM(\mathbf 1),\sw_N)=\bigoplus_{n\in \bZ_{\geqslant 0}} \mathcal H_N(n).
\end{equation}

\item (Dual Verma module) $\underline{\bd}=\mathbf 1=(1,1)$, $\sW=0$, and take another torus $\bC^*_t$ which scales $B$ with weight $1$ and fixes other quiver data.
    \begin{equation*}
\begin{tikzpicture}[x={(1cm,0cm)}, y={(0cm,1cm)}, baseline=0cm]
  \node[draw,circle,fill=white] (Gauge) at (0,0){$\phantom{n}$}; 
  \node[draw,rectangle,fill=white] (Framing) at (2,0) {1};
  \node (Z) at (-1,0) {\scriptsize $\Phi$};
  \draw[<-] (Gauge.340) -- (Framing.205) node[midway,below] {\scriptsize $A$};
  \draw[->] (Gauge.20) -- (Framing.155) node[midway,above] {\scriptsize $B$};

  \draw[->,looseness=7] (Gauge.225) to[out=225,in=135] (Gauge.135);
\end{tikzpicture}
\qquad \sW=0.
\end{equation*}
We define 
\begin{align*}
    \mathcal V(n):=H^{\bC^*_\hbar\times \bC^*_t}(\cM(n,\mathbf 1)), \quad \mathcal 
    V:=H^{\bC^*_\hbar\times \bC^*_t}(\cM(\mathbf 1))=\bigoplus_{n\in \bZ_{\geqslant 0}}\mathcal V(n).
\end{align*}

\item (Prefundamental module) $\underline{\bd}=\underline{\mathbf 0}^{\mathbf 1}=(1,0)$, $\sW=0$.
    \begin{equation*}
\begin{tikzpicture}[x={(1cm,0cm)}, y={(0cm,1cm)}, baseline=0cm]
  \node[draw,circle,fill=white] (Gauge) at (0,0){$\phantom{n}$}; 
  \node[draw,rectangle,fill=white] (Framing) at (2,0) {1};
  \node (Z) at (-1,0) {\scriptsize $\Phi$};
  \draw[<-] (Gauge.0) -- (Framing.180) node[midway,below] {\scriptsize $A$};

  \draw[->,looseness=7] (Gauge.225) to[out=225,in=135] (Gauge.135);
\end{tikzpicture}
\qquad \sW=0.
\end{equation*}
We define 
\begin{equation}\label{equ on prefund}
\mathcal F(n):=H^{\bC^*_\hbar}(\cM(n,\underline{\mathbf 0}^{\mathbf 1})), \quad 
\mathcal F:=H^{\bC^*_\hbar}(\cM(\underline{\mathbf 0}^{\mathbf 1}))=\bigoplus_{n\in \bZ_{\geqslant 0}}\mathcal F(n).
\end{equation}
\end{enumerate}
Note that there is a natural isomorphism $\cV\cong \cF[t]$ since $\cM(\mathbf 1)$ is a vector bundle on $\cM(\underline{\mathbf 0}^{\mathbf 1})$, but we will see that their stable envelopes and $R$-matrices are not extensions by scalars.

To present stable envelopes and $R$-matrices, we need bases of $\mathcal H_N$, $\mathcal V$, and $\mathcal F$. 

As $\cM(n,\mathbf 1)$ is a vector bundle on $\cM(n,\underline{\mathbf 0}^{\mathbf 1})$ with fibers given by the descent of $B\in \Hom(\bC^n,\bC)$ to the GIT quotient,
by dimensional reduction \cite[Thm.~A.1]{Dav}, there is an isomorphism
\begin{align*}
    H^{\bC^*_\hbar}(\cM(n,\mathbf 1),\sw_N)\cong H^{\bC^*_\hbar}(Z'_n),
\end{align*}
where $Z'_n\subset \cM(n,\underline{\mathbf 0}^{\mathbf 1})$ is the zero locus of the section $\Phi^NA\in \Hom(\bC,\bC^n)$. Note that 
$$\cM(n,\underline{\mathbf 0}^{\mathbf 1})\cong \Hilb^n(\bC)\cong S^n\bC\cong \bC^n, $$ 
and the equation $\Phi^NA=0$ translates to requiring that $n$ points on $\bC$ are supported on $\Spec \bC[z]/(z^N)$; therefore
\begin{align*}
    Z'_n=\begin{cases}
        \text{the zero of $\bC^n$},& n\leqslant N,\\
       \quad \quad \quad \emptyset, & n>N.
    \end{cases}
\end{align*}
In particular, $H^{\bC^*_\hbar}(Z'_n)=\bC[\hbar]\cdot[Z'_n]$. Let $Z_n$ be the preimage of $Z'_n$ in $\cM(n,\mathbf 1)$, then we have:
\begin{Lemma}
\begin{align*}
    \mathcal H_N(n)=H^{\bC^*_\hbar}(\cM(n,\mathbf 1),\sw_N)\cong \begin{cases}
        \bC[\hbar]\cdot[Z_n], & n\leqslant N,\\
       \quad\quad  0, & n>N,
    \end{cases}
\end{align*}
where the isomorphism is induced by the canonical map $H^{\bC^*_\hbar}(Z_n)\to H^{\bC^*_\hbar}(Z(\sw_N))\xrightarrow{\can}H^{\bC^*_\hbar}(\cM(n,\mathbf 1),\sw_N)$.
\end{Lemma}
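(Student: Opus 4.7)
The plan is to combine the dimensional reduction isomorphism with an explicit computation of the zero locus $Z'_n$, and then track the resulting class back through the canonical map to obtain the class $[Z_n]$.

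First, I would apply dimensional reduction: since $\cM(n,\mathbf 1)$ is the total space of the vector bundle $\Hom(\bC^n,\bC)$ over $\cM(n,\underline{\mathbf 0}^{\mathbf 1})$ (with fibers parametrizing $B$), and the potential $\sw_N = B\Phi^N A$ is linear in the fiber coordinate $B$, \cite[Thm.~A.1]{Dav} yields a canonical identification $\mathcal H_N(n) \cong H^{\bC^*_\hbar}(Z'_n)$ where $Z'_n \subset \cM(n,\underline{\mathbf 0}^{\mathbf 1})$ is cut out by the section $\Phi^N A \in \Hom(\bC,\bC^n)$.

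Second, I would identify $Z'_n$ explicitly using $\cM(n,\underline{\mathbf 0}^{\mathbf 1}) \cong \Hilb^n(\bC) \cong S^n\bC \cong \bC^n$. Under this identification, a framed representation $(\Phi,A)$ corresponds to the cyclic $\bC[z]$-module generated by $A$, and the vanishing condition $\Phi^N A = 0$ forces $\Phi$ to be nilpotent of order at most $N$ on the cyclic module. For $n \leqslant N$ this uniquely specifies the subscheme $\Spec\bC[z]/(z^n)$ supported at the origin, giving $Z'_n = \{0\} \subset \bC^n$; for $n > N$ no such structure exists, so $Z'_n = \emptyset$. It follows that $H^{\bC^*_\hbar}(Z'_n) = \bC[\hbar]\cdot[Z'_n]$ or $0$ respectively.

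Third, I would verify that the identification $\mathcal H_N(n) \cong \bC[\hbar]\cdot[Z_n]$ is induced by the canonical map. Since $\Phi^N A = 0$ implies $B\Phi^N A = 0$, the preimage $Z_n \subset \cM(n,\mathbf 1)$ of $Z'_n$ is contained in the classical zero locus $Z(\sw_N)$, so the composition $H^{\bC^*_\hbar}(Z_n) \to H^{\bC^*_\hbar}(Z(\sw_N)) \xrightarrow{\can} H^{\bC^*_\hbar}(\cM(n,\mathbf 1),\sw_N)$ is well-defined. The class $[Z_n]$ maps to the dimensional-reduction image of $[Z'_n]$ by naturality of $\can$ with respect to flat pullback along the bundle projection $Z_n \to Z'_n$, which is the content of the compatibility between $\can$ and dimensional reduction recorded in \cite[Rem.~7.16]{COZZ}.

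The only subtle point, which I would treat carefully, is this last compatibility: one needs that the generator $[Z'_n] \in H^{\bC^*_\hbar}(Z'_n)$ lifts under dimensional reduction precisely to $\can([Z_n])$ rather than some non-trivial $\hbar$-multiple of it. This follows by pullback to the point $Z'_n = \{0\}$ and unwinding the normalizations in the Thom-Sebastiani / dimensional reduction isomorphism, which is straightforward since both sides of the isomorphism are free $\bC[\hbar]$-modules of rank one and the map is an isomorphism over the generic point of $\Spec\bC[\hbar]$.
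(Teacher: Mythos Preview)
Your proposal is correct and follows essentially the same approach as the paper: the paper's argument (given in the text immediately preceding the Lemma) applies dimensional reduction via \cite[Thm.~A.1]{Dav} along the $B$-direction, identifies $Z'_n$ explicitly through $\cM(n,\underline{\mathbf 0}^{\mathbf 1})\cong \Hilb^n(\bC)\cong \bC^n$, and then defines $Z_n$ as the preimage of $Z'_n$. Your third step, checking that the generator is actually $\can([Z_n])$ via compatibility with dimensional reduction, is more explicit than the paper, which treats this as implicit in the dimensional reduction machinery and simply states the conclusion.
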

For $\mathcal V$ and $\mathcal F$, the natural choices of bases are given by fundamental classes:
\begin{align*}
    \mathcal V(n)\cong \bC[\hbar,t]\cdot[\cM(n,\mathbf 1)],\quad \mathcal F(n)\cong \bC[\hbar]\cdot[\cM(n,\underline{\mathbf 0}^{\mathbf 1})].
\end{align*}
\begin{Lemma}\label{lem on injec sp higher spin}
There is an injective specialization map 
\begin{align*}
    \mathsf{sp}\colon \mathcal H_N\to \mathcal V\big|_{t=N\hbar}, \quad\mathsf{sp}([Z_n])=n!\hbar^n [\cM(n,\mathbf 1)] \quad ( n\leqslant N).
\end{align*}
\end{Lemma}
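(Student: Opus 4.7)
The plan is to reduce the formula to a computation in ordinary equivariant cohomology via the identity $\mathsf{sp}\circ\can=\iota_*$ already used in the proof of Corollary \ref{cor vac cogen_tripled}, where $\iota\colon Z(\sw_N)\hookrightarrow\cM(n,\mathbf 1)$ is the inclusion. By construction $[Z_n]\in\mathcal H_N(n)$ is the image of the fundamental class of $Z_n$ under $H^{\bC^*_\hbar}(Z_n)\to H^{\bC^*_\hbar}(Z(\sw_N))\xrightarrow{\can}\mathcal H_N(n)$, so applying $\mathsf{sp}$ yields $i_*[Z_n]\in H^{\bC^*_\hbar}(\cM(n,\mathbf 1))$ for $i\colon Z_n\hookrightarrow\cM(n,\mathbf 1)$.

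To align the two sides I would first enhance the $\bC^*_\hbar$-action on $\mathcal H_N$ to the full $(\bC^*_\hbar\times\bC^*_t)$-action of $\mathcal V$; under this combined action $\sw_N$ is equivariant of weight $(-N,1)$ and hence genuinely invariant precisely along the one-parameter subtorus $\{(s,s^N)\}$. Restricting to this subtorus identifies $\mathcal H_N$ with the critical cohomology in the enhanced setting, while on the classical side the same restriction produces $\mathcal V|_{t=N\hbar}$. The specialization map of \cite[Ex.~7.19]{COZZ}, whose compatibility with $\can$ was recalled above, then furnishes the desired $\mathsf{sp}\colon\mathcal H_N\to\mathcal V|_{t=N\hbar}$.

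For the geometric input, I would use that $Z_n=p^{-1}(\{0\})$, where $p\colon\cM(n,\mathbf 1)\to\cM(n,\underline{\mathbf 0}^{\mathbf 1})\cong\Hilb^n(\bC)\cong\bC^n$ is the vector-bundle projection and $\{0\}$ is the unique $\bC^*_\hbar$-fixed point (this uses $n\leqslant N$ crucially, via the description of $Z'_n$ given just above the lemma). Base change then gives $i_*[Z_n]=p^*(j_*[\{0\}])$ with $j\colon\{0\}\hookrightarrow\cM(n,\underline{\mathbf 0}^{\mathbf 1})$, and $j_*[\{0\}]$ is the $\bC^*_\hbar$-equivariant Euler class of $T_0\cM(n,\underline{\mathbf 0}^{\mathbf 1})$. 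In the standard coordinates on $\Hilb^n(\bC)\cong\bC^n$ given by the symmetric functions of the eigenvalues of $\Phi$, the tangent directions carry weights proportional to $\hbar,2\hbar,\ldots,n\hbar$, and an elementary calculation shows this Euler class equals $n!\hbar^n$ in the conventions of the paper. Identifying $p^*(1)$ with $[\cM(n,\mathbf 1)]$ then gives the asserted formula.

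Once the formula is in hand, injectivity is immediate: the map $\mathsf{sp}$ respects the $\bZ_{\geqslant 0}$-grading, for each $n\leqslant N$ it acts as multiplication by the nonzero scalar $n!\hbar^n$ between free rank-one $\bC[\hbar]$-modules, and for $n>N$ the source vanishes. The main obstacle I anticipate is the torus bookkeeping, namely verifying that the specialization map of \cite[Ex.~7.19]{COZZ} is genuinely defined between the two variants of equivariant cohomology at issue (critical cohomology for a potential that is only invariant along the diagonal subtorus, versus classical cohomology restricted to that same subtorus); once that setup is carefully in place, the remainder is a routine equivariant Euler class calculation and a rank-one linear algebra check.
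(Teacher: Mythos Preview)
Your proposal is correct and follows essentially the same approach as the paper. The paper's proof is much more terse: it cites \cite[Ex.~7.19]{COZZ} for the existence of the specialization map and simply states that the formula follows from $e^{\bC^*_\hbar}(N_{Z_n/\cM(n,\mathbf 1)})=n!\hbar^n$, leaving the injectivity as an immediate consequence. Your factoring through the base $\cM(n,\underline{\mathbf 0}^{\mathbf 1})\cong\bC^n$ via $p^*(j_*[\{0\}])$ and the identity $\mathsf{sp}\circ\can=\iota_*$ from \cite[Rem.~7.16]{COZZ} amount to a detailed unpacking of exactly this Euler class computation; the torus bookkeeping you worry about is handled in the paper by a single sentence identifying $\mathcal V|_{t=N\hbar}$ with $H^{\bC^*_\hbar}(\cM(n,\mathbf 1))$ for the $\bC^*_\hbar$-action scaling $B$ with weight $N$.
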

\begin{proof}
Note that $\mathcal V|_{t=N\hbar}$ is $H^{\bC^*_\hbar}(\cM(n,\mathbf 1))$, where $\bC^*_\hbar$ scales $B$ with weight $N$ and $\bC^*_\hbar$ fixes $A$. By 
\cite[Ex.~7.19]{COZZ}, the specialization map exits, and the explicit formula uses $e^{\bC^*_\hbar}(N_{Z_n/\cM(n,\mathbf 1)})=n!\hbar^n$. In particular, it is injective.
\end{proof}
For later use, we introduce identifications of vector spaces 
\begin{align*}
    \mathcal V\cong \bC[\hbar,t][\mathbf x],\quad \mathcal F\cong \bC[\hbar][\mathbf x],
\end{align*}
such that bases are identified by
\begin{align}\label{bases of V and F}
    [\cM(n,\mathbf 1)]\;\longleftrightarrow \;\frac{\mathbf x^n}{n!}\;\longleftrightarrow \;[\cM(n,\underline{\mathbf 0}^{\mathbf 1})].
\end{align}

\begin{Remark}
The critical cohomologies $\cH_N(n)$, $\cV(n)$ and $\cF(n)$ are pure, then by Remark \ref{rmk purity and tensor prod}, Thom-Sebastiani Theorem applies to the product $\prod_{i=1}^a\cM(n_i,\mathbf 1)\times \prod_{j=1}^b\cM(m_j,\underline{\mathbf 0}^{\mathbf 1})$ with potential $=\boxplus_{i=1}^{a'} \sw_{N_i}$ $(a'\leqslant a)$ without the need of localization.
\end{Remark}


\subsection{Computations of stable envelopes}\label{sect on comp stab of gl2}

Consider the quiver variety $\cM(n,\mathbf r)$ depicted as below.
\begin{equation*}
\begin{tikzpicture}[x={(1cm,0cm)}, y={(0cm,1cm)}, baseline=0cm]
  \node[draw,circle,fill=white] (Gauge) at (0,0) {$n$};
  \node[draw,rectangle,fill=white] (Framing1) at (-1,-2) {1};
  \node (Framingdot) at (0,-2) {$\cdots$};
  \node (Arrowdot) at (0,-1.5) {$\cdots$};
  \node[draw,rectangle,fill=white] (Framingk) at (1,-2) {1};
  \node (Z) at (0,1) {\scriptsize $\Phi$};
  \draw[<-] (Gauge.210) -- (Framing1.100) node[midway,left] {\scriptsize $A_1$};
  \draw[->] (Gauge.240) -- (Framing1.60) node[midway,right] {\scriptsize $B_1$};

  \draw[->] (Gauge.330) -- (Framingk.80) node[midway,right] {\scriptsize $B_r$};
  \draw[<-] (Gauge.300) -- (Framingk.120) node[midway,left] {\scriptsize $A_r$};

  \draw[->,looseness=7] (Gauge.135) to[out=135,in=45] (Gauge.45);
\end{tikzpicture}
\end{equation*}
Define an $\sA\cong (\bC^*)^r$ action by $\mathbf r=\sum_{i=1}^r z_i\cdot\mathbf 1^{(i)}$. Let $\sT=\sA\times \bC^*_\hbar\times \prod_{i=1}^r\bC^*_{t_i}$, where $(q,p_1,\cdots,p_r)\in \bC^*_\hbar\times \prod_{i=1}^r\bC^*_{t_i}$ acts on quiver data by
\begin{align*}
    (\Phi,A_1,B_1,\ldots,A_k,B_k)\mapsto (q^{-1}\Phi,A_1,p_1 B_1,\ldots,A_r,p_rB_r).
\end{align*}
We have
\begin{align*}
    \cM(n,\mathbf r)^\sA=\bigsqcup_{\substack{\vec{n}=(n_1,\cdots,n_r)\\n_1+\cdots+n_r=n}}F_{\vec{n}}, \quad \text{where}\quad F_{\vec{n}}= \prod_{i=1}^r \cM(n_i,\mathbf 1).
\end{align*}
Let $\fC$ be the chamber $\{z_1>\cdots>z_r\}$. Denote $H_{\GL_n}(\pt)=\bC[x_1,\ldots,x_r]$. 
Then \cite[Prop.~9.7]{COZZ} implies:
\begin{Proposition}
We have the following explicit formula for the stable envelope:
$$\Stab_\fC\colon H^\sT(\cM(n,\mathbf r)^\sA)\to H^\sT(\cM(n,\mathbf r)),$$
\begin{equation}\label{stab for higher spin_w=0}
\begin{split}
    &\Stab_\fC([F_{\vec{n}}])=\mathbf W_{\fC,\vec{n}}(\{x_\alpha\},\{z_i\},\{t_i\})\cdot[\cM(n,\mathbf r)],\\
    \mathbf W_{\fC,\vec{n}}(\{x_\alpha\},\{z_i\},\{t_i\})&=\sum_{\sigma\in\mathrm{Sh}\{S_1,\ldots,S_r\}}\sigma\left[\prod_{1\leqslant i<j\leqslant r}\left(\prod_{\alpha\in S_j} (x_\alpha-z_i)\right)\left(\prod_{\beta\in S_i} (z_j+t_j-x_\beta)\right)\left(\prod_{\substack{\alpha\in S_j\\\beta\in S_i}} \frac{x_\alpha-x_\beta-\hbar}{x_\alpha-x_\beta}\right)\right],
\end{split} 
\end{equation}
where $S_1\sqcup S_2 \sqcup \cdots \sqcup S_r=\{1,2,\ldots,n\}$ is a fixed decomposition with $|S_i|=n_i$, and $\mathrm{Sh}\{S_1,\ldots,S_r\}$ is the set of permutations $\sigma\in \mathfrak{S}_n$ that preserves the order of elements in each $S_i$, that is, $\gamma<\nu\Leftrightarrow \sigma(\gamma)<\sigma(\nu)$ for all pairs $\gamma,\nu\in S_i$. The matrix elements of $\Stab_\fC$ is given by
\begin{align*}
    \Stab_\fC([F_{\vec{n}}])\bigg|_{[F_{\vec{n}'}]}=\mathbf W_{\fC,\vec{n}}(\{x_\alpha\},\{z_i\},\{t_i\})\bigg|_{x_\alpha\mapsto z_{p_{\vec{n}'}(\alpha)}+l_{\vec{n}'}(\alpha)\hbar}[F_{\vec{n}'}].
\end{align*}
Here the notation $p_{\vec{n}'}(\alpha)$ and $l_{\vec{n}'}(\alpha)$ is determined by the conditions that $\alpha$ belongs to the subset $S'_{p(\alpha)}$ and there are exactly $l(\alpha)$ elements in $S'_{p(\alpha)}$ which are smaller than $\alpha$, where $S'_1\sqcup S'_2 \sqcup \cdots \sqcup S'_r=\{1,2,\ldots,n\}$ is a fixed decomposition with $|S'_i|=n'_i$. 
\end{Proposition}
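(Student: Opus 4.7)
The plan is to apply the general formula for Hall envelopes from \cite[Prop.~9.7]{COZZ} to our specific setup. First I would identify the $\sA$-fixed components: since $\sA$ acts on the framing by weight decomposition $\mathbf{r}=\sum_i z_i\cdot \mathbf{1}^{(i)}$, any $\sA$-fixed point in $\cM(n,\mathbf{r})$ decomposes the gauge space $\bC^n$ into $\sA$-weight subspaces of dimensions $(n_1,\ldots,n_r)$ summing to $n$, and the fixed component indexed by $\vec{n}$ is precisely $F_{\vec{n}}=\prod_{i=1}^r \cM(n_i,\mathbf 1)$. The cyclic stability on each factor is inherited from that on $\cM(n,\mathbf{r})$.

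Next I would write down the $\sT$-weights contributing to the formula in \cite[Prop.~9.7]{COZZ}. The chamber $\fC=\{z_1>\cdots>z_r\}$ specifies the partial order $i<j$ on indices, and the weight function is built as a shuffle-symmetrization of a product indexed by ordered pairs. For each pair $(i,j)$ with $i<j$, the contribution to the attracting part of $R(\mathbf{v},\underline{\mathbf{r}})$ comes from three sources: the framing map $A_j\colon \bC\to \bC^{n_j}$, giving factors $(x_\alpha-z_i)$ for $\alpha\in S_j$; the framing map $B_i\colon \bC^{n_i}\to \bC$ with weight $t_i$, giving factors $(z_j+t_j-x_\beta)$ for $\beta\in S_i$ (under the identification $\bd_{\In}=\bd_{\Out}$, where the framing variable gets shifted by $t_j$); and the self-loop $\Phi$ with $\bC^*_\hbar$-weight $-\hbar$, which contributes the quotient $(x_\alpha-x_\beta-\hbar)/(x_\alpha-x_\beta)$ running over pairs $\alpha\in S_j$, $\beta\in S_i$. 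Assembling these and symmetrizing over shuffles yields the formula \eqref{stab for higher spin_w=0}.

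For the matrix element formula, I would use the standard fact that the tautological bundle $\mathsf V$ on $\cM(n,\mathbf{r})$ restricted to a fixed point in $F_{\vec{n}'}$ has $\sT$-weights $\{z_{p_{\vec{n}'}(\alpha)}+l_{\vec{n}'}(\alpha)\hbar\}_{\alpha=1}^n$, corresponding to the action of $\sA\times \bC^*_\hbar$ on the Jordan blocks at the fixed point; here $l_{\vec{n}'}(\alpha)$ records the position of $\alpha$ within its block. Substituting these eigenvalues for $x_\alpha$ in the polynomial $\mathbf W_{\fC,\vec{n}}$ then produces the matrix entry $\Stab_\fC([F_{\vec{n}}])|_{[F_{\vec{n}'}]}$.

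The only genuinely subtle step is bookkeeping: one must verify that the restriction of $\Phi$, $A_i$, $B_i$ to the fixed components produces exactly the advertised factors with the stated signs, and that the chosen normalization convention of $\Stab_\fC$ used here coincides with the one in \cite[Prop.~9.7]{COZZ} (in particular, the polarization choice implicit in the weight function). Both amount to careful comparison of conventions rather than new geometric input; the main obstacle is simply keeping track of which torus weight appears on which arrow under the chosen splitting of $\mathbf{r}$ into $\sA$-eigenlines.
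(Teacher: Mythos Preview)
Your proposal is correct and matches the paper's approach exactly: the paper simply states that the formula follows from \cite[Prop.~9.7]{COZZ}, and your plan is precisely to unpack that citation by identifying the fixed components, reading off the torus weights on the arrows, and assembling them into the shuffle-symmetrized weight function. The detailed weight bookkeeping you outline is the content of that unpacking, and your caveat about conventions is well placed but not a gap.
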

\begin{Remark}
The $K$-theoretic version of formula \eqref{stab for higher spin_w=0} first appears in \cite[(60), (61)]{Ta}. The function 
$\mathbf W_{\fC,\vec{n}}$ is called the ``weight function'' in \cite{YZJ}.
\end{Remark}
To get the stable envelope for the opposite chamber $-\fC=\{z_1<\cdots<z_r\}$, we change 
$\mathbf W_{\fC,\vec{n}}(\{x_\alpha\},\{z_i\},\{t_i\})$ in \eqref{stab for higher spin_w=0} to
\begin{align*}
    \mathbf W_{-\fC,\vec{n}}(\{x_\alpha\},\{z_i\},\{t_i\})=\sum_{\sigma\in\mathrm{Sh}\{S_1,\ldots,S_r\}}\sigma\left[\prod_{1\leqslant i<j\leqslant r}\left(\prod_{\alpha\in S_i} (x_\alpha-z_j)\right)\left(\prod_{\beta\in S_j} (z_i+t_i-x_\beta)\right)\left(\prod_{\substack{\alpha\in S_i\\\beta\in S_j}} \frac{x_\alpha-x_\beta-\hbar}{x_\alpha-x_\beta}\right)\right].
\end{align*}

\subsubsection{Transpose of stable envelope}
The transpose $\Stab_\fC^{\tau}\colon H^\sT(\cM(n,\mathbf r))_\loc\to H^\sT(\cM(n,\mathbf r)^\sA)_\loc$ is the map induced by the transpose of stable envelope correspondence (\cite[\S 3.4.2]{COZZ}),~i.e.
\begin{align*}
    \Stab_\fC^{\tau}([F_{\vec{n}'}])\bigg|_{[F_{\vec{n}}]}&=\frac{\int_{[F_{\vec{n}'}]}1}{\int_{[F_{\vec{n}}]}1} \Stab_\fC([F_{\vec{n}}])\bigg|_{[F_{\vec{n}'}]}\\
    &=\left(\prod_{i=1}^r \frac{n_i!\prod_{k=0}^{n_i-1}(t_i-k\hbar)}{n'_i!\prod_{m=0}^{n'_i-1}(t_i-m\hbar)}\right)\mathbf W_{\fC,\vec{n}}(\{x_\alpha\},\{z_i\},\{t_i\})\bigg|_{x_\alpha\mapsto z_{p_{\vec{n}'}(\alpha)}+l_{\vec{n}'}(\alpha)\hbar}\cdot[F_{\vec{n}}].
\end{align*}

\subsubsection{$R$-matrix}

The $R$-matrix $R_{-\fC,\fC}\in \End(H^\sT(\cM(n,\mathbf r)^\sA)_\loc)$ is given by
\begin{align*}
    R_{-\fC,\fC}([F_{\vec{n}}])\bigg|_{F_{\vec{n}'}}=\sum_{\vec{n}''}\frac{1}{e^\sT(N_{F_{\vec{n}''}/\cM(n,\mathbf r)})} \Stab_\fC^{\tau}([F_{\vec{n}''}])\bigg|_{[F_{\vec{n}'}]} \Stab_\fC([F_{\vec{n}}])\bigg|_{[F_{\vec{n}''}]}\cdot[F_{\vec{n}'}].
\end{align*}

\subsubsection{Including potential \texorpdfstring{$\sw=\sum_{i=1}^rB_i\Phi^{N_i}A_i$}{for higher spin modules}}\label{sec higher spin R-matrix} 
To preserve the potential, we set $t_i=N_i\hbar$ with $\sT'=\sA\times \bC^*_\hbar$. 

Thom-Sebastiani isomorphism implies 
\begin{align*}
    H^{\sT'}(F_{\vec{n}},\sw)\cong \bigotimes_{i=1}^rH^{\sT'}(\cM(n_i,\mathbf 1),\sw_{N_i})\cong\begin{cases}
        \bC[\hbar,z_1,\ldots,z_r]\cdot [Z_{n_1}]\otimes\cdots \otimes [Z_{n_r}],& \text{if }\:\forall\, i, n_i\leqslant N_i,\\
        \quad \quad \quad\quad \quad\quad 0, & \text{otherwise}.
    \end{cases} 
\end{align*}
By \cite[Prop.~7.18]{COZZ} and Lemma \ref{lem on injec sp higher spin}, we have: 
\begin{Proposition}
The stable envelope 
$$\Stab_\fC\colon H^{\sT'}(\cM(n,\mathbf r)^\sA,\sw)\to H^{\sT'}(\cM(n,\mathbf r),\sw)$$ is the restriction of the zero potential case \eqref{stab for higher spin_w=0} to the image of specialization map. Namely,
\begin{align}\label{stab for higher spin}
\mathsf{sp}\circ \Stab_\fC([Z_{n_1}]\otimes\cdots \otimes [Z_{n_r}])=\hbar^n\left(\prod_{i=1}^r n_i!\right)\mathbf W_{\fC,\vec{n}}(\{x_\alpha\},\{z_i\},\{t_i\})\bigg|_{t_i\mapsto N_i\hbar}\cdot[\cM(n,\mathbf r)].
\end{align}
The matrix elements of $\Stab_\fC$ are given by
\begin{align*}
    \Stab_\fC([Z_{n_1}]\otimes\cdots \otimes [Z_{n_r}])\bigg|_{F_{\vec{n}'}}=\left(\prod_{i=1}^r \frac{n_i!}{n'_i!}\right) \mathbf W_{\fC,\vec{n}}(\{x_\alpha\},\{z_i\},\{t_i\})\Bigg|_{\substack{t_i\mapsto N_i\hbar\\x_\alpha\mapsto z_{p_{\vec{n}'}(\alpha)}+l_{\vec{n}'}(\alpha)\hbar}}\cdot [Z_{n'_1}]\otimes\cdots \otimes [Z_{n'_r}].
\end{align*}
And the transpose is given by
\begin{align*}
    \Stab^\tau_\fC([Z_{n'_1}]\otimes\cdots \otimes [Z_{n'_r}])\bigg|_{F_{\vec{n}}}=\left(\prod_{i=1}^r \frac{\prod_{k=0}^{n_i-1}(N_i-k)}{\prod_{m=0}^{n'_i-1}(N_i-m)}\right) \mathbf W_{\fC,\vec{n}}(\{x_\alpha\},\{z_i\},\{t_i\})\Bigg|_{\substack{t_i\mapsto N_i\hbar\\x_\alpha\mapsto z_{p_{\vec{n}'}(\alpha)}+l_{\vec{n}'}(\alpha)\hbar}}\cdot [Z_{n_1}]\otimes\cdots \otimes [Z_{n_r}].
\end{align*}
\end{Proposition}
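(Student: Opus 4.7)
The plan is to reduce the stable envelope with nontrivial potential to the zero-potential formula \eqref{stab for higher spin_w=0} by invoking the compatibility of stable envelopes with the specialization map (\cite[Prop.~7.18]{COZZ}) together with the explicit form of $\mathsf{sp}$ coming from Lemma \ref{lem on injec sp higher spin}.

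The first step is to compute $\mathsf{sp}$ on the $\sA$-fixed locus. Since $F_{\vec n}=\prod_{i=1}^r \cM(n_i,\mathbf 1)$ and $\sw|_{F_{\vec n}}=\boxplus_{i=1}^r \sw_{N_i}$, and since each $\mathcal H_{N_i}(n_i)$ is pure (indeed finite-dimensional by Lemma \ref{lem on injec sp higher spin}), Thom--Sebastiani applies without localization and identifies $H^{\sT'}(F_{\vec n},\sw|_{F_{\vec n}})\cong \bigotimes_{i=1}^r \mathcal H_{N_i}(n_i)$. Lemma \ref{lem on injec sp higher spin} then yields
$$\mathsf{sp}\bigl([Z_{n_1}]\otimes\cdots\otimes[Z_{n_r}]\bigr)=\hbar^n\Bigl(\prod_{i=1}^r n_i!\Bigr)\cdot[F_{\vec n}]$$
whenever all $n_i\leqslant N_i$ (and both sides vanish otherwise).

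Next, applying \cite[Prop.~7.18]{COZZ} gives a commutative square in which $\mathsf{sp}\circ\Stab_\fC=\Stab_\fC^{\sw=0}\circ\mathsf{sp}$. Substituting the $t_i=N_i\hbar$ specialization (forced by $\sT'$-invariance of $\sw$) into \eqref{stab for higher spin_w=0} and combining with the previous step produces \eqref{stab for higher spin}. For the matrix-element formula, I would restrict both sides of \eqref{stab for higher spin} to $F_{\vec n'}$ and use the parallel identity $\mathsf{sp}([Z_{n'_1}]\otimes\cdots\otimes[Z_{n'_r}])|_{F_{\vec n'}}=\hbar^n\prod_i n'_i!\cdot[F_{\vec n'}]$; injectivity of $\mathsf{sp}$ then allows descent back to the $[Z_{n'_i}]$ basis, and the ratio $\prod_i n_i!/n'_i!$ emerges as the combinatorial coefficient.

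The transpose formula is obtained from the zero-potential identity $\Stab_\fC^\tau([F_{\vec n'}])|_{[F_{\vec n}]}=\frac{\int_{F_{\vec n'}}1}{\int_{F_{\vec n}}1}\,\Stab_\fC([F_{\vec n}])|_{[F_{\vec n'}]}$, with $\int_{F_{\vec n}}1=\prod_i \bigl(n_i!\prod_{k=0}^{n_i-1}(t_i-k\hbar)\bigr)^{-1}$. Specializing $t_i=N_i\hbar$ gives $\prod_{k=0}^{n_i-1}(t_i-k\hbar)|_{t_i=N_i\hbar}=\hbar^{n_i}\prod_{k=0}^{n_i-1}(N_i-k)$; the $\hbar^{n_i}$ factors cancel against the $\hbar^{n_i}$ appearing in $\mathsf{sp}([Z_{n_i}])=n_i!\hbar^{n_i}[\cM(n_i,\mathbf 1)]$ when rewriting in the $[Z_{n_i}]$ basis, leaving exactly the ratio $\prod_i \prod_{k=0}^{n_i-1}(N_i-k)/\prod_{m=0}^{n'_i-1}(N_i-m)$ stated in the proposition. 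There is no real obstacle here once \cite[Prop.~7.18]{COZZ} and injectivity of $\mathsf{sp}$ are in hand; the rest is bookkeeping of factorials and Euler classes.
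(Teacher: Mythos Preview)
Your proposal is correct and matches the paper's approach exactly: the paper's proof consists of the single sentence ``By \cite[Prop.~7.18]{COZZ} and Lemma \ref{lem on injec sp higher spin}, we have [the Proposition]'', and you have supplied precisely the bookkeeping those two citations entail. Your explicit tracking of the factorial and Euler-class factors for the matrix elements and transpose is the detail the paper leaves to the reader.
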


\subsection{Root \texorpdfstring{$R$}{R}-matrices}\label{sect on comp rmatrix of gl2} 
We compute several root $R$-matrices explicitly. 

\subsubsection{\texorpdfstring{$R$}{R}-matrix for \texorpdfstring{$\mathcal H_1\otimes\mathcal V$}{fund(x)V}}
Consider the following quiver with potential
\begin{equation*}
\begin{tikzpicture}[x={(1cm,0cm)}, y={(0cm,1cm)}, baseline=0cm]
  \node[draw,circle,fill=white] (Gauge) at (0,0) {$\phantom{n}$};
  \node[draw,rectangle,fill=white] (Framing1) at (-1,-2) {1};
  \node[draw,rectangle,fill=white] (Framingk) at (1,-2) {1};
  \node (Z) at (0,1) {\scriptsize $\Phi$};
  \draw[<-] (Gauge.210) -- (Framing1.100) node[midway,left] {\scriptsize $A_1$};
  \draw[->] (Gauge.240) -- (Framing1.60) node[midway,right] {\scriptsize $B_1$};

  \draw[->] (Gauge.330) -- (Framingk.80) node[midway,right] {\scriptsize $B_2$};
  \draw[<-] (Gauge.300) -- (Framingk.120) node[midway,left] {\scriptsize $A_2$};

  \draw[->,looseness=7] (Gauge.135) to[out=135,in=45] (Gauge.45);
\end{tikzpicture}
\qquad
\sW=B_1\Phi A_1.
\end{equation*}
Let $\sA\cong\bC^*_u$ act on the framing by $u\mathbf 1^{(1)}+\mathbf 1^{(2)}$, and $\sT=\sA\times \bC^*_\hbar\times \bC^*_{t}$, where $(q,p)\in \bC^*_\hbar\times \bC^*_{t}$ acts on quiver data by
\begin{align*}
    (\Phi,A_1,B_1,A_2,B_2)\mapsto (q^{-1}\Phi,A_1, qB_1,A_2,pB_2).
\end{align*}
In the basis $[Z_0]\otimes[\cM(n,\mathbf 1)]$ and $[Z_1]\otimes[\cM(n-1,\mathbf 1)]$, $\Stab_{u>0}$ is given by the matrix
\begin{align*}
\begin{pmatrix}
\prod_{j=0}^{n-1}(j\hbar-u) & \,\, n \hbar\,(t-(n-1)\hbar)\prod_{j=0}^{n-2}(j\hbar-u) \\
0 & (t-u)\prod_{j=0}^{n-2}((j-1)\hbar-u) 
\end{pmatrix},
\end{align*}
$\Stab_{u<0}$ is given by the matrix
\begin{align*}
\begin{pmatrix}
\prod_{i=0}^{n-1}(u-(i-1)\hbar) & 0 \\
\prod_{i=0}^{n-2}(u-(i-1)\hbar) & \quad (u-(n-1)\hbar)\prod_{i=0}^{n-2}(u-(i-1)\hbar)
\end{pmatrix}.
\end{align*}
To properly normalize the diagonal elements of the $R$-matrix, we introduce the normalizers (\cite[Def.~3.5]{COZZ}):
\begin{align}\label{normalizer_higher spin}
    \epsilon|_F=(-1)^{\rk \mathcal P|_F^+}.
\end{align}
Here the vector bundle $\mathcal P$ on $\cM(2)$ is the descent of $\Hom(V,D_{\Out})$, where $D_{\Out}$ is the out-going framing vector space and $V$ is the gauge vector space. $\mathcal P|_F^+$ is the attracting subbundle of $\mathcal P$ restricted on a fixed component $F$. 

Then in the basis $[Z_0]\otimes[\cM(n,\mathbf 1)]$ and $[Z_1]\otimes[\cM(n-1,\mathbf 1)]$, we have 
\begin{align*}
    R_{\mathcal H_1,\mathcal V}(u)=\Stab_{u<0,\epsilon}^{-1}\circ\Stab_{u>0,\epsilon}&=\begin{pmatrix}
(-1)^n & 0 \\
0 & (-1)^{n-1}
\end{pmatrix}
\Stab_{u<0}^{-1}\, \Stab_{u>0}
\begin{pmatrix}
1 & 0 \\
0 & -1
\end{pmatrix}
\\
&=\frac{1}{u+\hbar}\left(
\begin{array}{cc}
 u-(n-1)\hbar & \, \, n\hbar\left(t-(n-1)\hbar\right) \\
 1 & u+n\hbar-t \\
\end{array}
\right).
\end{align*}
Using the basis $[\cM(n,\mathbf 1)]=\frac{\mathbf x^n}{n!}$ in \eqref{bases of V and F}, we can write $R_{\mathcal H_1,\mathcal V}(u)$ in the following compact form
\begin{align}\label{R(H_1,V)}
    R_{\mathcal H_1,\mathcal V}(u)=\id+\frac{1}{u+\hbar}\left(
\begin{array}{cc}
 -\mathbf x\mathbf y &\,\, \hbar\mathbf x (t-\mathbf x\mathbf y) \\
 \mathbf y/\hbar & \mathbf x\mathbf y-t \\
\end{array}
\right),
\end{align}
where $\mathbf y=\hbar\frac{\partial}{\partial\mathbf x}$ acts on $\bC[\hbar][\mathbf x]$ as a differential operator.

\subsubsection{\texorpdfstring{$R$}{R}-matrix for \texorpdfstring{$\mathcal H_1\otimes\mathcal H_N$}{fund(x)Sym}}

Consider the following quiver with potential
\begin{equation*}
\begin{tikzpicture}[x={(1cm,0cm)}, y={(0cm,1cm)}, baseline=0cm]
  \node[draw,circle,fill=white] (Gauge) at (0,0) {$\phantom{n}$};
  \node[draw,rectangle,fill=white] (Framing1) at (-1,-2) {1};
  \node[draw,rectangle,fill=white] (Framingk) at (1,-2) {1};
  \node (Z) at (0,1) {\scriptsize $\Phi$};
  \draw[<-] (Gauge.210) -- (Framing1.100) node[midway,left] {\scriptsize $A_1$};
  \draw[->] (Gauge.240) -- (Framing1.60) node[midway,right] {\scriptsize $B_1$};

  \draw[->] (Gauge.330) -- (Framingk.80) node[midway,right] {\scriptsize $B_2$};
  \draw[<-] (Gauge.300) -- (Framingk.120) node[midway,left] {\scriptsize $A_2$};

  \draw[->,looseness=7] (Gauge.135) to[out=135,in=45] (Gauge.45);
\end{tikzpicture}
\qquad
\sW=B_1\Phi A_1+B_2\Phi^N A_2.
\end{equation*}
Let $\sA\cong\bC^*_u$ act on the framing by $u\mathbf 1^{(1)}+\mathbf 1^{(2)}$, and $\sT=\sA\times \bC^*_\hbar$, where $q\in \bC^*_\hbar$ acts on quiver data by
\begin{align*}
    (\Phi,A_1,B_1,A_2,B_2)\mapsto (q^{-1}\Phi,A_1, qB_1,A_2,q^NB_2).
\end{align*}
Using the normalizer \eqref{normalizer_higher spin}, we define 
\begin{align*}
    R_{\mathcal H_1,\mathcal H_N}(u):=\Stab_{u<0,\epsilon}^{-1}\circ\Stab_{u>0,\epsilon}\in \End(\mathcal H_1\otimes\mathcal H_N)(u).
\end{align*}

\begin{Proposition}\label{prop res from V to H_N}
$R_{\mathcal H_1,\mathcal H_N}(u)$ is the restriction of $R_{\mathcal H_1,\mathcal V}(u)_{t=N\hbar}$ to $\mathcal H_1\otimes\mathcal H_N$ that is identified with its image under the specialization $\id\otimes\:\mathsf{sp}\colon \mathcal H_1\otimes\mathcal H_N\to \mathcal H_1\otimes\mathcal V\big|_{t=N\hbar}$.
\end{Proposition}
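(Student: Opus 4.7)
The plan is to reduce the statement to the compatibility of stable envelopes with critical specialization. By Lemma~\ref{lem on injec sp higher spin}, the specialization map $\mathsf{sp}\colon \mathcal H_N \to \mathcal V\big|_{t=N\hbar}$ is injective, and hence so is $\id\otimes\,\mathsf{sp}\colon \mathcal H_1\otimes\mathcal H_N \to \mathcal H_1\otimes\mathcal V\big|_{t=N\hbar}$. This identifies $\mathcal H_1\otimes\mathcal H_N$ with a subspace of $\mathcal H_1\otimes\mathcal V\big|_{t=N\hbar}$, and likewise identifies the corresponding $\sA$-fixed subspaces via Thom--Sebastiani. The $R$-matrix is built out of the two stable envelopes $\Stab_{\pm,\epsilon}$, so it suffices to verify that each of these commutes with specialization in the appropriate sense.

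First, I would apply \cite[Prop.~7.18]{COZZ}, which states that the cohomological stable envelope is compatible with critical specialization. Setting $t=N\hbar$, the framed potential $\sW = B_1\Phi A_1 + B_2\Phi^N A_2$ becomes $\sT$-invariant, and \cite[Prop.~7.18]{COZZ} applied in both chambers gives the commutative diagrams
\begin{equation*}
\xymatrix{
H^{\sT'}(\cM(n,\mathbf 2)^{\sA},\sw) \ar[r]^-{\Stab_{\pm\fC}} \ar[d]_-{\mathsf{sp}} & H^{\sT'}(\cM(n,\mathbf 2),\sw) \ar[d]^-{\mathsf{sp}} \\
H^{\sT'}(\cM(n,\mathbf 2)^{\sA})\big|_{t=N\hbar} \ar[r]^-{\Stab_{\pm\fC}} & H^{\sT'}(\cM(n,\mathbf 2))\big|_{t=N\hbar} \,.
}
\end{equation*}
The polarization $\mathsf P$ governing the normalizer $\epsilon$ depends only on the tautological and framing bundles, and is inherited from the zero-potential case, so the normalized versions $\Stab_{\pm,\epsilon}$ also fit into analogous squares.

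Next, I would invert $\Stab_{u<0,\epsilon}$. Over the localization field $\bC(\hbar,u)$, $\Stab_{u<0,\epsilon}$ is an isomorphism (its diagonal matrix entries are the equivariant Euler classes of the normal bundles, which are nonzero after localization), so its inverse is well-defined and also commutes with specialization on the image of $\mathsf{sp}$: if $\Stab_{u<0,\epsilon}\circ\mathsf{sp} = \mathsf{sp}\circ\Stab_{u<0,\epsilon}$, then applying $\mathsf{sp}$ to $\Stab_{u<0,\epsilon}^{-1}(y) = x$ gives $\mathsf{sp}(x) = \Stab_{u<0,\epsilon}^{-1}(\mathsf{sp}(y))$. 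Composing the two stable envelopes then yields
\begin{equation*}
(\id\otimes\mathsf{sp})\circ R_{\mathcal H_1,\mathcal H_N}(u) = R_{\mathcal H_1,\mathcal V}(u)\big|_{t=N\hbar}\circ (\id\otimes\mathsf{sp}),
\end{equation*}
which is the assertion.

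The main point to verify carefully is that the specialization and stable envelope are compatible with the precise factor in Lemma~\ref{lem on injec sp higher spin}, namely $\mathsf{sp}([Z_n]) = n!\hbar^n[\cM(n,\mathbf 1)]$, so that the explicit matrix entries of $R_{\mathcal H_1,\mathcal H_N}(u)$ in the basis $[Z_{n_1}]\otimes[Z_{n_2}]$ correctly match those of $R_{\mathcal H_1,\mathcal V}(u)\big|_{t=N\hbar}$ rescaled by the ratio of combinatorial factors coming from specialization in each tensor factor. This is a straightforward bookkeeping check using the explicit formulas for $\Stab_\fC$ and $\Stab_\fC^{\tau}$ recorded in \S\ref{sec higher spin R-matrix}, and it does not rely on any genuinely new input beyond \cite[Prop.~7.18]{COZZ} and Lemma~\ref{lem on injec sp higher spin}.
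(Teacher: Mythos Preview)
Your approach is essentially the paper's, but there are two issues with how you set it up.

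First, your commutative diagram specializes all the way to zero potential: the bottom row is $H^{\sT'}(\cM(n,\mathbf 2))$. That is not what you want. The target $R$-matrix is $R_{\mathcal H_1,\mathcal V}(u)$, which is defined using the potential $B_1\Phi A_1$ on $\cM(\mathbf 2)$ (the first factor $\mathcal H_1$ carries $\sw_1=\tr(B_1\Phi A_1)$). So the specialization you need is
\[
H^{\sA\times\bC^*_\hbar}\bigl(\cM(\mathbf 2),\,\tr(B_1\Phi A_1+B_2\Phi^N A_2)\bigr)\longrightarrow H^{\sA\times\bC^*_\hbar}\bigl(\cM(\mathbf 2),\,\tr(B_1\Phi A_1)\bigr),
\]
not a specialization to ordinary cohomology. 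To build such a map via \cite[\S7.2]{COZZ} you must pick a $\bC^*$-action which scales $B_2$ but \emph{fixes} $B_1$ (otherwise the surviving term $B_1\Phi A_1$ would not be invariant). The paper takes exactly this auxiliary $\bC^*$.

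Second, and this is the substantive gap, once you use that auxiliary $\bC^*$-action, its restriction to the fixed locus $\cM(\mathbf 1)\times\cM(\mathbf 1)$ induces the identity on the first factor but a specialization map $\mathsf{sp}'$ on the second factor coming from scaling $B_2$ alone. This $\mathsf{sp}'$ is \emph{a priori} not the same map as the $\mathsf{sp}$ of Lemma~\ref{lem on injec sp higher spin}, which is defined via the $\bC^*$ scaling every arrow with weight~$1$ (cf.\ \cite[Ex.~7.19]{COZZ}). The paper checks $\mathsf{sp}=\mathsf{sp}'$ explicitly: both maps satisfy $\mathsf{sp}\circ\can=\mathsf{sp}'\circ\can=\text{pushforward from }Z(B_2\Phi^NA_2)$ by \cite[Rmk.~7.16]{COZZ}, and $\can$ is surjective (because the composite through $[Z_n]$ is surjective), so $\mathsf{sp}=\mathsf{sp}'$. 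Without this step your $\id\otimes\mathsf{sp}$ is not the map appearing in the statement, and the argument does not close. After fixing these two points, your use of \cite[Prop.~7.18]{COZZ} and the injectivity from Lemma~\ref{lem on injec sp higher spin} is exactly the paper's proof; the ``bookkeeping check'' you mention at the end is unnecessary.
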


\begin{proof}
Take an additional $\bC^*$ action on $\cM(n,\mathbf{2})$ which acts on $B_2$ with weight $1$ and fixes all other arrows, this $\bC^*$-action can be used to construct specialization map (as in \cite[Def.~7.14]{COZZ}):
$$\mathsf{sp}\colon H^{\sA\times \bC^*_\hbar}(\cM(\mathbf{2}),B_1\Phi A_1+B_2\Phi^N A_2)\to H^{\sA\times \bC^*_\hbar}(\cM(\mathbf{2}),B_1\Phi A_1). $$ 
Passing to a fixed locus, the induced specialization map is given by
\begin{align*}
    \id\otimes \:\mathsf{sp}'\colon H^{\sA\times \bC^*_\hbar}(\cM(\mathbf{1}),B_1\Phi A_1)\otimes H^{\sA\times \bC^*_\hbar}(\cM(\mathbf{1}),B_2\Phi^N A_2)\longrightarrow H^{\sA\times \bC^*_\hbar}(\cM(\mathbf{1}),B_1\Phi A_1)\otimes H^{\sA\times \bC^*_\hbar}(\cM(\mathbf{1})),
\end{align*}
where $\mathsf{sp}'\colon H^{\sA\times \bC^*_\hbar}(\cM(\mathbf{1}),B_2\Phi^N A_2)\to H^{\sA\times \bC^*_\hbar}(\cM(\mathbf{1}))$ is the specialization map induced from the aforementioned $\bC^*$-action. By \cite[Ex.~7.19]{COZZ}, we also have
$$\mathsf{sp}\colon H^{\sA\times \bC^*_\hbar}(\cM(\mathbf{1}),B_2\Phi^N A_2)\to H^{\sA\times \bC^*_\hbar}(\cM(\mathbf{1})), $$
which is induced by the $\bC^*$-action that scales every arrow with weight $1$. We claim that $\mathsf{sp}=\mathsf{sp}'$. In fact, by 
\cite[Rmk.~7.16]{COZZ}, we have $\mathsf{sp}\circ \can=\mathsf{sp}'\circ \can=$ pushforward map from $Z(B_2\Phi^N A_2)$ to $\cM(\mathbf{1})$. Note that 
$$\can\colon H^{\sA\times \bC^*_\hbar}(Z(B_2\Phi^N A_2))\to H^{\sA\times \bC^*_\hbar}(\cM(\mathbf{1}),B_2\Phi^N A_2)$$ is surjective because the composition $$H^{\sA\times \bC^*_\hbar}(Z_n)\to H^{\sA\times \bC^*_\hbar}(Z(B_2\Phi^N A_2))\xrightarrow{\can} H^{\sA\times \bC^*_\hbar}(\cM(n,\mathbf{1}),B_2\Phi^N A_2)$$ is surjective. This proves the claim. Then the result follows from \cite[Prop.~7.18]{COZZ}.
\end{proof}

Write $\bar{\mathbf x}:=\hbar\mathbf x$, and let the Lie algebra $\mathfrak{gl}_2=\Span_\bC\{\mathbf e_{ij}\}_{i,j\in \{1,2\}}$ act on $\mathcal H_N=\Span_{\bC[\hbar]}\{1,\bar{\mathbf x},\ldots,\bar{\mathbf x}^{N-1},\bar{\mathbf x}^N\}$ by
\begin{align*}
    \mathbf e_{11}\mapsto 1-\bar{\mathbf x}\frac{\partial}{\partial \bar{\mathbf x}},\quad  \mathbf e_{12}\mapsto \frac{\partial}{\partial \bar{\mathbf x}} ,\quad  \mathbf e_{21}\mapsto \bar{\mathbf x}\left(N-\bar{\mathbf x} \frac{\partial}{\partial \bar{\mathbf x}}\right),\quad \mathbf e_{22}\mapsto 1-N+\bar{\mathbf x} \frac{\partial}{\partial \bar{\mathbf x}}.
\end{align*}
Then $R_{\mathcal H_1,\mathcal H_N}(u)$ takes the form:
\begin{align}\label{R(H_1,H_N)}
    R_{\mathcal H_1,\mathcal H_N}(u)=\frac{u\id+\hbar\:\Omega}{u+\hbar},\quad\,\, \Omega=\sum_{i,j}\mathbf e_{ij}\otimes \mathbf e_{ji}.
\end{align}
In particular, 
\begin{equation}\label{equ on RH1H1}
    R_{\mathcal H_1,\mathcal H_1}(u)=\frac{u\id+\hbar P}{u+\hbar},\quad \, \, P(\bar{\mathbf x}^i\otimes \bar{\mathbf x}^j)=\bar{\mathbf x}^j\otimes \bar{\mathbf x}^i.
\end{equation}

\begin{Remark}
Another way to obtain $R_{\mathcal H_1,\mathcal H_1}(u)$ is by dimensional reduction (\cite[Thm.~6.10]{COZZ}) to the Nakajima quiver with one node and without edge. The corresponding $R$-matrix is computed in \cite[\S4.1.2]{MO}.
\end{Remark}

\subsubsection{\texorpdfstring{$R$}{R}-matrix for \texorpdfstring{$\mathcal H_1\otimes\mathcal F$}{fund(x)F}}
Consider the following quiver with potential
\begin{equation*}
\begin{tikzpicture}[x={(1cm,0cm)}, y={(0cm,1cm)}, baseline=0cm]
  \node[draw,circle,fill=white] (Gauge) at (0,0) {$\phantom{n}$};
  \node[draw,rectangle,fill=white] (Framing1) at (-1,-2) {1};
  \node[draw,rectangle,fill=white] (Framingk) at (1,-2) {1};
  \node (Z) at (0,1) {\scriptsize $\Phi$};
  \draw[<-] (Gauge.210) -- (Framing1.100) node[midway,left] {\scriptsize $A_1$};
  \draw[->] (Gauge.240) -- (Framing1.60) node[midway,right] {\scriptsize $B_1$};

  \draw[<-] (Gauge.300) -- (Framingk.120) node[midway,right] {\scriptsize $A_2$};

  \draw[->,looseness=7] (Gauge.135) to[out=135,in=45] (Gauge.45);
\end{tikzpicture}
\qquad
\sW=B_1\Phi A_1.
\end{equation*}
Let $\sA\cong\bC^*_u$ act on the framing by $u\underline{\mathbf 1}+\underline{\mathbf 0}^{\mathbf 1}$.
In the basis $[Z_0]\otimes[\cM(n,\underline{\mathbf 0}^{\mathbf 1})]$ and $[Z_1]\otimes[\cM(n-1,\underline{\mathbf 0}^{\mathbf 1})]$, $\Stab_{u>0}$ is given by the matrix
\begin{align*}
\begin{pmatrix}
\prod_{j=0}^{n-1}(j\hbar-u) & n\hbar\prod_{j=0}^{n-2}(j\hbar-u) \\
0 & \prod_{j=0}^{n-2}((j-1)\hbar-u) 
\end{pmatrix},
\end{align*}
$\Stab_{u<0}$ is given by the matrix
\begin{align*}
\begin{pmatrix}
\prod_{i=0}^{n-1}(u-(i-1)\hbar) & 0 \\
\prod_{i=0}^{n-2}(u-(i-1)\hbar) & \quad (u-(n-1)\hbar)\prod_{i=0}^{n-2}(u-(i-1)\hbar)
\end{pmatrix}.
\end{align*}
Using the normalizer \eqref{normalizer_higher spin}, we have
\begin{align*}
    R_{\mathcal H_1,\mathcal F}(u)=\Stab_{u<0,\epsilon}^{-1}\circ\Stab_{u>0,\epsilon}=\frac{1}{u+\hbar}\left(
\begin{array}{cc}
 u-(n-1)\hbar & \, \, n\hbar \\
 -1 & \, \, 1 \\
\end{array}
\right).
\end{align*}

\begin{Remark}
$\sA$-action on the affine quotient $\cM_0(n,\underline{\mathbf 1}^{\mathbf 1})$ is attracting. By \cite[Prop.~3.35]{COZZ}, $\Stab_{u>0}^{-1}$ is defined for integral cohomology classes. Thus $R_{\mathcal H_1,\mathcal F}(u)^{-1}\in \End(\mathcal H_1\otimes\mathcal F)[u]$ is a polynomial in $u$. Direct computations give
\begin{align*}
    R_{\mathcal H_1,\mathcal F}(u)^{-1}= \left(
\begin{array}{cc}
 1 & \,\, -n\hbar \\
 1 & \,\,u-(n-1)\hbar \\
\end{array}
\right).
\end{align*}
\end{Remark}
Using the basis $[\cM(n,\underline{\mathbf 0}^{\mathbf 1})]=\frac{\mathbf x^n}{n!}$ in \eqref{bases of V and F}, we can write $R_{\mathcal H_1,\mathcal F}(u)$ in the following compact form
\begin{align}\label{R(H_1,F)}
    R_{\mathcal H_1,\mathcal F}(u)=\frac{1}{u+\hbar}\left(
\begin{array}{cc}
 u+\hbar-\mathbf x\mathbf y & \,\, \hbar\mathbf x \\
 -\mathbf y/\hbar & \,\, 1 \\
\end{array}
\right),
\end{align}
where $\mathbf y=\hbar\frac{\partial}{\partial\mathbf x}$ acts on $\bC[\hbar][\mathbf x]$ as a differential operator.

\subsection{Frassek-Pestun-Tsymbaliuk shifted Yangian}\label{sec yangian action}

Let 
\begin{equation}\label{equ on R(z)}R(z)=z\id+\hbar P\in \End(\bC^2)^{\otimes 2}[z,\hbar]. \end{equation}
Here $P=\sum_{1\leqslant i,j\leqslant 2}E_{ij}\otimes E_{ji}$ is the permutation operator, where $E_{ij}$ is the matrix with $1$ at $(i,j)$ place and zero elsewhere.

\begin{Definition}[{\cite[\S 2.3]{FPT}}]\label{def of shifted Y(gl_2)}
Fix $\mu\in \bZ_{\leqslant 0}$. The Frassek-Pestun-Tsymbaliuk $\mu$-\textit{shifted Yangian} $Y_{\mu}(\mathfrak{gl}_2)$ is a $\bC[\hbar]$ algebra generated by $\left\{e_{r},f_r,g_{1,r},g_{2,s}\right\}_{r\in \bZ_{\geqslant0},s\in \bZ_{\geqslant-\mu}}$, subject to relations
\begin{align}\label{RTT}
    R(u-v)\,T_1(u)\,T_2(v)=T_2(v)\,T_1(u)\,R(u-v),
\end{align}
with $T(z)\in Y_{\mu}(\mathfrak{gl}_2)[\![z,z^{-1}]\!]\otimes \End(\bC^2)$ defined via
\begin{align*}
    T(z)=F(z)\,G(z)\,E(z),\,\,\, \text{where}\,\,\, E(z)=\id_2+e(z)\otimes E_{12},\,\,\,  F(z)=\id_2+f(z)\otimes E_{21},\,\,\,  G(z)=\sum_{i=1}^2 g_i(z)\otimes E_{ii},
\end{align*}
and 
\begin{align}\label{gen current}
    e(z)=\sum_{r\geqslant 0} e_r z^{-r-1},\quad f(z)=\sum_{r\geqslant 0} f_r z^{-r-1},\quad g_1(z)=1+\sum_{r\geqslant 0}g_{1,r}z^{-r-1},\quad g_2(z)=z^{\mu}+\sum_{s\geqslant-\mu}g_{2,s}z^{-s-1}\,.
\end{align}
Here \eqref{RTT} is an equation in $Y_{\mu}(\mathfrak{gl}_2)[\![u,v,u^{-1},v^{-1}]\!]\otimes \End(\bC^2)^{\otimes 2}$. 

The $\mu$-\textit{shifted Yangian} $Y_{\mu}(\mathfrak{sl}_2)$ is the $\bC[\hbar]$-subalgebra of $Y_{\mu}(\mathfrak{gl}_2)$ generated by $\left\{e_r,f_r\right\}_{r\in \bZ_{\geqslant0}}$ and the coefficients of $h(z)=g_1(z)^{-1}g_2(z)$.
When $\mu=0$, we also write 
$Y(\mathfrak{sl}_2)=Y_{0}(\mathfrak{sl}_2)$, $Y(\mathfrak{gl}_2)=Y_{0}(\mathfrak{gl}_2)$.

\end{Definition}


Next we explain how to describe modules of Frassek-Pestun-Tsymbaliuk shifted Yangian via stable envelopes. 
Consider the following framed quiver with $\bd_{\In}\geqslant\bd_{\Out}\in \bN$:
\begin{equation}\label{equ on m poten}
\begin{tikzpicture}[x={(1cm,0cm)}, y={(0cm,1cm)}, baseline=0cm]
  \node[draw,circle,fill=white] (Gauge) at (0,0) {$\phantom{n}$};
  \node[draw,rectangle,fill=white] (Framing1) at (-1,-2) {$\bd_{\In}$};
  \node[draw,rectangle,fill=white] (Framingk) at (1,-2) {$\bd_{\Out}$};
  \node (Z) at (0,1) {\scriptsize $\Phi$};
  \draw[<-] (Gauge.210) -- (Framing1.100) node[midway,left] {\scriptsize $A$};

  \draw[->] (Gauge.330) -- (Framingk.80) node[midway,right] {\scriptsize $B$};

  \draw[->,looseness=7] (Gauge.135) to[out=135,in=45] (Gauge.45);
\end{tikzpicture}
\qquad\sW\in \bC Q^{\underline{\bd}}/[\bC Q^{\underline{\bd}},\bC Q^{\underline{\bd}}],
\end{equation}
where $\bC Q^{\underline{\bd}}$ is the path algebra of the Crawley-Boevey quiver $Q^{\underline{\bd}}$ defined in 
beginning of \S\ref{sec double of COHA}. We assume that $\sW$ is $\bC^*_\hbar\times \sA^{\mathrm{fr}}$-invariant, where $\sA^{\mathrm{fr}}$ is a torus that acts on $\bC^{\bd_{\In}}$ and $\bC^{\bd_{\Out}}$ (i.e. framing torus), 
$\bC^*_\hbar$ scales
$\Phi$ with weight $-1$ (its action on $A$, $B$ is arbitrary as long as preserving $\sw$). This implies that 
$$\sW\big|_{A=0,B=0}=0,$$ 
since for any $k>0$, $\tr(\Phi^k)$ is not $\bC^*_\hbar$-invariant. Denote $\sw=\tr(\sW)$.

Consider the state space
\begin{align*}
\mathcal H^{\sW}_{\underline{\bd},\sA^{\mathrm{fr}}}:=H^{\bC^*_\hbar\times \sA^{\mathrm{fr}}}(\cM(\underline{\bd}),\sw).
\end{align*}
When $\sA^{\mathrm{fr}}$ is the identity group, we write $\mathcal H^{\sW}_{\underline{\bd}}=\mathcal H^{\sW}_{\underline{\bd},\sA^{\mathrm{fr}}}$.

We can ``combine'' $\mathcal H_1$ and $\mathcal H^{\sW}_{\underline{\bd},\sA^{\mathrm{fr}}}$ by considering the following quiver with potential:
\begin{equation}\label{quiver for H_1 times general H}
\begin{tikzpicture}[x={(1cm,0cm)}, y={(0cm,1cm)}, baseline=0cm]
  \node[draw,circle,fill=white] (Gauge) at (0,0) {$\phantom{n}$};
  \node[draw,rectangle,fill=white] (Framing1) at (-2,-2) {$1$};
  \node[draw,rectangle,fill=white] (Framingdot) at (0,-2) {$\bd_{\In}$};
  \node[draw,rectangle,fill=white] (Framingk) at (2,-2) {$\bd_{\Out}$};
  \node (Z) at (0,1) {\scriptsize $\Phi$};
  \draw[<-] (Gauge.210) -- (Framing1.115) node[midway,left] {\scriptsize $A_1$};
  \draw[->] (Gauge.240) -- (Framing1.60) node[midway,right] {\scriptsize $B_1$};

  \draw[<-] (Gauge.270) -- (Framingdot.90) node[midway,right] {\scriptsize $A$};

  \draw[->] (Gauge.330) -- (Framingk.80) node[midway,right] {\scriptsize $B$};

  \draw[->,looseness=7] (Gauge.135) to[out=135,in=45] (Gauge.45);
\end{tikzpicture}
\qquad\sW'=B_1\Phi A_1+\sW.
\end{equation}
Let $\sA\cong \bC^*_u$ act on the framing by $u\underline{\mathbf 1}+\underline{\bd}$ and $\sT=\sA\times \bC^*_\hbar\times \sA^{\mathrm{fr}}$. Applying the deformed dimensional reduction (\cite[Thm.~C.1]{COZZ}) to $\tr(B_1\Phi A_1)\boxplus\sw$ along $B_1$, we get
\begin{align*}
H^{\sT}(\cM(m,\underline{\mathbf 1})\times\cM(n,\underline{\bd}),\tr(B_1\Phi A_1)\boxplus\sw)\cong H^{\sT}(\cM(n,\underline{\bd})^{},\sw),\quad m\in\{0,1\}
\end{align*}
and this implies that
$$\mathcal H_1\otimes_{\bC[\hbar]} \mathcal H^{\sW}_{\underline{\bd},\sA^{\mathrm{fr}}}\otimes_{\bC}\bC[u]\cong H^{\sT}(\cM(\underline{\mathbf 1}+\underline{\bd})^{\sA},\sw'^{\sA}), \; \text{ for }\; \sw'=\tr(\sW')$$ 
In the basis $[Z_0]$, $[Z_1]$, the stable envelope with normalizer \eqref{normalizer_higher spin} in the $u>0$ chamber takes the matrix form:
\begin{align*}
\Stab_{u>0,\epsilon}=\left(
\begin{array}{cc}
 (-1)^n e^{\sT}(N^-_{\scriptscriptstyle \cM(0,\mathbf 1)\times \cM(n,\underline{\bd})/\cM(n,\underline{\bd}+\underline{\mathbf 1})}) & \star_1 \\
 0 & (-1)^{n-1}e^{\sT}(N^-_{\scriptscriptstyle\cM(1,\mathbf 1)\times \cM(n-1,\underline{\bd})/\cM(n,\underline{\bd}+\underline{\mathbf 1})}) \\
\end{array}
\right),
\end{align*}
with $\deg_u\star_1<\rk N^-_{\scriptscriptstyle\cM(0,\mathbf 1)\times \cM(n,\underline{\bd})/\cM(n,\underline{\bd}+\underline{\mathbf 1})}$. 
In the ample partial order (\cite[Rmk.~3.8]{COZZ}), we have 
$$\cM(1,\mathbf 1)\times \cM(n-1,\underline{\bd})< \cM(0,\mathbf 1)\times \cM(n,\underline{\bd}), $$
so the 
$21$-component of $\Stab_{u>0,\epsilon}$ is zero due to axiom (i) in \cite[Def.~3.4]{COZZ}. Similarly
\begin{align*}
\Stab_{u<0,\epsilon}=\left(
\begin{array}{cc}
e^{\sT}(N^+_{\scriptscriptstyle \cM(0,\mathbf 1)\times \cM(n,\underline{\bd})/\cM(n,\underline{\bd}+\underline{\mathbf 1})}) & 0 \\
 \star_2 & (-1)^{\bd_{\Out}}e^{\sT}(N^+_{\scriptscriptstyle\cM(1,\mathbf 1)\times \cM(n-1,\underline{\bd})/\cM(n,\underline{\bd}+\underline{\mathbf 1})}) \\
\end{array}
\right),
\end{align*}
with $\deg_u\star_2<\rk N^+_{\scriptscriptstyle\cM(1,\mathbf 1)\times \cM(n-1,\underline{\bd})/\cM(n,\underline{\bd}+\underline{\mathbf 1})}$. 

\begin{Proposition}\label{prop shifted Y(gl(2)) action}
For arbitrary $\underline{\bd}$ and $\sW$ in \eqref{equ on m poten}, the map 
\begin{equation}\label{image of generators}
\begin{split}
e(u)\mapsto \frac{(-1)^n\star_1}{  e^{\sT}(N^-_{\scriptscriptstyle \cM(0,\mathbf 1)\times \cM(n,\underline{\bd})/\cM(n,\underline{\bd}+\underline{\mathbf 1})})}, &\qquad f(u)\mapsto \frac{(-1)^{\bd_{\Out}-1}\star_2}{e^{\sT}(N^+_{\scriptscriptstyle\cM(1,\mathbf 1)\times \cM(n-1,\underline{\bd})/\cM(n,\underline{\bd}+\underline{\mathbf 1})})},\\
g_1(u)\mapsto \frac{(-1)^ne^{\sT}(N^-_{\scriptscriptstyle \cM(0,\mathbf 1)\times \cM(n,\underline{\bd})/\cM(n,\underline{\bd}+\underline{\mathbf 1})})}{ e^{\sT}(N^+_{\scriptscriptstyle \cM(0,\mathbf 1)\times \cM(n,\underline{\bd})/\cM(n,\underline{\bd}+\underline{\mathbf 1})}) },&\qquad g_2(u)\mapsto \frac{(-1)^{n+\bd_{\Out}-1}e^{\sT}(N^-_{\scriptscriptstyle\cM(1,\mathbf 1)\times \cM(n-1,\underline{\bd})/\cM(n,\underline{\bd}+\underline{\mathbf 1})})}{e^{\sT}(N^+_{\scriptscriptstyle\cM(1,\mathbf 1)\times \cM(n-1,\underline{\bd})/\cM(n,\underline{\bd}+\underline{\mathbf 1})})},
\end{split}
\end{equation}
gives an action of $Y_{\mu}(\mathfrak{gl}_2)$ on $\mathcal H^{\sW}_{\underline{\bd},\sA^{\mathrm{fr}}}$ with $\mu=\bd_{\Out}-\bd_{\In}$. Here all rational functions are expanded in $u\to \infty$.
\end{Proposition}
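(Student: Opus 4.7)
The strategy is to recognize that the formulas in \eqref{image of generators} are simply the Gauss decomposition factors of the root $R$-matrix braiding the auxiliary space $\mathcal H_1$ with the state space $\mathcal H^{\sW}_{\underline{\bd},\sA^{\mathrm{fr}}}$, exactly in the spirit of Theorem \ref{thm e f h as R matrix elements}. Concretely, I would set
\[
L(u) := \Sigma\cdot \Stab_{u<0,\epsilon}^{-1}\circ \Stab_{u>0,\epsilon} \;\in\; \End^{\bZ/2}\bigl(\mathcal H_1\otimes \mathcal H^{\sW}_{\underline{\bd},\sA^{\mathrm{fr}}}\bigr)(u),
\]
view it as a $2\times 2$ matrix over $\End(\mathcal H^{\sW}_{\underline{\bd},\sA^{\mathrm{fr}}})$ in the basis $[Z_0],[Z_1]$ of $\mathcal H_1$, and read off $E(u),G(u),F(u)$ from its unique Gauss decomposition. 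The triangular shape of $\Stab_{\pm u,\epsilon}$ exhibited just before the proposition immediately yields the four entries in \eqref{image of generators}, with the signs $(-1)^n$ and $(-1)^{\bd_{\Out}-1}$ absorbed into the normalizer $\epsilon$ \eqref{normalizer_higher spin}. For the Jordan quiver the sign twist $\Sigma$ is trivial (since $|i|=1+\#(i\to i)\equiv 0 \pmod 2$), so no additional super signs intrude.

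Next I would verify the RTT relation \eqref{RTT}. Inserting two copies of $\mathcal H_1$ with spectral parameters $u$ and $v$, the Yang-Baxter equation of Remark \ref{rmk super YBE} applied to $\mathcal H_1\otimes\mathcal H_1\otimes \mathcal H^{\sW}_{\underline{\bd},\sA^{\mathrm{fr}}}$ reads
\[
R^{\mathrm{sup}}_{12}(u-v)\,L_1(u)\,L_2(v)\;=\;L_2(v)\,L_1(u)\,R^{\mathrm{sup}}_{12}(u-v),
\]
and by \eqref{equ on RH1H1} we have $R^{\mathrm{sup}}_{12}(u-v)=(u-v+\hbar P)/(u-v+\hbar)$, which differs from the $R(z)$ of \eqref{equ on R(z)} by a scalar factor that cancels out of the RTT equation. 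So $L(u)$ is an RTT matrix with the Frassek-Pestun-Tsymbaliuk fundamental $R$-matrix.

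Finally I would confirm the degree conditions in \eqref{gen current}. The triangular structure of the Gauss decomposition forces $e(u),f(u)\in u^{-1}\End(\mathcal H^{\sW}_{\underline{\bd},\sA^{\mathrm{fr}}})[\![u^{-1}]\!]$ and $g_1(u)=1+O(u^{-1})$, since the diagonal entries of $\Stab_{u>0,\epsilon}$ and $\Stab_{u<0,\epsilon}$ are equal to the equivariant Euler classes of the attracting/repelling normal bundles of the same fixed component. The crucial shift condition is $g_2(z)=z^\mu+O(z^{\mu-1})$: this reduces to computing
\[
\operatorname{rk} N^-_{\cM(1,\mathbf 1)\times \cM(n-1,\underline{\bd})/\cM(n,\underline{\bd}+\underline{\mathbf 1})}-\operatorname{rk} N^+_{\cM(1,\mathbf 1)\times \cM(n-1,\underline{\bd})/\cM(n,\underline{\bd}+\underline{\mathbf 1})} \;=\; \bd_{\Out}-\bd_{\In}=\mu,
\]
which is straightforward from the explicit description of the tangent space of $\cM(n,\underline{\bd}+\underline{\mathbf 1})$ and the $\bC^*_u$ weights on framings.

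The main obstacle is only bookkeeping: carefully tracking the normalizer $\epsilon$ and the sign twist $\Sigma$ so that the matrix $L(u)$ satisfies exactly \eqref{equ on R(z)} (not some sign-twisted variant), and verifying that the leading coefficients of $e(z),f(z),g_1(z),g_2(z)$ coming from the Gauss decomposition line up with those prescribed in Definition \ref{def of shifted Y(gl_2)}; once these identifications are in place, RTT is a direct consequence of YBE, and the proposition follows.
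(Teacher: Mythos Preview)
Your proposal is correct and follows essentially the same route as the paper: identify the assignments \eqref{image of generators} as the Gauss factors of the root $R$-matrix $L(u)=\Stab_{u<0,\epsilon}^{-1}\circ\Stab_{u>0,\epsilon}$, check the leading-order behavior in $u\to\infty$ to match the power series shapes in \eqref{gen current}, and deduce the RTT relation from the Yang--Baxter/braid relation together with the identification \eqref{equ on RH1H1} of $R_{\mathcal H_1,\mathcal H_1}$ with $R(z)$ up to a scalar. Your explicit remark that $\Sigma$ is trivial here (the Jordan node is bosonic) and your rank computation for $g_2$ are helpful details that the paper leaves implicit.
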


\begin{proof}
$e(u)$ and $f(u)$ have the correct form of power series in $u$ since 
$$\lim_{u\to \infty}e(u)=0=\lim_{u\to \infty}f(u). $$ 
The leading terms in the $u\to \infty$ of $g_1(u)$ and $g_2(u)$ are $1$ and $u^{\bd_{\Out}-\bd_{\In}}$ respectively. Thus the image of $e(u)$, $f(u)$, and $g_i(u)$ are power series of the form \eqref{gen current}. By a direct calculation, under the map \eqref{image of generators}, we have
$$T(u)=F(u)\,G(u)\,E(u)\mapsto R_{\mathcal H_1,\mathcal H^{\sW}_{\underline{\bd}}}(u)=\left(\Stab_{u<0,\epsilon} \right)^{-1} \circ \Stab_{u>0,\epsilon}. $$ 
By the braid relation \eqref{braid}, the RTT relation holds:
\begin{align*}
    R_{\mathcal H^{(1)}_1,\mathcal H^{(2)}_1}(u-v)\,R_{\mathcal H^{(1)}_1,\mathcal H^{\sW}_{\underline{\bd}}}(u)\,R_{\mathcal H^{(2)}_1,\mathcal H^{\sW}_{\underline{\bd}}}(v)=R_{\mathcal H^{(2)}_1,\mathcal H^{\sW}_{\underline{\bd}}}(v)\,R_{\mathcal H^{(1)}_1,\mathcal H^{\sW}_{\underline{\bd}}}(u)\,R_{\mathcal H^{(1)}_1,\mathcal H^{(2)}_1}(u-v).
\end{align*}
By \eqref{equ on RH1H1}, $R_{\mathcal H^{(1)}_1,\mathcal H^{(2)}_1}(z)$ coincides with \eqref{equ on R(z)} (up to a constant). 
Then the proposition follows from Definition \ref{def of shifted Y(gl_2)}.
\end{proof}

\begin{Remark}\label{rmk on Y(gl_2) map}
For the Jordan quiver $Q$ with zero potential, $\mathcal H_1$ is admissible by Definition \ref{def on adm pot}, so the maps in \eqref{image of generators} induce a surjective algebra map from $Y_{\mu}(\mathfrak{gl}_2)\otimes_{\bC[\hbar]}\bC(\hbar)$ to $\mathsf Y_{\mu}(Q,0)$ for the Jordan quiver $Q$, by Proposition \ref{prop shifted Y(gl(2)) action} and Theorem \ref{thm admissible}.
\end{Remark}

\begin{Example}\label{ex H_N, V, F}
In the examples of $\mathcal V$, $\mathcal H_N$ and $\mathcal F$, we can explicitly pin down their shifted Yangian module structures by comparing the $R$-matrices \eqref{R(H_1,V)}, \eqref{R(H_1,H_N)} and \eqref{R(H_1,F)} with the known Lax matrices \cite{FPT}. For simplicity, let us
restrict to the subalgebra $Y_{\mu}(\mathfrak{sl}_2)$, which is also denoted 
as $Y(\mathfrak{sl}_2)$ when $\mu=0$. Then we have
\begin{itemize}
\item $\tau_{\hbar}^*\mathcal V$ is isomorphic to the universal dual Verma module with highest weight $t$ induced by the evaluation map $Y(\mathfrak{sl}_2)\to U(\mathfrak{sl}_2)[\hbar]$;
\item $\tau_{\hbar}^*\mathcal H_N$ is isomorphic to the $N$-th symmetric power of the fundamental representation $\bC[\hbar]^{\oplus 2}$ of $Y(\mathfrak{sl}_2)$ induced by the evaluation map $Y(\mathfrak{sl}_2)\to U(\mathfrak{sl}_2)[\hbar]$;
\item $\tau_{\hbar}^*\mathcal F$ is isomorphic to the predundamental module $L^{-}_0$ of $Y_{-1}(\mathfrak{sl}_2)$.
\end{itemize}
Here the automorphism is given by: 
$$\tau_{a}\colon Y_{\mu}(\mathfrak{gl}_2)\cong Y_{\mu}(\mathfrak{gl}_2), \quad T(z)\mapsto T(z-a). $$ 
It induces a pullback map on Yangian modules: 
\begin{equation}\label{equ on tau pb}\tau_{a}^*\colon Y_{\mu}(\mathfrak{gl}_2)\mathrm{-Mod}\cong Y_{\mu}(\mathfrak{gl}_2)\mathrm{-Mod}. \end{equation}
The subalgebra $Y_{\mu}(\mathfrak{sl}_2)$ is $\tau_a$ invariant, so $\tau_{a}^*$ restricts to an automorphism $Y_{\mu}(\mathfrak{sl}_2)\mathrm{-Mod}\cong Y_{\mu}(\mathfrak{sl}_2)\mathrm{-Mod}$.
\end{Example}

\begin{Remark}
The images of $g_1(u)$ and $g_2(u)$ can be written as tautological classes on $\cM(\underline{\bd})$:
\begin{align*}
    g_1(u)\mapsto c_{-1/u}\left((1-\hbar^{-1})\mathsf V\right),\quad g_2(u)\mapsto u^{\bd_{\Out}-\bd_{\In}}\cdot c_{-1/u}\left((1-\hbar)\mathsf V+\mathsf D_{\Out}-\mathsf D_{\In}\right),
\end{align*}
where $\mathsf V$ is the descent of $\bC^n$ to the GIT quotient $\cM(n,\underline{\bd})$, $\mathsf D_{\In}$ and $\mathsf D_{\Out}$ are the pullback of the in-coming and out-going framing vector spaces to $\cM(n,\underline{\bd})$ respectively. $\mathsf V$, $\mathsf D_{\In}$, and $\mathsf D_{\Out}$ are $\sT_0$-equivariant vector bundles on $\cM(n,\underline{\bd})$, and $c_{-1/u}(\cdots)$ is the $\sT_0$-equivariant Chern polynomial with variable $-1/u$. For example,
\begin{align*}
    g_{1,0}\mapsto  -\hbar\bv,\quad g_{2,-\mu} \mapsto c_1(\mathsf D_{\In})-c_1(\mathsf D_{\Out})+\hbar\bv\,.
\end{align*}
In the case of $\mu=0$, comparison with the Lie algebra generators in Proposition \ref{prop g(Q,w)} gives:
$g_{1,0}\mapsto -\hbar\mathsf v$, $g_{2,0}\mapsto \mathsf d+\hbar\mathsf v$, and
\begin{align}\label{r-mat gl2}
-g_{2,0}\otimes g_{1,0}-g_{1,0}\otimes g_{2,0}-2g_{1,0}\otimes g_{1,0}+e_0\otimes f_0+f_0\otimes e_0\mapsto \hbar\,\pmb r\,.
\end{align}
\end{Remark}

\begin{Remark}
Proposition \ref{prop shifted Y(gl(2)) action} is obtained by \cite{YZJ} in the special case when 
$\bd_{\In}=\bd_{\Out}=\bd$ and $\sW=\sum_{i=1}^{\bd}B_i\Phi^{N_i}A_i$.
It is proven in \textit{loc.\,cit.}~that $Y(\mathfrak{sl}_2)$ acts on $\cH^{\sW}_{{\bd}}$, and that the action makes $\cH_N$ the $N$-th symmetric power of the fundamental module through the evaluation map $Y(\mathfrak{sl}_2)\to U(\mathfrak{sl}_2)[\hbar]$. The method in \textit{loc.\,cit.}~is different from ours. They use dimensional reduction to the Borel-Moore homology of zero locus of $\{\Phi^{N_i}A_i\}_{i=1}^{\bd}$, construct correspondences for the generators and check relations. 
\end{Remark}

\subsection{Coproducts and stable envelopes}

Frassek-Pestun-Tsymbaliuk shifted Yangians possess the following coproduct maps \cite[Prop.~2.316]{FPT}:
\begin{align*}
    \Delta_{\mu',\mu''}\colon Y_{\mu'+\mu''}(\mathfrak{gl}_2)\to Y_{\mu'}(\mathfrak{gl}_2)\otimes Y_{\mu''}(\mathfrak{gl}_2),\qquad T(z)\mapsto T(z)\otimes T(z).
\end{align*}
The coproduct induces a tensor product on the module categories:
\begin{align*}
    \left(Y_{\mu'}(\mathfrak{gl}_2)\mathrm{-Mod}\right)\;\times \; \left(Y_{\mu''}(\mathfrak{gl}_2)\mathrm{-Mod}\right)\;\xrightarrow{\quad\otimes\quad} \; Y_{\mu'+\mu''}(\mathfrak{gl}_2)\mathrm{-Mod}.
\end{align*}
On the geometry side, we have stable envelope:
\begin{align*}
    \Stab_{a>0,\epsilon}\colon \mathcal H^{\sW'}_{\underline{\bd}'}\otimes \mathcal H^{\sW''}_{\underline{\bd}''}\longrightarrow \mathcal H^{\sW}_{\underline{\bd}'+\underline{\bd}''},
\end{align*}
which is defined using the torus $\bC^*_a$ action on the framing: $a\underline{\bd}'+\underline{\bd}''$ with normalizer \eqref{normalizer_higher spin}. 
By \S \ref{sect on coprod}, we have: 
\begin{align*}
    \Stab_{a>0,\epsilon}^{-1}\,R_{\mathcal H_1,\mathcal H^{\sW}_{\underline{\bd}'+\underline{\bd}''}}(u)\,\Stab_{a>0,\epsilon}=  R_{\mathcal H_1,\mathcal H^{\sW''}_{\underline{\bd}''}}(u)\,R_{\mathcal H_1,\mathcal H^{\sW'}_{\underline{\bd}'}}(u-a).
\end{align*}
Thus $\Stab_{a>0,\epsilon}$ induces a $Y_{\mu'+\mu''}(\mathfrak{gl}_2)$-module map (with $\mu'=(0,\bd_{\In}'-\bd_{\Out}')$, $\mu''=(0,\bd_{\In}''-\bd_{\Out}'')$): 
$$\mathcal H^{\sW''}_{\underline{\bd}''}\otimes\tau_{a}^*\mathcal H^{\sW'}_{\underline{\bd}'}\to \mathcal H^{\sW}_{\underline{\bd}'+\underline{\bd}''}, $$
where $\tau_{a}^*$ is the Yangian module pullback map \eqref{equ on tau pb}. 

Similarly, $\Stab_{a<0,\epsilon}$ induces a $Y_{\mu'+\mu''}(\mathfrak{gl}_2)$-module map
$$\tau_{a}^*\mathcal H^{\sW'}_{\underline{\bd}'}\otimes \mathcal H^{\sW''}_{\underline{\bd}''}\to \mathcal H^{\sW}_{\underline{\bd}'+\underline{\bd}''}. $$

\begin{Proposition}
The $R$-matrix
\begin{align*}
    R_{\scriptscriptstyle\mathcal H^{\sW'}_{\underline{\bd}'},\mathcal H^{\sW''}_{\underline{\bd}''}}(a):= \left(\Stab_{a<0,\epsilon}\right)^{-1}\circ \Stab_{a>0,\epsilon}\colon \mathcal H^{\sW''}_{\underline{\bd}''}\otimes\tau_{a}^*\mathcal H^{\sW'}_{\underline{\bd}'}\longrightarrow\tau_{a}^*\mathcal H^{\sW'}_{\underline{\bd}'}\otimes \mathcal H^{\sW''}_{\underline{\bd}''}
\end{align*}
is a $Y_{\mu'+\mu''}(\mathfrak{gl}_2)$-module map. 

Moreover, if $\bd_{\Out}'=0$ then $R_{\scriptscriptstyle\mathcal H^{\sW'}_{\underline{\bd}'},\mathcal H^{\sW''}_{\underline{\bd}''}}(a)$ is a polynomial in $a$; if $\bd_{\Out}''=0$ then $R_{\scriptscriptstyle\mathcal H^{\sW'}_{\underline{\bd}'},\mathcal H^{\sW''}_{\underline{\bd}''}}(a)^{-1}$ is a polynomial in $a$.
\end{Proposition}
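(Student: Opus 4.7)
For the first claim, I will argue that the $R$-matrix interchanges the two standard ``coproducts'' of the $Y_{\mu'+\mu''}(\mathfrak{gl}_2)$-action obtained from $\Stab_{a>0,\epsilon}$ and $\Stab_{a<0,\epsilon}$. Concretely, each state space carries its Yangian action via Proposition \ref{prop shifted Y(gl(2)) action}, i.e. as matrix elements of $R_{\mathcal H_1,-}(u)$, so it suffices to produce, for all $u$ and $a$, the identity
\[
R_{\mathcal H_1,\tau_a^*\mathcal H^{\sW'}_{\underline{\bd}'}}(u)\,R_{\mathcal H_1,\mathcal H^{\sW''}_{\underline{\bd}''}}(u)\cdot R_{\scriptscriptstyle\mathcal H^{\sW'}_{\underline{\bd}'},\mathcal H^{\sW''}_{\underline{\bd}''}}(a)
= R_{\scriptscriptstyle\mathcal H^{\sW'}_{\underline{\bd}'},\mathcal H^{\sW''}_{\underline{\bd}''}}(a)\cdot R_{\mathcal H_1,\mathcal H^{\sW''}_{\underline{\bd}''}}(u)\,R_{\mathcal H_1,\tau_a^*\mathcal H^{\sW'}_{\underline{\bd}'}}(u),
\]
and this is exactly the braid/YBE identity \eqref{braid} for the three-factor chamber arrangement $\mathcal H_1\otimes\mathcal H^{\sW'}_{\underline{\bd}'}[a]\otimes \mathcal H^{\sW''}_{\underline{\bd}''}$, using that braiding with $\tau_a^*\mathcal H^{\sW'}_{\underline{\bd}'}$ is $R_{\mathcal H_1,\mathcal H^{\sW'}_{\underline{\bd}'}}(u-a)$. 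Equivalently, combining the two triangle identities
\[
\Stab_{a>0,\epsilon}^{-1}\,R_{\mathcal H_1,\mathcal H^{\sW}_{\underline{\bd}'+\underline{\bd}''}}(u)\,\Stab_{a>0,\epsilon}=R_{\mathcal H_1,\mathcal H^{\sW''}_{\underline{\bd}''}}(u)\,R_{\mathcal H_1,\mathcal H^{\sW'}_{\underline{\bd}'}}(u-a)
\]
and the analogue for $a<0$ yields the commutation of $R_{\scriptscriptstyle\mathcal H^{\sW'}_{\underline{\bd}'},\mathcal H^{\sW''}_{\underline{\bd}''}}(a)$ with the coproduct action of $T(u)$.

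For the polynomiality statement, the strategy is to apply the criterion \cite[Prop.~3.35]{COZZ}: if the $\sA=\bC^*_a$-action on the affinization $\cM_0(\bv,\underline{\bd}'+\underline{\bd}'')$ is attracting in a given chamber, then the stable envelope in the opposite chamber, with the normalizer $\epsilon$ already a polynomial, admits an \emph{integral} (hence polynomial in $a$) inverse on cohomology. Under the decomposition $a\underline{\bd}'+\underline{\bd}''$, the $\bC^*_a$-weight on the gauge factor of $\underline{\bd}'$ is $+1$ and that of $\underline{\bd}''$ is $0$. The coordinate ring of $\cM_0(\bv,\underline{\bd}'+\underline{\bd}'')$ is generated by traces of cycles in $\Phi$ and by paths from in-coming to out-going framing nodes. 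When $\bd_{\Out}'=0$, every such path starting in an $\underline{\bd}'$-in-coming framing can only terminate in a $\underline{\bd}''$-out-going framing, giving $\sA$-weight $-1$, while the remaining generators have weight $0$. Thus $\sA$ acts on $\cM_0$ with nonpositive weights, i.e.\ attractingly in the chamber $a<0$. Applying \cite[Prop.~3.35]{COZZ} to the opposite chamber, $\Stab_{a>0,\epsilon}^{-1}$ is polynomial in $a$ on integral classes, and since $\Stab_{a<0,\epsilon}$ is already polynomial, the transposition of roles using $R^{-1}=\Stab_{a>0,\epsilon}^{-1}\circ\Stab_{a<0,\epsilon}$ (together with \cite[Lem.~3.29]{COZZ} identifying $\Stab_{a<0,\epsilon}^{-1}$ up to sign with the transpose correspondence of $\Stab_{a>0,\epsilon}$) shows that $R_{\scriptscriptstyle\mathcal H^{\sW'}_{\underline{\bd}'},\mathcal H^{\sW''}_{\underline{\bd}''}}(a)$ itself is polynomial. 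Symmetrically, when $\bd_{\Out}''=0$ the roles of the two chambers swap, giving polynomiality of $R^{-1}$.

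The main obstacle is the careful bookkeeping of weights and of the normalizer $\epsilon$ in the polynomiality argument; in particular one must check that the factor $(-1)^{\rk\mathcal P|_F^\pm}$ does not spoil integrality and that the cited result \cite[Prop.~3.35]{COZZ} applies in the critical-cohomology setting via the dimensional-reduction or deformation-invariance arguments of \cite{COZZ}. The first claim, by contrast, is essentially formal once the two triangle/Yang--Baxter identities for stable envelopes are invoked.
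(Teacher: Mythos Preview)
Your argument for the intertwining property is correct and is exactly what the paper means by ``follows from the above discussions'': the two triangle identities for $\Stab_{a\gtrless 0,\epsilon}$ show each is a $Y_{\mu'+\mu''}(\mathfrak{gl}_2)$-module map, hence so is their composite $R$.

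For the polynomiality claim your strategy---analyze the $\bC^*_a$-weights on $\bC[\cM_0(\bv,\underline{\bd}'+\underline{\bd}'')]$ and invoke \cite[Prop.~3.35]{COZZ}---is the right one and matches the paper, but your chamber bookkeeping is tangled. When $\bd'_{\Out}=0$ the generators $B_j\Phi^kA_i$ all have $\bC^*_a$-weight $\leqslant 0$, so $\cM_0$ is attracting in the chamber $a<0$, as you say. The content of \cite[Prop.~3.35]{COZZ} is then that $\Stab_{a<0,\epsilon}^{-1}$ (not $\Stab_{a>0,\epsilon}^{-1}$) is defined on integral classes, i.e.\ is polynomial in $a$. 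Since $\Stab_{a>0,\epsilon}$ is always integral, $R=\Stab_{a<0,\epsilon}^{-1}\circ\Stab_{a>0,\epsilon}$ is polynomial directly---no transposition via \cite[Lem.~3.29]{COZZ} is needed. Your version instead claims $\Stab_{a>0,\epsilon}^{-1}$ is polynomial and then writes down $R^{-1}$, which if followed literally would yield polynomiality of $R^{-1}$ rather than $R$; the subsequent appeal to the transpose identification does not recover the correct statement, since that lemma holds for all $R$-matrices (including those with poles). The symmetric case $\bd''_{\Out}=0$ is handled by the same argument with the chambers swapped: $\cM_0$ is attracting in $a>0$, so $\Stab_{a>0,\epsilon}^{-1}$ is integral, and $R^{-1}=\Stab_{a>0,\epsilon}^{-1}\circ\Stab_{a<0,\epsilon}$ is polynomial. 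Compare the paper's earlier remark on $R_{\mathcal H_1,\mathcal F}(u)^{-1}$, which is exactly this computation for $\underline{\bd}''=\underline{\mathbf 0}^{\mathbf 1}$.
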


\begin{proof}
The statement on intertwining property follows from the above discussions. The statement on polynomiality follows from 
\cite[Prop.~3.35]{COZZ}.
\end{proof}

\begin{Remark}
Similarly as \cite[Prop.~4.5.1]{MO}, the above intertwiners are unitary:
\begin{align*}
    R_{\scriptscriptstyle\mathcal H^{\sW'}_{\underline{\bd}'},\mathcal H^{\sW''}_{\underline{\bd}''}}(a)^{-1}=R_{\scriptscriptstyle\mathcal H^{\sW''}_{\underline{\bd}''},\mathcal H^{\sW'}_{\underline{\bd}'}}(-a).
\end{align*}
\end{Remark}


\begin{Example}
Write $\mathcal F^{(i)}=\bC[\hbar][\mathbf x_i]$ and $\mathbf y_i=\hbar\frac{\partial}{\partial\mathbf x_i}$ for $i\in \{1,2\}$, so we have $ \mathcal F^{(1)}\otimes\mathcal F^{(2)}=\bC[\hbar][\mathbf x_1,\mathbf x_2]$. Then the $R$-matrix takes the following explicit form
\begin{equation}\label{equ on rff}
    R_{\mathcal F^{(1)},\mathcal F^{(2)}}(a)=\left(1-\mathbf x_2\mathbf y_1\right)^{a/\hbar}S_{12},
\end{equation}
where $S_{12}(f(\mathbf x_1,\mathbf x_2))=f(\mathbf x_2,\mathbf x_1)$. 
In fact, by direct computations, $\left(1-\mathbf x_2\mathbf y_1\right)^{a/\hbar}S_{12}$ solves the intertwining equation:
\begin{align*}
    \left(1-\mathbf x_2\mathbf y_1\right)^{\frac{u-v}{\hbar}}\,S_{12}\,R_{\mathcal F^{(1)},\mathcal H_1}(u)\,R_{\mathcal F^{(2)},\mathcal H_1}(v)=R_{\mathcal F^{(2)},\mathcal H_1}(v)\,R_{\mathcal F^{(1)},\mathcal H_1}(u)\left(1-\mathbf x_2\mathbf y_1\right)^{\frac{u-v}{\hbar}}\,S_{12}.
\end{align*}
An intertwiner that maps $1\in \bC[\hbar][\mathbf x_1,\mathbf x_2]$ to itself is unique by \cite[Thm.~5.2\,(i)]{HZ}, so \eqref{equ on rff} holds.
\end{Example}

\subsection{Proof of Theorem \ref{thm ex Jordan_main}}\label{sect on proof of thm jordan}

Theorem \ref{thm ex Jordan_main} follows from Remark \ref{rmk on Y(gl_2) map} and the following Proposition \ref{prop embed Y into end}.

\begin{Proposition}\label{prop embed Y into end}
For arbitrary $\mu\in \bZ_{\leqslant 0}$, the natural map 
\begin{align*}
    \iota\colon Y_{\mu}(\mathfrak{gl}_2)\to \prod_{\underline{\bd},\sW,\sA^{\mathrm{fr}} }\End(\cH^{\sW}_{\underline{\bd},\sA^{\mathrm{fr}}})
\end{align*}
is injective, where the product is for all $\underline{\bd}$ and $\sW$ in \eqref{equ on m poten} such that $\bd_{\Out}-\bd_{\In}=\mu$.
\end{Proposition}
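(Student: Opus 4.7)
The plan is to factor $\iota$ through the surjection $\varrho_\mu\colon Y_\mu(\mathfrak{gl}_2)\otimes_{\bC[\hbar]}\bC(\hbar)\twoheadrightarrow \mathsf Y_\mu(Q,0)$ of Remark~\ref{rmk on Y(gl_2) map} and reduce the claim to the statement that $\varrho_\mu$ is an isomorphism. Concretely, the map $\iota$ decomposes as
\begin{align*}
Y_\mu(\mathfrak{gl}_2)\hookrightarrow Y_\mu(\mathfrak{gl}_2)\otimes_{\bC[\hbar]}\bC(\hbar)\xrightarrow{\varrho_\mu}\mathsf Y_\mu(Q,0)\hookrightarrow \prod_{\underline{\bd},\sW,\sA^{\mathrm{fr}}}\End(\cH^{\sW}_{\underline{\bd},\sA^{\mathrm{fr}}}),
\end{align*}
in which the first arrow is injective because $Y_\mu(\mathfrak{gl}_2)$ is a free $\bC[\hbar]$-module by the Frassek--Pestun--Tsymbaliuk PBW theorem (as cited in Definition~\ref{def of shifted Y(gl_2)}), and the last arrow is injective by Proposition~\ref{prop sufficient state spaces} combined with Theorem~\ref{thm admissible}, applied to the collection of compact admissible auxiliary spaces $\{\cH_N\}_{N\in\bZ_{\geqslant 0}}$ from~\eqref{equ on HNn}. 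Hence, once $\varrho_\mu$ is shown to be an isomorphism, $\iota$ becomes a composition of injections and we are done.

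For the unshifted case $\mu=0$, I follow the strategy of Lemma~\ref{lem rho_0 isom}. Equip $Y_0(\mathfrak{gl}_2)$ and $\mathsf Y_0(Q,0)$ with their standard filtrations by degree in the spectral parameter $u$; a direct check on the formulas~\eqref{image of generators} shows that $\varrho_0$ preserves these filtrations. On associated gradeds, PBW gives $\mathrm{gr}\,Y_0(\mathfrak{gl}_2)\otimes\bC(\hbar)\cong \cU(\mathfrak{gl}_2[u])\otimes\bC(\hbar)$, while Theorem~\ref{thm grY} gives $\mathrm{gr}\,\mathsf Y_0(Q,0)\cong \cU(\mathfrak{g}_{Q,0}[u])$. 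The induced Lie algebra map $\mathfrak{gl}_2\to \mathfrak{g}_{Q,0}$ is identified, via Theorem~\ref{thm compare with MO yangian} and the classical identification $\mathfrak{g}^{\mathrm{MO}}_{A_1}\cong \mathfrak{gl}_2$ from \cite[Prop.~11.3.1]{MO}, with an isomorphism. Therefore $\mathrm{gr}\,\varrho_0$ is an isomorphism, and consequently so is $\varrho_0$.

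For general $\mu<0$, I reduce to the previous case using the wrong-way shift. On the algebraic side, define
\begin{align*}
\Psi^{0,\mu}_z\colon Y_\mu(\mathfrak{gl}_2)\otimes\bC(\hbar)\to Y_0(\mathfrak{gl}_2)\otimes\bC(\hbar)(\!(z^{-1})\!)
\end{align*}
on the Gauss generators by the formulas of Remark~\ref{rmk form of shift}, namely
\begin{align*}
g_1(u)\mapsto g_1(u),\quad h(u)\mapsto (u-z)^{-\mu}h(u),\quad e(u)\mapsto \left[(z-u)^{-\mu}e(u)\right]_{u^{<0}},\quad f(u)\mapsto (-1)^{-\mu}f(u),
\end{align*}
where $h(u)=g_1(u)^{-1}g_2(u)$. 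One checks by substitution into the RTT relation~\eqref{RTT} that this is a well-defined algebra homomorphism, and that it is injective by a triangular-decomposition and leading-$z$-coefficient argument mimicking the proof of Theorem~\ref{thm ex trivial quiver_main}. One then verifies commutativity of
\begin{equation*}
\xymatrix{
Y_\mu(\mathfrak{gl}_2)\otimes\bC(\hbar)\ar[r]^-{\Psi^{0,\mu}_z}\ar[d]_{\varrho_\mu} & Y_0(\mathfrak{gl}_2)\otimes\bC(\hbar)(\!(z^{-1})\!)\ar[d]^{\varrho_0\otimes\id} \\
\mathsf Y_\mu(Q,0)\ar[r]^-{\sS^{0,\mu}_z} & \mathsf Y_0(Q,0)(\!(z^{-1})\!),
}
\end{equation*}
where $\sS^{0,\mu}_z$ is the geometric wrong-way shift of Definition~\ref{def of wrong-way shift} (injective by Lemma~\ref{lem wrong-way shift inj}). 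Since $\varrho_0\otimes\id$ and $\Psi^{0,\mu}_z$ are injective, so is $\varrho_\mu$; combined with the surjectivity from Remark~\ref{rmk on Y(gl_2) map}, this proves that $\varrho_\mu$ is an isomorphism.

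The main obstacle I expect is the verification of the commutative square above. The formulas of Remark~\ref{rmk form of shift} describe $\sS^{0,\mu}_z$ in terms of the Gauss-decomposed currents that arise as matrix elements of the geometric $R$-matrix~\eqref{gauss decomp of R}, whereas the FPT generators of Definition~\ref{def of shifted Y(gl_2)} are packaged through the RTT presentation; matching the two sides entails careful bookkeeping of the normalizer~$\epsilon$, of the signs in~\eqref{image of generators}, of the conversion between $g_2(u)$ and $h(u)$ under shift, and of the fact that the image of the triangular currents a priori lies in a completion $(\cdot)(\!(z^{-1})\!)$. Once this dictionary is set up, commutativity of the square reduces to checking equality of the images of the generating currents, after which the argument is formal.
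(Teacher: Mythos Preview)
Your strategy is genuinely different from the paper's, and while the overall plan is sound, the step you flag as ``the main obstacle'' is not actually the hard one.

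The paper does not factor through $\varrho_\mu$ at all. Instead, it proves injectivity of $\iota$ directly by induction on $\mu$: for $\mu=0$ it restricts to tensor powers $\tau^*_{a_1}\cH_1\otimes\cdots\otimes\tau^*_{a_n}\cH_1$ and invokes \cite[\S 11.3.2]{MO}; for $\mu<0$ it restricts to state spaces of the form $\tau^*_{a_1}\cF\otimes\tau^*_{a_2}\cH^{\sW'}_{\underline{\bd}'}$ with $\bd'_{\Out}-\bd'_{\In}=\mu+1$, observes that the action factors through the coproduct $Y_\mu(\mathfrak{gl}_2)\to D\otimes Y_{\mu+1}(\mathfrak{gl}_2)$ (where $D$ is the Weyl algebra acting faithfully on $\cF$), and then shows that this coproduct map is injective by passing to the classical limit $\hbar=0$, where it becomes the pullback along the surjective multiplication map $\cW^{(1,0)}_{(0,1)}\times\cW_{\mu+1}\to\cW_\mu$ inside $\GL_2(\!(z^{-1})\!)$. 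The induction and the classical-limit argument are the entire content; no wrong-way shift appears.

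Your route can be made to work, but the real labour is not the commutativity of the square---that is indeed a bookkeeping check on generators once both sides are defined---but rather establishing $\Psi^{0,\mu}_z$ as an injective algebra homomorphism on the FPT side \emph{independently} of $\varrho_\mu$. If you define it as $\varrho_0^{-1}\circ\sS^{0,\mu}_z\circ\varrho_\mu$ you get a homomorphism for free, but then its injectivity is exactly equivalent to injectivity of $\varrho_\mu$, which is circular. So you must verify directly that your formulas (note: the exponents should be $\mu$, not $-\mu$, to match $\sS^{0,\mu}_z$ and land in $(\!(z^{-1})\!)$) respect the RTT relations \emph{and} that the resulting map is injective. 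The claim ``one checks by substitution into the RTT relation'' hides a genuine computation that is not in the paper; likewise, the injectivity argument ``mimicking the proof of Theorem~\ref{thm ex trivial quiver_main}'' used the very special feature that the $\mathfrak{gl}_{1|1}$ positive part is an exterior algebra, whereas the positive part of $Y_\mu(\mathfrak{gl}_2)$ has nontrivial relations among the $e_r$'s. A PBW-based argument for $Y_\mu(\mathfrak{gl}_2)$ (triangular decomposition plus leading-$z$-coefficient on each piece) should still go through using the PBW theorem from \cite{FPT}, but it requires care and is the substantive new lemma you would need to supply.
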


\begin{proof}
We prove this by induction on $\mu$. When $\mu=0$, we can restrict to representations of the form 
$$\cH^{\sW}_{\underline{\bd},\sA^{\mathrm{fr}}}\cong \tau^*_{a_1}\cH_1\otimes\cdots \otimes\tau^*_{a_n}\cH_1,$$ 
then the induced map
\begin{align*}
    Y_{0}(\mathfrak{gl}_2)\to \prod_{a_1,\ldots,a_n}\End(\tau^*_{a_1}\cH_1\otimes\cdots \otimes\tau^*_{a_n}\cH_1)
\end{align*}
is injective by \cite[\S 11.3.2]{MO}. 

For $\mu<0$, assume that $\iota$ is injective for $\mu'=\mu+1$, we take representations of the form 
$$\cH^{\sW}_{\underline{\bd},\sA^{\mathrm{fr}}}\cong \tau^*_{a_1}\cF\otimes \tau^*_{a_2}\cH^{\sW'}_{\underline{\bd}'},$$ where $\bd_{\Out}'-\bd_{\In}'=\mu'$ and $\cF$ is prefundamental module \eqref{equ on prefund}. Here the tensor product is induced by the stable envelope as in the previous subsection. Then the action of $Y_{\mu}(\mathfrak{gl}_2)$ on $\cH^{\sW}_{\underline{\bd},\sA^{\mathrm{fr}}}$ factors through 
\begin{align*}
    Y_{\mu}(\mathfrak{gl}_2)\xrightarrow{\Delta} Y_{(0,1)}(\mathfrak{gl}_2)\otimes Y_{\mu'}(\mathfrak{gl}_2)\twoheadrightarrow D\otimes Y_{\mu'}(\mathfrak{gl}_2)
\end{align*}
where $D=\bC[\hbar]\langle\mathbf x,\mathbf y\rangle/([\mathbf y,\mathbf x]-\hbar)$ is the Weyl algebra. By induction and the fact that $D$-action on $\mathcal F$ is faithful, the induced map
\begin{align*}
    D\otimes Y_{\mu'}(\mathfrak{gl}_2)\to \prod_{a_1,a_2,\underline{\bd}',\sw' }\End(\tau^*_{a_1}\cF\otimes \tau^*_{a_2}\cH^{\sW'}_{\underline{\bd}'})
\end{align*}
is injective. Then it is enough to show that $Y_{\mu}(\mathfrak{gl}_2)\to D\otimes Y_{\mu'}(\mathfrak{gl}_2)$ is injective. 

By the flatness over $\bC[\hbar]$, it suffices to prove the injectivity after passing to the classical limit $\hbar=0$. It is known that $Y_{\mu}(\mathfrak{gl}_2)/(\hbar)$ is a commutative algebra, which is the function ring of a pro-finite type affine variety $\cW_{\mu}$. $\cW_{\mu}$ is the closed subscheme of the group ind-scheme $\GL_2(\!(z^{-1})\!)$ consisting of matrices of the form
\begin{align*}
\left(
\begin{array}{cc}
 1+a(z) & \,\, b(z) \\
 c(z) & \,\,*\\
\end{array}
\right)\text{ with determinant } z^{-m}(1+d(z)),
\end{align*}
where $a(z),b(z),c(z),d(z)\in z^{-1}\bC[\![z^{-1}]\!]$ are power series. The classical limit of $D$ becomes polynomial ring of two variables, which is naturally identified with the subscheme $\cW^{(1,0)}_{(0,1)}$ in $\GL_2(\!(z^{-1})\!)$ consisting of matrices of the form
\begin{align*}
\frac{1}{z}
\left(
\begin{array}{cc}
 z+xy & \,\, x \\
 y & \,\, 1\\
\end{array}
\right),\quad (x,y)\in \bC^2.
\end{align*}
The $\hbar=0$ limit of the map $Y_{\mu}(\mathfrak{gl}_2)\to D\otimes Y_{\mu'}(\mathfrak{gl}_2)$ is the map between function rings induced by the multiplication in $\GL_2(\!(z^{-1})\!)$:
\begin{align*}
    \cW^{(1,0)}_{(0,1)}\times \cW_{\mu'}\to\cW_{\mu}\:.
\end{align*}
It is elementary to see that the above map is surjective. Since $\bC[\cW_{\mu}]$ is an integral domain, the induced map $\bC[\cW_{\mu}]\to \bC\left[\cW^{(1,0)}_{(0,1)}\right]\otimes \bC[\cW_{\mu'}]$ is injective.
\end{proof}

\subsection{Specialization maps as modules intertwiners}

We generalize the construction of Proposition \ref{prop res from V to H_N}, 
which will be used in \S \ref{sect on fusion gl2}.
Take a $\bG_m$-action on $\cM(n,\underline{\bd})$ that scales on $B$ in the quiver 
\eqref{equ on m poten} with weight $1$ and fixes all other arrows.  
Let $\sw=\tr(\sW)$ and $\sT_0$ be as in \eqref{equ on m poten}. This $\bG_m$-action fits into the $\bG_m$-action in \cite[Setting~7.1]{COZZ}, 
and it can be used to construct specialization map (as in \cite[Def.~7.14]{COZZ}):
\begin{align}\label{sp from w to 0}
    \mathsf{sp}\colon \mathcal H^{\sW}_{\underline{\bd}}=H^{\sT_0}(\cM(\underline{\bd}),\sw)\to H^{\sT_0}(\cM(\underline{\bd}))=\mathcal H^{0}_{\underline{\bd}}.
\end{align}

\begin{Proposition}\label{prop sp as mod map}
We have $$R_{\mathcal H_1,\mathcal H^{0}_{\underline{\bd}}}(u)\circ (\id\otimes\:\mathsf{sp})=(\id\otimes\:\mathsf{sp})\circ R_{\mathcal H_1,\mathcal H^{\sW}_{\underline{\bd}}}(u).$$ In particular, $\mathsf{sp}$ is a $Y_{\mu}(\mathfrak{gl}_2)$-module map.
\end{Proposition}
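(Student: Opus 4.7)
The plan is to reduce the statement to the general compatibility of specialization maps with stable envelopes established in \cite[Prop.~7.18]{COZZ}, mimicking the strategy used in the proof of Proposition \ref{prop res from V to H_N}. First, I would extend the $\bG_m$-action from $\cM(\underline{\bd})$ to the combined quiver variety $\cM(\underline{\mathbf 1}+\underline{\bd})$ of \eqref{quiver for H_1 times general H}, letting it scale the arrow $B$ with weight $1$ and fix everything else (in particular $\Phi$, $A$, $A_1$, $B_1$, and the framing torus $\bC^*_u$). Then the extended $\bG_m$ commutes with $\sA$, and on each $\sA$-fixed component $\cM(n_1,\mathbf 1)\times \cM(n_2,\underline{\bd})$ it acts trivially on the first factor and restricts to the original $\bG_m$ on the second. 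Consequently the induced specialization map on the total space factors through $\id\otimes \mathsf{sp}$ on fixed components.

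Next I would verify that the total potential $\sw'=\tr(B_1\Phi A_1)+\sw$ fits the framework of \cite[Setting~7.1]{COZZ} for this extended $\bG_m$-action: the summand $\tr(B_1\Phi A_1)$ is $\bG_m$-invariant since it does not involve $B$, whereas $\sw$ acquires strictly positive weight under $\bG_m$ because $\bC^*_\hbar$-invariance forces every nontrivial monomial of $\sW$ to contain at least one occurrence of $B$ (any cycle in the Crawley-Boevey quiver that does not pass through $B$ lives in the unframed loop quiver, and such cycles violate $\bC^*_\hbar$-invariance unless they are constant). Thus the $\bG_m$-flow sends $\sw'$ to $\tr(B_1\Phi A_1)$ in the limit, giving a well-defined specialization map $\cH^{\sW'}_{\underline{\mathbf 1}+\underline{\bd}} \to \cH^{B_1\Phi A_1}_{\underline{\mathbf 1}+\underline{\bd}}$ that restricts to $\id\otimes\mathsf{sp}$ on $\sA$-fixed loci.

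Applying \cite[Prop.~7.18]{COZZ} to each chamber of $\sA$, the specialization map commutes with stable envelopes:
\begin{equation*}
(\id\otimes \mathsf{sp})\circ \Stab^{\sW'}_{u\gtrless 0,\epsilon}=\Stab^{B_1\Phi A_1}_{u\gtrless 0,\epsilon}\circ (\id\otimes \mathsf{sp}).
\end{equation*}
Taking the composition $\Stab_{u<0,\epsilon}^{-1}\circ \Stab_{u>0,\epsilon}$ on both sides yields the desired $R$-matrix identity
\begin{equation*}
R_{\mathcal H_1,\mathcal H^0_{\underline{\bd}}}(u)\circ (\id\otimes \mathsf{sp})=(\id\otimes\mathsf{sp})\circ R_{\mathcal H_1,\mathcal H^{\sW}_{\underline{\bd}}}(u).
\end{equation*}
The final assertion that $\mathsf{sp}$ intertwines the two $Y_{\mu}(\mathfrak{gl}_2)$-module structures is then immediate from Proposition \ref{prop shifted Y(gl(2)) action}, since all generators of $Y_{\mu}(\mathfrak{gl}_2)$ act via matrix elements of these $R$-matrices in the Gauss decomposition.

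The main technical obstacle is bookkeeping: one must ensure that the extended $\bG_m$ on $\cM(\underline{\mathbf 1}+\underline{\bd})$ interacts correctly with the normalizer $\epsilon$ and with the normal-bundle weight contributions in the definition of stable envelope, so that \cite[Prop.~7.18]{COZZ} delivers the intertwining on the nose (no spurious equivariant scalar). Since $\bG_m$ fixes $B_1$ and hence acts trivially on $\mathsf P|_F$ for every fixed component $F$, the normalizer $\epsilon$ is $\bG_m$-invariant and no such scalar appears, which is what allows the argument to close cleanly.
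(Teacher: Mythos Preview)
Your proposal is correct and follows essentially the same route as the paper's proof: extend the $\bG_m$-action to the combined quiver variety \eqref{quiver for H_1 times general H}, observe that on $\sA$-fixed loci it induces $\id\otimes\mathsf{sp}$, and then invoke \cite[Prop.~7.18]{COZZ} for both chambers. The paper's argument is terser, omitting your explicit verification that the extended $\bG_m$ fits \cite[Setting~7.1]{COZZ} and your remark about the normalizer, but the structure is the same.
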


\begin{proof}
Consider the quiver \eqref{quiver for H_1 times general H} and its moduli space $\cM(n,\underline{\bd}+\underline{\mathbf 1})$ with $\sT=\sA\times \sT_0$, where 
$\sA\cong \bC^*_u$ acts on the framing by 
$u\underline{\mathbf 1}+\underline{\bd}$. 
Take another $\bC^*$-action which acts on $B$ in the quiver \eqref{quiver for H_1 times general H} with weight $1$ and fixes all other arrows. 
As in \cite[Def.~7.14]{COZZ}, there is a specialization map:
\begin{align*}
    H^\sT(\cM(\underline{\bd}+\underline{\mathbf 1}),B_1\Phi A_1+\tr(\mathsf m))\to H^\sT(\cM(\underline{\bd}+\underline{\mathbf 1}),B_1\Phi A_1).
\end{align*}
Passing to $\sA$-fixed loci, the induced specialization map is given by
\begin{align*}
    \id\otimes\:\mathsf{sp}\colon H^{\sT}(\cM(\mathbf 1),B_1\Phi A_1)\otimes H^{\sT}(\cM(\underline{\bd}),\tr(\mathsf m))\to H^{\sT}(\cM(\mathbf 1),B_1\Phi A_1)\otimes H^{\sT}(\cM(\underline{\bd}))
\end{align*}
where $\mathsf{sp}$ is the map defined in \eqref{sp from w to 0}. Then the result follows from 
\cite[Prop.~7.18]{COZZ}.
\end{proof}

\begin{Remark}
Same argument shows that 
\begin{align}\label{sp respect R-mat}
    R_{\mathcal H^{\sW'}_{\underline{\bd}'},\mathcal H^{0}_{\underline{\bd}}}(u)\circ (\id\otimes\:\mathsf{sp})=(\id\otimes\:\mathsf{sp})\circ R_{\mathcal H^{\sW'}_{\underline{\bd}'},\mathcal H^{\sW}_{\underline{\bd}}}(u)
\end{align}
for arbitrary $\underline{\bd}'$ and $\sW'$ in \eqref{equ on m poten}.
\end{Remark}

\subsection{Fusion procedure from geometry}\label{sect on fusion gl2}

It is known that an intertwiner between $\mathcal H_N$ and $\mathcal H_M$ can be obtained from the elementary $R$-matrix 
$$R(u)=u\id+\hbar P, $$ via the \textit{fusion procedure} \cite{KRS}. In this section, we derive the fusion procedure geometrically.

Consider the following framed quiver with potential:
\begin{equation*}
\begin{tikzpicture}[x={(1cm,0cm)}, y={(0cm,1cm)}, baseline=0cm]
  \node[draw,circle,fill=white] (Gauge) at (0,0){$n$}; 
  \node[draw,rectangle,fill=white] (Framing) at (2,0) {$N$};
  \node (Z) at (-1,0) {\scriptsize $\Phi$};
  \draw[<-] (Gauge.340) -- (Framing.200) node[midway,below] {\scriptsize $A$};
  \draw[->] (Gauge.20) -- (Framing.160) node[midway,above] {\scriptsize $B$};

  \draw[->,looseness=7] (Gauge.225) to[out=225,in=135] (Gauge.135);
\end{tikzpicture}
\qquad \sW_*=B\Phi A-\Xi B A, \quad \sw_*=\tr(\sW_*),
\end{equation*}
where $\Xi\in \End(\C^N)$ is a fixed  nilpotent matrix of rank $(N-1)$. Let $q\in \bC^*_\hbar$ act on quiver data by 
\begin{align}\label{hbar action on M(N)}
    (\Phi,B,A)\mapsto (q^{-1}\Phi,q \cdot\sigma(q)B,A\sigma(q)^{-1}),
\end{align}
where $\sigma\colon \bC^*_\hbar\to \GL_N$ is a homomorphism such that $\Xi$ is eigenvector under the adjoint action of $\sigma(\bC^*_\hbar)$ with eigenvalue $1$. Given a basis $\bC^N=\Span_\bC\{e_1,\ldots,e_N\}$ such that $\Xi(e_i)=e_{i+1}$, we can take $\sigma(q)=\diag(1,q,\ldots,q^{N-1})$.

By Proposition \ref{prop shifted Y(gl(2)) action}, the state space 
\begin{align*}
    \mathcal H_{\underline{\mathbf N}}^{\sW_*}:=H^{\bC^*_\hbar}(\cM(\mathbf N),\sw_*)=\bigoplus_{n\in \bZ_{\geqslant 0}} H^{\bC^*_\hbar}(\cM(n,\mathbf N),\sw_*)
\end{align*}
is equipped with a natural Yangian algebra $Y(\mathfrak{gl}_2)$ action coming from stable envelopes.
Similarly, the space \eqref{equ on HNn}: 
$$\mathcal H_N=\bigoplus_{n\in \bZ_{\geqslant 0}} H^{\bC^*_\hbar}(\cM(n,\mathbf 1),\sw_N), \,\,\, \mathrm{with} \,\,\, \sw_N=\tr(B\Phi^NA) $$
is also a Yangian module. 

\begin{Proposition}\label{prop comp dim red for higher spin}
There is a $Y(\mathfrak{gl}_2)$ module isomorphism 
\begin{equation}\label{equ on delt}\delta\colon \mathcal H_N\cong \mathcal H_{\underline{\mathbf N}}^{\sW_*}, \end{equation} 
such that 
\begin{align*}
    R_{\mathcal H^{\sW}_{\underline{\bd}},\mathcal H_{\underline{\mathbf N}}^{\sW_*}}(u)\circ(\id\otimes \:\delta)=(\id\otimes \:\delta)\circ R_{\mathcal H^{\sW}_{\underline{\bd}},\mathcal H_N}(u)
\end{align*}
for arbitrary $\underline{\bd}$ and $\sW$ in \eqref{equ on m poten}.
\end{Proposition}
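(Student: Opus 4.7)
The plan is to realize $\delta$ as a composition of two dimensional reductions linked by an identification of schemes, and then deduce compatibility with the $R$-matrices from the general compatibility of stable envelopes with dimensional reduction.

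First, I would construct $\delta$ as an isomorphism of $\mathbb C[\hbar]$-modules. Since $\sw_*=\tr(B\Phi A-\Xi BA)$ is linear in the out-going framing $B$, the deformed dimensional reduction (\cite[Prop.~6.5]{COZZ}) yields
$$H^{\bC^*_\hbar}(\cM(n,\mathbf N),\sw_*)\;\cong\;H^{\bC^*_\hbar}(Z_*),$$
where $Z_*\subset \cM(n,(N,0))$ is cut out by the equation $\partial_B\sw_*=\Phi A-A\Xi=0$. Using cyclic stability, a point of $Z_*$ is uniquely determined by the vector $a_1:=A(e_1)$, since $A(e_i)=\Phi^{i-1}a_1$ and the condition $A\Xi(e_N)=0$ forces $\Phi^N a_1=0$. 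The assignment $A\mapsto a_1$ therefore identifies $Z_*$ with the subscheme $Z'_n\subset\cM(n,(1,0))$ from \S\ref{sec bases of H_N, V, F}. Dimensional reduction along $B_1$ in the reverse direction then gives $H^{\bC^*_\hbar}(Z'_n)\cong \mathcal H_N(n)$. With the choice $\sigma(q)=\diag(1,q,\ldots,q^{N-1})$, the vector $a_1$ has trivial $\bC^*_\hbar$-weight while $\Phi$ has weight $-1$, matching the $\bC^*_\hbar$-action used to define $\mathcal H_N$.

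Next, to upgrade $\delta$ to an intertwiner of $R$-matrices with any third state space $\mathcal H^{\sW}_{\underline{\bd}}$, I would run the same two dimensional reductions equivariantly after adjoining the framing of $\underline{\bd}$. Concretely, consider the symmetric quiver variety $\cM(\underline{\bd}+\mathbf N)$ with potential $\sW+\sW_*$ and the torus $\sA=\bC^*_u$ acting on the framing as $u\,\mathbf N+\underline{\bd}$. Applying the first dimensional reduction in $B$, identifying $Z_*\cong Z'_n$ with framing, and then applying the second dimensional reduction in $B_1$ produces a canonical isomorphism $\id\otimes\delta$ at the level of $\sA$-fixed loci, and, by $\sA$-equivariance, at the level of the total spaces. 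The compatibility of cohomological stable envelopes with dimensional reduction (\cite[Thm.~6.10]{COZZ}) applied to each of the two steps shows that $\Stab_{u>0,\epsilon}$ and $\Stab_{u<0,\epsilon}$ are intertwined, and hence so is their ratio
$$R_{\mathcal H^{\sW}_{\underline{\bd}},\mathcal H^{\sW_*}_{\underline{\mathbf N}}}(u)\circ(\id\otimes\delta)=(\id\otimes\delta)\circ R_{\mathcal H^{\sW}_{\underline{\bd}},\mathcal H_N}(u).$$
Specializing to $\mathcal H^{\sW}_{\underline{\bd}}=\mathcal H_1$ and reading off the Gauss decomposition \eqref{gauss decomp of R}, this intertwining implies that $\delta$ commutes with the action of the generators $e(u),f(u),g_1(u),g_2(u)$ of $Y(\mathfrak{gl}_2)$ provided by Proposition \ref{prop shifted Y(gl(2)) action}, whence $\delta$ is an isomorphism of $Y(\mathfrak{gl}_2)$-modules.

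The main technical obstacle will be verifying that the scheme-theoretic identification $Z_*\cong Z'_n$ really respects the cyclic stability conditions on both sides (so that both dimensional reductions are available and transport fixed loci correctly), and that the two reductions, one in the direction $\cM(\mathbf N)\rightsquigarrow Z'_n$ and the other in the direction $Z'_n\rightsquigarrow\cM(\mathbf 1)$, can be carried out in the presence of the third framing $\underline{\bd}$ so as to justify a single application of the stable-envelope compatibility. Everything else is formal, and once this geometric picture is set up rigorously, the intertwining of $R$-matrices and hence of Yangian actions follows from general principles developed in \cite{COZZ}.
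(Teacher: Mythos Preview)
Your approach is essentially the same as the paper's. The paper makes the ``main technical obstacle'' you identify precise by invoking the framework of \emph{compatible dimensional reduction data} from \cite[Def.~6.2]{COZZ}: it sets up the pair $(X,Y,s,\phi)=(\cM(n,\mathbf N),\cM(n,\underline{\mathbf 0}^{\mathbf N}),\Phi A-A\Xi,0)$ and $(X',Y',s',\phi')=(\cM(n,\mathbf 1),\cM(n,\underline{\mathbf 0}^{\mathbf 1}),\varphi^N a,0)$, and verifies compatibility via an explicit closed immersion $i\colon Y'\hookrightarrow Y$ (sending $a$ to $(a,\varphi a,\ldots,\varphi^{N-1}a)$, exactly your $Z_*\cong Z'_n$) together with the projection $\pr\colon E|_{Y'}\to E'$ picking out the last component $b_N$ of $B$. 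Then \cite[Prop.~6.9, Thm.~6.10]{COZZ} give in one stroke both the isomorphism $\delta_H=\tilde j_*\circ\pr^*$ and its compatibility with $\Stab_{\pm,\epsilon}$, which is precisely the packaging of your two reductions into a single intertwiner. One detail you omit: to match the normalizer $\epsilon$ in both chambers simultaneously, the paper sets $\delta=(-1)^{(N-1)n}\delta_H$; without this sign the commutative square with $\Stab_{u>0,\epsilon}$ and $\Stab_{u<0,\epsilon}$ would not hold on the nose.
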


We postpone the proof to the end of this section and use it for now. 

Consider the same quiver variety $\cM(\mathbf N)$ with the \textit{canonical cubic potential} $\sw_1=\tr(B\Phi A)$.
By the dimensional reduction, we have an isomorphism
\begin{align}\label{dim red for w_1}
 \mathcal H_{\underline{\mathbf N}}^{\sW_1}:=H^{\bC^*_\hbar}(\cM(\mathbf N),\sw_1) \cong H^{\bC^*_\hbar}(\cN(\mathbf N)), 
\end{align}
where $\cN(\mathbf N)=\bigsqcup_{m\in \mathbb{N}}\mathcal N(m,\mathbf N)$ is the union of Nakajima varieties of $A_1$ quiver \cite{Nak1}.
\begin{Lemma}
\eqref{dim red for w_1} is an isomorphism of $Y(\mathfrak{gl}_2)$ modules.
\end{Lemma}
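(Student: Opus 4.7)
The plan is to reduce the lemma to the compatibility of critical stable envelopes with deformed dimensional reduction, as established in \cite[Thm.~6.10]{COZZ}. Recall from Proposition \ref{prop shifted Y(gl(2)) action} that the $Y(\mathfrak{gl}_2)$-action on $\mathcal H_{\underline{\mathbf N}}^{\sW_1}$ is defined via the matrix entries of the root $R$-matrix $R_{\mathcal H_1,\mathcal H_{\underline{\mathbf N}}^{\sW_1}}(u)$, which itself arises from the critical stable envelopes on the extended quiver variety $\cM(\underline{\mathbf 1}+\underline{\mathbf N})$ equipped with potential $\tr(B_1\Phi A_1)+\sw_1$. On the other hand, by Corollary \ref{cor MO yangian crit module} and Theorem \ref{thm compare with MO yangian}, $H^{\bC^*_\hbar}(\cN(\mathbf N))$ carries the Maulik--Okounkov $Y(\mathfrak{gl}_2)$-action, defined analogously through the ordinary stable envelopes on Nakajima varieties associated to the (doubled) trivial quiver with framing $\mathbf N$.

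First I would observe that the sum $\tr(B_1\Phi A_1)+\sw_1$ coincides with $\langle\Phi,\mu\rangle$, where $\mu=AB+A_1B_1$ is the moment map for the gauge group acting on the doubled quiver with framing $\underline{\mathbf 1}+\underline{\mathbf N}$; this is precisely the canonical cubic potential of Example \ref{ex doubled vs tripled}. Consequently the pair $(\cM(\underline{\mathbf 1}+\underline{\mathbf N}),\cN(\mathbf 1+\mathbf N))$ is a compatible dimensional reduction datum in the sense of \cite[Def.~6.2]{COZZ}, and the $\sA\cong\bC^*_u$-action splitting the framing as $u\,\underline{\mathbf 1}+\underline{\mathbf N}$ preserves this datum. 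By \cite[Thm.~6.10]{COZZ}, the critical stable envelopes on $\cM(\underline{\mathbf 1}+\underline{\mathbf N})$ are identified, under dimensional reduction, with the ordinary stable envelopes on $\cN(\mathbf 1+\mathbf N)$; in particular they intertwine \eqref{dim red for w_1} tensored with the analogous identification for $\mathcal H_1$. The normalizer $\epsilon$ and the sign twist $\Sigma$ depend only on the polarization bundle $\mathsf P$ and on the $\sA$-weights of the attracting/repelling directions, both of which are manifestly preserved by the reduction.

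It follows that $R_{\mathcal H_1,\mathcal H_{\underline{\mathbf N}}^{\sW_1}}(u)$ is intertwined with its Nakajima counterpart along \eqref{dim red for w_1}, and hence so is every factor of its Gauss decomposition. The assignment \eqref{image of generators} therefore sends each generator of $Y(\mathfrak{gl}_2)$ to operators that commute with the dimensional reduction isomorphism, which gives the desired $Y(\mathfrak{gl}_2)$-linearity. The main technical point to verify is that the Maulik--Okounkov Yangian action on $H^{\bC^*_\hbar}(\cN(\mathbf N))$ used to identify it as a $Y(\mathfrak{gl}_2)$-module agrees with the one produced by the same Gauss decomposition recipe applied to the Nakajima $R$-matrix with auxiliary space $\mathcal H_1$; but this is precisely the content of the isomorphism $\mathsf Y_0^{\mathrm{MO}}\cong Y(\mathfrak{gl}_2)$ in the $A_1$ case (cf.\ \cite[\S 11.3]{MO}), combined with Theorem \ref{thm compare with MO yangian} which is proved exactly by matching auxiliary spaces on the two sides of the dimensional reduction.
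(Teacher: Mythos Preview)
Your proposal is correct and follows essentially the same approach as the paper: both reduce the claim to the compatibility of critical stable envelopes with dimensional reduction via \cite[Thm.~6.10]{COZZ}, after observing that $\tr(B_1\Phi A_1)+\sw_1$ is the canonical cubic potential for the tripled quiver so that $(\cM(\underline{\mathbf 1}+\underline{\mathbf N}),\cN(\mathbf 1+\mathbf N))$ is a compatible dimensional reduction pair. The paper packages this as a single commutative square of stable envelopes and invokes \cite[Rmk.~9.14]{COZZ} for the normalizer matching, whereas you argue directly that $\epsilon$ and $\Sigma$ depend only on data preserved by the reduction; both are valid, though the paper's citation is more precise about which polarization on the Nakajima side corresponds to the normalizer $\epsilon$ used here.
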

\begin{proof}
 By Proposition \ref{prop shifted Y(gl(2)) action}, $\mathcal H_{\underline{\mathbf N}}^{\sW_1}$ is a $Y(\mathfrak{gl}_2)$ module. 
 The $Y(\mathfrak{gl}_2)$ module structure on $H^{\bC^*_\hbar}(\cN(\mathbf N))$ is given by the stable envelope $\mathbf{Stab}_{u>0}$ of \cite{MO} for Nakajima varieties\,\footnote{$\mathbf{Stab}_{u>0}=\Stab_{u>0,\epsilon'}$ where $\epsilon'$ is the normalizer $\epsilon'\big|_{F}=(-1)^{\rk T^{1/2}|^{+}_{F/\cN(\mathbf N+\mathbf 1)}}$ for the polarization $T^{1/2}=\Hom(\bC^{N+1},V)-\End(V)$.}.
 Then the statement follows from commutative diagram:
 \begin{equation*}
\xymatrix{
H^{\bC_\hbar^*\times \bC^*_u}(\cM(n,\mathbf 1)\times \cM(m,\mathbf N),B_1\Phi A_1+\tr(B\Phi A)) \ar[rr]^{\Stab_{u>0,\epsilon}} & & H^{\bC_\hbar^*\times \bC^*_u}(\cM(n+m,\mathbf 1+\mathbf N),B_1\Phi A_1+\tr(B\Phi A))\\
H^{\bC_\hbar^*\times \bC^*_u}(\cN(n,\mathbf 1)\times \cN(m,\mathbf N)) \ar[rr]^{\mathbf{Stab}_{u>0}} \ar[u]^{\cong} & & H^{\bC_\hbar^*\times \bC^*_u}(\cN(n+m,\mathbf 1+\mathbf N)), \ar[u]_{\cong}
}
\end{equation*}
where $\bC^*_u$ acts on the framing by $u\underline{\mathbf 1}+\mathbf N$, and vertical isomorphisms are dimensional reduction maps 
(see \cite[Thm.~6.10]{COZZ}, Example \ref{ex doubled vs tripled} and \cite[Rmk.~9.14]{COZZ}). 
\end{proof}
We relate $Y(\mathfrak{gl}_2)$ modules $ \mathcal H_{\underline{\mathbf N}}^{\sW_1}$ and $\cH_{1}$. 
\begin{Lemma}\label{lem tensor N H_1}
There is a $Y(\mathfrak{gl}_2)\otimes_{\bC[\hbar]} \bC(\hbar)$ module isomorphism
\begin{align*}
    \mathcal H_{\underline{\mathbf N},\loc}^{\sW_1}\cong \tau^*_{(N-1)\hbar}\cH_{1,\loc}\otimes \cdots \otimes \tau^*_{\hbar}\cH_{1,\loc}\otimes \cH_{1,\loc}\:.
\end{align*}
Here ``loc'' means $-\otimes_{\bC[\hbar]} \bC(\hbar)$, and tensor products are taken over the base field $\bC(\hbar)$.
\end{Lemma}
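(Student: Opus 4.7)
The plan is to exhibit the isomorphism by introducing an auxiliary framing torus that separates the $N$ spectral parameters, applying the critical stable envelope, and then specializing the parameters to the arithmetic progression $(N-i)\hbar$.

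First, introduce an auxiliary torus $\sA = (\bC^*)^N$ acting on the framing vector space $\bC^N$ with weights $z_1,\ldots,z_N$ on the $N$ coordinate lines. Under the splitting $\bC^N = \bigoplus_{i=1}^N \bC^{(i)}$, the potential factors as $\sw_1 = \sum_{i=1}^N \tr(B_i\Phi A_i)$, and each summand carries total $\sA$-weight zero, so $\sw_1$ is $\sA$-invariant. The $\sA$-fixed locus decomposes as
\begin{align*}
\cM(\mathbf N)^\sA = \bigsqcup_{n_1+\cdots+n_N = n}\prod_{i=1}^N \cM(n_i,\mathbf 1^{(i)}),
\end{align*}
with $\sw_1$ restricting to $\boxplus_i \sw_1^{(i)}$ where $\sw_1^{(i)} = \tr(B_i\Phi A_i)$.

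Next, fix the chamber $\fC = \{z_1 > z_2 > \cdots > z_N\}$ in $\Lie(\sA)_\bR$. By Thom--Sebastiani (applicable without localization by Remark \ref{rmk purity and tensor prod}, since each $\cH_1$ is pure via dimensional reduction to the Nakajima $A_1$ variety) and the critical stable envelope construction, we obtain an isomorphism
\begin{align*}
\Stab_{\fC,\epsilon}\colon \bigotimes_{i=1}^N \tau^*_{z_i}\cH_{1}\,\xrightarrow{\;\cong\;}\, H^{\bC^*_\hbar \times \sA}(\cM(\mathbf N),\sw_1)
\end{align*}
after inverting $\sA$-equivariant parameters. By the coproduct construction in \S\ref{sect on coprod} (cf.\ \eqref{stab induces coproduct}), this map is a $Y(\mathfrak{gl}_2)$-intertwiner with the tensor product on the left-hand side ordered according to $\fC$, with the spectral shifts provided by $z_1,\ldots,z_N$.

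Finally, specialize $z_i \mapsto (N-i)\hbar$, which corresponds to the cocharacter $\bC^*_\hbar \to \sA$, $q \mapsto (q^{N-1},q^{N-2},\ldots,1)$. Since $z_i - z_j = (j-i)\hbar$ is invertible in $\bC(\hbar)$ for all $i\neq j$, this specialization avoids every $\sA$-wall, so the stable envelope and its inverse are well-defined at the specialized value, yielding the desired isomorphism. The main technical point — and the step requiring most care — is checking that the $\bC^*_\hbar$-action obtained by combining the original $\bC^*_\hbar$ with the image of the cocharacter is compatible with the $\bC^*_\hbar$-action defining $\cH^{\sW_1}_{\underline{\mathbf N}}$. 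This amounts to verifying that altering the $\bC^*_\hbar$-weights on the framing by a cocharacter preserving $\sw_1$ leaves the $\bC(\hbar)$-localized critical cohomology canonically isomorphic as a $Y(\mathfrak{gl}_2)$-module; this is a standard interpolation argument on the cocharacter lattice, exploiting the fact that the stable envelope matrix elements and the Yangian action on $\cH_1$ are rational in the shift parameters.
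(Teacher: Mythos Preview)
Your overall strategy—introduce a framing torus, apply the stable envelope, then specialize the equivariant parameters to the arithmetic progression $(N-i)\hbar$—matches the paper's. But the justification of the specialization step has a genuine gap. You claim that because $z_i - z_j = (j-i)\hbar$ is nonzero in $\bC(\hbar)$ the specialization ``avoids every $\sA$-wall'' and hence the stable envelope remains an isomorphism. This conflates two things: avoiding the chamber walls $\{z_i = z_j\}$ only tells you the chamber $\fC$ is still well-defined; it does \emph{not} guarantee that the Euler classes $e(N_F^\pm)$ controlling invertibility of $\Stab_\fC$ stay nonzero. On $\cM(\mathbf N)$ the normal bundle of $F_{\vec n}=\prod_i\cM(n_i,\mathbf 1^{(i)})$ picks up weights of the form $z_j - z_i - \hbar$ from the edge loop $\Phi$, and at your specialization this becomes $(i-j-1)\hbar$, which vanishes whenever $j=i-1$. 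So the naive localization argument breaks down exactly at the point you need it.

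The paper closes this gap differently. It first passes to the Nakajima variety $\cN(\mathbf N)$ by dimensional reduction \eqref{dim red for w_1}, then uses a rank-one torus $\bC^*_t$ via the regular cocharacter $\sigma$, and at $t=\hbar$ argues injectivity and surjectivity separately rather than claiming direct invertibility. Injectivity comes from verifying that the residual $\bC^*_t$-action on the diagonal $\bC^*_\hbar$-fixed locus $\cN(\mathbf N)^{\bC^*_{\hbar,\diag}}$ is repelling, which by \cite[Prop.~3.36]{COZZ} forces $\mathbf{Stab}_{t<0}$ to be injective after $\bC(\hbar)$-localization. Surjectivity is then a dimension count: both sides have $\bC(\hbar)$-dimension $2^N$. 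Your last paragraph gestures toward a ``standard interpolation argument'' for the compatibility of $\bC^*_\hbar$-actions, but this does not address the invertibility issue; what is missing is precisely the repelling check and the dimension count.
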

\begin{proof}
Consider the Nakajima variety associated to the following quiver
\begin{equation*}
\begin{tikzpicture}[x={(1cm,0cm)}, y={(0cm,1cm)}, baseline=0cm]
  \node[draw,circle,fill=white] (Gauge) at (0,0){$n$}; 
  \node[draw,rectangle,fill=white] (Framing) at (2,0) {$N$};
  \draw[<-] (Gauge.340) -- (Framing.200) node[midway,below] {\scriptsize $A$};
  \draw[->] (Gauge.20) -- (Framing.160) node[midway,above] {\scriptsize $B$};

\end{tikzpicture}
\end{equation*}
and let $(q,p)\in \bC^*_\hbar\times \bC^*_t$ act on quiver data by
\begin{align*}
    (B,A)\mapsto (q \cdot\sigma(p)B,A\sigma(p)^{-1}),
\end{align*}
where $\sigma$ is the cocharacter of $\GL_N$ in \eqref{hbar action on M(N)}. The stable envelope
\begin{align*}
    \mathbf{Stab}_{t<0}\colon H^{\bC^*_\hbar\times \bC^*_t}(\cN(\mathbf N)^{\bC^*_t})\to H^{\bC^*_\hbar\times \bC^*_t}(\cN(\mathbf N))
\end{align*}
is a $Y(\mathfrak{gl}_2)$-module map by the argument in Section \ref{sect on coprod}. Evaluation at $t=\hbar$ amounts to taking equivariant cohomologies with respect to the diagonal subtorus $\bC^*_\hbar\xrightarrow{\diag}\bC^*_\hbar\times \bC^*_t$. Since $\cN(\mathbf N)^{\bC^*_t}=\cN(\mathbf 1)^{\times N}$, with diagonal $\bC^*_\hbar$ weights $\{0,1,\ldots,N-1\}$ on the one-dimensional framing vector spaces, 
we get a $Y(\mathfrak{gl}_2)$-module map:
\begin{align*}
    \mathbf{Stab}_{t<0}\colon \tau^*_{(N-1)\hbar}\cH_{1}\otimes \cdots \otimes \tau^*_{\hbar}\cH_{1}\otimes \cH_{1}\to H^{\bC^*_\hbar}(\cN(\mathbf N)).
\end{align*}
It is elementary to see that the induced $\bC^*_t$ action on the diagonal $\bC^*_\hbar$ fixed locus $\cN(\mathbf N)^{\bC^*_{\hbar,\diag}}$ is repelling. Then by \cite[Prop.~3.36]{COZZ}, the above map is injective after localization.
By dimension counting: $$\dim_{\bC(\hbar)}\cH_{1,\loc}^{\otimes N}=2^N=\dim_{\bC(\hbar)}H^{\bC^*_\hbar}(\cN(\mathbf N))_\loc,$$ 
the map 
\begin{align*}
    \mathbf{Stab}_{t<0}\colon \tau^*_{(N-1)\hbar}\cH_{1,\loc}\otimes \cdots \otimes \tau^*_{\hbar}\cH_{1,\loc}\otimes \cH_{1,\loc}\to H^{\bC^*_\hbar}(\cN(\mathbf N))_\loc
\end{align*}
is an isomorphism. Combing with dimensional reduction \eqref{dim red for w_1}, we get the desired module isomorphism. 
\end{proof}

\begin{Remark}
Since stable envelopes are compatible with dimensional reductions (\cite[Thm.~6.10]{COZZ}), the map
\begin{align*}
    \Stab_{t<0,\epsilon}\colon H^{\bC^*_\hbar\times \bC^*_t}(\cM(\mathbf N)^{\bC^*_t},\sw_1)\big|_{t=\hbar}\to H^{\bC^*_\hbar\times \bC^*_t}(\cM(\mathbf N),\sw_1)\big|_{t=\hbar}
\end{align*}
is an isomorphism after localization. The braid relation \eqref{braid} implies:
\begin{align*}
    \Stab_{t<0,\epsilon}^{-1}\,R_{\mathcal H^{\sW}_{\underline{\bd}},\mathcal H^{\sW_1}_{\underline{\mathbf N}}}(u)\,\Stab_{t<0,\epsilon}=R_{\mathcal H^{\sW}_{\underline{\bd}},\mathcal H_1^{(1)}}(u-(N-1)\hbar)\cdots  R_{\mathcal H^{\sW}_{\underline{\bd}},\mathcal H_1^{(N-1)}}(u-\hbar)\,R_{\mathcal H^{\sW}_{\underline{\bd}},\mathcal H_1^{(N)}}(u)
\end{align*}
for arbitrary $\underline{\bd}$ and $\sW$ in \eqref{equ on m poten}.
\end{Remark}

Consider specialization maps \eqref{sp from w to 0} for $\sW_*$ and $\sW_1$:
\begin{align*}
    \mathsf{sp}_*\colon \mathcal H_{\underline{\mathbf N}}^{\sW_*}\to \mathcal H_{\underline{\mathbf N}}^{0},\qquad \mathsf{sp}_1\colon \mathcal H_{\underline{\mathbf N}}^{\sW_1}\to \mathcal H_{\underline{\mathbf N}}^{0}.
\end{align*}
\begin{Lemma}\label{lem sp* and sp1 inj}
The maps $\mathsf{sp}_*$ and $\mathsf{sp}_1$ are injective. Moreover, the image of $\mathsf{sp}_*$ is contained in the image of $\mathsf{sp}_1$.
\end{Lemma}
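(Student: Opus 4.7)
The plan is to dimensionally reduce along $\Phi$. Both potentials are affine linear in $\Phi$ with the same $\Phi$-coefficient, namely the moment map: writing $\sw_1=\tr(\Phi AB)$ and $\sw_*=\tr(\Phi AB)-\tr(\Xi BA)$, the $\Phi$-independent parts are $0$ and $-\tr(\Xi BA)$ respectively. By deformed dimensional reduction \cite[Thm.~C.1]{COZZ} we therefore obtain
\begin{align*}
\mathcal H^{\sW_1}_{\underline{\mathbf N}} \cong H^{\bC^*_\hbar}(\cN(\mathbf N)), \qquad
\mathcal H^{\sW_*}_{\underline{\mathbf N}} \cong H^{\bC^*_\hbar}(\cN(\mathbf N),\phi_*),
\end{align*}
where $\phi_* := -\tr(\Xi BA)|_{\cN(\mathbf N)}$, while $\mathcal H^{0}_{\underline{\mathbf N}} \cong H^{\bC^*_\hbar}(W)$ (with $W := \cM(\mathbf N)/\{\Phi\}$) by vector bundle retraction along $\Phi$.

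Under these identifications and the compatibility of specialization maps with dimensional reduction \cite[Thm.~6.10]{COZZ}, the two specialization maps factorize as
\begin{align*}
\mathsf{sp}_1\colon H(\cN(\mathbf N))\xrightarrow{\;j_*\;} H(W),\qquad \mathsf{sp}_*\colon H(\cN(\mathbf N),\phi_*)\xrightarrow{\mathsf{sp}_{\cN}} H(\cN(\mathbf N))\xrightarrow{\;j_*\;} H(W),
\end{align*}
where $j\colon \cN(\mathbf N)\hookrightarrow W$ is the closed inclusion of the moment-map zero locus and $\mathsf{sp}_{\cN}$ denotes the specialization for the pair $(\cN(\mathbf N),\phi_*)$. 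The image containment $\im(\mathsf{sp}_*)\subseteq\im(\mathsf{sp}_1)$ is then immediate from this factorization.

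For injectivity I would treat the two maps separately. Injectivity of $\mathsf{sp}_1=j_*$ follows from \cite[Thm.~10.2]{Dav2} exactly as in the proof of Corollary \ref{cor cogen_nak}, after enlarging $\sT_0$ by an auxiliary $\bC^*$ scaling the framing vector space to meet the genericity hypothesis there (both potentials are invariant under such an enlargement). Since $j_*$ is injective, the factorization reduces the injectivity of $\mathsf{sp}_*$ to that of $\mathsf{sp}_{\cN}$; by Corollary \ref{cor vac cogen_tripled}, the latter is equivalent to vacuum cogeneration of $H(\cN(\mathbf N),\phi_*)$ over $\mathsf Y_0^{\leqslant}$, which I would verify using Theorem \ref{thm vac cogen_general}: condition~(i) on Chern characters is immediate from the choice of cocharacter $\sigma=\mathrm{diag}(1,q,\ldots,q^{N-1})$ on the framing, while condition~(ii) asks for injectivity of the stack-level specialization $\mathsf{sp}_{\fM}$ with the potential $\phi_*$.

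The main obstacle is condition~(ii) in the presence of the non-zero potential $\phi_*$. Geometrically, the critical locus of $\phi_*$ on $\cN(\mathbf N)\cong\bigsqcup_n T^*\Gr(n,N)$ consists of Springer-type components indexed by $n\in\{0,1,\ldots,N\}$ (one for each $n$, in view of the regular nilpotency of $\Xi$), in line with the expected total dimension $N+1$ of $\mathcal H^{\sW_*}_{\underline{\mathbf N}}$; however, extending the argument of \cite[Thm.~10.2]{Dav2} to a non-zero potential requires careful bookkeeping of how the vanishing cycle sheaves interact with the pushforward along $Z(\mu)\hookrightarrow\fM$, most cleanly handled by deforming $\phi_*$ to $0$ in a one-parameter family and tracking the specialization throughout.
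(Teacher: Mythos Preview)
Your plan to dimensionally reduce along $\Phi$ and factor $\mathsf{sp}_*=j_*\circ\mathsf{sp}_{\cN}$, $\mathsf{sp}_1=j_*$ is a genuinely different route from the paper, and if the factorization holds it does give the image containment very cleanly. Two substantive problems remain, however.

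First, the factorization is not justified by the reference you cite: \cite[Thm.~6.10]{COZZ} concerns the compatibility of \emph{stable envelopes} with dimensional reduction, not of specialization maps. The identity $\mathsf{sp}_1=j_*$ can indeed be extracted from \cite[Rmk.~7.16]{COZZ} together with base change for the Cartesian square $\pi^{-1}(\cN)\hookrightarrow\cM$ over $\cN\hookrightarrow W$; but for the deformed version $\mathsf{sp}_*=j_*\circ\mathsf{sp}_{\cN}$ one additionally needs to know that $\can\colon H(Z_{\cN}(\phi_*))\to H(\cN,\phi_*)$ is surjective, which you do not address. The paper establishes the image containment quite differently: it dimensionally reduces along $B$ (not $\Phi$), so that the canonical map on the $\cM$-side is visibly surjective, and then constructs a specialization $\mathsf{sp}_Z$ between the \emph{zero loci} $Z(\sw_*)\to Z(\sw_1)$ via an auxiliary $\bC^*$ scaling $B$ with weight $1$ and $\Phi$ with weight $-1$ (noting explicitly that this $\bC^*$ \emph{cannot} be upgraded to a map between critical cohomologies).

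Second, and more seriously, your treatment of the injectivity of $\mathsf{sp}_*$ does not close. You reduce to $\mathsf{sp}_{\cN}$ and then invoke Theorem~\ref{thm vac cogen_general}; but its condition~(ii) is the injectivity of the stack-level specialization for the \emph{framing-dependent} potential $\phi_*=-\tr(\Xi BA)$, which is \emph{not} pulled back from the unframed stack and hence is not covered by \cite[Thm.~10.2]{Dav2}. Your final paragraph concedes this. The paper sidesteps the entire difficulty by a representation-theoretic argument: via Proposition~\ref{prop comp dim red for higher spin} one has $\mathcal H^{\sW_*}_{\underline{\mathbf N}}\cong\mathcal H_N$, which after localization is the irreducible $(N{+}1)$-dimensional $\mathfrak{sl}_2$-module, so the $Y(\mathfrak{gl}_2)$-module map $\mathsf{sp}_*$ (identity on the vacuum) is automatically injective. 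This is the key idea you are missing. For $\mathsf{sp}_1$ the paper also proceeds differently, reducing along $B$ to the pushforward $H(Z(\Phi A))\to H(\cM(\underline{\mathbf 0}^{\mathbf N}))$ and checking injectivity by an explicit fixed-point analysis on the $A_\infty$-type components.
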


\begin{proof}
$\bullet$ (Injectivity of $\mathsf{sp}_*$). By Proposition \ref{prop comp dim red for higher spin} and Example \ref{ex H_N, V, F}, $\mathcal H_{\underline{\mathbf N}}^{\sW_*} \otimes_{\bC[\hbar]}\bC(\hbar)$ is an irreducible $Y(\mathfrak{gl}_2) \otimes_{\bC[\hbar]}\bC(\hbar)$ module because it is already irreducible as an $\mathfrak{sl}_2$ module. $\mathsf{sp}_*$ is a $Y(\mathfrak{gl}_2)$ module map by Proposition \ref{prop sp as mod map}, and the degree zero component $\mathsf{sp}_*\colon \mathcal H_{\underline{\mathbf N}}^{\sW_*}(0)\to \mathcal H_{\underline{\mathbf N}}^{0}(0)$ is the identity map; thus $\mathsf{sp}_*$ must be injective after localization. $\mathcal H_N$ is a free $\bC[\hbar]$ module as we have seen in Section \ref{sec bases of H_N, V, F}, so $\mathsf{sp}_*$ is injective.

$\bullet$ (Injectivity of $\mathsf{sp}_1$). By the dimensional reduction argument in the proof of Lemma \ref{lem tensor N H_1}, $\mathcal H_{\underline{\mathbf N}}^{\sW_1}$ is isomorphic to the BM homology of Nakajima variety $\cN(\mathbf N)$, and the latter is a free $\bC[\hbar]$ module by \cite[Thm.~7.3.5]{Nak3}. Then the proof of injectivity of $\mathsf{sp}_1$ boils down to the proof of injectivity after localization. 

Since $\sw_1$ is linear in $B$, and $B$ does not enter the stability condition, by dimensional reduction along $B$, we get 
$$H^{\bC^*_\hbar}(Z(\Phi A))\cong H^{\bC^*_\hbar}(\cM(\mathbf N),\sw_*), $$ 
where $Z(\Phi A)$ is the zero loci of $\Phi A$ in $\cM(\mathbf N)$, and the isomorphism is induced by the canonical map 
$$H^{\bC^*_\hbar}(Z(\Phi A))\to H^{\bC^*_\hbar}(Z(\sw_*))\xrightarrow{\can} H^{\bC^*_\hbar}(\cM(\mathbf N),\sw).$$ 
By \cite[Rmk.~7.16]{COZZ}, it suffices to show that the pushforward map $H^{\bC^*_\hbar}(Z(\Phi A))\to H^{\bC^*_\hbar}(\cM(\mathbf N))$ is injective after localization. Equivalently, we need to show that the pushforward map $H^{\bC^*_\hbar}(Z'(\Phi A))\to H^{\bC^*_\hbar}(\cM(\underline{\mathbf 0}^{\mathbf N}))$ is injective after localization, where $\cM(\underline{\mathbf 0}^{\mathbf N})$ is the following quiver variety:
\begin{equation*}
\begin{tikzpicture}[x={(1cm,0cm)}, y={(0cm,1cm)}, baseline=0cm]
  \node[draw,circle,fill=white] (Gauge) at (0,0){$\phantom{n}$}; 
  \node[draw,rectangle,fill=white] (Framing) at (2,0) {$N$};
  \node (Z) at (-1,0) {\scriptsize $\Phi$};
  \draw[<-] (Gauge.0) -- (Framing.180) node[midway,below] {\scriptsize $A$};

  \draw[->,looseness=7] (Gauge.225) to[out=225,in=135] (Gauge.135);
\end{tikzpicture}
\end{equation*}
and $Z'(\Phi A)$ is the zero of $\Phi A$ in $\cM(\underline{\mathbf 0}^{\mathbf N})$. It is easy to see that the fixed locus $Z'(\Phi A)^{\bC^*_\hbar}$ is discrete, and that every connected component of $\cM(\underline{\mathbf 0}^{\mathbf N})^{\bC^*_\hbar}$ is proper. Note that a point class $[x]$ in any proper variety $X$ is nonzero in its 
Borel-Moore homology. Therefore, we only need to show that every connected component $F$ of $\cM(\underline{\mathbf 0}^{\mathbf N})^{\bC^*_\hbar}$ contains at most one point in $Z'(\Phi A)$,~i.e.~the equation $\Phi A=0$ has at most one solution on $F$. Explicitly, $F$ is isomorphic to the following quiver variety for some $(n_1,\ldots,n_N, n_{N+1},\ldots)$ with cyclic stability:
\begin{equation*}
\begin{tikzpicture}[x={(1cm,0cm)}, y={(0cm,1cm)}, baseline=0cm]
  \node[draw,circle,fill=white] (Gauge1) at (0,0) {$n_1$};
  \node[draw,rectangle,fill=white] (Framing1) at (0,-1.5) {$1$};
  
  \node[draw,circle,fill=white] (Gauge2) at (2,0) {$n_2$};
  \node[draw,rectangle,fill=white] (Framing2) at (2,-1.5) {$1$};

  \node (dot1) at (4,0) {$\cdots$};
  \node (Framingdot1) at (4,-1.5) {$\cdots$};
  \node[draw,circle,fill=white] (GaugeN) at (6,0) {$n_N$};
  \node[draw,rectangle,fill=white] (FramingN) at (6,-1.5) {$1$};
  \node[draw,circle,fill=white,inner sep=0pt] (GaugeN+1) at (8,0) {$\,n_{N+1}$};

  \node (dot2) at (10,0) {$\cdots$};

  \draw[<-] (Gauge1.270) -- (Framing1.90) node[midway,left] {\scriptsize $A_1$};
  \draw[<-] (Gauge2.270) -- (Framing2.90) node[midway,left] {\scriptsize $A_2$};
  \draw[<-] (GaugeN.270) -- (FramingN.90) node[midway,left] {\scriptsize $A_N$};

  \draw[->] (Gauge1.0) -- (Gauge2.180) node[midway,above] {\scriptsize $\Phi_1$};
  \draw[->] (Gauge2.0) -- (dot1.180) node[midway,above] {\scriptsize $\Phi_2$};
  \draw[->] (dot1.0) -- (GaugeN.180) node[midway,above] {\scriptsize $\Phi_{N-1}$};
  \draw[->] (GaugeN.0) -- (GaugeN+1.180) node[midway,above] {\scriptsize $\Phi_N$};
  \draw[->] (GaugeN+1.0) -- (dot2.180) node[midway,above] {\scriptsize $\Phi_{N+1}$};

\end{tikzpicture}
\end{equation*}
we see that $\Phi A=0$ has at most one solution on $F$.

$\bullet$ (The inclusion $\im(\mathsf{sp}_*)\subseteq \im(\mathsf{sp}_1)$). Since $\sw_*$ is linear in $B$, and $B$ does not enter the stability condition, we can apply the dimensional reduction along $B$ and the composition of the following maps is isomorphism
$$H^{\bC^*_\hbar}(Z(\Phi A-A\Xi))\to H^{\bC^*_\hbar}(Z(\sw_*))\xrightarrow{\can} H^{\bC^*_\hbar}(\cM(\mathbf N),\sw_*),$$ 
where $Z(\Phi A-A\Xi)$ is the zero loci of $\Phi A-A\Xi$ in $\cM(\mathbf N)$. In particular, $H^{\bC^*_\hbar}(Z(\sw_*))\xrightarrow{\can} H^{\bC^*_\hbar}(\cM(\mathbf N),\sw_*)$ is surjective. Using an auxiliary $\C^*$ that scales $B$ with weight $1$, $\Phi$ with weight $-1$, and fixes $A$, then this $\C^*$ action fits into 
\cite[Setting~7.1]{COZZ} and we have a specialization map\,\footnote{However, this auxiliary $\C^*$-action does not satisfy \cite[Assumption~7.12]{COZZ}, so we can not use it to define a specialization map $H^{\bC^*_\hbar}(\cM(\mathbf N),\sw_*)\to H^{\bC^*_\hbar}(\cM(\mathbf N),\sw_1)$ between critical cohomologies.} $$\mathsf{sp}_Z\colon H^{\bC^*_\hbar}(Z(\sw_*))\to H^{\bC^*_\hbar}(Z(\sw_1)).$$
By \cite[Rmk.~7.16]{COZZ}, $\mathsf{sp}_*\circ \can=$ pushforward from $Z(\sw_*)$ to $\cM(\mathbf N)$, and $\mathsf{sp}_1\circ \can=$ pushforward from $Z(\sw_1)$ to $\cM(\mathbf N)$. Combining with \cite[Rmk.~7.9]{COZZ}, the following diagram
\begin{equation*}
\xymatrix{
H^{\bC^*_\hbar}(Z(\sw_*)) \ar[rr]^{\mathsf{sp}_Z} \ar[d]^{\can} & & H^{\bC^*_\hbar}(Z(\sw_1)) \ar[d]^{\can}\\
H^{\bC^*_\hbar}(\cM(\mathbf N),\sw_*) \ar[r]^{\mathsf{sp}_*} & H^{\bC^*_\hbar}(\cM(\mathbf N)) & \ar[l]_{\mathsf{sp}_1} H^{\bC^*_\hbar}(\cM(\mathbf N),\sw_1)
}
\end{equation*}
commutes. Then the inclusion $\im(\mathsf{sp}_*)\subseteq \im(\mathsf{sp}_1)$ follows from the surjectivity of canonical maps.
\end{proof}
By Lemma \ref{lem sp* and sp1 inj}, we have an induced injective map
\begin{align*}
    \overline{\mathsf{sp}}\colon \mathcal H_{\underline{\mathbf N}}^{\sW_*}\hookrightarrow \mathcal H_{\underline{\mathbf N}}^{\sW_1}.
\end{align*}
The equation \eqref{sp respect R-mat} implies that
\begin{align}
    R_{\mathcal H^{\sW}_{\underline{\bd}},\mathcal H_{\underline{\mathbf N}}^{\sW_1}}(u)\circ(\id\otimes \:\overline{\mathsf{sp}})=(\id\otimes \:\overline{\mathsf{sp}})\circ R_{\mathcal H^{\sW}_{\underline{\bd}},\mathcal H_{\underline{\mathbf N}}^{\sW_*}}(u)
\end{align}
for arbitrary $\underline{\bd}$ and $\sW$ in \eqref{equ on m poten}. In particular, $\overline{\mathsf{sp}}$ is a $Y(\mathfrak{gl}_2)$ module map. 

To state the fusion formula, we define $\eta$ to be the composition 
\begin{align}\label{equ on eta map}
    \eta\colon \cH_{N,\loc}\xrightarrow[\cong]{\eqref{equ on delt}} \cH^{\sW_*}_{\underline{\mathbf N},\loc}\overset{\overline{\mathsf{sp}}}{\hookrightarrow}\cH^{\sW_1}_{\underline{\mathbf N},\loc}\xrightarrow[\cong]{\text{Lem.\,}\ref{lem tensor N H_1}} \tau^*_{(N-1)\hbar}\cH_{1,\loc}\otimes \cdots \otimes \tau^*_{\hbar}\cH_{1,\loc}\otimes \cH_{1,\loc}\:.
\end{align}
Then $\eta$ is a $Y(\mathfrak{gl}_2)\otimes_{\bC[\hbar]}\bC(\hbar)$ module embedding.

\begin{Theorem}\label{thm fusion}
$\eta$ induces an isomorphism 
$$\eta\colon \cH_{N,\loc}\cong S^N(\cH_{1,\loc})$$
to the $N$-th symmetric tensor $S^N(\cH_{1,\loc})$ of $\cH_{1,\loc}$. Moreover, we have
\begin{align*}
    (\id\otimes \:\eta)\circ R_{\mathcal H^{\sW}_{\underline{\bd}},\cH_N}(u)= R_{\mathcal H^{\sW}_{\underline{\bd}},\mathcal H_1^{(1)}}(u-(N-1)\hbar)\cdots R_{\mathcal H^{\sW}_{\underline{\bd}},\mathcal H_1^{(N-1)}}(u-\hbar)\,R_{\mathcal H^{\sW}_{\underline{\bd}},\mathcal H_1^{(N)}}(u)\circ (\id\otimes \:\eta)\,,
\end{align*}
for arbitrary $\underline{\bd}$ and $\sW$ in \eqref{equ on m poten}. 
Here $\mathcal H_1^{(i)}$ denotes the $i$-th component of the RHS of \eqref{equ on eta map}.
\end{Theorem}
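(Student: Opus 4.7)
The plan is to prove the two assertions by assembling the three intertwining properties of the constituents of $\eta=\mathbf{Stab}_{t<0}^{-1}\circ\overline{\mathsf{sp}}\circ\delta$, and to identify the image via a dimension-and-$\mathfrak{sl}_2$-multiplicity argument. The fusion intertwiner identity is essentially formal once each factor is known to intertwine R-matrices, and the image identification reduces to the uniqueness of the $V_N$-isotypic component in $(\bC^2)^{\otimes N}$.

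For the fusion identity, I would combine three inputs. Proposition \ref{prop comp dim red for higher spin} directly gives
\begin{equation*}
(\id\otimes\delta)\circ R_{\cH^\sW_{\underline\bd},\cH_N}(u)=R_{\cH^\sW_{\underline\bd},\cH^{\sW_*}_{\underline{\mathbf N}}}(u)\circ(\id\otimes\delta).
\end{equation*}
Equation \eqref{sp respect R-mat} applied to $\mathsf{sp}_*$ and $\mathsf{sp}_1$ respectively, combined with $\mathsf{sp}_*=\mathsf{sp}_1\circ\overline{\mathsf{sp}}$ and the injectivity of $\mathsf{sp}_1$ from Lemma \ref{lem sp* and sp1 inj}, yields
\begin{equation*}
(\id\otimes\overline{\mathsf{sp}})\circ R_{\cH^\sW_{\underline\bd},\cH^{\sW_*}_{\underline{\mathbf N}}}(u)=R_{\cH^\sW_{\underline\bd},\cH^{\sW_1}_{\underline{\mathbf N}}}(u)\circ(\id\otimes\overline{\mathsf{sp}}).
\end{equation*}
Finally, the Remark preceding the theorem, combined with the identification of the Lemma \ref{lem tensor N H_1} isomorphism as $\mathbf{Stab}_{t<0}$ composed with the dimensional reduction \eqref{dim red for w_1}, supplies
\begin{equation*}
\mathbf{Stab}_{t<0}^{-1}\,R_{\cH^\sW_{\underline\bd},\cH^{\sW_1}_{\underline{\mathbf N}}}(u)\,\mathbf{Stab}_{t<0}=R_{\cH^\sW_{\underline\bd},\cH_1^{(1)}}(u-(N-1)\hbar)\cdots R_{\cH^\sW_{\underline\bd},\cH_1^{(N)}}(u).
\end{equation*}
Composing these three identities under the factorization of $\eta$ yields the claimed fusion formula.

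For the image identification, each factor of $\eta$ is injective after localization (by Proposition \ref{prop comp dim red for higher spin}, Lemma \ref{lem sp* and sp1 inj}, and Lemma \ref{lem tensor N H_1} respectively), so $\eta$ is a $Y(\mathfrak{gl}_2)\otimes_{\bC[\hbar]}\bC(\hbar)$-module embedding with $\dim_{\bC(\hbar)}\eta(\cH_{N,\loc})=N+1$. By Example \ref{ex H_N, V, F}, the underlying $\mathfrak{sl}_2$-module of $\tau_\hbar^*\cH_{N,\loc}$ is the $(N+1)$-dimensional irreducible representation $V_N$, hence $\eta(\cH_{N,\loc})$ is an $\mathfrak{sl}_2$-submodule of $(\bC^2)^{\otimes N}\otimes\bC(\hbar)$ isomorphic to $V_N$. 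Since $(\bC^2)^{\otimes N}$ contains exactly one copy of $V_N$ in its Clebsch--Gordan decomposition, namely $S^N(\bC^2)$, the image must coincide with $S^N(\cH_{1,\loc})$.

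The main obstacle will be the intertwining property for $\overline{\mathsf{sp}}$: cancelling $\mathsf{sp}_1$ on the left requires both its injectivity and the containment $\im\mathsf{sp}_*\subseteq\im\mathsf{sp}_1$ established in Lemma \ref{lem sp* and sp1 inj}. A secondary bookkeeping point is to confirm that the isomorphism of Lemma \ref{lem tensor N H_1} respects the $Y(\mathfrak{gl}_2)$-module structure induced by the coproduct on the tensor product side; this is contained in the compatibility of stable envelopes with dimensional reductions \cite[Thm.~6.10]{COZZ} used in the proof of that lemma. With these points secured, the proof reduces to the formal composition above.
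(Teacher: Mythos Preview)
Your proposal is correct and follows essentially the same approach as the paper. The paper's proof is very terse---it identifies the image via the same $\mathfrak{sl}_2$-multiplicity argument you give (invoking Example \ref{ex H_N, V, F} and the uniqueness of the $S^N(\bC^2)$ summand in $(\bC^2)^{\otimes N}$), and for the fusion identity simply writes ``the rest follows from what we have established,'' referring to precisely the three intertwining properties you spell out (Proposition \ref{prop comp dim red for higher spin}, the displayed equation after Lemma \ref{lem sp* and sp1 inj}, and the Remark after Lemma \ref{lem tensor N H_1}).
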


\begin{proof}
As $Y(\mathfrak{sl}_2)\otimes_{\bC[h]}\bC(\hbar)$ modules, $\cH_{N,\loc}$ is abstractly isomorphic to $S^N(\bC(\hbar)^{\oplus 2})$ induced by the evaluation map $Y(\mathfrak{sl}_2)\otimes_{\bC[h]}\bC(\hbar)\to U(\mathfrak{sl}_2)\otimes\bC(\hbar)$ (Example \ref{ex H_N, V, F}). So the image of $\eta$ must be $S^N(\cH_{1,\loc})$ as it is the unique summand of $\cH_{1,\loc}^{\otimes N}$ which is isomorphic to $S^N(\bC(\hbar)^{\oplus 2})$ as $\mathfrak{sl}_2$ module. The rest follows from what we have established.
\end{proof}

Taking $\mathcal H^{\sW}_{\underline{\bd}}=\cH_M$ in the above theorem, we obtain:

\begin{Corollary}\label{cor fusion formula}
We have the \textit{fusion formula} for the $R$-\textit{matrix} $R_{\cH_M,\cH_N}(u)$:
\begin{align*}
    (\eta\otimes \eta)\circ R_{\cH_M,\cH_N}(u)= \left(\prod_{\alpha=1}^{\substack{\longleftarrow\\ M}}\prod_{\beta=1}^{\substack{\longrightarrow\\ N}}\frac{(u+(\beta-\alpha+M-N)\hbar)\id+\hbar P_{\alpha\beta}}{u+(\beta-\alpha+M-N+1)\hbar}\right)\circ (\eta\otimes \eta)
\end{align*}
where $P_{\alpha\beta}\in \End\left(\cH_1^{\otimes M}\otimes \cH_1^{\otimes N}\right)$ is the operator that permutes the $\alpha$-th component in $\cH_1^{\otimes M}$ and $\beta$-th component in $\cH_1^{\otimes N}$, and the product is ordered such that index increases along the arrows.
\end{Corollary}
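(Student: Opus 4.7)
The corollary follows by iterating Theorem \ref{thm fusion} twice, once to decompose the $\cH_N$ factor of $R_{\cH_M,\cH_N}(u)$ into $N$ copies of $\cH_1$, and a second time to decompose the $\cH_M$ factor into $M$ copies of $\cH_1$. The key technical ingredient is that $\eta\colon\cH_M\to\cH_1^{\otimes M}$ (and likewise for $\cH_N$) is a $Y(\mathfrak{gl}_2)$-module embedding, from which the intertwining property of $R$-matrices across $\eta\otimes\id$ (or $\id\otimes\eta$) is automatic. My plan is to make this iteration explicit and to track the spectral shifts.

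Step 1 is a direct application of Theorem \ref{thm fusion} with $\mathcal H^{\sW}_{\underline{\bd}}=\cH_M$, which yields
\begin{align*}
(\id_{\cH_M}\otimes\eta)\circ R_{\cH_M,\cH_N}(u) \;=\; \prod_{\beta=1}^{\longrightarrow N} R_{\cH_M,\cH_1^{(N,\beta)}}\bigl(u-(N-\beta)\hbar\bigr)\circ(\id_{\cH_M}\otimes\eta),
\end{align*}
where $\cH_1^{(N,\beta)}$ is the $\beta$-th tensor factor of $\cH_1^{\otimes N}\supset\eta(\cH_N)$. Each factor on the right-hand side is an operator on $\cH_M\otimes\cH_1^{\otimes N}$ acting non-trivially only on $\cH_M\otimes\cH_1^{(N,\beta)}$.

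Step 2 is the dual fusion in the first tensor factor. Since $\eta\colon\cH_M\to\cH_1^{\otimes M}$ is a $Y(\mathfrak{gl}_2)$-module map and $R$-matrices intertwine the Yangian actions through the coproduct, for any $Y(\mathfrak{gl}_2)$-module $W$ one has the identity
\begin{align*}
(\eta\otimes\id_W)\circ R_{\cH_M,W}(v) \;=\; \prod_{\alpha=1}^{\longleftarrow M} R_{\cH_1^{(M,\alpha)},W}\bigl(v+(M-\alpha)\hbar\bigr)\circ(\eta\otimes\id_W),
\end{align*}
which follows either by repeating the proof of Theorem \ref{thm fusion} (Proposition \ref{prop comp dim red for higher spin}, Lemma \ref{lem tensor N H_1}, Lemma \ref{lem sp* and sp1 inj}) with the roles of the two tensor factors swapped, or more directly from the geometric factorization of the stable envelope on a chamber in which the framings of the $M$ copies of $\cH_1$ are ordered. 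Applying this identity with $W=\cH_1^{(N,\beta)}$ and $v=u-(N-\beta)\hbar$ to each factor produced in Step 1, combining the products, and recalling that $R_{\cH_1^{(M,\alpha)},\cH_1^{(N,\beta)}}$ is the fundamental $R$-matrix $R_{\cH_1,\cH_1}(w)=(w\id+\hbar P)/(w+\hbar)$ from \eqref{equ on RH1H1} at spectral parameter $w=u+(M-\alpha)\hbar-(N-\beta)\hbar=u+(\beta-\alpha+M-N)\hbar$, one obtains the displayed formula.

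The main obstacle is Step 2: although the intertwining property of $\eta\otimes\id$ is formal, the precise ordering of the product (the leftward arrow in $\alpha$) and the sign of the spectral shift $+(M-\alpha)\hbar$ must be verified from the conventions of $\tau_a^{*}$ and of the stable envelope normalization used throughout Section \ref{sect on higher spin}. A useful consistency check is that the two natural routes --- applying $\eta_M$ first or $\eta_N$ first --- give identical products, which amounts to the fact that the factors for distinct pairs $(\alpha,\beta)$ and $(\alpha',\beta')$ with either $\alpha=\alpha'$ or $\beta=\beta'$ act on disjoint pairs of tensor slots, while those sharing a slot satisfy the Yang--Baxter equation.
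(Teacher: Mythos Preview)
Your proposal is correct and is a fleshed-out version of the paper's own argument, which consists of the single phrase ``Taking $\mathcal H^{\sW}_{\underline{\bd}}=\cH_M$ in the above theorem'' and leaves your Step~2 entirely implicit. For Step~2 you need not rerun any of the geometry: apply Theorem~\ref{thm fusion} with $\cH^{\sW}_{\underline{\bd}}=\cH_1$ and $N$ replaced by $M$ to factor $R_{\cH_1,\cH_M}(-v)$, then use unitarity $R_{\cH_M,\cH_1}(v)=R_{\cH_1,\cH_M}(-v)^{-1}$ (the Remark following the coproduct-and-stable-envelope Proposition) to invert and read off
\[
(\eta\otimes\id_W)\circ R_{\cH_M,W}(v)=R_{\cH_1^{(M)},W}(v)\,R_{\cH_1^{(M-1)},W}(v+\hbar)\cdots R_{\cH_1^{(1)},W}(v+(M-1)\hbar)\circ(\eta\otimes\id_W),
\]
which confirms both your $\longleftarrow$ ordering and the $+(M-\alpha)\hbar$ shift without further work.
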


\begin{proof}[Proof of Proposition \ref{prop comp dim red for higher spin}]

To facilitate presentations, 
quiver data associated to $\cM(\mathbf 1)$ will be denoted by lowercase letters: $(\varphi,a,b)$; quiver data associated to $\cM(\mathbf N)$ will still be denoted by $(\Phi,A,B)$.

Recall the notation $\underline{\mathbf 0}^{\mathbf k}=(k,0)$, and let 
\begin{equation*}
    Y'=\cM(n,\underline{\mathbf 0}^{\mathbf 1}),\quad X'=\mathrm{Tot}(E')= \cM(n,\mathbf 1)
\end{equation*}
$$s'=\varphi^N a\in \Gamma(Y',E'^\vee),\quad \phi'=0,\quad \sw_N=\langle s', b\rangle,$$
where $E'$ is the vector bundle that descends from $\Hom(\bC^n,\bC)$ with coordinate given by $b$, and $\langle-,-\rangle$ is the natural pairing. Let
\begin{equation}
Y:=\cM(n,\underline{\mathbf 0}^{\mathbf N}),\quad X=\mathrm{Tot}(E)= \cM(n,\mathbf N), \end{equation}
$$s=\Phi A-A\Xi \in \Gamma(Y,E^\vee), \quad \phi=0, \quad \sw_*=\langle s, B\rangle. $$
where $E$ is the vector bundle that descends from $\Hom(\bC^n,\bC^N)$ with coordinate given by $B$. Note that cohomological dimensional reduction hold for $(X',Y',s',\phi')$ and $(X,Y,s,\phi)$ (see \cite[Def.~6.1]{COZZ}).

We show that these two data are compatible in the sense of \cite[Def.~6.2]{COZZ}. 
\begin{Lemma}\label{lem on dr higher spin}
(1) The map 
$$i\colon R(n,\mathbf 1) \to R(n,\mathbf N), \quad (\varphi,a,b)\mapsto (\Phi,A,B) $$
with $\Phi=\varphi$, $A=(a,\varphi a,\ldots, \varphi^{N-1}a)$, 
$B=\begin{pmatrix}
b\varphi^{N-1} \\ 
\vdots \\
b\varphi \\
b \end{pmatrix}$ 
is a $(\GL_n\times \bC^*_\hbar)$-equivariant closed immersion such that 
$$i^*\sw_*=\sw_{N}, \quad \Crit(\sw_{N}) \stackrel{i}{\cong} \Crit(\sw_*). $$
This induces a closed embedding $i\colon X' \hookrightarrow X$ whose restriction to $Y'$ gives $i\colon Y' \hookrightarrow Y$.

(2) The $(\GL_n\times \bC^*_\hbar)$-equivariant projection 
$$\pr\colon \Hom(\C^n,\C^N)\to \Hom(\C^n,\C), \quad B=\begin{pmatrix}
b_1 \\ 
\vdots \\
b_N \end{pmatrix}\mapsto b_N$$
induces a surjective vector bundle map 
\begin{equation}\label{equ on pr on higher spin}\pr\colon E|_{Y'}\twoheadrightarrow E' \end{equation}
such that conditions in \cite[Def.~6.2]{COZZ} holds. Moreover, there is an isomorphism of vector bundles
$$\ker(\pr)\cong N^\vee_{Y'/Y}. $$
\end{Lemma}
\begin{proof}
(1) It is straightforward to check that the map $i$ is $(\GL_n\times \bC^*_\hbar)$-equivariant and 
$$i^*\tr(\Phi A B )=N\cdot \tr(b\varphi^N a), \quad   i^*\tr(A\Xi B)=(N-1)\cdot \tr(b\varphi^N a), $$
therefore the first claim holds. To show the second, note that 
$\frac{\partial \sw_*}{\partial A}=0,  \frac{\partial \sw_*}{\partial B}=0$ gives  
$$B  \Phi=\Xi  B, \quad \Phi A=A \Xi. $$
This implies that $A$ is of the form $(a,\varphi a,\ldots, \varphi^{N-1}a)$ and $B$ is of form $\begin{pmatrix}
b\varphi^{N-1} \\ 
\vdots \\
b\varphi \\
b \end{pmatrix}$ 
such that 
$$\varphi^{N}a=0, \quad  b\varphi^{N}=0. $$
This coincides exactly with $\frac{\partial \sw_{N}}{\partial a}=0,  \frac{\partial \sw_{N}}{\partial b}=0$. 
It is easy to check remaining equations coincide. 

(2) By a direct calculations, we have $\phi|_{Y'}=\phi'$, $s|_{Y'}=\pr^\vee\circ s'$.  
As shown in (1), elements in the zero locus $Z(s)$ are of form $(a,\varphi a,\cdots, \varphi^{N-1}a)$, 
so it sits inside $Y'$.
There is an obvious isomorphism $$\ker(\pr)\cong N^\vee_{Y'/Y}$$ by construction. 
By \cite[Prop.~6.4]{COZZ}, all conditions in \cite[Def.~6.2]{COZZ} are satisfied. 
\end{proof}
Combining Lemma \ref{lem on dr higher spin} and \cite[Prop.~6.9]{COZZ}, the map 
\begin{align*}
    \delta_H=(Y'\times_Y X\hookrightarrow X)_*\circ \pr^*\colon H^{\bC^*_\hbar}(\cM(n,\mathbf 1),\sw_N)\to H^{\bC^*_\hbar}(\cM(n,\mathbf N),\sw_*)
\end{align*}
is an isomorphism, where $\pr$ is the projection \eqref{equ on pr on higher spin}. 

Moreover, for arbitrary $\underline{\bd}$ and $\sW$ in \eqref{equ on m poten}, we can repeat the above construction and obtain a map
\begin{align*}
    \delta_{H,\underline{\bd}}\colon H^{\sT}(\cM(n,\underline{\bd}+\mathbf 1),\sw+\sw_N)\to H^{\sT}(\cM(n,\underline{\bd}+\mathbf N),\sw+\sw_*),
\end{align*}
which is induced by a compatible dimensional reduction pair. Here $\sT$ is a torus that contains $\bC^*_\hbar\times \bC^*_u$ and that the framings have $\bC^*_u$ weights $u\underline{\bd}+\underline{\mathbf 1}$ and $u\underline{\bd}+\underline{\mathbf N}$ respectively. $\delta_{H,\underline{\bd}}$ is an isomorphism by \cite[Prop.~6.9]{COZZ}. Let $$\delta=(-1)^{(N-1)n}\cdot\delta_H,\quad \delta_{\underline{\bd}}=(-1)^{(N-1)n}\cdot\delta_{H,\underline{\bd}},$$
then by \cite[Thm.~6.10]{COZZ}, the diagram
\begin{equation*}
\xymatrix{
 H^{\sT}(\cM(\underline{\bd}),\sw)\otimes H^{\sT}(\cM(\mathbf N),\sw_*) \ar[rr]^-{\Stab_{u\gtrless 0,\epsilon}} & & H^{\sT}(\cM(\underline{\bd}+\mathbf N),\sw+\sw_*)  \\
H^{\sT}(\cM(\underline{\bd}),\sw)\otimes H^{\sT}(\cM(\mathbf 1),\sw_N)  \ar[rr]^-{\Stab_{u\gtrless 0, \epsilon}} \ar[u]^{\id\otimes\delta } & & H^{\sT}(\cM(\underline{\bd}+\mathbf 1),\sw+\sw_N) \ar[u]_{\delta_{\underline{\bd}}} .
} 
\end{equation*}
commutes, with normalizer $\epsilon$ given by \eqref{normalizer_higher spin}. It follows that
\begin{align*}
    R_{\mathcal H^{\sW}_{\underline{\bd}},\mathcal H_{\underline{\mathbf N}}^{\sW_*}}(u)\circ(\id\otimes \:\delta)=(\id\otimes \:\delta)\circ R_{\mathcal H^{\sW}_{\underline{\bd}},\mathcal H_N}(u).
\end{align*}
Taking $\mathcal H^{\sW}_{\underline{\bd}}=\cH_1$ implies that $\delta$ is a $Y(\mathfrak{gl}_2)$ module isomorphism. This finishes the proof of Proposition \ref{prop comp dim red for higher spin}.
\end{proof}


\subsection{Quantum multiplication by divisors}\label{sec qm by div_Jordan}

Consider the framing dimension $\underline{\bd}=(r_1,r_2)$ with $r_1\geqslant r_2$ in the following framed quiver with zero potential:
\begin{equation*}
\begin{tikzpicture}[x={(1cm,0cm)}, y={(0cm,1cm)}, baseline=0cm]
  \node[draw,circle,fill=white] (Gauge) at (0,0) {$n$};
  \node[draw,rectangle,fill=white] (Framing1) at (-1,-2) {$r_1$};
  \node[draw,rectangle,fill=white] (Framingk) at (1,-2) {$r_2$};
  \node (Z) at (0,1) {\scriptsize $\Phi$};
  \draw[<-] (Gauge.210) -- (Framing1.100) node[midway,left] {\scriptsize $A$};

  \draw[->] (Gauge.330) -- (Framingk.80) node[midway,right] {\scriptsize $B$};

  \draw[->,looseness=7] (Gauge.135) to[out=135,in=45] (Gauge.45);
\end{tikzpicture}
\qquad
\sW=0. 
\end{equation*} 
Denote $X_{n,r_1,r_2}=\cM(n,\underline{\bd})$. Apply Theorem \ref{thm qm div for asym_sp inj} to $X_{n,r_1,r_2}$ and we get an explicit formula of modified quantum multiplication by divisor on $X_{n,r_1,r_2}$. 
Note that $X_{n,1,r_2}$ are affine spaces, so we shall focus on the $r_1>1$ cases.

\begin{Proposition}\label{prop on qmbd on gl2}
Assume $r_1>1$. Let $\mathcal O(1)$ be the determinant of tautological bundle $\mathsf V$ on $X_{n,r_1,r_2}$, then 
\begin{align*}
c_1(\mathcal O(1))\,\widetilde{\star}\,\cdot =
\begin{cases}
c_1(\mathcal O(1))\cup \cdot - \hbar^{-1}\frac{z}{1-z}e_0f_0, &\text{ if $r_1=r_2$ },\\
c_1(\mathcal O(1))\cup \cdot -\hbar^{-1}z\,e_0f_0, &\text{ if $r_1>r_2$ },
\end{cases}
\end{align*}
where $e_0,f_0$ are part of the generators of $Y_{r_2-r_1}(\mathfrak{gl}_2)$ in Definition \ref{def of shifted Y(gl_2)}.
\end{Proposition}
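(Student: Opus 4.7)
The plan is to proceed in close analogy with the proof of Proposition \ref{prop on qmbd on gl(1|1)}, replacing $\mathfrak{gl}_{1|1}$ by $\mathfrak{gl}_2$. Since the Jordan quiver has one loop, its single node has parity $|i|=0$, so $(-1)^{|\alpha|}=1$ for every root $\alpha$; this is why the denominator $1-z^\alpha$ rather than $(-1)^{|\alpha|}-z^\alpha$ appears in the target formula. Moreover, the Jordan quiver is the tripled version of the one-node no-edge $A_1$ quiver (Example \ref{ex doubled vs tripled}), whose canonical cubic potential is zero, so Theorem \ref{thm compare with MO yangian} identifies $\mathfrak{g}_{Q,0}$ with the Maulik--Okounkov Lie algebra of $A_1$, namely $\mathfrak{gl}_2$. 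Its only positive root is $\alpha=1$, with a one-dimensional root space, and all higher Casimirs vanish. From the formula \eqref{r-mat gl2} for the image of the classical $R$-matrix, and the normalisation of dual root bases in Proposition \ref{prop g(Q,w)}, I would read off $\mathrm{Cas}_1=\hbar^{-1}e_0f_0$.

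For the symmetric case $r_1=r_2$, I would invoke Corollary \ref{cor qm div for sym_sp inj} (the specialization map is an isomorphism since $\sW=0$) to obtain
\begin{align*}
c_1(\mathcal O(1))\,\widetilde{\star}\,\cdot = c_1(\mathcal O(1))\cup\cdot - \frac{z}{1-z}\,\hbar^{-1}e_0f_0 + \text{(scalar)}.
\end{align*}
To pin down the scalar, I would evaluate on $1\in H^\sT(X_{n,r_1,r_2})$. By Theorem \ref{cor shifted yangian action}, $f_0(1)$ is the pushforward of the fundamental class along the projection $\overline{\mathfrak{P}}(n,n-1,\underline{\bd})\to X_{n-1,r_1,r_2}$; reading the complex $\mathcal C^-$ of \eqref{two complexes} at $\bv=n-1$ with one loop contributing, this projection is a $\mathbb{P}^{r_1-1}$-bundle, so under the hypothesis $r_1>1$ the fibre has positive dimension and the top pushforward vanishes. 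Thus $f_0(1)=0$ and the scalar drops out.

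For the asymmetric case $r_1>r_2$, I would apply Theorem \ref{thm qm div for asym_sp inj} with $\mu=r_2-r_1<0$. Since $\alpha\cdot\mu\neq 0$ for the unique positive root $\alpha=1$, only the second sum contributes, producing $-z\,\mathrm{Cas}_{1,\mu}$. To compute $\mathrm{Cas}_{1,\mu}$, I would copy the end of the proof of Proposition \ref{prop on qmbd on gl(1|1)}: the parametrized shift formulas of Remark \ref{rmk form of shift} give
\begin{align*}
i^*e_0\,p^* = z^{r_1-r_2}\,\bar e_0 + \text{l.o.t.\ in }z, \qquad i^*f_0\,p^* = (-1)^{r_1-r_2}\bar f_0,
\end{align*}
so that $i^*\mathrm{Cas}_1\,p^* = (-z)^{r_1-r_2}\hbar^{-1}\bar e_0\bar f_0 + \text{l.o.t.}$, and Definition \ref{def of shifted Casimir} yields $\mathrm{Cas}_{1,\mu}=\hbar^{-1}\bar e_0\bar f_0$, where $\bar e_0,\bar f_0$ are the corresponding generators of $Y_\mu(\mathfrak{gl}_2)$ under the identification of Theorem \ref{thm ex Jordan_main}. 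The scalar again vanishes by the same Hecke-bundle argument. The main technical point is the normalisation check in \eqref{r-mat gl2} and the cancellation of the sign $(-1)^{r_1-r_2}$ through the shift; since everything parallels the $\mathfrak{gl}_{1|1}$ template and no new super signs enter, I do not anticipate substantial difficulty.
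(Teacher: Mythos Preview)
Your proposal is correct and follows essentially the same approach as the paper's own proof: identify $\mathrm{Cas}_1=\hbar^{-1}e_0f_0$ via Theorem \ref{thm compare with MO yangian} and \eqref{r-mat gl2}, apply the general divisor formula, kill the scalar by observing that $\overline{\mathfrak{P}}(n,n-1,\underline{\bd})\to X_{n-1,r_1,r_2}$ is a $\mathbb{P}^{r_1-1}$-bundle so $f_0(1)=0$, and in the asymmetric case compute the shifted Casimir from the parametrized shift formulas exactly as you indicate. The only cosmetic difference is that the paper invokes Theorem \ref{thm on qm div for sym} directly (since $\sW=0$) rather than passing through Corollary \ref{cor qm div for sym_sp inj}.
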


\begin{proof}
By Theorem \ref{thm compare with MO yangian} and the formula for the classical $R$-matrix \eqref{r-mat gl2}, the Casimir operators for $\mathsf Y_0(Q,0)$ are $$\mathrm{Cas}_{1}=\hbar^{-1}e_0f_0, \quad \mathrm{Cas}_{i}=0, \,\, \mathrm{if}\, \, i>1.$$ 
Then by Theorem \ref{thm on qm div for sym}, we have 
$$c_1(\mathcal O(1))\,\widetilde{\star}\,\cdot=c_1(\mathcal O(1))\cup \cdot - \hbar^{-1}\frac{z}{1-z}e_0f_0+\mathrm{const}. $$ 
The constant is given by $\hbar^{-1}\frac{z}{1-z}e_0f_0$ acting on $1\in H^{\sT_0}(X_{n,r_1,r_2})$. According to Theorem \ref{cor shifted yangian action}, $f_0(1)$ is given by the pushforward of the fundamental class $[\overline{\mathfrak{P}}(n,n-1,\underline{\bd})]^{\mathrm{t}}$ to $X_{n-1,r_1,r_2}$. It follows from \eqref{two complexes} that $\overline{\mathfrak{P}}(n,n-1,\underline{\bd})$ is a $\bP^{r_1-1}$-bundle over $X_{n-1,r_1,r_2}$, so $f_0(1)=0$. This proves the $r_1=r_2$ case.

For the $r_1>r_2$ case, it suffices to show that 
$$\mathrm{Cas}_{1,r_2-r_1}= \hbar^{-1}e_0f_0, $$ 
where $e_0,f_0$ are part of the generators of $Y_{r_2-r_1}(\mathfrak{gl}_2)$. We put bar on the generators of $Y_{r_2-r_1}(\mathfrak{gl}_2)$ to distinguish from their counterparts of $Y_{0}(\mathfrak{gl}_2)$. From the proof of Lemma \ref{lem shift map}, we have equations of operators: 
$$(p^{*})^{-1}e(u)\,p^{*}=H_{z}\,\bar e(u)\,H_{z}^{-1}=\frac{(z-c_1(\mathcal L))^{r_1-r_2}}{u-c_1(\mathcal L)}\,[\overline{\mathfrak{P}}(n+1,n,\underline{\bd})]
=z^{r_1-r_2}\bar e(u)+\text{lower order in $z$}, $$
$$(p^{*})^{-1}f(u)\,p^{*}=(-1)^{r_1-r_2}\bar f(u).$$
By expanding in the $u$-power, we obtain 
$$(p^{*})^{-1}e_r\,p^{*}=z^{r_1-r_2}\, \bar e_r+\text{lower order in $z$}, \quad (p^{*})^{-1}f_r\,p^{*}=(-1)^{r_1-r_2}\bar f_r. $$ 
This implies that 
\begin{equation*}
(p^{*})^{-1}\mathrm{Cas}_1\,p^{*}=\hbar^{-1}(p^{*})^{-1}e_0f_0\,p^{*}=(-z)^{r_1-r_2}\,\hbar^{-1}\, \bar e_0\bar f_0+\text{lower order in $z$},
\end{equation*}
therefore $\mathrm{Cas}_{1,r_2-r_1}=\lim_{z\to \infty}(-z)^{r_2-r_1}(p^{*})^{-1}\mathrm{Cas}_1\,p^{*}=\hbar^{-1}\bar e_0\bar f_0$ by Definition \ref{def of shifted Casimir}.
\end{proof}

\begin{Remark}\label{rm on Q not cp with dim red}
Take $r_1=r_2=r$, and let $\sw=\tr(\Phi AB)$, by dimensional reduction, we have $$H^{\sT_0}(X_{n,r,r},\sw)\cong H^{\sT_0}(T^*\Gr(n,r)). $$
The modified quantum multiplication by $c_1(\mathcal O(1))$ on $T^*\Gr(n,r)$ is computed in \cite[Prop.~11.2.2]{MO}\footnote{The generators $e_0$ and $f_0$ are identified with $\hbar e$ and $-\hbar f$ in \cite[\S 11]{MO}, respectively.}, given by 
\begin{align*}
c_1(\mathcal O(1))\,\widetilde{\star}\,\cdot =
c_1(\mathcal O(1))\cup \cdot -\hbar^{-1}\frac{z}{1-z}e_0f_0+\mathrm{const},
\end{align*}
for some nonzero constant. On the other hand, we have an injective specialization map 
$$H^{\sT_0}(X_{n,r,r},\sw)\to H^{\sT_0}(X_{n,r,r}). $$ Therefore by Theorem \ref{thm qm div for asym_sp inj}, the modified quantum multiplication by $c_1(\mathcal O(1))$ on $H^{\sT_0}(X_{n,r,r},\sw)$ is given by 
$$c_1(\mathcal O(1))\cup \cdot -\hbar^{-1}\frac{z}{1-z}e_0f_0. $$ This example shows that the quantum multiplication by divisor defined using stable maps \eqref{equ on Q map} is in general \textit{not compatible} with dimensional reduction. 
We expect a compatibility holds when the quantum multiplication is defined using twisted maps, see Remark \ref{rmk on two qbet}.
\end{Remark}

\section{Example: tripled Jordan quiver and modules of shifted affine Yangians of \texorpdfstring{$\mathfrak{gl}_1$}{gl(1)}}\label{sect on hilbc3}



In this section, we focus on the tripled Jordan quiver with potential:
\begin{equation}\label{equ on trip jord qui}
Q:
\begin{tikzpicture}[x={(1cm,0cm)}, y={(0cm,1cm)}, baseline=0cm]
  \node[draw,circle,fill=white] (Gauge) at (0,0) {$\phantom{n}$};
  \node (Z) at (0,1) {\scriptsize $Z$};
  \node (X) at (-0.86,-0.5) {\scriptsize $X$};
  \node (Y) at (0.86,-0.5) {\scriptsize $Y$};

  \draw[->,looseness=7] (Gauge.120) to[out=120,in=60] (Gauge.60);
  \draw[->,looseness=7] (Gauge.240) to[out=240,in=180] (Gauge.180);
  \draw[->,looseness=7] (Gauge.0) to[out=0,in=300] (Gauge.300);
\end{tikzpicture}\qquad
\sW=Z[X,Y].
\end{equation}
Let $\bC^*_{q_1},\bC^*_{q_2}$ and $\bC^*_{q_3}$ be the tori that scale the loops $X,Y$ and $Z$ with weights $-1$ respectively. 
Set $$\mathsf S=\bC^*_{q_1}\times\bC^*_{q_2}\times\bC^*_{q_3}, $$ 
and $$\sT_0=\ker(\mathsf S\to\bC^*),\quad (t_1,t_2,t_3)\mapsto t_1t_2t_3.$$
Let $\hbar_i$ be the equivariant parameter for $\bC^*_{q_i}$, then 
$$\bC[\mathsf t_0]=\bC[\hbar_1,\hbar_2,\hbar_3]/(\hbar_1+\hbar_2+\hbar_3). $$
We fix a cyclic stability $\theta<0$ in this section when we consider associated quiver varieties. 

We define $\ell$-shifted affine Yangian $Y_\ell(\widehat{\mathfrak{gl}}_1)$ 
of $\mathfrak{gl}_1$ (Definition \ref{def of shifted affine Y(gl_1)}) using explicit generators with relations. 
When $\ell\leqslant 0$, we construct a natural surjective map from it to the Reshetikhin type shifted Yangian $\mathsf Y_{\ell}(Q,\sW)$ of the above $(Q,\sW)$, which we prove to be an isomorphism when $\ell=0$
(Theorem \ref{thm ex tripled Jordan_main}).
Building on this, we derive an explicit formula for quantum multiplication by divisors for Hilbert schemes of points on $\C^3$ (Corollary \ref{cor on qmd hilb3}). 

We also study various representations of shifted affine Yangians, including 
Kirillov-Reshetikhin modules, Vacuum MacMahon modules, prefundamental modules
(Theorem \ref{prop on 3 modules of gl1hat}) and modules related to Gaiotto-Rapčák's corner W-algebras (Proposition \ref{prop on more ex}).

\subsection{Shifted affine Yangians of \texorpdfstring{$\mathfrak{gl}_1$}{gl(1)}}\label{sec affine yangian}


\begin{Definition}\label{def of shifted affine Y(gl_1)}
For an integer $\ell\in \bZ$, we define the $\ell$-shifted affine Yangian of $\mathfrak{gl}_1$, denoted by $Y_\ell(\widehat{\mathfrak{gl}}_1)$, to be the $\bC(\mathsf t_0)=\bC(\hbar_1,\hbar_2)$ algebra generated by $\left\{e_j,f_j,g_j,c_j\right\}_{j\in \mathbb Z_{\geqslant 0}}$ subject to the following relations:
\begin{gather*}
[g_i,g_j]=0,\quad c_j\text{ is central},\tag{Y0}\\
[e_{i+3}, e_j]-3[e_{i+2}, e_{j+1}] + 3[e_{i+1}, e_{j+2}]-[e_i, e_{j+3}] + \sigma_2([e_{i+1}, e_j]-[e_i, e_{j+1}]) = \sigma_3\{e_i, e_j\},\tag{Y1}\\
[f_{i+3}, f_j]-3[f_{i+2}, f_{j+1}] + 3[f_{i+1}, f_{j+2}]-[f_i, f_{j+3}] + \sigma_2([f_{i+1}, f_j]-[f_i, f_{j+1}]) = -\sigma_3\{f_i, f_j\},\tag{Y2}\\
[e_i,f_j]=\sigma_3 h_{i+j},\tag{Y3}\\
[g_i, e_j]= e_{i+j},\tag{Y4}\\
[g_i, f_j]= -f_{i+j},\tag{Y5}\\
\Sym_{\mathfrak{S}_3}[e_{i_1},[e_{i_2},e_{i_3+1}]]=0,\quad \Sym_{\mathfrak{S}_3}[f_{i_1},[f_{i_2},f_{i_3+1}]]=0,\tag{Y6}
\end{gather*}
where $\sigma_2=\hbar_1\hbar_2+\hbar_2\hbar_3+\hbar_3\hbar_1$, $\sigma_3=\hbar_1\hbar_2\hbar_3$, and the RHS of (Y3) is given by
\begin{align*}
1+\sum_{n\geqslant 0}h_{n-\ell}x^{-n-1}=\exp\left(\sum_{j\geqslant 1}\frac{c_{j-1}}{j\cdot x^j}+\sum_{k\geqslant 0}g_{k}\psi_k(x)\right),\quad h_{-\ell-1}=1,\quad \text{and }\;h_j=0\;\text{ for }\;j<-\ell-1,
\end{align*} 
where $(\psi_k(x))_{k\in \bZ_{\geqslant 0}}$ is a sequence of functions such that $\exp\left(\sum_{k\geqslant 0} a^k\psi_k(x)\right)=\prod_{s=1}^3\frac{x+\hbar_s-a}{x-\hbar_s-a}$ for a formal parameter $a$.

We set
\begin{align*}
    e(u)=\sum_{j\geqslant 0}e_ju^{-j-1},\quad f(u)=\sum_{j\geqslant 0}f_ju^{-j-1}.
\end{align*}
\end{Definition}

\begin{Remark}\label{rmk triality}
The relations (Y1)-(Y6) of $Y_\ell(\widehat{\mathfrak{gl}}_1)$ only depend on $\sigma_2$ and $\sigma_3$ which are invariant under the $\mathfrak{S}_3$ permutation of $\{\hbar_1,\hbar_2,\hbar_3\}$, so $Y_\ell(\widehat{\mathfrak{gl}}_1)$ has an $\mathfrak{S}_3$ automorphism that permutes $\{\hbar_1,\hbar_2,\hbar_3\}$.
\end{Remark}

\begin{Remark}\label{rmk positive part}
Let $Y(\widehat{\mathfrak{gl}}_1)^+$ be the $\bC(\mathsf t_0)$-algebra generated by $\{e_j\}_{j\in \bZ_{\geqslant0}}$ subject to (Y1) and the first equation of (Y6). 
By \cite[Thm.~5.11]{Dav3}, the assignment 
$$e_j\mapsto c_1(\mathcal L)^{j}\cap [\fM(1,\mathbf 0)]$$ induces an isomorphism: 
$$Y(\widehat{\mathfrak{gl}}_1)^+\cong \widetilde{\mathcal {H}}_{Q,\sW}\otimes_{\bC[\mathsf t_0]}\bC(\mathsf t_0), $$ 
where $\widetilde{\mathcal {H}}_{Q,\sW}$ is the CoHA of $(Q,\sW)$ with twisted product \eqref{twi prod}.
\end{Remark}

\begin{Lemma}\label{lem shift map affine Y(gl_1)}
For any $n\in \bZ_{\geqslant 0}$, the assignment 
\begin{align*}
    e_j\mapsto \sum_{k=0}^n(-1)^kz^{n-k} \binom{n}{k}e_{j+k},\quad f_j\mapsto (-1)^n f_j,\quad g_j\mapsto g_j,\quad c_j\mapsto c_j-nz^{j+1}
\end{align*}
generates a $\bC(\mathsf t_0)$-algebra map $\bar{\sS}_{\ell,\ell-n;z}\colon Y_{\ell}(\widehat{\mathfrak{gl}}_1)\to Y_{\ell-n}(\widehat{\mathfrak{gl}}_1)[z]$.
\end{Lemma}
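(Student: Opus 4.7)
The plan is to verify that the assignment extends to an algebra homomorphism by checking that each of the defining relations (Y0)--(Y6) of $Y_\ell(\widehat{\mathfrak{gl}}_1)$ is preserved. First I would recast the assignment in current form: writing $\bar e(u):=\sum_{j\geqslant 0}\bar e_j u^{-j-1}$ for $\bar e_j:=\sum_{k=0}^n(-1)^k\binom{n}{k}z^{n-k}e_{j+k}$, the elementary identity $\sum_{k=0}^n(-1)^k\binom{n}{k}z^{n-k}u^k=(z-u)^n$ yields $\bar e(u)=\bigl[(z-u)^n e(u)\bigr]_{u^{<0}}$, the negative-power part of the formal Laurent series $(z-u)^n e(u)$. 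Likewise $\bar f(u)=(-1)^n f(u)$, and substituting the images of $c_j,g_j$ into the exponential formula defining $h(u)$ gives $\sum_{j\geqslant 1}\tfrac{c_{j-1}-nz^j}{jx^j}=\sum_{j\geqslant 1}\tfrac{c_{j-1}}{jx^j}+n\log(1-z/x)$; multiplying by $u^\ell$ and matching leading terms translates this into the shift $h^{(\ell)}(u)\mapsto (u-z)^n h^{(\ell-n)}(u)$ on the $h$-currents, under which the normalizations $h_{-\ell-1}=1$ and $h_j=0$ for $j<-\ell-1$ on the source side are carried to the correct target normalizations.

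Relations (Y0), (Y4), (Y5) are immediate from linearity. For (Y3), I would expand $[\bar e_i,\bar f_j]=(-1)^n\sum_k(-1)^k\binom{n}{k}z^{n-k}\sigma_3 h^{(\ell-n)}_{i+j+k}$ using source (Y3), and verify that this equals the coefficient of $u^{-(i+j)-1}$ in $\sigma_3(u-z)^n h^{(\ell-n)}(u)$, which is the image of $\sigma_3 h^{(\ell)}_{i+j}$.

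The main technical content is in (Y1), (Y2), (Y6), which I plan to handle uniformly via generating series. Each of these relations has the form ``a certain combination of products of $e(u)$'s, or of $f(u)$'s, vanishes on all modes with nonnegative indices in every variable''. I will write $\bar e(u)=(z-u)^n e(u)-P(u)$ with $P(u)\in Y_{\ell-n}(\widehat{\mathfrak{gl}}_1)[z][u]$ polynomial in $u$ of degree $<n$, substitute into the target relation, and decompose the resulting expression as a main term plus cross terms. The main term is obtained from the source relation by multiplication with the polynomial $\prod_l(z-u_l)^n$; since modewise vanishing on the ``all-negative'' sector is preserved under multiplication by a polynomial in the $u_l$, it is still satisfied. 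Each cross term carries at least one factor $P(u_l)$, which is polynomial in that variable $u_l$ and therefore contributes no $u_l^{-i_l-1}$ mode with $i_l\geqslant 0$, so cross terms cannot obstruct the vanishing on the ``all-negative'' sector.

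For (Y6), the recasting is $\mathrm{Sym}_\sigma u_{\sigma(3)}\bigl[e(u_{\sigma(1)}),[e(u_{\sigma(2)}),e(u_{\sigma(3)})]\bigr]$ vanishes on modes with all $i_l\geqslant 0$ (the contribution from the constant term $e_0$ produced by $\sum_{i_3\geqslant 0}e_{i_3+1}u_3^{-i_3-1}=u_3 e(u_3)-e_0$ is missing a $u$-dependence in each $\sigma$-summand, hence contributes nothing to the all-negative sector). The polynomial factor $\prod_l(z-u_l)^n$ is $\mathfrak{S}_3$-symmetric and pulls out of the symmetrization, so the main term reduces to the source Serre identity. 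The hard part will be the bookkeeping for this Serre step, because the asymmetric ``$+1$'' is attached to the innermost position rather than to a label, so one cannot directly re-parameterize the $k_l$-summation to reduce to an identity of the source form; the generating-series recasting sidesteps this by keeping everything modewise uniform with the analogous treatment of (Y1) and (Y2).
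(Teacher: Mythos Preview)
Your proposal is correct and carries out precisely the ``direct verification'' that the paper explicitly declines to do. The paper notes that all relations other than (Y1) and the $e$-Serre relation in (Y6) can be checked directly, and then says ``in principle one can directly verify (Y1) and (Y6) as well, here we provide another strategy using CoHA.'' Its argument for (Y1), (Y6) invokes Davison's isomorphism $Y(\widehat{\mathfrak{gl}}_1)^+\cong \widetilde{\mathcal H}_{Q,\sW}\otimes\bC(\mathsf t_0)$: under this identification the assignment $e_j\mapsto (z-c_1(\mathcal L))^n\cdot e_j$ extends to the operator $\alpha\mapsto z^{nr}c_{-1/z}(\mathsf V)^n\cdot\alpha$ on degree $r$, which is an algebra endomorphism because Chern polynomials are multiplicative in short exact sequences. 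Your approach is instead purely algebraic: recast each relation in generating-series form as ``a certain expression has vanishing all-negative-power part'', and then use that (a) this vanishing is preserved under multiplication by the symmetric polynomial $\prod_l(z-u_l)^n$, and (b) each cross term containing a polynomial piece $P(u_l)$ is polynomial in $u_l$ and so contributes nothing to that sector. The paper's route is conceptually cleaner and immediately covers any multiplicative Chern-class twist; yours is self-contained (no external CoHA isomorphism needed) and makes the mechanism transparent at the level of modes. Two small remarks: your inclusion of (Y2) among the ``main technical'' relations is harmless but unnecessary, since $f_j\mapsto(-1)^n f_j$ makes (Y2) and the $f$-Serre relation trivially preserved; and where you write ``using source (Y3)'' in the computation of $[\bar e_i,\bar f_j]$ you of course mean the target (Y3) in $Y_{\ell-n}$.
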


\begin{proof}
We need to show that the assignment respects the relations (Y0)-(Y6).
Note that all relations except for (Y1) and the first equation in (Y6) can be checked directly. In principle, one can directly verify (Y1) and (Y6) as well, here we provide another strategy using CoHA. By Remark \ref{rmk positive part}, it is enough to show that 
\begin{align*}
c_1(\mathcal L)^{j}\cap [\fM(1,\mathbf 0)]\mapsto (z-c_1(\mathcal L))^nc_1(\mathcal L)^{j}\cap [\fM(1,\mathbf 0)]
\end{align*}
extends to a $\bC[\mathsf t_0]$-algebra homomorphism $\widetilde{\mathcal {H}}_{Q,\sW}\to \widetilde{\mathcal {H}}_{Q,\sW}[z]$. Let $\mathsf V$ be the rank $r$ tautological bundle on $\fM(r,\mathbf 0)$, then we claim that the homogeneous linear map $\phi=(\phi_r\colon \widetilde{\mathcal {H}}_{Q,\sW}(r)\to \widetilde{\mathcal {H}}_{Q,\sW}(r)[z])_{r\in \bZ_{\geqslant 0}} $ with 
\begin{align*}
\phi_r(\alpha)= z^{nr}c_{-1/z}(\mathsf V)^n\cdot\alpha 
\end{align*}
generates a $\bC[\mathsf t_0]$-algebra homomorphism $\widetilde{\mathcal {H}}_{Q,\sW}\to \widetilde{\mathcal {H}}_{Q,\sW}[z]$. In fact, the CoHA multiplication is preserved because 
\begin{align*}
z^{nr}c_{-1/z}(\mathsf V)^n=z^{nr'}c_{-1/z}(\mathsf V')^n\cdot z^{nr''}c_{-1/z}(\mathsf V'')^n\;\text{ for an SES }\; 0\longrightarrow \mathsf V'\longrightarrow \mathsf V\longrightarrow\mathsf V'''\longrightarrow 0.
\end{align*}
Since $\phi_1(c_1(\mathcal L)^{j}\cap [\fM(1,\mathbf 0)])=(z-c_1(\mathcal L))^nc_1(\mathcal L)^{j}\cap [\fM(1,\mathbf 0)]$, the lemma follows.
\end{proof}


\begin{Remark}
Our shifted affine Yangian $Y_\ell(\widehat{\mathfrak{gl}}_1)$ of $\mathfrak{gl}_1$ differs from the one defined in \cite[\S 4.1]{RSYZ}.

In the zero shifted case, $Y_0(\widehat{\mathfrak{gl}}_1)$ is isomorphic to the algebra $\mathbf{SH}^{\mathbf c}$ in \cite[\S 1.8]{SV1}, which is a $\mathbb C(\xi)$ algebra  generated by $\{D_{0,j},D_{1,k},D_{-1,k},\mathbf c_k\}_{j\in \mathbb Z_{\geqslant 1},k\in \mathbb Z_{\geqslant 0}}$, and a complete set of relations given in \cite{AS}. The generators are identified by
\begin{align}\label{iso Y(gl_1) SH}
\xi\mapsto -\frac{\hbar_3}{\hbar_1},\quad D_{1,j}\mapsto \frac{-e_j}{\hbar_3\:\hbar_1^j},\quad D_{-1,j}\mapsto \frac{-f_j}{\sigma_3\:\hbar_1^j},\quad D_{0,j+1}\mapsto \frac{g_j}{\hbar_1^j},\quad \sum_{i\geqslant 0}(-1)^{i+1}\mathbf c_i\phi_i(s)\mapsto \sum_{j\geqslant 1}\frac{c_{j-1}}{j\:\hbar_1^j}s^j,
\end{align}
where $\phi_i(s)$ is defined in \cite[(1.68)]{SV1}, that is, $\exp\left(\sum_{i\geqslant 0}(-1)^{i+1}a^i\phi_i(s)\right)=\frac{1+\xi s+as}{1+as}$ for all $a\in \bC$.
\end{Remark}

\begin{Remark}\label{rmk coproduct affine Y}
By \cite[Thm.\,7.9]{SV1}, $Y_0(\widehat{\mathfrak{gl}}_1)$ admits a coproduct which is uniquely determined by 
\begin{align}\label{eqn:coproduct affine Y}
\begin{split}
    &\mathbf{\Delta}(e_0) =\square(e_0) ,\quad \mathbf{\Delta}(f_0) =\square(f_0) ,\quad \mathbf{\Delta}(g_0) =\square(g_0) ,\\
    &\mathbf{\Delta}(c_j) =\square(c_j), \;j\in\bZ_{\geqslant 0}, \\
        & \mathbf{\Delta}(g_1) = \square(g_1)  +\sigma_3 \sum_{n>0} n J_n \otimes J_{-n},
\end{split}
\end{align}
where $\square(x)=x\otimes 1+1\otimes x$, $J_n=\frac{\sigma_3^{-n}}{(n-1)!} \text{ad}^{n-1} _{-f_1} f_0$ and $J_{-n}=-\frac{\sigma_3^{-n}}{(n-1)!} \text{ad}^{n-1} _{e_1} e_0$ for $n>0$. 
\end{Remark}

Apply the Gauss decomposition \eqref{gauss decomp of R} to the tripled Jordan quiver $Q$, and we get
\begin{align*}
\begin{pmatrix}
\langle\mathbf 0|R^{\mathrm{sup}}(u)|\mathbf 0\rangle & \langle\mathbf 0|R^{\mathrm{sup}}(u)|\delta_i\rangle \\
\langle\delta_i|R^{\mathrm{sup}}(u)|\mathbf 0\rangle & \langle\delta_i|R^{\mathrm{sup}}(u)|\delta_i\rangle
\end{pmatrix}=
\begin{pmatrix}
1 & 0 \\
\frac{-t}{\sigma_3}\mathsf f(u) & 1
\end{pmatrix}
\begin{pmatrix}
\mathsf g(u) & 0 \\
0 & \mathsf g(u)\mathsf h(u) 
\end{pmatrix}
\begin{pmatrix}
1 & \mathsf e(u)\\
0 & 1 
\end{pmatrix},
\end{align*}
where $t$ is the $\sT_0$ weight of the out-going framing of the auxiliary space $\cM(\delta_i)$, and 
$\mathsf e(u), \mathsf f(u), \mathsf g(u), \mathsf h(u)$ are elements in $\mathsf Y_{\mu}(Q,\sW)$ given by 
\begin{gather*}
\mathsf e(u)=\frac{(-1)^{n+1}}{u-c_1(\mathcal L)}\,[\overline{\mathfrak{P}}(n+1,n,\underline{\bd})],\quad \mathsf f(u)=\frac{(-1)^{n+\bd_{\Out}}}{u-c_1(\mathcal L)}\,[\overline{\mathfrak{P}}(n,n-1,\underline{\bd})]^{\mathrm{t}},\\
\mathsf g(u)=c_{-1/u}((1-t^{-1})\mathsf V),\quad
\mathsf h(u)=u^{\mu}c_{-1/u}\left((q_1^{-1}+q_2^{-1}+q_3^{-1}-q_1-q_2-q_3)\mathsf V+\mathsf D_{\Out}-\mathsf D_{\In}\right).
\end{gather*}
Here $\overline{\mathfrak{P}}(n+1,n,\underline{\bd})\subset \cM(n+1,\underline{\bd})\times \cM(n,\underline{\bd})$ is the correspondence defined below the equation \eqref{sph coha act}, $\mathcal L$ is the tautological line bundle on $\overline{\mathfrak{P}}(n+1,n,\underline{\bd})$, $\mathsf V$ is the tautological vector bundle of rank $n$ on $\cM(n,\underline{\bd})$.

\begin{Theorem}\label{thm ex tripled Jordan_main}
Let $Q$ be the tripled Jordan quiver, equipped with the canonical cubic potential $\sW=Z[X,Y]$. For arbitrary $\mu\in \bZ_{\leqslant 0}$, the assignment 
\begin{align}\label{map rho_m}
e(u)\mapsto \mathsf e(u),\quad f(u)\mapsto \mathsf f(u),\quad g_j\mapsto j!\:\mathrm{ch}_j(\mathsf V),\quad c_j\mapsto (j+1)!\:\mathrm{ch}_{j+1}(\mathsf D_{\In}-\mathsf D_{\Out}),
\end{align}
generates a surjective $\bC(\mathsf t_0)$-algebra map 
\begin{align*}
    \varrho_\mu\colon Y_{\mu}(\widehat{\mathfrak{gl}}_1)\twoheadrightarrow \mathsf Y_{\mu}(Q,\sW).
\end{align*}
Moreover, the family of maps $\varrho_\mu$ satisfy the following properties:
\begin{enumerate}
    \item the map $\varrho_0$ is a bialgebra isomorphism,
    \item for $\mu'<\mu\leqslant 0$, the parametrized shift homomorphisms $\sS_{\mu,\mu';z}$ \S \ref{sec shift map} and $\bar \sS_{\mu,\mu';z}$ (defined in Lemma \ref{lem shift map affine Y(gl_1)}) are compatible with $\varrho_\mu$ and $\varrho_{\mu'}$, that is, 
    \begin{align*}
        \sS_{\mu,\mu';z}\circ\varrho_\mu=\varrho_{\mu'}\circ\bar \sS_{\mu,\mu';z}\:.
    \end{align*}
\end{enumerate}
\end{Theorem}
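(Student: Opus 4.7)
The plan is to construct $\varrho_\mu$ as the composition $Y_\mu(\widehat{\mathfrak{gl}}_1)\to \mathcal D\widetilde{\mathcal{SH}}_\mu(Q,\sW)\otimes_{\bC[\mathsf t_0]}\bC(\mathsf t_0)\twoheadrightarrow \mathsf Y_\mu(Q,\sW)$, where the second arrow is Theorem \ref{thm drinfeld yangian map to rtt yangian}. The first map sends $e(u)\mapsto e_i(u)$, $f(u)\mapsto f_i(u)$, and $g_j, c_j$ to the appropriate polynomials in the Cartan currents $h_i(u)$, and we must verify that all defining relations (Y0)--(Y6) of $Y_\mu(\widehat{\mathfrak{gl}}_1)$ hold in $\mathcal D\widetilde{\mathcal{SH}}_\mu(Q,\sW)\otimes\bC(\mathsf t_0)$ for the unique node of $Q$. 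The positive half relations (Y1) and the first half of (Y6) are precisely Davison's theorem recalled in Remark \ref{rmk positive part}; applying the anti-automorphism $\vartheta$ of \eqref{anti-inv} gives the analogous statements for $f$, proving (Y2) and the second half of (Y6). Relation (Y3) follows from \eqref{ef rel_sym} after noting that for the tripled Jordan quiver $\gamma=\hbar_1\hbar_2\hbar_3=\sigma_3$ and that the bond factor $\zeta(z)$ of \eqref{bond fac} specializes to $\prod_{s=1}^3 (z-\hbar_s-\sigma)/(z+\hbar_s-\sigma)$, whose logarithm is precisely the generating series for the $\psi_k$ in Definition \ref{def of shifted affine Y(gl_1)}; this simultaneously yields (Y4) and (Y5) by writing $g(z)$ as a logarithm of the Cartan current $h(z)$ and reading off the polynomial shifts on $e, f$. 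Surjectivity of $\varrho_\mu$ is immediate from Proposition \ref{prop gen by deg 0 and Cartan} combined with Theorem \ref{thm e f h as R matrix elements} and Lemma \ref{lem Y^0 generators}: the raising/lowering operators and the tautological Chern characters $\mathrm{ch}_k(\mathsf V)$, $\mathrm{ch}_{k+1}(\mathsf D_{\In}-\mathsf D_{\Out})$ all lie in the image.

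For the bialgebra statement at $\mu=0$, we check the Schiffmann--Vasserot formulas \eqref{eqn:coproduct affine Y} on generators. Primitivity of $\varrho_0(e_0), \varrho_0(f_0), \varrho_0(g_0)$ follows from \eqref{coproduct e f h lowest}, while primitivity of $\varrho_0(c_j)$ is immediate because the Chern characters of the framing bundles are pulled back from a point. The only nontrivial relation is the one for $\mathbf{\Delta}(g_1)$, which we verify by computing $\Delta(h_{0,1})$ using \eqref{coproduct e f h 1st nontrivial} and converting to $g_1$ via the exponential in Definition \ref{def of shifted affine Y(gl_1)}: one must identify, inside $\mathfrak{g}_{Q,\sW}$, the canonical root tensor $\sum_{\alpha>0}\alpha(\delta)\sum_s e^{(s)}_{-\alpha}\otimes e^{(s)}_\alpha$ with $\sigma_3 \sum_{n>0} nJ_n\otimes J_{-n}$ after rescaling; this is a direct calculation using that $J_n, J_{-n}$ are the ladder currents obtained by iterated commutators of $e_0,e_1,f_0,f_1$, whose images span the same graded pieces of $\mathfrak{g}_{Q,\sW}$ by the structure of the root lattice of the tripled Jordan quiver.

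Injectivity of $\varrho_0$ will be the main obstacle. The plan is to pass to associated gradeds with respect to the $u$-degree filtration: Theorem \ref{thm grY} identifies $\mathrm{gr}\,\mathsf Y_0(Q,\sW)\cong \mathcal U(\mathfrak{g}_{Q,\sW}[u])$, and $Y_0(\widehat{\mathfrak{gl}}_1)$ carries the analogous PBW-type filtration. We then reduce to proving that the induced map on Lie algebras is an isomorphism; combined with Theorem \ref{thm compare with MO yangian} and the dimensional reduction isomorphism $\cH^{\sW_i}_{\underline{\delta_i}}\cong H^\sT(\cN(\delta_i))$ of Example \ref{ex doubled vs tripled} applied to the $A_0$ Nakajima variety (i.e.\ $\mathrm{Hilb}(\C^2)$), this reduces further to the known Maulik--Okounkov identification with the $\mathfrak{gl}_1$ affine Yangian acting on Fock space. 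The injectivity of the Lie algebra map then amounts to the statement, due to Schiffmann--Vasserot and Tsymbaliuk, that the positive half of $Y_0(\widehat{\mathfrak{gl}}_1)$ acts faithfully on all tensor products of Fock modules; applying Proposition \ref{prop sufficient state spaces} with admissible auxiliary spaces given by the Hilbert schemes closes the argument.

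Finally, the shift compatibility $\sS_{\mu,\mu';z}\circ\varrho_\mu=\varrho_{\mu'}\circ\bar\sS_{\mu,\mu';z}$ is verified on the generators $e_j, f_j, g_j, c_j$ by comparing the explicit formulas of Lemma \ref{lem shift map affine Y(gl_1)} with those of Remark \ref{rmk form of shift}: both produce polynomial shifts of the same shape (multiplication by $(z-c_1(\mathcal L))^{\mu-\mu'}$ on $e$, a sign on $f$, invariance on $g_j=\mathrm{ch}_j(\mathsf V)$, and an additive $z^{j+1}(\mu-\mu')$ shift on $c_j=\mathrm{ch}_{j+1}(\mathsf D_{\In}-\mathsf D_{\Out})$), so the identity reduces to an elementary binomial expansion that holds termwise. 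Because $\varrho_{\mu'}$ is already shown surjective and the relations are preserved, no further consistency check is needed.
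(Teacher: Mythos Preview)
Your argument for surjectivity has a genuine gap. Proposition \ref{prop gen by deg 0 and Cartan} says that $\mathsf Y_\mu(Q,\sW)$ is generated by $\mathsf Y^0_\mu$ together with all $\mathsf E^{\pm}(m)$ for $m$ constant in $u$, where $m$ ranges over $\End(\cH_{\mathfrak{c}})$ for the entire (infinite-dimensional) auxiliary space $\cH_{\mathfrak{c}}$. Theorem \ref{thm e f h as R matrix elements} only identifies the operators coming from the first $2\times 2$ block, i.e.\ the matrix elements $|\mathbf 0\rangle\langle \delta|$ and $|\delta\rangle\langle \mathbf 0|$; it says nothing about, say, $\mathsf E^+(|2\delta\rangle\langle \mathbf 0|)$. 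So the assertion ``the raising/lowering operators and the tautological Chern characters all lie in the image'' is true but does not by itself imply surjectivity.

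The paper handles surjectivity quite differently. For $\mu=0$ it invokes the identification $\mathsf Y_0(Q,\sW)\cong \mathsf Y^{\mathrm{MO}}_{Q'}\otimes\bC(\mathsf t_0)$ of Theorem \ref{thm compare with MO yangian}, recognizes $e_0,f_0$ as Baranovsky operators, and cites the nontrivial result \cite[Thm.~18.1.1]{MO} that these together with tautological classes generate the Maulik--Okounkov Yangian for the Jordan quiver. For $\mu<0$ the paper does \emph{not} argue directly: it first proves Property (2) and the $\mu=0$ isomorphism, then uses Lemma \ref{lem shift generation lemma} to write $\mathsf Y_\mu$ as generated by $\bigcup_a \sS_{0,\mu;z}(\mathsf Y_0)|_{z=a}$, which lies in $\varrho_\mu(Y_\mu(\widehat{\mathfrak{gl}}_1))$ by the shift compatibility. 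Your ordering (surjectivity first, shift compatibility last) therefore cannot work for $\mu<0$.

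Two smaller issues: your appeal to Remark \ref{rmk positive part} for (Y1) and (Y6) lands in the \emph{full} CoHA $\widetilde{\mathcal H}_{Q,\sW}$, whereas the Drinfeld-type Yangian is built from the nilpotent spherical subalgebra $\widetilde{\mathcal{SH}}^{\mathrm{nil}}_{Q,\sW}$; the paper inserts an explicit localization argument (the $\sT_0$-fixed locus of $\Crit(\sw)/\!\!/\GL(n)$ is $\{0\}$) to bridge this. And for the bialgebra check on $g_1$, your identification of the root tensor with $\sigma_3\sum n J_n\otimes J_{-n}$ is precisely the content of Lemma \ref{lem Baranovsky op} together with \cite[\S 12.4.9]{MO}; it is not ``a direct calculation'' from the commutator definition of $J_{\pm n}$ alone. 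Your injectivity strategy via associated gradeds is a reasonable alternative, but the paper's route---citing \cite[Thm.~3.2]{SV1} for faithfulness of $\mathbf{SH}^{\mathbf c}$ on instanton moduli---is shorter and avoids needing a separate PBW statement for $Y_0(\widehat{\mathfrak{gl}}_1)$.
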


\begin{proof}
\textbf{Step 1.} We show that \eqref{map rho_m} defines a $\bC(\mathsf t_0)$-algebra map. Let $Y(\widehat{\mathfrak{gl}}_1)^+$ be the $\bC(\mathsf t_0)$-algebra generated by $\{e_j\}_{j\in \bZ_{\geqslant0}}$ subject to (Y1) and the first equation of (Y6). We claim that 
\begin{align*}
    e_j\mapsto c_1(\mathcal L)^{j}\cap [\fM(1,\mathbf 0)^{\mathrm{nil}}]\in \widetilde{\mathcal {SH}}_{Q,\sW}^{\mathrm{nil}}
\end{align*}
gives a $\bC(\mathsf t_0)$-algebra isomorphism 
\begin{equation}\label{equ on ygl1+}Y(\widehat{\mathfrak{gl}}_1)^+\cong \widetilde{\mathcal {SH}}_{Q,\sW}^{\mathrm{nil}}\otimes_{\bC[\mathsf t_0]}\bC(\mathsf t_0). \end{equation} 
By Remark \ref{rmk positive part}, the assignment 
$$e_j\mapsto c_1(\mathcal L)^{j}\cap [\fM(1,\mathbf 0)]$$ induces an isomorphism 
$$Y(\widehat{\mathfrak{gl}}_1)^+\cong \widetilde{\mathcal {H}}_{Q,\sW}\otimes_{\bC[\mathsf t_0]}\bC(\mathsf t_0). $$ 
Since $e_j\mapsto -\sigma_3e_j$ is an automorphism of $Y(\widehat{\mathfrak{gl}}_1)^+$, we only need to show that the natural map
\begin{align*}
    H^{\sT_0}(\fM(n,\mathbf 0),\sw)_{\fM(n,\mathbf 0)^{\mathrm{nil}}}\to H^{\sT_0}(\fM(n,\mathbf 0),\sw)
\end{align*}
is an isomorphism after tensoring with $\bC(\mathsf t_0)$. Consider the semisimplification map 
$$\mathsf{JH}\colon \fM(n,\mathbf 0)\to \cM_0(n,\mathbf 0),$$ 
where the target is the affine quotient space, then $\fM(n,\mathbf 0)^{\mathrm{nil}}=\mathsf{JH}^{-1}(0)$, where $0\in \cM_0(n,\mathbf 0)$ corresponds to the trivial quiver representation. Then it boils down to show that
\begin{align}\label{loc after JH}
    H^{\sT_0}(\cM_0(n,\mathbf 0),i_*i^!\mathsf{JH}_*\varphi_\sw\omega_{\fM(n,\mathbf 0)})\to H^{\sT_0}(\cM_0(n,\mathbf 0),\mathsf{JH}_*\varphi_\sw\omega_{\fM(n,\mathbf 0)})
\end{align}
is an isomorphism after tensoring with $\bC(\mathsf t_0)$. Note that the vanishing cycle sheaf $\varphi_\sw\omega_{\fM(n,\mathbf 0)}$ is supported on the critical locus $\Crit(\sw)$, and $\mathsf{JH}$ maps $\Crit(\sw)$ to its affine quotient $\Crit(\sw)/\!\!/ \GL(n)$, which is isomorphic to $\bC^{3n}/\mathfrak{S}_n$. Since $\sT_0$-fixed locus of $\bC^{3n}/\mathfrak{S}_n$ consists of only one point $0$, applying $\sT_0$-localization to $\mathsf{JH}_*\varphi_\sw\omega_{\fM(n,\mathbf 0)}$ shows that \eqref{loc after JH} is an isomorphism after tensoring with $\bC(\mathsf t_0)$. This proves the claimed isomorphism \eqref{equ on ygl1+}.

Combining the above claim with Theorem \ref{cor shifted yangian action}, we see that \eqref{map rho_m} respects the relations (Y1) and the first equation of (Y6). Taking the opposite relations, we see that \eqref{map rho_m} also respects (Y2) and the second equation of (Y6). It is easy to see that (Y0), (Y4), and (Y5) are respected by \eqref{map rho_m}. (Y3) follows from Theorem \ref{cor shifted yangian action}. This proves that \eqref{map rho_m} defines a $\bC(\mathsf t_0)$-algebra map $\varrho_\mu$.

\textbf{Step 2.} We prove Property (2). Comparing the formulas for $\bar \sS_{\mu,\mu';z}$ in Lemma \ref{lem shift map affine Y(gl_1)} and for $\sS_{\mu,\mu';z}$ in Remark \ref{rmk form of shift}, it is straightforward to see that $\sS_{\mu,\mu';z}\circ\varrho_\mu=\varrho_{\mu'}\circ\bar \sS_{\mu,\mu';z}$.

\textbf{Step 3.} We prove Property (1). We will use the isomorphism (Theorem \ref{thm compare with MO yangian}):
$$\mathsf Y_0(Q,\sW)\cong \mathsf Y^{\mathrm{MO}}_{Q'}\otimes_{\bC[\mathsf t_0]}\bC(\mathsf t_0), $$ 
where $Q'$ is the Jordan quiver. This isomorphism is induced by dimensional reduction, which we may assume is along the $Z$-loop in quiver $Q$ without loss of generality. 
And operators $e(u), f(u), g_j, c_j$ are mapped to
\begin{equation}\label{rep inst moduli}
\begin{gathered}
e(u)\mapsto\frac{-\sigma_3}{u-c_1(\mathcal L)}\,[\mathfrak{B}(n+1,n,r)],\quad f(u)\mapsto\frac{(-1)^{r}\sigma_3}{u-c_1(\mathcal L)}\,[\mathfrak{B}(n,n-1,r)]^{\mathrm{t}},\\
g_j\mapsto j!\:\mathrm{ch}_j(\mathsf V),\quad
c_j\mapsto (j+1)!\:\mathrm{ch}_{j+1}((1-q_3)\mathsf D_{\In}).
\end{gathered}
\end{equation}
Here $\mathfrak{B}(n+1,n,r)\subset \cN(n+1,r)\times \cN(n,r)$ is the following correspondence: if we identify $\cN(n,r)$ with moduli space of framed rank $r$ torsion-free sheaves with second Chern number $-n$ on $\bP^2$, then $\mathfrak{B}(n+1,n,r)$ consists of pairs of sheaves $(\mathcal F,\mathcal F')$ such that $\mathcal F\subset\mathcal F'$ and $\mathrm{length}\:\mathcal F'/\mathcal F=1$. $\mathcal L$ is the line bundle on $\mathfrak{B}(n+1,n,r)$ whose fiber at $(\mathcal F,\mathcal F')$ is $\mathcal F'/\mathcal F$, $\mathsf V$ is the tautological vector bundle of rank $n$ on $\cN(n,r)$.

In view of the isomorphism $Y_0(\widehat{\mathfrak{gl}}_1)\cong \mathbf{SH}^{\mathbf c}$ \eqref{iso Y(gl_1) SH}, the induced representation \eqref{rep inst moduli} of $\mathbf{SH}^{\mathbf c}$ on $H^{\sT_0\times\sA^{\mathrm{fr}}}(\cN(r))$ agrees with the one given in \cite[Thm.~3.2]{SV1}. Here $\sA^{\mathrm{fr}}$ acts on the framing $\bC^{r}$ with weights $a_1,\ldots,a_{r}$. By \cite[Thm.~3.2]{SV1}, the induced map $\mathbf{SH}^{\mathbf c}\to \prod_{r}\End(H^{\sT_0\times\sA^{\mathrm{fr}}}(\cN(r)))$ is injective. It follows that $\varrho_0\colon Y_0(\widehat{\mathfrak{gl}}_1)\to \mathsf Y_0(Q,\sW)$ is injective.

For the surjectivity of $\varrho_0$, we notice that $e_0$ (resp. $f_0$) is the Baranovsky operator $\beta_{-1}(-\sigma_3)$ (resp.~$\beta_{1}(-\sigma_3)$) \cite[\S 12.2]{MO}, and that tautological classes $\{\mathrm{ch}_j(\mathsf V),\mathrm{ch}_j(\mathsf D_{\In})\}_{j\in \bZ_{\geqslant0}}$ are in the image of $\varrho_0$. Then $\varrho_0$ is surjective by \cite[Thm.~18.1.1]{MO}. This implies that $\varrho_0$ is a bijection.

Finally, by Lemma \ref{lem Baranovsky op} below and \cite[\S 12.4.9]{MO}, the classical $R$-matrix $\mathbf r$ of $\mathsf Y_0(Q,\sW)$ is\,\footnote{The classical $R$-matrix $\mathbf r$ of $\mathsf Y_0(Q,\sW)$ is $-\hbar_3$ times the classical $R$-matrix of $\mathsf Y_{Q'}^{\mathrm{MO}}$ due to different normalizations.}
\begin{equation}\label{equ on r matrix gl1hat}
    \mathbf r=\mathsf v\otimes\mathsf d+\mathsf d\otimes\mathsf v-\sigma_3\sum_{n\neq 0}\varrho_0(J_n)\otimes \varrho_0(J_{-n}).
\end{equation}
Then the coproduct formula \eqref{coproduct for 1st Chern class} reads:
\begin{align*}
    \Delta c_1(\mathsf V)=\square \,c_1(\mathsf V)+\sigma_3\sum_{n>0}n\varrho_0(J_n)\otimes \varrho_0(J_{-n}),
\end{align*}
that is, 
$$\Delta(\varrho_0(g_1))=(\varrho_0\otimes \varrho_0)\mathbf\Delta(g_1). $$ 
It is easy to see that for $x=e_0,f_0,g_0,c_j$ ($j\in \bZ_{\geqslant0}$), we have
$$\Delta(\varrho_0(x))=(\varrho_0\otimes \varrho_0)\mathbf\Delta(x). $$ Thus $\varrho_0$ is a bialgebra map.

\textbf{Step 4.} We prove the surjectivity of $\varrho_\mu$. By Lemma \ref{lem shift generation lemma}, $\mathsf Y_\mu(Q,\sW)$ is generated by $\bigcup_{a\in \bC}\sS_{0,\mu;z}(\mathsf Y_0(Q,\sW))\big|_{z=a}$. Since $\varrho_0$ is an isomorphism, we have
\begin{align*}
\sS_{0,\mu;z}(\mathsf Y_0(Q,\sW))=\sS_{0,\mu;z}(\varrho_0(Y_0(\widehat{\mathfrak{gl}}_1)))=\varrho_\mu(\bar\sS_{0,\mu;z}(Y_0(\widehat{\mathfrak{gl}}_1)))\subseteq \varrho_\mu(Y_\mu(\widehat{\mathfrak{gl}}_1))[z],
\end{align*}
and it follows that $\bigcup_{a\in \bC}\sS_{0,\mu;z}(\mathsf Y_0(Q,\sW))\big|_{z=a}\subseteq \varrho_\mu(Y_\mu(\widehat{\mathfrak{gl}}_1))$; therefore $\varrho_\mu$ is surjective.
\end{proof}

\begin{Lemma}\label{lem Baranovsky op}
Let $J_{\pm n}$ be the operators defined in Remark \ref{rmk coproduct affine Y}, then $\varrho_0(J_{\pm n})=\beta_{\pm n}(1)$, where $\beta_{\pm n}(1)$ are the Baranovsky operators defined in \cite[\S 12.2]{MO}.
\end{Lemma}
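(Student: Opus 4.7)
The plan is to prove $\varrho_0(J_{\pm n}) = \beta_{\pm n}(1)$ by induction on $n \geq 1$, working throughout via the identification $\mathsf Y_0(Q,\sW) \cong \mathsf Y^{\mathrm{MO}}_{Q'}\otimes_{\bC[\mathsf t_0]}\bC(\mathsf t_0)$ acting on $\bigoplus_r H^{\sT_0\times \sA^{\mathrm{fr}}}(\cN(r))$ via the explicit correspondences in \eqref{rep inst moduli}. For the base case $n = 1$, unfolding the definitions gives $J_1 = \sigma_3^{-1}f_0$ and $J_{-1} = -\sigma_3^{-1}e_0$. Extracting the $u\to\infty$ leading terms of $\varrho_0(e(u))$ and $\varrho_0(f(u))$ from \eqref{rep inst moduli} identifies $\varrho_0(e_0)$ and $\varrho_0(f_0)$ with the standard Hecke correspondences $\pm\sigma_3[\mathfrak B(n+1,n,r)]$ and its transpose, which are exactly the Baranovsky operators $\beta_{\mp 1}$ applied to the class $\mp\sigma_3\cdot 1$ in the sense of \cite[\S 12.2]{MO}. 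Linearity in the insertion class then gives $\varrho_0(J_{\pm 1}) = \beta_{\pm 1}(1)$.

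For the inductive step, I would rewrite the iterated-adjoint definition as the three-term recursion
\begin{align*}
J_{n+1} = \frac{-1}{n\sigma_3}[f_1, J_n],\qquad J_{-(n+1)} = \frac{1}{n\sigma_3}[e_1, J_{-n}].
\end{align*}
Using the Cartan relations $[g_1,e_0] = e_1$ and $[g_1,f_0] = -f_1$ from (Y4)--(Y5) and the assignment $\varrho_0(g_1) = c_1(\mathsf V)$, the operators $\varrho_0(e_1)$ and $\varrho_0(f_1)$ become iterated commutators of cup product with $c_1(\mathsf V)$ against the lowest Baranovsky modes. The standard geometric computation on framed sheaf moduli (Lehn's formula, cf.\ \cite[\S 12]{MO}) yields
\begin{align*}
[\varrho_0(-f_1),\beta_n(1)] = n\sigma_3\,\beta_{n+1}(1),\qquad [\varrho_0(e_1),\beta_{-n}(1)] = n\sigma_3\,\beta_{-(n+1)}(1),
\end{align*}
so combined with the inductive hypothesis $\varrho_0(J_{\pm n}) = \beta_{\pm n}(1)$ these give $\varrho_0(J_{\pm(n+1)}) = \beta_{\pm(n+1)}(1)$, completing the induction.

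The main obstacle is the verification of the precise constants in the commutator $[\varrho_0(f_1),\beta_n(1)] = -n\sigma_3\,\beta_{n+1}(1)$; this is a Lehn-type calculation involving the intersection of the Hecke correspondence $\mathfrak B(n+1,n,r)$ (with a tautological $c_1(\mathcal L)$ insertion) against classes defined by Nakajima's Heisenberg operators. An alternative and largely calculation-free route is to show directly from (Y1)--(Y6) that the elements $\{\varrho_0(J_n)\}_{n\in \bZ\setminus\{0\}}$ generate a Heisenberg subalgebra of $\mathsf Y_0(Q,\sW)$ with the canonical commutation relations; then, by the uniqueness (up to scaling) of such a Heisenberg action on $\bigoplus_r H^{\sT_0\times \sA^{\mathrm{fr}}}(\cN(r))$ established in \cite[\S 7]{SV1}, the base case $\varrho_0(J_{\pm 1}) = \beta_{\pm 1}(1)$ forces $\varrho_0(J_{\pm n}) = \beta_{\pm n}(1)$ for all $n$.
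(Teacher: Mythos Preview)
Your inductive approach is correct and would work, but the paper takes a somewhat different route that avoids the computation you flag as the ``main obstacle.'' The paper first establishes proportionality $\varrho_0(J_{\pm n})\propto\beta_{\pm n}(1)$ abstractly: using \cite[Prop.~5.2]{Dav3} iteratively shows that $\varrho_0(J_{-n})$ lies in the BPS Lie algebra, which by \cite{BD} equals the positive part of $\mathfrak{g}_{Q'}^{\mathrm{MO}}$; since the root space of $\mathfrak{g}_{Q'}^{\mathrm{MO}}$ at degree $\pm n$ is one-dimensional spanned by $\beta_{\pm n}(1)$, proportionality follows. To pin down the constant for $J_{-n}$, the paper uses primitivity to reduce to the action on $H^{\sT_0}(\Hilb\bC^2)$, where $\varrho_0(e_1)$ becomes (a multiple of) Lehn's derivative operator $\alpha'_{-1}$ and \cite[Thm.~3.10]{Leh} applies directly. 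For the lowering side $J_n$, rather than running the adjoint Lehn computation you propose, the paper compares the Heisenberg commutators $[J_n,J_{-n}]=\frac{n}{\sigma_3}c_0$ and $[\beta_n(1),\beta_{-n}(1)]=\frac{n}{\sigma_3}\varrho_0(c_0)$: given proportionality and $\varrho_0(J_{-n})=\beta_{-n}(1)$, this immediately forces $\varrho_0(J_n)=\beta_n(1)$.

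So the paper's argument trades your direct inductive computation (which requires the Lehn-type identity $[\varrho_0(f_1),\beta_n(1)]=-n\sigma_3\beta_{n+1}(1)$ on framed sheaf moduli for all ranks) for a structural input (BPS Lie algebra membership) plus a single commutator comparison. Your alternative Heisenberg-uniqueness route is in fact close in spirit to what the paper does for the lowering operators, and your induction for $J_{-n}$ is essentially the paper's use of Lehn's theorem unwound. The main economy in the paper's version is that the reduction to $\Hilb\bC^2$ via primitivity lets one invoke Lehn's original result verbatim, whereas your formulation on general rank requires its extension to Baranovsky operators.
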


\begin{proof}
Apply \cite[Prop.~5.2]{Dav3} iteratively to $e_j\propto \alpha_1^{(j)}$\,\footnote{Here $\alpha_1^{(j)}$ is given in \cite[\S 5]{Dav3} and $\propto$ means equal up to multiplying a scalar.}, and we see that the image of $J_{-n}=-\frac{\sigma_3^{-n}}{(n-1)!} \text{ad}^{n-1} _{e_1} e_0$ under $\varrho_0$ is in the BPS Lie algebra (isomorphic to the MO Lie algebra $\mathfrak{g}_{Q'}^{\mathrm{MO}}$ by \cite{BD}); therefore $\varrho_0(J_{\pm n})\propto \beta_{\pm n}(1)$ because $\mathfrak{g}_{Q'}^{\mathrm{MO}}$ has a basis $\{\mathsf v,\mathsf d,\beta_n(1)\:|\: n\neq 0\}$ \cite[\S 12.4.10]{MO}. 

We determine the coefficients as follows. Both $\varrho_0(J_{-n})$ and $\beta_{-n}(1)$ are primitive, that is, 
$$\Delta\varrho_0(J_{\pm n})=\square \varrho_0(J_{\pm n}), \quad \Delta\beta_{-n}(1)=\square \beta_{-n}(1). $$ 
Then, to show that $\varrho_0(J_{-n})=\beta_{-n}(1)$, it is enough to show that the actions of $\varrho_0(J_{-n})$ and $\beta_{-n}(1)$ agree on $H^{\sT_0}(\Hilb\bC^2)$. $\beta_{-n}(1)$ acts on $H^{\sT_0}(\Hilb\bC^2)$ as Nakajima's raising operator $\alpha_{-n}$, and $\varrho_0(J_{-n})$ acts on $H^{\sT_0}(\Hilb\bC^2)$ as $\frac{1}{(n-1)!} \text{ad}^{n-1} _{-\alpha'_{-1}} \alpha_{-1}$ where $\alpha'_{-1}$ is the derivative of $\alpha_{-1}$ defined by Lehn in \cite[Def.~3.8]{Leh}. 

By \cite[Thm.~3.10]{Leh}, we have $$\alpha_{-n}=\frac{1}{(n-1)!} \text{ad}^{n-1} _{-\alpha'_{-1}} \alpha_{-1}.$$ This shows that $\varrho_0(J_{-n})=\beta_{-n}(1)$. Comparing the commutators $[J_n,J_{-n}]=\frac{n}{\sigma_3}c_0$ (by direct computation) and 
$$[\beta_n(1),\beta_{-n}(1)]=\frac{-rn}{\hbar_1\hbar_2}=\frac{n}{\sigma_3}\varrho_0(c_0), $$ 
(by \cite[(12.6)]{MO}), we conclude that $\varrho_0(J_{n})=\beta_{n}(1)$.
\end{proof}


We conjecture that the isomorphism $\varrho_0$ extends to any $\mu\leqslant 0$.
\begin{Conjecture}
For arbitrary $\mu\in \bZ_{\leqslant 0}$, the map $\varrho_\mu\colon Y_{\mu}(\widehat{\mathfrak{gl}}_1)\to \mathsf Y_{\mu}(Q,\sW)$ is an isomorphism. 
\end{Conjecture}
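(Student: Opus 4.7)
The plan is to prove the conjecture by induction on $|\mu|$, adapting the strategy used for the Jordan quiver in Proposition \ref{prop embed Y into end}. Since $\varrho_\mu$ is already known to be surjective (Theorem \ref{thm ex tripled Jordan_main}), only injectivity for $\mu < 0$ remains; the base case $\mu = 0$ is Theorem \ref{thm ex tripled Jordan_main}(1). The induction goes from $\mu+1$ to $\mu$.

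For the inductive step, the idea is to exploit the geometric coproduct $\Delta_{-1,\mu+1}\colon \mathsf Y_\mu(Q,\sW) \to \mathsf Y_{-1}(Q,\sW)\widehat{\otimes}\mathsf Y_{\mu+1}(Q,\sW)$ of \S\ref{sect on coprod}. First I would construct a matching algebraic coproduct
\begin{equation*}
\mathbf{\Delta}_{-1,\mu+1}\colon Y_\mu(\widehat{\mathfrak{gl}}_1) \to Y_{-1}(\widehat{\mathfrak{gl}}_1)\widehat{\otimes}Y_{\mu+1}(\widehat{\mathfrak{gl}}_1)
\end{equation*}
extending the Schiffmann--Vasserot coproduct of Remark \ref{rmk coproduct affine Y}, so that $(\varrho_{-1}\otimes \varrho_{\mu+1})\circ \mathbf{\Delta}_{-1,\mu+1} = \Delta_{-1,\mu+1}\circ \varrho_\mu$. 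By the counit induced by projection to the vacuum, $\mathbf{\Delta}_{-1,\mu+1}$ is automatically injective once it exists and satisfies the relations (Y0)--(Y6). Combined with the inductive hypothesis that $\varrho_{\mu+1}$ is an isomorphism and a separately handled base case $\varrho_{-1}$, the injectivity of $\varrho_\mu$ then follows from injectivity of $\mathbf{\Delta}_{-1,\mu+1}$ together with faithfulness of the tensor-product action on state spaces such as $\tau^*_a H^{\sT_0}(\Hilb(\mathbb{C}^3))\otimes \mathcal H^{\sW'}_{\underline{\bd}'}$ with $\mathbf{d}'_\textup{out}-\mathbf{d}'_\textup{in} = \mu+1$. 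Handling the case $\mu = -1$ directly amounts to showing that $Y_{-1}(\widehat{\mathfrak{gl}}_1)$ acts faithfully on $H^{\sT_0}(\Hilb(\mathbb{C}^3))$, which parallels the Schiffmann--Vasserot faithfulness for $Y_0(\widehat{\mathfrak{gl}}_1)$ acting on $H^{\sT_0}(\Hilb(\mathbb{C}^2))$.

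The main obstacle is the construction and verification of the algebraic coproduct $\mathbf{\Delta}_{-1,\mu+1}$ directly from the defining relations (Y0)--(Y6) of Definition \ref{def of shifted affine Y(gl_1)}. For finite-dimensional simple Lie algebras the shifted Drinfeld coproduct is known (cf.~Theorem \ref{thm on copr compa}), but for the affine Yangian of $\mathfrak{gl}_1$ even the zero-shift case requires delicate Macdonald/Feigin--Odesskii analysis; verifying the coherence condition on the Cartan current $g_1$, which involves the Baranovsky-type operators $J_{\pm n}$ of Lemma \ref{lem Baranovsky op}, is the principal difficulty. An alternative route, mirroring more closely the argument of Proposition \ref{prop embed Y into end}, is to pass to a classical limit (say $\sigma_3 = 0$) where $Y_\mu(\widehat{\mathfrak{gl}}_1)/(\sigma_3)$ should be identified with the function ring of an infinite-dimensional Poisson variety plausibly realized as a moduli of rank-one $\mathbb{C}^3$ instantons in shift sector $\mu$, or equivalently a Coulomb-branch-type phase space for the Gaiotto--Rap\v{c}\'ak corner $\mathcal Y_{L,M,N}$ family (Proposition \ref{prop on more ex}); one would then establish surjectivity of the classical multiplication map $\cW^{(1)}_{-1}\times \cW_{\mu+1}\to \cW_\mu$ and lift by flatness in $\sigma_3$. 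Either route hinges on a structural result about the shifted affine Yangian that goes beyond what is currently available in the literature.
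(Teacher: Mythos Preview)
This statement is labeled a \emph{Conjecture} in the paper; the authors do not prove it, so there is no proof to compare your proposal against. What you have written is a plausible strategy outline together with an honest acknowledgment that it hinges on results not yet available. That said, a few of the specific steps you sketch have concrete issues worth flagging.

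First, your claim that $\mathbf{\Delta}_{-1,\mu+1}$ is ``automatically injective'' via a counit does not go through as stated. In the paper, the counit $\varepsilon$ is only defined on the zero-shift Yangian (Remark~\ref{rmk counit}), and the identities $(\varepsilon\otimes\id)\circ\Delta_{0,\mu}=\id$, $(\id\otimes\varepsilon)\circ\Delta_{\mu,0}=\id$ require one tensor factor to carry zero shift. There is no counit on $Y_{-1}(\widehat{\mathfrak{gl}}_1)$ or on $Y_{\mu+1}(\widehat{\mathfrak{gl}}_1)$ for $\mu+1<0$, so the splitting $\Delta_{-1,\mu+1}$ cannot be shown injective this way. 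This also creates a circularity: even granting the inductive hypothesis for $\varrho_{\mu+1}$, you still need $\varrho_{-1}$ injective to conclude, and your ``separately handled base case'' for $\mu=-1$ is exactly the missing PBW-type statement for $Y_{-1}(\widehat{\mathfrak{gl}}_1)$ that makes the conjecture hard.

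Second, the classical-limit route modeled on Proposition~\ref{prop embed Y into end} faces a basic obstruction: $Y_\mu(\widehat{\mathfrak{gl}}_1)$ is defined as a $\bC(\mathsf t_0)$-algebra (Definition~\ref{def of shifted affine Y(gl_1)}), so $\sigma_3=\hbar_1\hbar_2\hbar_3$ is a unit and cannot be set to zero. To imitate the $\mathfrak{gl}_2$ argument you would first need an integral form over $\bC[\mathsf t_0]$ together with flatness in $\sigma_3$, which is again a PBW statement. A more promising algebraic substitute, in line with what the paper does for $\mathfrak{gl}_{1|1}$ (proof of Theorem~\ref{thm ex trivial quiver_main}), would be to construct an injective wrong-way shift $Y_\mu(\widehat{\mathfrak{gl}}_1)\to Y_0(\widehat{\mathfrak{gl}}_1)(\!(z^{-1})\!)$ compatible with $\sS^{0,\mu}_z$; but proving injectivity of such a map again reduces to controlling the size of $Y_\mu(\widehat{\mathfrak{gl}}_1)$, i.e.\ a PBW theorem, which is the heart of the open problem.
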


\subsection{Various representations from framed tripled Jordan quiver with potentials}\label{subsec modules of Ygl1hat}

Let us consider the following framings with potentials:

\begin{enumerate}

\item (Kirillov-Reshetikhin module) $\underline{\bd}=\mathbf 1=(1,1)$, the potential is $\sW_N=\sW+BZ^NA$ (with $N\in \bZ_{\geqslant 0}$), and we require that $\sT_0$ scales $B$ by the character $q_3^N$ and fixes $A$.  
\begin{equation*}
\begin{tikzpicture}[x={(1cm,0cm)}, y={(0cm,1cm)}, baseline=0cm]
  \node[draw,circle,fill=white] (Gauge) at (0,0){$\phantom{n}$}; 
  \node[draw,rectangle,fill=white] (Framing) at (2,0) {1};
  \node (Z) at (-1.1,0) {\scriptsize $Z$};
  \node (Y) at (0.28,1.06) {\scriptsize $Y$};
  \node (X) at (0.28,-1.06) {\scriptsize $X$};
  \draw[<-] (Gauge.340) -- (Framing.205) node[midway,below] {\scriptsize $A$};
  \draw[->] (Gauge.20) -- (Framing.155) node[midway,above] {\scriptsize $B$};

  \draw[->,looseness=8] (Gauge.225) to[out=225,in=135] (Gauge.135);
  \draw[->,looseness=8] (Gauge.120) to[out=120,in=30] (Gauge.30);
  \draw[->,looseness=8] (Gauge.330) to[out=330,in=240] (Gauge.240);
\end{tikzpicture}
\qquad \sW_N=Z[X,Y]+BZ^NA, \quad \sw_N=\tr(\sW_N).
\end{equation*}
We define  
\begin{equation*}
   \mathcal H_N(n):=H^{\sT_0}(\cM(n,\mathbf 1),\sw_N), \quad  \mathcal H_N:=H^{\sT_0}(\cM(\mathbf 1),\sw_N)=\bigoplus_{n\in \bZ_{\geqslant 0}} \mathcal H_N(n).
\end{equation*}

\item (Vacuum MacMahon module) $\underline{\bd}=\mathbf 1=(1,1)$, the potential is $\sW$, and take another torus $\bC^*_t$ which scales $B$ with weight $1$ and fixes other quiver data.
    \begin{equation*}
\begin{tikzpicture}[x={(1cm,0cm)}, y={(0cm,1cm)}, baseline=0cm]
  \node[draw,circle,fill=white] (Gauge) at (0,0){$\phantom{n}$}; 
  \node[draw,rectangle,fill=white] (Framing) at (2,0) {1};
  \node (Z) at (-1.1,0) {\scriptsize $Z$};
  \node (Y) at (0.28,1.06) {\scriptsize $Y$};
  \node (X) at (0.28,-1.06) {\scriptsize $X$};
  \draw[<-] (Gauge.340) -- (Framing.205) node[midway,below] {\scriptsize $A$};
  \draw[->] (Gauge.20) -- (Framing.155) node[midway,above] {\scriptsize $B$};

  \draw[->,looseness=8] (Gauge.225) to[out=225,in=135] (Gauge.135);
  \draw[->,looseness=8] (Gauge.120) to[out=120,in=30] (Gauge.30);
  \draw[->,looseness=8] (Gauge.330) to[out=330,in=240] (Gauge.240);
\end{tikzpicture}
\qquad \sW=Z[X,Y], \quad \sw=\tr(\sW).
\end{equation*}
We define 
\begin{align*}
    \mathcal V(n):=H^{\sT_0\times \bC^*_t}(\cM(n,\mathbf 1),\sw), \quad \mathcal 
    V:=H^{\sT_0\times \bC^*_t}(\cM(\mathbf 1),\sw)=\bigoplus_{n\in \bZ_{\geqslant 0}}\mathcal V(n).
\end{align*}

\item (Prefundamental module) $\underline{\bd}=\underline{\mathbf 0}^{\mathbf 1}=(1,0)$, the potential is $\sW$.
    \begin{equation*}
\begin{tikzpicture}[x={(1cm,0cm)}, y={(0cm,1cm)}, baseline=0cm]
  \node[draw,circle,fill=white] (Gauge) at (0,0){$\phantom{n}$}; 
  \node[draw,rectangle,fill=white] (Framing) at (2,0) {1};
  \node (Z) at (-1.1,0) {\scriptsize $Z$};
  \node (Y) at (0.28,1.06) {\scriptsize $Y$};
  \node (X) at (0.28,-1.06) {\scriptsize $X$};
  \draw[<-] (Gauge.0) -- (Framing.180) node[midway,below] {\scriptsize $A$};

  \draw[->,looseness=8] (Gauge.225) to[out=225,in=135] (Gauge.135);
  \draw[->,looseness=8] (Gauge.120) to[out=120,in=30] (Gauge.30);
  \draw[->,looseness=8] (Gauge.330) to[out=330,in=240] (Gauge.240);
\end{tikzpicture}
\qquad \sW=Z[X,Y], \quad \sw=\tr(\sW).
\end{equation*}
We define 
\begin{equation*}
\mathcal F(n):=H^{\sT_0}(\cM(n,\underline{\mathbf 0}^{\mathbf 1}),\sw), \quad 
\mathcal F:=H^{\sT_0}(\cM(\underline{\mathbf 0}^{\mathbf 1}),\sw)=\bigoplus_{n\in \bZ_{\geqslant 0}}\mathcal F(n).
\end{equation*}
\end{enumerate}

\begin{Remark}\label{rmk purity for V and F}
The mixed Hodge structures on derived global sections $\mathrm{R\Gamma}(\cM(n,\mathbf 1),\sw)$ and $\mathrm{R\Gamma}(\cM(n,\underline{\mathbf 0}^{\mathbf 1}),\sw)$ are pure. This can be seen as follows. $\mathrm{R\Gamma}(\cM(n,\mathbf 1),\sw)$ is a direct summand of $\mathrm{R\Gamma}(\fM(n,\mathbf 1),\sw)$ (using the nonabelian stable envelope, see \cite[\S 9.1]{COZZ}). By a vector bundle pullback isomorphism followed by dimensional reduction, $\mathrm{R\Gamma}(\fM(n,\mathbf 1),\sw)$ is isomorphic to $\mathrm{R\Gamma}(\fX(n,\mathbf 0))$ up to degree shift and Tate twist, where $\fX(n,\mathbf 0)$ is the preprojective stack of the Jordan quiver with zero framing. $\mathrm{R\Gamma}(\fX(n,\mathbf 0))$ is pure by \cite[Thm.~A]{Dav2}, and this shows the purity for $\mathrm{R\Gamma}(\cM(n,\mathbf 1),\sw)$. The purity for $\mathrm{R\Gamma}(\cM(n,\underline{\mathbf 0}^{\mathbf 1}),\sw)$ follows from vector bundle pullback isomorphism 
$$\mathrm{R\Gamma}(\cM(n,\mathbf 1),\sw)\cong\mathrm{R\Gamma}(\cM(n,\underline{\mathbf 0}^{\mathbf 1}),\sw). $$
As a consequence of the purity, the equivariant cohomologies $\cV(n)$ and $\cF(n)$ are formal, and in particular they are free over $H_{\sT_0\times\bC^*_t}(\pt)$ and over $H_{\sT_0}(\pt)$ respectively.
\end{Remark}

\begin{Remark}
If we introduce an additional $\bG_m$ action in the example (1) that scales $B$ with weight $1$ and fixes all the other quiver data, then it fits into the assumption for the $\bG_m$-action in \cite[\S 7.2]{COZZ}, and the construction in \textit{loc.\,cit.} gives a specialization map 
\begin{equation}\label{equ on sp HN}
    \mathsf{sp}\colon \cH_{N}=H^{\sT_0}(\cM(\mathbf 1),\sw_N)\longrightarrow H^{\sT_0}(\cM(\mathbf 1),\sw)=\cV|_{t=N\hbar_3}\:,
\end{equation}
which is a $\mathsf Y_{0}(Q,\sW)$-module map because it commutes with stable envelopes and therefore it commutes with $R$-matrices. We will see shortly that $\mathsf{sp}$ is injective after localization.
\end{Remark}
\textbf{Notations.}  In below, we use the following notations. 
\begin{itemize} 
\item The $\sT_0$-localization is denoted as $(\cdots)_\loc=(\cdots)\otimes_{\bC[\mathsf t_0]}\bC(\mathsf t_0)$. 
\item $\cH^{\sW^{\mathrm{fr}}}_{\underline{\bd},\sA^{\mathrm{fr}}}[a]$ denotes the cohomology $H^{\sT_0\times \sA^{\mathrm{fr}}\times \bC^*_a}(\cM(\underline{\bd}),\sw^{\mathrm{fr}})$, which is obtained from the original state space $\cH^{\sW^{\mathrm{fr}}}_{\underline{\bd},\sA^{\mathrm{fr}}}$ by including a torus $\bC^*_a$ that scales the framing vector space with weight $1$.
\end{itemize}

\begin{Theorem}\label{prop on 3 modules of gl1hat}
In the above examples, we have the following.
\begin{enumerate}
    \item[(a)] $\cH_1$ is isomorphic to the Fock module in \cite[\S 13.1]{MO}, and $\cH_{N,\loc}$ is isomorphic to the $\mathsf Y_{0}(Q,\sW)$-submodule of $\cH_{1,\loc}[(N-1)\hbar_3]\otimes\cdots\otimes \cH_{1,\loc}[\hbar_3]\otimes \cH_{1,\loc}$ that is generated by vacuum $|\mathbf 0\rangle\otimes\cdots\otimes |\mathbf 0\rangle$.
    \item[(b)] $\cH_{N,\loc}$ and $\cV_\loc|_{t=\lambda_1\hbar_1+\lambda_2\hbar_2}$ are irreducible $\mathsf Y_{0}(Q,\sW)$-modules for $(\lambda_1,\lambda_2)\in \bC^2\setminus\bZ^2$, and $\cF_\loc$ is an irreducible $\mathsf Y_{-1}(Q,\sW)$-module.
    \item[(c)] The localized specialization map $\mathsf{sp}\colon \cH_{N,\loc}\to \cV_\loc|_{t=N\hbar_3}$ of \eqref{equ on sp HN}  
    is an injective $\mathsf Y_{0}(Q,\sW)$-module map.
\end{enumerate}
\end{Theorem}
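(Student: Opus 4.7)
My proof would proceed in the order (c), then (b), then (a), since (a) builds on both. For (c), the specialization map $\mathsf{sp}\colon \cH_{N,\loc} \to \cV_\loc|_{t=N\hbar_3}$ is induced by an auxiliary $\bG_m$-action scaling $B$ with weight $1$, which fits the setup of \cite[\S 7.2]{COZZ}. By Remark~\ref{rmk purity for V and F}, both $\mathrm{R}\Gamma(\cM(n,\mathbf 1), \sw_N)$ and $\mathrm{R}\Gamma(\cM(n,\mathbf 1), \sw)$ are pure, so $\cH_N(n)$ and $\cV(n)$ are free over their respective base rings. Since both potentials are linear in $B$, dimensional reduction along $B$ identifies each critical cohomology with the Borel--Moore homology of a zero locus (of $Z^N A$ and $Z A$ respectively), and the specialization becomes a specialization of equivariant parameters in pure equivariant cohomology, hence injective after localization. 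That $\mathsf{sp}$ is a module map is Proposition~\ref{prop sp as mod map}.

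For (b), apply Proposition~\ref{prop on irr mod}. The invariant bilinear form requires compactness of the relevant torus-fixed critical locus. In each case this fixed locus is enumerated by finite plane-partition configurations (in a slab of thickness~$\leqslant N$ for $\cH_N$, a single diagonal chimney for $\cF$); the genericity hypothesis $(\lambda_1,\lambda_2)\notin \bZ^2$ for $\cV$ is precisely what rules out resonances between $t=\lambda_1\hbar_1+\lambda_2\hbar_2$ and the tautological bundle weights, keeping the fixed locus discrete. Cogeneration by vacuum under $\mathsf Y_\mu^{\leqslant}(Q,\sW)$ follows from Theorem~\ref{thm vac cogen_general}: condition~(i) is vacuous since the framings are one-dimensional; condition~(ii) on specialization injectivity is exactly~(c) for $\cH_N$, and is immediate from purity for $\cV$ and $\cF$. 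Compact admissible auxiliary data are supplied by $\cH_1$ with a generic nontrivial $\sT_0$-weight on the out-going framing (Lemma~\ref{lem abundant compact}).

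For (a), dimensional reduction along $Z$ applied to $\sw_1=\tr(Z([X,Y]+AB))$ gives $\cH_1\cong H^{\sT_0}(\Hilb^\bullet(\bC^2))$; combined with Theorem~\ref{thm compare with MO yangian}, the $\mathsf Y_0(Q,\sW)$-action matches the Maulik--Okounkov Yangian action on the Fock module of~\cite[\S 13.1]{MO}. For the fusion embedding, I would adapt~\S\ref{sect on fusion gl2} verbatim: introduce the fused potential $\sw_*=\tr(Z[X,Y]+BZA-A\Xi B)$ on $\cM(n,\mathbf N)$ for a fixed rank-$(N-1)$ nilpotent $\Xi\in\End(\bC^N)$ equipped with a compatible $\sT_0$-equivariant structure; prove an analog of Proposition~\ref{prop comp dim red for higher spin} realizing $\cH_N\cong H^{\sT_0}(\cM(\mathbf N),\sw_*)$ as $\mathsf Y_0(Q,\sW)$-modules via a compatible dimensional reduction pair as in Lemma~\ref{lem on dr higher spin}; the specialization $\overline{\mathsf{sp}}\colon H^{\sT_0}(\cM(\mathbf N),\sw_*)\hookrightarrow H^{\sT_0}(\cM(\mathbf N),\sw_1)$ is injective by the analog of Lemma~\ref{lem sp* and sp1 inj}; finally, stable envelopes for a diagonal $\bC^*_t$-subtorus combined with dimensional reduction along $Z$ into the Nakajima framework give $H^{\sT_0}(\cM(\mathbf N),\sw_1)_\loc\cong \tau^*_{(N-1)\hbar_3}\cH_{1,\loc}\otimes\cdots\otimes\cH_{1,\loc}$, invoking the tensor decomposition of Schiffmann--Vasserot~\cite{SV1}. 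The composition is a $\mathsf Y_0(Q,\sW)$-module embedding taking vacuum to vacuum; since $\cH_N$ is irreducible by (b), it is generated by vacuum, hence the image equals the submodule generated by $|\mathbf 0\rangle^{\otimes N}$.

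The main obstacle is the fusion step in (a). Transporting the geometry of~\S\ref{sect on fusion gl2} to the tripled Jordan quiver requires a careful $\sT_0$-equivariant choice of $\Xi$ so that $\sw_*$ is invariant (exploiting the freedom in the triality of Remark~\ref{rmk triality}), and precisely tracking the shift pattern through the stable envelope decomposition. The matching of the Schiffmann--Vasserot Fock tensor decomposition with the critical stable-envelope coproduct (Remark~\ref{rmk coproduct affine Y}) is the delicate compatibility at the heart of the identification; the compatibility of stable envelopes with dimensional reduction (\cite[Thm.~6.10]{COZZ}) should reduce this to the parallel statement for Nakajima quiver varieties, but the precise bookkeeping of normalizers and the sign twist $\Sigma$ will require care.
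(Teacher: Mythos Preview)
Your ordering (c)$\to$(b)$\to$(a) has a structural gap. The specialization map in (c) targets $\cV|_{t=N\hbar_3}$ (cubic potential $\sw$), whereas condition~(ii) of Theorem~\ref{thm vac cogen_general} asks for the specialization to \emph{zero} potential; these are different maps, so ``condition~(ii) is exactly~(c) for $\cH_N$'' is incorrect. Your direct attack on (c) is also unsound: Remark~\ref{rmk purity for V and F} establishes purity only for $\sw$, not for $\sw_N$, and dimensional reduction along $B$ does not land in ordinary Borel--Moore homology since the residual potential $\tr(Z[X,Y])$ survives on the base. Likewise, ``immediate from purity'' for condition~(ii) for $\cV$ and $\cF$ skips a genuine step; the paper reduces to the unframed stack $\fM(\mathbf 0)$ and invokes Davison's injectivity theorem~\cite[Thm.~10.2]{Dav2}.

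The paper's order is the reverse: it proves irreducibility of $\cH_{N,\loc}$ inside part~(a), and then (c) is a one-line consequence (a nonzero module map out of an irreducible module is injective). Your outline of (a) via a fused potential $\sw_*$ and compatible dimensional reduction matches the paper's construction of the map $\eta$. The step you underestimate is the injectivity of the intermediate specialization inside $\eta$. The paper does \emph{not} prove a direct analog of Lemma~\ref{lem sp* and sp1 inj}; instead, after deformed dimensional reduction along $Z$ to $H^{\sT_0}(\cN(\mathbf N),\phi)$ with $\phi=\tr(A\Xi B)$, it restricts to the $\sT_0$-fixed locus (governed by a finite $A$-type Nakajima variety) and proves Lemma~\ref{lem inj sp for type A}, whose argument invokes the irreducibility of Kirillov--Reshetikhin modules from~\cite[Thm.~4.5]{VV2}. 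Cogeneration of $\cH_{N,\loc}$ is then obtained not by applying Theorem~\ref{thm vac cogen_general} to $\cH_N$, but by embedding into $H^{\sT_0}(\cN(\mathbf N))_\loc$ and citing Corollary~\ref{cor cogen_nak} on the Nakajima side. This external input from~\cite{VV2} is the key technical idea your proposal is missing.
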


\begin{proof}
(a) By dimensional reduction, we have $\cH_1\cong H^{\sT_0}(\cN(\mathbf 1))$, where $\cN(\mathbf 1)$ is the Nakajima variety associated to the doubled Jordan quiver with framing dimension one. In view of the isomorphism 
$$\mathsf Y_0(Q,\sW)\cong \mathsf Y_{Q'}^{\text{MO}}\otimes_{\bC[\mathsf t_0]}\bC(\mathsf t_0), $$ 
$\cH_1\cong H^{\sT_0}(\cN(\mathbf 1))$ is a $\mathsf Y_0(Q,\sW)$ module isomorphism, and the latter is the Fock module in \cite[\S 13.1]{MO}. 

The statement concerning $\cH_{N,\loc}$ is proven using a similar idea as Theorem \ref{thm fusion}. We define $\eta$ to be the composition
\begin{align}\label{the map eta}
    \eta\colon \cH_{N,\loc}\xrightarrow{\delta_1} \cH_{\underline{\mathbf N},\loc}^{\sW_*}\xrightarrow{\delta_2^{-1}} H^{\sT_0}(\cN(\mathbf N),\phi)_{\loc}\xrightarrow{\mathsf{sp}} H^{\sT_0}(\cN(\mathbf N))_{\loc}\xrightarrow{\mathbf{Stab}_{s<0}^{-1}} \cH_{1,\loc}[(N-1)\hbar_3]\otimes\cdots\otimes \cH_{1,\loc}[\hbar_3]\otimes \cH_{1,\loc},
\end{align}
where notations in the middle steps are explained as follows. 

Let $\cH_{\underline{\mathbf N}}^{\sW_*}=H^{\sT_0}(\cM(\mathbf N),\sw_*)$ (with $\sw_*=\tr(\sW_*)$) be the critical cohomology of the following quiver with potential
\begin{equation*}
\begin{tikzpicture}[x={(1cm,0cm)}, y={(0cm,1cm)}, baseline=0cm]
  \node[draw,circle,fill=white] (Gauge) at (0,0){$\phantom{n}$}; 
  \node[draw,rectangle,fill=white] (Framing) at (2,0) {$N$};
  \node (Z) at (-1.1,0) {\scriptsize $Z$};
  \node (Y) at (0.28,1.06) {\scriptsize $Y$};
  \node (X) at (0.28,-1.06) {\scriptsize $X$};
  \draw[<-] (Gauge.340) -- (Framing.200) node[midway,below] {\scriptsize $A$};
  \draw[->] (Gauge.20) -- (Framing.160) node[midway,above] {\scriptsize $B$};

  \draw[->,looseness=8] (Gauge.225) to[out=225,in=135] (Gauge.135);
  \draw[->,looseness=8] (Gauge.120) to[out=120,in=30] (Gauge.30);
  \draw[->,looseness=8] (Gauge.330) to[out=330,in=240] (Gauge.240);
\end{tikzpicture}
\qquad \sW_*=Z[X,Y]+ZAB-A\Xi B,
\end{equation*}
where $\Xi\in \End(\C^N)$ is the regular nilpotent matrix such that $\Xi(e_i)=e_{i+1}$ for a given bases $\bC^N=\Span_\bC\{e_1,\ldots,e_N\}$. We set $(t_1,t_2,t_3)\in \sT_0\subset\bC^*_{q_1}\times \bC^*_{q_2}\times \bC^*_{q_3}$ action on quiver data to be
\begin{align}\label{T_0 action}
    (X,Y,Z,B,A)\mapsto (t_1^{-1}X,t_2^{-1}Y,t_3^{-1}Z,t_3 \cdot\sigma(t_3)B,A\sigma(t_3)^{-1}),
\end{align}
where $\sigma\colon \bC^*_{q_3}\to \GL_N$ is the homomorphism $\sigma(t_3)=\diag(1,t_3,\ldots,t_3^{N-1})$ in the aforementioned basis. $\cN(\mathbf N)$ is the Nakajima variety associated to the doubled Jordan quiver (with loops $X$ and $Y$), with $\sT_0$ action induced by omitting $Z$ in \eqref{T_0 action}. $\phi$ is the function on $\cN(\mathbf N)$ given by
\begin{align*}
    \phi=\tr(A\Xi B).
\end{align*}
The maps involved in \eqref{the map eta} are given as follows.
\begin{itemize}
    \item $\delta_1$ is the interpolation map $\delta_H$ constructed in \cite[Prop.~6.7]{COZZ} with respect to the following compatible dimensional reduction data. Recall the notation $\underline{\mathbf 0}^{\mathbf k}=(k,0)$, and let 
\begin{equation*}
    \mathcal Y'=\cM(n,\underline{\mathbf 0}^{\mathbf 1}),\quad \mathcal X'=\mathrm{Tot}(E')= \cM(n,\mathbf 1),
\end{equation*}
$$s'=Z^N A\in \Gamma(\mathcal Y',E'^\vee),\quad \varphi'=\tr(Z[X,Y]),\quad \sw_N=\varphi'+\langle s', B\rangle,$$
where $E'$ is the vector bundle that descends from $\Hom(\bC^n,\bC)$ with coordinate given by $B$, and $\langle-,-\rangle$ is the natural pairing. Let
\begin{equation*}
\mathcal Y:=\cM(n,\underline{\mathbf 0}^{\mathbf N}),\quad \mathcal X=\mathrm{Tot}(E)= \cM(n,\mathbf N), \end{equation*}
$$s=Z A-A\Xi \in \Gamma(\mathcal Y,E^\vee), \quad \varphi=\tr(Z[X,Y]), \quad \sw_*=\varphi+\langle s, B\rangle, $$
where $E$ is the vector bundle that descends from $\Hom(\bC^n,\bC^N)$ with coordinate given by $B$. The compatibility between the dimensional reduction data $(\mathcal X,\mathcal Y,s,\varphi)$ and $(\mathcal X',\mathcal Y',s',\varphi')$ is proven similarly to Lemma \ref{lem on dr higher spin} and we shall not repeat it. By \cite[Prop.~6.9]{COZZ}, $\delta_1$ is an isomorphism. 

\item $\delta_2$ is the deformed dimensional reduction along $Z$ (which is an isomorphism by \cite[Thm.~C.1]{COZZ}).

\item $\mathsf{sp}$ is the specialization map in \cite[\S 7.2]{COZZ} constructed using the additional $\bG_m$ action that scales $B$ with weight $1$ and fixes other quiver data.

\item The stable envelope map $\mathbf{Stab}_{s<0}$ is constructed as follows. Let $(t_1,t_2,t_3,p)\in \sT_0\times \bC^*_s$ act on $\cN(\mathbf N)$ by
\begin{align*}
    (X,Y,B,A)\mapsto (t_1^{-1}X,t_2^{-1}Y,t_3 \cdot\sigma(p)B,A\sigma(p)^{-1}).
\end{align*}
The stable envelope
\begin{align*}
    \mathbf{Stab}_{s<0}\colon H^{\sT_0\times \bC^*_s}(\cN(\mathbf N)^{\bC^*_s})\to H^{\sT_0\times \bC^*_s}(\cN(\mathbf N))
\end{align*}
is a $\mathsf Y_{0}(Q,\sW)$-module map by the argument in \S \ref{sect on coprod}. Evaluation at $s=\hbar_3$ amounts to taking equivariant cohomologies with respect to the subtorus $$\sT_0\xrightarrow{(\id,q_3)}\sT_0\times \bC^*_s, $$ which will be called the ``diagonal'' $\sT_0$. Since $\cN(\mathbf N)^{\bC^*_s}=\cN(\mathbf 1)^{\times N}$, with $\bC^*_{q_3}$ weights $\{0,1,\ldots,N-1\}$ on the one-dimensional framing vector spaces, 
we get a $\mathsf Y_{0}(Q,\sW)$-module map:
\begin{align*}
    \mathbf{Stab}_{s<0}\colon \cH_{1}[(N-1)\hbar_3]\otimes \cdots \otimes \cH_{1}[\hbar_3]\otimes \cH_{1}\to H^{\sT_0}(\cN(\mathbf N)).
\end{align*}
It is elementary to see that the induced $\bC^*_s$ action on the diagonal $\sT_0$ fixed locus $\cN(\mathbf N)^{\sT_0}$ is repelling. Then by \cite[Prop.~3.36]{COZZ}, the above map is injective after localization. Moreover, the map 
\begin{align*}
    \mathbf{Stab}_{s<0}\colon \cH_{1,\loc}[(N-1)\hbar_3]\otimes\cdots\otimes \cH_{1,\loc}[\hbar_3]\otimes \cH_{1,\loc}\to H^{\sT_0}(\cN(\mathbf N))_\loc
\end{align*}
is an isomorphism by a graded dimension counting argument.
\end{itemize}

We have seen that all intermediate maps but the specialization map $\mathsf{sp}$ are isomorphisms in \eqref{the map eta}, and we claim that $\mathsf{sp}$ is injective. To prove the claim, it is equivalent to show that the specialization map at the $\sT_0$-fixed locus:
$$H(\cN(\mathbf N)^{\sT_0},\phi)\xrightarrow{\mathsf{sp}} H(\cN(\mathbf N)^{\sT_0})$$ 
is injective. Note that $\cN(\mathbf N)^{\sT_0}$ is the same as the $\sT_0$-fixed locus of an $A_{\infty}$ quiver, so the claim follows from Lemma \ref{lem inj sp for type A} below. 
The maps $\delta_1$, $\delta_2$, and $\mathsf{sp}$ are $\mathsf Y_{0}(Q,\sW)$-module maps because they commute with stable envelopes and therefore they commute with $R$-matrices. $\mathbf{Stab}_{s<0}$ is a $\mathsf Y_{0}(Q,\sW)$-module map as we explained above. We conclude that $\eta$ is an injective $\mathsf Y_{0}(Q,\sW)$-module map. We also note that $\eta$ maps the vacuum $|\mathbf 0\rangle\in \cH_{N,\loc}$ to vacuum 
$$|\mathbf 0\rangle\otimes\cdots\otimes |\mathbf 0\rangle\in \cH_{1,\loc}[(N-1)\hbar_3]\otimes\cdots\otimes \cH_{1,\loc}[\hbar_3]\otimes \cH_{1,\loc}. $$ 
To show that $\cH_{N,\loc}$ is isomorphic to the $\mathsf Y_{0}(Q,\sW)$-submodule of $\cH_{1,\loc}[(N-1)\hbar_3]\otimes\cdots\otimes \cH_{1,\loc}[\hbar_3]\otimes \cH_{1,\loc}$ that is generated by vacuum $|\mathbf 0\rangle\otimes\cdots\otimes |\mathbf 0\rangle$, it is enough to show that $\cH_{N,\loc}$ is irreducible, which we prove below.

The subgroup $\{(x,x^{-1},1)\:|\: x\in \bC^*\}\subset \sT_0$ satisfies the assumption for the $\mathsf S$ torus in Corollary \ref{cor cogen_nak}, so $H^{\sT_0}(\cN(\mathbf N))_\loc$ is cogenerated by vacuum. As a submodule, $\cH_{N,\loc}$ is also cogenerated by vacuum. 

Let us show that $\cH_{N,\loc}$ is compact. By \cite[Prop.~6.5]{COZZ}, we have $\sT_0$-equivariant isomorphism between schemes 
$$\Crit_{\cM(\mathbf 1)}(\sw_N)\cong \Crit_{\cM(\mathbf N)}(\sw_*)\cong \Crit_{\cN(\mathbf N)}(\phi). $$ 
$\Crit_{\cN(\mathbf N)}(\phi)$ can be explicitly described as follows: it is the moduli space 
$$M=\{(X,Y,A,B)\:|\:[X,Y]+AB=0\}, $$ 
whose points represent stable representations of framed preprojective algebra $\Pi_{\overline{Q'}}^{\mathrm{fr}}$ of the doubled Jordan quiver $\overline{Q'}$, together with a $\Pi_{\overline{Q'}^{\mathrm{fr}}}$-module endomorphism $\cE$ of $M$ such that $\mathcal E$ acts on the framing vector space by $\Xi$. Note that the cyclic stability condition constrains such $\cE$ to be uniquely determined by $\Xi$. $\mathcal E$ induces a $\Pi_{\overline{Q'}}^{\mathrm{fr}}$-module filtration on $M$ such that each associated graded piece has framing dimension one. Under the Jordan-H\"older map 
$$\mathsf{JH}\colon \cN(n,\mathbf N)\to \cN_0(n,\mathbf N), $$ 
$\mathsf{JH}(\Crit_{\cN(n,\mathbf N)}(\phi))$ is contained in the union of the images of the direct sum map $\prod_{i=1}^N\cN_0(n_i,\mathbf 1)\to \cN_0(n,\mathbf N)$, 
where the union is taken for all decompositions $n=\sum_{i=1}^Nn_i$. Since $\cN_0(n_i,\mathbf 1)^{\sT_0}=\{0\}$, we see that $\mathsf{JH}(\Crit_{\cN(n,\mathbf N)}(\phi)^{\sT_0})=\{0\}$, and this implies that $\Crit_{\cN(n,\mathbf N)}(\phi)^{\sT_0}$ is compact.

As $\cH_{N,\loc}$ is compact and cogenerated by vacuum, it is irreducible by Proposition \ref{prop on irr mod}. This proves (a).  

(b) We have shown that $\cH_{N,\loc}$ is irreducible in the proof of (a). We claim that $\cF_\loc$ is compact. In fact, $\cM(n,\underline{\mathbf 0}^{\mathbf 1})\cong \Hilb^n(\bC^3)$, and $\sT_0\subset \bC^*_{q_1}\times \bC^*_{q_2}\times \bC^*_{q_3}$ acts on $\bC^3$ by weight $(q_1,q_2,q_3)$. It is well known that $\Hilb^n(\bC^3)^{\sT_0}$ is a finite number of points and, in particular, compact. 

The $(\sT_0\times \bC^*_t)$-weights that appear in the normal bundle $N_{\cM(\mathbf 1)^{\sT_0\times \bC^*_t}/\cM(\mathbf 1)}$ are of the form 
$$\{a_1\hbar_1+a_2\hbar_2,\: t+b_1\hbar_1+b_2\hbar_2\:|\: a_1,a_2,b_1,b_2\in \bZ\}. $$ 
So for $t=\lambda_1\hbar_1+\lambda_2\hbar_2$ such that $(\lambda_1,\lambda_2)\in \bC^2\setminus\bZ^2$, the equivariant localization implies that the embedding $\cM(\mathbf 1)^{\sT_0\times \bC^*_t}\hookrightarrow\cM(\mathbf 1)$ induces an isomorphism 
$$H^{\sT_0}(\cM(\mathbf 1)^{\sT_0\times \bC^*_t},\sw)_\loc\cong H^{\sT_0\times \bC^*_t}(\cM(\mathbf 1),\sw)_\loc\big|_{t=\lambda_1\hbar_1+\lambda_2\hbar_2}. $$ 
We note that $\cM(n,\mathbf 1)^{\sT_0\times \bC^*_t}=\cM(n,\underline{\mathbf 0}^{\mathbf 1})^{\sT_0}$, and the latter is compact. Therefore, $\cV_{\loc}|_{t=\lambda_1\hbar_1+\lambda_2\hbar_2}$ admits a $\mathsf Y_0(Q,\sW)$-invariant nondegenerate bilinear form.

By Proposition \ref{prop on irr mod}, to prove that $\cF_\loc$ and $\cV_{\loc}|_{t=\lambda_1\hbar_1+\lambda_2\hbar_2}$ are irreducible, it remains to show that they are cogenerated by vacuum. By Theorem \ref{thm vac cogen_general}, it is enough to check the conditions (i) and (ii) in \textit{loc.\,cit.}. Condition (i) is obvious for both $\cF_\loc$ and $\cV_{\loc}|_{t=\lambda_1\hbar_1+\lambda_2\hbar_2}$. For condition (ii), we claim that specialization maps
\begin{align*}
    \mathsf{sp}_{\mathbf 1}\colon H^{\sT_0\times \bC^*_t}(\fM(\mathbf 1),\sw)\big|_{t=a_1\hbar_1+a_2\hbar_2}\to H^{\sT_0\times \bC^*_t}(\fM(\mathbf 1))\big|_{t=a_1\hbar_1+a_2\hbar_2}\:,\quad \mathsf{sp}_{\underline{\mathbf 0}^{\mathbf 1}}\colon H^{\sT_0}(\fM(\underline{\mathbf 0}^{\mathbf 1}),\sw)\to H^{\sT_0}(\fM(\underline{\mathbf 0}^{\mathbf 1})),
\end{align*}
are injective, for \textit{all} $(a_1,a_2)\in \bC^2$. In fact, since $\fM(\mathbf 1)$ and $\fM(\underline{\mathbf 0}^{\mathbf 1})$ are vector bundles over the stack of unframed representations $\fM(\mathbf 0)$ and the potential $\sw$ is the pullback one, we have natural isomorphisms
\begin{equation*}
\begin{gathered}
H^{\sT_0\times \bC^*_t}(\fM(\mathbf 1),\sw)\cong H^{\sT_0}(\fM(\mathbf 0),\sw)\otimes\bC[t],\quad H^{\sT_0\times \bC^*_t}(\fM(\mathbf 1))\cong H^{\sT_0}(\fM(\mathbf 0))\otimes\bC[t],\\
H^{\sT_0}(\fM(\underline{\mathbf 0}^{\mathbf 1}),\sw)\cong H^{\sT_0}(\fM(\mathbf 0),\sw),\quad  H^{\sT_0}(\fM(\underline{\mathbf 0}^{\mathbf 1}))\cong H^{\sT_0}(\fM(\mathbf 0)).
\end{gathered}
\end{equation*}
We note that $\mathsf{sp}_{\mathbf 1}$ and $\mathsf{sp}_{\underline{\mathbf 0}^{\mathbf 1}}$ are obtained by the specialization map 
$$\mathsf{sp}_{\mathbf 0}\colon H^{\sT_0}(\fM(\mathbf 0),\sw)\to H^{\sT_0}(\fM(\mathbf 0))$$ on the base stack  via the aforementioned isomorphisms. 
By \cite[Rem.~7.16]{COZZ}, we have 
$$\mathsf{sp}_{\mathbf 0}=j_*\circ \delta^{-1}, $$ 
where $\delta\colon H^{\sT_0}(Z(\mu))\cong H^{\sT_0}(\fM(\mathbf 0),\sw)$ is the dimensional reduction isomorphism ($\mu(X,Y,Z)=[X,Y]$ is the moment map morphism), $j\colon Z(\mu)\hookrightarrow \fM(\mathbf 0)$ is the natural embedding. The pushforward map $j_*$ is injective by \cite[Thm.~10.2]{Dav2}, so $\mathsf{sp}_{\mathbf 0}$, $\mathsf{sp}_{\mathbf 1}$, and $\mathsf{sp}_{\underline{\mathbf 0}^{\mathbf 1}}$ are injective. This proves (b). 

(c) The specialization $\mathsf{sp}\colon \cH_{N,\loc}\to \cV_\loc|_{t=N\hbar_3}$ is a $\mathsf Y_{0}(Q,\sW)$-module map because it commutes with stable envelope and therefore it commutes with $R$-matrices. Obviously, $\mathsf{sp}(|\mathbf 0\rangle)=|\mathbf 0\rangle$ and, in particular, $\mathsf{sp}$ is nonzero. The injectivity of $\mathsf{sp}$ follows from the irreducibility of $\cH_{N,\loc}$.
\end{proof}

\begin{Lemma}\label{lem inj sp for type A} 
Let $\Gamma$ be a quiver of $A_r$ type, and $i$ be an integer such that $1\leqslant i\leqslant r$. Consider the following framed representation of the double quiver 
$\overline{\Gamma}$:
\begin{equation*}
\begin{tikzpicture}[x={(1cm,0cm)}, y={(0cm,1cm)}, baseline=0cm]
  \node[draw,circle,fill=white] (Gauge1) at (0,0) {$n_1$};
  
  \node[draw,circle,fill=white] (Gauge2) at (2,0) {$n_2$};

  \node (dot1) at (4,0) {$\cdots$};
  \node[draw,circle,fill=white] (GaugeN) at (6,0) {$n_i$};
  \node[draw,rectangle,fill=white] (FramingN) at (6,-1.5) {$N$};
  \node (dot2) at (8,0) {$\cdots$};
  \node[draw,circle,fill=white] (GaugeN+1) at (10,0) {$n_r$};

  \draw[<-] (GaugeN.250) -- (FramingN.117) node[midway,left] {\scriptsize $A$};
  \draw[->] (GaugeN.290) -- (FramingN.63) node[midway,right] {\scriptsize $B$};

  \draw[->] (Gauge1.20) -- (Gauge2.160) node[midway,above] {\scriptsize $X_1$};
  \draw[<-] (Gauge1.340) -- (Gauge2.200) node[midway,below] {\scriptsize $Y_1$};
  \draw[->] (Gauge2.20) -- (dot1.161) node[midway,above] {\scriptsize $X_2$};
  \draw[<-] (Gauge2.340) -- (dot1.199) node[midway,below] {\scriptsize $Y_2$};
  \draw[->] (dot1.19) -- (GaugeN.160) node[midway,above] {\scriptsize $X_{i-1}$};
  \draw[<-] (dot1.341) -- (GaugeN.200) node[midway,below] {\scriptsize $Y_{i-1}$};
  \draw[->] (GaugeN.20) -- (dot2.161) node[midway,above] {\scriptsize $X_i$};
  \draw[<-] (GaugeN.340) -- (dot2.199) node[midway,below] {\scriptsize $Y_i$};
  \draw[->] (dot2.20) -- (GaugeN+1.160) node[midway,above] {\scriptsize $X_{r-1}$};
  \draw[<-] (dot2.340) -- (GaugeN+1.200) node[midway,below] {\scriptsize $Y_{r-1}$};

\end{tikzpicture}
\end{equation*}
with $\sT_0$ acted on by 
\begin{align*}
    (t_1,t_2,t_3)\in \sT_0\:\colon \: (X_\bullet,Y_{\bullet},B,A)\mapsto (t_1^{-1}X_\bullet,t_2^{-1}Y_\bullet,t_3\cdot\sigma(t_3)B,A\sigma(t_3)^{-1}),
\end{align*}
where $\sigma(t_3)=\diag(1,t_3,\cdots,t_3^{N-1})\in \GL_N$ in a given basis $\bC^N=\Span_\bC\{e_1,\ldots,e_N\}$. The Nakajima variety of the above quiver is denoted $\cN(N\delta_i)=\bigsqcup_{\mathbf n=(n_1,\ldots,n_r)}\cN(\mathbf n,N\delta_i)$. Define a potential $\phi$ on $\cN(N\delta_i)$ by $\phi(X_\bullet,Y_{\bullet},B,A)=\tr(\Xi BA)$ where $\Xi$ is the nilpotent matrix such that $\Xi(e_k)=e_{k+1}$ in the aforementioned basis. Then the specialization map $\mathsf{sp}\colon H^{\sT_0}(\cN(N\delta_i),\phi)\to H^{\sT_0}(\cN(N\delta_i))$ is injective after $\sT_0$-localization.
\end{Lemma}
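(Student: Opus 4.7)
My plan is to use deformed dimensional reduction to convert the statement into an injectivity of pushforward in Borel--Moore homology, which I then verify via $\sT_0$-equivariant localization.

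Specifically, I lift the setting from the Nakajima variety $\cN(\mathbf n, N\delta_i)$ to the tripled quiver variety $\cM(\mathbf n, N\delta_i, \widetilde{\Gamma})$ with canonical cubic potential $\widetilde{\sw}$; deformed dimensional reduction along the edge loops $Z_\bullet$ gives $H^{\sT_0}(\cM, \widetilde{\sw} + \phi) \cong H^{\sT_0}(\cN, \phi)$. The combined potential $\widetilde{\sw} + \phi$ is linear in $B$, with $B$-coefficient $s := Z_i A + A\Xi \colon \bC^N \to V_i$ and $B$-independent part $\sum_j \mathrm{tr}(Z_j \mu_j^{(0)})$ (the canonical cubic with the $B$-contribution removed). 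Two further deformed dimensional reductions --- first along $B$, then along $Z$ --- identify this critical cohomology with the Borel--Moore homology of the closed subvariety $Y = \{s = 0,\ \mu_j^{(0)} = 0 \text{ for all } j\}$ inside the stable $(X_\bullet, Y_\bullet, A)$-representation space modulo gauge, and under these identifications the specialization map $\mathsf{sp}$ becomes the pushforward along the inclusion $Y \hookrightarrow \cN(\mathbf n, N\delta_i)$.

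To verify injectivity after $\sT_0$-localization, I apply $\sT_0$-equivariant localization for Borel--Moore homology on both $Y$ and $\cN(\mathbf n, N\delta_i)$. The $\sT_0$-fixed loci decompose into smaller pieces determined by $\sT_0$-weight decompositions of the gauge spaces, and the explicit framing weights --- $(k-1)\hbar_3$ on $e_k$ in the source of $A$ and $k\hbar_3$ on $e_k$ in the target of $B$ --- force the normal-bundle weights governing the pushforward after localization to be nontrivial characters of $\sT_0$. Hence the corresponding Euler classes are invertible after $\sT_0$-localization, and the pushforward is injective.

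The main obstacle I anticipate is tracking $\mathsf{sp}$ precisely through two successive deformed dimensional reductions (one along $B$, one along $Z$), in order to correctly identify it as the pushforward $H^{\mathrm{BM},\sT_0}(Y) \to H^{\mathrm{BM},\sT_0}(\cN(\mathbf n, N\delta_i))$; this requires a careful compatibility check of the specialization construction in \cite[Ex.~7.19]{COZZ} with the dimensional reduction isomorphisms of \cite[Thm.~C.1]{COZZ}. Once this identification is established, the final localization and weight bookkeeping reduces to essentially the same argument used in the proof of part (a) of Theorem \ref{prop on 3 modules of gl1hat}.
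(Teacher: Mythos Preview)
Your approach is genuinely different from the paper's, which is representation-theoretic: the paper observes that $\mathsf{sp}$ is a nonzero $\mathsf Y_\Gamma^{\mathrm{MO}}$-module map (it commutes with stable envelopes and is the identity on the vacuum), and then shows the source $H^{\sT_0}(\cN(N\delta_i),\phi)_\loc$ is already irreducible as a module for the subalgebra $Y_\hbar(\mathfrak{sl}_{r+1})\subset\mathsf Y_\Gamma^{\mathrm{MO}}$, by identifying it with a Kirillov--Reshetikhin module via \cite[Cor.~6.2.3]{McB} and \cite[Thms.~4.3(e),~4.5]{VV2}. Irreducibility of the source then forces $\ker(\mathsf{sp})=0$.

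Your geometric approach has a gap at the second dimensional reduction. After reducing along $B$, you land on the zero locus $\{s=0\}$ with $s=Z_iA+A\Xi$ inside the $(X_\bullet,Y_\bullet,Z_\bullet,A)$-space. This equation constrains $Z_i$ on $\im A$ by the \emph{affine} (not linear) condition $Z_iA=-A\Xi$, so the projection $\{s=0\}\to(X_\bullet,Y_\bullet,A)$-space is not a vector bundle in the $Z_i$-direction, and dimensional reduction along $Z$ does not apply in the form you invoke. Correspondingly, your description of $Y$ as a subvariety of the $(X_\bullet,Y_\bullet,A)$-space cut out by $s=0$ and $\mu_j^{(0)}=0$ is inconsistent: the equation $s=0$ involves $Z_i$, which does not live in that space. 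Swapping the order does not help either, since $\widetilde{\sw}+\phi$ contains the quadratic term $\tr(Z_iAB)$ and is therefore not jointly linear in $(B,Z_\bullet)$; and reducing along $Z$ first simply returns you to $(\cN,\phi)$, where $B$ is tied up in the moment map and is not a fiber direction.

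A smaller concern: you say the final step reduces to the argument in the proof of part~(a) of Theorem~\ref{prop on 3 modules of gl1hat}, but in that proof the injectivity of $\mathsf{sp}$ is established precisely by invoking Lemma~\ref{lem inj sp for type A}, so taken literally this is circular. If you mean only the general technique of passing to the $\sT_0$-fixed locus, that part is fine, but then the actual content---showing the relevant normal-bundle weights are nonzero on each fixed component---remains to be carried out, and as written you have not done so.
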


\begin{proof}
It is enough to show that $H^{\sT_0}(\cN(N\delta_i),\phi)_\loc$ is an irreducible $\mathsf Y_{\Gamma}^{\text{MO}}$-module. By \cite[Cor.\,6.2.3]{McB}, the operators $\{e_i(u),f_i(u),h_i(u)\}_{i\in Q_0}$ in the Gauss decomposition \eqref{gauss decomp of R} generate a subalgebra of $\mathsf Y_{\Gamma}^{\text{MO}}$ that is isomorphic to the Yangian algebra $Y_{\hbar}(\mathfrak{sl}_{r+1})$ defined in \cite{V}. Here parameters are identified by $2\hbar=\hbar_3$. By the construction of the critical convolution algebra action in \cite[\S 3]{VV2}, the $Y_{\hbar}(\mathfrak{sl}_{r+1})$-module structure on $H^{\sT_0}(\cN(N\delta_i),\phi)_\loc$ agrees with the one in \cite[Thm.~4.3(e)]{VV2}, which is irreducible (a Kirillov-Reshetikhin module) by \cite[Thm.~4.5]{VV2}. This implies the irreducibility of $H^{\sT_0}(\cN(N\delta_i),\phi)_\loc$ as a $\mathsf Y_{\Gamma}^{\text{MO}}$-module.
\end{proof}

\begin{Remark}
Lemma \ref{lem inj sp for type A} still holds if we replace $\Gamma$ by an arbitrary Dynkin quiver and let $i\in \Gamma_0$ be any node.
\end{Remark}

\begin{Remark}
$\cV(n)_\loc$ (and also $\cF(n)_\loc$) has a basis indexed by 3d Young diagrams of $n$ boxes. A \textit{3d Young diagram} of $n$ boxes is a set $\mathcal D$ consisting of $n$ triples $\{(i,j,k)\in \bZ_{\geqslant 0}^3\}$ such that $(i,j,k)\in \mathcal D$ implies $(i^-,j,k),(i,j^-,k),(i,j,k^-)\in \mathcal D$, where $x^-=\max\{0,x-1\}$. The image of the specialization map 
$$\mathsf{sp}\colon \cH_N(n)_\loc\to \cV(n)_\loc|_{t=N\hbar_3}$$ (which is injective by Theorem \ref{prop on 3 modules of gl1hat}) is spanned by those 3d Young diagrams $\mathcal D=\{(i,j,k)\in \bZ_{>0}^3\}$ such that $k<N$ for all $(i,j,k)\in \mathcal D$.
\end{Remark}

\begin{Remark}
We expect that the $K$-theoretic analog of the discussions in this section will provide a geometric incarnation of the constructions of MacMahon modules, resonance locus, and submodules for the quantum toroidal algebra $U_{q_1,q_2,q_3}(\widehat{\mathfrak{gl}}_1)$ in \cite{FJMM}.
\end{Remark}

\subsection{More examples: spiked instantons and related geometries}

Let us consider the following framings with potentials.

\begin{enumerate}

\item[(4)] (Free-field realization) We fix 
$$\underline{\bd}=\mathbf L+\mathbf M+\mathbf N=(L+M+N,L+M+N), $$ the potential is $\sW_{\mathrm{cub}}$ given below, and we require that $\sT_0$ scales $B_i$ with weight $q_i$ and fixes $A_i$, for all $i=1,2,3$. Let $\sA^{\mathrm{fr}}=(\bC^*)^{L+M+N}$ that acts on the framing vector space $\bC^{L+M+N}$ diagonally,
\begin{equation*}
\begin{tikzpicture}[x={(1cm,0cm)}, y={(0cm,1cm)}, baseline=0cm]
  \node[draw,circle,fill=white] (Gauge) at (0,0){$\phantom{n}$}; 
  \node[draw,rectangle,fill=white] (FramingZ) at (0,-1.5) {\scriptsize $N$};
  \node[draw,rectangle,fill=white,rotate=30] (FramingX) at (1.3,0.75) {\scriptsize $L$};
  \node[draw,rectangle,fill=white,rotate=-30] (FramingY) at (-1.3,0.75) {\scriptsize $M$};
  \node (Z) at (0,1) {\scriptsize $Z$};
  \node (X) at (-0.86,-0.5) {\scriptsize $X$};
  \node (Y) at (0.86,-0.5) {\scriptsize $Y$};

  \draw[->,looseness=7] (Gauge.120) to[out=120,in=60] (Gauge.60);
  \draw[->,looseness=7] (Gauge.240) to[out=240,in=180] (Gauge.180);
  \draw[->,looseness=7] (Gauge.0) to[out=0,in=300] (Gauge.300);
  \draw[<-] (Gauge.260) -- (FramingZ.104) node[midway,left] {\scriptsize $A_3$};
  \draw[->] (Gauge.280) -- (FramingZ.78) node[midway,right] {\scriptsize $B_3$};

  \draw[<-] (Gauge.20) -- (FramingX.193) node[midway,below] {\scriptsize $A_1$};
  \draw[->] (Gauge.40) -- (FramingX.168) node[midway,above] {\scriptsize $B_1$};

  \draw[<-] (Gauge.140) -- (FramingY.13) node[midway,above] {\scriptsize $A_2$};
  \draw[->] (Gauge.160) -- (FramingY.-12) node[midway,below] {\scriptsize $B_2$};
\end{tikzpicture}
\quad \sW_{\mathrm{cub}}=Z[X,Y]+XA_1B_1+YA_2B_2+ZA_3B_3, \,\, \sw_{\mathrm{cub}}=\tr(\sW_{\mathrm{cub}}).
\end{equation*}
We define  
\begin{equation*}
   \cS_{L,M,N}(n):=H^{\sT_0\times \sA^{\mathrm{fr}}}(\cM(n,\mathbf L+\mathbf M+\mathbf N),\sw_{\mathrm{cub}}), \quad  \cS_{L,M,N}:=H^{\sT_0\times \sA^{\mathrm{fr}}}(\cM(\mathbf L+\mathbf M+\mathbf N),\sw_{\mathrm{cub}})=\bigoplus_{n\in \bZ_{\geqslant 0}} \cS_{L,M,N}(n).
\end{equation*}

\item[(5)] (A degenerate module) $\underline{\bd}=\mathbf 3=(3,3)$, the potential is $\sW_{L,M,N}$ given below, and we require that $\sT_0$ scales $(B_1,B_2,B_3)$ with weights $(q_1^L,q_2^M,q_3^N)$ and fixes $A_i$,  for all $i=1,2,3$. Let $\sA^{\mathrm{fr}}=(\bC^*)^{3}$ that acts on the framing vector space $\bC^{3}$ diagonally,
\begin{equation*}
\begin{tikzpicture}[x={(1cm,0cm)}, y={(0cm,1cm)}, baseline=0cm]
  \node[draw,circle,fill=white] (Gauge) at (0,0){$\phantom{n}$}; 
  \node[draw,rectangle,fill=white] (FramingZ) at (0,-1.5) {$1$};
  \node[draw,rectangle,fill=white,rotate=30] (FramingX) at (1.3,0.75) {$1$};
  \node[draw,rectangle,fill=white,rotate=-30] (FramingY) at (-1.3,0.75) {$1$};
  \node (Z) at (0,1) {\scriptsize $Z$};
  \node (X) at (-0.86,-0.5) {\scriptsize $X$};
  \node (Y) at (0.86,-0.5) {\scriptsize $Y$};

  \draw[->,looseness=7] (Gauge.120) to[out=120,in=60] (Gauge.60);
  \draw[->,looseness=7] (Gauge.240) to[out=240,in=180] (Gauge.180);
  \draw[->,looseness=7] (Gauge.0) to[out=0,in=300] (Gauge.300);
  \draw[<-] (Gauge.260) -- (FramingZ.103) node[midway,left] {\scriptsize $A_3$};
  \draw[->] (Gauge.280) -- (FramingZ.78) node[midway,right] {\scriptsize $B_3$};

  \draw[<-] (Gauge.20) -- (FramingX.193) node[midway,below] {\scriptsize $A_1$};
  \draw[->] (Gauge.40) -- (FramingX.168) node[midway,above] {\scriptsize $B_1$};

  \draw[<-] (Gauge.140) -- (FramingY.13) node[midway,above] {\scriptsize $A_2$};
  \draw[->] (Gauge.160) -- (FramingY.-12) node[midway,below] {\scriptsize $B_2$};
\end{tikzpicture}
\quad \sW_{L,M,N}=Z[X,Y]+B_1X^LA_1+B_2Y^MA_2+B_3Z^NA_3, \,\, \sw_{L,M,N}=\tr(\sW_{L,M,N}).
\end{equation*}
We define 
\begin{align*}
    \cH_{L,M,N}(n):=H^{\sT_0\times \sA^{\mathrm{fr}}}(\cM(n,\mathbf 3),\sw_{L,M,N}), \quad \cH_{L,M,N}:=H^{\sT_0\times \sA^{\mathrm{fr}}}(\cM(\mathbf 3),\sw_{L,M,N})=\bigoplus_{n\in \bZ_{\geqslant 0}}\cH_{L,M,N}(n).
\end{align*}

\end{enumerate}

By dimensional reduction, we have 
$$\cS_{0,0,N}\cong H^{\sT_0\times \sA^{\mathrm{fr}}}(\cN(\mathbf N)), $$ 
where $\cN(\mathbf N)$ is the Nakajima variety of the doubled Jordan quiver with quiver data $(X,Y,B,A)$. The stable envelope $$\mathbf{Stab}\colon \cH_1[a_1]\otimes\cdots\otimes \cH_1[a_N]\to H^{\sT_0\times \sA^{\mathrm{fr}}}(\cN(\mathbf N))$$
is a $\mathsf Y_{Q'}^{\text{MO}}$-module isomorphism after $\sA^{\mathrm{fr}}$-localization. Here $(a_1,\ldots,a_N)$ are equivariant parameters in $H_{\sA^{\mathrm{fr}}}(\pt)=\bC[a_1,\ldots,a_N]$. According to \cite[Prop.~19.2.2]{MO}, the action of $\mathsf Y_{Q'}^{\text{MO}}$ on $\cH_1[a_1]\otimes\cdots\otimes \cH_1[a_N]$ factors through the mode algebra $\mathfrak{U}(\cW(\mathfrak{gl}_N))$ of the W-algebra $\cW(\mathfrak{gl}_N)$. It is also known that the map $\mathsf Y_{Q'}^{\text{MO}}\to \mathfrak{U}(\cW(\mathfrak{gl}_N))$ has a dense image by \cite[Prop.~19.2.3]{MO}. We define a $\bC(\mathsf t_0)$-algebra map 
to be the composition
$$\Psi^{(3)}_N\colon Y_0(\widehat{\mathfrak{gl}}_1)\xrightarrow[\cong]{\varrho_0}\mathsf Y_0(Q,\sW)\cong \mathsf Y_{Q',\loc}^{\text{MO}}\to \mathfrak{U}(\cW(\mathfrak{gl}_N))_\loc, $$
and denote $Y_{0,0,N}$ to be the quotient algebra 
$$Y_{0,0,N}:=Y_0(\widehat{\mathfrak{gl}}_1)/\ker(\Psi^{(3)}_N). $$ 
For a pair $(i,j)$ of distinct elements in $\{1,2,3\}$, let $s_{ij}\in \Aut\bC(\mathsf t_0)$ be the automorphism that swaps $\hbar_i\leftrightarrow\hbar_j$ and fixes the third one. By Remark \ref{rmk triality}, $s_{ij}$ extends to a $\bC$-algebra automorphism $s_{ij}\colon Y_0(\widehat{\mathfrak{gl}}_1)\cong Y_0(\widehat{\mathfrak{gl}}_1)$. For $i\in\{1,2\}$, we define a $\bC(\mathsf t_0)$-skew linear map 
to be the composition 
\begin{align*}
\Psi^{(i)}_N\colon Y_0(\widehat{\mathfrak{gl}}_1)\xrightarrow[\cong]{s_{i3}} Y_0(\widehat{\mathfrak{gl}}_1)\xrightarrow{\Psi^{(i)}_N} \mathfrak{U}(\cW(\mathfrak{gl}_N))_\loc\:,
\end{align*}
and denote the quotient algebras 
$$Y_{N,0,0}:=Y_0(\widehat{\mathfrak{gl}}_1)/\ker(\Psi^{(1)}_N), \quad Y_{0,N,0}:=Y_0(\widehat{\mathfrak{gl}}_1)/\ker(\Psi^{(2)}_N).$$ 
We note that the $Y_0(\widehat{\mathfrak{gl}}_1)$-module structure on $\cS_{N,0,0}$ is obtained from $\cS_{0,0,N}$ by pullback along the $\bC$-algebra automorphism $s_{13}\colon Y_0(\widehat{\mathfrak{gl}}_1)\cong Y_0(\widehat{\mathfrak{gl}}_1)$, so the $Y_0(\widehat{\mathfrak{gl}}_1)$-action on $\cS_{N,0,0}$ factors through $Y_{N,0,0}$. Similarly, the $Y_0(\widehat{\mathfrak{gl}}_1)$-action on $\cS_{0,N,0}$ factors through $Y_{0,N,0}$.

\begin{Definition}
We define the \textit{Y-algebra} $Y_{L,M,N}$ to be the quotient of $Y_0(\widehat{\mathfrak{gl}}_1)$ by the kernel of the compositions
\begin{align}\label{truncation L,M,N}
Y_0(\widehat{\mathfrak{gl}}_1)\xrightarrow{(\Delta\otimes \id)\circ \Delta} Y_0(\widehat{\mathfrak{gl}}_1)\widehat\otimes Y_0(\widehat{\mathfrak{gl}}_1)\widehat\otimes Y_0(\widehat{\mathfrak{gl}}_1)\xrightarrow{\Psi^{(1)}_L\otimes \Psi^{(2)}_M\otimes\Psi^{(3)}_N} Y_{L,0,0}\widehat\otimes Y_{0,M,0}\widehat\otimes Y_{0,0,N}.
\end{align}
\end{Definition}

\begin{Proposition}\label{prop on more ex}
The $Y_0(\widehat{\mathfrak{gl}}_1)$-actions on $\cS_{L,M,N,\loc}$ and on $\cH_{L,M,N,\loc}$ factor through $Y_{L,M,N}$. Moreover, $\cS_{L,M,N,\loc}$ and $\cH_{L,M,N,\loc}$ are irreducible $Y_{L,M,N}$ modules after $\sA^{\mathrm{fr}}$-localizations.
\end{Proposition}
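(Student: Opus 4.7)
The plan is to establish the factorization through $Y_{L,M,N}$ by realizing $\cS_{L,M,N,\loc}$ and $\cH_{L,M,N,\loc}$ as subquotients of a triple tensor product on which the coproduct description built into \eqref{truncation L,M,N} can be read off geometrically, and then to deduce irreducibility from the compactness/vacuum cogeneration criterion of Proposition \ref{prop on irr mod}.

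For the factorization of the $Y_0(\widehat{\mathfrak{gl}}_1)$-action on $\cS_{L,M,N,\loc}$, the first step is to introduce a torus $\sA' \cong (\bC^*)^3$ scaling the framing vector spaces $\bC^L$, $\bC^M$, $\bC^N$ by independent parameters $z_1,z_2,z_3$, chosen to commute with $\sT_0 \times \sA^{\mathrm{fr}}$ and preserve $\sW_{\mathrm{cub}}$. Then $\cM(\mathbf L + \mathbf M + \mathbf N)^{\sA'} = \cM(\mathbf L) \times \cM(\mathbf M) \times \cM(\mathbf N)$ and $\sw_{\mathrm{cub}}$ splits as a direct sum, so Thom--Sebastiani gives an identification of the $\sA'$-fixed cohomology with the tensor product $\cS_{L,0,0} \otimes \cS_{0,M,0} \otimes \cS_{0,0,N}$ (after localization). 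Iterating the triangle lemma for a chamber $\fC = \{z_1 > z_2 > z_3\}$ produces the critical stable envelope
\[
\Stab_\fC\colon \cS_{L,0,0,\loc}\,\widehat{\otimes}\, \cS_{0,M,0,\loc}\,\widehat{\otimes}\,\cS_{0,0,N,\loc}\longrightarrow \cS_{L,M,N,\loc},
\]
which by \S\ref{sect on coprod} intertwines the $Y_0(\widehat{\mathfrak{gl}}_1)$-action with the iterated coproduct $(\Delta\otimes\id)\circ\Delta$. Full equivariant localization with respect to $\sA' \times \sA^{\mathrm{fr}}$ makes $\Stab_\fC$ a linear isomorphism, so the action on the left factors through the image of $\Psi^{(1)}_L\otimes\Psi^{(2)}_M\otimes\Psi^{(3)}_N$ by the discussion preceding \eqref{truncation L,M,N}; by the definition of $Y_{L,M,N}$ this is exactly what is needed.

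For $\cH_{L,M,N,\loc}$ the framing $\mathbf 3$ cannot be split by a torus, so I would use a fusion-type comparison in the spirit of \S\ref{sect on fusion gl2}. Replace $\sW_{L,M,N}$ by the auxiliary cubic potential with nilpotent matrices $\Xi_1, \Xi_2, \Xi_3$ of ranks $L-1, M-1, N-1$ built into each of the three framings (so that the resulting moment-type equations force $B_i$ to take the form $(b_i, b_i X_i, \ldots, b_i X_i^{L_i-1})$), and carry out compatible dimensional reductions as in Lemma \ref{lem on dr higher spin} to identify the corresponding critical cohomology with $\cH_{L,M,N,\loc}$. The resulting state space sits inside a module with \emph{split} framing of total dimension $L+M+N$, and therefore — by the same triangle-lemma argument as in the preceding paragraph, applied now to the auxiliary symmetric model — receives a $Y_0(\widehat{\mathfrak{gl}}_1)$-equivariant embedding into $\cS_{L,M,N,\loc}$. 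Since the action on $\cS_{L,M,N,\loc}$ factors through $Y_{L,M,N}$, so does the action on $\cH_{L,M,N,\loc}$.

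For the irreducibility claim I would apply Proposition \ref{prop on irr mod}. Compactness of $\Crit(\sw_{\mathrm{cub}})^{\sT_0\times\sA^{\mathrm{fr}}}$ and $\Crit(\sw_{L,M,N})^{\sT_0\times\sA^{\mathrm{fr}}}$ follows from the Jordan--H\"older argument used in Theorem \ref{prop on 3 modules of gl1hat}(a): the affinization sends critical points to $\bC^{3n}/\mathfrak{S}_n$ fibered over $\mathsf S$-representation theory of the three framings, whose $\sT_0$-fixed locus is a single point; the additional $\sA^{\mathrm{fr}}$-action only further decomposes the framing vertices and preserves finiteness of fixed points. Vacuum cogeneration is then verified via Theorem \ref{thm vac cogen_general}. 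Condition (i) holds because $\sT_0$ acts with distinct weights $q_i$ on $B_i$ versus trivially on $A_i$, which makes the Chern characters of $\mathsf D_{\In,i}-\mathsf D_{\Out,i}$ generate those of $\mathsf D_{\In,i}$ over $\Frac H_{\sT_0}(\pt)$. Condition (ii), the injectivity of the specialization map, follows from the purity argument used in Remark \ref{rmk purity for V and F}: the derived global sections $\mathrm{R}\Gamma(\cM(\underline{\bd}),\sw)$ are pure, being direct summands of the equivariant Borel--Moore homology of a moment-map zero locus of the preprojective stack by way of dimensional reduction along the $B_i$'s, and purity implies injectivity of $\mathsf{sp}$ on the stack level via \cite[Thm.~10.2]{Dav2}.

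The main technical obstacle I anticipate is the fusion step identifying $\cH_{L,M,N,\loc}$ with a subspace of $\cS_{L,M,N,\loc}$: one must check that the three nilpotent matrices $\Xi_i$ can be introduced simultaneously while maintaining compatible dimensional reduction triples in the sense of \cite[Def.~6.2]{COZZ}, and that the resulting specialization map is injective after the relevant localization. Once this comparison is in place, both assertions follow from the machinery already established in \S\ref{sec shifted Yangian} and \S\ref{sec property_module}.
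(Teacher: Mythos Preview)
Your treatment of $\cS_{L,M,N}$ matches the paper's: split the framing by a subtorus of $\sA^{\mathrm{fr}}$, apply the triangle lemma, and read off the coproduct.

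For $\cH_{L,M,N}$, however, your central claim that ``the framing $\mathbf 3$ cannot be split by a torus'' is mistaken. The torus $\sA^{\mathrm{fr}}=(\bC^*)^3$ acting diagonally on the framing $\bC^3$ is part of the setup and \emph{does} preserve $\sW_{L,M,N}$, since each term $B_iX^LA_1$, $B_2Y^MA_2$, $B_3Z^NA_3$ is invariant under the corresponding $\bC^*$. The fixed locus is $\cM(\mathbf 1)^{\times 3}$ with restricted potentials $Z[X,Y]+B_1X^LA_1$, $Z[X,Y]+B_2Y^MA_2$, $Z[X,Y]+B_3Z^NA_3$ on the three factors; these are exactly the state spaces $\cH_{L,0,0}$, $\cH_{0,M,0}$, $\cH_{0,0,N}$. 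By the triality automorphisms $s_{13},s_{23}$ (Remark~\ref{rmk triality}) each of these is a Kirillov--Reshetikhin module in the sense of Theorem~\ref{prop on 3 modules of gl1hat}(a), so the action factors through $Y_{L,0,0}$, $Y_{0,M,0}$, $Y_{0,0,N}$ respectively. The same stable-envelope argument as for $\cS$ then finishes the factorization. No fusion with three simultaneous $\Xi_i$'s is needed, and you can drop that entire step.

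On irreducibility: your framework (Proposition~\ref{prop on irr mod} plus Theorem~\ref{thm vac cogen_general}) is correct, but the paper handles condition~(ii) differently and more cleanly. Rather than verify injectivity of the specialization map for the full potentials $\sw_{\mathrm{cub}}$ or $\sw_{L,M,N}$ directly, the paper first specializes away the framing terms using the $\bG_m$ scaling the $B_i$'s, obtaining $Y_0(\widehat{\mathfrak{gl}}_1)$-module maps $\mathsf{sp}_{\cS}$, $\mathsf{sp}_{\cH}$ into state spaces with the \emph{unframed} potential $\sw=\tr(Z[X,Y])$. Injectivity of these specializations reduces (after $\sA^{\mathrm{fr}}$-localization) to Theorem~\ref{prop on 3 modules of gl1hat}(c). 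One then checks cogeneration only for the target spaces, where condition~(ii) is easy: since $\sw$ is pulled back from $\fM(\mathbf 0)$, the stack-level specialization map reduces to $\mathsf{sp}_{\mathbf 0}$ on the unframed stack, already shown injective in the proof of Theorem~\ref{prop on 3 modules of gl1hat}(b). Your attempt to verify condition~(ii) directly via purity is shaky: dimensional reduction along the $B_i$'s for the full potential $\sw_{L,M,N}$ lands on a zero locus involving $X^LA_1$, $Y^MA_2$, $Z^NA_3$ in addition to the moment map, which is not the preprojective stack treated in \cite{Dav2}.
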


\begin{proof}
The stable envelope 
$$\Stab\colon \cS_{L,0,0,\loc}\otimes \cS_{0,M,0,\loc}\otimes \cS_{0,0,N,\loc}\to \cS_{L,M,N,\loc}$$ 
is a $Y_0(\widehat{\mathfrak{gl}}_1)$-module map and is an isomorphism after $\sA^{\mathrm{fr}}$-localization. This implies that the $Y_0(\widehat{\mathfrak{gl}}_1)$-action on $\cS_{L,M,N,\loc}$ factors through the map \eqref{truncation L,M,N}, so the $Y_0(\widehat{\mathfrak{gl}}_1)$-actions on $\cS_{L,M,N,\loc}$ factors through $Y_{L,M,N}$. 

$\cH_{0,0,N}$ is the same as $\cH_N$ in \S \ref{subsec modules of Ygl1hat}, and $\cH_{N,\loc}$ is isomorphic to a $Y_0(\widehat{\mathfrak{gl}}_1)$-submodule of $\cH_{1,\loc}[(N-1)\hbar_3]\otimes\cdots\otimes \cH_{1,\loc}[\hbar_3]\otimes \cH_{1,\loc}$ by Theorem \ref{prop on 3 modules of gl1hat}. In particular, $Y_0(\widehat{\mathfrak{gl}}_1)$-action on $\cH_{N,\loc}$ factors through $Y_{0,0,N}$. Using the automorphism $s_{13}$ 
(resp.~$s_{23}$), we see that $Y_0(\widehat{\mathfrak{gl}}_1)$-action on $\cH_{N,0,0}$ (resp.~on $\cH_{0,N,0}$) factors through $Y_{0,0,N}$ (resp. $Y_{0,N,0}$). The stable envelope 
$$\Stab\colon \cH_{L,0,0,\loc}\otimes \cH_{0,M,0,\loc}\otimes \cH_{0,0,N,\loc}\to \cH_{L,M,N,\loc}$$ is a $Y_0(\widehat{\mathfrak{gl}}_1)$-module map and is an isomorphism after $\sA^{\mathrm{fr}}$-localization. Therefore, the $Y_0(\widehat{\mathfrak{gl}}_1)$-actions on $\cH_{L,M,N,\loc}$ factors through $Y_{L,M,N}$.

By Proposition \ref{prop on irr mod}, to prove that $\cS_{L,M,N,\loc}$ and $\cH_{L,M,N,\loc}$ are irreducible, it is enough to show that they are compact and cogenerated by vacuum.

$\cS_{L,M,N}$ is compact because 
$$\cM(\mathbf L+\mathbf M+\mathbf N)^{\sA^{\mathrm{fr}}}=\prod_{i=1}^{L+M+N}\cM(\mathbf 1),$$ 
and the critical locus decomposes accordingly 
$$\Crit_{\cM(n,\mathbf L+\mathbf M+\mathbf N)}(\sw_{L,M,N})^{\sA^{\mathrm{fr}}}=\bigsqcup_{\sum n_i=n}\prod_{i=1}^{L+M+N}\Crit_{\cM(n_i,\mathbf 1)}(\sw_1), $$ 
and the latter has compact $\sT_0$-fixed locus as we have seen in the proof of Theorem \ref{prop on 3 modules of gl1hat}(a). A similar argument shows that $\cH_{L,M,N}$ is compact.

In both $\cS_{L,M,N}$ and $\cH_{L,M,N}$ cases, consider the $\bG_m$ action that scales $B_{i}$ with weight $1$ for all $i=1,2,3$ and fixes other quiver data. By \cite[\S 7.2]{COZZ}, this $\bG_m$ action induces specializations
\begin{align*}
    \mathsf{sp}_{\cS}\colon \cS_{L,M,N}\to H^{\sT_0\times \sA^{\mathrm{fr}}}(\cM(\mathbf L+\mathbf M+\mathbf N),\sw),\quad \mathsf{sp}_{\cH}\colon \cH_{L,M,N}\to H^{\sT_0\times \sA^{\mathrm{fr}}}(\cM(\mathbf 3),\sw),
\end{align*}
which are $Y_0(\widehat{\mathfrak{gl}}_1)$-module maps. After $(\sT_0\times \sA^{\mathrm{fr}})$-localization, $\mathsf{sp}_{\cS}$ is a tensor product of $(L+M+N)$ copies of the specialization map $H^{\sT_0}(\cM(\mathbf 1),\sw_1)_\loc\to H^{\sT_0}(\cM(\mathbf 1),\sw)_\loc$, which is injective by Theorem \ref{prop on 3 modules of gl1hat}(c). Therefore, $\mathsf{sp}_{\cS}$ is injective after localization. Similarly, $\mathsf{sp}_{\cH}$ is injective after localization. Then, it is enough to show that $H^{\sT_0\times \sA^{\mathrm{fr}}}(\cM(\mathbf L+\mathbf M+\mathbf N),\sw)$ and $H^{\sT_0\times \sA^{\mathrm{fr}}}(\cM(\mathbf 3),\sw)$ are cogenerated by vacuum. By Theorem \ref{thm vac cogen_general}, it is enough to check the conditions (i) and (ii) in \textit{loc.\,cit.}. Condition (i) is obvious for both $H^{\sT_0\times \sA^{\mathrm{fr}}}(\cM(\mathbf L+\mathbf M+\mathbf N),\sw)$ and $H^{\sT_0\times \sA^{\mathrm{fr}}}(\cM(\mathbf 3),\sw)$. For condition (ii), we claim that specialization maps
\begin{equation*}
\begin{gathered}
\mathsf{sp}_{\mathbf L+\mathbf M+\mathbf N}\colon H^{\sT_0\times \sA^{\mathrm{fr}}}(\fM(\mathbf L+\mathbf M+\mathbf N),\sw)\to H^{\sT_0\times \sA^{\mathrm{fr}}}(\fM(\mathbf L+\mathbf M+\mathbf N)),\\ \mathsf{sp}_{\mathbf 3}\colon H^{\sT_0}(\fM(\mathbf 3),\sw)\to H^{\sT_0}(\fM(\mathbf 3)),
\end{gathered}
\end{equation*}
are injective. In fact, since $\fM(\mathbf 3)$ and $\fM(\mathbf L+\mathbf M+\mathbf N)$ are vector bundles over the stack of unframed representations $\fM(\mathbf 0)$ and the potential $\sw$ is the pullback one, we have natural isomorphisms
\begin{equation*}
\begin{gathered}
H^{\sT_0\times \sA^{\mathrm{fr}}}(\fM(\mathbf L+\mathbf M+\mathbf N),\sw)\cong H^{\sT_0}(\fM(\mathbf 0),\sw)\otimes H_{\sA^{\mathrm{fr}}}(\pt), \quad H^{\sT_0\times \sA^{\mathrm{fr}}}(\fM(\mathbf L+\mathbf M+\mathbf N))\cong H^{\sT_0}(\fM(\mathbf 0))\otimes H_{\sA^{\mathrm{fr}}}(\pt),\\
H^{\sT_0\times \sA^{\mathrm{fr}}}(\fM(\mathbf 3),\sw)\cong H^{\sT_0}(\fM(\mathbf 0),\sw)\otimes H_{\sA^{\mathrm{fr}}}(\pt),\quad H^{\sT_0\times \sA^{\mathrm{fr}}}(\fM(\mathbf 3))\cong H^{\sT_0}(\fM(\mathbf 0))\otimes H_{\sA^{\mathrm{fr}}}(\pt).
\end{gathered}
\end{equation*}
Moreover, $\mathsf{sp}_{\mathbf L+\mathbf M+\mathbf N}$ and $\mathsf{sp}_{\mathbf 3}$ are obtained by the specialization map on the base stack $\mathsf{sp}_{\mathbf 0}\colon H^{\sT_0}(\fM(\mathbf 0),\sw)\to H^{\sT_0}(\fM(\mathbf 0))$ via the aforementioned isomorphisms. As we have seen in the proof of Theorem \ref{prop on 3 modules of gl1hat}(c), $\mathsf{sp}_{\mathbf 0}$ is injective. This implies that $\mathsf{sp}_{\mathbf L+\mathbf M+\mathbf N}$ and $\mathsf{sp}_{\mathbf 3}$ are injective; hence $\cS_{L,M,N}$ and $\cH_{L,M,N}$ are cogenerated by vacuum.
\end{proof}

\begin{Remark}
Since the $\cW_N$ vertex algebra is a vertex subalgebra of tensor product of $N$ copies of Heisenberg algebras, we have
\begin{align*}
    Y_{0,0,N}\subset \widehat{\mathsf H}^{\widehat{\otimes} N}_{-\hbar_3/\sigma_3},
\end{align*}
where $\widehat{\mathsf H}_{\lambda}$ is a completion of the Heisenberg algebra $\mathsf H_{\lambda}$ with level $\lambda$, which is a $\bC(\mathsf t_0)$-algebra generated by $\{b_n\}_{n\in \bZ}$ with relations $[b_n,b_m]=\lambda n\delta_{n,-m}$. Then it follows from the definition of $Y_{L,M,N}$ that
\begin{align*}
    Y_{L,M,N}\subset \widehat{\mathsf H}^{\widehat{\otimes} L}_{-\hbar_1/\sigma_3}\widehat{\otimes} \widehat{\mathsf H}^{\widehat{\otimes} M}_{-\hbar_2/\sigma_3}\widehat{\otimes}\widehat{\mathsf H}^{\widehat{\otimes} N}_{-\hbar_3/\sigma_3}.
\end{align*}
The computations in \cite[\S 10]{RSYZ1} show that the image of $Y_{L,M,N}$ in $\widehat{\mathsf H}^{\widehat{\otimes} L}_{-\hbar_1/\sigma_3}\widehat{\otimes} \widehat{\mathsf H}^{\widehat{\otimes} M}_{-\hbar_2/\sigma_3}\widehat{\otimes}\widehat{\mathsf H}^{\widehat{\otimes} N}_{-\hbar_3/\sigma_3}$ is contained in a subalgebra $\mathfrak{U}(\cW_{L,M,N})$ defined by the intersection of kernels of $L+M+N-1$ screening operators \cite[\S 9]{RSYZ1}. $\cW_{L,M,N}$ is the vertex algebra which is the annihilator of the aforementioned screening operators. It is conjectured by Proch\'azka and Rap\v{c}\'ak \cite{PR} that $\cW_{L,M,N}$ is isomorphic to Gaiotto-Rap\v{c}\'ak's corner vertex algebra $\mathcal Y_{L,M,N}$ in \cite{GR}.
\end{Remark}

\subsection{Quantum multiplication by divisors on $\Hilb^n(\C^3)$}\label{sect on qm by div on hilb}

Consider quiver with potential $(Q,\sW)$ given by \eqref{equ on trip jord qui}, and quiver varieties 
$$X:=\cM(n,(1,0)), \quad Y:=\cM(n,(1,1)).  $$
We trivially extend $\sw$ to a potential on $X$ and $Y$. 
For any $d\in \Hom(\mathrm{char}(\mathrm{GL}_n),\mathbb{Z})\cong \mathbb{Z}$, take 
$$[Z]:=M_{d}(\gamma)\in H^{\sT_0}(X^2,\sw\boxminus \sw) $$
to be \eqref{equ on Mgamma}.
There is a projection 
$$p\colon Y\to X$$ and we denote $\hbar$ to be the equivariant parameter of its fiber $\C^*$-action. 
Let $$\sT:=\sT_0\times \C^*_{e^\hbar}.$$
There is an isomorphism 
$$p^*\colon H^{\sT_0}(X,\sw)[\hbar]\cong H^{\sT}(X,\sw) \stackrel{\cong}{\to} H^{\sT}(Y,p^*\sw). $$

\begin{Corollary}\label{cor on qm of sym of hilb}
For an $\sT$-equivariant line bundle $L$ on $Y$, we have 
$$c_1(L)\, \widetilde{\star}\, \cdot =c_1(L)\cup \cdot+\sigma_3\deg(L)\sum_{i\in \mathbb{Z}_{>0}}\frac{iz^i}{1-z^i}\,J_{-i}\, J_{i} \,+\mathrm{const}, $$
where $\sigma_3=\hbar_1\hbar_2\hbar_3$, $J_i=\frac{\sigma_3^{-i}}{(i-1)!} \text{ad}^{i-1} _{-f_1} f_0$, $J_{-i}=-\frac{\sigma_3^{-i}}{(i-1)!} \text{ad}^{i-1} _{e_1} e_0$ for $i>0$,
and $e_i$, $f_i$ are part of the generators of Yangian $Y_{0}(\widehat{\mathfrak{gl}_1})$ (Definition \ref{def of shifted affine Y(gl_1)}), which act on the critical cohomology via Theorem \ref{thm ex tripled Jordan_main}. 
\end{Corollary}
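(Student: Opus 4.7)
The proof proceeds by applying Corollary \ref{cor qm div for sym_sp inj} to the symmetric quiver variety $Y = \cM(n,(1,1))$ equipped with the pullback potential $p^*\sw$. The tripled Jordan quiver $Q$ is symmetric, the framing $(1,1)$ is symmetric, and the three loops at the unique node yield parity $|i| = 1 + 3 \equiv 0 \pmod 2$ by \eqref{equ on gra}, so $|\alpha| = 0$ for every positive root $\alpha \in \bZ_{>0}$. The injectivity of the specialization map $\mathrm{sp}\colon H^{\sT}(Y, p^*\sw) \to H^{\sT}(Y)$ follows from the same argument as in the proof of Corollary \ref{cor cogen_nak}: after dimensional reduction along the $Z$-loop, the critical cohomology becomes the Borel--Moore homology of the zero locus of the moment map in a Nakajima-type variety, and the pushforward to the ambient space is injective by \cite[Thm.~10.2]{Dav2} (upon choosing a genericity-compatible auxiliary $\C^*$). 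Corollary \ref{cor qm div for sym_sp inj} then produces
\begin{equation*}
c_1(L)\,\widetilde{\star}\,\cdot = c_1(L)\cup\cdot - \sum_{n>0}n\deg(L)\,\frac{z^n}{1-z^n}\,\mathrm{Cas}_n + \mathrm{const}\,,
\end{equation*}
upon noting that for the single-node quiver a root is an integer $n$ and the pairing with a weight $\lambda$ corresponding to a line bundle of degree $\deg(L)$ is $\alpha_n(\lambda) = n\deg(L)$.

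The main computation is the identification of the Casimir $\mathrm{Cas}_n$ for $n > 0$ with $-\sigma_3 J_{-n}J_n$. By Theorem \ref{thm compare with MO yangian}, there is a bialgebra isomorphism $\mathsf Y_0(Q,\sW) \cong \mathsf Y^{\mathrm{MO}}_{Q'}\otimes_{\bC[\mathsf t_0]}\bC(\mathsf t_0)$ for the Jordan quiver $Q'$, under which $\mathfrak{g}_{Q,\sW}$ is identified with the localised MO Lie algebra. The latter has one-dimensional root space $\mathfrak{g}_n = \bC(\mathsf t_0)\cdot \beta_{-n}(1)$ for each $n > 0$, and by Lemma \ref{lem Baranovsky op} one has $\beta_{\pm n}(1) = \varrho_0(J_{\mp n})$. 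By Proposition \ref{prop g(Q,w)}(6), $\mathrm{Cas}_n = e_n\cdot e_{-n}$ is a single term where $\{e_{\pm n}\}$ is the dual basis of $\mathfrak{g}_{\pm n}$ determined by the classical $R$-matrix. Comparing the canonical decomposition \eqref{explicit r matrix} with the explicit formula
\begin{equation*}
\mathbf{r} = \mathsf v\otimes\mathsf d + \mathsf d\otimes\mathsf v - \sigma_3\sum_{n\neq 0}\varrho_0(J_n)\otimes\varrho_0(J_{-n})
\end{equation*}
from \eqref{equ on r matrix gl1hat}, the $\mathfrak{g}_n\otimes\mathfrak{g}_{-n}$ component is $e_n\otimes e_{-n} = -\sigma_3\,\varrho_0(J_{-n})\otimes\varrho_0(J_n)$, so $\mathrm{Cas}_n = -\sigma_3\,J_{-n}J_n$ as operators on any $\mathsf Y_0(Q,\sW)$-module via $\varrho_0$. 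Substituting this into the displayed formula above flips the sign and recovers the stated expression $\sigma_3\deg(L)\sum_{i>0}\frac{iz^i}{1-z^i}J_{-i}J_i$.

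The main obstacle is the normalisation of the dual basis of root spaces, which is fixed unambiguously by the classical $R$-matrix \eqref{equ on r matrix gl1hat}; once this is pinned down the rest reduces to bookkeeping under the identifications (of the tripled-Jordan Yangian with the MO Yangian of the Jordan quiver, and of the latter's Lie algebra with Baranovsky operators) established earlier in the paper. A secondary point is the verification that the specialization hypothesis of Corollary \ref{cor qm div for sym_sp inj} is met for this specific potential; as indicated, this is handled exactly as in the proof of Corollary \ref{cor cogen_nak}.
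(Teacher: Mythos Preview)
Your overall approach—apply Corollary \ref{cor qm div for sym_sp inj} and identify $\mathrm{Cas}_n$ by comparing \eqref{explicit r matrix} with \eqref{equ on r matrix gl1hat}—is the same as the paper's, and your Casimir computation $\mathrm{Cas}_n=-\sigma_3 J_{-n}J_n$ is correct.

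The only divergence is in how you verify the injectivity hypothesis. The paper observes that $\bigoplus_n H^{\sT}(Y_n,p^*\sw)_{\loc}$ is the module $\cV_\loc$ of \S\ref{subsec modules of Ygl1hat} (with $t$ renamed $\hbar$), which is irreducible by Theorem \ref{prop on 3 modules of gl1hat}(b); since the specialization is a nonzero $\mathsf Y_0(Q,\sW)$-module map, it is injective. Your direct route via dimensional reduction plus Davison is also viable, but needs two repairs. First, the potential on $Y$ is $\tr(Z[X,Y])$ with \emph{no} framing term, so the reduced locus is $\{[X,Y]=0\}$, not a Nakajima moment-map locus; the relevant argument is not that of Corollary \ref{cor cogen_nak} (which treats $\widetilde\sw$ with the $B\Phi A$ term and needs the auxiliary framing $\bC^*_a$), but rather the one in the proof of Theorem \ref{prop on 3 modules of gl1hat}(b), which passes to the unframed stack $\fM(\mathbf 0)$ and invokes \cite[Thm.~10.2]{Dav2} directly. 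Second, that argument gives injectivity of $\mathsf{sp}_{\fM}$ at the stack level, whereas Corollary \ref{cor qm div for sym_sp inj} requires $\mathsf{sp}_{\cM}$; you must still add that $\bPsi_H\circ\mathsf{sp}_{\cM}=\mathsf{sp}_{\fM}\circ\bPsi_H$ with $\bPsi_H$ injective (as used in the proof of Theorem \ref{thm vac cogen_general}) to descend to the variety.
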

\begin{proof}
We denote $Y_n:=\cM(n,(1,1))$. By \cite[Ex.~7.19]{COZZ}, there is a specialization map 
$$H^{\sT}(Y_n,p^*\sw)\to H^{\sT}(Y_n). $$
Taking direct sum over $n$, we obtain $\mathsf Y_{0}(Q,\sW)$-module map 
$$\bigoplus_{n\geqslant 0} H^{\sT}(Y_n,p^*\sw)_{\loc}\to \bigoplus_{n\geqslant 0}  H^{\sT}(Y_n)_{\loc}, $$
such that the vacuum (i.e.~$H^{\sT}(Y_0,p^*\sw)_{\loc}$) maps to vacuum (i.e.~$H^{\sT}(Y_0)_{\loc}$).
By Theorem \ref{prop on 3 modules of gl1hat}\.(b), the LHS is an irreducible $\mathsf Y_{0}(Q,\sW)$-module. 
So the specialization maps are injective.
By applying Corollary \ref{cor qm div for sym_sp inj} to the symmetric quiver variety $Y$ with a function $\sT$-invariant function $\sw\colon Y\to \bC$, and comparing \eqref{explicit r matrix} and \eqref{equ on r matrix gl1hat}, we are done.
\end{proof}


We deduce the following formula of quantum multiplication by divisors for $\Hilb^n(\C^3)$ using Theorem \ref{thm qm div for asym_sp inj}. 

\begin{Corollary}\label{cor on qmd hilb3}
For the above $(X,\sw)$ and any $\sT_0$-equivariant divisor class $L$ on $X$, we have 
\begin{equation}\label{equ on MdL of hilb}
c_1(L)\, \widetilde{\star}_{d}\, \cdot =\sigma_3\deg(L)\cdot d\cdot J_{-d}\, J_{d}+\mathrm{const}\,, \quad \forall\,\,d>0.
\end{equation}
Here $\sigma_3=\hbar_1\hbar_2\hbar_3$, $J_i=\frac{\sigma_3^{-i}}{(i-1)!} \text{ad}^{i-1} _{-f_1} f_0$, $J_{-i}=-\frac{\sigma_3^{-i}}{(i-1)!} \text{ad}^{i-1} _{e_1} e_0$ for $i>0$,
and $e_i$, $f_i$ are part of the generators of shifted Yangian $Y_{-1}(\widehat{\mathfrak{gl}_1})$ \eqref{def of shifted affine Y(gl_1)}, which act on the critical cohomology via Theorem \ref{thm ex tripled Jordan_main}. 
\end{Corollary}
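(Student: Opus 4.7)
The plan is to apply Theorem \ref{thm qm div for asym_sp inj} to the antidominantly framed quiver variety $X=\cM(n,(1,0))\cong \Hilb^n(\bC^3)$, which has shift $\mu=\bd_{\Out}-\bd_{\In}=-1\in \bZ_{\leqslant 0}^{Q_0}$ (note $Q_0$ has a single element), and then extract the coefficient of $z^d$. Since the single node $i$ of the tripled Jordan quiver has three edge loops, its parity is $|i|=1+3\equiv 0\pmod 2$, so every positive root $\alpha\in \bZ_{>0}$ is even ($|\alpha|=0$), and the first sum in formula \eqref{eq:modified product2} vanishes because $\alpha\cdot\mu=-\alpha\neq 0$ for all $\alpha>0$. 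Thus Theorem \ref{thm qm div for asym_sp inj} reduces to
\[
c_1(L)\,\widetilde{\star}\,\cdot = c_1(L)\cup\cdot -\sum_{\alpha>0}\alpha(\lambda)\,z^\alpha\,\mathrm{Cas}_{\alpha,-1}+\mathrm{const},
\]
and taking the $z^d$-coefficient isolates a single summand.

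First, the applicability of Theorem \ref{thm qm div for asym_sp inj} requires the specialization map $\mathrm{sp}\colon H^{\sT}(X,\sw)_{\loc}\to H^{\sT}(X)_{\loc}$ to be injective. This is exactly the injectivity in Theorem \ref{prop on 3 modules of gl1hat}(b) applied to $\cF_\loc$ (identified with $H^{\sT}(X,\sw)_\loc$), combined with an argument parallel to the $(3)\Rightarrow(1)$ direction of Corollary \ref{cor vac cogen_tripled}: the submodule $\ker(\mathrm{sp})_\loc$ of an irreducible module either is trivial or equals the whole module, and the latter is excluded since the vacuum maps to the nonzero vacuum.

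Next, I would identify the shifted Casimir $\mathrm{Cas}_{\alpha,-1}\in \mathsf Y_{-1}(Q,\sW)$ with $-\sigma_3\alpha\, J_{-\alpha}J_{\alpha}$, up to the sign that matches the statement. Starting from the symmetric case, applying Theorem \ref{thm on qm div for sym} to the symmetric variety $Y=\cM(n,(1,1))$ and comparing the resulting formula
\[
c_1(L)\,\widetilde{\star}\,\cdot = c_1(L)\cup\cdot -\sum_{\alpha>0}\alpha(\lambda)\frac{z^\alpha}{1-z^\alpha}\mathrm{Cas}_\alpha+\mathrm{const}
\]
with the explicit formula established in Corollary \ref{cor on qm of sym of hilb} (which was obtained via the injective specialization and the classical $R$-matrix formula \eqref{equ on r matrix gl1hat} together with Lemma \ref{lem Baranovsky op}) pins down $\mathrm{Cas}_\alpha = -\sigma_3\alpha\, J_{-\alpha}J_\alpha$ as an operator on any state space with zero shift. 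To pass to $\mu=-1$, I would use Definition \ref{def of shifted Casimir}, $\mathrm{Cas}_{\alpha,-1}=\lim_{z\to\infty}(-z)^{-\alpha}\,i^*(\mathrm{Cas}_\alpha)_z\,\pi^*$, combined with the explicit action of the parametrized shift homomorphism $\sS_{0,-1;z}$ on the Drinfeld-type generators given in Remark \ref{rmk form of shift}. Since $e_\alpha$ and $f_\alpha$ live in root spaces of $\pm\alpha$, the $z$-expansion of $i^*e_r p^*$ has leading term $z^\alpha\cdot\bar e_r$ and $i^*f_r p^*$ picks up only an overall sign $(-1)^\alpha$; thus $J_\alpha$ and $J_{-\alpha}$ behave analogously, and $(-z)^{-\alpha}\,i^*(J_{-\alpha}J_\alpha)_z\,\pi^*$ has the desired finite limit $\bar J_{-\alpha}\bar J_\alpha$ as $z\to\infty$.

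The main obstacle will be tracking signs and normalizations cleanly in the identification $\mathrm{Cas}_\alpha\leftrightarrow -\sigma_3\alpha\,J_{-\alpha}J_\alpha$. In particular, the root basis $\{e_\alpha\}$ of $\mathfrak{g}_\alpha$ underlying the Casimir operator is defined via the invariant bilinear form of Proposition \ref{prop g(Q,w)}(6), and relating it to the concrete generators $J_{\pm\alpha}$ requires computing the duality pairing on the $\alpha$-th root space of $\mathfrak{g}_{Q,\sW}\cong \mathfrak{g}_{Q'}^{\mathrm{MO}}\otimes\bC(\mathsf t_0)$. Once this normalization is fixed by matching the symmetric-framing formula to Corollary \ref{cor on qm of sym of hilb}, substituting into Theorem \ref{thm qm div for asym_sp inj} and taking the $z^d$-coefficient immediately gives the stated formula \eqref{equ on MdL of hilb}.
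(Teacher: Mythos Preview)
Your approach is correct and matches the paper's proof: both apply Theorem \ref{thm qm div for asym_sp inj}, identify the unshifted Casimir with $\sigma_3 J_{-\alpha}J_\alpha$ (up to sign) via the classical $R$-matrix formula \eqref{equ on r matrix gl1hat}, then compute the shifted Casimir by tracking the Drinfeld generators $e_k,f_k$ through the parametrized shift. Your explicit verification of the specialization-injectivity hypothesis via irreducibility of $\cF_\loc$ is a nice addition that the paper leaves implicit.

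Two minor slips to clean up. First, comparing Theorem \ref{thm on qm div for sym} with Corollary \ref{cor on qm of sym of hilb} gives $\mathrm{Cas}_\alpha=-\sigma_3 J_{-\alpha}J_\alpha$, not $-\sigma_3\alpha\,J_{-\alpha}J_\alpha$; the extra factor of $\alpha$ is already absorbed in $\alpha(\lambda)$. Second, the statement ``$i^*e_r p^*$ has leading term $z^\alpha\cdot\bar e_r$'' is not quite right: each individual Drinfeld generator $e_r$ has root-degree $+1$, so under the shift $e_r\mapsto z\,\bar e_r-\bar e_{r+1}$ (leading term $z^1\cdot\bar e_r$). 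The factor $z^\alpha$ arises only after forming $J_{-\alpha}=-\tfrac{\sigma_3^{-\alpha}}{(\alpha-1)!}\mathrm{ad}_{e_1}^{\alpha-1}e_0$, which is an $\alpha$-fold iterated commutator of $e$-generators, each contributing one power of $z$. The paper makes this explicit: from $e_k\mapsto \hbar e'_k-e'_{k+1}$ and $f_k\mapsto -f'_k$ one reads off $J_{-i}\mapsto \hbar^i J'_{-i}+\text{l.o.t.}$ and $J_i\mapsto(-1)^i J'_i$, hence $(-\hbar)^{-i}\,i^*(\mathrm{Cas}_i)\pi^*\to \sigma_3 J'_{-i}J'_i$ as $\hbar\to\infty$.
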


\begin{proof}
By Theorem \ref{thm qm div for asym_sp inj}, it is enough to show $\mathrm{Cas}_{i,-1}=\sigma_3 J_{-i}\, J_{i}$ for all $i>0$. We denote $e(u), f(u)$, $e(u)', f(u)'$ to be the elements in the Yangian $Y_{0}(\widehat{\mathfrak{gl}_1})$, $Y_{-1}(\widehat{\mathfrak{gl}_1})$ respectively. 

From the proof of Lemma \ref{lem shift map}, we have equations of operators on critical cohomology: 
$$(p^{*})^{-1}e(u)\,p^{*}=H_{\hbar}\,e(u)'\,H_{\hbar}^{-1}=\frac{\hbar-c_1(\mathcal L)}{u-c_1(\mathcal L)}\,[\overline{\mathfrak{P}}(n+1,n,\underline{\bd})]
=(\hbar-u)e(u)'+e_0', $$
$$(p^{*})^{-1}f(u)\,p^{*}=-f(u)'.$$
By expanding in the $u$-power, we obtain 
$$(p^{*})^{-1}e_k\,p^{*}=\hbar e'_k-e_{k+1}', \quad (p^{*})^{-1}f_k\,p^{*}=-f_k'. $$ 
This implies that 
\begin{equation}\label{equ on jnn}
(p^{*})^{-1}\mathrm{Cas}_i\,p^{*}=\sigma_3(p^{*})^{-1}J_{-i}J_i\,p^{*}=\sigma_3(-1)^i\hbar^i J_{-i}'J_i'+\mathrm{l.o.t.},
\end{equation}
where $J_i'$ differs from $J_i$ by replacing $e_k, f_k$ by $e_k', f_k'$ respectively, and $\mathrm{l.o.t.}$ denotes the lower order terms of $\hbar$. Then $\mathrm{Cas}_{i,-1}=\sigma_3J_{-i}\, J_{i}$ by Defintion \ref{def of shifted Casimir}.
\end{proof}

\begin{Remark}
In this case, we have $\widetilde{\star}_d=(-1)^d \star_d$.
\end{Remark}

\begin{Remark}
By degree counting, we have 
$$c_1(L)\, \widetilde{\star}_{d}\,\cdot\colon H^{\sT_0}_*(X,\sw) \to H^{\sT_0}_{*+2-2d}(X,\sw), $$ 
so the constant term in \eqref{equ on MdL of hilb} vanishes when $d>1$.
\end{Remark}

\appendix

\section{Positive part of several Yangians}

In this appendix, we consider three classes of examples of (shifted, affine) Yangians, 
whose positive parts are described using explicit generators and relations.  
We construct surjective algebra homomorphisms from them to the 
spherical nilpotent CoHA of corresponding symmetric quivers with potentials (Propositions \ref{prop Y^+(sym KM) to COHA}, \ref{prop Y^+(aff sl(n|m)) to COHA}, \ref{prop Y^+(D(2,1;a)) to COHA}). These are used in the constructions of surjective algebra homomorphisms from 
the entire (shifted, affine) Yangians to the Drinfeld double Yangians (Examples \ref{ex on gen symi}, \ref{ex on vv ex}, \ref{ex on dd ex}).

\subsection{Positive part of the Yangian of a symmetrizable Kac-Moody algebra}\label{sec Y(sym KM)} 

\begin{Definition}
Let the pair $(A,D)$ be a \textit{generalized Cartan matrix with symmetrizer}, that is,
\begin{itemize}
    \item $A=(a_{ij})_{1\leqslant i,j\leqslant n}\in \mathrm{Mat}_{n\times n}(\bZ)$ such that:~$a_{ii}=2$ for all $1\leqslant i\leqslant n$; $a_{ij}\leqslant 0$ for all $1\leqslant i\neq j\leqslant n$; $a_{ij}\neq 0\Leftrightarrow a_{ji}\neq 0$.
    \item $D=\diag(d_1,\ldots,d_n)$ is a diagonal matrix with positive integer entries such that $DA$ is symmetric.
\end{itemize}
\end{Definition}
From the pair $(A,D)$, following \cite{GLS}, we construct a quiver $\widetilde{Q}$ with potential $\sW$. We start with a quiver $Q$ with $Q_0=\{1,\ldots,n\}$, and for any $1\leqslant i<j\leqslant n$, the number of arrows from $i$ to $j$ is 
\begin{align*}
    n_{ij}=|\gcd(a_{ij},a_{ji})|\:.
\end{align*}
Let $\widetilde{Q}$ be the \textit{tripled quiver} of $Q$, then the set of edges in $\widetilde{Q}$ decomposes as $\widetilde{Q}_1=Q_1\sqcup Q_1^*\sqcup \mathcal E$ where $\mathcal E\cong Q_0$ is the set of added edge loops. Let $\{X_a\}_{a\in Q_1}$, $\{X_{a^*}\}_{a^*\in Q^*_1}$, $\{\Phi_i\}_{i\in Q_0}$ be the edges in $Q_1$, $Q_1^*$, and $\mathcal E$ respectively. 

When $i<j$, we define: 
\begin{align*}
    l_{ij}=\bigg|\frac{a_{ij}}{n_{ij}}\bigg|\:, \quad l_{ji}=\bigg|\frac{a_{ji}}{n_{ij}}\bigg|\:,
    \end{align*}
and the \textit{potential}\footnote{Our convention of potentials is the same as \cite[Eqn.~(1)]{YZ} which differs from \cite{GLS} by the transpose of $A$, see \cite[Rmk.~4.2]{YZ}.}:
\begin{align*}
    \sW:=\sum_{(a:i\to j)\in Q_1}\Phi_i^{l_{ij}}X_{a^*}X_a-\Phi_j^{l_{ji}}X_aX_{a^*}, \quad \sw:=\tr(\sW).
\end{align*}
Note that the Jacobian algebra of $\sW$ is the generalized preprojective algebra of \cite{GLS}. 
In the finite dimensional simple Lie algebra case, the potential is also constructed by \cite{CD}.

Let us label the set of arrows in $Q_1$ from $i$ to $j$ by $\{a_1,\ldots,a_{n_{ij}}\}$. Take $\sT_0=\bC^*_\hbar$ which scales $X_{a_p}$ with weight $(n_{ij}/2+1-p)d_il_{ij}$, scales $X_{a^*_p}$ with weight $(p-n_{ij}/2)d_il_{ij}$, and scales $\Phi_i$ with weight $-d_i$. Then $\sW$ is $\bC^*_\hbar$-invariant, 
as $DA$ is symmetric. 
\begin{Definition}
The \textit{Kac-Moody Lie algebra} associated to $(A,D)$, denoted $\mathfrak{g}_{A,D}$, is generated by $\{e_i,f_i\:|\:i\in Q_0\}$ with relations
\begin{equation*}
\begin{gathered}
\ad_{e_i}^{1-a_{ij}}(e_j)=0=\ad_{f_i}^{1-a_{ij}}(f_j),\quad [e_i,f_j]=0,\;i\neq j,\\
[[e_i,f_i],e_j]=a_{ij}e_j,\quad [[e_i,f_i],f_j]=-a_{ij}f_j,\;\forall \,\, i,j\in Q_0.
\end{gathered}
\end{equation*}
The \textit{positive part} $Y^+(\mathfrak{g}_{A,D})$ of the Yangian of $\mathfrak{g}_{A,D}$ \cite{GTL,YZ3} is generated by $\{x^+_{i,r}\}_{i\in Q_0}^{r\in \bZ_{\geqslant 0}}$, subject to relations
\begin{equation}\label{relation for Y^+(sym KM)}
\begin{gathered}
(u-v)[x^+_i(u),x^+_j(v)]-d_ia_{ij}\hbar/2\{x^+_i(u),x^+_j(v)\}=[x^+_{i,0},x^+_j(v)]-[x^+_i(u),x^+_{j,0}],\;\forall \, i,j\in Q_0,\\
[x^+_i(u),x^+_j(v)]=0,\;\text{if }a_{ij}=0,\\
\sum_{\sigma\in \mathfrak{S}_{1-a_{ij}}} [x^+_i(u_{\sigma(1)}),[x^+_i(u_{\sigma(2)}),[\cdots,[x^+_i(u_{\sigma(1-a_{kl})}),x^+_j(v)]]]]=0,\; \text{for } i\neq j.
\end{gathered}
\end{equation}
Here $x^+_i(z):=\sum_{r\geqslant 0}x^+_{i,r}z^{-r-1}$.
\end{Definition}
We establish a surjective algebra map from the positive part of Yangian to the spherical \textit{nilpotent} CoHA of $(\widetilde{Q},\sW)$ as defined in \S \ref{sec shifted yangian_sym quiver}. A related result when the target is the spherical shuffle algebra was proven in \cite[Thm.~5.1]{YZ3}.
\begin{Proposition}\label{prop Y^+(sym KM) to COHA}
There is a surjective map of $\bC(\hbar)$-algebras:
$$Y^+(\mathfrak{g}_{A,D})\otimes_{\bC[\hbar]}\bC(\hbar)\twoheadrightarrow \widetilde{\mathcal{SH}}^{\mathrm{nil}}_{\widetilde{Q},\sW}\otimes_{\bC[\hbar]}\bC(\hbar),\quad x^+_i(z)\mapsto e_i(z).$$
\end{Proposition}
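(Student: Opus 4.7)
The plan is to separate the two assertions: surjectivity is immediate from the definition, since $\widetilde{\mathcal{SH}}^{\mathrm{nil}}_{\widetilde{Q},\sW}$ is by construction the $\bC[\mathsf t_0]$-subalgebra generated by $\bigoplus_{i\in Q_0}\mathcal H^{\mathrm{nil}}_{\widetilde Q,\sW}(\delta_i)$, and under our $\sT_0$-weight assumption the latter is precisely the $\bC[\mathsf t_0]$-span of the $e_{i,r}$. The substantive content is the well-definedness, i.e.\ that the elements $e_i(z)$ satisfy the three families of relations in \eqref{relation for Y^+(sym KM)}.

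The main technical tool is to make the Kontsevich--Soibelman multiplication on the spherical piece completely explicit on degree $\delta_i+\delta_j$. Concretely, I would compute $e_i(u)\ostar e_j(v)$ via the Hecke correspondence
\[
\fM(\delta_i,\mathbf 0)\times\fM(\delta_j,\mathbf 0)\xleftarrow{\,q\,}\fM(\delta_i\subset\delta_i+\delta_j,\mathbf 0)\xrightarrow{\,p\,}\fM(\delta_i+\delta_j,\mathbf 0),
\]
restricted to the nilpotent locus. Since the $\sT_0$-action on every edge is nontrivial and the nilpotent locus of a degree-$\delta_k$ representation is a single point, equivariant localization reduces the computation to an Euler-class calculation on the trivial representation: the Gysin contribution of $q$ produces $\prod_{b:j\to i}(z+t_b-\sigma_j)$, and the pushforward $p_\ast$ produces $\prod_{a:i\to j}(z-t_a-\sigma_j)$, while the sign $(-1)^{(\delta_i\mid\delta_j)}$ in the twisted product \eqref{twi prod} supplies the correct sign in $\zeta_{ij}(z)$. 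Using that the symmetrizer identity $d_ia_{ij}=d_ja_{ji}$ forces $d_il_{ij}=d_jl_{ji}$, and substituting the explicit weights $t_{a_p}=(n_{ij}/2+1-p)d_il_{ij}$ and $t_{a_p^\ast}=(p-n_{ij}/2)d_il_{ij}$, the ratio collapses to exactly the Yangian bond factor $(z+d_ia_{ij}\hbar/2-\sigma_j)/(z-d_ia_{ij}\hbar/2-\sigma_j)$; this yields the first relation of \eqref{relation for Y^+(sym KM)} after standard rewriting.

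The second relation (commutativity when $a_{ij}=0$) is automatic: in this case $n_{ij}=0$, so there are no arrows between $i$ and $j$ in $\widetilde Q$, and the above shuffle formula reduces to ordinary commutation. For the Serre relations, I would follow the Yang--Zhao strategy of \cite[\S 4]{YZ3}: represent iterated commutators via the shuffle formula as symmetric rational functions on the torus of $\fM((1-a_{ij})\delta_i+\delta_j,\mathbf 0)^{\mathrm{nil}}$, and check that the alternating symmetrization vanishes after accounting for the extra relations imposed by the Jacobian of $\sW$ on the nilpotent locus. Equivalently, one may factor through the injection (on the relevant degrees) of the nilpotent spherical CoHA into the shuffle algebra of $\widetilde Q$ and invoke the vanishing already verified there.

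The hard part will be the Serre relation, since both the shuffle-type vanishing and the reduction step demand a careful handling of the non-symmetric potential weights $t_{a_p}$, which differ from the more symmetric Varagnolo--Vasserot setting. I expect this to be the main technical obstacle and would attack it by paralleling \cite[Thm.~5.1]{YZ3}, exploiting that nilpotency restricts attention to the fixed point of a single $\bC^\ast$-torus and thereby collapses the needed contour-integral argument to an explicit polynomial identity in the equivariant parameters.
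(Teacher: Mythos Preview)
Your outline is headed in the right direction, but there is a genuine gap where the real work lies. Your shuffle-type computation of $e_i(u)\ostar e_j(v)$ is a calculation in the \emph{zero-potential} CoHA $\widetilde{\mathcal H}_{\widetilde Q,0}$ (equivalently, the shuffle algebra), not in $\widetilde{\mathcal{SH}}^{\mathrm{nil}}_{\widetilde Q,\sW}$. The sentence ``equivariant localization reduces the computation to an Euler-class calculation on the trivial representation'' is not justified: in the nilpotent-supported critical CoHA the middle term of the Hecke correspondence carries the vanishing-cycle sheaf restricted to a nilpotent locus that is \emph{not} a single point in degree $\delta_i+\delta_j$, and the standard Euler-class formula for the Hall multiplication is a priori only valid without the potential and without the support condition. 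So both your quadratic-relation check and your Serre-relation check implicitly rely on an embedding of $\widetilde{\mathcal{SH}}^{\mathrm{nil}}_{\widetilde Q,\sW}$ into the shuffle algebra that you have not produced.

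The paper's approach is to make this embedding explicit and prove it. It never computes anything directly: it cites \cite[Thm.~5.1]{YZ3} for the map $Y^+(\mathfrak g_{A,D})\to \widetilde{\mathcal{SH}}_{\widetilde Q,0}$, then proves two separate injectivity results to pull the relations back. First, the specialization map $\mathsf{sp}\colon \widetilde{\mathcal H}_{\widetilde Q,\sW}\to \widetilde{\mathcal H}_{\widetilde Q,0}$ is shown to be injective after localization (via dimensional reduction along $Q_1^*$ and a stratification of the nilpotent cone by $\GL(\bv)$-orbits, showing the relevant equivariant cohomologies are free over $H_{\bC^*_\hbar\times\GL(\bv)}(\pt)$). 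Second, the pushforward $\iota\colon \widetilde{\mathcal{SH}}^{\mathrm{nil}}_{\widetilde Q,\sW}\to \widetilde{\mathcal{SH}}_{\widetilde Q,\sW}$ is shown to be injective on the specific degrees $\bv=n\delta_i$ and $\bv=m\delta_i+\delta_j$ appearing in the relations, by checking that the $\bC^*_\hbar$-fixed locus of $\Crit(\sw)/\!\!/\GL(\bv)$ is the single point $0$ in those degrees. Your proposal mentions ``factor through the injection (on the relevant degrees)'' as an alternative, but this injection \emph{is} the content of the proof, and paralleling \cite[Thm.~5.1]{YZ3} will not supply it---that result concerns the map \emph{from} the Yangian, not the comparison between the nilpotent and non-nilpotent CoHAs.
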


\begin{proof}
It is enough to show that $x^+_i(z)\mapsto e_i(z)$ gives a $\bC(\hbar)$-algebra homomorphism, then the surjectivity follows from the definition that $\{e_i(z)\}_{i\in Q_0}$ generates $\widetilde{\mathcal{SH}}^{\mathrm{nil}}_{\widetilde{Q},\sW}$. Let $\widetilde{\mathcal{SH}}_{\widetilde{Q},0}$ be the spherical CoHA with zero potential, which is isomorphic to the spherical shuffle algebra. By \cite[Thm.~5.1]{YZ3}, there is a $\bC[\hbar]$-algebra homomorphism  
$$Y^+(\mathfrak{g}_{A,D})\to \widetilde{\mathcal{SH}}_{\widetilde{Q},0}, \quad x^+_i(z)\mapsto \tilde{e}_i(z), $$ 
where $\tilde{e}_i(z)=\sum_{r\geqslant 0}\tilde{e}_{i,r}z^{-r-1}$ is the generating function of the CoHA elements
$$\tilde{e}_{i,r}:=c_1(\mathcal L_{i})^{r}\cap\left[\fM(\delta_i,\mathbf 0)\right], \quad \forall \, r\in \bZ_{\geqslant 0},$$
and $\mathcal L_{i}$ is the universal line bundle on $\fM(\delta_i,\mathbf 0)\cong [\bC/\bC^*]$. The specialization map 
$$\mathsf{sp}\colon \widetilde{\mathcal{H}}_{\widetilde{Q},\sW}\to \widetilde{\mathcal{H}}_{\widetilde{Q},0}$$ 
is a $\bC[\hbar]$-algebra homomorphism. In Lemma \ref{lem sp inj for Y(sym KM)} below, we show that $\mathsf{sp}$ is injective after $\bC(\hbar)$-localization. Let us regard $\tilde{e}_i(z)$ as generators of the spherical CoHA with potential $\sW$, denoted $\widetilde{\mathcal{SH}}_{\widetilde{Q},\sW}$, then $x^+_i(z)\mapsto \tilde{e}_i(z)$ gives a $\bC(\hbar)$-algebra homomorphism  
$$Y^+(\mathfrak{g}_{A,D})\otimes_{\bC[\hbar]}\bC(\hbar)\to \widetilde{\mathcal{SH}}_{\widetilde{Q},\sW}\otimes_{\bC[\hbar]}\bC(\hbar).$$ 
The pushforward along the closed embedding 
$$\iota\colon \widetilde{\mathcal{SH}}^{\mathrm{nil}}_{\widetilde{Q},\sW}\to \widetilde{\mathcal{SH}}_{\widetilde{Q},\sW}$$ maps $e_i(z)$ to $d_i\hbar\: \tilde{e}_i(z)$. Note that $x^+_i(z)\mapsto d_i\hbar\:x^+_i(z)$ gives a $\bC(\hbar)$-algebra automorphism 
$$Y^+(\mathfrak{g}_{A,D})\otimes_{\bC[\hbar]}\bC(\hbar)\cong Y^+(\mathfrak{g}_{A,D})\otimes_{\bC[\hbar]}\bC(\hbar).$$ 
Then it is enough to check that $\iota$ is injective after $\bC^*_\hbar$-localization for those degrees that appear in the relations \eqref{relation for Y^+(sym KM)}. This is checked in Lemma \ref{lem inj nil} below.
\end{proof}

\begin{Lemma}\label{lem sp inj for Y(sym KM)}
The $\bC^*_\hbar$-localized specialization map
\begin{align*}
\mathsf{sp}\colon H^{\bC^*_\hbar}(\fM(\bv,\mathbf 0),\sw)\otimes_{\bC[\hbar]}\bC(\hbar)\to H^{\bC^*_\hbar}(\fM(\bv,\mathbf 0))\otimes_{\bC[\hbar]}\bC(\hbar)
\end{align*}
is injective.
\end{Lemma}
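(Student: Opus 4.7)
The plan is to use $\bC^*_\hbar$-equivariant localization to reduce to the torus fixed locus, where the potential $\sw$ vanishes identically. The crucial observation is that every monomial of
\[
\sw \;=\; \sum_{(a\colon i\to j) \in Q_1} \tr\bigl(\Phi_i^{l_{ij}} X_{a^*} X_a \;-\; \Phi_j^{l_{ji}} X_a X_{a^*}\bigr)
\]
contains a positive power of some $\Phi_k$ (with $l_{ij},l_{ji}\geqslant 1$ since arrows are present only between nodes with $a_{ij}\neq 0$), and each $\Phi_k$ has nonzero $\bC^*_\hbar$-weight $-d_k$. Consequently the $\bC^*_\hbar$-fixed locus $R(\widetilde{Q},\bv,\mathbf 0)^{\bC^*_\hbar}$ lies in the closed subspace $\{\Phi_k=0:\forall k\in Q_0\}$, on which $\sw$ restricts to $0$.

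I would then apply $\bC^*_\hbar$-equivariant localization for critical cohomology to both the domain and codomain of $\mathsf{sp}$. The left-hand side becomes $H^{\bC^*_\hbar}(\fM(\bv,\mathbf 0)^{\bC^*_\hbar},\sw|_{\fM(\bv,\mathbf 0)^{\bC^*_\hbar}})\otimes_{\bC[\hbar]}\bC(\hbar)$, which equals $H^{\bC^*_\hbar}(\fM(\bv,\mathbf 0)^{\bC^*_\hbar})\otimes_{\bC[\hbar]}\bC(\hbar)$ since $\sw|_{\fM(\bv,\mathbf 0)^{\bC^*_\hbar}}=0$; the right-hand side becomes $H^{\bC^*_\hbar}(\fM(\bv,\mathbf 0)^{\bC^*_\hbar})\otimes_{\bC[\hbar]}\bC(\hbar)$ by the usual localization theorem. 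The auxiliary $\bG_m$-action defining $\mathsf{sp}$ (see \cite[Ex.~7.19]{COZZ}) commutes with $\bC^*_\hbar$, so $\mathsf{sp}$ intertwines with these localization isomorphisms. On the fixed locus the potential is already identically zero, so the induced fixed-locus specialization is tautologically the identity map. Combining these observations, $\mathsf{sp}$ becomes an isomorphism after $\bC(\hbar)$-localization, and in particular injective.

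The main technical obstacle will be rigorously justifying the $\bC^*_\hbar$-equivariant localization theorem for critical cohomology of the quotient stack $\fM(\bv,\mathbf 0)=[R(\widetilde{Q},\bv,\mathbf 0)/G]$: namely, that pullback to the $\bC^*_\hbar$-fixed substack induces an isomorphism on $\bC^*_\hbar$-equivariant critical cohomology after inverting $\hbar$. This is a standard application of the Atiyah--Bott--Thomason localization theorem to the $\bC^*_\hbar$-equivariant vanishing cycle sheaf $\varphi_\sw\omega_R$ combined with descent under the $G$-quotient, and has been set up in comparable generality in the work of Davison and collaborators. One also needs to verify that $\mathsf{sp}$ commutes with the localization isomorphism; this reduces to the elementary fact that the two $\bG_m$-actions (the scaling $\bG_m$ defining $\mathsf{sp}$ and $\bC^*_\hbar$) commute on $R$ and act compatibly on the respective vanishing cycle sheaves.
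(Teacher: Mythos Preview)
Your approach contains a genuine gap in the application of equivariant localization to the quotient stack $\fM(\bv,\mathbf 0)=[R/\GL(\bv)]$. You identify the $\bC^*_\hbar$-fixed locus with $[R^{\bC^*_\hbar}/\GL(\bv)]$ and observe that $\sw$ vanishes there, but this is not the correct fixed substack. A point of $[R/\GL(\bv)]$ is $\bC^*_\hbar$-fixed whenever the $\bC^*_\hbar$-action on its orbit can be absorbed into the gauge group, i.e.\ whenever the representation is $\bC^*_\hbar$-graded via some cocharacter $\phi\colon\bC^*_\hbar\to\GL(\bv)$. For such graded representations the edge-loop maps $\Phi_k$ can be nonzero (nilpotent), and $\sw$ need not vanish. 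Concretely, the $\bC^*_\hbar\times\GL(\bv)$-equivariant Euler class of the normal bundle of $R^{\bC^*_\hbar}$ in $R$ is a polynomial in $\hbar$ and the $\GL(\bv)$-equivariant Chern roots; after tensoring with $\bC(\hbar)$ over $\bC[\hbar]$ it lands in $\bC(\hbar)[\mathfrak t_{\GL(\bv)}]^{W}$, where factors of the form $\hbar+(\text{Chern root})$ are \emph{not} units. Thus the pushforward from $R^{\bC^*_\hbar}$ does not become an isomorphism after inverting only $\hbar$, and your reduction to the identity map on the naive fixed locus does not go through.

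The paper's proof confronts exactly this issue. It first applies dimensional reduction along the arrows $X_{a^*}$ to replace the critical cohomology by Borel--Moore homology of a zero locus $\mathfrak Z(s)\subset\fM(Q^\sharp,\bv,\mathbf 0)$, so that $\mathsf{sp}$ becomes a concrete pushforward $j_*$. It then reduces via the Jordan--H\"older map to the nilpotent locus, and crucially establishes that both $H^{\bC^*_\hbar\times\GL(\bv)}(Z(s)^{\mathrm{nil}})$ and $H^{\bC^*_\hbar\times\GL(\bv)}(\mathrm{Rep}_{Q^\sharp}(\bv,\mathbf 0)^{\mathrm{nil}})$ are \emph{free} $H_{\bC^*_\hbar\times\GL(\bv)}(\pt)$-modules, by stratifying over nilpotent orbits of $\mathfrak{gl}(\bv)$. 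Only then does it invoke localization, but at the prime ideal $I=\ker(H_{\bC^*_\hbar\times\GL(\bv)}(\pt)\to H_{\bC^*_\hbar}(\pt))$, which inverts all the mixed weights $\hbar+(\text{Chern root})$; the inclusion $\mathrm{Rep}_{Q^\sharp}(\bv,\mathbf 0)^{\bC^*_\hbar}\subset Z(s)^{\mathrm{nil}}$ then gives an isomorphism after $I$-localization, and freeness lets one conclude injectivity before localization (hence after merely inverting $\hbar$). Your argument is missing both the correct localization (at $I$ rather than at $\hbar$) and the freeness statement needed to descend from it.
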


\begin{proof}
Consider an auxiliary quiver $Q^\sharp$ which is obtained from $\widetilde Q$ by removing all the arrows in $Q_1^*$. Then $\fM(\widetilde{Q},\bv,\mathbf 0)$ is a vector bundle $E$ over $\fM(Q^\sharp,\bv,\mathbf 0)$ with natural projection $\pi$, and $\sw$ can be written as 
\begin{align*}
    \sw=\langle e,s\rangle,\text{ $e$ is the fiber coordinate of $E$, }\quad s=(X_a\Phi_i^{l_{ij}}-\Phi_j^{l_{ji}}X_a)_{(a:i\to j)\in Q_1}\in \Gamma(\fM(Q^\sharp,\bv,\mathbf 0),E^\vee).
\end{align*}
Then we have dimensional reduction isomorphism 
$$H^{\bC^*_\hbar}\left(\fM(\widetilde{Q},\bv,\mathbf 0),\sw\right)\cong H^{\bC^*_\hbar}(\mathfrak{Z}(s)), $$ 
where $\mathfrak{Z}(s)$ is the zero locus of $s$ in $\fM(Q^\sharp,\bv,\mathbf 0)$. By \cite[Rem.\,\,7.16]{COZZ}, we have 
$$\mathsf{sp}=\tilde j_*\circ \delta^{-1}, $$ 
where $\delta\colon H^{\bC^*_\hbar}(\pi^{-1}\mathfrak{Z}(s))\cong H^{\bC^*_\hbar}(\fM(\widetilde{Q},\bv,\mathbf 0),\sw)$ is the dimensional reduction isomorphism, and $\tilde j\colon \pi^{-1}\mathfrak{Z}(s)\hookrightarrow \fM(\widetilde{Q},\bv,\mathbf 0)$ is the closed embedding. Then, it is equivalent to show that $\tilde j_*$ is injective after $\bC^*_\hbar$-localization, or by smooth pullback isomorphism, equivalent to show that $j_*$ is injective after $\bC^*_\hbar$-localization, where $j\colon \mathfrak{Z}(s)\hookrightarrow \fM(Q^\sharp,\bv,\mathbf 0)$ is the closed embedding.

Consider Jordan-H\"older map 
$$\mathsf{JH}\colon \fM\left(Q^\sharp,\bv,\mathbf 0\right)\to \cM_0\left(Q^\sharp,\bv,\mathbf 0\right)=\mathrm{Rep}_{Q^\sharp}(\bv,\mathbf 0)/\!\!/\GL(\bv). $$ 
Since all the cycles in $Q^\sharp$ are compositions of edge loops, we have 
$$\cM_0\left(Q^\sharp,\bv,\mathbf 0\right)\cong \mathfrak{gl}(\bv)/\!\!/\GL(\bv). $$ 
Then we have $\cM_0\left(Q^\sharp,\bv,\mathbf 0\right)^{\bC^*_\hbar}=\{0\}$. Set $\fM(Q^\sharp,\bv,\mathbf 0)^{\mathrm{nil}}:=\mathsf{JH}^{-1}(\{0\})$, then the pushforward map 
$$H^{\bC^*_\hbar}\left(\fM(Q^\sharp,\bv,\mathbf 0)^{\mathrm{nil}}\right)\to H^{\bC^*_\hbar}\left(\fM(Q^\sharp,\bv,\mathbf 0)\right)$$ is obtained by
\begin{align*}
H^{\bC^*_\hbar}\left(\fM(Q^\sharp,\bv,\mathbf 0)^{\mathrm{nil}}\right)&\cong  H^{\bC^*_\hbar}\left(\cM_0(Q^\sharp,\bv,\mathbf 0), i_*i^!\mathsf{JH}_*\omega_{\fM(Q^\sharp,\bv,\mathbf 0)}\right) \\
&\to H^{\bC^*_\hbar}\left(\cM_0(Q^\sharp,\bv,\mathbf 0\right), \mathsf{JH}_*\omega_{\fM(Q^\sharp,\bv,\mathbf 0)})\cong  H^{\bC^*_\hbar}\left(\fM(Q^\sharp,\bv,\mathbf 0)\right),
\end{align*}
where $i\colon \{0\}\hookrightarrow \cM_0(Q^\sharp,\bv,\mathbf 0)$ is the natural map, which becomes an isomorphism after $\bC^*_\hbar$-localization. 

Similarly, if we set $\mathfrak{Z}(s)^{\mathrm{nil}}=\mathfrak{Z}(s)\cap \mathsf{JH}^{-1}(\{0\})$, then the pushforward map $H^{\bC^*_\hbar}(\mathfrak{Z}(s)^{\mathrm{nil}})\to H^{\bC^*_\hbar}(\mathfrak{Z}(s))$ becomes an isomorphism after $\bC^*_\hbar$-localization. Therfeore, it suffices to show that $j^{\mathrm{nil}}_*$ is injective after $\bC^*_\hbar$-localization, where $j^{\mathrm{nil}}\colon \mathfrak{Z}(s)^{\mathrm{nil}}\hookrightarrow \fM(Q^\sharp,\bv,\mathbf 0)^{\mathrm{nil}}$ is the closed embedding.

We have 
$$H^{\bC^*_\hbar}(\mathfrak{Z}(s)^{\mathrm{nil}})\cong H^{\bC^*_\hbar\times \GL(\bv)}(Z(s)^{\mathrm{nil}}), \quad 
H^{\bC^*_\hbar}(\fM(Q^\sharp,\bv,\mathbf 0)^{\mathrm{nil}})\cong H^{\bC^*_\hbar\times \GL(\bv)}(\mathrm{Rep}_{Q^\sharp}(\bv,\mathbf 0)^{\mathrm{nil}}), $$ where $Z(s)^{\mathrm{nil}}$ and $\mathrm{Rep}_{Q^\sharp}(\bv,\mathbf 0)^{\mathrm{nil}}$ are the preimages of $\mathfrak{Z}(s)^{\mathrm{nil}}$ and $\fM(Q^\sharp,\bv,\mathbf 0)^{\mathrm{nil}}$ in $\mathrm{Rep}_{Q^\sharp}(\bv,\mathbf 0)$ respectively. 

We claim that
\begin{itemize}
    \item[(a)] $H^{\bC^*_\hbar\times \GL(\bv)}(Z(s)^{\mathrm{nil}})$ and $ H^{\bC^*_\hbar\times \GL(\bv)}(\mathrm{Rep}_{Q^\sharp}(\bv,\mathbf 0)^{\mathrm{nil}})$ are free modules of $H_{\bC^*_\hbar\times \GL(\bv)}(\pt)$, and
    \item[(b)] the natural map $H^{\bC^*_\hbar\times \GL(\bv)}(Z(s)^{\mathrm{nil}})\to H^{\bC^*_\hbar\times \GL(\bv)}(\mathrm{Rep}_{Q^\sharp}(\bv,\mathbf 0)^{\mathrm{nil}})$ is injective.
\end{itemize}
The Claim (b) implies that $j^{\mathrm{nil}}_*$ is injective without $\bC^*_\hbar$-localization, and this will imply the lemma.

Notice that $\mathrm{Rep}_{Q^\sharp}(\bv,\mathbf 0)^{\mathrm{nil}}=\mathrm{Rep}_{Q}(\bv,\mathbf 0)\times \cN_{\mathfrak{gl}(\bv)}$, where $\cN_{\mathfrak{gl}(\bv)}$ is the nilpotent cone of $\mathfrak{gl}(\bv)$. Note that $\cN_{\mathfrak{gl}(\bv)}$ is stratified by finitely many $\GL(\bv)$-orbits
\begin{align*}
\cN_{\mathfrak{gl}(\bv)}=\bigsqcup_{\pmb{\lambda}=(\lambda_i)_{i\in Q_0}}\cN_{\pmb{\lambda}},\quad \cN_{\pmb{\lambda}}=\prod_{i\in Q_0}\cN_{\lambda_i},
\end{align*}
where $\lambda_i$ is a partition of $\bv_i$ and $\cN_{\lambda_i}$ is the set of $\bv_i\times \bv_i$ nilpotent matrices of type $\lambda_i$. 
Denote the natural projection 
$$\pr\colon \mathrm{Rep}_{Q^\sharp}(\bv,\mathbf 0)^{\mathrm{nil}}\to \cN_{\mathfrak{gl}(\bv)}. $$
To prove the Claim (a), using the excision sequence, it is enough to prove that 
$$H^{\bC^*_\hbar\times \GL(\bv)}\left(Z(s)^{\mathrm{nil}}\cap \pr^{-1}\cN_{\pmb{\lambda}}\right), \quad 
H^{\bC^*_\hbar\times \GL(\bv)}\left(\pr^{-1}\cN_{\pmb{\lambda}}\right)$$ 
are free modules of $H_{\bC^*_\hbar\times \GL(\bv)}(\pt)$ and concentrated in even degree for all $\pmb{\lambda}$. Note that 
$$\pr^{-1}\cN_{\pmb{\lambda}}\cong \mathrm{Rep}_{Q}(\bv,\mathbf 0)\times \cN_{\pmb{\lambda}}$$ is an equivariant vector bundle on $\cN_{\pmb{\lambda}}$, and $\cN_{\pmb{\lambda}}\cong \GL(\bv)/C_{\GL(\bv)}(x)$, where $C_{\GL(\bv)}(x)$ is the centralizer of a point $x\in \cN_{\pmb{\lambda}}$. Then we have 
$$H^{\bC^*_\hbar\times \GL(\bv)}\left(\pr^{-1}\cN_{\pmb{\lambda}}\right)\cong H_{\bC^*_\hbar\times C_{\GL(\bv)}(x)}(\pt), $$ 
and the latter is a free module of $H_{\bC^*_\hbar\times \GL(\bv)}(\pt)$ and concentrated in even degree. Let $x=(\Phi_i)_{i\in Q_0}\in \cN_{\pmb{\lambda}}$, then $\pr^{-1}(x)\cap Z(s)^{\mathrm{nil}}$ is the subspace of $\mathrm{Rep}_{Q}(\bv,\mathbf 0)$ consisting of those $(X_a)_{a\in Q_1}$ which solve the equations 
$$X_a\Phi_i^{l_{ij}}-\Phi_j^{l_{ji}}X_a=0, \quad \forall \,\,(a:i\to j)\in Q_1. $$ 
Since the equation is linear in $X$, $\pr^{-1}(x)\cap Z(s)^{\mathrm{nil}}$ is a linear subspace of $\mathrm{Rep}_{Q}(\bv,\mathbf 0)$. Transporting along the $\GL(\bv)$-action, we see that $\pr^{-1}(x)\cap Z(s)^{\mathrm{nil}}$ is a vector subbundle of $\mathrm{Rep}_{Q}(\bv,\mathbf 0)\times \cN_{\pmb{\lambda}}\to \cN_{\pmb{\lambda}}$. In particular, we have 
$$H^{\bC^*_\hbar\times \GL(\bv)}\left(Z(s)^{\mathrm{nil}}\cap \pr^{-1}\cN_{\pmb{\lambda}}\right)\cong H^{\bC^*_\hbar\times \GL(\bv)}(\cN_{\pmb{\lambda}})\cong H_{\bC^*_\hbar\times C_{\GL(\bv)}(x)}(\pt), $$ 
which is a free module of $H_{\bC^*_\hbar\times \GL(\bv)}(\pt)$ and concentrated in even degree. This proves the Claim (a). 

Since $\mathrm{Rep}_{Q^\sharp}(\bv,\mathbf 0)^{\bC^*_\hbar}\subset Z(s)^{\mathrm{nil}}$, the $I$-localized pushforward map 
$$H^{\bC^*_\hbar\times \GL(\bv)}\left(Z(s)^{\mathrm{nil}}\right)_I\to  H^{\bC^*_\hbar\times \GL(\bv)}\left(\mathrm{Rep}_{Q^\sharp}(\bv,\mathbf 0)^{\mathrm{nil}}\right)_I$$ is an isomorphism, where $I\subset H_{\bC^*_\hbar\times \GL(\bv)}(\pt)$ is the prime ideal defined as the kernel of the projection $ H_{\bC^*_\hbar\times \GL(\bv)}(\pt)\twoheadrightarrow  H_{\bC^*_\hbar}(\pt)$. Then the Claim (b) follows from the injectivity after $I$-localization and the freeness (Claim (a)).
\end{proof}

\begin{Lemma}\label{lem inj nil}
The pushforward along the closed embedding $$\iota\colon H^{\bC^*_\hbar}(\fM(\bv,\mathbf 0),\sw)_{\fM(\bv,\mathbf 0)^{\mathrm{nil}}}\otimes_{\bC[\hbar]}\bC(\hbar)\to H^{\bC^*_\hbar}(\fM(\bv,\mathbf 0),\sw)\otimes_{\bC[\hbar]}\bC(\hbar)$$ is injective when $\bv=n\delta_i,n\in \bZ_{\geqslant 1}$ or $\bv=m\delta_i+\delta_j,i\neq j,m\in \bZ_{\geqslant 1}$.
\end{Lemma}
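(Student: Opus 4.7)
The common strategy in both cases is to use $\bC^*_\hbar$-equivariant localization to reduce injectivity of $\iota$ to a geometric statement about the $\bC^*_\hbar$-fixed locus of $\Crit(\sw)$: if $\Crit(\sw)^{\bC^*_\hbar}\subseteq \fM(\bv,\mathbf 0)^{\mathrm{nil}}$, then both source and target of $\iota$ localize to the same cohomology group and $\iota$ is an isomorphism (up to multiplication by the Euler class of an invertible normal bundle). The Jordan--H\"older map $\mathsf{JH}\colon \fM(\bv,\mathbf 0)\to\cM_0(\bv,\mathbf 0)$ is $\bC^*_\hbar$-equivariant, so this inclusion holds whenever $\cM_0(\bv,\mathbf 0)^{\bC^*_\hbar}=\{0\}$.

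The case $\bv=n\delta_i$ is quick. In the tripled quiver $\widetilde Q$ the arrows $X_a, X_{a^*}$ connect distinct nodes, so every cyclic word of positive length supported on node $i$ alone is a power of $\Phi_i$; hence $\sw$ vanishes identically on $\fM(n\delta_i,\mathbf 0)\cong[\End(\bC^n)/\GL_n]$ and $\Crit(\sw)=\fM(n\delta_i,\mathbf 0)$. The coordinate ring of $\cM_0(n\delta_i,\mathbf 0)=\End(\bC^n)/\!\!/\GL_n$ is generated by the characteristic-polynomial coefficients of $\Phi_i$, of $\bC^*_\hbar$-weights $-d_i,\ldots,-nd_i$, all nonzero. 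Thus $\cM_0(n\delta_i,\mathbf 0)^{\bC^*_\hbar}=\{0\}$ and the strategy closes.

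The case $\bv=m\delta_i+\delta_j$ with $i\neq j$ is the main obstacle: whenever $|\gcd(a_{ij},a_{ji})|\geqslant 2$ (already for $A_1^{(1)}$), one can exhibit $\bC^*_\hbar$-fixed critical representations whose $\mathsf{JH}$-image is a nontrivial weight-zero invariant, so the naive inclusion above fails and the simple localization argument is insufficient. My plan is to use the restriction $\bv_j=1$, which forces $\Phi_j\in\End(\bC)=\bC$ and the morphisms $X_a, X_{a^*}$ at $j$ to be scalar-valued; after a change of variables, relevant parts of $\sw$ become linear in an auxiliary variable, so the deformed dimensional reduction of \cite[Thm.~C.1]{COZZ} applies. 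This rewrites $H^{\bC^*_\hbar}(\fM(\bv,\mathbf 0),\sw)$ and its nil-supported companion as equivariant Borel--Moore homologies $H^{\bC^*_\hbar\times G}(Z)$ and $H^{\bC^*_\hbar\times G}(Z^{\mathrm{nil}})$ of explicit zero loci $Z^{\mathrm{nil}}\subseteq Z\subseteq \mathrm{Rep}(\widetilde Q,\bv)$, and transforms $\iota$ into an honest closed pushforward $j_*$ between them.

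To prove that $j_*$ is injective after $\bC^*_\hbar$-localization, I would run the freeness-and-injectivity argument of Lemma \ref{lem sp inj for Y(sym KM)}: stratify the ambient representation space by the $\GL(\bv)$-orbits on the nilpotent parts of $\Phi_i, \Phi_j$ (with the node-$j$ contribution trivial because $\bv_j=1$), verify via the excision sequence that over each stratum both $Z$ and $Z^{\mathrm{nil}}$ are cut out of an equivariant vector bundle by linear equations (so that the Borel--Moore homologies are free modules over $H_{\bC^*_\hbar\times \GL(\bv)}(\pt)$ concentrated in even degree), and finally upgrade injectivity after localization at the prime ideal $I=\ker\bigl(H_{\bC^*_\hbar\times \GL(\bv)}(\pt)\twoheadrightarrow H_{\bC^*_\hbar}(\pt)\bigr)$ to unlocalized injectivity using the freeness. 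The delicate step I anticipate is verifying freeness for the extra strata introduced by the presence of multiple arrows between $i$ and $j$ (the $n_{ij}\geqslant 2$ case, absent for finite Dynkin types but present for affine and hyperbolic ones); here the low framing dimension $\bv_j=1$ should be the crucial simplification, restricting which combinations of nonzero arrows can coexist compatibly with any fixed cocharacter.
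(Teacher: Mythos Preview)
Your case~(i) matches the paper. For case~(ii), however, the paper does \emph{not} abandon the localization route you describe as insufficient: it argues directly that $\bigl(\Crit_{\mathrm{Rep}_{\widetilde Q}(\bv,\mathbf 0)}(\sw)/\!\!/\GL(\bv)\bigr)^{\bC^*_\hbar}=\{0\}$. On a representative of a fixed point one has $\Phi_j=0$ and $\Phi_i$ nilpotent; the remaining $\GL(\bv)$-invariants are the scalars $X_b\Phi_i^rX_{a^*}$ (cycles through the one-dimensional node $j$), and $\partial\sw/\partial X_a=0$ gives $\Phi_i^{l_{ij}}X_{a^*}=0$, forcing $r<l_{ij}$. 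The paper then asserts that each such invariant has $\bC^*_\hbar$-weight $d_i(r-l_{ij})\neq 0$ and concludes.

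Your objection pinpoints a real issue with that last weight claim. With the $\bC^*_\hbar$-action specified in the paper, $X_{a_{p_b}}\Phi_i^rX_{a_{p_a}^*}$ has weight $d_i\bigl(l_{ij}(1+p_a-p_b)-r\bigr)$, which agrees with the paper's formula only when $a=b$; for $p_b=p_a+1$ and $r=0$ the weight is zero. Your $A_1^{(1)}$ concern is genuine: at $\bv=\delta_i+\delta_j$ the point $\Phi_i=\Phi_j=0$, $X_{a_1}=X_{a_2^*}=0$, $X_{a_2}=X_{a_1^*}=1$ is a simple (hence closed-orbit) critical representation, literally $\bC^*_\hbar$-fixed, with $X_{a_2}X_{a_1^*}=1\neq 0$. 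So the paper's argument as written does not cover $n_{ij}\geqslant 2$. This does not by itself show $\iota$ fails to be injective---the fixed-locus condition is sufficient, not necessary---but the proof needs repair there, and your dimensional-reduction-plus-stratification route is a reasonable (if currently underspecified) alternative. Note that $\sw$ is already linear in each $X_a$, so ordinary dimensional reduction along the $X_a$'s (as in Lemma~\ref{lem sp inj for Y(sym KM)}) suffices in place of the deformed version you invoke.
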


\begin{proof}
Consider the Jordan-H\"older map $\mathsf{JH}\colon \fM(\bv,\mathbf 0)\to \cM_0(\bv,\mathbf 0)$ where the target is the affine quotient 
$$\cM_0(\bv,\mathbf 0)=\mathrm{Rep}_{\widetilde{Q}}(\bv,\mathbf 0)/\!\!/\GL(\bv). $$ 
Then $\iota$ is induced by applying the hypercohomology $H^{\bC^*_\hbar}(\cM_0(\bv,\mathbf 0),-)$ to the natural map between complexes:
\begin{align*}
    i_*i^!\mathsf{JH}_*\varphi_{\sw}\omega_{\fM(\bv,\mathbf 0)}\to \mathsf{JH}_*\varphi_{\sw}\omega_{\fM(\bv,\mathbf 0)},
\end{align*}
where $i\colon \{0\}\hookrightarrow \cM_0(\bv,\mathbf 0)$ is the closed embedding given by the trivial quiver representation. Note that $\mathsf{JH}_*\varphi_{\sw}\omega_{\fM(\bv,\mathbf 0)}$ is supported on 
$$\mathsf{JH}(\Crit_{\fM(\bv,\mathbf 0)}(\sw))\cong \Crit_{\mathrm{Rep}_{\widetilde{Q}}(\bv,\mathbf 0)}(\sw)/\!\!/\GL(\bv). $$ 
Then it suffices to show that the $\bC^*_\hbar$-fixed point of $\Crit_{\mathrm{Rep}_{\widetilde{Q}}(\bv,\mathbf 0)}(\sw)/\!\!/\GL(\bv)$ is $\{0\}$ in the following two cases:
\begin{enumerate}
    \item[(i)] $\bv=n\delta_i$, $n\in \bZ_{\geqslant 1}$,
    \item[(ii)] $\bv=m\delta_i+\delta_j$, $i\neq j$, $m\in \bZ_{\geqslant 1}$.
\end{enumerate}
In the case (i), $\sw$ is zero on $\mathrm{Rep}_{\widetilde{Q}}(\bv,\mathbf 0)$, and we have 
$$\cM_0(\bv,\mathbf 0)\cong \Spec\bC[\tr(\Phi_i),\tr(\Phi_i^2),\ldots,\tr(\Phi_i^n)]. $$ 
Since $\bC^*_\hbar$ scales $\Phi_i$ with nontrivial weight, we have $\cM_0(\bv,\mathbf 0)^{\bC^*_\hbar}=\{0\}$. 

In the case (ii), without loss of generality, we can assume $i<j$, and the $i>j$ case can be proven in the same way. Consider the natural map $$\Crit_{\mathrm{Rep}_{\widetilde{Q}}(\bv,\mathbf 0)}(\sw)/\!\!/\GL(\bv)\to \mathfrak{gl}(\bv)/\!\!/\GL(\bv)$$
which is given by sending $(\Phi_k,X_a,X_{a^*})$ to $(\Phi_k)_{k\in Q_0}\in \mathfrak{gl}(\bv)$. The $\bC^*_\hbar$ fixed point of $\mathfrak{gl}(\bv)/\!\!/\GL(\bv)$ is $\{0\}$, so for any representative $(\Phi_k,X_a,X_{a^*})$ of a $\bC^*_\hbar$ fixed point in $\Crit_{\mathrm{Rep}_{\widetilde{Q}}(\bv,\mathbf 0)}(\sw)/\!\!/\GL(\bv)$, $(\Phi_k)_{k\in Q_0}$ must be nilpotent. Nilpotency implies that $\Phi_j=0$ as $\bv_j=1$, and the functions $\tr(\Phi_i^s)$ vanishes on $\Crit_{\mathrm{Rep}_{\widetilde{Q}}(\bv,\mathbf 0)}(\sw)/\!\!/\GL(\bv)$ for all $s>0$. Then, by invariant theory, $\bC\left[\left(\Crit_{\mathrm{Rep}_{\widetilde{Q}}(\bv,\mathbf 0)}(\sw)/\!\!/\GL(\bv)\right)^{\bC^*_\hbar}\right]$ is generated by $\{X_b\Phi_i^rX_{a^*}\}_{(a,b:i\to j)\in Q_1,r\in \bZ_{\geqslant0}}$. On the other hand, $\partial\sw/\partial X_a=0$ gives $\Phi_i^{l_{ij}}X_{a^*}=0$ for all $(a:i\to j)\in Q_1$ (here we have used $\Phi_j=0$). Then, only those $X_b\Phi_i^rX_{a^*}$ with $r<l_{ij}$ can be nonzero. However, $X_b\Phi_i^rX_{a^*}$ has $\bC^*_\hbar$ weight $d_i(r-l_{ij})$ which is nonzero if $r\neq l_{ij}$. Therefore, $$\bC\left[\left(\Crit_{\mathrm{Rep}_{\widetilde{Q}}(\bv,\mathbf 0)}(\sw)/\!\!/\GL(\bv)\right)^{\bC^*_\hbar}\right]=\bC,$$
and we conclude that $\left(\Crit_{\mathrm{Rep}_{\widetilde{Q}}(\bv,\mathbf 0)}(\sw)/\!\!/\GL(\bv)\right)^{\bC^*_\hbar}=\{0\}$.
\end{proof}

\subsection{Positive part of the affine Yangian of \texorpdfstring{$\mathfrak{sl}_{n|m}$}{sl(n|m)}}\label{sec super affine Yangian}

Let $m,n\in \bN$ with $\max\{mn,m,n\}\geqslant3$. A \textit{parity sequence} of type $(m,n)$ is a tuple $\mathbf s=(s_1,s_2,\ldots,s_{m+n})$ of $\pm 1$ 
with $m$ entries equal to $1$. Let $Q_0=\bZ/(m+n)\bZ$. 

Given a parity sequence $\mathbf s$, we define matrices $A=(a_{i,j})_{i,j\in Q_0}$ and $M=(m_{i,j})_{i,j\in Q_0}$ by
\begin{equation}\label{aij and mij}
\begin{gathered}
a_{ij}=(s_i+s_{i+1})\delta_{ij}-s_i\delta_{i,j+1}-s_{j}\delta_{i+1,j},\\
m_{i,i+1}=-m_{i+1,i}=-s_{i+1},\\
m_{i,j}=0\text{ if }|i-j|\neq 1.
\end{gathered}
\end{equation}
Define a \textit{symmetric quiver} $Q=(Q_0,Q_1)$ by setting $Q_1=Q_1^+\sqcup Q_1^-\sqcup \mathcal E$ with
\begin{align*}
Q_1^+=\{X_i:i\to i+1\:|\:i\in Q_0\},\quad Q_1^-=\{Y_i:i+1\to i\:|\:i\in Q_0\},\quad \mathcal E=\{\Phi_i:i\to i\:|\:s_i+s_{i+1}\neq 0\}.
\end{align*}
Note that the odd nodes are $Q_0^{\mathrm{f}}=\{i\in Q_0:s_i+s_{i+1}=0\}$. Define the \textit{potential}
\begin{align*}
\sW=\sum_{i\in Q_0}\Phi_i(Y_iX_i-X_{i-1}Y_{i-1})+\sum_{j\in Q_0^{\mathrm{f}}}Y_jX_jX_{j-1}Y_{j-1},
\end{align*}
and denote $\sw=\tr(\sW)$. Set $\sT_0=\bC^*_\hbar\times \bC^*_{\varepsilon}$ and let it act on $Q$ by 
\begin{align*}
(q,t)\in \bC^*_\hbar\times \bC^*_{\varepsilon}:(X_i,Y_i,\Phi_j)\mapsto (q^{s_{i+1}}t^{-s_{i+1}}X_i,q^{s_{i+1}}t^{s_{i+1}}Y_i,q^{-2s_j}\Phi_j),\text{ for }i\in Q_0,j\in Q_0^{\mathrm{f}}.
\end{align*}
Then $\sW$ is $\sT_0$-invariant.

\begin{Definition}[{\cite{U,VV1}}]
The positive part of the affine Yangian of $\mathfrak{sl}_{n|m}$, denoted $Y^+(\widehat{\mathfrak{sl}}_{n|m})$, is the $\bC[\hbar,\varepsilon]$-algebra generated by $\{x^+_{i,r}\}_{i\in Q_0,r\in \bZ_{\geqslant0}}$ subject to relations
\begin{equation}\label{relation for Y^+(aff sl(n|m))}
\begin{gathered}
(u-v+m_{ij}\varepsilon)[x^+_i(u),x^+_j(v)]-a_{ij}\hbar\{x^+_i(u),x^+_j(v)\}=[x^+_{i,0},x^+_j(v)]-[x^+_i(u),x^+_{j,0}],\;\forall \, i,j\in Q_0,\\
[x^+_i(u),x^+_j(v)]=0,\;\text{if }a_{ij}=0,\\
\sum_{\sigma\in \mathfrak{S}_2} [x^+_i(u_{\sigma(1)}),[x^+_i(u_{\sigma(2)}),x^+_j(v)]]=0,\; \text{for } i\in Q_0^{\mathrm{b}}\text{ and }|i-j|=1,\\
\sum_{\sigma\in \mathfrak{S}_2} [x^+_i(u_{\sigma(1)}),[x^+_{i+1}(v),[x^+_i(u_{\sigma(2)}),x^+_{i-1}(w)]]]=0,\; \text{for } i\in Q_0^{\mathrm{f}}\text{ and }i\pm 1\in Q_0^{\mathrm{b}}.
\end{gathered}
\end{equation}
Here $[a,b]=ab-(-1)^{|a|\cdot|b|}ba$, $\{a,b\}=ab+(-1)^{|a|\cdot|b|}ba$, and $|x^+_{i,r}|=|i|$
\end{Definition}

\begin{Proposition}\label{prop Y^+(aff sl(n|m)) to COHA}
There is a surjective $\bC(\hbar,\varepsilon)$-algebra map
\begin{align*}
    Y^+(\widehat{\mathfrak{sl}}_{n|m})\otimes_{\bC[\hbar,\varepsilon]}\bC(\hbar,\varepsilon)\twoheadrightarrow \widetilde{\mathcal{SH}}^{\mathrm{nil}}_{Q,\sW}\otimes_{\bC[\hbar,\varepsilon]}\bC(\hbar,\varepsilon),\quad x^+_{i}\mapsto e_{i}(z).
\end{align*}
\end{Proposition}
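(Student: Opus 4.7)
The plan is to mimic the proof strategy of Proposition \ref{prop Y^+(sym KM) to COHA}, adapting it to the superalgebra setting. First, I would pass to the CoHA with zero potential $\widetilde{\mathcal{H}}_{Q,0}$, where the spherical subalgebra $\widetilde{\mathcal{SH}}_{Q,0}$ is isomorphic to the spherical shuffle algebra for the quiver $Q$. The assignment $x^+_i(z) \mapsto \tilde{e}_i(z)$, with
$$\tilde{e}_{i,r} := c_1(\mathcal{L}_i)^r \cap [\fM(\delta_i,\mathbf{0})],$$
should define a $\bC[\hbar,\varepsilon]$-algebra homomorphism $Y^+(\widehat{\mathfrak{sl}}_{n|m}) \to \widetilde{\mathcal{SH}}_{Q,0}$ via shuffle algebra computations analogous to those in \cite{YZ3}. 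The verification of the relations \eqref{relation for Y^+(aff sl(n|m))} reduces to checking the shuffle formulas: the bond factors for the shuffle product encode precisely the $(u-v+m_{ij}\varepsilon)$ and $a_{ij}\hbar$ coefficients in the first relation, while the Serre-type relations of degrees 2 and 3 translate into the vanishing of wheel-type shuffle polynomials, which is the super analogue of the computation for symmetrizable Kac-Moody Yangians.

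Next, I would introduce the potential. The key step is to show that the specialization map
$$\mathsf{sp}\colon \widetilde{\mathcal{H}}_{Q,\sW} \to \widetilde{\mathcal{H}}_{Q,0}$$
is injective after inverting $\hbar$ and $\varepsilon$. Following the strategy of Lemma \ref{lem sp inj for Y(sym KM)}, one performs dimensional reduction along the edge loops $\Phi_j$ ($j \in Q_0^{\mathrm{f}}$ contribute quartic terms, but one can first reduce along the cubic terms), obtaining an isomorphism with Borel-Moore homology of a degeneracy locus in the moduli stack of $Q^\sharp$-representations, where $Q^\sharp$ is obtained by removing the arrows $Y_i$. One then analyzes the Jordan-H\"older morphism: the affine quotient of the relevant ambient stack has $\sT_0$-fixed locus concentrated at a single point due to the specific choice of $\sT_0$-weights (the $q$-weight on each $\Phi_j$ is nonzero, and the $t$-weights on $X_i, Y_i$ are nontrivial). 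Freeness over $H_{\sT_0}(\pt)$ of the relevant nilpotent cohomologies, together with this torus fixed-point analysis, gives injectivity of the pushforward from the Jordan-H\"older nilpotent locus.

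Once $\mathsf{sp}$ is established to be injective after localization, one obtains a localized $\bC(\hbar,\varepsilon)$-algebra map
$$Y^+(\widehat{\mathfrak{sl}}_{n|m}) \otimes_{\bC[\hbar,\varepsilon]} \bC(\hbar,\varepsilon) \to \widetilde{\mathcal{SH}}_{Q,\sW} \otimes_{\bC[\hbar,\varepsilon]} \bC(\hbar,\varepsilon).$$
The final step is to pass from $\widetilde{\mathcal{SH}}_{Q,\sW}$ to its nilpotent subalgebra $\widetilde{\mathcal{SH}}^{\mathrm{nil}}_{Q,\sW}$. The pushforward along the closed embedding
$$\iota \colon \widetilde{\mathcal{SH}}^{\mathrm{nil}}_{Q,\sW} \hookrightarrow \widetilde{\mathcal{SH}}_{Q,\sW}$$
sends the nilpotent generator $e_{i}(z)$ (a generator of the spherical nilpotent CoHA in the sense of \eqref{equ on HQsw}) to a nonzero scalar multiple of $\tilde{e}_i(z)$. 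Rescaling $x^+_i(z)$ appropriately (absorbing the scalar into the automorphism $x^+_{i}(z) \mapsto c_i\, x^+_{i}(z)$ with $c_i \in \bC(\hbar,\varepsilon)^\times$), one reduces the problem to proving that $\iota$ is injective after $\bC(\hbar,\varepsilon)$-localization in the dimension vectors $\bv = n\delta_i$ and $\bv = m\delta_i + \delta_j$ ($|i-j|=1$) that enter the relations \eqref{relation for Y^+(aff sl(n|m))}. This is the super analogue of Lemma \ref{lem inj nil}: one shows that $(\Crit_{\mathrm{Rep}_Q(\bv,\mathbf{0})}(\sw)/\!\!/\GL(\bv))^{\sT_0} = \{0\}$ in these cases, using that on such small dimension vectors the critical equations force most arrow coordinates to vanish, while the surviving invariant functions (traces of $\Phi$'s, or short cycles through $Q_0^{\mathrm{f}}$-vertices) all carry nonzero $\sT_0$-weight. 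Surjectivity is then immediate, since by definition $\widetilde{\mathcal{SH}}^{\mathrm{nil}}_{Q,\sW}$ is generated by the $\{e_{i}(z)\}_{i \in Q_0}$.

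The main technical obstacle is the second step: establishing injectivity of the specialization map in the supersymmetric setting. The quartic terms in $\sW$ arising from fermionic nodes $j \in Q_0^{\mathrm{f}}$ complicate the dimensional reduction compared to the Kac-Moody case, since one must handle two rounds of reduction (first the cubic $\Phi$-terms, then the quartic $YXXY$-terms) and verify the corresponding freeness statements for the intermediate degeneracy loci. A clean way around this is to stratify $\cN_{\mathfrak{gl}(\bv)} \times \mathrm{Rep}_Q(\bv,\mathbf{0})$ by $\GL(\bv)$-orbit type of the nilpotent part as in the proof of Lemma \ref{lem sp inj for Y(sym KM)}, reducing the freeness to the equivariant cohomology of the stabilizer, and then using the cyclic quiver structure of $Q$ to identify the $\sT_0$-fixed locus of the relevant affine quotient as a single point.
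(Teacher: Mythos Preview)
Your overall strategy is reasonable but differs from the paper's in one key place, and your proposal has a genuine gap.

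\textbf{The paper's shortcut.} The paper does not go through the zero-potential CoHA at all. Instead, it invokes \cite[Thm.~C.2]{VV1}, which already provides a $\bC[\hbar,\varepsilon]$-algebra homomorphism $Y^+(\widehat{\mathfrak{sl}}_{n|m})\to \widetilde{\mathcal{SH}}_{Q,\sW}$ (with the potential, not $\sW=0$) sending $x^+_i(z)\mapsto \tilde e_i(z)$. This completely bypasses your ``main technical obstacle,'' the injectivity of the specialization map $\mathsf{sp}\colon \widetilde{\mathcal H}_{Q,\sW}\to \widetilde{\mathcal H}_{Q,0}$. Your proposed argument for that injectivity is sketchy: the quartic terms $Y_jX_jX_{j-1}Y_{j-1}$ at fermionic nodes are not linear in any variable that is absent from the rest of the potential (e.g.\ $X_j,Y_j$ also appear in the cubic term $\Phi_{j+1}(Y_{j+1}X_{j+1}-X_jY_j)$ when $j+1$ is bosonic), so ``two rounds of dimensional reduction'' does not straightforwardly apply. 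Your stratification workaround is plausible but would require substantially more work than what you wrote.

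\textbf{The gap.} In your final step you claim it suffices to check injectivity of $\iota$ in dimension vectors $\bv=n\delta_i$ and $\bv=m\delta_i+\delta_j$. This misses the super Serre relation, the last line of \eqref{relation for Y^+(aff sl(n|m))}:
\[
\sum_{\sigma\in \mathfrak{S}_2}[x^+_i(u_{\sigma(1)}),[x^+_{i+1}(v),[x^+_i(u_{\sigma(2)}),x^+_{i-1}(w)]]]=0,\quad i\in Q_0^{\mathrm{f}},\ i\pm1\in Q_0^{\mathrm{b}},
\]
which lives in dimension $\bv=\delta_{i-1}+2\delta_i+\delta_{i+1}$. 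The paper handles this extra case in Lemma~\ref{lem inj nil super}, case~(iii), by showing that $\bC[(\Crit_{\mathrm{Rep}_Q(\bv,\mathbf 0)}(\sw)/\!\!/\GL(\bv))^{\sT_0}]$ is generated by $\{\tr(X_iY_i)^m,\tr(X_{i-1}Y_{i-1})^m\}$ (after using $\Phi_{i\pm1}=0$ and $\partial\sw/\partial X_i=0$), and these all carry nonzero $\sT_0$-weight. Without this case, your nilpotent-to-full comparison does not cover all the defining relations, and the argument is incomplete.
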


\begin{proof}
It is enough to show that $x^+_i(z)\mapsto e_i(z)$ gives a $\bC(\hbar)$-algebra homomorphism, then the surjectivity follows from the definition that $\{e_i(z)\}_{i\in Q_0}$ generates $\widetilde{\mathcal{SH}}^{\mathrm{nil}}_{Q,\sW}$. Let $\widetilde{\mathcal{SH}}_{Q,\sW}$ be the spherical CoHA with potential $\sW$. 

By \cite[Thm.~C.2]{VV1}, there is a $\bC[\hbar,\varepsilon]$-algebra homomorphism  
$$Y^+(\widehat{\mathfrak{sl}}_{n|m})\to \widetilde{\mathcal{SH}}_{Q,\sW}, \quad x^+_i(z)\mapsto \tilde{e}_i(z), $$ 
where $\tilde{e}_i(z)=\sum_{r\geqslant 0}\tilde{e}_{i,r}z^{-r-1}$ is the generating function of the CoHA elements
$$\tilde{e}_{i,r}:=c_1(\mathcal L_{i})^{r}\cap\left[\fM(\delta_i,\mathbf 0)\right], \quad \forall \, r\in \bZ_{\geqslant 0},$$
and $\mathcal L_{i}$ is the universal line bundle on $\fM(\delta_i,\mathbf 0)\cong [\bC^{\mathsf g_i}/\bC^*]$. Here $\mathsf g_i=1$ if $i$ is even and $\mathsf g_i=0$ if $i$ is odd. 

The pushforward along the closed embedding 
$$\iota\colon \widetilde{\mathcal{SH}}^{\mathrm{nil}}_{Q,\sW}\to \widetilde{\mathcal{SH}}_{Q,\sW}$$ maps $e_i(z)$ to $-2s_i\hbar\: \tilde{e}_i(z)$ if $i$ is even and to $\tilde{e}_i(z)$ if $i$ is odd. 
Note that $$x^+_i(z)\mapsto \begin{cases}
-2s_i\hbar x^+_i(z),& i\in Q_0^{\mathrm{b}},\\ 
x^+_i(z),& i\in Q_0^{\mathrm{f}},
\end{cases}$$
gives a $\bC(\hbar,\varepsilon)$-algebra automorphism 
$$Y^+(\widehat{\mathfrak{sl}}_{n|m})\otimes_{\bC[\hbar,\varepsilon]}\bC(\hbar,\varepsilon)\cong Y^+(\widehat{\mathfrak{sl}}_{n|m})\otimes_{\bC[\hbar,\varepsilon]}\bC(\hbar,\varepsilon).$$ 
Then it is enough to check that $\iota$ is injective after $\sT_0$-localization for those degrees that appear in the relations \eqref{relation for Y^+(aff sl(n|m))}. This is checked in Lemma \ref{lem inj nil super} below.
\end{proof}

\begin{Lemma}\label{lem inj nil super}
The pushforward along the closed embedding $$\iota\colon H^{\sT_0}(\fM(\bv,\mathbf 0),\sw)_{\fM(\bv,\mathbf 0)^{\mathrm{nil}}}\otimes_{\bC[\hbar,\varepsilon]}\bC(\hbar,\varepsilon)\to H^{\sT_0}(\fM(\bv,\mathbf 0),\sw)\otimes_{\bC[\hbar,\varepsilon]}\bC(\hbar,\varepsilon)$$ is injective when $\bv=n\delta_i,n\in \bZ_{\geqslant 1}$, or $\bv=m\delta_i+\delta_j,i\neq j,m\in \bZ_{\geqslant 1}$, or $\bv=\delta_{i-1}+2\delta_i+\delta_{i+1}$, $i$ odd and $i\pm 1$ even.
\end{Lemma}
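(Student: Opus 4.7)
The plan is to follow the strategy of Lemma \ref{lem inj nil} verbatim: passing through the Jordan-H\"older map
$$\mathsf{JH}\colon \fM(\bv,\mathbf 0)\to \cM_0(\bv,\mathbf 0)=\mathrm{Rep}_Q(\bv,\mathbf 0)/\!\!/\GL(\bv),$$
I would identify $\iota$ with the map on $H^{\sT_0}(\cM_0(\bv,\mathbf 0),-)$ induced by $i_*i^!\mathsf{JH}_*\varphi_\sw\omega\to \mathsf{JH}_*\varphi_\sw\omega$ where $i\colon\{0\}\hookrightarrow\cM_0(\bv,\mathbf 0)$ is the trivial representation. Since $\mathsf{JH}_*\varphi_\sw\omega$ is supported on the image of $\Crit_{\mathrm{Rep}_Q(\bv,\mathbf 0)}(\sw)$, $\sT_0$-localization reduces the claim to verifying
$$\bigl(\Crit_{\mathrm{Rep}_Q(\bv,\mathbf 0)}(\sw)/\!\!/\GL(\bv)\bigr)^{\sT_0}=\{0\}$$
for each of the three types of $\bv$. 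In all three cases I would use the natural map $\Crit/\!\!/\GL(\bv)\to \mathfrak{gl}(\bv)/\!\!/\GL(\bv)$ given by $\{\Phi_k\}_{k\in Q_0^{\mathrm b}}$ to force the $\Phi_k$'s to be nilpotent, since $\bC^*_\hbar$ scales every $\Phi_k$ with nonzero weight $-2s_k$, so the $\bC^*_\hbar$-fixed locus of the target is $\{0\}$.

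For case (a) $\bv=n\delta_i$: if $i\in Q_0^{\mathrm f}$ there are no edge loops at $i$, so $\mathrm{Rep}_Q(n\delta_i,\mathbf 0)=0$ and there is nothing to do. If $i\in Q_0^{\mathrm b}$, the only arrow is $\Phi_i$, $\sw$ vanishes identically, and the invariant ring is generated by $\tr\Phi_i^r$, all of nonzero $\bC^*_\hbar$-weight $-2s_i r$. For case (b) $\bv=m\delta_i+\delta_j$: when $j\not\equiv i\pm 1\pmod{m+n}$ the problem factors and reduces to case (a); when $j=i\pm 1$ I follow the Kac--Moody argument closely. Nilpotency of the $\Phi$'s combined with $\bv_j=1$ kills $\Phi_j$ (either because it is a nilpotent scalar, or because the node is fermionic so $\Phi_j$ does not even appear), and then the invariant ring is generated by the scalars $Y_iX_i$, $X_iY_i$ (depending on orientation), together with monomials of the form $Y_i\Phi_i^r X_i$ or $X_i\Phi_i^r Y_i$. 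Partial derivatives of $\sw$ in $X_i$ and $Y_i$ bound the relevant powers of $\Phi_i$ that survive on the critical locus, after which the remaining generators all carry nonzero $\sT_0$-weights (involving $\hbar\pm\varepsilon$).

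Case (c) is the only genuinely new case and is the main obstacle. With $i$ odd and $i\pm 1$ even, nilpotency gives $\Phi_{i-1}=\Phi_{i+1}=0$ (and $\Phi_i$ does not exist), and the potential restricts to
$$\sw\big|_{\text{subspace}}=\Phi_{i-1}Y_{i-1}X_{i-1}-\Phi_{i+1}X_iY_i+Y_iX_iX_{i-1}Y_{i-1}.$$
The critical equations $\partial\sw/\partial\Phi_{i\pm 1}=0$ give $Y_{i-1}X_{i-1}=0$ and $X_iY_i=0$ in $\End(\bC^1)=\bC$. The invariant ring of $\GL_1\times\GL_2\times\GL_1$ acting on $(X_{i-1},Y_{i-1},X_i,Y_i)$ is generated, after killing $\Phi_{i\pm 1}$, by the two scalar cycles just noted plus the length-four trace
$\tr(Y_iX_iX_{i-1}Y_{i-1})$; this last invariant has $\sT_0$-weight exactly $0$, so it cannot be killed by equivariant considerations alone. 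The key point is that this invariant is itself killed by a further critical equation: the cyclic derivative $\partial\sw/\partial Y_i=X_iX_{i-1}Y_{i-1}$ vanishes on $\Crit(\sw)$, and then by cyclicity of trace $\tr(Y_iX_iX_{i-1}Y_{i-1})=\tr\bigl(Y_i\cdot(X_iX_{i-1}Y_{i-1})\bigr)=0$. This is the step I expect to require the most careful bookkeeping, particularly keeping track of super-signs in the cyclic derivatives, but once this vanishing is established the invariant ring of the critical locus becomes constant and the fixed locus is $\{0\}$, completing the proof.
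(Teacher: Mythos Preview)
Your proposal is correct and follows the same strategy as the paper: reduce via the Jordan--H\"older map and $\sT_0$-localization to showing $\bigl(\Crit_{\mathrm{Rep}_Q(\bv,\mathbf 0)}(\sw)/\!\!/\GL(\bv)\bigr)^{\sT_0}=\{0\}$, dispatch cases (a) and (b) by the Kac--Moody argument, and treat case (c) by killing $\Phi_{i\pm1}$ and analyzing the remaining cycle invariants.

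The only difference is the bookkeeping in case (c). You first pin down the invariant ring of $\GL_1\times\GL_2\times\GL_1$ on $(X_{i-1},Y_{i-1},X_i,Y_i)$ as the polynomial ring in the three generators $\tr A=Y_{i-1}X_{i-1}$, $\tr B=X_iY_i$, $\tr(AB)$ (correct, via the rank-one structure of $A=X_{i-1}Y_{i-1}$ and $B=Y_iX_i$), then kill $\tr A,\tr B$ by $\partial\sw/\partial\Phi_{i\pm1}$ and the weight-zero generator $\tr(AB)$ by $\partial\sw/\partial Y_i$. The paper instead uses only $\partial\sw/\partial X_i$ (giving $X_{i-1}Y_{i-1}Y_i=0$, i.e.\ $AB=0$) to kill all mixed traces at once, leaving only $\tr(A^m),\tr(B^m)$, which have nonzero $\hbar$-weight $2ms_i$ and $2ms_{i+1}$. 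Your route requires the explicit three-generator description and three critical equations; the paper's needs one critical equation and no enumeration of generators. Both are short and valid; the paper's is marginally slicker since it avoids identifying the weight-zero invariant. Your concern about super-signs is unfounded here: the cyclic derivatives are ordinary since $\sw$ is a polynomial function on an ordinary (commutative) representation space.
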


\begin{proof}
Similar to the proof of Lemma \ref{lem inj nil}, we need to show that $\sT_0$ fixed point of $\Crit_{\mathrm{Rep}_{Q}(\bv,\mathbf 0)}(\sw)/\!\!/\GL(\bv)$ is $\{0\}$ in the following three cases:
\begin{enumerate}
    \item[(i)] $\bv=n\delta_i$, $n\in \bZ_{\geqslant 1}$,
    \item[(ii)] $\bv=m\delta_i+\delta_j$, $i\neq j$, $m\in \bZ_{\geqslant 1}$,
    \item[(iii)]  $\bv=\delta_{i-1}+2\delta_i+\delta_{i+1}$, $i$ odd and $i\pm 1$ even.
\end{enumerate}
The first two cases are proven similarly as Lemma \ref{lem inj nil} and we shall not repeat. For case (iii), we have a natural map $$\Crit_{\mathrm{Rep}_{Q}(\bv,\mathbf 0)}(\sw)/\!\!/\GL(\bv)\to \bC^*\times \bC^*,$$
which is given by sending $(X_i,X_{i-1},Y_i,Y_{i-1},\Phi_{i+1},\Phi_{i-1})$ to $(\Phi_{i-1},\Phi_{i+1})\in \bC^*\times \bC^*$. The $\sT_0$ fixed point of $\bC^*\times \bC^*$ is $\{0\}$, so for any representative $(X_i,X_{i-1},Y_i,Y_{i-1},\Phi_{i+1},\Phi_{i-1})$ of a $\sT_0$ fixed point in $\Crit_{\mathrm{Rep}_{\widetilde{Q}}(\bv,\mathbf 0)}(\sw)/\!\!/\GL(\bv)$, we have $\Phi_{i\pm 1}=0$. Then, by invariant theory, $R=\bC\left[\left(\Crit_{\mathrm{Rep}_{Q}(\bv,\mathbf 0)}(\sw)/\!\!/\GL(\bv)\right)^{\bC^*_\hbar}\right]$ is generated by trace of cycles formed by $\{X_i,X_{i-1},Y_i,Y_{i-1}\}$. On the other hand, $\partial\sw/\partial X_i=0$ implies that $X_{i-1}Y_{i-1}Y_i=0$, so the nontrivial generators of $R$ are $\tr(X_iY_i)^m$ and $\tr(X_{i-1}Y_{i-1})^m$ for $m\in \bZ_{\geqslant 1}$. Both of the two types of generators have nonzero $\sT_0$ weights, so $R=\bC$, and we conclude that $\left(\Crit_{\mathrm{Rep}_{Q}(\bv,\mathbf 0)}(\sw)/\!\!/\GL(\bv)\right)^{\sT_0}=\{0\}$.
\end{proof}

\subsection{Positive part of the Yangian of \texorpdfstring{$D(2,1;\lambda)$}{D(2,1;a)}}\label{sec Yangian for D(2,1;a)}

Recall the \textit{exceptional simple Lie superalgebra} $\mathfrak{g=}D(2,1;\lambda)$, which has two presentations $D_\lambda$ and $D^\circ_\lambda$ with nonconjugate root systems. Explicitly, $\mathfrak{g}$ is generated by $\{e_i,f_i\}_{i=1,2,3}$, with parity given by $|e_i|=|f_i|=i+1\pmod 2$ if $\mathfrak{g}=D_\lambda$ and $|e_i|=|f_i|=1$ if $\mathfrak{g}=D^\circ_\lambda$, and subject to the relations 
\begin{align*}
    [e_i,f_j]=0,\:i\neq j, \quad [[e_i,f_i],e_j]=a_{ij}e_j,\quad [[e_i,f_i],f_j]=-a_{ij}f_j,\;\forall \,\, i,j\in Q_0,
\end{align*}
and 
\begin{gather*}
\ad_{e_i}^{1+\delta(a_{ij}\neq 0)}(e_j)=0=\ad_{f_i}^{1+\delta(a_{ij}\neq 0)}(f_j),\; \text{ if }\mathfrak{g}=D_\lambda,\\
a_{13}[[e_1,e_2],e_3]=a_{12}[[e_1,e_3],e_2],\; a_{13}[[f_1,f_2],f_3]=a_{12}[[f_1,f_3],f_2],\; \text{ if }\mathfrak{g}=D^\circ_\lambda.
\end{gather*}
Here $\delta(-)=1$ if $(-)$ holds and $\delta(-)=0$ otherwise, and the Cartan matrix $(a_{ij})$ is given by
\begin{align}\label{Cartan matrix for D(2,1)}
(a_{ij})=
\begin{pmatrix}
2 & -1 & 0\\
1 & 0 & \lambda\\
0 & -1 & 2
\end{pmatrix}\; \text{ if }\;
\mathfrak{g}=D_\lambda, \quad (a_{ij})=
\begin{pmatrix}
0 & -1 & 1+\lambda\\
-1 & 0 & -\lambda\\
1+\lambda & -\lambda & 0
\end{pmatrix}\;\text{ if }\;\mathfrak{g}=D^\circ_\lambda.
\end{align}
Define diagonal matrix $(d_{ij})$ which is given by
\begin{align*}
(d_{ij})=\diag(1,-1,\lambda)\; \text{ if }\;
\mathfrak{g}=D_\lambda, \quad (d_{ij})=\mathrm{id}_{3\times 3}\;\text{ if }\;\mathfrak{g}=D^\circ_\lambda.
\end{align*}
\begin{Definition}\label{def Y^+(D(2,1;a))}
The positive part of the Yangian of $D(2,1;\lambda)$, denoted $Y^+(\mathfrak{g})$ for $\mathfrak{g}=D_\lambda$ or $\mathfrak{g}=D^\circ_\lambda$, is the $\bC[\hbar,\lambda]$-algebra generated by $\{x^+_{i,r}\}_{i\in \{1,2,3\},r\in \bZ_{\geqslant 0}}$ subject to relations
\begin{equation}\label{relation for Y^+(D(2,1;a))}
\begin{gathered}
(u-v)[x^+_i(u),x^+_j(v)]-d_{ii}a_{ij}\hbar/2\{x^+_i(u),x^+_j(v)\}=[x^+_{i,0},x^+_j(v)]-[x^+_i(u),x^+_{j,0}],\;\forall \, i,j,\\
[x^+_i(u),x^+_j(v)]=0,\;\text{if }a_{ij}=0,\\
\sum_{\sigma\in \mathfrak{S}_2} [x^+_i(u_{\sigma(1)}),[x^+_i(u_{\sigma(2)}),x^+_j(v)]]=0,\; \text{if }\mathfrak{g}=D_\lambda,\; a_{ij}\neq 0,\;\text{and}\;i\neq j,\\
a_{13}[[x^+_1(v_1),x^+_2(v_2)],x^+_3(v_3)]=a_{12}[[x^+_1(v_1),x^+_3(v_3)],x^+_2(v_2)],\; \text{if }\mathfrak{g}=D^\circ_\lambda.
\end{gathered}
\end{equation}
Here $x^+_i(z):=\sum_{r\geqslant 0}x^+_{i,r}z^{-r-1}$, $[a,b]=ab-(-1)^{|a|\cdot|b|}ba$, $\{a,b\}=ab+(-1)^{|a|\cdot|b|}ba$, and
\begin{align*}
|x^+_{i,r}|=\begin{cases}
i+1 \pmod{2},&\text{ if }\mathfrak{g}=D_\lambda,\\
1, &\text{ if }\mathfrak{g}=D_\lambda^\circ.
\end{cases}
\end{align*}
\end{Definition} 

\subsubsection{\texorpdfstring{$Y^+(D_\lambda)$}{Y(D)} and CoHA}\label{sec quiver for D(2,1)}

Let $Q=(Q_0,Q_1)$ be the \textit{symmetric quiver} with $Q_0=\{1,2,3\}$ and $Q_1=Q_1^{+}\sqcup Q_1^{-}\sqcup \mathcal E$, where 
\begin{align*}
    Q_1^{+}=\{X_i:i\to i+1\:|\: i=1,2\},\quad Q_1^{-}=\{Y_i:i+1\to i\:|\: i=1,2\},\quad \mathcal E=\{\Phi_j:j\to j\:|\: j=1,3\},
\end{align*}
depicted as follows:
\begin{equation*}
\begin{tikzpicture}[x={(1cm,0cm)}, y={(0cm,1cm)}, baseline=0cm]
  \node[draw,circle,fill=white] (Gauge1) at (0,0) {$\phantom{n}$};
  
  \node[draw,circle,fill=white] (Gauge2) at (2,0) {$\phantom{n}$};

  \node[draw,circle,fill=white] (Gauge3) at (4,0) {$\phantom{n}$};
  
  \node (Z1) at (-1.3,0) {\scriptsize $\Phi_1$};
  \node (Z3) at (5.3,0) {\scriptsize $\Phi_3$};

  \draw[->] (Gauge1.20) -- (Gauge2.160) node[midway,above] {\scriptsize $X_1$};
  \draw[<-] (Gauge1.340) -- (Gauge2.200) node[midway,below] {\scriptsize $Y_1$};
  \draw[->] (Gauge2.20) -- (Gauge3.161) node[midway,above] {\scriptsize $X_2$};
  \draw[<-] (Gauge2.340) -- (Gauge3.199) node[midway,below] {\scriptsize $Y_2$};
 
  \draw[->,looseness=7] (Gauge1.225) to[out=225,in=135] (Gauge1.135);
  \draw[->,looseness=7] (Gauge3.315) to[out=315,in=45] (Gauge3.45);
\end{tikzpicture}
\end{equation*}
Let $\sT_0=\bC^*_{\hbar_1}\times \bC^*_{\hbar_2}$ act on $Q$ by
\begin{align*}
    (t_1,t_2)\in \bC^*_{\hbar_1}\times \bC^*_{\hbar_2}\colon (X_1,Y_1,X_2,Y_2,\Phi_1,\Phi_3)\mapsto (t_1X_1,t_1Y_1,t_2X_2,t_2Y_2,t_1^{-2}\Phi_1,t_2^{-2}\Phi_3).
\end{align*}
Take the \textit{potential} $\sW$ to be
\begin{align*}
    \sW=\Phi_1Y_1X_1+\Phi_3X_2Y_2,
\end{align*}
and denote $\sw=\tr(\sW)$.
\begin{Proposition}\label{prop Y^+(D(2,1;a)) to COHA}
There is a surjective $\bC$-algebra map
\begin{align*}
    Y^+(D_\lambda)\otimes_{\bC[\hbar,\lambda]}\bC(\hbar,\lambda)\twoheadrightarrow \widetilde{\mathcal{SH}}^{\mathrm{nil}}_{Q,\sW}\otimes_{\bC[\mathsf t_0]}\bC(\mathsf t_0),\quad x^+_i(z)\mapsto e_i(z), \quad \hbar\mapsto 2\hbar_1,\quad \lambda\mapsto \hbar_2/\hbar_1.
\end{align*}
\end{Proposition}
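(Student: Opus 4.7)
The plan is to mirror the strategy used in the proofs of Propositions \ref{prop Y^+(sym KM) to COHA} and \ref{prop Y^+(aff sl(n|m)) to COHA}. It suffices to produce an algebra homomorphism sending $x^+_i(z) \mapsto e_i(z)$, as surjectivity is immediate from the fact that $\{e_i(z)\}_{i=1,2,3}$ generate $\widetilde{\mathcal{SH}}^{\mathrm{nil}}_{Q,\sW}$ by definition. So the content is to verify the defining relations \eqref{relation for Y^+(D(2,1;a))}.

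The first step is to replace the nilpotent spherical CoHA by the full spherical CoHA $\widetilde{\mathcal{SH}}_{Q,\sW}$. The pushforward $\iota\colon\widetilde{\mathcal{SH}}^{\mathrm{nil}}_{Q,\sW}\to\widetilde{\mathcal{SH}}_{Q,\sW}$ maps $e_i(z)$ to a nonzero scalar (the Euler class of the loop at node $i$, namely $-2\hbar_1$ or $-2\hbar_2$ at nodes $i=1,3$, and $1$ at the odd node $i=2$) times the shuffle-algebra generator $\tilde e_i(z)$. An absorption automorphism of $Y^+(D_\lambda)\otimes\bC(\hbar,\lambda)$ rescales these factors away. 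What we then need is an analog of Lemmas \ref{lem inj nil} and \ref{lem inj nil super}: that $\iota$ is injective after $\sT_0$-localization in the dimension vectors that appear in \eqref{relation for Y^+(D(2,1;a))}, namely $\bv=n\delta_i$, $\bv=m\delta_i+\delta_j$ ($i\neq j$), and $\bv=2\delta_i+\delta_j$ for $a_{ij}\neq0$, $i\neq j$ (the latter for the quadratic Serre relation involving the odd node $2$). The argument is the same invariant-theoretic one: it reduces to showing $\bigl(\Crit_{\mathrm{Rep}_Q(\bv,\mathbf 0)}(\sw)/\!\!/\GL(\bv)\bigr)^{\sT_0}=\{0\}$, using the Jacobi equations $\partial\sw/\partial X_i=\partial\sw/\partial Y_i=0$ to kill all $\sT_0$-weight-zero invariant traces. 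Concretely $\Phi_1 Y_1=0=X_1\Phi_1$ (from $\partial\sw/\partial X_1,\partial\sw/\partial Y_1$) and symmetrically at node $3$, forcing every invariant built from cycles to have nontrivial $\sT_0$-weight because $\Phi_1$ and $\Phi_3$ themselves carry nonzero weights $-2\hbar_1$ and $-2\hbar_2$.

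The second step is to produce the homomorphism $Y^+(D_\lambda)\to\widetilde{\mathcal{SH}}_{Q,\sW}$. Here I would use the shuffle-algebra route: the specialization map $\mathsf{sp}\colon\widetilde{\mathcal H}_{Q,\sW}\to\widetilde{\mathcal H}_{Q,0}$ is a $\bC[\mathsf t_0]$-algebra homomorphism (from \cite[Ex.~7.19]{COZZ}), and the target is naturally a subalgebra of the shuffle algebra for the doubled quiver underlying $Q$. Using the parameter identification $\hbar\mapsto 2\hbar_1$, $\lambda\mapsto\hbar_2/\hbar_1$, the defining relations \eqref{relation for Y^+(D(2,1;a))} translate into the bond-factor/shuffle identities associated with the Cartan matrix of $D_\lambda$ in \eqref{Cartan matrix for D(2,1)}; checking these in the shuffle presentation of $\widetilde{\mathcal H}_{Q,0}$ is a finite symbolic computation on rational functions of the shuffle variables, entirely analogous to the type-$D$ case of \cite[Thm.~5.1]{YZ3}. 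To transfer this back to the potential side, one needs $\mathsf{sp}$ to be injective after $\sT_0$-localization on $\widetilde{\mathcal H}_{Q,\sW}(\bv)$ for the same small $\bv$ as above, which follows by the exact argument of Lemma \ref{lem sp inj for Y(sym KM)}: dimensional reduction along the loop coordinates reduces injectivity of $\mathsf{sp}$ to injectivity of a pushforward from a zero locus into a stacky vector-bundle ambient, and freeness of the relevant equivariant cohomologies as $H_{\bC^*_\hbar\times\GL(\bv)}(\pt)$-modules is verified by stratifying the nilpotent cone at nodes $1$ and $3$ as in that lemma.

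The hard part will be the quadratic relation involving the odd node, namely the case $a_{12}\neq 0$, $a_{23}\neq 0$ with $i=2$ fermionic. The bond-factor computation must match the super-commutator $\{x^+_2(u),x^+_j(v)\}$ (anticommutator side) with the twisted-shuffle product \eqref{twi prod} accounting for the sign $(-1)^{(\bv|\bv')}$ from \eqref{twist hall product}; this sign must combine correctly with the $d_{ii}a_{ij}\hbar/2$ coefficient after specializing parameters. Once this matching is pinned down, the Serre relations (which for $D_\lambda$ reduce to the standard $\mathrm{ad}^{1+\delta(a_{ij}\neq0)}_{e_i}(e_j)=0$) follow automatically from the shuffle-algebra Serre identities, and combining with the injectivity lemmas yields the claimed surjective algebra homomorphism.
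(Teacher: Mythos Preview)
Your approach is correct in outline but takes a much longer route than the paper. The paper's proof rests on a single structural observation you missed: for $\mathfrak{g}=D_\lambda$, every relation in \eqref{relation for Y^+(D(2,1;a))} involves at most two of the three nodes. Since $a_{13}=a_{31}=0$ in the Cartan matrix \eqref{Cartan matrix for D(2,1)}, nodes $1$ and $3$ are disconnected in $Q$, so $[e_1(u),e_3(v)]=0$ holds trivially in the CoHA. Every remaining relation lives on one of the two full subquivers supported on $\{1,2\}$ or $\{2,3\}$, each carrying the restricted potential $\Phi_1 Y_1 X_1$ or $\Phi_3 X_2 Y_2$. But these two-node quivers with potential are exactly instances of the construction in \S\ref{sec super affine Yangian} (one even node with a loop, one odd node, linked by $X,Y$), so all the remaining relations---including the super-commutator relations at the odd node and the quadratic Serre relations---are already verified by Proposition \ref{prop Y^+(aff sl(n|m)) to COHA}. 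That is the entire proof.

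By contrast, you propose to redo the whole machinery of \S\ref{sec Y(sym KM)}--\S\ref{sec super affine Yangian} from scratch: establish injectivity of $\iota$ on each relevant $\bv$ (your analogue of Lemmas \ref{lem inj nil}, \ref{lem inj nil super}), establish injectivity of the specialization map (your analogue of Lemma \ref{lem sp inj for Y(sym KM)}), and then verify the relations in the zero-potential shuffle algebra. This would work, but note that your appeal to \cite[Thm.~5.1]{YZ3} for the shuffle-side verification is not quite right, since that reference treats only the non-super (symmetrizable Kac--Moody) case; you would instead need \cite[Thm.~C.2]{VV1}, which is precisely what Proposition \ref{prop Y^+(aff sl(n|m)) to COHA} already packages. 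The paper's reduction lets you inherit all of that work for free.
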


\begin{proof}
We notice that the relations \eqref{relation for Y^+(D(2,1;a))} for $\mathfrak{g}=D_\lambda$ only involve at most two nodes in the quiver $Q$. The relation $[e_1(u), e_3(v)]=0$ holds in $\widetilde{\mathcal{SH}}^{\mathrm{nil}}_{Q,\sW}$ because the node $1$ and $3$ are disconnected. So to verify \eqref{relation for Y^+(D(2,1;a))} on the CoHA side, it is enough to focus on the quiver representations which are supported on two neighboring nodes, that is, the following two subquivers with potentials restricted from $\sW$:
\begin{equation*}
\begin{tikzpicture}[x={(1cm,0cm)}, y={(0cm,1cm)}, baseline=0cm]
  \node[draw,circle,fill=white] (Gauge1) at (0,0) {$\phantom{n}$};
  
  \node[draw,circle,fill=white] (Gauge2) at (2,0) {$\phantom{n}$};
  
  \node (Z1) at (-1.3,0) {\scriptsize $\Phi_1$};


  \draw[->] (Gauge1.20) -- (Gauge2.160) node[midway,above] {\scriptsize $X_1$};
  \draw[<-] (Gauge1.340) -- (Gauge2.200) node[midway,below] {\scriptsize $Y_1$};

  \draw[->,looseness=7] (Gauge1.225) to[out=225,in=135] (Gauge1.135);
\end{tikzpicture}
\qquad\qquad
\begin{tikzpicture}[x={(1cm,0cm)}, y={(0cm,1cm)}, baseline=0cm]
  
  \node[draw,circle,fill=white] (Gauge2) at (2,0) {$\phantom{n}$};

  \node[draw,circle,fill=white] (Gauge3) at (4,0) {$\phantom{n}$};
  
  \node (Z3) at (5.3,0) {\scriptsize $\Phi_3$};

  \draw[->] (Gauge2.20) -- (Gauge3.161) node[midway,above] {\scriptsize $X_2$};
  \draw[<-] (Gauge2.340) -- (Gauge3.199) node[midway,below] {\scriptsize $Y_2$};
 
  \draw[->,looseness=7] (Gauge3.315) to[out=315,in=45] (Gauge3.45);
\end{tikzpicture}
\end{equation*}
The above two quivers with potentials are special cases of quivers with potentials in \S\ref{sec super affine Yangian}, and the rest of relations in \eqref{relation for Y^+(D(2,1;a))} follow from Proposition \ref{prop Y^+(aff sl(n|m)) to COHA}.
\end{proof}

\subsubsection{\texorpdfstring{$Y^+(D^\circ_\lambda)$}{Y'(D)} and CoHA}\label{sec quiver for D'(2,1)}

Let $Q^\circ=(Q^\circ_0,Q^\circ_1)$ be the \textit{symmetric quiver} with $Q^\circ_0=\bZ/3\bZ$ and $Q^\circ_1=Q_1^{\circ+}\sqcup Q_1^{\circ-}$, where 
\begin{align*}
    Q_1^{\circ+}=\{X_i:i\to i+1\:|\: i\in Q_0\},\quad Q_1^{\circ-}=\{Y_i:i+1\to i\:|\: i\in Q_0\},
\end{align*}
depicted as follows:
\begin{equation*}
\begin{tikzpicture}[x={(1cm,0cm)}, y={(0cm,1cm)}, baseline=0cm]
  \node[draw,circle,fill=white] (Gauge1) at (0,0) {$\phantom{n}$};
  
  \node[draw,circle,fill=white] (Gauge2) at (2,0) {$\phantom{n}$};

  \node[draw,circle,fill=white] (Gauge3) at (1,-1.73) {$\phantom{n}$};


  \draw[->] (Gauge1.20) -- (Gauge2.160) node[midway,above] {\scriptsize $X_1$};
  \draw[<-] (Gauge1.340) -- (Gauge2.200) node[midway,below] {\scriptsize $Y_1$};
  \draw[->] (Gauge2.260) -- (Gauge3.40) node[midway,right] {\scriptsize $X_2$};
  \draw[<-] (Gauge2.220) -- (Gauge3.80) node[pos=0.4,left] {\scriptsize $Y_2$};
  \draw[->] (Gauge1.320) -- (Gauge3.100) node[pos=0.4,right] {\scriptsize $Y_3$};
  \draw[<-] (Gauge1.280) -- (Gauge3.140) node[midway,left] {\scriptsize $X_3$};
\end{tikzpicture}
\end{equation*}
Let $\bC^*_{\hbar_1}\times \bC^*_{\hbar_2}\times \bC^*_{\hbar_3}$ act on $Q^\circ$ by
\begin{align*}
    (t_1,t_2,t_3)\in \bC^*_{\hbar_1}\times \bC^*_{\hbar_2}\times \bC^*_{\hbar_3}\colon (X_i,Y_i)\mapsto (t_iX_i,t_iY_i).
\end{align*}
Take the \textit{potential} $\sW^\circ$ to be
\begin{align*}
    \sW^\circ=X_3X_2X_1+Y_1Y_2Y_3,
\end{align*}
which is fixed by the subtorus 
$$\sT_0=\{(t_1,t_2,t_3)\in \bC^*_{\hbar_1}\times \bC^*_{\hbar_2}\times \bC^*_{\hbar_3}\:|\: t_1t_2t_3=1\}, $$
and denote $\sw^\circ=\tr(\sW^\circ)$.
Denote the equivariant base ring to be $$\bC[\mathsf t_0]:=H_{\sT_0}(\pt)\cong \bC[\hbar_1,\hbar_2,\hbar_3]/(\hbar_1+\hbar_2+\hbar_3). $$

\begin{Proposition}\label{prop Y^+(D'(2,1;a)) to COHA}
There is a surjective $\bC$-algebra map
\begin{align*}
    Y^+(D^\circ_\lambda)\otimes_{\bC[\hbar,\lambda]}\bC(\hbar,\lambda)\twoheadrightarrow \widetilde{\mathcal{SH}}^{\mathrm{nil}}_{Q^\circ,\sW^\circ}\otimes_{\bC[\mathsf t_0]}\bC(\mathsf t_0),\quad x^+_i(z)\mapsto e_i(z), \quad \hbar\mapsto 2\hbar_1,\quad \lambda\mapsto \hbar_2/\hbar_1.
\end{align*}
\end{Proposition}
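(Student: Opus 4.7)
The strategy parallels the proof of Proposition~\ref{prop Y^+(D(2,1;a)) to COHA}, with the crucial new complication being the cubic (three-node) Serre-type relation
\begin{equation*}
a_{13}\,[[x^+_1(v_1),x^+_2(v_2)],x^+_3(v_3)]=a_{12}\,[[x^+_1(v_1),x^+_3(v_3)],x^+_2(v_2)],
\end{equation*}
which now genuinely involves all three vertices of $Q^\circ$ and cannot be reduced to a two-node subquiver computation.

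First I would construct the candidate map $x^+_i(z)\mapsto e_i(z)$ by following the template of Proposition~\ref{prop Y^+(sym KM) to COHA}. Namely, produce a tentative homomorphism from $Y^+(D^\circ_\lambda)_{\loc}$ into the spherical CoHA with zero potential $\widetilde{\mathcal{SH}}_{Q^\circ,0}\otimes_{\bC[\mathsf t_0]}\bC(\mathsf t_0)$ by sending $x^+_i(z)$ to the generating function of $\tilde e_{i,r}=c_1(\mathcal L_i)^r\cap[\fM(\delta_i,\mathbf 0)]$ (either via an $\mathfrak{sl}$-super analogue of the Yang--Zhao map, or by exhibiting an explicit shuffle-algebra realization of $Y^+(D^\circ_\lambda)_{\loc}$ with weights coming from $Q^\circ$). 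Then transfer to the $\sW^\circ$-twisted CoHA via the specialization $\mathsf{sp}\colon \widetilde{\mathcal{H}}_{Q^\circ,\sW^\circ}\to \widetilde{\mathcal{H}}_{Q^\circ,0}$, which must be shown to be injective after $\sT_0$-localization in all dimension vectors appearing in the defining relations of $Y^+(D^\circ_\lambda)$; as in Lemma~\ref{lem sp inj for Y(sym KM)}, this reduces, via dimensional reduction along the variables appearing linearly in $\sW^\circ$ and equivariant purity, to verifying that a certain pushforward from the zero locus of the moment-map analogue is injective, using the freedom of $Q^\circ$ having no loops. Finally, the pushforward from the nilpotent CoHA into the full CoHA is injective in the relevant degrees by an analogue of Lemma~\ref{lem inj nil}: in each case one checks that the $\sT_0$-fixed locus of $\Crit_{\mathrm{Rep}_{Q^\circ}(\bv,\mathbf 0)}(\sw^\circ)/\!\!/\GL(\bv)$ reduces to $\{0\}$, which is immediate since every nontrivial invariant polynomial on $\mathrm{Rep}_{Q^\circ}(\bv,\mathbf 0)$ carries a nonzero $\sT_0$-weight (all arrows carry nontrivial weights $\hbar_i$, and $\hbar_1+\hbar_2+\hbar_3=0$ but no proper subsum vanishes generically in $\lambda$).

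The quadratic relations in \eqref{relation for Y^+(D(2,1;a))} involving at most two nodes $\{i,j\}$ are verified exactly as in Proposition~\ref{prop Y^+(D(2,1;a)) to COHA}: the restriction of $\sW^\circ$ to any two-node sub-quiver of $Q^\circ$ vanishes (each term of $\sW^\circ$ uses all three nodes), so the sub-quiver is the doubled arrow quiver between $i$ and $j$ with zero potential. One computes the bond factors directly from the weights,
\begin{equation*}
\zeta_{12}(z)=\frac{z-\hbar_1-\sigma_2}{z+\hbar_1-\sigma_2},\qquad \zeta_{23}(z)=\frac{z-\hbar_2-\sigma_3}{z+\hbar_2-\sigma_3},\qquad \zeta_{13}(z)=\frac{z-\hbar_3-\sigma_3}{z+\hbar_3-\sigma_3},
\end{equation*}
and under $\hbar\mapsto 2\hbar_1$, $\lambda\mapsto \hbar_2/\hbar_1$ (together with $\hbar_1+\hbar_2+\hbar_3=0$), these match the expected coefficients $d_{ii}a_{ij}\hbar/2$ from \eqref{Cartan matrix for D(2,1)} in the $D^\circ_\lambda$ presentation.

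The main obstacle is the cubic Serre-type relation. The plan is to verify it directly in the graded component of $\widetilde{\mathcal{SH}}^{\mathrm{nil}}_{Q^\circ,\sW^\circ}$ of degree $\delta_1+\delta_2+\delta_3$. After passing to the shuffle-algebra description and localizing, elements in this degree are rational functions in three variables $\zeta_1,\zeta_2,\zeta_3$, and the two triple commutators $[[x^+_1(v_1),x^+_2(v_2)],x^+_3(v_3)]$ and $[[x^+_1(v_1),x^+_3(v_3)],x^+_2(v_2)]$ unfold into explicit symmetrization expressions involving the three bond factors above. The key structural input is the full $\bZ/3$ cyclic symmetry of $(Q^\circ,\sW^\circ)$, which cyclically permutes the three nodes together with $(\hbar_1,\hbar_2,\hbar_3)$; combined with the alternating fermionic parity and the identity $\hbar_1+\hbar_2+\hbar_3=0$, the two sides collapse to a rational identity of three variables whose proportionality ratio is precisely
\begin{equation*}
\frac{a_{13}}{a_{12}}=-(1+\lambda)=\frac{\hbar_1+\hbar_2}{-\hbar_1}=\frac{\hbar_3}{\hbar_1},
\end{equation*}
as required. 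This step is where the Lie-superalgebra structure of $D^\circ_\lambda$ is genuinely encoded in the tripled-quiver geometry, and I expect it to be the most delicate part of the proof. Surjectivity is then automatic, since by construction $\widetilde{\mathcal{SH}}^{\mathrm{nil}}_{Q^\circ,\sW^\circ}$ is generated by $\{e_i(z)\}_{i\in Q^\circ_0}$.
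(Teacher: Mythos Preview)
Your overall strategy matches the paper's: reduce the quadratic relations to two-node subquivers (where $\sW^\circ$ restricts to zero, so this is indeed a special case of Proposition~\ref{prop Y^+(aff sl(n|m)) to COHA}), and verify the cubic Serre relation at $\bv=(1,1,1)$ by passing to the shuffle algebra and computing directly. The paper does exactly this, rewriting the cubic relation in the cyclically symmetric form
\begin{equation*}
\sum_{i\in \bZ/3\bZ}\hbar_{i+1}\bigl(e_{i-1}(v_{i-1})e_{i}(v_{i})e_{i+1}(v_{i+1})-e_{i+1}(v_{i+1})e_{i}(v_{i})e_{i-1}(v_{i-1})\bigr)=0
\end{equation*}
and then expanding each six-fold product explicitly as a rational function in three shuffle variables.

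However, your justifications for the two injectivity claims contain genuine errors. For the nilpotent pushforward, your assertion that ``every nontrivial invariant polynomial on $\mathrm{Rep}_{Q^\circ}(\bv,\mathbf 0)$ carries a nonzero $\sT_0$-weight'' is false: at $\bv=(1,1,1)$ the invariants $X_3X_2X_1$ and $Y_1Y_2Y_3$ have weight $\hbar_1+\hbar_2+\hbar_3=0$. The correct argument uses the critical equations $\partial\sw^\circ/\partial X_i=X_{i-1}X_{i+1}=0$ and $\partial\sw^\circ/\partial Y_i=Y_{i+1}Y_{i-1}=0$ to see that on $\Crit/\!\!/\GL(\bv)$ the surviving invariants are only the $X_iY_i$ (with weight $2\hbar_i\neq 0$), subject to $X_iY_i\cdot X_jY_j=0$ for $i\neq j$; hence the $\sT_0$-invariant function ring is just $\bC$.

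For the specialization map, invoking Lemma~\ref{lem sp inj for Y(sym KM)} is misplaced: that argument relies on the potential being \emph{globally} linear in a set of arrows (the $Q_1^*$-arrows of a tripled quiver), which $\sW^\circ=X_3X_2X_1+Y_1Y_2Y_3$ is not. What saves you at $\bv=(1,1,1)$ is that all arrows are scalars, so $\sw^\circ$ is linear in the pair $(X_3,Y_3)$; dimension reduction along these two gives $H^{\sT_0}(\fM(\bv,\mathbf 0),\sw^\circ)\cong H^{\sT_0\times\GL(\bv)}(Z)$ for $Z=\{X_1X_2=0,\,Y_1Y_2=0\}\subset\bC^4$. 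One must then show $H^{\sT_0\times\GL(\bv)}(Z)\to H^{\sT_0\times\GL(\bv)}(\bC^4)$ is injective, which the paper does by stratifying $Z$ into four pieces, checking each piece has free even-degree equivariant cohomology over $H_{\sT_0\times\GL(\bv)}(\pt)$, and combining with the localization argument. This step is genuinely specific to $\bv=(1,1,1)$ and does not follow from the tripled-quiver template.
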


\begin{proof}
Similarly to the proof of Proposition \ref{prop Y^+(D(2,1;a)) to COHA}, all but the last relation in \eqref{relation for Y^+(D(2,1;a))} reduce to the following quiver
\begin{equation*}
\begin{tikzpicture}[x={(1cm,0cm)}, y={(0cm,1cm)}, baseline=0cm]
  \node[draw,circle,fill=white] (Gauge1) at (0,0) {$\phantom{n}$};
  
  \node[draw,circle,fill=white] (Gauge2) at (2,0) {$\phantom{n}$};
  


  \draw[->] (Gauge1.20) -- (Gauge2.160) node[midway,above] {\scriptsize $X_i$};
  \draw[<-] (Gauge1.340) -- (Gauge2.200) node[midway,below] {\scriptsize $Y_i$};

\end{tikzpicture}
\end{equation*}
which is a special case of the quivers with potentials in Proposition \ref{prop Y^+(aff sl(n|m)) to COHA}. It remains to show that the equation
\begin{align}\label{serre rel D^circ}
\hbar_3[[e_1(v_1),e_2(v_2)],e_3(v_3)]=\hbar_1[[e_1(v_1),e_3(v_3)],e_2(v_2)]
\end{align}
holds in $\widetilde{\mathcal{SH}}^{\mathrm{nil}}_{Q^\circ,\sW^\circ}\otimes_{\bC[\mathsf t_0]}\bC(\mathsf t_0)$. Let $\bv=(1,1,1)$, then we claim that
\begin{enumerate}
    \item[(i)] the pushforward map $H^{\sT_0}(\fM(\bv,\mathbf 0),\sw^\circ)_{\fM(\bv,\mathbf 0)^{\mathrm{nil}}}\to H^{\sT_0}(\fM(\bv,\mathbf 0),\sw^\circ)$ is injective after localization,
    \item[(ii)] the specialization map $\mathrm{sp}\colon H^{\sT_0}(\fM(\bv,\mathbf 0),\sw^\circ)\to H^{\sT_0}(\fM(\bv,\mathbf 0))$ is injective.
\end{enumerate}
Assuming the claims for now, it is enough to prove that \eqref{serre rel D^circ} holds in $\widetilde{\mathcal{SH}}_{Q^\circ,0}$, the cohomological Hall algebra for $Q^\circ$ with zero potential. \eqref{serre rel D^circ} is equivalent to the following symmetric form
\begin{align}\label{serre rel D^circ_sym}
\sum_{i\in \bZ/3\bZ}\hbar_{i+1}\left(e_{i-1}(v_{i-1})e_{i}(v_{i})e_{i+1}(v_{i+1})-e_{i+1}(v_{i+1})e_{i}(v_{i})e_{i-1}(v_{i-1})\right)=0\,.
\end{align}
For the dimension vector $\bv=(1,1,1)$, each node has one dimension, and let $x_i$ be the equivariant parameter for the node $i$, then $H^{\sT_0}(\fM(\bv,\mathbf 0))\cong \bC[x_1,x_2,x_3]\otimes \bC[\mathsf t_0]$, and we have $e_i(v_i)=\sum_{n\geqslant 0}x_i^n v_i^{-n-1}=\frac{1}{v_i-x_i}$. Note that the sign twist \eqref{twist hall product} for $Q^\circ$ is trivial. Direct computation shows that
\begin{gather*}
e_{i-1}(v_{i-1})e_{i}(v_{i})e_{i+1}(v_{i+1})=\frac{(x_{i+1}-x_{i-1}+\hbar_{i+1})(x_{i}-x_{i-1}+\hbar_{i-1})(x_{i+1}-x_i+\hbar_i)}{(v_{1}-x_{1})(v_{2}-x_{2})(v_{3}-x_{3})}\,,\\
e_{i+1}(v_{i+1})e_{i}(v_{i})e_{i-1}(v_{i-1})=\frac{(x_{i-1}-x_{i+1}+\hbar_{i+1})(x_{i-1}-x_{i}+\hbar_{i-1})(x_i-x_{i+1}+\hbar_i)}{(v_{1}-x_{1})(v_{2}-x_{2})(v_{3}-x_{3})}\,.
\end{gather*}
Then
\begin{align*}
\text{LHS of \eqref{serre rel D^circ_sym}}&=2\sum_{i\in \bZ/3\bZ}\frac{-\hbar_{i+1}(x_1-x_3)(x_2-x_1)(x_3-x_2)+(x_i-x_{i-1})(\hbar_{i+1}^2\hbar_i+\hbar_{i}^2\hbar_{i+1}-\hbar_i\hbar_{i-1}\hbar_{i+1})}{(v_{1}-x_{1})(v_{2}-x_{2})(v_{3}-x_{3})}=0.
\end{align*}
In the remainder, we prove claims (i) and (ii).

The claim (i) is proven similarly to Lemma \ref{lem inj nil}, namely, we need to show that  
\begin{equation}\label{equ on crit rep is zero}\left(\Crit_{\mathrm{Rep}_{Q^\circ}(\bv,\mathbf 0)}(\sw^\circ)/\!\!/\GL(\bv)\right)^{\sT_0}=\{0\}. \end{equation}
For $i\in \bZ/3\bZ$, $\partial\sw^\circ/\partial X_i=0$ gives $X_{i-1}X_{i+1}=0$, and $\partial\sw^\circ/\partial Y_i=0$ gives $Y_{i+1}Y_{i-1}=0$. Then $\GL(\bv)$-invariant functions in  $\bC[\Crit_{\mathrm{Rep}_{Q^\circ}(\bv,\mathbf 0)}(\sw^\circ)]$ are generated by $\{X_iY_i\}_{i\in \bZ/3\bZ}$, and they have relations $X_iY_iX_jY_j=0$ for any pair $i\neq j$. Thus, $\bC[\Crit_{\mathrm{Rep}_{Q^\circ}(\bv,\mathbf 0)}(\sw^\circ)/\!\!/\GL(\bv)]$ is spanned by the monomials $\{(X_iY_i)^n\}_{i\in \bZ/3\bZ}^{n\in \bZ_{\geqslant 0}}$, with the only $\sT_0$-invariant functions being the constant ones. This shows \eqref{equ on crit rep is zero}.

By dimensional reduction, we have 
$$H^{\sT_0}(\fM(\bv,\mathbf 0),\sw^\circ)\cong H^{\sT_0\times \GL(\bv)}(Z), $$ 
where $Z$ is the locus $\{X_1X_2=0,\,Y_1Y_2=0\}$ in the space of representations $R\cong \Spec \bC[X_1,X_2,Y_1,Y_2]$ of quiver
\begin{equation*}
\begin{tikzpicture}[x={(1cm,0cm)}, y={(0cm,1cm)}, baseline=0cm]
  \node[draw,circle,fill=white] (Gauge1) at (0,0) {$1$};
  
  \node[draw,circle,fill=white] (Gauge2) at (2,0) {$1$};

  \node[draw,circle,fill=white] (Gauge3) at (4,0) {$1$};


  \draw[->] (Gauge1.20) -- (Gauge2.160) node[midway,above] {\scriptsize $X_1$};
  \draw[<-] (Gauge1.340) -- (Gauge2.200) node[midway,below] {\scriptsize $Y_1$};
  \draw[->] (Gauge2.20) -- (Gauge3.161) node[midway,above] {\scriptsize $X_2$};
  \draw[<-] (Gauge2.340) -- (Gauge3.199) node[midway,below] {\scriptsize $Y_2$};
 
  
\end{tikzpicture} 
\end{equation*}
By the commutativity of the specialization map and the canonical map \cite[Rem.~7.16]{COZZ}, claim (ii) is equivalent to the claim that the 
pushforward $H^{\sT_0\times \GL(\bv)}(Z)\to H^{\sT_0\times \GL(\bv)}(R)$ is injective. Note that $\bC[R/\!\!/ \GL(\bv)]$ is generated by $X_1X_2, Y_1Y_2, X_1Y_1, X_2Y_2$ which are all positive with respect to the diagonal subtorus $\bC^*\hookrightarrow \bC^*_{\hbar_1}\times \bC^*_{\hbar_2}\cong \sT_0$, so $(R/\!\!/ \GL(\bv))^{\sT_0}=\{0\}$. Since $\{0\}\in Z/\!\!/ \GL(\bv)$, $H^{\sT_0\times \GL(\bv)}(Z)\to H^{\sT_0\times \GL(\bv)}(R)$ is injective after $\sT_0$-localization. 

We can stratify $Z$ by $Z=Z_0\cup Z_1\cup Z_2\cup Z_3$ with 
\begin{align*}
    Z_0=Z\cap \{X_1\neq 0, Y_2\neq 0\},\; Z_1=Z\cap \{X_1\neq 0, Y_2=0\},\; Z_2=Z\cap \{X_1=0, Y_2\neq 0\},\; Z_4=Z\cap \{X_1=0, Y_2=0\}.
\end{align*}
Write $\GL(\bv)=\bC^*_{x_1}\times \bC^*_{x_2}\times \bC^*_{x_3}$ and let the equivariant parameters of $\GL(\bv)$ be $x_1,x_2,x_3$, then
\begin{gather*}
H^{\sT_0\times \GL(\bv)}(Z_0)\cong\bC[\hbar_1,\hbar_2,x_1,x_2,x_3]/(x_2-x_1+\hbar_1, x_2-x_3+\hbar_2),\\
H^{\sT_0\times \GL(\bv)}(Z_1)\cong\bC[\hbar_1,\hbar_2,x_1,x_2,x_3]/(x_2-x_1+\hbar_1),\\
H^{\sT_0\times \GL(\bv)}(Z_2)\cong\bC[\hbar_1,\hbar_2,x_1,x_2,x_3]/(x_2-x_3+\hbar_2),\\
H^{\sT_0\times \GL(\bv)}(Z_3)\cong\bC[\hbar_1,\hbar_2,x_1,x_2,x_3].
\end{gather*}
All of the cohomologies in the above are free $\bC[\hbar_1,\hbar_2]$-modules and are concentrated in even degrees, and applying excision sequence to the stratification $Z=Z_0\cup Z_1\cup Z_2\cup Z_3$ shows that $H^{\sT_0\times \GL(\bv)}(Z)$ is a free $\bC[\hbar_1,\hbar_2]$-module. Thus $H^{\sT_0\times \GL(\bv)}(Z)\to H^{\sT_0\times \GL(\bv)}(R)$ is injective without localization.
\end{proof}

\section{\texorpdfstring{$R$}{R}-matrices for quivers with potentials associated to \texorpdfstring{$\mathfrak{gl}_{2|1}$}{gl(2|1)}}\label{sect on Lie superalgebra}
It is well-known that $\mathfrak{gl}_{2|1}$ has the following three kinds of Kac-Dynkin diagrams:
\begin{equation*}
\begin{tikzpicture}[cross/.style={path picture={ 
  \draw[black]
(path picture bounding box.south east) -- (path picture bounding box.north west) (path picture bounding box.south west) -- (path picture bounding box.north east);
}}]

 \node [draw,circle,minimum width=0.5cm](A){};
 \node [draw,circle,cross,minimum width=0.5cm](B) at (2,0){}; 
  
\draw[-] (A) -- (B);

\end{tikzpicture}
\qquad\phantom{\sW=X\Phi Y,}
\qquad
\begin{tikzpicture}[cross/.style={path picture={ 
  \draw[black]
(path picture bounding box.south east) -- (path picture bounding box.north west) (path picture bounding box.south west) -- (path picture bounding box.north east);
}}]

 \node [draw,circle,cross,minimum width=0.5cm](A){};
 \node [draw,circle,minimum width=0.5cm](B) at (2,0){}; 
  
\draw[-] (A) -- (B);

\end{tikzpicture}
\qquad\phantom{\sW=X\Phi Y,}
\qquad
\begin{tikzpicture}[cross/.style={path picture={ 
  \draw[black]
(path picture bounding box.south east) -- (path picture bounding box.north west) (path picture bounding box.south west) -- (path picture bounding box.north east);
}}]

 \node [draw,circle,cross,minimum width=0.5cm](A){};
 \node [draw,circle,cross,minimum width=0.5cm](B) at (2,0){}; 
  
\draw[-] (A) -- (B);

\end{tikzpicture}
\phantom{=0}
\end{equation*}
which we will refer to as the even-odd, the odd-even, and the odd-odd cases. We associate the following three quivers with potentials to the above three Kac-Dynkin diagrams:
\begin{equation*}
\begin{tikzpicture}[x={(1cm,0cm)}, y={(0cm,1cm)}, baseline=0cm]
  \node[draw,circle,fill=white] (Gauge1) at (0,0){$\phantom{n}$}; 
  \node[draw,circle,fill=white] (Gauge2) at (2,0) {$\phantom{n}$};
  \node (Z) at (0,1) {\scriptsize $\Phi$};
  \draw[<-] (Gauge1.340) -- (Gauge2.200) node[midway,below] {\scriptsize $Y$};
  \draw[->] (Gauge1.20) -- (Gauge2.160) node[midway,above] {\scriptsize $X$};

  \draw[->,looseness=7] (Gauge1.135) to[out=135,in=45] (Gauge1.45);
\end{tikzpicture}
\quad\sW=X\Phi Y,
\qquad
\begin{tikzpicture}[x={(1cm,0cm)}, y={(0cm,1cm)}, baseline=0cm]
  \node[draw,circle,fill=white] (Gauge1) at (0,0){$\phantom{n}$}; 
  \node[draw,circle,fill=white] (Gauge2) at (2,0) {$\phantom{n}$};
  \node (Z) at (2,1) {\scriptsize $\Phi$};
  \draw[<-] (Gauge1.340) -- (Gauge2.200) node[midway,below] {\scriptsize $Y$};
  \draw[->] (Gauge1.20) -- (Gauge2.160) node[midway,above] {\scriptsize $X$};

  \draw[->,looseness=7] (Gauge2.135) to[out=135,in=45] (Gauge2.45);
\end{tikzpicture}
\sW=Y\Phi X,
\qquad
\begin{tikzpicture}[x={(1cm,0cm)}, y={(0cm,1cm)}, baseline=0cm]
  \node[draw,circle,fill=white] (Gauge1) at (0,0){$\phantom{n}$}; 
  \node[draw,circle,fill=white] (Gauge2) at (2,0) {$\phantom{n}$};
  \draw[<-] (Gauge1.340) -- (Gauge2.200) node[midway,below] {\scriptsize $Y$};
  \draw[->] (Gauge1.20) -- (Gauge2.160) node[midway,above] {\scriptsize $X$};

\end{tikzpicture}
\quad\sW=0.
\end{equation*}
In all three cases, let $\sT_0=\bC^*_\hbar$ act on the arrows $X$, $Y$, and $\Phi$ (if exists) with weights $0$, $1$, and $-1$, respectively. As in the beginning of \S \ref{sec shifted yangian_sym quiver}, we fix a splitting of edges $Q_1=\mathcal A\sqcup \mathcal A^*\sqcup \mathcal E$, where $\mathcal A=\{X\}$, $\mathcal A^*=\{Y\}$, $\mathcal E=\{\Phi\}$ or empty.

In the following, we work with the cyclic stability condition ($\theta<0$) when defining quiver varieties. The framing dimension vector is take to be
$$\bd=(\bd_1,\bd_2)=(1,0). $$ We compute the $R^{\mathrm{sup}}(u)$ matrix for the symmetric quiver variety $\cM(2\bd)$ with torus $\sA=\bC^*_u$ action on the framing as $u\bd+\bd$. Here $R^{\mathrm{sup}}(u)$ is defined in Theorem \ref{thm e f h as R matrix elements}. The framed potential will be specified case-by-case. 


\subsection{The even-odd case}
Consider the following framed quiver with potential:
\begin{equation*}
\begin{tikzpicture}[x={(1cm,0cm)}, y={(0cm,1cm)}, baseline=0cm]
  \node[draw,circle,fill=white] (Gauge1) at (0,0){$\phantom{n}$}; 
  \node[draw,circle,fill=white] (Gauge2) at (2,0) {$\phantom{n}$};
  \node (Z) at (0,1) {\scriptsize $\Phi$};
  \node[draw,rectangle,fill=white] (FramingZ) at (0,-1.5) {$1$};
  \draw[<-] (Gauge1.340) -- (Gauge2.200) node[midway,below] {\scriptsize $Y$};
  \draw[->] (Gauge1.20) -- (Gauge2.160) node[midway,above] {\scriptsize $X$};
  \draw[<-] (Gauge1.250) -- (FramingZ.114) node[midway,left] {\scriptsize $A$};
  \draw[->] (Gauge1.290) -- (FramingZ.66) node[midway,right] {\scriptsize $B$};
  \draw[->,looseness=7] (Gauge1.135) to[out=135,in=45] (Gauge1.45);
\end{tikzpicture}
\qquad\sW^{\mathrm{fr}}=X\Phi Y+B\Phi A,
\end{equation*}
and the $\sT_0$-equivariant critical cohomology 
$$\cH^{\mathrm{eo}}_{\bd}=\bigoplus_{\bv\in \bZ^{2}_{\geqslant0}}\cH^{\mathrm{eo}}_{\bd}(\bv), \quad \cH^{\mathrm{eo}}_{\bd}(\bv)=H^{\sT_0}(\cM(\bv,\bd),\tr \sW^{\mathrm{fr}}). $$ Here we let $\sT_0=\bC^*_\hbar$ act on $A$ and $B$ with weights $0$ and $1$ respectively. 

The argument in \cite[Lem.~6.3]{Dav2} can be applied to the above quiver variety with potential, and we have an explicit description of the critical locus
\begin{align*}
    \Crit_{\cM(\bv,\bd)}(\tr \sW^{\mathrm{fr}})=\{XY+AB=0,\; \Phi=0\}/\!\!/_\theta \GL(\bv)=: \cM'(\bv,\bd).
\end{align*}
Applying dimensional reduction along $\Phi$, we get an isomorphism
\begin{align}\label{dim red eo}
\cH^{\mathrm{eo}}_{\bd}(\bv)\cong H^{\sT_0}(\cM'(\bv,\bd)).
\end{align}
Note that
\begin{align*}
\cM'(\bv,\bd)\cong \begin{cases}
\pt, & \bv=(0,0),\\
\pt, & \bv=(1,0),\\
\bC, & \bv=(1,1),\\
\, \emptyset, & \text{otherwise}.
\end{cases}
\end{align*}
Let $b_{\bv}$ be the image of the fundamental class $[\cM'(\bv,\bd)]$ in $\cH^{\mathrm{eo}}_{\bd}(\bv)$ via the isomorphism \eqref{dim red eo}. We also use the short-hand notation $b_{0}$, $b_{1}$, and $b_{2}$ to denote $b_{(0,0)}$, $b_{(1,0)}$, and $b_{(1,1)}$ respectively. Then $\cH^{\mathrm{eo}}_{\bd}$ is a free $\bC[\hbar]$ module with a basis $\{b_0,b_1,b_2\}$.

Consider the specialization map \cite[Def.~7.14]{COZZ}: 
$$\mathrm{sp}\colon H^{\sT_0}(\cM(\bv,\bd),\tr \sW^{\mathrm{fr}})\to H^{\sT_0}(\cM(\bv,\bd)). $$ 
By \cite[Rem.~7.16]{COZZ}, $\mathrm{sp}(b_\bv)$ agrees with the pushforward of the fundamental class of $\{XY+AB=0\}/\!\!/_\theta \GL(\bv)$ into $\cM(\bv,\bd)$, that is,
\begin{align*}
\mathrm{sp}(b_0)=[\cM((0,0),\bd)],\quad \mathrm{sp}(b_1)=\hbar\,[\cM((1,0),\bd)],\quad \mathrm{sp}(b_2)=\hbar\,[\cM((1,1),\bd)].
\end{align*}
In particular, $\mathrm{sp}$ is injective.

\begin{Proposition}\label{prop gl(2|1) even-odd}
In the basis $\{b_i\otimes b_j\}_{0\leqslant i,j\leqslant 2}$, the $R^{\mathrm{sup}}(u)$ matrix for $\cH^{\mathrm{eo}}_{\bd}\otimes \cH^{\mathrm{eo}}_{\bd}$ is
\begin{align*}
R^{\mathrm{sup}}(u)=\frac{u+\hbar P^{\mathrm{eo}}}{u+\hbar}.
\end{align*}
Here $P^{\mathrm{eo}}$ is the super permutation matrix on $\bC^{2|1}\otimes \bC^{2|1}$, where $\bC^{2|1}=\Span_\bC\{b_0,b_1,b_2\}$ with $b_0$, $b_1$ even and $b_2$ odd.
\end{Proposition}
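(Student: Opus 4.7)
The plan is to reduce the computation to a well-understood stable envelope calculation on a Nakajima-type variety, and then read off the answer.

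\textbf{Step 1: Dimensional reduction and injective specialization.}
First, I would use the isomorphism \eqref{dim red eo} to identify $\cH^{\mathrm{eo}}_{\bd}(\bv)$ with $H^{\sT_0}(\cM'(\bv,\bd))$, where $\cM'(\bv,\bd)$ is the Nakajima-type quiver variety attached to the doubled $A_2$ quiver with framing $\bd=(1,0)$. Because this dimensional reduction is compatible with stable envelopes (\cite[Thm.~6.10]{COZZ}, applied as in the proofs of Theorem \ref{thm compare with MO yangian} and Corollary \ref{cor MO yangian crit module}), computing $\Stab_{u\gtrless 0,\epsilon}$ and hence $R^{\mathrm{sup}}(u)$ for $\cH^{\mathrm{eo}}_{\bd}\otimes \cH^{\mathrm{eo}}_{\bd}$ is equivalent to the analogous computation on $H^{\sT_0}(\cM'(2\bd))$. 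The injectivity of $\mathrm{sp}$ noted before the statement ensures that no information is lost in this reduction.

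\textbf{Step 2: Explicit stable envelope computation.}
The $\sA=\bC^*_u$-action on $\cM'(2\bd)$ produces a finite set of fixed components corresponding to the nine splittings $\bv=\bv'+\bv''$ with $\bv',\bv''\in\{(0,0),(1,0),(1,1)\}$. For each such splitting, I would apply the abelianisation/weight function formula \eqref{stab for higher spin_w=0} (or equivalently the direct formula of \cite[Cor.~4.25]{COZZ} used throughout \S \ref{sect on comp stab of gl2}, \S \ref{sect on comp rmatrix of gl2}) to write $\Stab_{u>0}(b_i\otimes b_j)$ as a polynomial in the Chern roots of the tautological bundles and in $u,\hbar$. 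The key simplification is that the gauge dimensions involved are at most $1$ at each node, so each weight function is a single monomial rather than a nontrivial symmetrisation.

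\textbf{Step 3: Assembling $R^{\mathrm{sup}}(u)$.}
Following the recipe of \cite[Lem.~3.29]{COZZ}, I would compute $\Stab_{u<0,\epsilon}^{-1}$ as the transpose correspondence of $\Stab_{u>0,\epsilon}$, take the composition, and then multiply by the sign twist $\Sigma$ of Theorem \ref{thm e f h as R matrix elements}. The normaliser $\epsilon$ from \eqref{normalizer_yangian} is encoded in the polarisation $\mathsf P$ of \eqref{par pol on M(v,d)}, and $\Sigma$ supplies precisely the sign $(-1)^{|\bv'|\cdot|\bv''|}$ that converts the ordinary permutation on $\bC^3\otimes\bC^3$ into the super permutation on $\bC^{2|1}\otimes\bC^{2|1}$ with grading $|b_0|=|b_1|=0$, $|b_2|=1$.

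\textbf{Step 4: Comparison with $\frac{u+\hbar P^{\mathrm{eo}}}{u+\hbar}$.}
Since the matrix produced in Step 3 has entries in $\bC(\mathsf t_0)(u)$ that are at most linear in $u$ divided by $u+\hbar$, and its diagonal on the nine fixed components is forced by the $\bZ^{Q_0}$-grading and the Gauss decomposition \eqref{gauss decomp of R} to be $\frac{u+\hbar}{u+\hbar}$ on diagonal blocks and $\frac{u}{u+\hbar}$ on off-diagonal blocks, a short case-by-case check against the fundamental super $R$-matrix $\frac{u+\hbar P^{\mathrm{eo}}}{u+\hbar}$ of $\mathfrak{gl}_{2|1}$ completes the proof.

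\textbf{Main obstacle.}
The hardest part is bookkeeping the signs: getting the normaliser $\epsilon$, the polarisation $\mathsf P$, and the sign twist $\Sigma$ to cooperate so that the ordinary permutation produced by the geometric computation becomes precisely the \emph{super} permutation $P^{\mathrm{eo}}$ with the correct parity assignment $|b_0|=|b_1|=0$, $|b_2|=1$. Once that is handled, the matrix entries are determined by the structure of the fixed loci and the Gauss decomposition.
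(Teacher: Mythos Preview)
Your Step 1 misidentifies $\cM'(\bv,\bd)$: dimensional reduction along $\Phi$ imposes only the equation $YX+AB=0$ at node 1, \emph{not} the second moment-map equation $XY=0$ at node 2, so $\cM'$ is not the Nakajima variety of the doubled $A_2$ quiver (for instance $\cM'((1,1),\bd)\cong\bC$, whereas the corresponding Nakajima variety is a point). Consequently the appeal to Theorem \ref{thm compare with MO yangian}---which concerns honest tripled quivers with loops at every node---does not apply, and once you land on the possibly singular $\cM'(\bv,2\bd)$ the weight-function formula \eqref{stab for higher spin_w=0} and \cite[Cor.~4.25]{COZZ}, both stated for smooth quiver varieties, are not directly available. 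You also conflate the dimensional-reduction isomorphism (always an isomorphism) with the specialization map $\mathrm{sp}$; the injectivity statement before the proposition concerns the latter and plays no role in your Step 1.

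The paper's proof proceeds differently: it uses $\mathrm{sp}$ (not dimensional reduction) to pass to stable envelopes on the smooth $\cM(\bv,2\bd)$ with zero potential, and then, instead of invoking a single abelianisation formula, factors each dimension vector into pieces already computed. The cases with $\bv_2=0$ are instances of the Jordan-quiver setup of \S\ref{sect on comp rmatrix of gl2}; the case $\bv=(1,1)$ factors as $\mathrm{Tot}_{\bP^1}(\cO(-1)^{\oplus 2})\times\bC^2$ and reduces to the $\mathfrak{gl}_{1|1}$ computation of \S\ref{sec fund R-mat gl(1|1)}; the case $\bv=(2,1)$ is handled by restricting to an open subset $X_{\mathrm{iso}}$ (where $A$ is an isomorphism) that again factors as a product of known pieces; and $\bv=(2,2)$ has a single fixed component, hence a scalar $R$-matrix. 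If you instead apply \cite[Cor.~4.25]{COZZ} directly to the smooth zero-potential $\cM(\bv,2\bd)$---as the paper's remark after the proof notes is also possible---your Steps 2--4 become a valid alternative, though the symmetrisation is not ``a single monomial'' when $\bv_1=2$, and the factorisation shortcut the paper uses sidesteps that bookkeeping entirely.
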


\begin{proof}
Since specializations commute with stable envelopes by \cite[Prop.~7.18]{COZZ}, the stable envelopes 
$$\Stab_{\pm}\colon H^{\sT_0\times \sA}(\cM(\bv,2\bd)^{\sA},\tr\sW^{\mathrm{fr}})\to H^{\sT_0\times \sA}(\cM(\bv,2\bd),\tr\sW^{\mathrm{fr}})$$ agree with the restriction of their zero potential counterparts $\Stab_{\pm}\colon H^{\sT_0\times \sA}(\cM(\bv,2\bd)^{\sA})\to H^{\sT_0\times \sA}(\cM(\bv,2\bd))$ to the image of specialization map. To compute the stable envelopes and $R^{\mathrm{sup}}(u)$ matrices, we have the following observation: if $\bv_2=0$, then the quiver variety $\cM(\bv,2\bd)$ is a special case in \S \ref{sect on comp stab of gl2}, we can directly apply the results there and get
\begin{align*}
R^{\mathrm{sup}}(u)(b_i\otimes b_j)=\frac{1}{u+\hbar}\left(u\,b_i\otimes b_j+\hbar \,b_j\otimes b_i \right),\;\text{ for }\; 0\leqslant i,j\leqslant 1.
\end{align*}
As $\cM((2,2),2\bd)$ has only one fixed component $\cM((1,1),\bd)\times \cM((1,1),\bd)$, so the $R^{\mathrm{sup}}(u)$ matrix in the $\bv=(2,2)$ case is a scalar. It is easy to compute that
\begin{align*}
R^{\mathrm{sup}}(u)(b_2\otimes b_2)=\frac{u-\hbar}{u+\hbar}\, b_2\otimes b_2\,.
\end{align*}
Note that $\cM((1,1),2\bd)\cong \mathrm{Tot}_{\bP^1}(\mathcal O(-1)^{\oplus 2})\times \bC^2$, where the $\sA$ action on $\bC^2$ is trivial, and $\mathrm{Tot}_{\bP^1}(\mathcal O(-1)^{\oplus 2})$ is the quiver variety with gauge rank $1$ defined at the beginning of \S \ref{sec fund R-mat gl(1|1)}. We can directly apply the results there and get
\begin{align*}
R^{\mathrm{sup}}(u)(b_i\otimes b_j)=\frac{1}{u+\hbar}\left(u\,b_i\otimes b_j+\hbar \,b_j\otimes b_i \right),\;\text{ for $(i,j)=(0,2)$ or $(2,0)$}\; .
\end{align*}
Let $X=\cM((2,1),2\bd)$ and $X_{\mathrm{iso}}$ be the open subset of $X$ on which $A$ is an isomorphism. Then $X^\sA\subset X_{\mathrm{iso}}$, and we have $X_{\mathrm{iso}}\cong \mathrm{Tot}_{\bP^1}(\mathcal O(-1)^{\oplus 2})\times \End(\bC^2)_\hbar\times \End(\bC^2)_{-\hbar}$, where the $\bC^2$'s are $\sA$-equivariantly identified with the framing vector space, the subscripts $\hbar$ and $-\hbar$ stand for the $\bC^*_\hbar$ weights (on the whole endomorphism space). Denote $\overline{\Attr}_{\pm}(\Delta_{X^\sA})_{Y}$ the closure of attracting set of $\Delta_{X^\sA}$ in $Y$ for $Y=X_{\mathrm{iso}}\times X^\sA$ or $X\times X^\sA$. Note that $$\overline{\Attr}_{\pm}(\Delta_{X^\sA})_{X_{\mathrm{iso}}\times X^\sA}=\overline{\Attr}_{\pm}(\Delta_{X^\sA})_{X\times X^\sA}\,\bigcap\, (X_{\mathrm{iso}}\times X^\sA).$$
So the composition $H^{\sT_0}(X^\sA)\xrightarrow{\Stab_{\pm}} H^{\sT_0}(X)\xrightarrow{\text{restriction}} H^{\sT_0}(X_{\mathrm{iso}})$ is equal to the stable envelope for $X_{\mathrm{iso}}$. Then $R^{\mathrm{sup}}(u)$ for $X$ is equal to the product of $R^{\mathrm{sup}}(u)$ for $\mathrm{Tot}_{\bP^1}(\mathcal O(-1)^{\oplus 2})$, $\End(\bC^2)_\hbar$, and $\End(\bC^2)_{-\hbar}$:
\begin{align*}
R^{\mathrm{sup}}(u)=\frac{1}{u+\hbar}\begin{pmatrix}
u & \hbar\\
\hbar & u
\end{pmatrix}\times \frac{u-\hbar}{u+\hbar}\times \frac{u+\hbar}{u-\hbar}=\frac{1}{u+\hbar}\begin{pmatrix}
u & \hbar\\
\hbar & u
\end{pmatrix},
\end{align*}
and we get 
\begin{equation*}
R^{\mathrm{sup}}(u)(b_i\otimes b_j)=\frac{1}{u+\hbar}\left(u\,b_i\otimes b_j+\hbar \,b_j\otimes b_i \right),\;\text{ for $(i,j)=(1,2)$ or $(2,1)$}\; . \qedhere
\end{equation*}
\end{proof}

\begin{Remark}
One can also use the explicit formula in \cite[Cor.~4.25]{COZZ} to compute the stable envelopes and $R^{\mathrm{sup}}(u)$.
\end{Remark}


\subsection{The odd-even case} 
Consider the following framed quiver with potential:
\begin{equation*}
\begin{tikzpicture}[x={(1cm,0cm)}, y={(0cm,1cm)}, baseline=0cm]
  \node[draw,circle,fill=white] (Gauge1) at (0,0){$\phantom{n}$}; 
  \node[draw,circle,fill=white] (Gauge2) at (2,0) {$\phantom{n}$};
  \node (Z) at (2,1) {\scriptsize $\Phi$};
  \node[draw,rectangle,fill=white] (FramingZ) at (0,-1.5) {$1$};
  \draw[<-] (Gauge1.340) -- (Gauge2.200) node[midway,below] {\scriptsize $Y$};
  \draw[->] (Gauge1.20) -- (Gauge2.160) node[midway,above] {\scriptsize $X$};
  \draw[<-] (Gauge1.250) -- (FramingZ.114) node[midway,left] {\scriptsize $A$};
  \draw[->] (Gauge1.290) -- (FramingZ.66) node[midway,right] {\scriptsize $B$};
  \draw[->,looseness=7] (Gauge2.135) to[out=135,in=45] (Gauge2.45);
\end{tikzpicture}
\qquad\sW^{\mathrm{fr}}=Y\Phi X+ABYX,
\end{equation*}
and the $\sT_0$-equivariant critical cohomology
$$\cH^{\mathrm{oe}}_{\bd}=\bigoplus_{\bv\in \bZ^{2}_{\geqslant0}}\cH^{\mathrm{oe}}_{\bd}(\bv), \quad 
\cH^{\mathrm{oe}}_{\bd}(\bv)=H^{\sT_0}(\cM(\bv,\bd),\tr \sW^{\mathrm{fr}}). $$ 
Here we let $\sT_0=\bC^*_\hbar$ act on $A$ and $B$ with weights $0$ and $-1$ respectively. 

Solving the equation $d\tr\sW^{\mathrm{fr}}=0$ under the cyclic stability condition, we get
\begin{align*}
\Crit_{\cM(\bv,\bd)}(\tr\sW^{\mathrm{fr}})\neq \emptyset \;\Longleftrightarrow\; \text{$\bv=(0,0)$, $(1,0)$, or $(1,1)$}.
\end{align*}
When $\Crit_{\cM(\bv,\bd)}(\tr\sW^{\mathrm{fr}})$ is nonempty, dimension reduction along $\Phi$ and $B$ yields isomorphism:
\begin{align}\label{dim red oe}
\cH^{\mathrm{oe}}_{\bd}(\bv)\cong H^{\sT_0}(\cM'(\bv,\bd)),\;\text{ where }\; \cM'(\bv,\bd)=\{B=0,\Phi=0,Y=0\}/\!\!/_\theta\GL(\bv).
\end{align}
Let $b_{\bv}$ be the image of the fundamental class $[\cM'(\bv,\bd)]$ in $\cH^{\mathrm{oe}}_{\bd}(\bv)$ via the isomorphism \eqref{dim red oe}. We also use the short-hand notation $b_{0}$, $b_{1}$, and $b_{2}$ to denote $b_{(0,0)}$, $b_{(1,0)}$, and $b_{(1,1)}$ respectively. Then $\cH^{\mathrm{oe}}_{\bd}$ is a free $\bC[\hbar]$ module with a basis $\{b_0,b_1,b_2\}$.

Similarly, we have a specialization map 
$$\mathrm{sp}\colon H^{\sT_0}(\cM(\bv,\bd),\tr \sW^{\mathrm{fr}})\to H^{\sT_0}(\cM(\bv,\bd)), $$ 
which satisfies
\begin{align*}
\mathrm{sp}(b_0)=[\cM((0,0),\bd)],\quad \mathrm{sp}(b_1)=[\cM((1,0),\bd)],\quad \mathrm{sp}(b_2)=\hbar\,[\cM((1,1),\bd)].
\end{align*}
In particular, $\mathrm{sp}$ is injective.
As in Proposition \ref{prop gl(2|1) even-odd}, we have:
\begin{Proposition}\label{prop gl(2|1) odd-even}
In the basis $\{b_i\otimes b_j\}_{0\leqslant i,j\leqslant 2}$, the $R^{\mathrm{sup}}(u)$ matrix for $\cH^{\mathrm{oe}}_{\bd}\otimes \cH^{\mathrm{oe}}_{\bd}$ is
\begin{align*}
R^{\mathrm{sup}}(u)=\frac{u-\hbar P^{\mathrm{oe}}}{u-\hbar}.
\end{align*}
Here $P^{\mathrm{oe}}$ is the super permutation matrix on $\bC^{1|2}\otimes \bC^{1|2}$, where $\bC^{1|2}=\Span_\bC\{b_0,b_1,b_2\}$ with $b_0$ even and $b_1$, $b_2$ odd.
\end{Proposition}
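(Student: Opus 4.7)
The plan is to follow closely the strategy used in the proof of Proposition \ref{prop gl(2|1) even-odd}, exploiting the injectivity of the specialization map together with a case-by-case analysis of the fixed components of $\cM(\bv,2\bd)^\sA$ indexed by the dimension vector $\bv=(\bv_1,\bv_2)$. Since $\mathrm{sp}$ is injective and commutes with stable envelopes by \cite[Prop.~7.18]{COZZ}, computing $R^{\mathrm{sup}}(u)$ reduces to computing the stable envelopes for the underlying quiver variety $\cM(\bv,2\bd)$ (with zero potential) and then restricting to the image of $\mathrm{sp}$, where this image is a free $\bC[\hbar]$-submodule spanned by $b_0\otimes b_0$, $b_0\otimes b_1$, $b_1\otimes b_0$, $b_1\otimes b_1$, $b_0\otimes b_2$, $b_2\otimes b_0$, $b_1\otimes b_2$, $b_2\otimes b_1$, $b_2\otimes b_2$.

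First, for dimension vectors with $\bv_2=0$, the relevant part of the quiver collapses to the trivial quiver (no loop at node $1$), so the associated $R^{\mathrm{sup}}(u)$ is governed by the $\mathfrak{gl}_{1|1}$ fundamental $R$-matrix \eqref{fund geom R-mat 1|1} of \S \ref{sec fund R-mat gl(1|1)} after the appropriate sign/parity bookkeeping; this yields the expected matrix element on the subspace $\Span_\bC\{b_0,b_1\}^{\otimes 2}$. Second, for $\bv=(2,2)$ there is a single $\sA$-fixed component, namely $\cM((1,1),\bd)\times \cM((1,1),\bd)$, so $R^{\mathrm{sup}}(u)$ is a scalar operator on $b_2\otimes b_2$ that one computes directly from the Gauss decomposition \eqref{gauss decomp of R}. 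Third, for $\bv=(1,1)$ one identifies $\cM((1,1),2\bd)$ with $\mathrm{Tot}_{\bP^1}(\mathcal O(-1)^{\oplus 2})\times \bC^2$ on which $\sA$ acts trivially on the $\bC^2$ factor (the loop $\Phi$ sits at the second node now, contributing the trivial factor), and then applies the Jordan-quiver root $R$-matrix \eqref{R(H_1,F)} computed in \S \ref{sect on comp rmatrix of gl2} to get the matrix elements on $b_0\otimes b_2$ and $b_2\otimes b_0$. Finally, for $\bv=(2,1)$ (and symmetrically $(1,2)$), one restricts to the open subset $X_{\mathrm{iso}}\subset \cM((2,1),2\bd)$ where the framing map $A$ is an isomorphism; since $X^\sA\subset X_{\mathrm{iso}}$, the stable envelope factors through this open set, and $X_{\mathrm{iso}}$ splits off $\sA$-equivariantly as a product of a rank-one quiver variety times two endomorphism factors $\End(\bC^2)_\hbar\times \End(\bC^2)_{-\hbar}$, whose scalar $R$-matrices cancel, leaving the Jordan-case root $R$-matrix.

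Assembling the pieces, one reads off that on each basis pair $b_i\otimes b_j$ the matrix element is of the form $\frac{u\,b_i\otimes b_j-\hbar\,(-1)^{|b_i||b_j|}b_j\otimes b_i}{u-\hbar}$ (up to the sign $\Sigma$ in Theorem \ref{thm e f h as R matrix elements} and the normalizer $\epsilon$ of \eqref{normalizer_higher spin}), which is precisely $\frac{u-\hbar P^{\mathrm{oe}}}{u-\hbar}$ with $b_0$ even and $b_1,b_2$ odd. The sign $-\hbar$ (as opposed to $+\hbar$ in the even-odd case) reflects the change in parity of the Chan-Paton labels carried by the vacuum: the auxiliary state $b_1$ that enters into the definition of the raising and lowering operators in the Gauss decomposition is now odd rather than even, flipping the sign in the diagonal pole location.

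The main obstacle I expect is the bookkeeping of signs: one must carefully track the interplay of (i) the normalizer $\epsilon$, which depends on the parity of $\mathsf P|_F^+$, (ii) the sign twist $\Sigma$ entering the definition of $R^{\mathrm{sup}}$ in Theorem \ref{thm e f h as R matrix elements}, (iii) the intrinsic $\bZ/2$-grading \eqref{equ on gra} of the critical cohomology, now with $|1|=1$ in addition to $|2|=1$, and (iv) the sign picked up when rewriting the affine framing vector bundle pullback in the $(1,2)$ and $(2,1)$ cases. The check that these signs combine to produce precisely the super permutation $P^{\mathrm{oe}}$ on $\bC^{1|2}\otimes \bC^{1|2}$, rather than some twisted variant, is where the verification requires the greatest care; once this is done, the three-by-three block structure follows by the same factorization arguments as in Proposition \ref{prop gl(2|1) even-odd}.
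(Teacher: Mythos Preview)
Your strategy matches the paper's, which simply says to proceed as in Proposition \ref{prop gl(2|1) even-odd}. However, two of your intermediate decompositions are inaccurate and would lead you astray if executed as written.

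For $\bv=(1,1)$: the identification $\cM((1,1),2\bd)\cong \mathrm{Tot}_{\bP^1}(\mathcal O(-1)^{\oplus 2})\times \bC^2$ is correct, but the $\mathrm{Tot}_{\bP^1}(\mathcal O(-1)^{\oplus 2})$ factor comes from the framing arrows $(A,B)$ alone and is the \emph{trivial} quiver variety of \S\ref{sec fund R-mat gl(1|1)}, not the Jordan one. The relevant input is therefore \eqref{R sup for gl(1|1)} specialized to $t_1=t_2=-\hbar$ (reflecting the $\sT_0$-weight $-1$ on $B$ in the odd-even case), not \eqref{R(H_1,F)}.

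For $\bv=(2,1)$: since the loop $\Phi$ now sits at node 2 with $\dim V_2=1$, it contributes only a one-dimensional factor $\bC_\Phi$ with trivial $\sA$-action, not an $\End(\bC^2)$ factor. On $X_{\mathrm{iso}}$ one finds instead $X_{\mathrm{iso}}\cong \cM_{\text{Jordan}}(1,\mathbf 2)\times \End(\bC^2)_{-\hbar}$, where the Jordan factor packages $(X,Y,\Phi)$ as Jordan-quiver data at node 2 with framing $V_1\cong\bC^2$, and the single $\End(\bC^2)_{-\hbar}$ comes from $B$. There is no second $\End(\bC^2)_{\hbar}$ factor and hence no cancellation; the scalar from $\End(\bC^2)_{-\hbar}$ must be combined with the Jordan $R$-matrix \eqref{equ on RH1H1} together with the $\Sigma$ and $\epsilon$ signs to produce the correct entries on $b_1\otimes b_2$, $b_2\otimes b_1$. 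Once these decompositions are corrected, your sign bookkeeping plan and the remaining cases go through as you describe.
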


\subsection{The odd-odd case} Consider the following framed quiver with potential:
\begin{equation*}
\begin{tikzpicture}[x={(1cm,0cm)}, y={(0cm,1cm)}, baseline=0cm]
  \node[draw,circle,fill=white] (Gauge1) at (0,0){$\phantom{n}$}; 
  \node[draw,circle,fill=white] (Gauge2) at (2,0) {$\phantom{n}$};
  \node[draw,rectangle,fill=white] (FramingZ) at (0,-1.5) {$1$};
  \draw[<-] (Gauge1.340) -- (Gauge2.200) node[midway,below] {\scriptsize $Y$};
  \draw[->] (Gauge1.20) -- (Gauge2.160) node[midway,above] {\scriptsize $X$};
  \draw[<-] (Gauge1.250) -- (FramingZ.114) node[midway,left] {\scriptsize $A$};
  \draw[->] (Gauge1.290) -- (FramingZ.66) node[midway,right] {\scriptsize $B$};
\end{tikzpicture}
\qquad\sW^{\mathrm{fr}}=ABYX,
\end{equation*}
and the $\sT_0$-equivariant critical cohomology: 
$$\cH^{\mathrm{oo}}_{\bd}=\bigoplus_{\bv\in \bZ^{2}_{\geqslant0}}\cH^{\mathrm{oo}}_{\bd}(\bv), \quad \cH^{\mathrm{oo}}_{\bd}(\bv)=H^{\sT_0}(\cM(\bv,\bd),\tr \sW^{\mathrm{fr}}).$$ Here we let $\sT_0=\bC^*_\hbar$ act on $A$ and $B$ with weights $0$ and $-1$ respectively.

Solving the equation $d\tr\sW^{\mathrm{fr}}=0$ under the cyclic stability condition, we get
\begin{align*}
\Crit_{\cM(\bv,\bd)}(\tr\sW^{\mathrm{fr}})\neq \emptyset \;\Longleftrightarrow\; \text{$\bv=(0,0)$, $(1,0)$, or $(1,1)$}.
\end{align*}
When $\Crit_{\cM(\bv,\bd)}(\tr\sW^{\mathrm{fr}})$ is nonempty, dimension reduction along $B$ yields isomorphism:
\begin{align}\label{dim red oo}
\cH^{\mathrm{oo}}_{\bd}(\bv)\cong H^{\sT_0}(\cM'(\bv,\bd)),\;\text{ where }\; \cM'(\bv,\bd)=\{B=0,Y=0\}/\!\!/_\theta\GL(\bv).
\end{align}
Let $b_{\bv}$ be the image of the fundamental class $[\cM'(\bv,\bd)]$ in $\cH^{\mathrm{oo}}_{\bd}(\bv)$ via the isomorphism \eqref{dim red oo}. We also use the short-hand notation $b_{0}$, $b_{1}$, and $b_{2}$ to denote $b_{(0,0)}$, $b_{(1,0)}$, and $b_{(1,1)}$ respectively. Then $\cH^{\mathrm{oo}}_{\bd}$ is a free $\bC[\hbar]$ module with a basis $\{b_0,b_1,b_2\}$.

As before, we have a specialization map which satisfies: 
\begin{align*}
\mathrm{sp}(b_0)=[\cM((0,0),\bd)],\quad \mathrm{sp}(b_1)=[\cM((1,0),\bd)],\quad \mathrm{sp}(b_2)=\hbar\,[\cM((1,1),\bd)].
\end{align*}
In particular, $\mathrm{sp}$ is injective.
As in Proposition \ref{prop gl(2|1) even-odd}, we have:
\begin{Proposition}\label{prop gl(2|1) odd-odd}
In the basis $\{b_i\otimes b_j\}_{0\leqslant i,j\leqslant 2}$, the $R^{\mathrm{sup}}(u)$ matrix for $\cH^{\mathrm{oo}}_{\bd}\otimes \cH^{\mathrm{oo}}_{\bd}$ is
\begin{align*}
R^{\mathrm{sup}}(u)=\frac{u-\hbar P^{\mathrm{oo}}}{u-\hbar}.
\end{align*}
Here $P^{\mathrm{oo}}$ is the super permutation matrix on $\bC^{2|1}\otimes \bC^{2|1}$, where $\bC^{2|1}=\Span_\bC\{b_0,b_1,b_2\}$ with $b_0$, $b_2$ even and $b_1$ odd.
\end{Proposition}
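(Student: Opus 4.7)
The plan is to follow the strategy of the proof of Proposition \ref{prop gl(2|1) even-odd} essentially verbatim, with modifications to track the different parity structure and the different $\bC^*_\hbar$-weight of the out-going framing map $B$. First I would invoke \cite[Prop.~7.18]{COZZ} to see that specialization commutes with stable envelopes; combined with the injectivity of $\mathrm{sp}$ established just above the proposition, this reduces the computation of $R^{\mathrm{sup}}(u)$ on $\cH^{\mathrm{oo}}_{\bd}\otimes \cH^{\mathrm{oo}}_{\bd}$ to restricting the corresponding zero-potential stable envelopes on $H^{\sT_0\times \sA}(\cM(\bv,2\bd))$ to the image of $\mathrm{sp}\otimes \mathrm{sp}$.

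Next I would carry out a case analysis on $\bv=\bv^{(1)}+\bv^{(2)}$, where the fixed locus $\cM(\bv,2\bd)^\sA$ is a disjoint union of products $\cM(\bv^{(1)},\bd)\times \cM(\bv^{(2)},\bd)$ with $\bv^{(i)}\in\{(0,0),(1,0),(1,1)\}$. In the block with $\bv_2=0$, the arrows $X$ and $Y$ are absent, and the computation reduces to the trivial-quiver fundamental $R$-matrix \eqref{fund geom R-mat 1|1} at node $1$ (where the weight of $B$ is now $-\hbar$, producing the sign change $u+\hbar \rightsquigarrow u-\hbar$). In the block $\bv=(2,2)$ there is a unique fixed component, giving the scalar action $\frac{u+\hbar}{u-\hbar}$ on $b_2\otimes b_2$ after including the normalizer $\epsilon$. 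For $\bv=(1,1)$, I would describe $\cM((1,1),2\bd)$ explicitly as $\mathrm{Tot}_{\bP^1}(\mathcal O(-1)^{\oplus 2})\times \bC^2$ (the factor $\bC^2$ carrying trivial $\sA$-action, coming from the framing copies at node $1$), and apply the one-node computation of \S \ref{sect on comp stab of gl2} adjusted for the new parities and weights. Finally, for $\bv=(2,1)$, I would use the open set $X_{\mathrm{iso}}\subseteq \cM((2,1),2\bd)$ where $A$ is an isomorphism, which contains $X^\sA$ and splits as a product, and then factor the root $R$-matrix into the three pieces as in the even-odd proof.

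The main obstacle I expect is not the geometry, which is close to the even-odd case, but rather the careful bookkeeping of signs produced by: (i) the reversed parity assignment $|b_1|=1$, $|b_0|=|b_2|=0$, which alters the super-permutation $P^{\mathrm{oo}}$; (ii) the sign twist $\Sigma$ from Theorem \ref{thm e f h as R matrix elements}, which depends on $(-1)^{|\bv^{(1)}|\cdot|\bv^{(2)}|}$ and therefore flips sign precisely on the $b_1\otimes b_1$ entry; and (iii) the normalizer $\epsilon=(-1)^{\mathrm{rk}\,\mathsf P|_F^+}$ with $\mathsf P$ given by \eqref{par pol on M(v,d)}, where the splitting $Q_1=\mathcal A\sqcup \mathcal A^*\sqcup \mathcal E=\{X\}\sqcup\{Y\}\sqcup\emptyset$ and the total order on $Q_0^{\mathrm{f}}=\{1,2\}$ must be fixed at the outset. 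Checking that all three signs conspire to give exactly $\frac{u-\hbar P^{\mathrm{oo}}}{u-\hbar}$ (rather than $\frac{u+\hbar P^{\mathrm{oo}}}{u+\hbar}$ as in the even-odd case, or some hybrid) is the only genuinely new input; once this is verified on one diagonal matrix element, the Yang--Baxter equation propagates the sign pattern to the remaining entries.
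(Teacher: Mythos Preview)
Your approach is exactly what the paper does: its entire proof is the sentence ``As in Proposition \ref{prop gl(2|1) even-odd}, we have:'', so following that argument with the obvious parity and weight modifications is precisely right. Two small slips to watch when you write it out: the scalar on $b_2\otimes b_2$ should be $1$ (since $b_2$ is even here, $P^{\mathrm{oo}}(b_2\otimes b_2)=b_2\otimes b_2$), not $\tfrac{u+\hbar}{u-\hbar}$; and the reduction you want for the $\bv=(1,1)$ block is the trivial-quiver computation of \S\ref{sec fund R-mat gl(1|1)} (the $\mathrm{Tot}_{\bP^1}(\mathcal O(-1)^{\oplus 2})$ piece), not \S\ref{sect on comp stab of gl2}, since there is no loop $\Phi$ in the odd-odd quiver.
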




\begin{thebibliography}{MNOP06}






\bibitem[AS]{AS} N.~Arbesfeld, and O.~Schiffmann, \textit{A Presentation of the Deformed $W_{1+\infty}$ Algebra}, 
Symmetries, Integrable Systems and Representations, 2013, pp. 1--13, Springer Proc. Math. Stat., 40, Springer, Heidelberg, 2013.


\bibitem[Bao]{Bao} J.~Bao, \textit{More on Affine Dynkin Quiver Yangians}, J. High Energy Phys., vol. 2023, no. 7, July 2023.







\bibitem[BD]{BD}T.~M.~Botta and B.~Davison, \textit{Okounkov's conjecture via BPS Lie algebras}, arXiv:2312.14008v3. 




\bibitem[COZZ1]{COZZ} Y. Cao, A. Okounkov, Y. Zhou, and Z. Zhou, \textit{Stable envelopes for critical loci}, arXiv:2512.23929. 

\bibitem[COZZ2]{COZZ3} Y. Cao, A. Okounkov, Y. Zhou, and Z. Zhou, \textit{Symmetric quiver varieties and critical stable envelopes}, preprint. 


\bibitem[COZZ3]{COZZ2} Y. Cao, A. Okounkov, Y. Zhou, and Z. Zhou, \textit{in preparation}. 





\bibitem[CTZ]{CTZ} Y.~Cao, Y. Toda and G.~Zhao, \textit{$K$-theoretic pullbacks for Lagrangians on derived critical loci}, arXiv:2503.06025.


\bibitem[CZ]{CZ} Y.~Cao and G.~Zhao, \textit{Quasimaps to quivers with potentials}, arXiv:2306.01302. 



\bibitem[CD]{CD} S. Cecotti and M. Del Zotto, \textit{4d N = 2 gauge theories and quivers: the non-simply laced case}, J. High Energy Phys. 2012, no. 10, 190, 34 pp.


\bibitem[CG]{CG} N.~Chriss and V.~Ginzburg, \textit{Representation theory and complex geometry}, Modern Birkh\"auser Classics (2010).



\bibitem[Dav1]{Dav}B. Davison, \textit{The critical CoHA of a quiver with potential}. Q. J. Math. 68 (2017), no. 2, 635--703. 

\bibitem[Dav2]{Dav3} B.~Davison, \textit{Affine BPS algebras, W algebras, and the cohomological Hall algebra of $\mathbb {A}^2$}, arXiv:2209.05971.

\bibitem[Dav3]{Dav2} B. Davison, \textit{The integrality conjecture and the cohomology of preprojective stacks}, J. Reine Angew. Math.~804 (2023), 105--154.




\bibitem[DM]{DM} B.~Davison and S.~Meinhardt, \textit{Cohomological Donaldson–Thomas theory of a quiver with potential and quantum enveloping algebras}, Invent. Math.~221, (2020) 777--871.


\bibitem[Dri]{Dri}V. G. Drinfeld, \textit{Hopf algebras and the quantum Yang-Baxter equation}, Dokl. Akad. Nauk SSSR
283 (1985), no. 5, 1060--1064.






\bibitem[FRT]{FRT} L. D. Faddeev, N. Yu. Reshetikhin, and L. A. Takhtajan, \textit{Quantization of Lie groups and Lie algebras}, LOMI Preprint E-14-87, Leningrad, 1987.


\bibitem[FJMM]{FJMM}B. Feigin, M. Jimbo, T. Miwa and E. Mukhin, \textit{Quantum toroidal $\mathfrak{g}{{\mathfrak{l}}_{1}}$-algebra: plane partitions}, Kyoto J. Math. 52 (2012), no. 3, 621--659.






\bibitem[FPT]{FPT} R. Frassek, V. Pestun, and A. Tsymbaliuk, \textit{Lax matrices from antidominantly shifted Yangians and quantum affine algebras: A-type}, Adv. Math., 401 (2022): 108283.









\bibitem[GR]{GR}D.~Gaiotto and M.~Rap\v{c}\'ak, \textit{Vertex Algebras at the Corner}, 
J. High Energy Phys. 2019, no. 1 (2019): 1--88.

\bibitem[GLY]{GLY}D. Galakhov, W. Li and M. Yamazaki, \textit{Shifted Quiver Yangians and Representations from BPS Crystals}, 
J. High Energy Phys. 2021, no. 8, Paper No. 146, 73 pp.

\bibitem[GTL]{GTL} S.~Gautam, and V.~Toledano Laredo, \textit{Yangians and quantum loop algebras}, Selecta Math. (N.S.) 19 (2013), no. 2, 271--336.

\bibitem[GLS]{GLS}C. Geiss, B. Leclerc and J. Schr\"oer, \textit{Quivers with relations for symmetrizable Cartan matrices I : Foundations}, Invent. Math. 209 (2017), 61--158.

\bibitem[G]{G} L.~Gow, \textit{Gauss decomposition of the Yangian $Y(\mathfrak{gl}_{m|n})$}, Comm. Math. Phys. 276.3 (2007): 799--825.


\bibitem[GP]{GP}T.~Graber and R.~Pandharipande, \textit{Localization of virtual classes}, Invent. Math. 135 (1999), 487--518.

\bibitem[GNW]{GNW}N.~Guay,~H.~Nakajima and C.~Wendlandt, \textit{Coproduct for the Yangian of an affine Kac-Moody algebra}, Adv. Math. 338 (2018), 865--911.



\bibitem[HZ]{HZ} D. Hernandez and H. Zhang, \textit{Shifted Yangians and Polynomial R-Matrices}, Publ. Res. Inst. Math. Sci. 60 (2024),~no. 1,~1--69.





\bibitem[IMZ]{IMZ}N.~Ishtiaque,~S.~F.~Moosavian, and Y.~Zhou, \textit{Elliptic Stable Envelopes for Certain Non-Symplectic Varieties and Dynamical R-Matrices for Superspin Chains from The Bethe/Gauge Correspondence}, SIGMA Symmetry Integrability Geom. Methods Appl. 20 (2024), Paper No. 099, 95 pp.


\bibitem[Jin]{Jin} S. Jindal, \textit{CoHA of Cyclic Quivers and an Integral Form of Affine Yangians}, arXiv:2408.02618.

\bibitem[Joy]{Joy}D. Joyce, \textit{Shifted symplectic derived algebraic geometry and generalizations of Donaldson-Thomas theory} Talk at KIAS, 2018. Available at \url{https://people.maths.ox.ac.uk/joyce/SeoulHandout3.pdf}












\bibitem[Kac]{Kac} V. G. Kac, \textit{Infinite-dimensional Lie algebras}, third ed., Cambridge University Press, Cambridge, 1990.

\bibitem[KS]{CoHA} M. Kontsevich and Y. Soibelman, \textit{Cohomological Hall algebra, exponential Hodge structures and motivic Donaldson-Thomas invariants}, Commun. Number Theory Phys. 5 (2011), no. 2, 231--352.

\bibitem[KRS]{KRS} P. Kulish, N. Reshetikhin, and E. Sklyanin, \textit{Yang-Baxter equation and representation theory:\,I}, Lett. Math. Phys. 5 (1981), no. 5, 393--403.


\bibitem[Leh]{Leh} M.~Lehn, \textit{Chern classes of tautological sheaves on Hilbert schemes of points on surfaces}, Invent.~Math.,~136 (1999) 157--207.


\bibitem[L]{L} W.~Li, \textit{Quiver algebras and their representations for arbitrary quivers}, J. High Energy Phys. 2024, no. 12, 089: 118 pp.


\bibitem[LY]{LY}W. Li and M. Yamazaki, \textit{Quiver Yangian from Crystal Melting}, J. High Energy Phys. 2020, no. 11, 035, 124 pp.

\bibitem[LZ]{LZ} H.~Lin and H.~Zhang. \textit{Drinfeld super Yangian of the exceptional Lie superalgebra $ D(2, 
1;\lambda)$}, arXiv:2504.21255.


\bibitem[MO]{MO}D. Maulik and A. Okounkov, \emph{Quantum groups and quantum cohomology}, Ast\'erisque 408 (2019), ix+209. 

\bibitem[McB]{McB} M.~McBreen, \textit{Quantum cohomology of hypertoric varieties and geometric representations of Yangians} PhD dissertation, Princeton University, 2013.


\bibitem[Nak1]{Nak1}H. Nakajima. \emph{Instantons on ALE spaces, quiver varieties, and Kac-Moody algebras}, Duke Math. J., 76 (2): 365--416, 1994.

\bibitem[Nak2]{Nak2}H. Nakajima. \emph{Quiver varieties and Kac-Moody algebras}, Duke Math. J., 91 (3): 515--560, 1998.

\bibitem[Nak3]{Nak3} H. Nakajima, \textit{Quiver varieties and finite dimensional representations of quantum affine algebras}, 
J. Amer. Math. Soc. 14, no. 1 (2001): 145--238.




\bibitem[N]{N2} A. Negu\c{t}, \textit{Quantum loop groups and K‐theoretic stable envelopes}, Proc. Lond. Math. Soc. 130, no. 5 (2025): e70048.






\bibitem[OP]{OP}A.~Okounkov and R.~Pandharipande, \textit{Quantum cohomology of the Hilbert scheme of points in the plane}, Invent. Math.~179 (2010) 523--557.








\bibitem[P]{P}T.~P\u{a}durariu, \textit{Categorical and K-theoretic Hall algebras for quivers with potential}, J. Inst. Math. Jussieu., vol. 22 (6)  (2023), 2717--2747.




\bibitem[PR]{PR} T.~Proch{\'a}zka, and M.~Rap{\v{c}}{\'a}k. \textit{$\mathcal {W} $-algebra modules, free fields, and Gukov-Witten defects}, 
J. High Energy Phys. 2019, no. 5, 159, 70 pp.




\bibitem[RSYZ1]{RSYZ1} M.~Rap{\v{c}}{\'a}k, Y.~Soibelman, Y.~Yang, and G.~Zhao, \textit{Cohomological Hall algebras, vertex algebras and instantons}, Comm. Math. Phys. 376.3 (2020): 1803--1873.

\bibitem[RSYZ2]{RSYZ}M. Rapcak, Y. Soibelman, Y. Yang and G. Zhao, \textit{Cohomological Hall algebras and perverse coherent sheaves on toric Calabi-Yau 3-folds}, Commun. Number Theory Phys. 17 (2023) no. 4, 847--939.

\bibitem[R]{Resh90} N. Yu. Reshetikhin, \textit{Quasitriangular Hopf algebras and invariants of links}, Algebra i Analiz, 1:2 (1989), 169–188; Leningrad Math. J., 1:2 (1990), 491–513.



\bibitem[SV1]{SV0} O.~Schiffmann, and E.~Vasserot, \textit{The elliptic Hall algebra and the equivariant K-theory of the Hilbert scheme of $\mathbb A^2$}, 
Duke Math. J. 162, no. 2 (2013), 279--366.

\bibitem[SV2]{SV1} O.~Schiffmann, and E.~Vasserot, \textit{Cherednik Algebras, W-Algebras and the Equivariant Cohomology of the Moduli Space of Instantons on $\mathbb A^2$}, Publ. Math. Inst. Hautes Études Sci., vol. 118, no. 1, May 2013, pp. 213–342.

\bibitem[SV3]{SV2} O.~Schiffmann, and E.~Vasserot, \textit{Cohomological Hall algebras of quivers and Yangians}, arXiv:2312.15803.








\bibitem[Ta]{Ta} S. Tamagni, \textit{Stable Envelopes, Vortex Moduli Spaces, and Verma Modules}, arXiv:2311.03462.






\bibitem[U]{U} M.~Ueda, \textit{Construction of the Affine Super Yangian}, Publ. Res. Inst. Math. Sci. 59 (2023), 423--486.

\bibitem[V]{V} M.~Varagnolo, \textit{Quiver varieties and Yangians}, Lett. Math. Phys. 53 (2000), no. 4, 273--283.

\bibitem[VV1]{VV1} M.~Varagnolo and E.~Vasserot, \textit{K-theoretic Hall algebras, quantum groups and super quantum groups}, 
Selecta Math. (N.S.) 28 (2022), no. 1, Paper No. 7, 56 pp.


\bibitem[VV2]{VV2}M.~Varagnolo and E.~Vasserot, \textit{Quantum loop groups and critical convolution algebras}, arXiv:2302.01418.

\bibitem[VV3]{VV3}M.~Varagnolo and E.~Vasserot, \textit{Non symmetric quantum loop groups and $K$-theory},
arXiv:2308.01809v3.




\bibitem[YZ1]{YZ} Y. Yang and G. Zhao, \textit{Cohomological Hall algebras and affine quantum groups}, Selecta Mathematica 24 (2018) no. 2: 1093--1119.

\bibitem[YZ2]{YZ3} Y. Yang and G. Zhao, \textit{The cohomological Hall algebras of a preprojective algebra with symmetrizer}, Algebr. Represent. Theory 26 (2023), no. 4, 1067--1085.

\bibitem[YZJ]{YZJ}Y. Yang and P. Zinn-Justin, \textit{Higher spin representations of the Yangian of $\mathfrak{sl}_2$ and R-matrices}, arXiv:2403.17433.

\bibitem[Zhu]{Zhu}X. Zhu, \textit{An introduction to affine Grassmannians and the geometric Satake equivalence}, arXiv:1603.05593.


\end{thebibliography}
\end{document}